\documentclass[a4paper,11pt]{article}
\usepackage[
    a4paper,
    top=1.05in,
    bottom=1.05in,
    left=1.00in,
    right=1.00in
]{geometry}
\usepackage[T1]{fontenc}

\usepackage{amsmath,amsthm,mathtools}
\usepackage{amssymb,mathrsfs}
\usepackage{txfonts}
\InputIfFileExists{ursfs.fd}{}{}
\DeclareFontShape{U}{rsfs}{m}{n}{<-6.5>rsfs5<6.5-8>rsfs7<8->rsfs10}{}
\DeclareMathAlphabet{\mathsf}{OT1}{phv}{m}{n}
\SetMathAlphabet{\mathsf}{bold}{OT1}{phv}{b}{n}
\usepackage{microtype}
\microtypesetup{protrusion=true,expansion=true}
\allowdisplaybreaks[4]

\usepackage{enumitem}
\setlist[enumerate]{leftmargin=2.5em,itemsep=0.25em,topsep=0.35em}
\setlist[itemize]{leftmargin=2.2em,itemsep=0.25em,topsep=0.35em}
\usepackage{booktabs,array,longtable}

\usepackage{titlesec}
\usepackage[titletoc]{appendix}
\titleformat{\section}{\normalfont\Large\bfseries}{\thesection}{0.75em}{}
\titleformat{\subsection}{\normalfont\large\bfseries}{\thesubsection}{0.75em}{}
\titlespacing*{\section}{0pt}{2.6ex plus 0.8ex minus 0.2ex}{1.2ex plus 0.2ex}
\titlespacing*{\subsection}{0pt}{2.0ex plus 0.6ex minus 0.2ex}{0.9ex plus 0.2ex}

\usepackage{fancyhdr}
\fancypagestyle{plain}{%
  \fancyhf{}%
  \fancyfoot[C]{\thepage}%
}

\usepackage{float}
\usepackage[dvipsnames]{xcolor}
\definecolor{linkblue}{RGB}{20,67,128}
\definecolor{citegreen}{RGB}{25,105,80}
\definecolor{urlpurple}{RGB}{100,55,125}
\usepackage[colorlinks=true,
            linkcolor=linkblue,
            citecolor=citegreen,
            urlcolor=urlpurple,
            plainpages=false]{hyperref}
\usepackage{bookmark}
\usepackage{aliascnt}
\usepackage[nameinlink,noabbrev,capitalise]{cleveref}
\usepackage{tikz-cd}
\usetikzlibrary{arrows.meta,calc}
\usepackage{multicol}
\let\templatebibliography\thebibliography
\let\endtemplatebibliography\endthebibliography
\renewenvironment{thebibliography}[1]{%
  \templatebibliography{#1}%
  \setlength{\itemsep}{2pt}%
  \setlength{\parsep}{0pt}%
}{\endtemplatebibliography}

\numberwithin{equation}{section}
\makeatletter
\renewcommand{\theequation}{\thesection.\ifnum\value{equation}<10 0\fi\arabic{equation}}
\makeatother

\theoremstyle{plain}
\newtheorem{Theorem}{Theorem}[section]

\theoremstyle{plain}
\newaliascnt{Proposition}{Theorem}
\newtheorem{Proposition}[Proposition]{Proposition}
\aliascntresetthe{Proposition}
\newaliascnt{Lemma}{Theorem}
\newtheorem{Lemma}[Lemma]{Lemma}
\aliascntresetthe{Lemma}
\newaliascnt{Corollary}{Theorem}
\newtheorem{Corollary}[Corollary]{Corollary}
\aliascntresetthe{Corollary}
\newaliascnt{Claim}{Theorem}

\aliascntresetthe{Claim}
\newaliascnt{Question}{Theorem}

\aliascntresetthe{Question}
\newaliascnt{Conjecture}{Theorem}

\aliascntresetthe{Conjecture}

\theoremstyle{definition}
\newaliascnt{Definition}{Theorem}
\newtheorem{Definition}[Definition]{Definition}
\aliascntresetthe{Definition}
\newaliascnt{Example}{Theorem}

\aliascntresetthe{Example}

\theoremstyle{remark}
\newaliascnt{Remark}{Theorem}

\aliascntresetthe{Remark}

\crefname{Theorem}{Theorem}{Theorems}
\Crefname{Theorem}{Theorem}{Theorems}
\crefname{Proposition}{Proposition}{Propositions}
\Crefname{Proposition}{Proposition}{Propositions}
\crefname{Lemma}{Lemma}{Lemmas}
\Crefname{Lemma}{Lemma}{Lemmas}
\crefname{Corollary}{Corollary}{Corollaries}
\Crefname{Corollary}{Corollary}{Corollaries}
\crefname{Definition}{Definition}{Definitions}
\Crefname{Definition}{Definition}{Definitions}
\crefname{Remark}{Remark}{Remarks}
\Crefname{Remark}{Remark}{Remarks}

\crefformat{equation}{(#2#1#3)}
\Crefformat{equation}{(#2#1#3)}
\crefrangeformat{equation}{(#3#1#4)--(#5#2#6)}
\Crefrangeformat{equation}{(#3#1#4)--(#5#2#6)}
\crefmultiformat{equation}{(#2#1#3)}{ and~(#2#1#3)}{, (#2#1#3)}{ and~(#2#1#3)}
\Crefmultiformat{equation}{(#2#1#3)}{ and~(#2#1#3)}{, (#2#1#3)}{ and~(#2#1#3)}

\crefformat{enumi}{(#2#1#3)}
\Crefformat{enumi}{(#2#1#3)}
\crefformat{enumii}{(#2#1#3)}
\Crefformat{enumii}{(#2#1#3)}
\crefformat{enumiii}{(#2#1#3)}
\Crefformat{enumiii}{(#2#1#3)}
\crefformat{enumiv}{(#2#1#3)}
\Crefformat{enumiv}{(#2#1#3)}

\DeclareMathOperator{\tr}{tr}
\DeclareMathOperator{\Vol}{Vol}

\DeclareMathOperator{\inj}{inj}

\DeclareMathOperator{\End}{End}

\DeclareMathOperator{\Ker}{Ker}

\DeclareMathOperator{\Dom}{Dom}

\newcommand{\R}{{\mathbb R}}

\renewcommand{\geq}{\geqslant}
\renewcommand{\leq}{\leqslant}

\newcommand{\Rm}{\mathrm{Rm}}
\newcommand{\Ric}{\mathrm{Ric}}

\providecommand{\hM}{\hat M}
\providecommand{\hg}{\hat g}
\providecommand{\hf}{\hat f}
\providecommand{\hb}{\hat b}
\providecommand{\hnu}{\hat\nu}
\providecommand{\hnd}{\hat\nabla}
\providecommand{\gz}{g^z}
\providecommand{\fz}{f^z}
\providecommand{\Id}{\operatorname{Id}}
\providecommand{\osc}{\operatorname{osc}}

\providecommand{\eps}{\epsilon}

\DeclareMathOperator{\ISD}{ISD}

\hypersetup{
  pdftitle={Paper II, arxiv version},
  pdfauthor={Yu Li, Longhuan Yan, and Wenjia Zhang},
  pdfsubject={Strong uniqueness of AC tangent flows},
  pdfkeywords={Ricci shrinkers, AC geometry, weighted analysis, Ricci flow}
}
\title{On the geometry of asymptotically conical \\Ricci shrinkers (II): strong uniqueness}
\author{Yu Li \and Longhuan Yan \and Wenjia Zhang}
\date{}

\begin{document}
\maketitle
\begin{abstract}
We prove strong uniqueness of tangent flows at the first singular time of a
closed Ricci flow when one tangent flow is generated by a complete smooth
asymptotically conical Ricci shrinker. The modified flow converges to this
shrinker in a fixed gauge, with quantitative estimates on expanding domains.
The main ingredients are a \L ojasiewicz--Simon inequality for Perelman's
entropy, based on the global weighted divergence-free gauge constructed in
the preceding paper, and estimates controlling the geometric radius under
Ricci flow.
\end{abstract}

\tableofcontents
\section{Introduction}\label{sec:introduction}

Tangent flows describe the geometry of a geometric flow near a singularity.
Compactness gives limits along sequences of parabolic rescalings, but the
resulting model may a priori depend on the sequence. Uniqueness identifies
one model at all sufficiently small scales and is a basic step toward
understanding the local geometry and structure of the singular set. A
stronger question asks whether the rescaled flow converges in one fixed
gauge, with a quantitative rate. Such convergence compares the geometry at
different scales through a single system of coordinates. We study these
questions for Ricci flow when one tangent flow is generated by a smooth
asymptotically conical (AC) Ricci shrinker.

A classical approach to uniqueness uses a \L ojasiewicz--Simon inequality
for a monotone functional. Simon \cite{simon1983asymptotics} extended the
finite-dimensional \L ojasiewicz inequality to analytic variational
problems by a Lyapunov--Schmidt reduction. One solves the equation in the
complement of the kernel of the linearized operator and obtains an
analytic function on the finite-dimensional kernel. The classical
inequality for this reduced function then controls the original
functional by its gradient. Together with the flow's energy identity,
this yields convergence and quantitative decay. Simon also proved
uniqueness of tangent cones of minimal varieties when one tangent cone
has multiplicity one and a smooth link. This method is particularly effective when the
analytic problem is posed on a compact manifold, where elliptic theory
provides the required Fredholm estimates after the geometric symmetries
have been removed.

Applications include the asymptotic analysis of harmonic maps into
analytic targets \cite{simon1983asymptotics}, Yang--Mills flow in
dimensions two and three \cite{rade1992yangmills}, and convergence rates
for the Yamabe flow \cite{carlotto2015slowly}.
Colding--Minicozzi \cite{colding2014tangent} used this approach to prove
uniqueness of tangent cones with smooth cross-sections for Ricci-flat
manifolds with Euclidean volume growth and for noncollapsed Einstein
limits. Their reduction takes place on the compact cross-section of the
cone. For Ricci flow, Sun--Wang \cite[Lemmas~2.2 and~3.1]{sun2015on}
established analytic dependence of the entropy minimizer and a
\L ojasiewicz inequality for Perelman's $\boldsymbol\mu$-functional near a
compact Ricci shrinker. Chan--Ma--Zhang \cite{chan2024closed} proved
uniqueness when one tangent flow is closed and smooth. In mean curvature
flow, Schulze \cite{schulze2014compact} proved uniqueness of compact,
smooth, multiplicity-one tangent flows by applying Simon's inequality to
Gaussian area.

Noncompact tangent flows require additional control at infinity.
Convergence on compact subsets does not place the whole rescaled flow in
a neighborhood where a global analytic inequality applies. Smallness in
a Gaussian integral norm also gives little pointwise control far from
the base point. Thus the choice of function spaces and the extension of
local geometric information to larger domains become part of the
uniqueness argument. The compact theory does not apply directly to a
general noncompact shrinker.

For cylindrical mean curvature tangent flows, Colding--Minicozzi
\cite{colding2015uniqueness} proved \L ojasiewicz inequalities directly,
using the explicit structure of the cylinder. Fang--Li
\cite{fangli2026strong} established strong uniqueness for standard round
cylindrical Ricci tangent flows and their quotients. They extended this
analysis to generalized cylinders $\mathbb R^k\times N$, where $N$ is a
closed Einstein manifold satisfying an obstruction-of-order-three
assumption and a spectral condition, and proved rectifiability of the
corresponding singular sets \cite{fangli2026generalized}. These proofs
derive pointed $\mathcal W$-entropy inequalities from the product
geometry and deformation estimates for the compact factor, and then
control the geometric radius to obtain convergence in a fixed gauge.
The comparison maps are
improved by successive gauge corrections, with the remaining weighted
divergence error retained in the estimates. This construction does not
provide a global exact weighted divergence free gauge. Our aim is
to recover a finite-dimensional analytic reduction for AC Ricci
shrinkers without an explicit description of their deformation spaces.

The work of Chodosh--Schulze \cite{chodosh2021conical} on
multiplicity-one AC mean curvature tangent flows is especially relevant.
They introduce weighted H\"older and Sobolev spaces that permit a
Lyapunov--Schmidt reduction, and develop an extension-of-closeness
argument to apply the resulting inequality along the flow. We use a
similar distinction between a global analytic inequality and the
geometric estimates needed for its application. There are two additional
issues for Ricci flow. Nearby hypersurfaces in their argument are
represented by normal graphs, whereas metric perturbations here require
a global weighted divergence free gauge. Moreover, their analytic
functional is Gaussian area at a fixed center and scale. Perelman's
functional $\boldsymbol\mu(g)$ is defined by minimizing over normalized
potentials, so its analyticity requires existence, uniqueness, and
analytic dependence of the minimizer on the metric.

The AC hypothesis supplies the structure at infinity needed for these
constructions. The canonical ends admit controlled asymptotic
expansions, and the weighted estimates can track the coefficients and
remainders separately. They also give the control needed to extend a
local comparison along the flow and estimate the errors from cutting
off the perturbation. The role of the AC condition is therefore both
analytic and geometric.

The preceding paper \cite[Theorem~6.6]{LYZ-I} constructs a global
weighted divergence free gauge using a fixed Banach graph. At coordinate
order $10$, a small input $h\in E_{14}$ has a
$C^1$ coordinate parameter and a global diffeomorphism $F$ with
\[
 \tilde h=F^*(\hat g+h)-\hat g\in E_9,
 \qquad \hat D\tilde h=0.
\]
The original metric spaces retain tangential degree-zero cone variation,
the degree-minus-one terms, and ordinary weight-minus-two remainders.
In \cref{sec:finite-linear-estimates}, we derive the finite scalar
and tensor estimates from the weighted analysis of \cite{LYZ-I}.
These estimates supply the linear inputs for the graphs used here.
The potential pullback estimate is proved in
\cref{lem:finite-potential-pullback}. Simultaneous pullback preserves
the entropy functionals and the physical defect integral by change of variables.

We now state our first main result. Fix a complete connected nontrivial
AC Ricci shrinker $(\hM^n,\hg,\hf)$, $n\ge3$, with
\[
 \Ric_{\hg}+\hnd^2\hf=\frac12\hg,\qquad
 (4\pi)^{-n/2}\int_{\hM}e^{-\hf}\,\mathrm dV_{\hg}=1.
\]
Here nontrivial means non-Gaussian. A Gaussian tangent flow corresponds
to a regular point, where tangent-flow uniqueness is automatic; see
\cite[Definition~7.14 and Theorem~7.15]{fangli2026structure}.
For a metric--potential pair, write
\[
 \mathrm dV_f=(4\pi)^{-n/2}e^{-f}\,\mathrm dV_g,\qquad
 \mathcal W(g,f)=\int_{\hM}
 \bigl(\mathrm R_g+|\nabla^gf|_g^2+f-n\bigr)\,\mathrm dV_f.
\]
The scale is fixed to be $1$. The metric expansion space $E_{14}$
and the ordinary weighted potential space $C^{3,1/4}_1$ are defined
in Section~\ref{sec:preliminaries}.

\begin{Theorem}\label{thm:intro-pair-LS}
There exist $\epsilon,C>0$ and $\vartheta\in(0,\frac12]$ such
that every Riemannian metric $g$ and potential $f$ satisfying
\[
 g-\hg\in E_{14},\qquad f-\hf\in C^{3,1/4}_1,\qquad
 \int_{\hM}\mathrm dV_f=1,\qquad
 \|g-\hg\|_{E_{14}}+\|f-\hf\|_{C^{3,1/4}_1}<\epsilon
\]
obey
\begin{equation}\label{eq:intro-pair-LS}
 \bigl|\mathcal W(g,f)-\mathcal W(\hg,\hf)\bigr|^{1-\vartheta}
 \le C\left(\int_{\hM}
 \left|\Ric_g+\nabla_g^2f-\frac12g\right|_g^2
 \,\mathrm dV_f\right)^{1/2}.
\end{equation}
\end{Theorem}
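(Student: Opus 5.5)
The plan is to reduce the pair inequality \eqref{eq:intro-pair-LS} to a \L ojasiewicz--Simon inequality for Perelman's entropy on a finite-dimensional slice, using the global weighted divergence-free gauge of \cite[Theorem~6.6]{LYZ-I}.

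\textbf{Step 1: gauge fixing.} Given $(g,f)$, write $h=g-\hg\in E_{14}$ and apply the gauge theorem to obtain a diffeomorphism $F$ with $\tilde h=F^*g-\hg\in E_9$ and $\hat D\tilde h=0$, with $\|\tilde h\|_{E_9}\le C\|h\|_{E_{14}}$. Replace $(g,f)$ by $(F^*g,F^*f)$. By \cref{lem:finite-potential-pullback}, $F^*f-\hf$ remains small in a slightly weaker potential space. Simultaneous pullback preserves $\mathcal W$, the normalization $\int\mathrm dV_f=1$, and the defect integral on the right-hand side. So we may assume $g$ lies in the weighted divergence-free slice.

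\textbf{Step 2: reduction to the minimizer.} Let $f_g$ denote the minimizer defining $\boldsymbol\mu(g)$. Near $\hg$ in the slice, $f_g$ should exist, be unique, and depend analytically on $g$. This follows from the implicit function theorem applied to the Euler--Lagrange equation
\[
 2\Delta f-|\nabla f|^2+\mathrm R+f-n=\boldsymbol\mu,
\]
in the weighted spaces of \cref{sec:finite-linear-estimates}. The linearization at $\hf$ is $2(\Delta_{\hf}-\tfrac12\cdot\,)$ up to constants, which is invertible on the normalized complement.

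Then split
\[
 \mathcal W(g,f)-\mathcal W(\hg,\hf)
 =\bigl[\mathcal W(g,f)-\boldsymbol\mu(g)\bigr]+\bigl[\boldsymbol\mu(g)-\boldsymbol\mu(\hg)\bigr].
\]
For the first bracket, second-order expansion in $f-f_g$ gives $0\le\mathcal W(g,f)-\boldsymbol\mu(g)\le C\|f-f_g\|^2$. The $f$-variation of $\mathcal W$ is the trace of the soliton tensor $\Ric+\nabla^2f-\tfrac12g$ plus a term involving $f$. A coercivity estimate for the weighted Laplacian then bounds $\|f-f_g\|_{L^2_f}$ by the defect. Since the exponent satisfies $1-\vartheta\ge\tfrac12$ and the quantities are small, this bracket is controlled by the right-hand side.

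\textbf{Step 3: \L ojasiewicz inequality for $\boldsymbol\mu$ on the slice.} The gradient of $\boldsymbol\mu$ at $g$ is $-(\Ric_g+\nabla^2f_g-\tfrac12g)\,e^{-f_g}$. Its linearization on divergence-free tensors is the weighted Lichnerowicz operator $L=\tfrac12\Delta_{\hf}+\Rm*$. Using the weighted Fredholm theory for $E_9$-type spaces from \cite{LYZ-I}, $L$ has finite-dimensional kernel $K$ and is invertible on a complement. The argument then runs in three parts:
\begin{itemize}
\item Perform a Lyapunov--Schmidt reduction: solve the complementary component of the gradient equation analytically in the $K$-parameter.
\item Apply the classical \L ojasiewicz inequality to the resulting analytic function on $K$.
\item Transfer the inequality back in Simon's standard way, giving $|\boldsymbol\mu(g)-\boldsymbol\mu(\hg)|^{1-\vartheta}\le C\|\Ric_g+\nabla^2f_g-\tfrac12g\|_{L^2_{f_g}}$.
\end{itemize}
Finally, replace $f_g$ by $f$ on the right-hand side. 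The difference of the two soliton tensors is $\nabla^2(f-f_g)$, and its $L^2$ size is again controlled by the defect via the Step 2 estimate.

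\textbf{Main obstacle.} I expect the hardest step to be the Lyapunov--Schmidt step in weighted spaces. The gradient is only controlled in a Gaussian $L^2$ norm, while the analytic setting uses Hölder-type expansion spaces. Simon's comparison therefore needs an estimate $\|\pi_{K^\perp}\tilde h\|\lesssim\|\nabla\boldsymbol\mu\|_{L^2_f}+$ (higher order) that mixes the two norms. I would first handle this by interpolation: the smallness in $E_9$ absorbs the loss from the Gaussian weight. I would also exploit the fact that elements of $K$ have controlled AC asymptotics, so that the kernel projection is bounded from $L^2_f$.
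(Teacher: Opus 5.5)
Your overall architecture matches the paper's: slice the metric, split $\mathcal W(g,f)-\mathcal W(\hg,\hf)=[\mathcal W(g,f)-\boldsymbol\mu(g)]+[\boldsymbol\mu(g)-\boldsymbol\mu(\hg)]$, and prove a \L ojasiewicz--Simon inequality for $\boldsymbol\mu$ by Lyapunov--Schmidt reduction. The reduction from the pair to the metric inequality, however, has a genuine gap.

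\textbf{The pair-to-metric reduction.} With $u=f-f_g$ you have $\Phi(g,f)=\Phi(g)-\nabla_g^2u$. The triangle inequality therefore gives only $\|\Phi(g)\|\le\|\Phi(g,f)\|+\|\nabla_g^2u\|$, and you need $\nabla_g^2u$ controlled by $\Phi(g,f)$ alone. Your Step~2 does not supply this, for two reasons:
\begin{itemize}
\item The Euler--Lagrange expression of $\mathcal W(g,\cdot)$ at $f$ is $-2\tr_g\Phi(g,f)-(\mathrm R_g+|\nabla_gf|_g^2-f)$. The second term is not pointwise controlled by $\Phi(g,f)$; only its differential is, through $\delta\Phi$ and $\Phi(\nabla f,\cdot)$ with $|\nabla f|$ growing linearly.
\item A bound on $\|u\|_{L^2}$ does not bound $\nabla^2u$.
\end{itemize}
As written, controlling $\nabla_g^2u$ brings $\Phi(g)$ back, so Steps~2 and~3 are circular.

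The missing idea is orthogonality. The weighted Bianchi identity $\delta^g_{f_g}\Phi(g)=0$ gives $\int_M\langle\Phi(g),\nabla_g^2u\rangle\,\mathrm dV_{f_g}=0$, hence
\[
 \|\Phi(g,f)\|_{L^2(\mathrm dV_{f_g})}^2=\|\Phi(g)\|_{L^2(\mathrm dV_{f_g})}^2+\|\nabla_g^2u\|_{L^2(\mathrm dV_{f_g})}^2,
\]
which bounds both pieces at once. You then still need three further ingredients:
\begin{itemize}
\item the one-form spectral gap (no parallel one-forms on a nontrivial AC shrinker, persisting to $(g,f_g)$), to pass from $\nabla^2u$ to $\nabla u$;
\item the exact bound $0\le\mathcal W(g,f)-\boldsymbol\mu(g)\le\int_M|\nabla u|^2\,\mathrm dV_f$, obtained from the Euler--Lagrange equation and Jensen;
\item H\"older's inequality to move between $\mathrm dV_f$ and $\mathrm dV_{f_g}$.
\end{itemize}
Since $u$ may grow linearly, these two measures are not comparable, so your loss-free bound $C\|f-f_g\|^2$ is not available. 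The paper pays for this by choosing $1-\vartheta_0<b<1$ and $1/(2b)<a<1$ and lowering the exponent.

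\textbf{The metric inequality.} In Step~3 you treat $\Phi(g)$ as the gradient on the slice. But $\Phi(g)$ is divergence-free for $D_g=\delta^g_{f_g}$, not for $\hat D$, so it does not lie in $\ker\hat D$. Also $\mathrm dV_{f_g}$ is not uniformly comparable to $\mathrm dV_{\hf}$, again because $f_g-\hf$ grows linearly. These two problems are what \cref{sec:appendix-weakened-gaussian} handles:
\begin{itemize}
\item The reduction is run on the analytic fixed-target quasi-gradient $\mathscr Q_\kappa(h)=\Pi_\kappa\Phi(g)$, the $L^2_\kappa$-projection onto $\ker\hat D$ with weakened weight $e^{\kappa\hf}$. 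Its weighted H\"older bounds must be proved, as must the reconstruction $\Phi(g)=(I-\mathcal C_\kappa(\hat D-D_g))^{-1}\Pi_\kappa\Phi(g)$.
\item The normal gap, the derivative estimate, and the quadratic error $|\boldsymbol\mu(\hg+h)-\boldsymbol\mu(\hg+c_\kappa)|\le C\|\mathscr Q_{\kappa'}(h)\|_{L^2_{\kappa'}}^2$ are all obtained in the fixed norm $L^2_{\kappa'}$, which dominates every $\mathrm dV_{f_{g_t}}$.
\item Only at the end does H\"older, using the a priori polynomial bound on $\Phi(g)$, convert to $L^2(\mathrm dV_{f_g})$. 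This costs an exponent $a\in(1-\theta,1-\kappa')$ and forces $\kappa'<\theta$.
\end{itemize}
Your ``interpolation absorbs the loss'' addresses only this last step, not the need to run the whole reduction in the stronger weighted norm.

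\textbf{Further corrections.}
\begin{itemize}
\item The Fredholm setting is not an $E_9$-type space. The paper uses the inverse-image graph $\mathscr H=\ISD\oplus\mathscr H'$ over the target $C^{6,1/4}_{-1/2}$. Analyticity of the defect there needs the radial slice identities to control contractions with the growing $\mathbf X$.
\item The linearization of the Euler--Lagrange equation is $2(\hat\Delta_{\hf}+\tfrac12)$, not $2(\hat\Delta_{\hf}-\tfrac12)$. It is invertible outright on a nontrivial shrinker.
\item The implicit function theorem gives only local uniqueness. Showing that the actual minimizer exists and lies in that neighborhood is the separate global argument of \cref{sec:asymptotic-estimates-on-minimizer}.
\end{itemize}
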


This is proved in Theorem~\ref{thm:global-pair-LS}. The estimate is global
over $\hM$, with smallness measured in the finite
$E_{14}$ and $C^{3,1/4}_1$ norms. No integrability or nondegeneracy assumption is imposed on the
infinitesimal solitonic deformation space. The potential $f$ is any
normalized potential in this neighborhood, not necessarily the
minimizer $f_g$ of $\mathcal W(g,\cdot)$. This distinction is needed in
the application to Ricci flow, where $f$ is a conjugate heat-kernel
potential. The potential space allows linear growth, as a bounded radial
displacement in the gauge can produce a linear potential difference.

The analytic argument uses two fixed inverse-image Banach graphs.
On the sliced metric graph, the radial identities express contractions
with $\hat\nabla\hat f$ in terms of divergence and derivatives of the
metric perturbation. These identities close the nonlinear scalar
Euler--Lagrange equation, including its leading coefficient, on the
scalar graph. The analytic Banach implicit function theorem constructs
the normalized branch. Variational compactness, exterior estimates,
and a one-form spectral gap identify it with the unique entropy
minimizer. The scalar branch and its entropy are analytic on the
sliced graph; the finite coordinate slice transfers existence and
uniqueness to the original metric classes.

The tensor graph is the Gaussian inverse image of a negative fractional
weighted H\"older target, with the infinitesimal solitonic deformation
space retained as a finite-dimensional summand. Whole-field estimates
and the same radial identities make the projected soliton defect
analytic on this graph. The ordinary analytic implicit function theorem
solves its normal equation and gives an analytic finite-dimensional
center. This construction permits a nonzero, nonintegrable deformation
space.

Since potential differences may grow linearly, the moving Gaussian
measures need not be uniformly equivalent. Section~\ref{sec:appendix-weakened-gaussian}
retains fixed weakened Gaussian weights, the weighted divergence
projection, and an $L^2$ reconstruction estimate. These compare nearby
metrics to the analytic center and yield a quadratic entropy error.
The finite-dimensional \L ojasiewicz inequality and physical-measure
interpolation give the metric inequality, which is then extended to
arbitrary normalized potentials. Only the original metric norm $E_{14}$
and a finite weighted potential norm need to be small.

We next describe the application to tangent flows. After translating
and rescaling time, let $(M^n,g(t))_{-2\le t<0}$ be a smooth closed
connected Ricci flow whose first singular time is $0$, with entropy
bounded below by $-Y$. Let $(Z,d_Z,\mathfrak t)$ be its spacetime
completion in the sense of Fang--Li \cite{fangli2026structure}, and
write $Z_0=\mathfrak t^{-1}(0)$. For $z\in Z_0$, tangent flows are
pointed limits of
\[
 (Z,r_i^{-1}d_Z,z,r_i^{-2}\mathfrak t),\qquad r_i\searrow0.
\]
The setup and convergence are recalled in
Section~\ref{s10:setup}. Write the conjugate heat-kernel measure based
at $z$ as
\[
 \mathrm d\nu_{z;t}=(4\pi(-t))^{-n/2}e^{-f(t)}\,\mathrm dV_{g(t)}.
\]
Let $\varphi_t$ solve
$\partial_t\varphi_t=-\nabla^{g(t)}f(t)\circ\varphi_t$, with
$\varphi_{-1}=\Id$, and define the modified pair by
\begin{equation}\label{eq:intro-modified-flow}
 \gz(s)=e^s\varphi_{-e^{-s}}^*g(-e^{-s}),\qquad
 \fz(s)=\varphi_{-e^{-s}}^*f(-e^{-s}).
\end{equation}

\begin{Theorem}[Main result]\label{thm:intro-strong-uniqueness}
Suppose one tangent flow at $z$ is generated by $(\hM,\hg,\hf)$.
There are an open subset $U\subset M$ and a smooth diffeomorphism
$\psi:\hM\to U$, independent of $s$, such that
\begin{equation}\label{eq:intro-fixed-convergence}
 \psi^*(\gz(s),\fz(s))\longrightarrow(\hg,\hf)
 \quad\text{in }C^\infty_{\mathrm{loc}}(\hM),\qquad
 \psi^*\mathrm dV_{\fz(s)}=\mathrm dV_{\hat f}.
\end{equation}
Let $\vartheta\in(0,\frac12)$ be an admissible exponent in
Theorem~\ref{thm:intro-pair-LS}. For every integer $m\ge0$ and
\[
 0<\gamma<\frac{\vartheta}{1-2\vartheta},\qquad 0<c<2\gamma,
\]
there are constants $C<\infty$ and $\bar s<\infty$ such that, for every
integer $j\ge\bar s$ and all $s\ge j$,
\begin{equation}\label{eq:intro-convergence-rate}
 [\psi^*\gz(s)-\hg]_m+[\psi^*\fz(s)-\hf]_m
 \le Ce^{\hf/2}(1+s)^{-\gamma}
 \quad\text{on }\{\hf\le c\log j\}.
\end{equation}
Here $[u]_m=\sum_{a=0}^m|\hnd^au|_{\hg}$. The constants may depend on
the reference shrinker, $Y$, and the displayed parameters; $\bar s$ may
also depend on the flow and $z$.
\end{Theorem}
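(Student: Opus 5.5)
The plan follows the Chodosh--Schulze scheme: a global \L ojasiewicz inequality (\cref{thm:intro-pair-LS}) combined with an extension-of-closeness argument. We work with the modified pair $(\gz(s),\fz(s))$ of \eqref{eq:intro-modified-flow}. Along it Perelman's pointed entropy $\mathcal W(s)=\mathcal W(\gz(s),\fz(s))$ is nonincreasing, and
\[
 -\frac{d}{ds}\mathcal W(s)=2\int\Bigl|\Ric_{\gz}+\nabla^2\fz-\tfrac12\gz\Bigr|^2\,\mathrm dV_{\fz}.
\]
It converges to $\mathcal W(\hg,\hf)$ as $s\to\infty$, since the entropy is a tangent-flow invariant and one tangent flow is $(\hM,\hg,\hf)$. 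Since $\varphi_t$ is generated by $-\nabla f$, the measures $\mathrm dV_{\fz(s)}$ are preserved under the pullback by $\varphi$. A fixed $\psi$ onto a large domain $U$ will therefore satisfy $\psi^*\mathrm dV_{\fz(s)}=\mathrm dV_{\hf}$, once $\psi$ is chosen at one time and transported by the modified flow.

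\textbf{Step 1 (initial closeness).} Along the sequence $r_i$ realizing the AC tangent flow, the Fang--Li structure theory gives smooth convergence on compact sets $\{\hf\le L\}$ for times $s\in[s_i,s_i+T]$. We then extend this to closeness on $\{\hf\le c\log s\}$. The AC asymptotics and the geometric-radius estimates under Ricci flow (pseudolocality-type control on the conical end, propagated from the smooth annular region outward along radial rescalings) give this extension. After a cutoff at $\hf\sim c\log s$ and an application of the gauge of \cite[Theorem~6.6]{LYZ-I}, the cut-off pair lies in the $\epsilon$-neighborhood of \cref{thm:intro-pair-LS}. The cutoff errors are of size $e^{-\hf/2}$-weighted Gaussian tails, so they are $O(s^{-c/2})$.

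\textbf{Step 2 (\L ojasiewicz decay).} On any interval $[s_0,s_1]$ where closeness holds, apply \eqref{eq:intro-pair-LS} to the cut-off pair. We absorb the cutoff error into the defect, which is possible because $c<2\gamma$ is arranged so that these tails are dominated by the target rate. This yields $-\frac d{ds}(\mathcal W-\hat{\mathcal W})\ge c'(\mathcal W-\hat{\mathcal W})^{2-2\vartheta}-Cs^{-N}$. Integrating gives $\mathcal W-\hat{\mathcal W}\lesssim s^{-1/(1-2\vartheta)}$. The standard Simon trick, differentiating $(\mathcal W-\hat{\mathcal W})^{\vartheta}$, then gives
\[
 \int_{s}^{\infty}\Bigl\|\Ric+\nabla^2 f-\tfrac12 g\Bigr\|_{L^2(\mathrm dV_f)}\,ds'\lesssim s^{-\gamma},
\]
for every $\gamma<\vartheta/(1-2\vartheta)$. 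Since in the fixed gauge $\partial_s(\psi^*\gz)=-2(\Ric+\nabla^2\fz-\frac12\gz)$, this $L^1$-in-time bound controls the total $L^2(\mathrm dV_{\hf})$ drift of $\psi^*\gz(s)$ after $s$. Interior parabolic estimates upgrade the $L^2$ drift to pointwise $C^m$ bounds with weight $e^{\hf/2}$, because the Gaussian density is $e^{-\hf}$. The same argument handles $\fz$ through its conjugate heat equation.

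\textbf{Step 3 (extension of closeness).} Suppose closeness holds on $[j,S]$ with the rate \eqref{eq:intro-convergence-rate}. Step 2 improves the constant on $\{\hf\le c\log j\}$. Step 1's geometric-radius control then re-extends closeness to slightly beyond $S$. A continuity argument gives $S=\infty$, uniformly for all integers $j\ge\bar s$. In particular the drift is summable, so $\psi^*(\gz,\fz)$ has a limit in $C^\infty_{\mathrm{loc}}$. That limit is a shrinker close to $\hg$ with the same entropy, and it equals $(\hg,\hf)$ because it agrees with the subsequential limit. This yields \eqref{eq:intro-fixed-convergence}.

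The main obstacle is Step 1/3: showing that the rescaled flow stays close to $\hg$ on the expanding region $\{\hf\le c\log s\}$, and not merely on compact sets. This pointwise control far out is exactly what a Gaussian-integral smallness cannot give. The first attempt will be to use the AC end: closeness on an annulus $\{L\le\hf\le 2L\}$ together with the cone structure and radial self-similarity of the modified flow propagates the geometric radius outward logarithmically in $s$. The exponent $c<2\gamma$ balances this propagation against the Gaussian weight loss.
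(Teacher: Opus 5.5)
There is a genuine gap, located at the obstacle you flag, and your proposed resolution goes the wrong way.

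\textbf{The closeness radius is far too small.} You propagate closeness only to $\{\hat f\le c\log s\}$, i.e.\ to radius $R\sim\sqrt{\log s}$. The cutoff error in the \L ojasiewicz inequality is then $e^{-R^2/8}\approx s^{-c/2}$, as you note yourself. So what you actually obtain is $E^{1-\vartheta}\le C(\mathcal D+s^{-c/2})$, not a forcing term $Cs^{-N}$. In the two-set length argument, the set where $\mathcal D\le s^{-c/2}$ contributes $\int_s^\infty u^{-c/2}\,du$. This diverges for $c\le2$, and the theorem allows every $c<2\gamma$, which is below $1$ as soon as $\vartheta\le1/4$. Even when it converges it is of order $s^{1-c/2}$, slower than $s^{-\gamma}$ because $c/2<\gamma$. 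So neither finite length, nor fixed-gauge convergence, nor the rate follows, and $c<2\gamma$ cannot absorb this error.

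The missing idea is Proposition~\ref{s10:Cpersistence}: closeness on a \emph{fixed} compact core at high order on $[s_0,S)$ forces $r_C(s)\ge c_0Ae^{(s-s_0)/2}$. At time $s$, the chart places at radius $r$ the core level $a(r)\in[A,4A]$ taken at the earlier time $u_s(r)=s-2\log(r/a(r))$. Forward pseudolocality controls that annulus from $u$ to $s$, and the conical leading term cancels exactly under $e^{s-u}a^2=r^2$. The cutoff error becomes $\exp(-cA^2e^{s-s_0})$, and the finite-length bound $\int_{s_1}^S\mathcal D^a\le CE(s_1)^\beta+C\exp(-c_aA^2e^W)$ closes. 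The continuity argument then runs on the fixed core (Corollary~\ref{s10:noexit}), not on an expanding region. The sets $\{\hat f\le c\log j\}$ enter only at the end. There the pointwise bound $\sum_{i\le m}|\nabla^i\Phi|\le Ce^{(f+C_*)/2}\mathcal D^a$ is integrated in the fixed gauge, and $c<2\gamma$ makes $e^{\hat f/2}(1+s)^{-\gamma}\lesssim j^{c/2-\gamma}$ small, which is what closes that bootstrap.

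\textbf{Cutting off does not land in the \L ojasiewicz neighborhood.} Even with a large region of metric closeness, ``cut off and apply the slice of \cite[Theorem~6.6]{LYZ-I}'' does not put the pair into the neighborhood of Theorem~\ref{thm:intro-pair-LS}. A compactly supported cutoff of a $\sigma$-small $C^q_0$ perturbation at radius $R$ lies entirely in the weight $-2$ remainder of $E_{14}$, with norm of order $\sigma R^2$. Heat-kernel bounds only give $|f-r^2/4|\le\eta r^2+C$, far from $C^{3,1/4}_1$-smallness.

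The paper therefore uses a second radius, the entropy radius $r_E$ with $\mathcal D=e^{-r_E^2/8}$. Below $\min\{r_C,r_E\}$, interpolation makes the defect pointwise exponentially small. This permits potential coordinates with $f\circ P=\hat f$ exactly, and the level-metric ODE then splits $g-\hat g=b^2h_B+w_B$ with $w_B$ of weight $-2$. The extension keeps the cone variation $h_B$ outside (Lemma~\ref{s10:completion}). Only then does one get the pointed inequality $|\mathcal W-\boldsymbol\mu(\hat g)|^{1-\theta}\le C\mathcal D+Ce^{-r_C^2/8}$, with the case $r_E\le r_C$ absorbed into $\mathcal D$.

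\textbf{Construction of $\psi$.} A $\psi$ ``chosen at one time'' is defined only on a compact domain. The paper builds $U$ as a union of sublevels stabilized by velocity barriers. It obtains $\psi$ as the limit of the tangent-sequence embeddings, which is an injective local isometry and hence a covering; unit mass of the stationary density then forces $U$ to be connected.
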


The theorem is proved in Theorem~\ref{s10:main}. The comparison map is
fixed on the whole reference manifold. Thus the conclusion gives
convergence in a single gauge, with polynomial decay in modified time
$s=-\log(-t)$. The condition $c<2\gamma$ makes the right-hand side of
\eqref{eq:intro-convergence-rate} tend to zero uniformly on the displayed
expanding domains. For round cylindrical tangent flows,
Fang--Li \cite[Theorem~1.2]{fangli2026strong} obtain, for every
sufficiently small $\epsilon>0$, the weighted rate
$Ce^{\bar f/2}s^{-1+\epsilon}$, which gives almost inverse-linear decay
in modified time on each fixed compact set. In the AC setting, the
admissible decay exponent $\gamma$ depends on the \L ojasiewicz--Simon
exponent $\vartheta$ of the reference shrinker through
$\gamma<\vartheta/(1-2\vartheta)$.
Identifying the completed flow gives the following consequences,
proved in Corollaries~\ref{s10:uniqueness}
and~\ref{s10:iso-main}.

\begin{Corollary}\label{cor:intro-uniqueness-isolation}
Under the assumptions of Theorem~\ref{thm:intro-strong-uniqueness}, every
tangent flow at $z$ is isometric to the completed canonical flow of
$(\hM,\hg,\hf)$ as a pointed noncollapsed Ricci flow limit space.
Moreover, let $\mathcal S\subset Z_0$ denote the singular set of $Z$.
There is $r_0>0$ such that
\[
 B_{d_Z}(z,r_0)\cap\mathcal S=\{z\}.
\]
\end{Corollary}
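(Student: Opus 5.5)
The corollary should follow from the fixed-gauge convergence in Theorem~\ref{thm:intro-strong-uniqueness} in two steps: uniqueness first, then isolation.

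\emph{Uniqueness.} Let $r_i\searrow0$ be any sequence along which a tangent flow at $z$ exists, and set $s_i=-\log r_i^2$. Parabolic rescaling by $r_i$ corresponds to the modified flow \eqref{eq:intro-modified-flow} on the window $s\in[s_i+\log(1/T),s_i+\log(1/T')]$ for rescaled times $t\in[-T,-T']$, up to the diffeomorphisms $\varphi_t$. Because $\psi$ is independent of $s$, \eqref{eq:intro-fixed-convergence} gives $\psi^*\gz(s)\to\hg$ in $C^\infty_{\mathrm{loc}}$, uniformly for $s$ in such windows as $i\to\infty$. Undoing the modification by $\varphi_t$ and the factor $e^s$, the regular parts of the rescaled flows therefore converge smoothly on compact subsets of $\hM\times(-\infty,0)$ to the canonical flow $\hg(t)=(-t)\phi_t^*\hg$ of the shrinker. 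Here $\phi_t$ is generated by $\hnd\hf/(-t)$, and the conjugate heat kernel based at $z$ converges to $(4\pi(-t))^{-n/2}e^{-\hf}$.

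To upgrade this to an isometry of pointed limit spaces, I would use the structure theory of \cite{fangli2026structure} from Section~\ref{s10:setup}. A noncollapsed Ricci flow limit space is determined by its regular part together with the $d^*$-type metric, which is built from conjugate heat kernels. The regular part is open and dense and has full measure. So the tangent flow along $r_i$ is the metric completion of the limit regular part, and this limit is the canonical flow. Since every sequence gives the same limit, every tangent flow is isometric to the completed canonical flow of $(\hM,\hg,\hf)$. The same argument applies to any subsequence, so no choice of $r_i$ matters.

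\emph{Isolation.} The AC shrinker is smooth, so in the completed canonical flow the time-zero slice is the asymptotic cone $C(\Sigma)$, and its only singular point is the vertex. I will argue by contradiction. Suppose $z_k\in\mathcal S$ with $z_k\ne z$ and $d_Z(z_k,z)\to0$, and set $r_k=d_Z(z_k,z)$. Rescaling by $r_k$, the points $z_k$ sit at distance $1$ from $z$ at time $0$. Along a subsequence they converge to a point $w$ of the time-zero slice of the tangent flow with $d(w,o)=1$. By uniqueness this slice is $C(\Sigma)$, so $w$ lies on the smooth cone away from the vertex. The quantitative convergence \eqref{eq:intro-convergence-rate} on the expanding domains $\{\hf\le c\log j\}$ gives curvature bounds on the rescaled flow up to time $0$ near $w$. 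More precisely, I would use the AC structure, under which scaled curvature decays like $r^{-2}$ at distance $r$, together with the uniform-in-$s$ closeness in the fixed gauge. The aim is a bound $|\Rm|\le C$ on a backward parabolic neighbourhood of definite size around the preimages of $w$, uniformly in $k$. Pseudolocality, or the $\epsilon$-regularity theorem of \cite{fangli2026structure}, then shows that $z_k$ is regular for large $k$, which is a contradiction.

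The main obstacle is this last step. The curvature bound must hold at points with $\hf\sim d^2/4$ that grow as time approaches $0$. The expanding domains $\{\hf\le c\log j\}$ grow only logarithmically, and the rescaled neighbourhood of $z_k$ lies at bounded distance at time $0$. In modified time, a point at bounded distance at $t$ near $0$ corresponds to $\hf\approx e^{s}/4$, which is not logarithmic in $s$. My first attempt would not estimate at $t\to0$ directly. Instead, at a fixed rescaled time $t=-\delta$ with $\delta$ small, the point lies at $\hf\approx1/(4\delta)$, which is well inside the controlled region, so closeness there is available. I would then use pseudolocality forward from $t=-\delta$ on a scale $\sqrt\delta$ ball around the near-conical region to propagate the bounds to time $0$.
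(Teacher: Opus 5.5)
\textbf{Uniqueness part.} The fixed-gauge convergence, composed with $\varphi$, the factor $e^s$ and $\Psi_t$, and passed through the regular convergence charts of \cite[Theorem~5.2]{fangli2026structure}, only produces a time-preserving \emph{local} isometry $J$ from the canonical flow on $\hM\times(-\infty,0)$ \emph{into} the regular part of the tangent flow. Your sentence ``this limit is the canonical flow'' also needs three things: each $J_t$ is injective, each $J_t$ is onto the regular slice, and the negative-time slices of the limit contain no singular points. Nothing you wrote rules out extra regular regions or a multi-sheeted image. ``Open, dense, full measure'' is a statement about the limit, not about the image of $J$.

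The paper closes this as follows. Regular slices are connected \cite[Proposition~7.9]{fangli2026structure}, and $\hg(t)$ is complete, so $J_t$ is a covering. Kernel convergence gives $J_t^*\mathrm d\nu_{\infty,t}=\mathrm d\hnu_t$ with both measures of mass one, so the covering has one sheet. Metric-flow slices are intrinsic completions of their regular parts \cite[Theorem~2.4(c)]{bamler2020structure}, so completeness leaves no extra negative-time points. Only then do kernel identification, \cite[Theorem~1.7(e)]{fangli2026structure} and density extend $J$ to the completions, with the center estimate sending $\hat z$ to $z_\infty$.

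\textbf{Isolation part.} Two things are missing. First, the claim that the time-zero slice is $C(\Sigma)$ with only the vertex singular concerns abstract terminal points of the completion $\hat Z$, so it must be proved. The paper obtains $\hat Z_0\setminus\{\hat z\}\subset\hat{\mathcal R}$ from three inputs: the smooth extension of distant ends to time zero \cite[Proposition~2.1]{kotschwar2015rigidity}; surjectivity of the endpoint map with core image at bounded distance from $\hat z$ \cite[Lemmas~5.1 and~5.4]{li2026kahlerlimits}; and soliton dilations fixing $\hat z$.

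Second, and decisively, consider your pseudolocality scheme. As you realized, it needs only $C^\infty_{\mathrm{loc}}$ closeness at a fixed rescaled time $-\delta$, not the rate \eqref{eq:intro-convergence-rate}. It gives curvature bounds up to time zero along particular worldlines, hence regularity of \emph{their} endpoints. But $z_k$ is known only through $z_k\to w$ in the pointed $d_Z$-convergence. You never show that $z_k$ is one of these endpoints, or that it inherits regularity from nearby ones. This transfer is a stability statement for regularity under $d_Z$-convergence.

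That stability statement is exactly what the paper uses instead of pseudolocality. Choose $\rho$ with $\mathcal N_{w}(\rho^2)>-\epsilon/2$, where $\epsilon$ is the constant of \cite[Proposition~7.7]{fangli2026structure}. After approximating $z_k$ by negative-time points, continuity of the pointed entropy \cite[Corollary~6.21 and Lemma~7.2]{fangli2026structure} gives $\mathcal N_{z_k}(\rho^2)>-\epsilon$ in the rescaled flows. Then $\epsilon$-regularity gives a positive curvature radius at $z_k$, a contradiction. With this argument, no curvature estimate near preimages of $w$ is needed at all.
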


The geometric part of the proof follows the radius strategy of
\cite{fangli2026strong}, adapted to conical ends. We define a geometric
radius by closeness on one time slice, and compare it with an entropy
radius determined by the soliton defect. Potential estimates and
matched product coordinates allow the local pair to be extended to a
complete normalized pair in the strong AC topology. The extension
retains a small variation of the asymptotic cone, and Gaussian tail
estimates control the cutoff errors. Applying
Theorem~\ref{thm:intro-pair-LS} then gives a pointed entropy inequality
for the original flow. Integrating the velocity estimates prevents exit
from the comparison neighborhood and yields convergence in a fixed gauge. The
weighted estimates give the rate on expanding potential sublevels.
Finally, the completed tangent flow has no terminal singular point
away from its vertex; uniqueness and regularity of limits then imply
the isolation assertion.

The paper is organized as follows. Section~\ref{sec:preliminaries}
collects the notation and results from \cite{LYZ-I}.
Section~\ref{sec:metric-LS-auxiliary} establishes analytic dependence of
the entropy minimizer. Section~\ref{sec:appendix-weakened-gaussian}
proves the estimates with weakened Gaussian weights used in the
Lyapunov--Schmidt reduction. Section~\ref{sec:finite-Banach-metric-LS}
proves the entropy inequalities, and
Section~\ref{sec:tangent-flow-uniqueness} establishes strong uniqueness
and isolation. Appendix~\ref{app:euclidean-model} contains the Euclidean
model estimates used in Section~\ref{sec:appendix-weakened-gaussian}.

\medskip
\noindent\textbf{Acknowledgements.}
Yu Li is supported by the National Key Research and Development Program
of China (Grant No.~2025YFA1018200), NSFC-12522105, and research funds
from the University of Science and Technology of China and the Chinese
Academy of Sciences.

\medskip
\noindent\textbf{AI declaration.}
This paper originated in a program begun by the authors in early 2025.
The authors developed the overall strategy and main ideas, together with
the corresponding arguments in this paper. Beginning in August 2026,
the authors used ChatGPT to assist with implementing some of the computations,
mathematical exposition, revision, and drafting, in particular in Sections 3
and 4. The authors take full responsibility for the mathematical content
and correctness.

\section{Preliminaries}\label{sec:preliminaries}
We collect the conventions, function spaces, and background geometry
from \cite{LYZ-I}, together with the intermediate-weight estimate
used below. All background spaces and operators are defined using a fixed
complete nontrivial AC Ricci shrinker $(M^n,\hat g,\hat f)$, unless another
metric or potential is displayed. Manifolds are connected, $n\geq3$, and
nontrivial means non-Gaussian. The weighted definitions allow $0<\alpha<1$; the nonlinear construction fixes $\alpha=1/4$.

\subsection{Notation and conventions}\label{sec:prelim-conventions}\label{sec:prelim-AC-geometry}
We follow \cite[Definitions~2.1--2.2]{LYZ-I}. The background normalization is
\begin{equation}\label{eq:prelim-shrinker-normalization}
 \Ric_{\hat g}+\hat\nabla^2\hat f=\frac12\hat g,
 \qquad (4\pi)^{-n/2}\int_Me^{-\hat f}\,\mathrm dV_{\hat g}=1.
\end{equation}
\phantomsection\label{idx:operator-conventions}
Write $T^{p,q}M=(TM)^{\otimes p}\otimes(T^*M)^{\otimes q}$.
The metric induces the tensor norms and connections; $\sharp$ and $\flat$
raise and lower indices with that metric. For a metric--potential pair
$(g,f)$, use the weighted inner product
$\langle u,v\rangle_f=\int_M\langle u,v\rangle_g\,\mathrm dV_f$,
where the measure is defined in \eqref{eq:prelim-entropy-measure} below.
For $u\in C^\infty(M,T^{p,q}M)$ with $q\geq1$, the weighted divergence
contracts the covariant derivative with the first covariant tensor slot:
\begin{equation}\label{eq:prelim-operator-conventions}
 (\delta_fu)^{i_1\cdots i_p}_{j_2\cdots j_q}
 =\nabla^a u^{i_1\cdots i_p}_{a j_2\cdots j_q}
  -(\nabla^af)u^{i_1\cdots i_p}_{a j_2\cdots j_q}.
\end{equation}
Thus $\delta_fu$ has type $(p,q-1)$, and $\nabla^a=g^{ab}\nabla_b$.
When $f$ is constant, we write $\delta$. Set
\[
 \Delta_f=\Delta-\nabla_{\nabla f},\qquad
 \Delta=\operatorname{tr}_g\nabla^2,\qquad
 \delta^*:=-\mathrm{Sym}\circ\nabla.
\]
With respect to $\langle\cdot,\cdot\rangle_f$, the formal adjoints of
$\delta_f$ on covariant tensors, differential forms, and symmetric tensors
are respectively $-\nabla$, $-\mathrm d$, and $\delta^*$.
Here $\mathrm{Sym}$ is normalized symmetrization; in particular,
$a\odot b=\frac12(a\otimes b+b\otimes a)$. For a one-form $\omega$,
\[
 (\delta^*\omega)_{ij}=-\frac12(\nabla_i\omega_j+\nabla_j\omega_i),
 \qquad \mathcal L_{\omega^\sharp}g=-2\delta^*\omega.
\]
The Fr\'echet differential is denoted by $\mathrm D$, and $\mathrm d$ denotes the exterior differential.
Also, $\delta_f\mathrm du=\Delta_fu$ for functions. The notation $S*T$
denotes a linear combination of metric contractions of $S\otimes T$;
$\tr_g h=g^{ij}h_{ij}$ denotes the metric trace of a covariant two-tensor,
and $\mathrm R_g$ denotes scalar curvature. Hats distinguish background
objects. Following \cite[(2.05) and (3.13)]{LYZ-I}, we set
\begin{equation}\label{eq:prelim-background-drift}
 \mathbf{X}:=\hat\nabla\hat f,\qquad
 \mathrm R_{\hat g}+|\mathbf{X}|_{\hat g}^2=\hat f-\boldsymbol{\mu}(\hat g),\qquad
 \boldsymbol{\rho}:=2\sqrt{1+\hat f-\boldsymbol{\mu}(\hat g)}.
\end{equation}

\phantomsection\label{idx:entropy-conventions}
The scale in Perelman's functionals is always $1$:
$\mathcal W(g,f)=\mathcal W(g,f,1)$ and $\boldsymbol{\mu}(g)=\boldsymbol{\mu}(g,1)$.
For a metric--potential pair, our weighted measures are
\begin{equation}\label{eq:prelim-entropy-measure}
 \mathrm dV_f:=(4\pi)^{-n/2}e^{-f}\mathrm dV_g,\qquad
 \mathrm dV_{\hat f}:=(4\pi)^{-n/2}e^{-\hat f}\mathrm dV_{\hat g}.
\end{equation}
A pair is normalized when $\int_M\mathrm dV_f=1$, and
\[
 \mathcal W(g,f)=\int_M
   \bigl(\mathrm R_g+|\nabla^gf|_g^2+f-n\bigr)\,\mathrm dV_f.
\]
The metric associated with $f$ is understood from the specified pair;
for a mixed pair whose metric is not clear from context, we write the
density explicitly.
The entropy $\boldsymbol{\mu}(g)$ is the infimum over normalized admissible potentials.
Whenever this infimum has a unique minimizer, it is denoted by $f_g$.
\phantomsection\label{idx:pair-defect}
The soliton defect is
\begin{equation}\label{eq:prelim-soliton-defect}
 \Phi(g,f):=\frac12g-\Ric_g-\nabla_g^2f.
\end{equation}
When $f_g$ exists, we write $\Phi(g):=\Phi(g,f_g)$ and use
$\mathrm dV_{f_g}$ for its normalized measure.
For $g=\hat g+h$, set
\begin{equation}\label{eq:prelim-metric-abbreviations}
 D_g:=\delta^g_{f_g},\qquad \hat D:=\hat\delta_{\hat f}.
\end{equation}
\phantomsection\label{idx:gaussian-measures}
For $0\leq\kappa<1$, define
\begin{equation}\label{eq:prelim-weakened-measures}
 Z_\kappa:=\int_Me^{\kappa\hat f}\,\mathrm dV_{\hat f},
 \qquad \mathrm dV_{\hat f,\kappa}:=Z_\kappa^{-1}e^{\kappa\hat f}\mathrm dV_{\hat f},
 \qquad L^2_\kappa:=L^2(\mathrm dV_{\hat f,\kappa}).
\end{equation}
For a tensor bundle $E$ and an integer $m\geq0$, let $H^m_\kappa(M,E)$
consist of sections with weak background covariant derivatives through
order $m$ and finite norm
\begin{equation*}
 \|u\|_{H^m_\kappa}^2
 :=\sum_{j=0}^m\int_M|\hat\nabla^ju|_{\hat g}^2\,
                  \mathrm dV_{\hat f,\kappa}.
\end{equation*}
In particular, $H^0_\kappa=L^2_\kappa$.
The tensor bundle is understood from context when omitted.

\begin{Definition}[Gaussian Sobolev spaces; {\cite[Definition~2.1]{LYZ-I}}]
\label{def:prelim-gaussian-spaces}
For a metric--potential pair $(g,f)$ and a tensor bundle $E$,
let $H^k_f(M,E)$ consist of sections with weak covariant derivatives
through order $k$ and finite norm
\[
 \|u\|_{H^k_f}^2
 :=\sum_{j=0}^k\int_M|\nabla^{g,j}u|_g^2\,\mathrm dV_f.
\]
We write $L^2_f=H^0_f$. On a domain $\Omega$, the notation
$H^k_{f,\mathrm{loc}}(\Omega,E)$ denotes local membership, and
$H^1_{f,0}(\Omega,E)$ is the closure of $C_c^\infty(\Omega,E)$
in $H^1_f(\Omega,E)$.
Background spaces use $(\hat g,\hat f)$.
Unless specified otherwise, orthogonal complements of background
spaces use the $L^2_{\hat f}$ inner product.
\end{Definition}

\phantomsection\label{idx:ISD}
Following \cite[(1.03)]{LYZ-I}, define
\begin{equation}\label{eq:prelim-ISD}
 \begin{aligned}
 \hat L&:=\frac12\hat\Delta_{\hat f}+\Rm_{\hat g},\\
 \ISD=\ISD(\hat g,\hat f)
 &:=\{k\in H^2_{\hat f}(M,S^2T^*M):\hat Lk=0,\ \hat Dk=0\}.
 \end{aligned}
\end{equation}
Here $\Rm_{\hat g}$ acts on symmetric two-tensors by curvature
contraction, with the convention of \cite{LYZ-I}.

\phantomsection\label{idx:AC-geometry}
Let $(\mathscr C(\Sigma),g_C)$ be the cone over a compact, possibly disconnected
link $(\Sigma^{n-1},g_\Sigma)$, with
$\mathscr C(\Sigma)=(0,\infty)\times\Sigma$ and
$g_C=\mathrm dr^2+r^2g_\Sigma$. We abbreviate the cone to $\mathscr C$.
Write $\nabla^C$ and $\Delta^C=\operatorname{tr}_{g_C}(\nabla^C)^2$
for its connection and rough Laplacian, and
$D_\lambda(r,\theta)=(\lambda r,\theta)$ for cone dilation.

\begin{Definition}[AC ends and degree; {\cite[Definitions~2.3--2.4]{LYZ-I}}]
\label{def:prelim-AC-degree}
A Riemannian manifold is asymptotic to $(\mathscr C,g_C)$ if an end
identification $\iota_0:(\rho_0,\infty)\times\Sigma\to M\setminus K_0$,
with $\rho_0>0$ and compact $K_0$, satisfies
\[
 \lambda^{-2}D_\lambda^*\iota_0^*g\longrightarrow g_C
 \quad\text{in }C^2_{\mathrm{loc}}(\mathscr C)\quad\text{as }\lambda\to\infty.
\]
For a tensor bundle $E=T^{p,q}M$, use the end chart to identify it with
$T^{p,q}\mathscr C$. Put $\Sigma_r=\{r\}\times\Sigma$ and
$E_\Sigma=T^{p,q}\mathscr C|_{\Sigma_1}$, viewed as a bundle over $\Sigma$.
Parallel transport along $s\mapsto(s,\theta)$ for $g_C$ gives the
fiberwise isometry
\[
 \boldsymbol{\tau}_r:E_\Sigma\longrightarrow T^{p,q}\mathscr C|_{\Sigma_r}.
\]
Both restrictions are regarded as bundles over $\Sigma$ using the product
identification. The radial extension is characterized by
$\boldsymbol{\tau}(a)|_{\Sigma_r}=\boldsymbol{\tau}_ra$ for a section $a$ of $E_\Sigma$.
A tensor $r^\gamma\boldsymbol{\tau}(a)$ is homogeneous of degree $\gamma$.
The extension includes radial components; for example,
$\boldsymbol{\tau}(a\,\mathrm dr+b)=a\,\mathrm dr+r\pi^*b$ for a link
function $a$ and a tangential link one-form $b$, where
$\pi:\mathscr C\to\Sigma$ is the projection $(r,\theta)\mapsto\theta$.
\end{Definition}

By \cite[Theorem~2.5 and Lemma~2.6]{LYZ-I}, there is a smooth end embedding
$\iota:(\underline{r},\infty)\times\Sigma\to M$, with compact complement after
restriction to every farther end, such that
\begin{equation}\label{eq:prelim-canonical-end}
 \iota^*\mathbf{X}=\frac r2\partial_r,\qquad
 |\nabla^{C,j}(\iota^*\hat g-g_C-2\Ric(g_C))|_{g_C}
 \leq C_jr^{-4-j}\quad(j\geq0).
\end{equation}
In this chart, the connection difference and the potential satisfy
\begin{equation}\label{eq:prelim-end-errors}
 |\nabla^{C,j}(\hat\nabla-\nabla^C)|_{g_C}=O(r^{-3-j}),\qquad
 \left|\nabla^{C,j}\left(\hat f-\boldsymbol{\mu}(\hat g)-\frac{r^2}{4}\right)\right|_{g_C}
 =O(r^{-2-j}).
\end{equation}
The canonical shrinker flow is $\hat g(t)=(-t)\Psi_t^*\hat g$, where
$\partial_t\Psi_t=(-t)^{-1}\mathbf{X}\circ\Psi_t$ and $\Psi_{-1}=\Id$.
On the end,
$\Psi_t(r,\theta)=(r/\sqrt{-t},\theta)$ whenever both points belong
to the chart. The metrics $\hat g(t)$ converge smoothly to $g_C$ on
compact subsets of the end as $t\uparrow0$.

We fix the standard AC data of \cite[Definition~3.8]{LYZ-I}.
Enlarge $\underline{r}$ so that the canonical chart extends across its initial
slice, and set $Q_\rho:=\iota((\rho,\infty)\times\Sigma)$ for $\rho\geq \underline{r}$.
Choose a smooth cutoff $\chi:M\to[0,1]$ with
$\operatorname{Supp}\chi\subset Q_{\underline{r}}$ and $\chi=1$ on $Q_{2\underline{r}}$,
and a compact core $K$ containing $M\setminus Q_{2\underline{r}}$. The cutoff has
$|\hat\nabla^j\chi|\leq C_j\underline{r}^{-j}$. Constants in weighted estimates
may depend on these fixed data, the tensor type, and the displayed
regularity and weight parameters. We identify the end with its image
under $\iota$ when writing cone tensors on $M$.
As in \cite[Lemma~4.11]{LYZ-I}, bars denote intrinsic link operators:
$\bar\nabla=\nabla^{g_\Sigma}$,
$\bar\Delta=\operatorname{tr}_{g_\Sigma}\bar\nabla^2$, and $\bar\delta$
and $\bar\delta^*$ use the conventions in
\cref{eq:prelim-operator-conventions} with the link metric and constant
potential. For a scalar link function $a$,
\begin{equation}\label{eq:prelim-cone-scalar-Laplacian}
 \Delta^C(r^\gamma a)
 =r^{\gamma-2}\bigl(\bar\Delta a+\gamma(\gamma+n-2)a\bigr).
\end{equation}

\subsection{Weighted H\"older spaces and estimates}\label{sec:prelim-weighted-spaces}\label{sec:prelim-elliptic}
On the canonical end put $t=\log(r/\underline{r})$, so $r=\underline{r}e^t$, and let
$\pi_\Sigma:[0,\infty)\times\Sigma\to\Sigma$ be the cylinder projection.
We use $E_\Sigma$ also for the pullback bundle $\pi_\Sigma^*E_\Sigma$.
Its fiber metric is induced by $g_C$ at $\Sigma_1$.
The fixed link-bundle connection is the cone-induced connection
\[
 \mathcal D_Ya
 :=\left.\nabla^C_{\widetilde Y}\boldsymbol{\tau}(a)\right|_{\Sigma_1},
\]
where $Y$ is tangent to $\Sigma$ and $\widetilde Y$ is its
$r$-independent lift to $\mathscr C$. On the cylinder we use its
pullback, ordinary differentiation in $t$, and the product metric
$\mathrm dt^2+g_\Sigma$. For $\beta\in\mathbb R$ and a section $u$
of $E$ on the end, define the section $U=\mathscr T_\beta u$ of this fixed bundle by
\begin{equation}\label{eq:prelim-cylindrical-representative}
 \begin{aligned}
 U(t,\theta)&:=(\mathscr T_\beta u)(t,\theta)
      =r^{-\beta}\boldsymbol{\tau}_r^{-1}\bigl(u|_{\Sigma_r}\bigr)(\theta),\\
 \|U\|_{C^{k,\alpha}_{\mathrm{cyl}}}
 &:=\sup_{T\geq0}\|U(T+\cdot,\cdot)\|_{C^{k,\alpha}([0,1]\times\Sigma,E_\Sigma)}.
 \end{aligned}
\end{equation}
Here $k\in\mathbb Z_{\geq0}$, and $U(T+\cdot,\cdot)$ means the section
$(s,\theta)\mapsto U(T+s,\theta)$ for $0\leq s\leq1$.
The same supremum defines $C^k_{\mathrm{cyl}}$ using ordinary $C^k$ norms.

\begin{Definition}[Weighted H\"older spaces; {\cite[Definition~3.9]{LYZ-I}}]
\label{def:prelim-weighted-Holder}
For $\beta\in\mathbb R$, $k\geq0$, and $\alpha\in[0,1]$, the space $C^{k,\alpha}_\beta(M,E)$
consists of locally $C^{k,\alpha}$ sections with
\begin{equation}\label{eq:prelim-weighted-norm}
 \|u\|_{C^{k,\alpha}_\beta}
 :=\|u\|_{C^{k,\alpha}(K)}+\|\mathscr T_\beta u\|_{C^{k,\alpha}_{\mathrm{cyl}}}<\infty.
\end{equation}
At $\alpha=0$ these are ordinary $C^k$ norms, and we write
$C^k_\beta=C^{k,0}_\beta$. At $\alpha=1$ the derivatives of order $k$
are Lipschitz. For fixed $\alpha\in(0,1)$,
set $C^\infty_\beta=\bigcap_{k\geq0}C^{k,\alpha}_\beta$.
\end{Definition}
\phantomsection\label{idx:weighted-O}
We write $O_{m+\alpha}(r^\beta)$ for a bound in $C^{m,\alpha}_\beta$.
In coordinate-component formulas this denotes the corresponding
fixed-link family norm, with mixed radial and angular derivatives
on normalized annuli.

For comparison with rescaled annuli, set $R_T=\underline{r}e^T$,
$\kappa_T(s,\theta)=\iota(R_Te^s,\theta)$, and
$g_T=R_T^{-2}\kappa_T^*\hat g$. Then, for $E=T^{p,q}M$,
\begin{equation}\label{eq:prelim-annular-norm}
 \|u\|_{C^{k,\alpha}_\beta}
 \asymp\|u\|_{C^{k,\alpha}(K)}
 +\sup_{T\geq0}\|R_T^{p-q-\beta}\kappa_T^*u\|_{C^{k,\alpha}([0,1]\times\Sigma),g_T}.
\end{equation}
This is \cite[(3.18)]{LYZ-I}. At integer order,
$\|u\|_{C^k_\beta}\asymp\sum_{j=0}^k\sup_M\boldsymbol{\rho}^{j-\beta}|\hat\nabla^ju|$.

By \cite[Lemmas~3.10--3.11]{LYZ-I}, covariant differentiation lowers the
weight by one, products and contractions add weights, and
$C^{k,\alpha}_\beta\hookrightarrow H^k_{\hat f}$ continuously.
Multiplication by $\boldsymbol{\rho}^{\beta'-\beta}$ is a bounded
isomorphism $C^{k,\alpha}_\beta\to C^{k,\alpha}_{\beta'}$.

\begin{Definition}[Critical graph spaces; {\cite[Definition~3.14 and Corollary~3.16]{LYZ-I}}]
\label{def:prelim-graph-spaces}
For $k\geq2$, the space $C^{k,\alpha}_{\beta,\mathbf{X}}(M,E)$ consists of locally
$C^{k,\alpha}$ tensors for which
\[
 u,\hat\nabla_{\mathbf{X}}u\in C^{k-2,\alpha}_\beta,\qquad
 \hat\nabla^{k-1}u,\hat\nabla^ku\in C^{0,\alpha}_{\beta+2-k}.
\]
The appropriate tensor bundles are understood. As in \cite{LYZ-I}, its norm is
\[
 \|u\|_{C^{k,\alpha}_{\beta,\mathbf{X}}}
 :=\|u\|_{C^{k-2,\alpha}_\beta}
  +\|\hat\nabla_{\mathbf{X}}u\|_{C^{k-2,\alpha}_\beta}
  +\|\hat\nabla^{k-1}u\|_{C^{0,\alpha}_{\beta+2-k}}
  +\|\hat\nabla^ku\|_{C^{0,\alpha}_{\beta+2-k}}.
\]
Equivalently, $u,\hat\nabla^2u,\hat\nabla_{\mathbf{X}}u\in C^{k-2,\alpha}_\beta$, and
\begin{equation}\label{eq:prelim-graph-norm}
 \|u\|_{C^{k,\alpha}_{\beta,\mathbf{X}}}
 \asymp\|u\|_{C^{k-2,\alpha}_\beta}
       +\|\hat\nabla^2u\|_{C^{k-2,\alpha}_\beta}
       +\|\hat\nabla_{\mathbf{X}}u\|_{C^{k-2,\alpha}_\beta}.
\end{equation}
The subscript $\mathbf{X}$ is the background version of the subscript $\nabla f$
in \cite{LYZ-I}.
\end{Definition}

\begin{Definition}[Finite expansions; {\cite[Definition~3.17]{LYZ-I}}]
\label{def:prelim-finite-expansions}
Let $\Lambda\subset(\beta,\infty)$ be finite. A tensor belongs to
$C^{k,\alpha}_{\Lambda;\beta}$ if it has the unique decomposition
\begin{equation}\label{eq:prelim-finite-expansion}
 u=\sum_{\gamma\in\Lambda}[u]_\gamma+u^\star,
 \qquad [u]_\gamma:=\chi r^\gamma\boldsymbol{\tau}(u_\gamma),\qquad
 u_\gamma\in C^{k,\alpha}(\Sigma,E_\Sigma),\quad
 u^\star\in C^{k,\alpha}_\beta.
\end{equation}
Its norm is the sum of the link-coefficient norms and the remainder norm.
For $k\geq2$, replacing the remainder space by $C^{k,\alpha}_{\beta,\mathbf{X}}$
defines $C^{k,\alpha}_{\Lambda;\beta,\mathbf{X}}$, with the analogous sum norm.
We omit braces for explicitly listed degrees and write
$C^{k,\alpha}_{\Lambda;-\infty}$ for the subspace with $u^\star=0$.
Degrees at or below $\beta$ are omitted from $\Lambda$ by convention.
\end{Definition}

We use the compact Gaussian Sobolev embeddings, elliptic regularity, and
Fredholm theory of \cite[Lemma~3.2 and Propositions~3.3--3.4]{LYZ-I}.
The following estimate from \cite[Proposition~4.18]{LYZ-I} applies to each
fixed tensor bundle over the background shrinker; solutions are understood
distributionally.

\begin{Proposition}
\label{prop:prelim-intermediate-Schauder}
Let $k\geq0$ and $\lambda,\beta\in\mathbb R$. Suppose $2\lambda+\beta\in(-2,0]$ and
$(\hat\Delta_{\hat f}-\lambda)u=v\in C^{k,\alpha}_\beta$, with
$u\in L^2_{\hat f}$. Then
\[
 \|\hat\nabla^2u\|_{C^{k,\alpha}_\beta}
 +\|(\hat\nabla_{\mathbf{X}}+\lambda)u\|_{C^{k,\alpha}_\beta}
 \leq C\bigl(\|u\|_{L^2_{\hat f}}+\|v\|_{C^{k,\alpha}_\beta}\bigr).
\]
If $2\lambda+\beta<0$, there is a unique decomposition
$u=[u]_{-2\lambda}+u^\star$ with
\begin{equation}\label{eq:prelim-intermediate-expansion}
 \|[u]_{-2\lambda}\|_{C^{k,\alpha}_{-2\lambda;-\infty}}
 +\|u^\star\|_{C^{k,\alpha}_\beta}
 \leq C\bigl(\|u\|_{L^2_{\hat f}}+\|v\|_{C^{k,\alpha}_\beta}\bigr).
\end{equation}
\end{Proposition}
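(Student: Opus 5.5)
The estimate is local on the cylindrical end, so I would combine the global $L^2$ theory with a scaled Schauder estimate on normalized annuli, and obtain the expansion by inverting the model operator $\hat\nabla_{\mathbf{X}}+\lambda$ along radial lines.

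\emph{Compact part.} On a compact set $K'$ slightly larger than $K$, interior Schauder estimates for $\hat\Delta_{\hat f}-\lambda$ give $\|u\|_{C^{k+2,\alpha}(K)}\le C(\|u\|_{L^2_{\hat f}(K')}+\|v\|_{C^{k,\alpha}(K')})$, where the $L^2$ norm controls $u$ because the Gaussian weight is bounded on compacta. The substantive work is on the end.

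\emph{End estimate.} On the end, write $\hat\Delta_{\hat f}=\hat\Delta-\hat\nabla_{\mathbf{X}}$, so the equation reads
\[
 \hat\nabla_{\mathbf{X}}u+\lambda u=\hat\Delta u-v.
\]
With $\mathbf{X}=\frac r2\partial_r$ this is a transport equation along radial lines, perturbed by the second-order term $\hat\Delta u$, which is of lower weight. I would proceed in three steps.
\begin{enumerate}
\item \emph{A priori growth.} Using Gaussian Sobolev regularity from \cite[Lemma~3.2]{LYZ-I} together with a local parabolic-type estimate along the flow lines (as in \cite{LYZ-I}), show that $u$ has at most polynomial growth.
\item \emph{Rescaled annuli.} On the rescaled annuli of \eqref{eq:prelim-annular-norm}, the rescaled equation has drift of size $R_T^2$. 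A Schauder estimate for operators with large drift, in which derivatives tangential to the characteristics are controlled, yields the bounds $\hat\nabla^2u\in C^{k,\alpha}_\beta$ and $(\hat\nabla_{\mathbf{X}}+\lambda)u\in C^{k,\alpha}_\beta$ in terms of $\sup\boldsymbol\rho^{-\beta}|u|$ plus the data.
\item \emph{Integration along rays.} To bound $u$ itself, integrate the ODE $\frac r2\partial_r(\boldsymbol\tau^{-1}u)+\lambda\,\boldsymbol\tau^{-1}u=w$, where $w:=\hat\Delta u-v$. The homogeneous solution has degree $-2\lambda$, and the source $w$ lies in weight $\beta$, or in $\beta-2$ for the $\hat\Delta u$ part, which is a gain.
\end{enumerate}

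\emph{Weight regimes and the expansion.} The condition $2\lambda+\beta\in(-2,0]$ locates $\beta$ relative to the indicial degree $-2\lambda$.

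When $\beta\ge-2\lambda$, i.e.\ $\beta=-2\lambda$, integrating inward gives a bound of weight $\beta$, up to a logarithmic loss in the borderline case. That loss is absorbed because only $\hat\nabla^2u$ and $(\hat\nabla_{\mathbf{X}}+\lambda)u$ are estimated, not $u$.

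When $2\lambda+\beta<0$, the source decays faster than $r^{-2\lambda}$, so the limit
\[
 u_{-2\lambda}:=\lim_{r\to\infty} r^{2\lambda}\,\boldsymbol\tau_r^{-1}u|_{\Sigma_r}
\]
exists. Set $u^\star:=u-\chi r^{-2\lambda}\boldsymbol\tau(u_{-2\lambda})$, so that $u^\star=\int_r^\infty(\dots)$ is of weight $\beta$. The regularity of the link coefficient $u_{-2\lambda}$ follows by commuting tangential derivatives, and uniqueness follows from $\beta<-2\lambda$. The lower bound $2\lambda+\beta>-2$ ensures that $\hat\Delta[u]_{-2\lambda}$, of weight $-2\lambda-2<\beta$, is absorbed into the remainder. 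Iterating once closes the bootstrap, and the $L^2_{\hat f}$ norm fixes the constants through a contradiction/compactness argument with \cite[Proposition~3.4]{LYZ-I}.

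\emph{Main obstacle.} I expect the hardest step to be the rescaled Schauder estimate with large drift, uniformly in $T$: on annuli of size $R_T$, the drift $\nabla_{\mathbf{X}}$ has size $R_T^2$. Handling this requires working in characteristic variables $s=\log r$ and treating the problem as a degenerate parabolic problem in $s$. This is exactly the content of \cite[Proposition~4.18]{LYZ-I}, whose proof I would follow.
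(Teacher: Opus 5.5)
The paper gives no proof of this statement. It is quoted from \cite[Proposition~4.18]{LYZ-I}, so your final appeal to that result is all the paper does. Note, however, that Proposition~4.18 there is the \emph{entire} statement, expansion included, and not a large-drift Schauder step inside a bootstrap.

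Read as a self-contained argument, your outline has a genuine gap in Step~2 and in the bootstrap built on it. You bound $\hat\nabla^2u$ and $(\hat\nabla_{\mathbf X}+\lambda)u$ in $C^{k,\alpha}_\beta$ in terms of $\sup\boldsymbol\rho^{-\beta}|u|$. But when $2\lambda+\beta<0$ and the leading coefficient $a$ is nonzero, $u\sim r^{-2\lambda}\boldsymbol\tau(a)$ with $-2\lambda>\beta$. When $2\lambda+\beta=0$, your own ray integration generically produces growth $r^{\beta}\log r$. In either case that quantity is infinite. The only place you would learn that $u-[u]_{-2\lambda}$ has weight $\beta$ is Step~3, which consumes the output of Step~2. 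So ``iterating once closes the bootstrap'' is circular, and the compactness argument has no a priori estimate to run on.

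No local estimate can repair Step~2.
\begin{itemize}
\item On balls of radius $1/r$, where the drift is normalized, Schauder theory gives only $|\hat\nabla^2u|\le C(r^2\operatorname{osc}_{B_{1/r}}u+|v|)$.
\item On an annulus of radius $R$ rescaled to unit size, the operator becomes the singular perturbation $R^{-2}\Delta-\frac{\varrho}{2}\partial_\varrho-\lambda$, with $\varrho=r/R$. Its uniform, parabolic-in-$\log\varrho$ estimates control $\hat\nabla^2u$ at best at the weight of $u$ itself. A link eigenmode with eigenvalue of order $R^2$ already has $|\hat\nabla^2u|$ comparable to $|u|$ there.
\end{itemize}

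The bound at weight $\beta$, independent of the growth of $u$, is a global fact: its right-hand side is $\|u\|_{L^2_{\hat f}}$. The structural reason for it is the shift of the indicial root under differentiation. On a shrinker $\mathrm d\hat\Delta_{\hat f}=(\hat\Delta_{\hat f}-\frac12)\mathrm d$, so $\hat\nabla^2u$ satisfies an equation of the same type with $\lambda$ replaced by $\lambda+1$, up to curvature terms, and $2(\lambda+1)+\beta>0$.

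Equivalently, put $\tau=-t$ and $W=\tau^{-\lambda}\Psi_t^*u$. Along the canonical flow, $W$ solves $(\partial_t-\Delta_{\hat g(t)})W=-\tau^{-\lambda-1}\Psi_t^*v$. On fixed annuli this source has size $\tau^{-1+\delta}$, where $\delta=-(2\lambda+\beta)/2\in[0,1)$. The Hessian estimate says exactly that $\nabla^2W$ blows up no faster than this source as $t\uparrow0$. This is where $2\lambda+\beta>-2$ is genuinely used, not merely to absorb $\hat\Delta[u]_{-2\lambda}$.

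Once $(\hat\nabla_{\mathbf X}+\lambda)u\in C^{k,\alpha}_\beta$ is available, your ray integration in Step~3 does give the expansion for $2\lambda+\beta<0$. One correction there: the $\hat\Delta u$ part of its source has weight $\beta$, not $\beta-2$, because $\hat\Delta[u]_{-2\lambda}$ has weight $-2\lambda-2>\beta-2$.
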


\subsection{Metric and slice spaces}\label{sec:prelim-ordinary-spaces}\label{sec:prelim-slice}\label{sec:finite-coordinate-slice}
\begin{Definition}[Smooth weighted classes]
\label{def:prelim-ordinary-spaces}
The smooth weighted and finite-expansion spaces are
\begin{equation}\label{eq:prelim-ordinary-grading}
 C^\infty_\beta:=\bigcap_{k\geq0}C^{k,\alpha}_\beta,\qquad
 C^\infty_{\Lambda;\beta}:=\bigcap_{k\geq0}C^{k,\alpha}_{\Lambda;\beta}.
\end{equation}
Thus the coefficients in \cref{def:prelim-finite-expansions} are smooth
link tensors, and the final remainder belongs to $C^\infty_\beta$.
These remainder bounds control all conically weighted spatial derivatives.
\end{Definition}

At a fixed finite order, weight nesting and coefficient extraction are
bounded linear maps. The smooth intersections are used only as classes
of fields satisfying all finite bounds. Ordinary weighted norms and
critical graph norms retain their separate meanings.

The metric spaces below are those of \cite[Definition~5.1]{LYZ-I}.

\begin{Definition}[Metric expansion spaces]\label{def:prelim-radial-filtration}
For each integer $m\geq0$, define $E_m(M,S^2T^*M)$ by the unique expansion
\begin{equation}\label{eq:prelim-E-expansion}
 h=\chi\bigl(\boldsymbol\tau(c_0)
       +r^{-1}\boldsymbol\tau(c_{-1}+2\,\mathrm dr\odot b_{-1})\bigr)+h^\star,
 \qquad h^\star\in C^{m,\alpha}_{-2}.
\end{equation}
Here $c_0,c_{-1}\in C^{m,\alpha}(\Sigma,S^2T^*\Sigma)$ and
$b_{-1}\in C^{m,\alpha}(\Sigma,T^*\Sigma)$ are tangential coefficients.
The norm is
\begin{equation}\label{eq:prelim-radial-gradings}
 \|h\|_{E_m}:=\|(c_0,c_{-1},b_{-1})\|_{C^{m,\alpha}(\Sigma)}
                    +\|h^\star\|_{C^{m,\alpha}_{-2}},
\end{equation}
where the tuple norm is the sum of the coefficient norms.
These finite spaces are Banach. Define
\begin{equation}\label{eq:prelim-radial-spaces}
 E_\infty:=\bigcap_{m\geq0}E_m
 =\bigl\{h\in C^\infty_{0,-1;-2}:
 h(\mathbf X,\cdot)\in C^\infty_{0;-1},\quad
 h(\mathbf X,\mathbf X)\in C^\infty_0\bigr\}.
\end{equation}
Its elements have smooth link coefficients and a remainder in $C^\infty_{-2}$.
\end{Definition}

\phantomsection\label{idx:coordinate-graphs}
By \cite[Definition~5.5 and Propositions~5.6--5.7]{LYZ-I}, the auxiliary
coordinate target $\mathscr Y_k$ consists of
$v=\chi\boldsymbol\tau(c_0)+v^\star$, with tangential
$c_0\in C^{k+1,\alpha}$, $v^\star\in C^{k,\alpha}_{-1}$, and
\[
 \hat Dv\in\mathscr S_k
 :=C^{k+1,\alpha}_0+C^{k-1,\alpha}_{-2}.
\]
This sum carries its quotient norm; the target norm adds the coefficient,
remainder, and sum-space norms. Its inverse-image domain is
\[
 \mathscr X_k=\{Z\in H^2_{\hat f}(T^*M)\cap\mathcal K^\perp:
       \hat PZ\in\mathscr Y_k\},\qquad
 \|Z\|_{\mathscr X_k}=\|Z\|_{H^2_{\hat f}}+\|\hat PZ\|_{\mathscr Y_k},
 \quad \hat P=\hat D\hat\delta^*.
\]
Here $\mathcal K=\ker\hat P$. The sum is a quotient by the closed
kernel of addition into distributions; the target graph is closed.
The Gaussian inverse therefore proves completeness and identifies
$\hat P:\mathscr X_k\to\mathscr Y_k\cap\mathcal K^\perp$ as an
isomorphism.
Every $Z\in\mathscr X_k$ has an end expansion with coefficients $b_1,a_0\in C^{k+2,\alpha}$,
$b_0\in C^{k+1,\alpha}$ and remainder $Z^\star\in C^{k,\alpha}_{-1}$;
the whole-field estimates are those of
\cite[Proposition~5.7]{LYZ-I}.

\begin{Theorem}[Finite coordinate slice; {\cite[Theorem~6.6]{LYZ-I}}]
\label{thm:finite-coordinate-slice}
Fix $k\ge6$. For sufficiently small $h\in E_{k+4}$ there is a $C^1$
coordinate parameter $Z(h)\in\mathscr X_k$ such that the associated
global $C^{k+2,\alpha}_{\mathrm{loc}}$ diffeomorphism satisfies
\[
 \tilde h=F_{Z(h)}^*(\hat g+h)-\hat g\in E_{k-1}\cap\ker\hat D,
 \qquad
 \|Z(h)\|_{\mathscr X_k}+\|\tilde h\|_{E_{k-1}}
                         \le C_k\|h\|_{E_{k+4}}.
\]
\end{Theorem}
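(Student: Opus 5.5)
This statement is quoted from \cite[Theorem~6.6]{LYZ-I}, and within this paper it is simply invoked. To indicate how we would reprove it, we solve for the coordinate parameter by a Banach implicit function argument on the inverse-image graph $\mathscr X_k$. For $Z\in\mathscr X_k$, let $F_Z$ be the global diffeomorphism generated by $Z^\sharp$ in the fixed chart, for instance $F_Z(x)=\exp^{\hat g}_x(Z^\sharp(x))$, corrected on the end so that $F_Z$ respects the conical structure. We then consider the map
\[
 \mathcal N(h,Z):=\hat D\bigl(F_Z^*(\hat g+h)-\hat g\bigr),
\]
and seek $Z(h)$ with $\mathcal N(h,Z(h))=0$.

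\textbf{Step 1: linearization in $Z$.} At $(0,0)$ we have $\partial_Z\mathcal N(0,0)Z=\hat D\mathcal L_{Z^\sharp}\hat g=-2\hat D\hat\delta^*Z=-2\hat PZ$. By the stated isomorphism $\hat P:\mathscr X_k\to\mathscr Y_k\cap\mathcal K^\perp$, this operator is invertible onto $\mathscr Y_k\cap\mathcal K^\perp$.

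\textbf{Step 2: the target and the cokernel.} We must check that $\mathcal N$ maps into the correct target, so that $\mathcal N(h,Z)\in\mathscr Y_k\cap\mathcal K^\perp$.
\begin{itemize}
\item \emph{Membership in $\mathscr Y_k$.} For $h\in E_{k+4}$ and $Z\in\mathscr X_k$ we use the end expansion of $Z$, with coefficients $b_1,a_0,b_0$. The degree-one and degree-zero coefficients of $Z$ act on the degree-zero cone variation $c_0$ and on the degree $-1$ terms. Using the canonical end \eqref{eq:prelim-canonical-end}, we show that the pulled-back metric retains an $E_{k-1}$ expansion; the loss of roughly five derivatives is absorbed by the indices. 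The divergence $\hat D$ of such an expansion lies in $\mathscr S_k$.
\item \emph{Orthogonality to $\mathcal K$.} The kernel $\mathcal K=\ker\hat P$ consists of one-forms $\omega$ with $\hat D\hat\delta^*\omega=0$. Pairing gives $\langle\hat Dk,\omega\rangle_{\hat f}=\langle k,\hat\delta^*\omega\rangle_{\hat f}$. We expect $\mathcal K$ to consist of Killing-type fields, for which $\hat\delta^*\omega=0$; alternatively, the orthogonality follows from an integration-by-parts identity with $\hat D$. In either case the range of $\hat D$ on $E$-fields is $L^2_{\hat f}$-orthogonal to $\mathcal K$. This requires justifying integration by parts against the Gaussian weight, which works because $E$-fields have polynomial growth.
\end{itemize}

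\textbf{Step 3: regularity and conclusion.} We show that $\mathcal N$ is $C^1$ as a map $E_{k+4}\times\mathscr X_k\to\mathscr Y_k\cap\mathcal K^\perp$. This is where the derivative loss matters: composition with $F_Z$ is only $C^1$ when the input has extra derivatives, which is why $h$ is taken in $E_{k+4}$ while the output lies only in $E_{k-1}$. The implicit function theorem then gives a $C^1$ map $h\mapsto Z(h)$ with $Z(0)=0$ and $\|Z(h)\|_{\mathscr X_k}\le C_k\|h\|_{E_{k+4}}$. The bound on $\tilde h$ follows from the estimate $\|F_Z^*(\hat g+h)-\hat g\|_{E_{k-1}}\lesssim\|h\|+\|Z\|$.

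\textbf{Main obstacle.} The hardest part is Step 2 together with the $C^1$ dependence in Step 3. Specifically, we must verify that pulling back by diffeomorphisms with linearly growing generators, through the $b_1$ coefficient, preserves the $E$-expansion structure. This requires careful tracking of each cone coefficient, since a degree-one vector field shifts a degree-zero tensor by an amount comparable to itself. We would handle this using the dilation structure $\mathbf X=\frac r2\partial_r$ and the homogeneity of $\boldsymbol\tau$-tensors.
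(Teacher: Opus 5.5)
The paper does not reprove this theorem; it imports it from \cite[Theorem~6.6]{LYZ-I}. The ingredients it does quote fit your implicit-function architecture: the isomorphism $\hat P:\mathscr X_k\to\mathscr Y_k\cap\mathcal K^\perp$, the map estimates of \cite[Lemma~6.3]{LYZ-I}, and the pullback formulas of \cite[Proposition~6.4]{LYZ-I}. Your orthogonality step is also fine, and can be made exact. Since $\hat P=(\hat\delta^*)^*\hat\delta^*$ in $L^2_{\hat f}$, the identity $\langle\hat PZ,Z\rangle_{\hat f}=\|\hat\delta^*Z\|^2_{L^2_{\hat f}}$ shows $\mathcal K$ is exactly the Killing forms in $H^2_{\hat f}$. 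Then $\langle\hat Dk,\omega\rangle_{\hat f}=\langle k,\hat\delta^*\omega\rangle_{\hat f}=0$.

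The first genuine gap is the diffeomorphism. $F_Z(x)=\exp^{\hat g}_x(Z^\sharp(x))$ is not a usable starting point, and the ``correction on the end'' is the substantive construction, not a detail. The coefficient $b_1$ gives $Z^\sharp$ a tangential part of size $\sim r$. Already on the flat cone $\mathbb R^n$, take $Z^\sharp(x)=Ax$ with $A$ skew. Then $\exp_x(Z^\sharp)=x+Ax$ and $|x+Ax|^2=|x|^2+|Ax|^2$, so the radial displacement grows linearly in $r$ (and quadratically in $A$). Moreover $(F^*g-g)(\partial_r,\partial_r)=|Ax|^2/|x|^2$ is a degree-zero radial component. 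No tensor in any $E_m$ has such a component, since $E_m$ forces $h(\mathbf X,\mathbf X)$ to have weight zero, i.e.\ $h(\partial_r,\partial_r)=O(r^{-2})$. You need a map in which $b_1$ acts, to leading order, as a diffeomorphism of the link and $a_0$ as a bounded radial shift. Then $r_F=R-r\in C^{4,\alpha}_0$, as used in the proof of \cref{lem:finite-potential-pullback}, while still $\partial_tF_{tZ}|_{t=0}=Z^\sharp$. Your proposal leaves this unspecified.

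The second gap is Step~2: it does not show that $\mathcal N$ maps into $\mathscr Y_k$, which the implicit function theorem requires for all small $(h,Z)$. $\mathcal N(h,Z)$ is a one-form, and $\mathscr Y_k$ imposes three conditions on it:
\begin{itemize}
\item a tangential degree-zero coefficient in $C^{k+1,\alpha}$;
\item a remainder in $C^{k,\alpha}_{-1}$;
\item $\hat D\mathcal N(h,Z)\in\mathscr S_k=C^{k+1,\alpha}_0+C^{k-1,\alpha}_{-2}$, which is a condition on a function, so ``$\hat D$ of the expansion lies in $\mathscr S_k$'' does not type-check.
\end{itemize}
Knowing $F_Z^*(\hat g+h)-\hat g\in E_{k-1}$ gives none of these. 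For $\tilde h\in E_{k-1}$, the degree-zero coefficient of $\hat D\tilde h$ is $-\tfrac12b_{-1}$, coming from $-\tilde h(\mathbf X,\cdot)$, and this is only in $C^{k-1,\alpha}$. Likewise $\hat\delta$ of the expansion has only $k-2$ derivatives.

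The extra regularity has to come from structure. Write $\mathcal N(h,Z)=-2\hat PZ+\hat D(F_Z^*h)+\hat D\mathcal Q_Z$, with $\mathcal Q_Z=F_Z^*\hat g-\hat g-\mathcal L_{Z^\sharp}\hat g$. The first term lies in $\mathscr Y_k$ by the definition of $\mathscr X_k$. The other two must be checked against all three conditions. This uses the extra derivatives of $h\in E_{k+4}$ and the whole-field estimates for $Z$, not merely its expansion with $Z^\star\in C^{k,\alpha}_{-1}$. In particular, the weight-zero part of $\hat D\hat D\mathcal Q_Z$, which contains radial contractions such as $\mathcal Q_Z(\mathbf X,\mathbf X)$, must lie in $C^{k+1,\alpha}_0$ modulo $C^{k-1,\alpha}_{-2}$. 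The same bookkeeping, with one more derivative of the composed field, is what makes $\mathcal N$ of class $C^1$. The $E_{k-1}$ bound is needed only afterwards, to describe $\tilde h$ at the solution; that part of your Step~3 is fine.
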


\begin{Lemma}
\label{lem:finite-potential-pullback}
For the diffeomorphisms $F_Z$ of
\cref{thm:finite-coordinate-slice}, with $k\ge6$, and
$u\in C^{3,\alpha}_1(M)$, one has
\[
 \|F_Z^*(\hat f+u)-\hat f\|_{C^{3,\alpha}_1}
 +\|(F_Z^{-1})^*(\hat f+u)-\hat f\|_{C^{3,\alpha}_1}
 \le C\bigl(\|Z\|_{\mathscr X_k}+\|u\|_{C^{3,\alpha}_1}\bigr).
\]
Both pullbacks depend continuously on $(Z,u)$ as maps from a small
$\mathscr X_k$ ball times $C^{4,\alpha}_1$ into $C^{3,\alpha}_1$.
\end{Lemma}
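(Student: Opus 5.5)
The plan is to write $F_Z(x)=\exp_x(Z^\sharp(x))$, or whatever end-compatible construction \cite{LYZ-I} uses, as a displacement from the identity. Then $F_Z^*(\hat f+u)-\hat f=(\hat f\circ F_Z-\hat f)+u\circ F_Z$, and I would estimate the two terms separately in $C^{3,\alpha}_1$.

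\textbf{Step 1: structure of the displacement.} The whole-field estimates of \cite[Proposition~5.7]{LYZ-I} give $Z=\chi\boldsymbol\tau(b_1 r\,\cdot)+\dots$. The top coefficient $b_1$ corresponds to a displacement vector field $V=Z^\sharp$ that is bounded, with $|V|\le C\|Z\|_{\mathscr X_k}$, and whose conically weighted derivatives satisfy $|\hat\nabla^jV|\le C\|Z\|_{\mathscr X_k}\boldsymbol\rho^{-j}$ for $j\le k+1$, with $k\ge6$ leaving room to spare. In the weighted notation, $V\in C^{k+1,\alpha}_0$ as a vector field. For $\|Z\|_{\mathscr X_k}$ small, $F_Z$ moves points a bounded distance. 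On the end this is a bounded distance in $r$, so $F_Z$ preserves the annuli $\{r\sim R_T\}$ up to a fixed enlargement. Both $F_Z$ and $F_Z^{-1}$ are $C^{4,\alpha}$-close to the identity in the rescaled annular charts of \eqref{eq:prelim-annular-norm}; the inverse follows from the inverse function theorem applied chartwise.

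\textbf{Step 2: the $\hat f$ term.} Write
\[
 \hat f\circ F_Z-\hat f=\int_0^1 \mathrm d\hat f\bigl(\partial_\tau F_{\tau}\bigr)\,\mathrm d\tau,
\]
using the path $\tau\mapsto F_{\tau Z}$ or the geodesic homotopy. Since $\hat\nabla\hat f=\mathbf X\in C^\infty_1$ by \eqref{eq:prelim-end-errors}, and the displacement lies in $C^{4,\alpha}_0$, products add weights. Composition with a map $C^{4,\alpha}$-close to the identity preserves $C^{3,\alpha}_1$ bounds, by checking on rescaled annuli. Together these give $\|\hat f\circ F_Z-\hat f\|_{C^{3,\alpha}_1}\le C\|Z\|_{\mathscr X_k}$.

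\textbf{Step 3: the $u$ term and the inverse.} The composition $u\circ F_Z$ is bounded by $C\|u\|_{C^{3,\alpha}_1}$ through the same annular chain rule. The weights $\boldsymbol\rho^{j-1}$ are comparable at $x$ and $F_Z(x)$ because the displacement is bounded. For $(F_Z^{-1})^*$ I repeat Steps 2 and 3 with $F_Z^{-1}$, which has a displacement of the same type and size.

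\textbf{Step 4: continuity.} For continuity in $(Z,u)$, I use the difference
\[
 u\circ F_Z-u'\circ F_{Z'}=(u-u')\circ F_Z+\bigl(u'\circ F_Z-u'\circ F_{Z'}\bigr).
\]
The second term is estimated by the mean value theorem along $F_{Z'}\to F_Z$. This costs one derivative of $u'$, which is why the statement takes $u'\in C^{4,\alpha}_1$ for continuity into $C^{3,\alpha}_1$.

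\textbf{Main obstacle.} The main obstacle is Step 1: extracting from the $\mathscr X_k$ expansion that the displacement is uniformly bounded in the conical sense. The $b_1$ term grows like $r$ as a one-form, yet is bounded as a displacement, so I would have to match the exact definition of $F_Z$ in \cite{LYZ-I}. I also need uniform $C^{4,\alpha}$ control of the annular charts of $F_Z^{\pm1}$; after that, everything reduces to chain-rule bookkeeping.
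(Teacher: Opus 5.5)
\textbf{Gap in Step 1, which Step 2 depends on.} The degree-one coefficient $b_1$ in the expansion of $Z\in\mathscr X_k$ is a \emph{tangential} link one-form. Hence $[Z]_1=\chi r\boldsymbol\tau(b_1)=\chi r^2\pi^*b_1$. Raising the index with $g_C$ gives the $r$-independent lift of $b_1^{\sharp}$ from $\Sigma$, and its $g_C$-length is $r|b_1|_{g_\Sigma}$.

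This is a rotation of the link at infinity; it is the part of the gauge that acts on the tangential degree-zero cone variation. The angular displacement $\Theta-\theta$ is small but does not decay, so the physical displacement of $F_Z$ has size $r\|Z\|_{\mathscr X_k}$, not $O(\|Z\|_{\mathscr X_k})$. A one-form of linear growth cannot become a bounded vector field by raising an index, since that operation is a fiberwise isometry.

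What is bounded is only the radial displacement $r_F=R-r$, with $\|r_F\|_{C^{4,\alpha}_0}\le C\|Z\|_{\mathscr X_k}$. The displacement is merely $O(\|Z\|_{\mathscr X_k})$ in the \emph{normalized} annular charts; this is what \cite[Lemma~6.3]{LYZ-I} supplies. With the correct size, your Step~2 bookkeeping fails. Pairing $\mathrm d\hat f\in C^\infty_1$ with a displacement of weight one only places $\hat f\circ F_Z-\hat f$ in $C^{3,\alpha}_2$.

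\textbf{Missing idea.} The growing part of $\hat f$ is exactly radial. By \eqref{eq:prelim-end-errors}, on the end $\hat f=\boldsymbol\mu(\hat g)+r^2/4+e$ with $e\in C^\infty_{-2}$, so
\[
 \hat f\circ F_Z-\hat f=\tfrac14 r_F(2r+r_F)+(e\circ F_Z-e).
\]
The first term sees only the bounded radial displacement, so it lies in $C^{3,\alpha}_1$ with norm at most $C\|Z\|_{\mathscr X_k}$. The non-decaying angular displacement enters only through $e$. There the mean value estimate on normalized annuli applies:
\[
 \|E\circ\Phi-E\|_{C^{3,\alpha}}\le C\|E\|_{C^{4,\alpha}}\|\Phi-\Id\|_{C^{3,\alpha}},
\]
and with a normalized displacement of size $C\|Z\|_{\mathscr X_k}$ it gives $\|e\circ F_Z-e\|_{C^{3,\alpha}_{-2}}\le C\|Z\|_{\mathscr X_k}$. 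Your homotopy formula could be repaired the same way, by splitting $\mathrm d\hat f=\tfrac r2\,\mathrm dr+\mathrm de$. You would then have to show that the radial component of the velocity stays bounded along the path; the exact difference formula avoids this.

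\textbf{The rest of your outline survives once it is justified correctly.}
\begin{itemize}
\item The weights at $x$ and $F_Z(x)$ are comparable because $R\asymp r$, not because the displacement is bounded.
\item The bound for $u\circ F_Z$ follows from the chain rule on normalized annuli, which tolerates a non-decaying angular displacement.
\item For the inverse, the identity $\hat f\circ F_Z^{-1}-\hat f=-(\hat f\circ F_Z-\hat f)\circ F_Z^{-1}$ saves you from re-deriving the structure of $F_Z^{-1}$.
\item For continuity you must also handle the $\hat f$ term, not only $u$. It follows from the same radial formula, controlled one derivative higher.
\end{itemize}
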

\begin{proof}
Write $F_Z=(R,\Theta)$ on the end and put $r_F=R-r$.
By \cite[Lemma~6.3]{LYZ-I}, the radii $R$ and $r$ are comparable,
the normalized maps have uniformly bounded $C^{4,\alpha}$ norms, and
\[
 \|r_F\|_{C^{4,\alpha}_0}\le C\|Z\|_{\mathscr X_k}.
\]
The chain rule and the H\"older product estimate on normalized annuli
therefore give
\[
 \|u\circ F_Z\|_{C^{3,\alpha}_1}\le C\|u\|_{C^{3,\alpha}_1}.
\]
By \eqref{eq:prelim-end-errors}, write
$\hat f=\boldsymbol\mu(\hat g)+r^2/4+e$ on the end, with
$e\in C^\infty_{-2}$. Then
\[
 \hat f\circ F_Z-\hat f
   =\tfrac14r_F(2r+r_F)+e\circ F_Z-e.
\]
The first term has $C^{3,\alpha}_1$ norm at most
$C\|Z\|_{\mathscr X_k}$. On nested normalized annuli the mean value
estimate for a function $E$ and a coordinate map $\Phi$ is
\[
 \|E\circ\Phi-E\|_{C^{3,\alpha}}
 \leq C\|E\|_{C^{4,\alpha}}\|\Phi-\Id\|_{C^{3,\alpha}},
\]
with a uniform constant on the bounded set of maps under consideration.
By \cite[Lemma~6.3]{LYZ-I}, the normalized displacement has
$C^{3,\alpha}$ norm at most $C\|Z\|_{\mathscr X_k}$.
Apply this estimate to the normalized representative of $e$ and scale back:
\[
 \|e\circ F_Z-e\|_{C^{3,\alpha}_{-2}}
 \leq C\|e\|_{C^{4,\alpha}_{-2}}\|Z\|_{\mathscr X_k}
 \leq C\|Z\|_{\mathscr X_k}.
\]
The same mean value estimate for the smooth function $\hat f$ on the
compact core proves the remaining bound. This proves the forward estimate.

Smallness of the normalized differential and the inverse-function
formula give uniformly bounded $C^{4,\alpha}$ normalized inverse maps;
their radial coordinates are comparable to $r$. Thus composition with
$F_Z^{-1}$ is also bounded on $C^{3,\alpha}_1$. The identity
\[
 \hat f\circ F_Z^{-1}-\hat f
   =-(\hat f\circ F_Z-\hat f)\circ F_Z^{-1}
\]
proves the inverse estimate. Finally, the map estimates in
\cite[Lemma~6.3]{LYZ-I} and the inverse-function formula give continuous
dependence of the normalized maps and their inverses at order four.
For $u\in C^{4,\alpha}_1$, the mean value estimate for the difference
of two compositions, with $\mathrm du\in C^{3,\alpha}_0$, gives
continuity in $C^{3,\alpha}_1$. The smooth background term is controlled
one derivative higher by the same radial formula above, so its inverse
pullback is continuous as well. These estimates are uniform on
normalized annuli and on the fixed compact core.
\end{proof}

\subsection{Finite scalar and tensor estimates}\label{sec:finite-linear-estimates}\label{sec:finite-graphs}
We record the finite estimates for scalar entropy equations and for the
tensor stability operator. All norms in this subsection use the fixed
background metric and potential. Fix
\[
 0<\alpha<\frac12,\qquad \alpha-1<\beta<-\alpha.
\]
For a tensor bundle $E$,
$E_\Sigma$ denotes its restriction to the unit link with the radial
parallel identification $\boldsymbol\tau$.
The following trace estimate uses the interpolation argument in the
proof of \cite[Lemma~5.3]{LYZ-I} at the fractional weight $\beta$.

\begin{Lemma}\label{lem:fractional-terminal}
Let $k\geq0$ be an integer. Suppose
\[
 (\hat\Delta_{\hat f}+\tfrac12)u=v\in C^{k,\alpha}_{\beta}(M,E),
 \qquad u\in L^2_{\hat f}(M,E).
\]
There is a unique $a\in C^{k+1,\alpha-\beta}(\Sigma,E_\Sigma)$
such that, with $u_1=\chi r\boldsymbol\tau(a)$,
\begin{align*}
 &\|a\|_{C^{k+1,\alpha-\beta}}
 +\|u-u_1\|_{C^{k,\alpha}_{\beta}}
 +\|\hat\nabla^2u\|_{C^{k,\alpha}_{\beta}}\\
 &\quad+\|\hat\nabla u-\nabla^Cu_1\|_{C^{k,\alpha}_{\beta}}
 +\|(\hat\nabla_{\mathbf X}-\tfrac12)u\|_{C^{k,\alpha}_{\beta}}
 \leq C\bigl(\|v\|_{C^{k,\alpha}_{\beta}}+\|u\|_{L^2_{\hat f}}\bigr).
\end{align*}
\end{Lemma}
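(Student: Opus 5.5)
Apply \cref{prop:prelim-intermediate-Schauder} with $\lambda=-\tfrac12$, so that $-2\lambda=1$ and $2\lambda+\beta=\beta-1\in(-2,0]$, since $\alpha-1<\beta<-\alpha$. The strict inequality $2\lambda+\beta<0$ holds. The proposition therefore gives the bounds on $\hat\nabla^2u$ and on $(\hat\nabla_{\mathbf X}-\tfrac12)u$ in $C^{k,\alpha}_\beta$. It also gives the decomposition $u=[u]_1+u^\star$ with $[u]_1=\chi r\boldsymbol\tau(a)$, $a\in C^{k,\alpha}(\Sigma,E_\Sigma)$, and $u^\star\in C^{k,\alpha}_\beta$, all bounded by $C(\|v\|_{C^{k,\alpha}_\beta}+\|u\|_{L^2_{\hat f}})$. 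Uniqueness of $a$ is immediate. The difference of two such degree-one terms would be $\chi r\boldsymbol\tau(a-a')\in C^{0}_\beta$ with $\beta<1$, which forces $a=a'$. What remains is to upgrade the regularity of the coefficient from $C^{k,\alpha}$ to $C^{k+1,\alpha-\beta}$ and to prove the first-derivative estimate $\hat\nabla u-\nabla^Cu_1\in C^{k,\alpha}_\beta$.

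For the coefficient, I would follow the interpolation argument of \cite[Lemma~5.3]{LYZ-I}. On the annulus at radius $R=R_T$, write $u=R\boldsymbol\tau(a)+u^\star$ in normalized coordinates. The rescaled second derivative $R^{2-\beta}\cdot R^{-1}\hat\nabla^2u$ is controlled in $C^{k,\alpha}$ at scale $R^{\beta-1}$ relative to the degree-one part. The tangential second derivatives of $r\boldsymbol\tau(a)$ contain $r^{-1}\bar\nabla^2a$ plus lower-order terms, while $u^\star$ contributes at most $r^{\beta}$ in $C^{k,\alpha}$. One obtains smallness of the angular Hessian of $a$ at angular scale $\ell$ by choosing $\ell\sim R^{-1}$. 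The point is that an angular ball of size $\ell$ on $\Sigma$ corresponds to a ball of cone size $R\ell$. On balls of cone size $\sim 1$ inside the annulus at radius $R$ we have a second-derivative bound on $u$ of size $R^{\beta}$ from $\hat\nabla^2u\in C^{k,\alpha}_\beta$, and a zeroth-order bound on $u^\star$ of size $R^\beta$. The remainder at those points is $O(R^{\beta})$. So on an angular ball of radius $\ell=R^{-1}$, the oscillation of $a$ minus its best affine approximation is $O(\ell^{1-\beta})$, after dividing by $R$.

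Since $1-\beta\in(1+\alpha,2-\alpha)$, Campanato's characterization gives $a\in C^{1,\alpha-\beta}$ with the stated bound when $k=0$. Applying this to link derivatives up to order $k$ gives the claim for $k>0$. These link derivatives are controlled by the $C^{k,\alpha}$ part, using the commutation of $\mathcal D$ with the equation up to lower-order end errors from \eqref{eq:prelim-end-errors}. The first-derivative estimate then follows by interpolation on each normalized annulus. The function $u-u_1$ is bounded in $C^{k,\alpha}_\beta$, and $\hat\nabla^2(u-u_1)$ is bounded in $C^{k,\alpha}_{\beta-2}$ or better. The term $\hat\nabla^2u_1$ is of order $r^{-1}\bar\nabla^2a$, which is only in $C^{k-1}$ with the gained H\"older exponent, and this term needs care. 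I would instead interpolate between $u^\star\in C^{k,\alpha}_\beta$ and $\hat\nabla^2u\in C^{k,\alpha}_\beta$ directly on annuli. The connection error $(\hat\nabla-\nabla^C)u_1=O(r^{-2})$ lies in $C^{k,\alpha}_\beta$ because $\beta>-1$. This gives $\hat\nabla u-\nabla^Cu_1\in C^{k,\alpha}_{\beta}$ up to the top-order H\"older seminorm, which is handled by the same Campanato scaling.

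The main obstacle is the Campanato step. One must check carefully that the mixed radial–angular scaling is consistent. The radial estimate $(\hat\nabla_{\mathbf X}-\tfrac12)u\in C^{k,\alpha}_\beta$ is needed to ensure that the degree-one part is exactly homogeneous up to the remainder, so that the coefficient extracted at different radii is the same $a$. One must also check that the gain of exactly $1-\beta$ angular derivatives, rather than a full integer, is what the fractional weight supplies.
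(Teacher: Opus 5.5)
Your first paragraph matches the paper: the intermediate Schauder proposition at $\lambda=-\tfrac12$ gives the expansion, and dividing the remainder by $r$ gives uniqueness.

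\emph{Coefficient regularity.} Your coefficient upgrade does not reach the claimed exponent. Write $N=\|v\|_{C^{k,\alpha}_\beta}+\|u\|_{L^2_{\hat f}}$. Your scaling uses only the sup bounds $|u^\star|\le CNR^\beta$ and $|\hat\nabla^2u|\le CNR^\beta$ on unit cone balls at radius $R$. That gives an approximation error $O(\ell^{1-\beta})$ at angular scale $\ell=R^{-1}$. Campanato at exponent $1-\beta=1+(-\beta)$ then yields only $a\in C^{1,-\beta}$, not $C^{1,\alpha-\beta}$, so you lose exactly $\alpha$.

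The missing $\alpha$ comes from the H\"older seminorms in the normalized-annulus norms. At cone scale one, i.e.\ angular scale $R^{-1}$, both $u^\star$ and $\hat\nabla^2u$ oscillate by only $O(R^{\beta-\alpha})$. With quadratic rather than affine approximants the error improves to $O(\ell^{1+\alpha-\beta})$, and since $1+\alpha-\beta\in(1+2\alpha,2)$ this gives $C^{1,\alpha-\beta}$.

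The paper packages this as a $K$-functional bound. Set $A=r^{-1}\boldsymbol\tau_r^{-1}u$ and take the traces $a_R=A(R,\cdot)$. The remainder and Hessian estimates give $\|a-a_R\|_{C^{k,\alpha}}\le CNR^{\beta-1}$ and $\|a_R\|_{C^{k+2,\alpha}}\le CNR^{\beta+1}$; the zeroth-order term is harmless because $1+\beta>0$. Hence $K(R^{-2},a;C^{k,\alpha},C^{k+2,\alpha})\le CNR^{\beta-1}$. Real interpolation at parameter $(1-\beta)/2$ then lands in $C^{k+1,\alpha-\beta}$ for every $k$ at once, without commuting link derivatives through the equation.

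\emph{Gradient estimate.} This step also has a gap. Interpolating ``between $u^\star$ and $\hat\nabla^2u$'' is not an interpolation inequality for a single function. The relevant quantity $\hat\nabla^2u^\star=\hat\nabla^2u-\hat\nabla^2u_1$ is exactly the one you observed is uncontrolled, since $a\notin C^{k+2}$.

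The missing idea is to compare $u$ on each annulus with the smooth trace $a_R$ instead of the rough $a$.
\begin{itemize}
\item With $U_R(s,\theta)=A(Rs,\theta)$ and $a_R$ extended constantly in $s$, the difference $U_R-a_R$ is $O(R^{\beta-1})$ in $C^{k,\alpha}$ and $O(R^{\beta+1})$ in $C^{k+2,\alpha}$. Interpolation makes it $O(R^\beta)$ in $C^{k+1,\alpha}$.
\item The dyadic differences $a_{2R}-a_R$ obey the same two bounds, so $\|a_{2R}-a_R\|_{C^{k+1,\alpha}}\le CNR^\beta$. Because $\beta<0$, the telescoping sum gives $\|a_R-a\|_{C^{k+1,\alpha}}\le CNR^\beta$.
\item Together these bound $U_R-a$ in $C^{k+1,\alpha}$ by $CNR^\beta$ on fixed rescaled annuli. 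Using $r\partial_r=s\partial_s$ and the $O(r^{-2})$ connection error acting on $u$, this is exactly $\|\hat\nabla u-\nabla^Cu_1\|_{C^{k,\alpha}_\beta}\le CN$.
\end{itemize}

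Finally, the radial estimate is not needed to make the coefficient independent of the radius. The proposition already supplies one fixed $a$ with remainder in $C^{k,\alpha}_\beta$.
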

\begin{proof}
Write $N=\|v\|_{C^{k,\alpha}_\beta}+\|u\|_{L^2_{\hat f}}$.
Apply \cref{prop:prelim-intermediate-Schauder} with $\lambda=-1/2$.
Its strict condition is $-2<\beta-1<0$. It gives the expansion with
$a\in C^{k,\alpha}$, the remainder, and the whole-field Hessian and
corrected drift estimates, with bound $CN$. On the end put
\[
 A(r,\theta)=r^{-1}\boldsymbol\tau_r^{-1}u(r,\theta),\qquad
 a_R=A(R,\cdot),\qquad U_R(s,\theta)=A(Rs,\theta).
\]
We work on fixed nested annuli in $1/2<s<4$. The remainder and
Hessian estimates give
\[
 \|U_R-a\|_{C^{k,\alpha}}\leq CNR^{\beta-1},\qquad
 \|U_R\|_{C^{k+2,\alpha}}\leq CNR^{\beta+1}.
\]
For the second bound, express ordinary second derivatives using the
rescaled covariant Hessian and absorb the first derivatives by local
interpolation on nested annuli. The rescaled connection coefficients
and their required derivatives are uniformly bounded. The zeroth-order
term is also controlled, since $1+\beta>0$. Restriction to $s=1$ gives
\[
 K(R^{-2},a;C^{k,\alpha},C^{k+2,\alpha})
 \leq\|a-a_R\|_{C^{k,\alpha}}
       +R^{-2}\|a_R\|_{C^{k+2,\alpha}}
 \leq CNR^{\beta-1}.
\]
These scales and the initial trace bound control the full $K$-functional.
By \cite[Lemma~3.12(iii)]{LYZ-I}, interpolation at parameter
$(1-\beta)/2$ yields $a\in C^{k+1,\alpha-\beta}$ with norm at most $CN$.

For the gradient estimate, let $d_R=a_{2R}-a_R$. The preceding bounds imply
\[
 \|d_R\|_{C^{k,\alpha}}\leq CNR^{\beta-1},\qquad
 \|d_R\|_{C^{k+2,\alpha}}\leq CNR^{\beta+1}.
\]
Interpolation at parameter $1/2$ gives
$\|d_R\|_{C^{k+1,\alpha}}\leq CNR^\beta$.
Since $\beta<0$, the series of these differences converges in
$C^{k+1,\alpha}$, and its limit agrees with the known $C^{k,\alpha}$ limit.
Consequently
\[
 \|a_R-a\|_{C^{k+1,\alpha}}
 \leq CN\sum_{\ell=0}^\infty(2^\ell R)^\beta
 \leq CNR^\beta.
\]
On a fixed rescaled annulus, extend $a_R$ constantly in $s$.
The difference $U_R-a_R$ has $C^{k,\alpha}$ norm at most
$CNR^{\beta-1}$ and $C^{k+2,\alpha}$ norm at most $CNR^{\beta+1}$.
The same interpolation bounds it by $CNR^\beta$ in $C^{k+1,\alpha}$.
Adding the last display gives this bound for $U_R-a$.
A cone derivative of $r\boldsymbol\tau(A-a)$ consists of
$(r\partial_r)(A-a)$, link derivatives, and zeroth-order terms;
on the rescaled annulus $r\partial_r=s\partial_s$.
Thus its $C^{k,\alpha}$ norm is at most $CNR^\beta$.
The connection difference acting on $u$ contributes
$O_{k+\alpha}(r^{-2})N$, which has the required weight.
This proves the gradient estimate. Local elliptic estimates handle
the compact core, and division of the remainder by $r$ proves uniqueness.
\end{proof}

\begin{Lemma}\label{lem:fractional-degree-zero}
Let $k\geq0$. If
\[
 (\hat\Delta_{\hat f}+\tfrac12)u
   =\chi\boldsymbol\tau(v_0)+v^\star,\qquad
 v_0\in C^{k,\alpha-\beta}(\Sigma,E_\Sigma),\quad
 v^\star\in C^{k,\alpha}_{\beta}(M,E),\quad u\in L^2_{\hat f},
\]
then $u=\chi r\boldsymbol\tau(a)+2\chi\boldsymbol\tau(v_0)+u^\star$
for a unique $a\in C^{k+1,\alpha-\beta}$, and
\begin{align*}
 &\|a\|_{C^{k+1,\alpha-\beta}}+\|u^\star\|_{C^{k,\alpha}_{\beta}}
 +\|\hat\nabla^2u\|_{C^{k,\alpha}_{\beta}}\\
 &\quad+\|\hat\nabla u-\nabla^C(\chi r\boldsymbol\tau(a))
       \|_{C^{k,\alpha}_{\beta}}
 +\|(\hat\nabla_{\mathbf X}-\tfrac12)u+\chi\boldsymbol\tau(v_0)
       \|_{C^{k,\alpha}_{\beta}}\\
 &\hspace{12mm}\leq C\bigl(\|v_0\|_{C^{k,\alpha-\beta}}
                +\|v^\star\|_{C^{k,\alpha}_{\beta}}
                +\|u\|_{L^2_{\hat f}}\bigr).
\end{align*}
\end{Lemma}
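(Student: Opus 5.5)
The plan is to reduce to \cref{lem:fractional-terminal} by subtracting an explicit degree-zero particular solution. Set $w:=2\chi\boldsymbol\tau(v_0)$. On the canonical end, \eqref{eq:prelim-canonical-end} gives $\mathbf X=\frac r2\partial_r$, and the cone connection satisfies $\nabla^C_{\partial_r}\boldsymbol\tau(v_0)=0$. Hence
\[
 \nabla^C_{\mathbf X}\boldsymbol\tau(v_0)=0 .
\]
The cone Laplacian of a degree-zero homogeneous tensor is $r^{-2}$ times a link operator applied to $v_0$; this is the tensor analogue of \eqref{eq:prelim-cone-scalar-Laplacian} with $\gamma=0$. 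The connection difference is $O(r^{-3})$ by \eqref{eq:prelim-end-errors}. Putting these together,
\[
 (\hat\Delta_{\hat f}+\tfrac12)w=\chi\boldsymbol\tau(v_0)+e,
 \qquad
 e=r^{-2}\boldsymbol\tau(\mathcal A v_0)+O(r^{-3})v_0+(\text{terms supported where }\nabla\chi\neq0),
\]
where $\mathcal A$ is a second-order link operator. The factor $\frac12\cdot2$ produces exactly $\chi\boldsymbol\tau(v_0)$, which explains the coefficient $2$ in the statement. Similarly $(\hat\nabla_{\mathbf X}-\tfrac12)w=-\chi\boldsymbol\tau(v_0)$ up to compactly supported terms and $O(r^{-2})$ connection terms. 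This matches the corrected drift expression in the statement.

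The main obstacle is the regularity bookkeeping for $e$. The coefficient $v_0$ lies only in $C^{k,\alpha-\beta}$, so $\boldsymbol\tau(\mathcal Av_0)$ loses two link derivatives and $e$ need not lie in $C^{k,\alpha}_\beta$ directly. I would handle this by splitting $v_0$ at scale $R$, exactly as in the $K$-functional argument of \cref{lem:fractional-terminal}. Replace $w$ by $2\chi\boldsymbol\tau(v_{0,r})$, where $v_{0,r}$ is a link mollification of $v_0$ at scale $r^{-1}$, implemented via a heat semigroup on $\Sigma$ at time $r^{-2}$. The smoothing bounds for the $(\alpha-\beta)$-Hölder coefficient are
\[
 \|v_{0,r}-v_0\|_{C^{k,\alpha}}\le Cr^{\beta-\alpha}\|v_0\|,
 \qquad
 \|v_{0,r}\|_{C^{k+2,\alpha}}\le Cr^{2+\beta-\alpha}\|v_0\| .
\]
These should give three estimates:
\begin{itemize}
 \item $r^{-2}\mathcal A v_{0,r}\in C^{k,\alpha}_{\beta-\alpha}\subset C^{k,\alpha}_\beta$;
 \item the radial derivative term $\nabla_{\mathbf X}$ falling on $v_{0,r}$ has size $r\,\partial_r v_{0,r}=O(r^{\beta-\alpha})$, which is again admissible;
 \item $2\chi\boldsymbol\tau(v_{0,r}-v_0)\in C^{k,\alpha}_\beta$, so the mollified and unmollified particular solutions differ by an allowed remainder.
 \end{itemize}
For the Hölder seminorms on normalized annuli, I would check that the $r$-dependence of the mollifier does not destroy the $C^{k,\alpha}_{\mathrm{cyl}}$ control. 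If this bookkeeping fails, the fallback is to cite the degree-zero particular-solution construction inside \cite[Lemma~5.3]{LYZ-I} at weight $\beta$.

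Next set $\tilde u=u-w$. Then $\tilde u\in L^2_{\hat f}$, since $w$ is bounded and the Gaussian weight decays, with $\|w\|_{L^2_{\hat f}}\le C\|v_0\|$. Moreover
\[
 (\hat\Delta_{\hat f}+\tfrac12)\tilde u=v^\star-e\in C^{k,\alpha}_\beta .
\]
Applying \cref{lem:fractional-terminal} to $\tilde u$ yields a unique $a\in C^{k+1,\alpha-\beta}$. It also gives control of $\tilde u-\chi r\boldsymbol\tau(a)$ in $C^{k,\alpha}_\beta$, together with the Hessian, the gradient-minus-$\nabla^C(\chi r\boldsymbol\tau(a))$, and the corrected drift $(\hat\nabla_{\mathbf X}-\frac12)\tilde u$. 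All of these are bounded by the right-hand side of the statement. Define $u^\star=\tilde u-\chi r\boldsymbol\tau(a)$, adjusted by the mollification difference $w-2\chi\boldsymbol\tau(v_0)$ if mollification was used.

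Adding back the bounds for $w$ then gives each estimate in the statement:
\begin{itemize}
 \item $\hat\nabla^2 w=O(r^{-2})$ in the appropriate fractional sense, hence lies in $C^{k,\alpha}_\beta$ because $\beta>-1$;
 \item $\hat\nabla w=O(r^{-1})$ lies in $C^{k,\alpha}_\beta$ for the same reason;
 \item the drift identity for $w$ computed in the first step supplies the $+\chi\boldsymbol\tau(v_0)$ correction.
 \end{itemize}
For uniqueness, take two decompositions of $u$. Their difference $\chi r\boldsymbol\tau(a-a')$ lies in $C^{k,\alpha}_\beta$ with $\beta<1$. Dividing by $r$ and letting $r\to\infty$ forces $a=a'$, and then $u^\star$ is determined as well.
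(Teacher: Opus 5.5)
\textbf{Verdict.} Your proof is correct, and its overall structure matches the paper's: subtract a bounded degree-zero lift with leading term $2\chi\boldsymbol\tau(v_0)$, apply \cref{lem:fractional-terminal} to the difference, and add the lift back. The only real difference is how the lift is built.

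\textbf{The paper's lift.} It uses the Gaussian resolvent lift of \cite[Lemma~5.4]{LYZ-I}: $U_0=-4(\hat\Delta_{\hat f}-2)^{-1}F$ with $F=\chi\boldsymbol\tau(v_0)$, and sets $W=U_0-2F$. It then interpolates the known endpoint estimates at parameters $-\beta/2$ and $-\beta$. These are weight $0$ for $v_0\in C^{k,\alpha}$, weight $-2$ for $v_0\in C^{k+2,\alpha}$, and weight $-1$ for the gradient at $C^{k+1,\alpha}$. This puts $W$, $\hat\nabla U_0$ and $\hat\nabla^2U_0$ in $C^{k,\alpha}_\beta$. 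The exact identities $(\hat\Delta_{\hat f}+\tfrac12)U_0-F=\tfrac52W$ and $(\hat\nabla_{\mathbf X}-\tfrac12)U_0+F=\hat\Delta U_0-\tfrac52W$ then give the source and the drift correction with no further computation.

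\textbf{Your lift and what each route buys.} Your $w=2\chi\boldsymbol\tau(e^{r^{-2}\bar\Delta}v_0)$ is an explicit, annulus-by-annulus version of the same interpolation: the link heat semigroup at time $r^{-2}$ is a concrete $K$-functional splitting at the scale of each annulus. It is self-contained and avoids the resolvent at level $2$ and its endpoint theory. The cost is that the cone-Laplacian computation and the mixed radial--link H\"older bounds must be checked by hand. The paper's lift is a global Gaussian solution with an exact error structure, so it needs none of this.

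\textbf{A correction to your bookkeeping.} The rates $r^{\beta-\alpha}$ and $r^{2+\beta-\alpha}$ that you display are the smoothing rates in the $C^k$ and $C^{k+2}$ norms. The annular $C^{k,\alpha}_\beta$ norm sees the link $C^{k,\alpha}$ norm at unit scale, and there the correct rates are
$\|v_{0,r}-v_0\|_{C^{k,\alpha}}\le Cr^{\beta}\|v_0\|_{C^{k,\alpha-\beta}}$, $\|v_{0,r}\|_{C^{k+1,\alpha}}\le Cr^{1+\beta}\|v_0\|_{C^{k,\alpha-\beta}}$, and $\|v_{0,r}\|_{C^{k+2,\alpha}}\le Cr^{2+\beta}\|v_0\|_{C^{k,\alpha-\beta}}$.
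Consequently the following all land exactly at weight $\beta$, not at $\beta-\alpha$: the error $e$, the difference $w-2\chi\boldsymbol\tau(v_0)$, $\hat\nabla w$, $\hat\nabla^2w$, and $(\hat\nabla_{\mathbf X}-\tfrac12)w+\chi\boldsymbol\tau(v_0)$.

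In particular, $\hat\nabla w$ and $\hat\nabla^2 w$ are not $O(r^{-1})$ and $O(r^{-2})$ in these norms. For the unmollified $2\chi\boldsymbol\tau(v_0)$ they would not even be locally $C^{k,\alpha}$. Their membership in $C^{k,\alpha}_\beta$ therefore comes from the smoothing bounds, not from $\beta>-1$.

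Each $r\partial_r$ falling on $v_{0,r}$ produces $-2t\bar\Delta e^{t\bar\Delta}v_0$ with $t=r^{-2}$, which is again $O(r^{\beta})$ in $C^{k,\alpha}$. So the mixed annular H\"older control you were worried about does hold. With these corrections the argument closes, with no room to spare at weight $\beta$.
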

\begin{proof}
Use the lift from the proof of \cite[Lemma~5.4]{LYZ-I}:
\[
 F=\chi\boldsymbol\tau(v_0),\qquad
 U_0=-4(\hat\Delta_{\hat f}-2)^{-1}F,\qquad W=U_0-2F.
\]
The endpoint estimates there, whose proofs apply at every $k\geq0$,
bound $W$ and $\hat\nabla^2U_0$ in $C^{k,\alpha}_0$ for
$v_0\in C^{k,\alpha}$, and in $C^{k,\alpha}_{-2}$ for
$v_0\in C^{k+2,\alpha}$. The gradient is bounded in
$C^{k,\alpha}_0$ at the first endpoint and in $C^{k,\alpha}_{-1}$
for $v_0\in C^{k+1,\alpha}$, by the commuted equation used there.
Interpolate the first two maps at parameter $-\beta/2$ and the
gradient map at parameter $-\beta$, using
\cite[Lemma~3.12(ii)--(iii)]{LYZ-I}. In each case the input is
$C^{k,\alpha-\beta}$ and the output is $C^{k,\alpha}_\beta$. Hence
\[
 \|W\|_{C^{k,\alpha}_\beta}
 +\|\hat\nabla U_0\|_{C^{k,\alpha}_\beta}
 +\|\hat\nabla^2U_0\|_{C^{k,\alpha}_\beta}
 \leq C\|v_0\|_{C^{k,\alpha-\beta}}.
\]
The exact identities are
\[
 (\hat\Delta_{\hat f}+\tfrac12)U_0-F=\tfrac52W,
 \qquad
 (\hat\nabla_{\mathbf X}-\tfrac12)U_0+F
       =\hat\Delta U_0-\tfrac52W.
\]
Apply \cref{lem:fractional-terminal} to $u-U_0$, whose source is
$v^\star-\tfrac52W$, and add these lift estimates.
The Gaussian inverse bounds the low norm of $U_0$ by the source norm.
The field $U_0=2F+W$ has zero degree-one trace, so the resulting
coefficient is the unique degree-one coefficient of $u$.
\end{proof}

\begin{Lemma}\label{lem:tensor-terminal}
Let $k\geq0$, and suppose $u\in L^2_{\hat f}(M,E)$ satisfies
\[
 \hat\Delta_{\hat f}u=v\in C^{k,\alpha}_{\beta}(M,E).
\]
There is a unique $a\in C^{k,\alpha-\beta}(\Sigma,E_\Sigma)$
such that
\begin{align*}
 &\|a\|_{C^{k,\alpha-\beta}}
 +\|u-\chi\boldsymbol\tau(a)\|_{C^{k,\alpha}_{\beta}}
 +\|\hat\nabla u\|_{C^{k,\alpha}_{\beta}}\\
 &\quad+\|\hat\nabla^2u\|_{C^{k,\alpha}_{\beta}}
 +\|\hat\nabla_{\mathbf X}u\|_{C^{k,\alpha}_{\beta}}
 \leq C\bigl(\|v\|_{C^{k,\alpha}_{\beta}}+\|u\|_{L^2_{\hat f}}\bigr).
\end{align*}
The same conclusion holds for every Gaussian solution of
$\hat Lu=v\in C^{k,\alpha}_{\beta}(S^2T^*M)$, where
$\hat L=\frac12\hat\Delta_{\hat f}+\mathrm{Rm}_{\hat g}$.
\end{Lemma}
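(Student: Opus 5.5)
We plan to proceed exactly as in \cref{lem:fractional-terminal}, now with $\lambda=0$ in \cref{prop:prelim-intermediate-Schauder}. The condition $2\lambda+\beta=\beta\in(-2,0)$ holds strictly since $\alpha-1<\beta<-\alpha$. Set $N=\|v\|_{C^{k,\alpha}_\beta}+\|u\|_{L^2_{\hat f}}$. The proposition then gives
\begin{itemize}
\item a degree-zero expansion $u=\chi\boldsymbol\tau(a)+u^\star$ with $a\in C^{k,\alpha}(\Sigma)$ and $u^\star\in C^{k,\alpha}_\beta$;
\item the bounds $\|\hat\nabla^2u\|_{C^{k,\alpha}_\beta}+\|\hat\nabla_{\mathbf X}u\|_{C^{k,\alpha}_\beta}\le CN$.
\end{itemize}
Two points remain. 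First, the improvement of the link regularity from $C^{k,\alpha}$ to $C^{k,\alpha-\beta}$, noting $0<\alpha-\beta<1$. Second, the whole gradient bound $\|\hat\nabla u\|_{C^{k,\alpha}_\beta}\le CN$. At degree zero no cone-derivative correction appears, because $\nabla^C\boldsymbol\tau(a)$ has weight $-1$ only through link derivatives of $a$.

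For the link regularity we use the $K$-functional argument. On the end put $A(r,\theta)=\boldsymbol\tau_r^{-1}u(r,\theta)$, $a_R=A(R,\cdot)$ and $U_R(s,\theta)=A(Rs,\theta)$ on fixed nested annuli $1/2<s<4$. The remainder estimate gives $\|U_R-a\|_{C^{k,\alpha}}\le CNR^{\beta}$. The Hessian estimate, rescaled, controls second normalized derivatives of $U_R$ by $CNR^{2+\beta}$. First derivatives are absorbed by local interpolation, and the zeroth-order term is harmless because $2+\beta>0$. This yields $\|U_R\|_{C^{k+2,\alpha}}\le CNR^{2+\beta}$. Restricting to $s=1$ gives
\[
 K(R^{-2},a;C^{k,\alpha},C^{k+2,\alpha})\le\|a-a_R\|_{C^{k,\alpha}}+R^{-2}\|a_R\|_{C^{k+2,\alpha}}\le CNR^{\beta}.
\]
Interpolation at parameter $-\beta/2\in(\alpha/2,(1-\alpha)/2)$, via \cite[Lemma~3.12(iii)]{LYZ-I}, then places $a$ in $C^{k,\alpha-\beta}$ with norm $\le CN$. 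The small-$R$ scales are covered by the initial trace bound.

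For the gradient, interpolate between the same two bounds at parameter $1/2$ on $U_R-a_R$, with $a_R$ extended constantly in $s$. This gives $\|U_R-a_R\|_{C^{k+1,\alpha}}\le CNR^{1+\beta}$, and hence radial and angular normalized derivatives of $U_R$ of order $CNR^{1+\beta}$. Translated back, this is exactly $\hat\nabla u\in C^{k,\alpha}_{\beta-1}\cdot r$, that is, a bound in $C^{k,\alpha}_\beta$ up to the link term.

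The point we expect to be the main obstacle is the link term. We need $r^{-1}\bar\nabla a_R$, which is of size $R^{-1}$ times a $C^{k+1,\alpha}$ bound on $a_R$, to be $O(R^{\beta})$ in $C^{k,\alpha}$. This requires $\|a_R\|_{C^{k+1,\alpha}}\le CNR^{1+\beta}$. That bound follows by interpolating $\|a_R-a\|_{C^{k,\alpha}}\le CNR^\beta$ against $\|a_R\|_{C^{k+2,\alpha}}\le CNR^{2+\beta}$, together with $\|a\|_{C^{k+1,\alpha}}$. Here $a$ need not be $C^{k+1}$. We therefore split off a smoothing $a^{(R)}$ of $a$ at scale $R^{-1}$ from the $K$-functional decomposition, whose $C^{k+1,\alpha}$ norm is $\le CNR^{1+\beta}$. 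Cone derivatives of $\chi\boldsymbol\tau(a)$ are then handled in the normalized annular norm \eqref{eq:prelim-annular-norm}. In that norm the relevant quantity is the $C^{k,\alpha}$ norm of $\kappa_T^*\nabla u$ with metric $g_T$ rescaled by $R_T^{1-\beta}$, which is exactly what the interpolated bound controls.

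The connection difference $\hat\nabla-\nabla^C$ contributes $O_{k+\alpha}(r^{-3})N$, which lies in the required weight. Local elliptic estimates cover the compact core. Uniqueness of $a$ follows because $u^\star\to0$ along rays, since $\beta<0$.

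For the stability operator, write $\hat L u=v$ as $\hat\Delta_{\hat f}u=2v-2\mathrm{Rm}_{\hat g}(u)$. By \cref{prop:prelim-intermediate-Schauder} applied to $\hat L$ (or first with a crude weight), $u$ is bounded, so $\mathrm{Rm}_{\hat g}(u)\in C^{k,\alpha}_{-2}\subset C^{k,\alpha}_\beta$, using $|\nabla^j\mathrm{Rm}|=O(r^{-2-j})$. Its norm is bounded by $CN$ after a bootstrap in $k$ from local estimates and the $L^2$ bound. The first part then applies verbatim.
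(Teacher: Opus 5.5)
Your treatment of the rough drift Laplacian is essentially the paper's argument. You apply \cref{prop:prelim-intermediate-Schauder} with $\lambda=0$, use the rescaled-annulus bounds $\|U_R-a\|_{C^{k,\alpha}}\le CNR^\beta$ and $\|U_R\|_{C^{k+2,\alpha}}\le CNR^{2+\beta}$, take the $K$-functional at parameter $-\beta/2$, and interpolate at parameter $1/2$ on $U_R-a_R$.

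The only difference is in the link term. The paper obtains $\|a_R\|_{C^{k+1,\alpha}}\le CNR^{1+\beta}$ by bounding $a_{2R}-a_R$ in $C^{k+1,\alpha}$ by $CNR^{1+\beta}$ and summing dyadically, using $1+\beta>0$. You instead compare $a_R$ with a smoothing of $a\in C^{k,\alpha-\beta}$ at scale $R^{-1}$. That works if the smoothing is a genuine mollifier satisfying
\[
\|a^{(R)}\|_{C^{k+1,\alpha}}\le CNR^{1+\beta},\qquad
\|a^{(R)}\|_{C^{k+2,\alpha}}\le CNR^{2+\beta},\qquad
\|a-a^{(R)}\|_{C^{k,\alpha}}\le CNR^{\beta}.
\]
A single near-optimal $K$-decomposition at $t=R^{-2}$ would only give $R^{1+\beta/2}$ for its smooth part. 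The telescoping avoids building a mollifier on the link bundle.

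The genuine gap is in the stability-operator case. To apply the rough case to $\hat\Delta_{\hat f}u=2v-2\mathrm{Rm}_{\hat g}(u)$, you need an a priori bound putting $\mathrm{Rm}_{\hat g}(u)$ in $C^{k,\alpha}_\beta$. That means a conically weighted bound $u\in C^{k,\alpha}_{\beta_0}$ for some $\beta_0<2+\beta$, controlled by $N$. None of your three justifications supplies it.

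\begin{itemize}
\item \cref{prop:prelim-intermediate-Schauder} is stated only for $\hat\Delta_{\hat f}-\lambda$ on a fixed bundle, not for $\hat L$. If it did apply to $\hat L$, your reduction would be unnecessary.
\item Applying it ``with a crude weight'' to the rewritten equation is circular. Any admissible weight $\beta'\in(-2,0]$ already requires $\mathrm{Rm}_{\hat g}(u)\in C^{k,\alpha}_{\beta'}$, that is, $u\in C^{k,\alpha}_{\beta'+2}$.
\item Local elliptic estimates together with the $L^2_{\hat f}$ bound only give pointwise bounds of Gaussian size, roughly $e^{\hat f/2}N$. They work on balls of radius comparable to $\boldsymbol\rho^{-1}$ and so lose a factor of $\boldsymbol\rho$ per derivative. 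The weighted norm instead needs $|\hat\nabla^ju|\le Cr^{\beta_0-j}$. Polynomial growth of an $L^2_{\hat f}$ solution is a global fact (exclusion of the Gaussian-growth branch of the drift operator), so local estimates cannot produce it.
\end{itemize}

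The missing ingredient is a global weighted Schauder estimate for Gaussian solutions of $\hat L$ itself. The paper applies \cite[Proposition~4.15]{LYZ-I} to $2\hat L=\hat\Delta_{\hat f}+2\mathrm{Rm}_{\hat g}$ at a weight $0<\beta_0<2+\beta$. This gives $u\in C^{k+2,\alpha}_{\beta_0,\mathbf X}$ with bound $C(\|v\|_{C^{k,\alpha}_\beta}+\|u\|_{L^2_{\hat f}})$, and hence $\mathrm{Rm}_{\hat g}u\in C^{k,\alpha}_{\beta_0-2}\subset C^{k,\alpha}_\beta$. Boundedness of $u$ is not needed at this stage, only growth slower than $r^{2+\beta}$. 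With this input, your final step of applying the rough case verbatim is exactly what the paper does.
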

\begin{proof}
Apply \cref{prop:prelim-intermediate-Schauder} with $\lambda=0$,
and write the resulting bound as $CN$.
The rescaled-annulus argument in \cref{lem:fractional-terminal}, now with
$U_R(s,\theta)=\boldsymbol\tau_{Rs}^{-1}u(Rs,\theta)$ and
$a_R=U_R(1,\cdot)$, gives
\[
 \|U_R-a\|_{C^{k,\alpha}}\leq CNR^\beta,\qquad
 \|U_R\|_{C^{k+2,\alpha}}\leq CNR^{2+\beta}.
\]
The $K$-functional at scale $R^{-2}$ and parameter $-\beta/2$
give $a\in C^{k,\alpha-\beta}$.
Interpolation bounds $a_{2R}-a_R$ in $C^{k+1,\alpha}$ by
$CNR^{1+\beta}$. Summing from a fixed initial radius up to $R$
gives $\|a_R\|_{C^{k+1,\alpha}}\leq CNR^{1+\beta}$, since
$1+\beta>0$. The same interpolation applied to $U_R-a_R$
gives this bound on the annulus. A physical derivative contributes
$R^{-1}$, proving the whole-field gradient estimate.
The compact-core bound and uniqueness follow as before.

For the stability operator, choose $0<\beta_0<2+\beta$.
Apply \cite[Proposition~4.15]{LYZ-I} to
$2\hat L=\hat\Delta_{\hat f}+2\mathrm{Rm}_{\hat g}$ at weight $\beta_0$.
It gives $u\in C^{k+2,\alpha}_{\beta_0,\mathbf X}$, and hence
$\mathrm{Rm}_{\hat g}u\in C^{k,\alpha}_\beta$, with bound by
$C(\|v\|_{C^{k,\alpha}_\beta}+\|u\|_{L^2_{\hat f}})$.
Apply the rough-tensor assertion to
$\hat\Delta_{\hat f}u=2v-2\mathrm{Rm}_{\hat g}u$.
\end{proof}

\begin{Corollary}\label{cor:weight-minus-two}
Let $k\geq0$. Every Gaussian solution of
$\hat\Delta_{\hat f}u=v\in C^{k,\alpha}_{-2}(M,E)$ has
\[
 u=\chi\boldsymbol\tau(a)+u^\star,\qquad
 a\in C^{k+2,\alpha}(\Sigma,E_\Sigma),\qquad
 u^\star\in C^{k+2,\alpha}_{-2,\mathbf X},
\]
with norm bounded by $C(\|v\|_{C^{k,\alpha}_{-2}}+
\|u\|_{L^2_{\hat f}})$. The same holds for
$\hat Lu=v\in C^{k,\alpha}_{-2}(S^2T^*M)$.
\end{Corollary}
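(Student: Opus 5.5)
The plan is to combine \cref{lem:tensor-terminal}, which already supplies the degree-zero trace $a$ and a remainder at fractional weight, with a bootstrap that upgrades the remainder to weight $-2$ and the trace to $C^{k+2,\alpha}$. The key observation is that for $v\in C^{k,\alpha}_{-2}$ the only indicial root of $\hat\Delta_{\hat f}$ between $0$ and $-2$ is $0$, since the drift term dominates on the end. So subtracting the degree-zero part should leave a source decaying like $r^{-2}$, which \cref{prop:prelim-intermediate-Schauder} handles at the endpoint $\beta=-2$, $\lambda=0$. There $2\lambda+\beta=-2$ lies outside the half-open interval, so the endpoint is genuinely the critical case treated separately in \cite{LYZ-I}.

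First, since $C^{k,\alpha}_{-2}\hookrightarrow C^{k,\alpha}_\beta$ for any fixed $\beta\in(\alpha-1,-\alpha)$, \cref{lem:tensor-terminal} gives $u=\chi\boldsymbol\tau(a)+u_1$ with $a\in C^{k,\alpha-\beta}$ and $u_1\in C^{k,\alpha}_\beta$. It also bounds $\hat\nabla u$, $\hat\nabla^2u$ and $\hat\nabla_{\mathbf X}u$ in $C^{k,\alpha}_\beta$, all by $CN$ with $N=\|v\|_{C^{k,\alpha}_{-2}}+\|u\|_{L^2_{\hat f}}$.

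Second, I would identify the trace equation. On the end,
\[
\hat\Delta_{\hat f}(\chi\boldsymbol\tau(a))=r^{-2}\boldsymbol\tau(\mathcal Ba)+O(r^{-3}),
\]
where $\mathcal B$ is the cone rough Laplacian on homogeneous degree-zero sections. This uses that $\hat\nabla_{\mathbf X}\boldsymbol\tau(a)=\frac r2\nabla^C_{\partial_r}\boldsymbol\tau(a)+O(r^{-2})$ vanishes up to the connection error from \eqref{eq:prelim-end-errors}. Hence $\hat\Delta_{\hat f}u_1=v-r^{-2}\boldsymbol\tau(\mathcal Ba)+\dots$, and $r^{-2}\boldsymbol\tau(\mathcal Ba)$ is a legitimate weight $-2$ source once $a\in C^{k+2,\alpha}$. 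Rather than invert $\mathcal B$, I would gain link regularity directly. The rescaled-annulus bound $\|U_R\|_{C^{k+2,\alpha}}\le CNR^{2+\beta}$ improves once the Hessian is known at weight $-2$, and the drift identity $\hat\nabla_{\mathbf X}u=\hat\Delta u-v$ feeds it. Concretely, $\hat\nabla_{\mathbf X}u\in C^{k,\alpha}_\beta$ gives $\partial_sU_R=O(R^{\beta})$, which integrates to convergence of $a_R\to a$. Then $\hat\Delta u=v+\hat\nabla_{\mathbf X}u$ gains weight at each step: $\beta\mapsto\beta-2$ for $\hat\nabla_{\mathbf X}u$ via the commuted equation, until the cap at $-2$ coming from $v$. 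Iterating finitely many times, with the weights staying strictly above the critical endpoint until the last step, yields $\hat\nabla^2u,\hat\nabla_{\mathbf X}u\in C^{k,\alpha}_{-2}$ up to an $\epsilon$ loss. At that point the degree-zero trace satisfies $\bar\Delta$-type regularity with right-hand side in $C^{k,\alpha}(\Sigma)$, so local link Schauder gives $a\in C^{k+2,\alpha}$.

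Third, I would set $u^\star=u-\chi\boldsymbol\tau(a)$. Its source is now in $C^{k,\alpha}_{-2}$ with norm $\le CN$. The commuted and drift estimates above show $u^\star,\hat\nabla^2u^\star,\hat\nabla_{\mathbf X}u^\star\in C^{k,\alpha}_{-2}$, which by \eqref{eq:prelim-graph-norm} is exactly $u^\star\in C^{k+2,\alpha}_{-2,\mathbf X}$. The decay $u^\star=O(r^{-2})$ itself follows by integrating $\hat\nabla_{\mathbf X}u^\star=O(r^{-2})$ along $\frac r2\partial_r$ from infinity, since $u^\star\to0$. For $\hat L$, the same reduction as in \cref{lem:tensor-terminal} applies. \cite[Proposition~4.15]{LYZ-I} places $\mathrm{Rm}_{\hat g}u$ in $C^{k,\alpha}_{-2}$, because $\mathrm{Rm}=O(r^{-2})$ and $u$ is bounded, and then one applies the rough case to $\hat\Delta_{\hat f}u=2v-2\mathrm{Rm}_{\hat g}u$.

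The main obstacle is the endpoint step: removing the $\epsilon$-loss at weight exactly $-2$ while simultaneously gaining two link derivatives on $a$. I would try first to quote the critical endpoint estimate of \cite[Lemma~5.3 / Corollary~3.16]{LYZ-I}, which is presumably why the critical graph spaces were introduced. Failing that, I would run the $K$-functional interpolation of \cref{lem:fractional-terminal} at integer parameter.
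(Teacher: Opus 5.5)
\textbf{There is a genuine gap: the endpoint estimate, which is the entire content of the first assertion, is never proved.} For $\lambda=0$, the weight $\beta=-2$ is exactly the excluded endpoint $2\lambda+\beta=-2$ of \cref{prop:prelim-intermediate-Schauder}. The statement (two extra link derivatives for $a$, and $u^\star\in C^{k+2,\alpha}_{-2,\mathbf X}$ with no loss) \emph{is} that endpoint estimate. Your bootstrap stops ``up to an $\epsilon$ loss'', and none of the tools you list can remove that loss.

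\begin{itemize}
\item \emph{The commuted equation.} For $w=\hat\nabla_{\mathbf X}u$ in the scalar case it reads
\[
 (\hat\Delta_{\hat f}-1)w=\hat\nabla_{\mathbf X}v+v-2\langle\Ric_{\hat g},\hat\nabla^2u\rangle .
\]
It costs one derivative of $v\in C^{k,\alpha}_{-2}$. Worse, $-2$ is the indicial degree $-2\lambda$ of $\hat\Delta_{\hat f}-1$. So \cref{prop:prelim-intermediate-Schauder} at $(\lambda,\beta)=(1,-2)$ gives at best $(\hat\nabla_{\mathbf X}+1)w=O(r^{-2})$, which integrates only to $w=O(r^{-2}\log r)$.
\item \emph{The proposed link equation for $a$.} There is none. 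Since $\hat\Delta_{\hat f}(r^{-2}\boldsymbol\tau(c))=r^{-2}\boldsymbol\tau(c)+O(r^{-4})$, the term $r^{-2}\boldsymbol\tau(\mathcal Ba)$ is absorbed by the $r^{-2}$ part of $u^\star$, so $a$ is locally unconstrained. Its $C^{k+2,\alpha}$ regularity must come from a uniform bound $\|U_R\|_{C^{k+2,\alpha}}\le CN$, that is, from $\hat\nabla^2u\in C^{k,\alpha}_{-2}$ exactly. With an $R^{\epsilon}$ loss, the slicewise link Schauder argument yields nothing uniform.
\item \emph{The $K$-functional fallback.} The step in \cref{lem:fractional-terminal} only produces intermediate spaces between two bounds already in hand. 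At an integer parameter it presupposes the endpoint bound rather than proving it.
\end{itemize}

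\textbf{How the paper closes it.} The paper takes the first assertion directly from \cite[Proposition~4.16]{LYZ-I} with $\lambda=0$ and $m=0$, the critical-weight companion of \cref{prop:prelim-intermediate-Schauder}. The results you guessed, Lemma~5.3 and Corollary~3.16 of \cite{LYZ-I}, are the trace-interpolation argument and the description of the critical graph spaces, not this estimate.

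\textbf{The $\hat L$ reduction.} With the correct citation, your reduction for $\hat L$ is the paper's, with one correction. Boundedness of $u$ should come from the $\hat L$ clause of \cref{lem:tensor-terminal}. That clause gives $u=\chi\boldsymbol\tau(a)+u_1$ with $u_1\in C^{k,\alpha}_\beta$, hence $u\in C^{k,\alpha}_0$ and $\mathrm{Rm}_{\hat g}u\in C^{k,\alpha}_{-2}$ with the needed conic derivative bounds. By contrast, \cite[Proposition~4.15]{LYZ-I} is used in this paper only at a positive weight $\beta_0$. By itself it yields $u\in C^{k+2,\alpha}_{\beta_0,\mathbf X}$, hence only $\mathrm{Rm}_{\hat g}u\in C^{k,\alpha}_{\beta_0-2}$, which is not enough.
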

\begin{proof}
The first assertion is \cite[Proposition~4.16]{LYZ-I} with
$\lambda=0$ and $m=0$. For the second, first apply
\cref{lem:tensor-terminal} using the inclusion of the source in
$C^{k,\alpha}_{\beta}$. This gives $u\in C^{k,\alpha}_0$,
so $\mathrm{Rm}_{\hat g}u\in C^{k,\alpha}_{-2}$.
Apply the first assertion to
$\hat\Delta_{\hat f}u=2v-2\mathrm{Rm}_{\hat g}u$.
\end{proof}

For the nonlinear construction set
\begin{equation}\label{eq:finite-orders}
 \mathcal V=C^{6,1/4}_{-1/2}(S^2T^*M).
\end{equation}
Define
\begin{equation}\label{eq:finite-tensor-graphs}
 \begin{aligned}
 \mathscr F&=\mathcal V\cap\ker\hat D,\qquad
 \mathscr F'=\mathscr F\cap\ISD^{\perp_{L^2_{\hat f}}},\\
 \mathscr H'&=\{h\in H^2_{\hat f}:\hat Dh=0,\ h\perp_{L^2_{\hat f}}\ISD,
                      \ \hat Lh\in\mathscr F'\},\\
 \mathscr H&=\ISD\oplus\mathscr H'.
 \end{aligned}
\end{equation}
The target has its inherited weighted norm, and
\[
 \|h\|_{\mathscr H'}=\|h\|_{H^2_{\hat f}}+\|\hat Lh\|_{\mathscr F'}.
\]
Use a fixed norm on the finite-dimensional summand of $\mathscr H$.
All equations and kernel conditions in these definitions are distributional.

For functions put
\begin{equation}\label{eq:finite-scalar-graphs}
 \begin{aligned}
 L_{1/2}&=\hat\Delta_{\hat f}+\tfrac12,
 &G_{1/2}&=L_{1/2}^{-1},\\
 \mathcal T&=\chi C^{6,3/4}(\Sigma)
                     \oplus C^{6,1/4}_{-1/2}(M),
 &\mathscr Z&=G_{1/2}\mathcal T,\qquad
 \|u\|_{\mathscr Z}=\|L_{1/2}u\|_{\mathcal T}.
 \end{aligned}
\end{equation}
The target norm is the sum of its coefficient and remainder norms.
The scalar inverse exists on a nontrivial AC shrinker by
\cite[Section~5.2]{LYZ-I}.

\begin{Proposition}\label{prop:finite-graph-realization}
The spaces in \eqref{eq:finite-tensor-graphs} and
\eqref{eq:finite-scalar-graphs} are Banach spaces continuously embedded
in distributions. The map $\hat L:\mathscr H'\to\mathscr F'$ is a
bounded isomorphism. For $h\in\mathscr H$ there is a unique tangential
link tensor $a_h\in C^{6,3/4}$ such that
\begin{equation}\label{eq:finite-tensor-bounds}
 \begin{aligned}
 &\|a_h\|_{C^{6,3/4}}
 +\|h-\chi\boldsymbol\tau(a_h)\|_{C^{6,1/4}_{-1/2}}
 +\|h\|_{C^{6,1/4}_0}\\
 &\quad+\|\hat\nabla h\|_{C^{6,1/4}_{-1/2}}
 +\|\hat\nabla^2h\|_{C^{6,1/4}_{-1/2}}
 +\|\hat\nabla_{\mathbf X}h\|_{C^{6,1/4}_{-1/2}}
 \le C\|h\|_{\mathscr H}.
 \end{aligned}
\end{equation}
For $v=h(\mathbf X,\cdot)$, one also has
\begin{equation}\label{eq:graph-radial-bounds}
 \|v\|_{C^{6,1/4}_{-1/2}}
 +\|\hat\nabla v\|_{C^{6,1/4}_{-1/2}}
 +\|h(\mathbf X,\mathbf X)+\tfrac12\chi\tr_{g_\Sigma}a_h
       \|_{C^{6,1/4}_{-1/2}}
 \leq C\|h\|_{\mathscr H}.
\end{equation}
For $u\in\mathscr Z$ write $L_{1/2}u=\chi b+v^\star$.
There is a unique $a_u\in C^{7,3/4}(\Sigma)$ with
\begin{equation}\label{eq:finite-scalar-bounds}
 \begin{aligned}
 &\|a_u\|_{C^{7,3/4}}
 +\|u-\chi ra_u-2\chi b\|_{C^{6,1/4}_{-1/2}}
 +\|\hat\nabla^2u\|_{C^{6,1/4}_{-1/2}}\\
 &\quad+\|\hat\nabla u-\nabla^C(\chi ra_u)\|_{C^{6,1/4}_{-1/2}}
 +\|(\hat\nabla_{\mathbf X}-\tfrac12)u+\chi b\|_{C^{6,1/4}_{-1/2}}
 \le C\|u\|_{\mathscr Z}.
 \end{aligned}
\end{equation}
Both graphs embed continuously in $C^{8,1/4}_{\mathrm{loc}}$,
and $\mathscr Z\hookrightarrow C^{6,1/4}_1$. Moreover,
\begin{equation}\label{eq:original-to-graph}
 E_8\cap\ker\hat D\hookrightarrow\mathscr H.
\end{equation}
Every sufficiently small $h\in\mathscr H$ defines a complete positive
metric $\hat g+h$.
\end{Proposition}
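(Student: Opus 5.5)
The plan is to prove the assertions of \cref{prop:finite-graph-realization} in the order they are stated, reducing each to the finite estimates \cref{lem:fractional-terminal,lem:fractional-degree-zero,lem:tensor-terminal} and \cref{cor:weight-minus-two}. Throughout we take $\alpha=1/4$ and $\beta=-1/2$; these satisfy $\alpha-1<\beta<-\alpha$, and they give $\alpha-\beta=3/4$.

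\textbf{Completeness.} Each target ($\mathcal V$, $\mathscr F$, $\mathscr F'$, $\mathcal T$) is a closed subspace of a weighted H\"older space, or a direct sum of such spaces with $\chi C^{6,3/4}(\Sigma)$. Closedness holds because $\hat D$ and $L^2_{\hat f}$-pairing with the finite-dimensional $\ISD$ are continuous on $\mathcal V\hookrightarrow H^0_{\hat f}$, and these targets embed in distributions.

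For the graph spaces, take a Cauchy sequence $h_i$ in $\mathscr H'$. It converges in $H^2_{\hat f}$, and $\hat Lh_i$ converges in $\mathscr F'$. The two limits agree as distributions, so $\mathscr H'$ is complete. The constraints $\hat Dh=0$ and $h\perp\ISD$ pass to the limit.

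\textbf{The isomorphism $\hat L:\mathscr H'\to\mathscr F'$.}
\begin{itemize}
\item Injectivity: an element of $\ker\hat L$ with $\hat Dh=0$ lies in $\ISD$, and $\ISD\cap\ISD^\perp=0$.
\item Surjectivity: use Gaussian Fredholm theory for the self-adjoint $\hat L$ on $L^2_{\hat f}$ (\cite[Propositions~3.3--3.4]{LYZ-I}). For $v\in\mathscr F'$, the condition $v\perp\ker\hat L$ requires $\ker\hat L=\ISD$ on divergence-free fields. The Gaussian solution $h$ must also satisfy $\hat Dh=0$. For this I would use the commutation identity $\hat D\hat L=\frac12\hat\Delta_{\hat f}\hat D$, valid on a shrinker (up to the drift correction, as in \cite{LYZ-I}), together with injectivity of $\hat\Delta_{\hat f}$ on $L^2_{\hat f}$ one-forms of the relevant type. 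Subtracting a component in $\ker\hat L$ gives $h\perp\ISD$. Since $\hat L$ on $\ker\hat D$ may have a larger kernel consisting of non-divergence-free fields, this is where I would be most careful.
\item Boundedness of the inverse: open mapping theorem.
\end{itemize}

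\textbf{The tensor bounds \eqref{eq:finite-tensor-bounds}.} For $h\in\mathscr H'$, apply the last assertion of \cref{lem:tensor-terminal} with $k=6$ and $v=\hat Lh$. This directly yields $a_h\in C^{6,3/4}$ and all displayed norms except $\|h\|_{C^{6,1/4}_0}$, which follows since $\chi\boldsymbol\tau(a_h)\in C^{6,1/4}_0$. For $k\in\ISD$, use $\hat Lk=0$ and the same lemma, with the bound by $\|k\|_{L^2_{\hat f}}$, which is equivalent to the fixed norm on the finite-dimensional summand.

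Tangentiality of $a_h$ and the radial bounds \eqref{eq:graph-radial-bounds} need an extra argument. On the end, $\mathbf X=\frac r2\partial_r+O(r^{-3})$ by \eqref{eq:prelim-canonical-end}, so
\[
 (\hat Dh)(\cdot)=\hat\delta h-h(\mathbf X,\cdot)=0
\]
gives $h(\mathbf X,\cdot)=\hat\delta h\in C^{6,1/4}_{-3/2}\subset C^{6,1/4}_{-1/2}$, since $\hat\nabla h\in C_{-1/2}$. The radial components of the degree-zero coefficient therefore vanish, because $\frac r2\,\boldsymbol\tau(a)(\partial_r,\cdot)$ has degree one. Differentiating gives $\hat\nabla v$ from $\hat\nabla^2h$ and $\hat\nabla\mathbf X=\frac12\hat g-\Ric$. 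For $h(\mathbf X,\mathbf X)$, I would contract $\hat Dh=0$ once more with $\mathbf X$ and commute. The term $\frac12\chi\tr_{g_\Sigma}a_h$ should arise from $\hat\delta$ of the degree-zero tangential part on the cone, through the identity $\delta^C(\boldsymbol\tau(a))(\partial_r)=-r^{-1}\tr a$. I expect this bookkeeping of the radial identity to be the main technical obstacle.

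\textbf{The scalar graph.} The bounds \eqref{eq:finite-scalar-bounds} are exactly \cref{lem:fractional-degree-zero} with $k=6$, $v_0=b\in C^{6,3/4}$ and $\beta=-1/2$. The regularity $a_u\in C^{7,3/4}$ is its $C^{k+1,\alpha-\beta}$ conclusion. The embedding $\mathscr Z\hookrightarrow C^{6,1/4}_1$ follows from the expansion, since $\chi ra_u$, $\chi b$ and the remainder all lie in weight at most one.

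\textbf{Remaining assertions.}
\begin{itemize}
\item Local $C^{8,1/4}$ embeddings: interior Schauder estimates on the compact core and on normalized annuli for $\hat L$ and $L_{1/2}$, with $C^{6,1/4}$ sources.
\item Inclusion \eqref{eq:original-to-graph}: for $h\in E_8\cap\ker\hat D$, the expansion \eqref{eq:prelim-E-expansion} and the fact that $\hat L$ kills the leading cone term to order $r^{-2}$ give $\hat Lh\in C^{6,1/4}_{-1/2}$. The divergence-free condition is preserved. Then decompose $h$ into its $\ISD$ component and its $\ISD^\perp$ component. The main check is that $\hat L$ of the degree-zero and degree-minus-one terms lands in weight $-1/2$, i.e.\ that the drift $\hat\nabla_{\mathbf X}$ annihilates degree-zero tensors up to lower order.
\item Completeness of $\hat g+h$: smallness of $\|h\|_{C^0_0}$ gives uniform equivalence to $\hat g$, hence a complete positive metric.
\end{itemize}
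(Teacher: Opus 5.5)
\textbf{Gap: surjectivity of $\hat L:\mathscr H'\to\mathscr F'$.} Your handling of completeness, the tensor and scalar bounds via \cref{lem:tensor-terminal,lem:fractional-degree-zero}, tangentiality, the inclusion \eqref{eq:original-to-graph}, and completeness of $\hat g+h$ follows the paper. The genuine gap is in surjectivity, which is the core of the proposition.

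With the normalization \eqref{eq:prelim-shrinker-normalization}, the intertwining identity is $\hat D\hat L=(\tfrac12\hat\Delta_{\hat f}+\tfrac14)\hat D$, not $\tfrac12\hat\Delta_{\hat f}\hat D$. The one-form operator $\tfrac12\hat\Delta_{\hat f}+\tfrac14$ is \emph{not} injective on $L^2_{\hat f}$. Indeed, $\hat\Delta_{\hat f}\hat f=\tfrac n2+\boldsymbol\mu(\hat g)-\hat f$ and $\mathrm d\hat\Delta_{\hat f}u=\hat\Delta_{\hat f}\mathrm du-\tfrac12\mathrm du$ together give $\hat\Delta_{\hat f}\mathrm d\hat f=-\tfrac12\mathrm d\hat f$. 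So $\hat D\hat Lh=0$ does not force $\hat Dh=0$.

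No sharper injectivity statement can repair this. The tensor $\hat\nabla^2\hat f=\tfrac12\hat g-\Ric_{\hat g}$ satisfies $\hat L\hat\nabla^2\hat f=0$, because $\hat L\hat g=\Ric_{\hat g}$ and $\hat L\Ric_{\hat g}=\tfrac12\Ric_{\hat g}$. Yet $\hat D\hat\nabla^2\hat f=-\tfrac12\mathrm d\hat f\neq0$. Adding it to any Gaussian solution of $\hat Lh=v$ gives a solution that is not divergence free. The same example shows $\ker\hat L\supsetneq\ISD$ on the full tensor space. So knowing $\ker\hat L\cap\ker\hat D=\ISD$ does not verify the solvability condition $v\perp\ker\hat L$ for $v\in\mathscr F'$.

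\textbf{Missing idea: $\ker\hat D$ is a reducing subspace.} It is reducing for the Gaussian self-adjoint realization of $\hat L$. The paper derives this from the pair of identities
\[
 \hat D\hat L=(\tfrac12\hat\Delta_{\hat f}+\tfrac14)\hat D,\qquad
 \hat L\hat D^*=\hat D^*(\tfrac12\hat\Delta_{\hat f}+\tfrac14).
\]
For a large positive $\lambda$, the one-form operator $\tfrac12\hat\Delta_{\hat f}+\tfrac14-\lambda$ is invertible. Applying $\hat D$ then shows that $(\hat L-\lambda)^{-1}$ preserves $\ker\hat D$, and self-adjointness makes the subspace reducing. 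The restricted operator has compact resolvent and kernel exactly $\ISD$. Its Fredholm alternative gives, for each $q\in\mathscr F'$, a unique $h\in\ker\hat D\cap\ISD^\perp$ with $\hat Lh=q$ and $\|h\|_{H^2_{\hat f}}\le C\|q\|_{L^2_{\hat f}}$. This settles solvability and the divergence condition together.

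\textbf{Two minor points.} First, $\hat\delta h$ lies only in $C^{6,1/4}_{-1/2}$, not $C^{6,1/4}_{-3/2}$; that still suffices for tangentiality. Second, your ``contract and commute'' step should be carried out as the exact identity $h(\mathbf X,\mathbf X)=\hat\delta(\hat\delta h)-\langle h,\hat\nabla\mathbf X\rangle$, obtained by applying $\hat\delta$ to $h(\mathbf X,\cdot)=\hat\delta h$. Its first term is a contraction of $\hat\nabla^2h$. The second, $-\langle h,\tfrac12\hat g-\Ric_{\hat g}\rangle$, equals $-\tfrac12\chi\tr_{g_\Sigma}a_h$ modulo $C^{6,1/4}_{-1/2}$. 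Expanding $(\hat\delta h)(\mathbf X)$ term by term instead would need the remainder's divergence at weight $-3/2$ with $C^{6,1/4}$ regularity, which the available bounds do not provide.
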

\begin{proof}
The weighted target embeds continuously in $L^2_{\hat f}$ and in local
distributions. Distributional differentiation and Gaussian pairing
against any fixed kernel tensor are continuous in this norm. Thus
$\mathscr F$ and $\mathscr F'$ are closed. The Gaussian self-adjoint
realization of $\hat L$ has compact resolvent by
\cite[Propositions~3.3--3.4]{LYZ-I}. Its closed divergence-free
subspace is reducing: the identities
\[
 \hat D\hat L=(\tfrac12\hat\Delta_{\hat f}+\tfrac14)\hat D,
 \qquad
 \hat L\hat D^*=\hat D^*(\tfrac12\hat\Delta_{\hat f}+\tfrac14)
\]
first hold on smooth fields and extend to the closed realizations.
For a sufficiently large positive resolvent parameter the one-form
operator on the right is invertible; applying $\hat D$ shows that
the tensor resolvent preserves $\ker\hat D$. Self-adjointness gives
the reducing assertion. The restricted Fredholm alternative gives,
for each $q\in\mathscr F'$, a unique normal solution $h$ with
\[
 \|h\|_{H^2_{\hat f}}\le C\|q\|_{L^2_{\hat f}}\le C\|q\|_{\mathscr F'}.
\]
This proves surjectivity and the inverse bound, as well as completeness
of the inverse-image graph. The finite-dimensional kernel consists of
smooth fields. Repeated use of \cref{cor:weight-minus-two} with
zero source shows that each kernel tensor has a smooth degree-zero
trace and weight-minus-two remainder. The slice equation forces its
trace to be tangential, so it also belongs to every original $E_m$.

Apply \cref{lem:tensor-terminal} to the normal Gaussian solution and
to the kernel summand. This gives \eqref{eq:finite-tensor-bounds},
apart from tangentiality. Since
$h(\mathbf X,\cdot)=\hat\delta h\in C^{6,1/4}_{-1/2}$,
the degree-one radial row of $\chi\boldsymbol\tau(a_h)(\mathbf X,\cdot)$
vanishes. Thus $a_h$ is tangential. The exact slice identities are
\begin{equation}\label{eq:graph-radial-identities}
 h(\mathbf X,\cdot)=\hat\delta h,\qquad
 h(\mathbf X,\mathbf X)=\hat\delta(\hat\delta h)
                              -\langle h,\hat\nabla\mathbf X\rangle.
\end{equation}
The second follows by differentiating the first and contracting.
The whole-field gradient and Hessian estimates control $v=\hat\delta h$
and $\hat\nabla v$ in $C^{6,1/4}_{-1/2}$. Since
$\hat\nabla\mathbf X=\tfrac12\hat g-\Ric_{\hat g}$, the tensor expansion
and the second identity give \eqref{eq:graph-radial-bounds}.
The scalar statements are precisely
\cref{lem:fractional-terminal,lem:fractional-degree-zero}, with the bounded Gaussian
inverse absorbing the low norm. Local elliptic regularity proves the
local inclusions. The scalar target decomposition is unique, and its
inverse-image norm proves completeness of $\mathscr Z$.

For \eqref{eq:original-to-graph}, write the original expansion
$h=h_0+h_{-1}+h^\star$. Direct differentiation gives
\[
 \hat Lh_0\in C^{6,1/4}_{-2},\qquad
 \hat Lh_{-1}-\tfrac14h_{-1}\in C^{6,1/4}_{-3},\qquad
 \hat Lh^\star\in C^{6,1/4}_{-2},
 \qquad \|\hat Lh\|_{C^{6,1/4}_{-1}}\le C\|h\|_{E_8}.
\]
The drift on the remainder uses one derivative, and its Laplacian
uses two. The commutation identity gives $\hat D\hat Lh=0$;
self-adjointness gives $\hat Lh\perp\ISD$. Gaussian projection onto
$\ISD$ is bounded on $E_8$, proving the inclusion. Finally,
\eqref{eq:finite-tensor-bounds} controls $\|h\|_{C^0}$, so small
graph balls give metrics uniformly equivalent to the complete $\hat g$.
\end{proof}

\section{Analyticity of Perelman's functional}\label{sec:metric-LS-auxiliary}
We construct the scalar branch on the fixed tensor graph $\mathscr H$.
The same domain will be used for the analytic center in
Section~\ref{sec:finite-Banach-metric-LS}.

\subsection{Deformations and the analytic scalar branch}\label{sec:computation-of-deformation}\label{sec:analyticity-of-mu-IFT}
All derivatives in the following computations use $\hat g$.
For $g=\hat g+h$ let $H=\hat g^{-1}h$, $B=(I+H)^{-1}$,
$v=h(\mathbf X,\cdot)$.
Write $v^\sharp$ for raising with $\hat g$. Put
\[
 T_{ij\ell}=\hat\nabla_i h_{j\ell}+\hat\nabla_j h_{i\ell}
                           -\hat\nabla_\ell h_{ij},\qquad
 A^k{}_{ij}=\tfrac12g^{k\ell}T_{ij\ell}.
\]
Thus $A=\nabla^g-\hat\nabla$ and
$\nabla^g\hat f=\mathbf X-Bv^\sharp$.

\begin{Lemma}\label{lem:graph-nonlinear-closure}
On a sufficiently small ball in $\mathscr H\times\mathscr Z$, the maps
\begin{align}
 W(h,u)&=2\Delta_g(\hat f+u)-|\mathrm d(\hat f+u)|_g^2
       +\mathrm R_g+\hat f+u-n-\boldsymbol\mu(\hat g),
       \label{eq:function-for-minimizer-IFT}\\
 (h,u)&\longmapsto\Phi(g,\hat f+u)
       \label{eq:graph-defect-map}
\end{align}
are real analytic into $\mathcal T$ and $\mathcal V$, respectively.
At the origin,
\begin{equation}\label{eq:scalar-Banach-differential}
 \mathrm D_uW(0,0)=2L_{1/2},\qquad
 \mathrm D_hW(0,0)[k]=-L_{1/2}\tr_{\hat g}k
 \quad(k\in\mathscr H).
\end{equation}
For $\epsilon=\|h\|_{\mathscr H}+\|u\|_{\mathscr Z}$, one also has
\begin{equation}\label{eq:finite-low-coefficients}
 \begin{gathered}
 \|h\|_{C^{3,1/4}_0}+\|u\|_{C^{3,1/4}_1}\le C\epsilon,
 \quad\|\nabla^g(\hat f+u)-\mathbf X\|_{C^0}\le C\epsilon,\\
 \|\Phi(g,\hat f+u)\|_{C^0_{-1/2}}\le C\epsilon.
 \end{gathered}
\end{equation}
\end{Lemma}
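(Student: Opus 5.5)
The plan is to expand $W$ and $\Phi(g,\hat f+u)$ in $(h,u)$ around $(\hat g,\hat f)$ using only background derivatives. Each term will be a polynomial in $h$, $u$, their background derivatives up to order two, $B=(I+H)^{-1}$, and the fixed background fields $\mathbf X$, $\hat\nabla\mathbf X$, $\Rm_{\hat g}$. The one dangerous contribution is the drift, which carries the linearly growing $\mathbf X$; it will be rewritten through the slice identities \eqref{eq:graph-radial-identities}. Analyticity then follows from three facts: $B$ is an analytic (Neumann series) function of $h$ in $C^{6,1/4}_0$, which \eqref{eq:finite-tensor-bounds} controls; multilinear products of bounded maps are analytic; and products add weights (\cite[Lemmas~3.10--3.11]{LYZ-I}).

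First write, with $A=\nabla^g-\hat\nabla$,
\[
 \Delta_g\phi=g^{ij}(\hat\nabla^2_{ij}\phi-A^k{}_{ij}\partial_k\phi),\qquad
 |\mathrm d\phi|^2_g=g^{ij}\partial_i\phi\,\partial_j\phi,\qquad \phi=\hat f+u.
\]
Subtract the background identities $2\hat\Delta\hat f-|\mathbf X|^2+\mathrm R_{\hat g}+\hat f-n-\boldsymbol\mu(\hat g)=0$ (a consequence of \eqref{eq:prelim-background-drift} together with $\mathrm R+\hat\Delta\hat f=n/2$) and $\Phi(\hat g,\hat f)=0$. The differences then split into three kinds of terms.

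\emph{(a) Terms without $\mathbf X$.} These are quadratic or higher in $\hat\nabla h$, $\hat\nabla^2h$, $\hat\nabla^2u$, or linear terms such as $\Rm*h$. They lie in $C^{4,1/4}_{-1/2}$, or better, directly by \eqref{eq:finite-tensor-bounds} and \eqref{eq:finite-scalar-bounds}.

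\emph{(b) Terms contracting $h$ or $\hat\nabla h$ with $\mathbf X$.} These include $g^{ij}\mathbf X_i\mathbf X_j-|\mathbf X|^2=-h(\mathbf X,\mathbf X)+B*v*v$ and $A(\mathbf X)\sim\hat\nabla h*\mathbf X$. Rewrite them via $v=\hat\delta h$, $\hat\nabla_{\mathbf X}h$, and $h(\mathbf X,\mathbf X)=\hat\delta\hat\delta h-\langle h,\hat\nabla\mathbf X\rangle$. By \eqref{eq:graph-radial-bounds} all of these are in $C^{6,1/4}_{-1/2}$ except the degree-zero piece $-\tfrac12\chi\tr_{g_\Sigma}a_h$, which lands in the coefficient summand $\chi C^{6,3/4}(\Sigma)$ of $\mathcal T$.

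\emph{(c) Terms with $u$.} The term $2\langle\mathbf X,\mathrm du\rangle$ combines with $2\hat\Delta u$ and $u$ into $2\hat\Delta_{\hat f}u+u=2L_{1/2}u\in\mathcal T$ exactly. The nonlinear pieces $B*v*\mathrm du$ and $|\mathrm du|^2$ need the structure $\hat\nabla u=\nabla^C(\chi ra_u)+O(r^{-1/2})$. The product $|\mathrm du|^2$ has a degree-zero part $|\mathrm d(ra_u)|^2_{g_C}=a_u^2+|\bar\nabla a_u|^2$, which is a link function in $C^{6,3/4}$ and so lies in the coefficient summand of $\mathcal T$. The remaining cross terms are $O(r^{-1/2})$.

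For the defect, $\nabla^2_g\phi$ involves $A*\mathrm d\phi$. The $A*\mathbf X$ part is handled as in (b), using $\hat\nabla v$ and $\hat\nabla_{\mathbf X}h$. This places $\Phi$ in $\mathcal V=C^{6,1/4}_{-1/2}$, but only after observing that its degree-zero part vanishes. That vanishing needs checking: the $\nabla^2u$ part has no degree-zero term, since $\hat\nabla^2u\in C^{6,1/4}_{-1/2}$, and the $\Ric_g$ and $A*\mathbf X$ terms combine to $\tfrac12\hat\nabla_{\mathbf X}h-\tfrac12 h+\dots$, which is weight $-1/2$ by \eqref{eq:finite-tensor-bounds}.

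The regularity bookkeeping, namely six derivatives with H\"older exponents $1/4$ versus $3/4$, uses the fractional link regularity of $a_h$ and $a_u$. Terms involving $\hat\nabla^2h$ lose no order, since $\mathscr H$ controls $\hat\nabla^2h$ in $C^{6,1/4}_{-1/2}$.

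The differential \eqref{eq:scalar-Banach-differential} is read off from the linear terms. The $u$-linear part is $2L_{1/2}u$ as above. The $h$-linear part of $\mathrm R_g-|\mathrm d\hat f|^2_g+2\Delta_g\hat f$ is computed with the standard first-variation formulas together with $\hat Dk=0$, which kills $\hat\delta\hat\delta k$ against $\hat\delta k(\mathbf X)$ terms. The result should be $-\hat\Delta\tr k+\langle k,\Ric\rangle+k(\mathbf X,\mathbf X)-2\langle k,\hat\nabla^2\hat f\rangle-\langle\mathbf X,\mathrm d\tr k\rangle$. Using $\Ric+\hat\nabla^2\hat f=\tfrac12\hat g$ and the slice identity, this reduces to $-L_{1/2}\tr k$.

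The estimates \eqref{eq:finite-low-coefficients} follow from the embeddings $\mathscr H\hookrightarrow C^{6,1/4}_0$ and $\mathscr Z\hookrightarrow C^{6,1/4}_1$. They also use $\nabla^g\phi-\mathbf X=-Bv^\sharp+B\,\mathrm du^\sharp$ with $v\in C^0_{-1/2}$ and $\mathrm du\in C^0_0$, together with the $\mathcal V$ bound for $\Phi$.

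The main obstacle is (b)/(c): showing every $\mathbf X$-contracted term closes in the graph spaces without losing derivatives, and that the degree-zero parts land exactly in $\chi C^{6,3/4}(\Sigma)$, or vanish for $\Phi$.
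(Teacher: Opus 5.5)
Your route is the paper's: expand about the background, move the drift off $\hat\nabla h$ with the slice identities, get analyticity from the Neumann series for $B$ and weighted products, and place the leading coefficients in $\chi C^{6,3/4}(\Sigma)$. The gap is in the one cancellation that is mandatory. Since $\mathcal V=C^{6,1/4}_{-1/2}$ has no degree-zero summand, $\Phi(g,\hat f+u)\in\mathcal V$ needs its non-decaying part to vanish. Your check of this is false as stated. \eqref{eq:finite-tensor-bounds} puts $\hat\nabla_{\mathbf X}h$ in $C^{6,1/4}_{-1/2}$, but $h$ carries the nonzero degree-zero term $\chi\boldsymbol\tau(a_h)$. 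So $\tfrac12\hat\nabla_{\mathbf X}h-\tfrac12h$ is not of weight $-1/2$.

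What is missing is the following. Moving $\mathbf X$ through $h$ gives $\tfrac12T_{ij\ell}X^\ell=\operatorname{Sym}(\hat\nabla v)_{ij}-\operatorname{Sym}(h\circ\hat\nabla\mathbf X)_{ij}-\tfrac12\hat\nabla_{\mathbf X}h_{ij}$. With $\hat\nabla\mathbf X=\tfrac12\hat g-\Ric_{\hat g}$, the middle term contributes $+\tfrac12h$ to $\nabla_g^2\hat f$. This is cancelled by the $\tfrac12h$ in $\tfrac12g=\tfrac12\hat g+\tfrac12h$, which your accounting never uses; $\Ric_g$ plays no role in it. This is \eqref{eq:graph-Hessian-cancellation}: $K_h=\nabla_g^2\hat f-\hat\nabla^2\hat f-\tfrac12h=-\operatorname{Sym}(\hat\nabla v)-\operatorname{Sym}(h\circ\Ric_{\hat g})+\tfrac12\hat\nabla_{\mathbf X}h+\tfrac12T(Bv^\sharp)$. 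Each term lies in $\mathcal V$, so $\Phi=-(\Ric_g-\Ric_{\hat g})-K_h-\hat\nabla^2u+A\,\mathrm du\in\mathcal V$.

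The same term is misfiled in your treatment of $W$. The term $2(g^{-1}-\hat g^{-1}):\hat\nabla^2\hat f$ has degree zero, so it does not belong in your class (a). The trace of $-\operatorname{Sym}(h\circ\hat\nabla\mathbf X)$ inside $-2\tr_gA(\mathrm d\hat f)$ is also of degree zero and is not covered by \eqref{eq:graph-radial-bounds}. These two cancel exactly because $\tfrac12\tr_g(\hat g+h)=\tfrac n2$, i.e.\ $\Delta_g\hat f=\tfrac n2-\tr_g\Ric_{\hat g}+\tr_gK_h$. For $W$ this does not threaten membership in $\mathcal T$, which has a coefficient summand. It is, however, what makes $-\tfrac12\tr_{g_\Sigma}a_h$ the correct metric contribution. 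Similarly, $(g^{-1}-\hat g^{-1})(\mathrm du,\mathrm du)$ has degree zero, not $O(r^{-1/2})$, so the link norm of $\mathrm d_\Sigma a_u$ must be taken with $g_\Sigma+a_h$.

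Your intermediate linearization is also wrong, and as written it does not reduce to $-L_{1/2}\tr k$. The signs of $\langle k,\Ric\rangle$ and $\langle\mathbf X,\mathrm d\tr k\rangle$ are flipped, and the $\hat\delta\hat\delta k$ and $(\hat\delta k)(\mathbf X)$ terms are missing. The correct $h$-linear part of $\mathrm R_g-|\mathrm d\hat f|_g^2+2\Delta_g\hat f$ is
\[
 -\hat\Delta\tr k+\hat\delta\hat\delta k-\langle k,\Ric_{\hat g}\rangle
 +k(\mathbf X,\mathbf X)-2\langle k,\hat\nabla^2\hat f\rangle
 -2(\hat\delta k)(\mathbf X)+\langle\mathbf X,\mathrm d\tr k\rangle .
\]
Insert $\hat\delta k=k(\mathbf X,\cdot)$ and $\hat\delta\hat\delta k=k(\mathbf X,\mathbf X)+\langle k,\hat\nabla\mathbf X\rangle$. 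This gives $-\hat\Delta\tr k+\langle\mathbf X,\mathrm d\tr k\rangle-\langle k,\Ric_{\hat g}+\hat\nabla^2\hat f\rangle=-L_{1/2}\tr k$, as claimed.
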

\begin{proof}
The space of degree-zero coefficient expansions with
$C^{6,3/4}$ coefficient and $C^{6,1/4}_{-1/2}$
remainder is a Banach algebra: subtract the product of the leading
coefficients, and each remaining product has a decaying factor.
Multiplication by a bounded $C^{6,1/4}_0$ field is bounded on the
decaying target. These assertions follow on each normalized annulus
from the H\"older product inequality, with a uniform constant.
Consequently $B$, $g^{-1}$, their traces and determinants are analytic
in the indicated expansion algebra, by their convergent power series.

We verify the contractions with the growing background potential.
The slice identities \eqref{eq:graph-radial-identities} give
\[
 \tfrac12 T_{ij\ell}X^\ell
 =\operatorname{Sym}(\hat\nabla v)_{ij}
      -\operatorname{Sym}(h\circ\hat\nabla^2\hat f)_{ij}
      -\tfrac12\hat\nabla_{\mathbf X}h_{ij}.
\]
Here compositions use $\hat g$. It follows exactly that
\begin{equation}\label{eq:graph-Hessian-cancellation}
 \begin{split}
 K_h&:=\nabla_g^2\hat f-\hat\nabla^2\hat f-\tfrac12h\\
 &=-\operatorname{Sym}(\hat\nabla v)
   -\operatorname{Sym}(h\circ\Ric_{\hat g})
   +\tfrac12\hat\nabla_{\mathbf X}h
   +\tfrac12T(Bv^\sharp).
 \end{split}
\end{equation}
Every displayed term is analytic into $\mathcal V$, using the
whole-field bounds of \cref{prop:finite-graph-realization}.
The curvature difference is a sum of contractions of
$g^{-1}\hat\nabla^2h$, $g^{-1}g^{-1}\hat\nabla h\hat\nabla h$,
and the background curvature times $h$, so it has the same property.
The exact identity
\begin{equation}\label{eq:graph-defect-cancellation}
 \Phi(g,\hat f+u)=-(\Ric_g-\Ric_{\hat g})-K_h
                       -\hat\nabla^2u+A\,\mathrm du
\end{equation}
proves the second assertion: $\mathrm du$ is bounded in
$C^{6,1/4}_0$, $A\in C^{6,1/4}_{-1/2}$, and the
whole Hessian of $u$ is in the decaying target.

For the scalar expression, the metric-only part is exactly
\begin{equation}\label{eq:graph-scalar-background}
 \begin{split}
 W(h,0)={}&\mathrm R_g-\mathrm R_{\hat g}
  +2(\mathrm R_{\hat g}-\tr_g\Ric_{\hat g})
  +2\tr_g K_h\,+h(\mathbf X,\mathbf X)-\langle v^\sharp,Bv^\sharp\rangle.
 \end{split}
\end{equation}
All terms except the double radial contraction are in the decaying
target. By \eqref{eq:graph-radial-bounds}, its leading coefficient is
$-\tfrac12\tr_{g_\Sigma}a_h$. Moreover,
\begin{equation}\label{eq:graph-scalar-full}
 \begin{split}
 W(h,u)={}&W(h,0)+2L_{1/2}u
 +2(g^{-1}-\hat g^{-1}):\hat\nabla^2u
 -2(\tr_g A)\,\mathrm du\\
 &+2\langle Bv^\sharp,\hat\nabla u\rangle
 -|\mathrm du|_g^2.
 \end{split}
\end{equation}
The middle three terms have weight at most $-1/2$.
If $L_{1/2}u=\chi b+v^\star$, the gradient estimate in
\eqref{eq:finite-scalar-bounds} shows that the last term has leading
coefficient
$a_u^2+|\mathrm d_\Sigma a_u|_{g_\Sigma+a_h}^2$.
Thus the full leading scalar coefficient is
\begin{equation}\label{eq:scalar-leading-coefficient}
 -\tfrac12\tr_{g_\Sigma}a_h+2b
                       -a_u^2-|\mathrm d_\Sigma a_u|_{g_\Sigma+a_h}^2.
\end{equation}
It belongs to $C^{6,3/4}$, while the remainder belongs
to $C^{6,1/4}_{-1/2}$. The cutoff transition is compact and
is included in the remainder. These formulas prove, on fixed small balls,
\[
 \|W(h,u)-2L_{1/2}u\|_{\mathcal T}
 \le C\bigl(\|h\|_{\mathscr H}
              +\|h\|_{\mathscr H}\|u\|_{\mathscr Z}
              +\|u\|_{\mathscr Z}^2\bigr).
\]
The same bounded products and convergent inverse-matrix series prove
real analyticity. The differential
\eqref{eq:scalar-Banach-differential} follows from the standard variation
formulas \cite[(2.9)--(2.11)]{cao2012second}, using $\hat Dk=0$.

The first bound in \eqref{eq:finite-low-coefficients} follows from
\cref{prop:finite-graph-realization}. The identity
\[
 \nabla^g(\hat f+u)-\mathbf X
       =B(\hat\nabla u-v^\sharp)
\]
and the same graph bounds give the drift estimate. Analyticity of the
defect map and $\Phi(\hat g,\hat f)=0$ give the remaining bound.
Differentiating the inverse-metric, connection, and drift formulas also
controls the inverse metric through order two, the connection difference
through order one, and the drift difference through order one, with
perturbation bounds $C\epsilon$ in the corresponding polynomial norms.
These are the coefficient estimates used in
Section~\ref{sec:appendix-weakened-gaussian}.
\end{proof}

\begin{Theorem}\label{thm:uniqueness-of-minimizer-IFT}
There are fixed open neighborhoods $U_1\subset\mathscr H$ and
$U_2\subset\mathscr Z$ and a unique real-analytic map $h\mapsto u_h\in U_2$
such that $W(h,u_h)=0$. It satisfies $u_0=0$,
$\|u_h\|_{\mathscr Z}\le C\|h\|_{\mathscr H}$, and
$\mathrm Du_0[k]=\tfrac12\tr_{\hat g}k$.
\end{Theorem}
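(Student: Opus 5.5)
The plan is to apply the analytic implicit function theorem in Banach spaces to the map $W:\mathscr H\times\mathscr Z\supset B\to\mathcal T$ of \cref{lem:graph-nonlinear-closure}, on a small ball $B$ around the origin where that lemma gives real analyticity. The first step is to check that $W(0,0)=0$. With $h=0$ and $u=0$ the expression \eqref{eq:function-for-minimizer-IFT} becomes
\[
 2\hat\Delta\hat f-|\mathbf X|^2+\mathrm R_{\hat g}+\hat f-n-\boldsymbol\mu(\hat g).
\]
The traced soliton equation gives $\mathrm R_{\hat g}+\hat\Delta\hat f=n/2$, and \eqref{eq:prelim-background-drift} gives $\mathrm R_{\hat g}+|\mathbf X|^2=\hat f-\boldsymbol\mu(\hat g)$. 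Substituting both, every term cancels, so $W(0,0)=0$.

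Next I would invoke \eqref{eq:scalar-Banach-differential}, which says $\mathrm D_uW(0,0)=2L_{1/2}$. By the definition \eqref{eq:finite-scalar-graphs}, $\mathscr Z=G_{1/2}\mathcal T$ with norm $\|u\|_{\mathscr Z}=\|L_{1/2}u\|_{\mathcal T}$. Hence $L_{1/2}:\mathscr Z\to\mathcal T$ is an isometric bijection; injectivity comes from the existence of the Gaussian inverse $G_{1/2}$ on a nontrivial AC shrinker. Completeness of $\mathscr Z$ is part of \cref{prop:finite-graph-realization}, so $\mathrm D_uW(0,0)$ is a Banach isomorphism.

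The analytic implicit function theorem then yields neighborhoods $U_1$ and $U_2$ and a unique real-analytic map $h\mapsto u_h\in U_2$ with $W(h,u_h)=0$ and $u_0=0$. Shrinking $U_1$ keeps the differential of $u_h$ uniformly bounded, which gives the Lipschitz bound $\|u_h\|_{\mathscr Z}\le C\|h\|_{\mathscr H}$. Differentiating the identity $W(h,u_h)=0$ at $h=0$ gives
\[
 \mathrm Du_0[k]=-\bigl(2L_{1/2}\bigr)^{-1}\mathrm D_hW(0,0)[k]=\tfrac12 G_{1/2}L_{1/2}\tr_{\hat g}k.
\]

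The main obstacle is this final identification. It requires $\tr_{\hat g}k$ to lie in $\mathscr Z$, so that $G_{1/2}L_{1/2}\tr_{\hat g}k=\tr_{\hat g}k$. Formula \eqref{eq:scalar-Banach-differential} shows $L_{1/2}\tr_{\hat g}k\in\mathcal T$, since it equals $-\mathrm D_hW(0,0)[k]$. By \eqref{eq:finite-tensor-bounds}, $\tr_{\hat g}k$ is bounded, so it lies in $L^2_{\hat f}$. Uniqueness of Gaussian solutions of $L_{1/2}$, from the same inverse in \cite[Section~5.2]{LYZ-I}, then gives $\tr_{\hat g}k=G_{1/2}\bigl(L_{1/2}\tr_{\hat g}k\bigr)\in\mathscr Z$. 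Therefore $\mathrm Du_0[k]=\tfrac12\tr_{\hat g}k$, which completes the plan.
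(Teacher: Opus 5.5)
Your proposal is correct and follows the paper's proof essentially step for step. You apply the analytic Banach implicit function theorem to the map of \cref{lem:graph-nonlinear-closure}, with $\mathrm D_uW(0,0)=2L_{1/2}$ an isomorphism $\mathscr Z\to\mathcal T$ by construction of the inverse-image norm, and you obtain the derivative formula from \eqref{eq:scalar-Banach-differential} once $\tr_{\hat g}k\in\mathscr Z$ is justified by $L_{1/2}\tr_{\hat g}k\in\mathcal T$ and Gaussian integrability; your explicit check that $W(0,0)=0$ via the traced soliton equation and \eqref{eq:prelim-background-drift} is correct and is a detail the paper leaves implicit.
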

\begin{proof}
The differential $2L_{1/2}:\mathscr Z\to\mathcal T$ is
a bounded isomorphism by definition of the inverse-image domain.
Apply the analytic Banach implicit function theorem to
\cref{lem:graph-nonlinear-closure}. Operator-norm continuity gives
nearby invertibility by the Neumann series on these same two spaces.
The derivative formula follows from
\eqref{eq:scalar-Banach-differential}; the trace belongs to $\mathscr Z$
because $L_{1/2}\tr k\in\mathcal T$ and it is Gaussian integrable.
\end{proof}

\subsection{Asymptotic estimates and existence of the minimizer}\label{sec:asymptotic-estimates-on-minimizer}
The main purpose of this subsection is to show that every minimizer,
if it exists, lies, up to a constant, in the potential neighborhood
obtained in \cref{thm:uniqueness-of-minimizer-IFT}.
We then establish existence and identify the normalized solution.
The metrics have local $C^{8,1/4}$ regularity and uniform curvature and
injectivity-radius bounds by \cref{prop:finite-graph-realization}.
The elliptic estimates and Dirichlet exhaustion arguments below apply by
smooth approximation and elliptic compactness, with constants depending
only on the controlled finite norms.
\begin{Theorem}\label{thm:asymptotic-estimates-on-minimizer}
    Given an open neighborhood $U_1\subset \mathscr Z$ of the origin, there exists an open neighborhood $U_2\subset \mathscr H$ of the origin such that the following holds.
    Let $h\in U_2$ and $g:=\hat{g}+h$.
    For any normalized entropy minimizer $f$ satisfying $\int_M\mathrm dV_f=1$
    and the Euler--Lagrange equation
    \begin{equation}\label{eq:Euler-Lagrange-equation}
        2\Delta^g f-|\nabla^gf|^2+\mathrm{R}_g+f-n-\boldsymbol{\mu}(g)=0
    \end{equation}
    of $\mathcal W(g,\cdot)$, we have $(f-\boldsymbol{\mu}(g))-(\hat{f}-\boldsymbol{\mu}(\hat{g}))\in U_1$.
\end{Theorem}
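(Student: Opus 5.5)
Note first a labeling point. The statement writes $U_1\subset\mathscr Z$ for the potential neighborhood and $U_2\subset\mathscr H$ for the metric neighborhood. I read it with these roles: $U_1$ is given, $U_2$ is to be found.

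The plan is a compactness-and-contradiction argument, followed by a quantitative upgrade. Put $w=(f-\boldsymbol\mu(g))-(\hat f-\boldsymbol\mu(\hat g))$. Suppose the statement fails. Then there are $h_i\to0$ in $\mathscr H$ and normalized minimizers $f_i$ of $\mathcal W(g_i,\cdot)$ with $w_i\notin U_1$.

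\textbf{Step 1 (entropy and local convergence).} Upper semicontinuity of $\boldsymbol\mu$ follows by testing with the explicit competitor $\hat f+\tfrac12\tr_{\hat g}h_i$ plus a normalizing constant. A lower bound comes from the log-Sobolev inequality under metrics uniformly equivalent to $\hat g$ with $C^2$-close geometry, via \eqref{eq:finite-low-coefficients}. Together they give $\boldsymbol\mu(g_i)\to\boldsymbol\mu(\hat g)$. With the substitution $\phi_i=e^{-f_i/2}$, the Euler--Lagrange equation \eqref{eq:Euler-Lagrange-equation} becomes the linear-in-form equation $4\Delta\phi-\mathrm R\phi+2\phi\log\phi+(n+\boldsymbol\mu)\phi=0$. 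The $\phi_i$ are uniformly bounded in $H^1$, and local elliptic estimates give $C^\infty_{\rm loc}$ subconvergence. The limit is a minimizer for $\hat g$. Uniqueness of the minimizer for the background shrinker then forces $f_i\to\hat f$ locally. This uniqueness is classical for nontrivial shrinkers, and by the argument of \cref{thm:uniqueness-of-minimizer-IFT} it is also the unique normalized solution near $\hat f$. Mass must not escape to infinity; this follows from the Gaussian concentration of $\mathrm dV_{f_i}$ in Step 2.

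\textbf{Step 2 (exterior estimates).} This is where the potential's structure at infinity has to be controlled. Without it, $f_i$ could be close locally but wrong at the end. Following Cao--Zhou type arguments, I would first prove a two-sided quadratic bound on the end, $f_i=\tfrac{r^2}{4}+O(r)$, uniformly in $i$. The upper bound comes from the maximum principle applied to $\phi_i$ against the barrier $e^{-\hat f/2\pm Cr}$. The lower bound comes from integrating $|\nabla f_i|^2\le f_i+C$, which is a Hamilton-type identity obtained by differentiating the equation and using the maximum principle on $|\nabla f|^2-f$. The near-soliton structure of $g_i$ enters through the defect bound in \eqref{eq:finite-low-coefficients}.

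Next, write the equation for $w_i$ using the formula \eqref{eq:graph-scalar-full}. Schematically it reads
\[
 2L_{1/2}w_i=-W(h_i,0)+Q(h_i,w_i)+\bigl(\boldsymbol\mu(g_i)-\boldsymbol\mu(\hat g)\bigr),
\]
where $Q$ is quadratic in $\mathrm dw_i$ and bilinear in $h_i$. On annuli $\{r\sim R\}$, rescale and apply the drift Schauder estimates. These are \cref{prop:prelim-intermediate-Schauder}, \cref{lem:fractional-terminal}, and \cref{lem:fractional-degree-zero}, used first with a weak weight ($w_i\in C^{k,\alpha}_{1+\delta}$) and bootstrapped. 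The goal is
\[
 \|w_i\|_{\mathscr Z}\le C\bigl(\|w_i\|_{L^2_{\hat f}(K')}+\|h_i\|_{\mathscr H}+|\boldsymbol\mu(g_i)-\boldsymbol\mu(\hat g)|\bigr)+C\|w_i\|_{\mathscr Z}^2 .
\]
Here $K'$ is a large compact set. The main obstacle is the quadratic term $|\mathrm dw|^2$, which a priori grows like $r^2$ at infinity. To absorb it, I would use a one-form spectral gap for $\mathrm dw_i$. Differentiating the equation gives a drift equation for $\nabla w_i$ whose zeroth-order coefficient is $\tfrac12$ plus small corrections, because $\Ric_g+\nabla^2\hat f\approx\tfrac12g$. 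The Gaussian inverse therefore bounds $\mathrm dw_i$ in $L^2_{\hat f}$ by $\|h_i\|$ plus small multiples of itself. Combined with the pointwise growth bound from the barrier, this makes the quadratic term small in the weighted target on a tail $\{r\ge R_0\}$. That tail smallness suffices to close the estimate.

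\textbf{Step 3 (conclusion).} Local convergence from Step 1 makes $\|w_i\|_{L^2_{\hat f}(K')}\to0$, and Gaussian tail control makes the full Gaussian norm tend to zero. Step 2 then gives $\|w_i\|_{\mathscr Z}\to0$, contradicting $w_i\notin U_1$. Before applying the estimates I will check that $w_i\in\mathscr Z$ a priori, that is, that $L_{1/2}w_i\in\mathcal T$. This follows from the bootstrapped expansion in Step 2, where the degree-zero coefficient is produced by \cref{lem:fractional-degree-zero}.
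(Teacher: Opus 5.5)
Your proposal has a genuine gap in Step~2, exactly where you locate the main obstacle. Write $u_i=f_i-\hat f$, so $\mathrm du_i=\mathrm dw_i$. To make $|\mathrm dw_i|^2$ small in $\mathcal T$ you need at least $\sup_{\{r\ge R_0\}}|\mathrm dw_i|\to0$, and then control of its degree-zero link coefficient in $C^{6,3/4}$. An $L^2_{\hat f}$ bound says nothing about the tail, and a growth bound is not small.

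The one-form step also does not work as you describe. Differentiating the scalar equation gives \cref{eq:beta-equation}, $\Delta^g_f\beta+\phi_h\beta=\frac12\mathrm d\psi_h$. Its zeroth-order coefficient is the small defect $\Phi(g,\hat f)$, not $\frac12$, since $1-2(\Ric_g+\nabla_g^2\hat f)\approx0$. So there is no coercive potential, only the absence of parallel one-forms. Moreover, the drift is $\nabla^g\hat f+\nabla^gu_i$ with $\nabla^gu_i$ not known to be small. Treating $-\nabla_{\nabla u_i}$ as a perturbation of $\Delta_{\hat f}$ in $L^2_{\hat f}$ is therefore not perturbative. Working in $L^2_{f_i}$ instead requires transporting the gap from $\mathrm dV_{\hat f}$, which again needs $\|\mathrm du_i\|_{C^0(M)}$ small.

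Likewise, the Schauder results you cite (with $\lambda=-\frac12$) allow source weight at most one. With only $|\mathrm du_i|\lesssim\boldsymbol\rho$, the term $|\mathrm du_i|^2$ has weight two, so the bootstrap cannot start. Finally, an inequality $X\le\epsilon_i+CX^2$ with $X=\|w_i\|_{\mathscr Z}$ closes only under a priori smallness of $X$, which a sequence does not provide. Note that $W(h_i,w_i)=0$ exactly, so your extra $\boldsymbol\mu(g_i)-\boldsymbol\mu(\hat g)$ term should not appear. Once $w_i$ is small in $\mathscr Z$, the implicit function theorem already finishes; the a priori $\mathscr Z$ estimate is essentially the whole theorem.

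The missing idea is a uniform global gradient bound, and the paper uses it in place of any direct $\mathscr Z$ estimate. First it proves uniform bounds $c\boldsymbol\rho^2-C\le f_i\le C\boldsymbol\rho^2+C$ and $|\nabla f_i|^2\le\frac32f_i+C$ on a fixed exterior. It then applies the Bochner identity to $q_i=|\mathrm du_i|^2_{g_i}$, using $|\phi_{h_i}|+|\mathrm d\psi_{h_i}|\le\eta_i\boldsymbol\rho^{-1/2}$ (the entropy constant drops out on differentiating). Comparing $q_i$ with the barrier $K_i(1-f_i^{-1/4})$ via \cref{lem:weighted-exterior-maximum-principle} gives $\|\mathrm du_i\|_{C^0(M)}\to0$.

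Next it takes the branch $a_i=u_{h_i}$ of \cref{thm:uniqueness-of-minimizer-IFT}, which already lies in $\mathscr Z$ and tends to zero. It shows that $\tilde w_i=f_i-\hat f-a_i$ satisfies $\Delta^{g_i}_{f_i}\mathrm d\tilde w_i+(\Phi^0_i)^\sharp\mathrm d\tilde w_i=0$. Testing this in $L^2_{f_i}$, together with the one-form gap transported to $\mathrm dV_{f_i}$ (where the global gradient bound is needed), forces $\mathrm d\tilde w_i=0$. Hence the normalized difference is exactly $a_i$.

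Step~1 has a related circularity. Uniform Gaussian decay of $v_i$ presupposes that $v_i$ is uniformly small on a fixed exterior, i.e.\ tightness, so it cannot be what prevents mass from escaping. The paper first rules out escape of the maximum points: otherwise a Euclidean limit would have normalized entropy at most $\ell+\log a<0=\boldsymbol\mu(\mathbb R^n)$, using $\boldsymbol\mu(\hat g)<0$. Only then does it obtain $\ell=\boldsymbol\mu(\hat g)$ and $a=1$, from $\boldsymbol\mu(\hat g)\le\ell+\log a\le\ell$. A uniform lower bound on $\boldsymbol\mu(g_i)$ alone does not give convergence.

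Also, integrating $|\nabla f_i|^2\le f_i+C$ gives an upper bound on $f_i$, not a lower one. The lower quadratic bound comes from the Gaussian upper barrier for $v_i$, which is available only after tightness.
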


We denote $u:=f-\hat f$ in the remainder of the asymptotic argument.
\begin{Lemma}\label{lem:quadratic-growth-minimizer}
Let $f$ be a normalized solution as in
\cref{thm:asymptotic-estimates-on-minimizer}. Then
$f\leq C(1+\boldsymbol{\rho}^2)$ on $M$ for some $C>0$.
\end{Lemma}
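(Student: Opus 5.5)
We prove the upper bound $f\le C(1+\boldsymbol\rho^2)$ by a maximum-principle argument for the quantity $w=e^{-f/2}$. The Euler--Lagrange equation \eqref{eq:Euler-Lagrange-equation} is equivalent to the linear equation
\[
 4\Delta_g w-\mathrm R_gw+2w\log w+(n+\boldsymbol\mu(g))w=0,
 \qquad w=e^{-f/2},
\]
with $\int_M w^2\,(4\pi)^{-n/2}\mathrm dV_g=1$. An upper bound for $f$ is a lower bound for $w$. Positivity of $w$ is standard for minimizers, by the strong maximum principle on the locally smooth metric given by \cref{prop:finite-graph-realization}. The metric $g=\hat g+h$ is uniformly equivalent to $\hat g$, with bounded curvature, injectivity radius bounded below, and $|\mathrm R_g|\le C$, so the coefficients above are uniformly controlled. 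I also use that $\boldsymbol\mu(g)$ is close to $\boldsymbol\mu(\hat g)$. A crude two-sided bound suffices here: test $\mathcal W(g,\cdot)$ with $\hat f$ plus a normalizing constant, and use the log-Sobolev inequality for uniformly equivalent metrics with bounded geometry.

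\textbf{Step 1 (local lower bound on a fixed core).} The normalization gives $\int w^2=1$. The standard local $L^\infty$ estimate for the equation, with the term $w\log w$ handled since $w\log w\ge -e^{-1}$, also gives $w\le C$ everywhere. Hence $w$ cannot concentrate at infinity too strongly. On the other hand, $\mathcal W(g,f)=\boldsymbol\mu(g)$ is bounded, and this bounds $\int f\,\mathrm dV_f$ and $\int|\nabla f|^2\mathrm dV_f$. Together these show that a fixed fraction of the mass of $w^2$ lies in a ball $B_{\hat g}(o,R_0)$, with $R_0$ depending only on the controlled data. The local Harnack inequality for the linear equation, with $\log w$ treated as part of the potential where $w$ is bounded away from $0$, then gives $w\ge c>0$ on $B(o,R_0)$. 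Equivalently, $f\le C$ there.

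\textbf{Step 2 (subsolution barrier on the end).} On the end $Q_{R_0}$, compare $f$ with $\phi=A(1+\boldsymbol\rho^2)$ for $A$ large. Recall $\boldsymbol\rho^2=4(1+\hat f-\boldsymbol\mu(\hat g))$, so $\phi\sim A r^2$. Write the equation as
\[
 2\Delta_g f=|\nabla f|^2-\mathrm R_g-f+n+\boldsymbol\mu(g).
\]
Let $x_0$ be a positive interior maximum of $f-\phi$. At $x_0$ we have $\nabla f=\nabla\phi$ and $\Delta_g f\le\Delta_g\phi$, which gives
\[
 |\nabla\phi|^2-f+C\le 2\Delta_g\phi\le CA.
\]
The estimate \eqref{eq:finite-low-coefficients} gives $\Delta_g\hat f\le C$ and $|\nabla^g\hat f|_g^2\ge\hat f-C$ on the end. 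Therefore $|\nabla\phi|^2\ge cA^2\boldsymbol\rho^2-C$, and so $f(x_0)\ge cA^2\boldsymbol\rho^2-CA$. Since $f-\phi>0$ at $x_0$ only requires $f>A\boldsymbol\rho^2$, this is not yet a contradiction. The fix is to work instead with $f-\phi$ for $\phi=A\boldsymbol\rho^2$ and use the identity $-f\le-\phi$ at the maximum, which gives
\[
 cA^2\boldsymbol\rho^2-CA-C\le f(x_0)\quad\text{and}\quad f(x_0)\le\ldots
\]
This circularity is the main obstacle: the inequality at the maximum must carry a favorable sign from the $-f$ term, and the quadratic gradient term pushes the wrong way.

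I would resolve it by running the comparison on $w$ instead, where the equation is linear apart from $w\log w$. Take the barrier $\underline w=\epsilon e^{-A\boldsymbol\rho^2}$. A direct computation using $|\nabla\boldsymbol\rho^2|^2\approx4\boldsymbol\rho^2$ gives
\[
 4\Delta_g\underline w-\mathrm R_g\underline w+2\underline w\log\underline w+(n+\boldsymbol\mu)\underline w\ge\underline w\bigl((16A^2-2A)\boldsymbol\rho^2-C\bigr)>0
\]
for $A\ge1$ and $\boldsymbol\rho\ge R_0$, so $\underline w$ is a subsolution. The operator $w\mapsto 4\Delta w+2w\log w+\dots$ is not monotone in the usual sense, so I would compare the quotient $w/\underline w$ at its infimum. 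At an interior minimum where $w<\underline w$, the difference of $2\log$ terms has a favorable sign, because $\log w<\log\underline w$ gives a smaller zeroth-order term for $w$. This yields a contradiction.

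Three inputs close the argument. The boundary condition $w\ge\underline w$ on $\partial Q_{R_0}$ comes from Step 1 by choosing $\epsilon$ small. Behaviour at infinity is handled by the Dirichlet exhaustion mentioned before the theorem: replace $\underline w$ by $\underline w-\delta$ on large annuli, or note that $\underline w\to0$ faster than any admissible lower envelope of $w$, so the infimum of $w/\underline w$ cannot escape to infinity after a small modification. The conclusion is $w\ge\epsilon e^{-A\boldsymbol\rho^2}$, that is, $f\le C(1+\boldsymbol\rho^2)$.
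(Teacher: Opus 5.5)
Your overall plan is the paper's plan: pass to $w=e^{-f/2}$ and place the Gaussian barrier $\underline w=\epsilon e^{-A\boldsymbol\rho^2}$ below it on a far exterior. The comparison mechanism you propose, however, does not close. Suppose $q=w/\underline w$ has an interior minimum at $x_0$ with $q(x_0)<1$. Then $\nabla q=0$ and $\Delta q\ge0$ there, hence $\Delta_g w/w\ge\Delta_g\underline w/\underline w$ at $x_0$. Divide the equation by $w$ and the strict subsolution inequality by $\underline w$:
\[
 \mathrm R_g-(n+\boldsymbol\mu)-2\log w=\frac{4\Delta_g w}{w}
 \ge\frac{4\Delta_g\underline w}{\underline w}
 >\mathrm R_g-(n+\boldsymbol\mu)-2\log\underline w .
\]
This says $\log w<\log\underline w$, which is just $q(x_0)<1$ again, so there is no contradiction. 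In quotient form the nonlinearity is $-2\log u$, which is decreasing in $u$, so its sign is unfavorable, not favorable.

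In fact, with $\phi=-2\log\underline w$ one has $w/\underline w=e^{-(f-\phi)/2}$. Your quotient comparison is therefore literally the maximum-principle comparison for $f-\phi$ that you already found inconclusive. The obstruction there is the zeroth-order term $+f$ (the operator $2\Delta+1$), not the gradient term, which cancels at the extremum. The quotient approach also leaves the end unresolved: ruling out an infimum of $w/\underline w$ that escapes to infinity would need an a priori lower envelope for $w$, which is the conclusion.

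The missing idea is to compare the \emph{difference} $d=w-\underline w$, using that $u\mapsto u\log u$ is decreasing on $(0,e^{-1})$. Since $w>0$ and $\underline w\to0$, one has $\liminf d\ge0$ at infinity. So if $d\ge0$ on the inner boundary, a negative infimum of $d$ is attained at an interior point $x_0$, where $w(x_0)<\underline w(x_0)\le\epsilon$. Subtracting the equation from the subsolution inequality and applying the mean value theorem gives
\[
 4\Delta_g d+\bigl(2\log\xi+2+n+\boldsymbol\mu-\mathrm R_g\bigr)d<0
 \quad\text{at }x_0,\qquad w(x_0)<\xi<\epsilon .
\]
For $\epsilon$ small the coefficient is negative, so the left side is positive at $x_0$, a contradiction. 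The paper implements exactly this. It uses Zhang's decay $v\to0$ to get $v\le e^{-2}$ on $\{\boldsymbol\rho>R\}$ and rewrites the equation there as $4\Delta^g v+3v\log v\le0$. It then checks $4\Delta^g z+3z\log z\ge0$ for $z=ae^{-b\boldsymbol\rho^2}$ and notes that the mean-value coefficient is $3(\log\xi+1)\le-3$.

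Two smaller points. First, $\epsilon$ must be fixed first (from the boundary values and the sign condition), and $A$ chosen afterwards, because your constant $C$ in the subsolution computation contains $2|\log\epsilon|$. Second, your Step~1 is unnecessary: the lemma allows $C$ to depend on $f$, so the boundary bound is just positivity and continuity of $e^{-f/2}$ on a compact level set. As written, Step~1 is also circular, since it treats $\log w$ as a bounded potential where $w$ is bounded below, which is what is to be shown.
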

\begin{proof}
Put $v=(4\pi)^{-n/4}e^{-f/2}$. It satisfies
\[
 4\Delta^gv-\mathrm{R}_gv+2v\log v+\lambda v=0,
 \qquad \lambda=\boldsymbol{\mu}(g)+n+\frac n2\log(4\pi).
\]
By \cite[Lemma~2.1]{zhang2012extremal}, $v\to0$ at infinity.
Thus, on $E_R=\{\boldsymbol{\rho}>R\}$ for sufficiently large $R$,
$v\leq e^{-2}$ and
\begin{equation}\label{ineq:v-as-supersolution}
 4\Delta^gv+3v\log v=(\log v+\mathrm{R}_g-\lambda)v\leq0.
\end{equation}
The definition of $\boldsymbol{\rho}$ and the deformation estimates give
\[
 |\mathrm d(\boldsymbol{\rho}^2)|_g^2\geq c\boldsymbol{\rho}^2,
 \qquad |\Delta^g(\boldsymbol{\rho}^2)|\leq C
 \qquad\text{on }E_R.
\]
Choose $0<a\leq\min\{e^{-2},\min_{\partial E_R}v\}$ and set
$z=a e^{-b\boldsymbol{\rho}^2}$. For sufficiently large $b$,
\[
 \frac{4\Delta^gz+3z\log z}{z}
 =4b^2|\mathrm d(\boldsymbol{\rho}^2)|_g^2
   -4b\Delta^g(\boldsymbol{\rho}^2)+3\log a-3b\boldsymbol{\rho}^2
 \geq0.
\]
Both $v,z$ are at most $e^{-2}$, so the mean value theorem gives
\[
 4\Delta^g(v-z)+3(\log\xi+1)(v-z)\leq0,\qquad
 3(\log\xi+1)\leq-3,
\]
where $\xi$ lies between $v$ and $z$. Since $v-z\geq0$ on
$\partial E_R$ and $v-z\to0$ at infinity, the maximum principle gives
$v\geq z$. Taking logarithms and enlarging $C$ on the compact core
proves the assertion.
\end{proof}

\begin{Lemma}\label{lem:higher-order-estimates-on-minimizer}
For $f$ as in \cref{lem:quadratic-growth-minimizer}, we have
$|\hat\nabla f|_{\hat g}\leq C\boldsymbol{\rho}$ and
$|\hat\nabla^2f|_{\hat g}\leq C\boldsymbol{\rho}^2$
on a sufficiently far end.
\end{Lemma}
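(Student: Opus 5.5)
We prove both bounds by rescaling to unit balls on the end, where the equation becomes uniformly elliptic with bounded coefficients. The input is \cref{lem:quadratic-growth-minimizer}, which gives $f\le C(1+\boldsymbol\rho^2)$, together with the lower bound $f\ge -C$. The lower bound follows because the Euler--Lagrange equation with bounded curvature gives a local Harnack-type lower bound, or alternatively from the upper bound $v\le C$ on $v=(4\pi)^{-n/4}e^{-f/2}$ given by \cite[Lemma~2.1]{zhang2012extremal}. Hence $|f|\le C\boldsymbol\rho^2$ on the far end.

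Next, fix a point $x$ with $\boldsymbol\rho(x)=R$ large. Consider the $g$-ball $B=B_g(x,1/R)$. On this ball $\boldsymbol\rho$ is comparable to $R$, and by \cref{prop:finite-graph-realization} the metric $g$ is uniformly equivalent to $\hat g$ with controlled derivatives. Rescale by $\tilde g=R^2g$ on $B$, so that $B$ becomes a unit ball with bounded geometry; the curvature of $\tilde g$ is $O(R^{-2})$. Set $\tilde f=R^{-2}f$. The equation \eqref{eq:Euler-Lagrange-equation} becomes
\[
 2\Delta^{\tilde g}\tilde f-|\nabla^{\tilde g}\tilde f|^2_{\tilde g}
 =R^{-4}\bigl(n+\boldsymbol\mu(g)-\mathrm R_g-f\bigr),
\]
with $|\tilde f|\le C$ and right-hand side $O(R^{-2})$. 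The quadratic gradient term is the only nonlinearity, and we remove it with the substitution $w=e^{-\tilde f/2}$, which is just the rescaling of $v$ up to a constant. This substitution linearizes the equation to
\[
 4\Delta^{\tilde g}w=-R^{-4}(\cdots)w,
\]
with a bounded zeroth-order coefficient. Since $|\tilde f|\le C$, the function $w$ is bounded above and below by positive constants.

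Interior Schauder estimates on the unit $\tilde g$-ball then bound $w$ in $C^{2,1/4}$ on the half ball. Because $w$ has a positive lower bound, we may take logarithms, which gives $|\nabla^{\tilde g}\tilde f|+|\nabla^{2}_{\tilde g}\tilde f|\le C$. Scaling back, $|\nabla^g f|_g\le CR$ and $|\nabla^2_gf|_g\le CR^2$ at $x$.

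Finally, we convert these bounds to the background connection. We use $g\sim\hat g$ and $\nabla^2_gf=\hat\nabla^2f-A\ast\mathrm df$, where the connection difference satisfies $A\in C^{6,1/4}_{-1/2}$ and is therefore bounded. This gives $|\hat\nabla f|_{\hat g}\le C\boldsymbol\rho$ and $|\hat\nabla^2f|_{\hat g}\le C\boldsymbol\rho^2+C\boldsymbol\rho\le C\boldsymbol\rho^2$.

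The main obstacle is the two-sided sup bound $|f|\le C\boldsymbol\rho^2$, specifically the lower bound. If $f$ were very negative somewhere, the rescaled $\tilde f$ would not be bounded and the argument above would break down. I would handle this using $v\le C$, which holds because $v$ is a bounded solution tending to zero at infinity and $v\in L^2$. Failing that, I would apply a Li--Yau/Cheng--Yau type gradient estimate for $\log w$ on the rescaled ball, which requires only positivity of $w$ and bounded geometry. That gradient estimate yields $|\nabla\tilde f|\le C(1+\sup|\tilde f|)$ and closes the argument without a separate lower bound.
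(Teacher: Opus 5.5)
The rescaling step fails as written. With $\tilde g=R^2g$ and $\tilde f=R^{-2}f$ you have $\Delta^{\tilde g}\tilde f=R^{-4}\Delta^gf$ but $|\nabla^{\tilde g}\tilde f|^2_{\tilde g}=R^{-6}|\nabla^gf|_g^2$. The equation therefore actually reads
\[
 2\Delta^{\tilde g}\tilde f-R^{2}|\nabla^{\tilde g}\tilde f|^2_{\tilde g}
 =R^{-4}\bigl(n+\boldsymbol\mu(g)-\mathrm R_g-f\bigr),
\]
not the equation you display. Because the gradient term carries the large factor $R^2$, $w=e^{-\tilde f/2}$ does not solve a linear equation with bounded coefficients, and the Schauder step does not apply. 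Even granting $|\nabla^{\tilde g}\tilde f|_{\tilde g}+|\nabla^2_{\tilde g}\tilde f|_{\tilde g}\le C$, scaling back gives only $|\nabla^gf|_g\le CR^3$ and $|\nabla^2_gf|_g\le CR^4$. This is because $\mathrm d\tilde f=R^{-2}\mathrm df$, and one-form and two-tensor norms pick up further factors $R^{-1}$ and $R^{-2}$ under $g\mapsto R^2g$.

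Dividing $f$ by $R^2$ is what made $|\tilde f|\le C$ look free. The correct normalization rescales only the metric and works with $u=f-f(x)$, for which $2\Delta^{\tilde g}u-|\nabla^{\tilde g}u|^2_{\tilde g}=R^{-2}(n+\boldsymbol\mu(g)-\mathrm R_g-f)$ has bounded right-hand side. But then boundedness of $u$ on the unit $\tilde g$-ball requires $\osc_{B_g(x,c/R)}f\le C$, and $|f|\le C\boldsymbol\rho^2$ only gives oscillation $O(R^2)$ there. So the real obstacle is this oscillation bound, not the lower bound $f\ge-C$, which is easy from $v\le C$. Your fallback does not address it either: $\log w$ satisfies no linear equation, and a bound $|\nabla\tilde f|\le C(1+\sup|\tilde f|)$ again scales back to $R^3$.

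The paper supplies the missing $O(1)$ oscillation through the global gradient estimate $|\nabla^gf|_g^2\le\frac32f+C$ of \cite{wang2023gradient}. Combined with \cref{lem:quadratic-growth-minimizer}, this gives $|\nabla^gf|_g\le C\boldsymbol\rho$ at once, hence bounded oscillation on $B_g(p,c/\boldsymbol\rho(p))$. Interior elliptic and Schauder estimates for $f-f(p)$ on the rescaled unit ball then give the Hessian bound.

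Your linearization idea can also be repaired without that gradient estimate, provided you use $v=(4\pi)^{-n/4}e^{-f/2}$ with $f$ unscaled. It satisfies $4\Delta^{\tilde g}v=R^{-2}(\mathrm R_g-2\log v-\lambda)v$, whose coefficient is bounded on the rescaled ball exactly because $-C\le f\le C\boldsymbol\rho^2$. The Harnack inequality for this linear equation gives $\osc f\le C$ on the half ball. Then $W^{2,p}$ and Schauder estimates for $v/v(x)$, followed by taking logarithms, give $|\nabla^gf|_g\le CR$ and $|\nabla^2_gf|_g\le CR^2$. Your final conversion to $\hat\nabla$ is fine once these are in hand.
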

\begin{proof}
By the gradient estimate \cite[Theorem~1.1]{wang2023gradient},
\[
 |\nabla^gf|_g^2\leq\frac32f+C,
\]
where $C$ depends on $n$, a lower bound on $\Ric_g$, and bounds on
$|\mathrm R_g|$, $|\nabla^g\mathrm R_g|_g$, and $\boldsymbol\mu(g)$.
The first assertion follows from \cref{lem:quadratic-growth-minimizer}
and uniform equivalence of $g$ and $\hat g$.

For the Hessian estimate, use a ball of radius $c/\boldsymbol{\rho}(p)$
about a far point $p$. The gradient bound makes the oscillation of $f$
on this ball uniformly bounded. After scaling to unit size, the equation
for $f-f(p)$ has uniformly bounded coefficients, right-hand side, and
first derivatives of the unknown. Interior elliptic estimates followed
by the local Schauder estimate bound its second derivatives.
Scaling back gives $|\nabla^{g,2}f|(p)\leq C\boldsymbol{\rho}(p)^2$.
The connection difference is $O(\boldsymbol{\rho}^{-1})$, so the same
estimate holds for $\hat\nabla^2f$.
\end{proof}

The next lemma uses the weighted integration-by-parts arguments of
\cite[Lemmas~3.1 and~4.1]{LYZ-I}. Here the displayed bounds on $f$
replace the shrinker identities, since $(M,g,f)$ need not be a Ricci shrinker.
\begin{Lemma}\label{lem:weighted-exterior-maximum-principle}
Let $E_R=\{\boldsymbol{\rho}>R\}$. Suppose that, on $E_R$,
\begin{equation}\label{eq:auxiliary-exterior-potential-bounds}
 f\geq1,\qquad
 |\nabla^gf|_g^2\leq\Lambda f,\qquad
 \Delta_f^gf\leq-\lambda f
\end{equation}
for constants $\Lambda,\lambda>0$. Then
\begin{equation}\label{eq:weighted-exterior-Poincare}
 \int_{E_R}f\phi^2\,\mathrm dV_f
 \leq \frac{4\Lambda}{\lambda^2}
       \int_{E_R}|\nabla^g\phi|_g^2\,\mathrm dV_f
\end{equation}
for every $\phi\in C_c^\infty(E_R)$.

Let $V\in L^\infty_{\mathrm{loc}}(E_R)$ satisfy
$V_+\leq\kappa f$, where $\kappa<\lambda^2/(4\Lambda)$.
If $w\in H^1_{f,\mathrm{loc}}(E_R)$,
$w_+\in H^1_{f,0}(E_R)$, and
\[
 \Delta_f^gw+Vw\geq0
\]
weakly on $\{w>0\}$, then $w\leq0$ almost everywhere on $E_R$.
\end{Lemma}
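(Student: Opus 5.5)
For the Poincaré inequality \eqref{eq:weighted-exterior-Poincare} I would use the standard integration by parts against the drift Laplacian of the weight. Fix $\phi\in C_c^\infty(E_R)$. Since $\Delta^g_f$ is symmetric with respect to $\mathrm dV_f$ and $\phi$ has compact support in $E_R$,
\[
 \lambda\int_{E_R}f\phi^2\,\mathrm dV_f
 \le-\int_{E_R}\phi^2\Delta^g_ff\,\mathrm dV_f
 =\int_{E_R}2\phi\langle\nabla^g\phi,\nabla^gf\rangle_g\,\mathrm dV_f .
\]
Cauchy--Schwarz and $|\nabla^gf|^2\le\Lambda f$ bound the right side by
\[
 2\Bigl(\int_{E_R}f\phi^2\,\mathrm dV_f\Bigr)^{1/2}
 \Bigl(\Lambda\int_{E_R}|\nabla^g\phi|^2\,\mathrm dV_f\Bigr)^{1/2}.
\]
Dividing by the first factor, which is finite and may be assumed positive, and squaring gives the constant $4\Lambda/\lambda^2$. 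The hypothesis $f\ge1$ is used only to make the weight $f$ positive. By density, the inequality then extends to all $\phi\in H^1_{f,0}(E_R)$. This needs Fatou's lemma on the left, since $f$ is unbounded, and convergence in $H^1_f$ on the right.

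For the maximum principle, I would test the differential inequality with $w_+\in H^1_{f,0}(E_R)$. The weak inequality on $\{w>0\}$ means that for nonnegative test functions $\psi$ supported in $\{w\ge 0\}$,
\[
 -\int\langle\nabla^g w,\nabla^g\psi\rangle\,\mathrm dV_f+\int Vw\psi\,\mathrm dV_f\ge0 .
\]
With $\psi=w_+$ and $\nabla w_+=\mathbf 1_{\{w>0\}}\nabla w$, this gives
\[
 \int|\nabla^g w_+|^2\,\mathrm dV_f\le\int Vw_+^2\,\mathrm dV_f\le\kappa\int fw_+^2\,\mathrm dV_f .
\]
The extended Poincaré inequality then yields
\[
 \int fw_+^2\,\mathrm dV_f\le\frac{4\Lambda\kappa}{\lambda^2}\int fw_+^2\,\mathrm dV_f .
\]
Since $4\Lambda\kappa/\lambda^2<1$, this forces $\int fw_+^2\,\mathrm dV_f=0$. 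Hence $w_+=0$ a.e., because $f\ge1$.

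The main technical obstacle is justifying $w_+$ as a test function. Two issues arise. First, $Vw_+^2$ need not be integrable a priori, although $V_+w_+^2\le\kappa fw_+^2$ is controlled once $\int fw_+^2<\infty$. Second, the weak formulation is stated for compactly supported test functions. I would handle both by approximating $w_+$ in $H^1_{f}$ by functions $\phi_j\in C_c^\infty(E_R)$. To keep the test functions supported in $\{w>0\}$, I would replace $\phi_j$ by truncations $\min\{(w-\epsilon)_+,\phi_j\}$ or a similar construction. In the resulting inequality only $V_+$ contributes a positive term. Finiteness of $\int fw_+^2\,\mathrm dV_f$ follows from the extended Poincaré inequality applied to $w_+$. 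One then passes to the limit using Fatou and monotone convergence.
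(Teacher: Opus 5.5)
Your proposal is correct and follows essentially the same route as the paper: you obtain the Poincaré inequality by integrating $\phi^2\Delta_f^g f$ by parts, using Cauchy--Schwarz and division where the paper uses Young's inequality, with the same constant $4\Lambda/\lambda^2$. You then test with approximations of $w_+$ and absorb using $4\Lambda\kappa/\lambda^2<1$; your truncation argument for the admissibility of $w_+$ and for the sign of $V_-$ spells out what the paper compresses into ``cutoff approximations to $w_+$''.
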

\begin{proof}
Weighted integration by parts and
\cref{eq:auxiliary-exterior-potential-bounds} give
\[
\lambda\int f\phi^2\,\mathrm dV_f
 \leq-\int\phi^2\Delta_f^gf\,\mathrm dV_f
 =2\int\phi\langle\nabla^g\phi,\nabla^gf\rangle_g\,\mathrm dV_f
 \leq\frac{\lambda}{2}\int f\phi^2\,\mathrm dV_f
      +\frac{2\Lambda}{\lambda}\int|\nabla^g\phi|_g^2\,\mathrm dV_f.
\]
This proves \cref{eq:weighted-exterior-Poincare}. Testing the weak
inequality with cutoff approximations to $w_+$ and using that estimate yields
\[
 0\leq-\int|\nabla^gw_+|_g^2\,\mathrm dV_f+\int Vw_+^2\,\mathrm dV_f
 \leq-\left(1-\frac{4\Lambda\kappa}{\lambda^2}\right)
       \int|\nabla^gw_+|_g^2\,\mathrm dV_f.
\]
Applying \cref{eq:weighted-exterior-Poincare}, extended by density, gives
$\int_{E_R}fw_+^2\,\mathrm dV_f=0$. Since $f\geq1$, we obtain $w_+=0$.
\end{proof}

\begin{Lemma}
Define
\begin{align*}
 (\phi_h)_i^{\phantom{i}j}&:=\Phi(g,\hat f)_{ik}g^{kj},\\
 \psi_h&:=\boldsymbol\mu(g)-\boldsymbol\mu(\hat g)
       -(\mathrm R_g-\mathrm R_{\hat g})
       -2(\Delta^g-\hat\Delta)\hat f
       +|\mathrm d\hat f|_g^2-|\mathrm d\hat f|_{\hat g}^2.
\end{align*}
Then $\phi_h\in C^{6,1/4}_{-1/2}(M,\End(T^*M))$ and $\psi_h\in\mathcal T$.
Moreover, $\beta:=\mathrm du$ solves
\begin{equation}\label{eq:beta-equation}
 \Delta_f^g\beta+\phi_h\beta=\frac12\mathrm d\psi_h.
\end{equation}
\end{Lemma}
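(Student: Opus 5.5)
The plan is to derive \eqref{eq:beta-equation} by subtracting the background Euler--Lagrange identity from \eqref{eq:Euler-Lagrange-equation}, differentiating, and applying the weighted Bochner formula. The two membership claims should follow directly from \cref{lem:graph-nonlinear-closure} evaluated at $u=0$.

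\emph{Regularity of the coefficients.} The map $h\mapsto\Phi(g,\hat f)$ is the defect map \eqref{eq:graph-defect-map} at $u=0$, so it lies in $\mathcal V=C^{6,1/4}_{-1/2}$. Raising an index with $g^{-1}=\hat g^{-1}B$ multiplies by a bounded $C^{6,1/4}_0$ field, and that multiplication preserves the decaying target; this gives $\phi_h\in C^{6,1/4}_{-1/2}$. For $\psi_h$, I will use the shrinker identities $\hat\Delta\hat f=\frac n2-\mathrm R_{\hat g}$ and $\mathrm R_{\hat g}+|\mathrm d\hat f|^2=\hat f-\boldsymbol\mu(\hat g)$. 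With them, the expression $2\hat\Delta\hat f-|\mathrm d\hat f|^2_{\hat g}+\mathrm R_{\hat g}+\hat f-n-\boldsymbol\mu(\hat g)$ vanishes. Comparing with \eqref{eq:function-for-minimizer-IFT} at $u=0$ then gives
\[
 \psi_h=-W(h,0)+\bigl(\boldsymbol\mu(g)-\boldsymbol\mu(\hat g)\bigr).
\]
By the lemma, $W(h,0)\in\mathcal T$. The constant term splits as $\chi$ times a constant link function, which lies in $\chi C^{6,3/4}(\Sigma)$, plus a compactly supported smooth function, which lies in the remainder space. Here finiteness of $\boldsymbol\mu(g)$ is used; it holds because a minimizer $f$ exists by assumption.

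\emph{The scalar equation for $u$.} Write $f=\hat f+u$ and compute every quantity with $g$. Subtracting the vanishing background expression from \eqref{eq:Euler-Lagrange-equation} and regrouping by the definition of $\psi_h$ gives
\[
 2\Delta^gu-|\mathrm du|_g^2-2\langle\mathrm d\hat f,\mathrm du\rangle_g+u=\psi_h.
\]
Since $\Delta^g_fu=\Delta^gu-\langle\mathrm d\hat f,\mathrm du\rangle_g-|\mathrm du|^2_g$, this becomes
\[
 2\Delta^g_fu+|\mathrm du|^2_g+u=\psi_h.
\]

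\emph{Differentiation and Bochner.} Next apply $\mathrm d$ and use the weighted Bochner identity
\[
 \Delta^g_f\,\mathrm du=\mathrm d\Delta^g_fu+(\Ric_g+\nabla_g^2f)(\nabla^gu,\cdot),
\]
together with $\mathrm d|\beta|^2_g=2\nabla^2_gu(\nabla^gu,\cdot)$. Substitute
\[
 \Ric_g+\nabla^2_gf=\tfrac12g-\Phi(g,\hat f)+\nabla^2_gu.
\]
The $\nabla^2_gu$ terms cancel, and so do the two copies of $\beta$, one from $\frac12 g$ and one from $\mathrm du$. What remains is $2\Delta^g_f\beta+2\phi_h\beta=\mathrm d\psi_h$, which is \eqref{eq:beta-equation}.

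The identity holds classically. The metric is locally $C^{8,1/4}$ by \cref{prop:finite-graph-realization}, so local elliptic regularity applied to \eqref{eq:Euler-Lagrange-equation} makes $f$ locally $C^{3,\alpha}$ or better. A smooth-approximation argument then justifies the third derivatives that appear.

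\emph{Expected obstacle.} The only real risk is bookkeeping: the sign and index conventions in the Bochner identity, and the index placement in $\phi_h$ acting on one-forms. I will check these against the model case $h=0$. There $\phi_h=0$, and the equation must reduce to the commutation $\mathrm d(\hat\Delta_{\hat f}+\frac12)=(\hat\Delta_{\hat f}+\frac12-\frac12)\mathrm d$ on functions, which is consistent with $\Ric_{\hat g}+\hat\nabla^2\hat f=\frac12\hat g$.
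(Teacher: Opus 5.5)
Your proposal is correct and follows essentially the same route as the paper. You read off the coefficients from \cref{lem:graph-nonlinear-closure} at $u=0$, subtract the background scalar identity to get the equation for $u$, differentiate with the weighted commutation formula, and substitute $\Ric_g+\nabla_g^2\hat f=\frac12g-\Phi(g,\hat f)$. The one cosmetic difference is that you absorb $\langle\mathrm d\hat f,\mathrm du\rangle_g+|\mathrm du|_g^2$ into $\Delta_f^g$ before differentiating, whereas the paper keeps $\Delta_{\hat f}^g$ and merges the $\nabla^g_{\nabla^gu}$ term afterward.
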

\begin{proof}
The coefficient estimates follow from
\cref{lem:graph-nonlinear-closure},
applied to the pair $(g,\hat f)$.
Substitute $f=\hat f+u$ into \cref{eq:Euler-Lagrange-equation}, use
\[
 |\mathrm d(\hat f+u)|_g^2
 =|\mathrm d\hat f|_g^2
   +2\langle\mathrm d\hat f,\mathrm du\rangle_g+|\mathrm du|_g^2,
\]
and subtract $W(0,0)=0$. The terms involving only $\hat f$ give
$-\psi_h$, leaving
\begin{equation}\label{eq:potential-defect-difference-equation}
 (2\Delta_{\hat f}^g+1)u-|\mathrm du|_g^2=\psi_h,
\end{equation}
where $\Delta_{\hat f}^g=\Delta^g-\nabla^g_{\nabla^g\hat f}$.
The commutation identities
\[
 \mathrm d\Delta_{\hat f}^gu
 =\Delta_{\hat f}^g\mathrm du
  -(\Ric_g+\nabla_g^2\hat f)^\sharp(\mathrm du),
 \qquad
 \mathrm d|\mathrm du|_g^2=2\nabla^g_{\nabla^gu}\mathrm du
\]
therefore give
\[
 2\bigl(\Delta_{\hat f}^g-\nabla^g_{\nabla^gu}\bigr)\beta
 +\bigl(I-2(\Ric_g+\nabla_g^2\hat f)^\sharp\bigr)\beta
 =\mathrm d\psi_h.
\]
Since $f=\hat f+u$, the first operator is $2\Delta_f^g$, and the
zeroth-order coefficient is $2\Phi(g,\hat f)^\sharp=2\phi_h$.
Dividing by two proves \cref{eq:beta-equation}.
\end{proof}

\begin{proof}[Proof of \cref{thm:asymptotic-estimates-on-minimizer}]
Suppose the assertion fails for a fixed neighborhood $U_1$.
Since $\mathscr H$ is metrizable, there are $h_i\to0$ in $\mathscr H$ and
normalized solutions $f_i$ for $g_i=\hat g+h_i$ such that
\[
 (f_i-\boldsymbol\mu(g_i))-(\hat f-\boldsymbol\mu(\hat g))
 \notin U_1.
\]
We first prove compactness and a small global gradient bound, and then
identify these solutions with those of
\cref{thm:uniqueness-of-minimizer-IFT}.

Write
\[
 v_i=(4\pi)^{-n/4}e^{-f_i/2},\qquad
 \lambda_i=\boldsymbol\mu(g_i)+n+\frac n2\log(4\pi).
\]
Then
\[
 4\Delta^{g_i}v_i-\mathrm R_{g_i}v_i+2v_i\log v_i+\lambda_i v_i=0,
 \qquad \int_Mv_i^2\,\mathrm dV_{g_i}=1.
\]
The AC geometry and $h_i\to0$ in $\mathscr H$ give uniformly bounded
curvature and a uniform positive lower bound for the injectivity radius
of $g_i$. Hence the Sobolev inequality
\cite[Theorem~2.21]{aubin1998some} gives
\[
 \|\varphi\|_{L^{2n/(n-2)}(g_i)}^2
 \leq C\bigl(\|\nabla^{g_i}\varphi\|_{L^2(g_i)}^2
                  +\|\varphi\|_{L^2(g_i)}^2\bigr).
\]
For $\varphi\in C_c^\infty(M)$ with
$\int_M\varphi^2\,\mathrm dV_{g_i}=1$,
Jensen's inequality for the probability measure
$\varphi^2\,\mathrm dV_{g_i}$ yields, as in
\cite[Section~3, p.~6]{sesum2004notes},
\begin{align*}
 \int_M\varphi^2\log\varphi^2\,\mathrm dV_{g_i}
 &\leq\frac{n-2}{2}\log\int_M|\varphi|^{2n/(n-2)}\,\mathrm dV_{g_i}\\
 &\leq\frac n2\log\bigl[C(1+\|\nabla^{g_i}\varphi\|_{L^2(g_i)}^2)\bigr]
 \leq2\|\nabla^{g_i}\varphi\|_{L^2(g_i)}^2+C.
\end{align*}
Thus $\boldsymbol\mu(g_i)\geq-C$.
Testing with normalized compactly supported approximations of
$(4\pi)^{-n/4}e^{-\hat f/2}$ also gives
\[
 \limsup_i\boldsymbol\mu(g_i)\leq\boldsymbol\mu(\hat g)<0.
\]
The strict inequality follows because the background AC shrinker is
nontrivial, by the entropy-gap theorem of Li--Wang \cite{li2020heat}.

The local estimates in \cite[Lemma~2.1]{zhang2012extremal} give
\[
 \sup_M\bigl(v_i+|\nabla^{g_i}v_i|_{g_i}\bigr)\leq C
\]
with $C$ independent of $i$. They also give $v_i\to0$ at infinity
for each fixed $i$, and \cite[Lemma~2.3]{zhang2012extremal} gives
Gaussian upper decay for that solution.
Together with \cref{lem:quadratic-growth-minimizer,lem:higher-order-estimates-on-minimizer},
these individual bounds justify integration of the equation against
$v_i$ by cutoffs. Hence
$\mathcal W(g_i,f_i)=\boldsymbol\mu(g_i)$, so the normalized solutions
under consideration are minimizers.

After passing to a subsequence, let $\boldsymbol\mu(g_i)\to\ell<0$
and choose a maximum point $p_i$ of $v_i$.
At this point the equation implies
\[
 v_i(p_i)\geq
 \exp\!\left(\frac{\inf\mathrm R_{g_i}-\lambda_i}{2}\right)\geq c>0.
\]
The gradient estimate used in \cref{lem:higher-order-estimates-on-minimizer}
gives $|\mathrm d\sqrt{C+f_i}|_{g_i}\leq C$ after one uniform shift.
Since $f_i(p_i)\leq C$, integration along geodesics yields
\[
 f_i(x)\leq C\bigl(1+d_{g_i}(x,p_i)^2\bigr).
\]
Thus $v_i$ has a uniform positive lower bound on each fixed-radius ball
about $p_i$, and local elliptic estimates give compactness in $C^4$ on compact subsets there.

If $p_i$ escaped to infinity, uniform AC geometry would give pointed
$C^3$ convergence to Euclidean space and a nonzero positive limit
$v_\infty$ with squared $L^2$ norm $a\in(0,1]$.
The decay and cutoff arguments apply to this limit.
Normalizing it by $a^{-1/2}$ changes the Euler--Lagrange constant by
$\log a$, so its entropy is $\ell+\log a$. This contradicts
\[
 0=\boldsymbol\mu(\mathbb R^n)\leq\ell+\log a<0.
\]
Hence $p_i$ remain in a fixed compact set, and we obtain a nonzero local
limit on $(M,\hat g)$. Writing $a\in(0,1]$ for its squared $L^2$ norm,
the same normalization calculation gives
\[
 \boldsymbol\mu(\hat g)\leq\ell+\log a\leq\ell
 \leq\boldsymbol\mu(\hat g).
\]
Therefore $v_\infty$ has unit $L^2(M,\hat g)$ mass and
$\ell=\boldsymbol\mu(\hat g)$.
For its normalized potential $f_\infty$,
\cref{eq:beta-equation} at $h=0$ and weighted integration by parts
show that $\mathrm d(f_\infty-\hat f)$ is parallel.
Since a nontrivial AC shrinker does not split off a line, normalization
gives $f_\infty=\hat f$.
Every convergent subsequence has the same limit, and hence
\[
 f_i\longrightarrow\hat f\quad\text{in }C^4_{\mathrm{loc}},
 \qquad \boldsymbol\mu(g_i)\longrightarrow\boldsymbol\mu(\hat g).
\]

The unit mass of $v_\infty$ and local convergence give tightness of
$v_i^2\,\mathrm dV_{g_i}$.
Together with the uniform gradient estimate for $v_i$, this implies
$v_i\to0$ at infinity uniformly in $i$: a value at least $\epsilon>0$
at a point tending to infinity would give $v_i\geq\epsilon/2$ on a
ball of fixed radius. Uniform volume lower bounds would then place
a fixed positive mass outside every compact set, a contradiction.
The upper barriers in \cite[Lemma~2.3]{zhang2012extremal} can therefore
be chosen on one fixed exterior, giving
\[
 v_i\leq Ce^{-c\boldsymbol{\rho}^2}
\]
with constants independent of $i$.
Local convergence also gives uniform positive boundary values for the
lower barriers in \cref{lem:quadratic-growth-minimizer}.
Consequently, on a fixed exterior,
\[
 c\boldsymbol{\rho}^2-C\leq f_i\leq C\boldsymbol{\rho}^2+C,\qquad
 |\nabla^{g_i}f_i|^2\leq\tfrac32 f_i+C,\qquad
 |\hat\nabla^2f_i|\leq C\boldsymbol{\rho}^2,
\]
with constants independent of $i$.

Set $u_i=f_i-\hat f$, $\beta_i=\mathrm du_i$, and
$q_i=|\beta_i|_{g_i}^2$. The coefficient formulas give
\[
 |\phi_{h_i}|+|\mathrm d\psi_{h_i}|
 \leq\eta_i\boldsymbol{\rho}^{-1/2},\qquad \eta_i\longrightarrow0.
\]
The entropy difference disappears upon differentiating $\psi_{h_i}$,
so this bound does not require a global estimate for $u_i$.
Taking the inner product of \cref{eq:beta-equation} with $\beta_i$ gives
\[
 \frac12\Delta_{f_i}^{g_i}q_i
 =|\nabla^{g_i}\beta_i|^2-\langle\phi_{h_i}\beta_i,\beta_i\rangle
   +\frac12\langle\mathrm d\psi_{h_i},\beta_i\rangle.
\]
The Euler--Lagrange equation also gives
\[
 \Delta_{f_i}^{g_i}f_i
 =-\frac12|\mathrm df_i|_{g_i}^2-\frac12 f_i-\frac12\mathrm R_{g_i}
   +\frac n2+\frac12\boldsymbol\mu(g_i).
\]
On a sufficiently far fixed exterior $E_R=\{\boldsymbol{\rho}>R\}$,
we thus have $f_i^{-1/4}\leq1/2$, $f_i\asymp\boldsymbol{\rho}^2$, and
\[
 \Delta_{f_i}^{g_i}f_i\leq-c_0f_i,\qquad
 |\mathrm df_i|_{g_i}^2\leq C_1f_i,\qquad
 \Delta_{f_i}^{g_i}q_i\geq-A_i(q_i+1),
 \quad A_i=C\eta_i f_i^{-1/4}.
\]
The hypotheses of \cref{lem:weighted-exterior-maximum-principle}
hold uniformly. Polynomial growth of the functions and their first
derivatives, together with Gaussian decay, puts the positive parts
used below in $H^1_{f_i,0}(E_R)$ whenever their inner boundary trace vanishes.

Let $m_i=\sup_{\partial E_R}q_i\to0$ and set
\[
 K_i=2m_i+\frac{16C}{c_0}\eta_i,\qquad
 B_i=K_i(1-f_i^{-1/4}).
\]
Since
\[
 \Delta_{f_i}^{g_i}(f_i^{-1/4})
 =-\frac14f_i^{-5/4}\Delta_{f_i}^{g_i}f_i
 +\frac5{16} f_i^{-9/4}|\mathrm df_i|_{g_i}^2
 \ge\frac{c_0}{4}f_i^{-1/4},
\]
for all sufficiently large $i$ we have
\[
 (\Delta_{f_i}^{g_i}+A_i)B_i
 \leq-\frac{c_0K_i}{8}f_i^{-1/4}\leq-A_i,
 \qquad B_i\geq K_i/2\geq q_i\quad\text{on }\partial E_R.
\]
On the other hand, $(\Delta_{f_i}^{g_i}+A_i)q_i\geq-A_i$.
Apply \cref{lem:weighted-exterior-maximum-principle} to $q_i-B_i$.
It gives $q_i\leq B_i\leq K_i$ on $E_R$.
Since $K_i\to0$, local convergence yields
\[
 \|\mathrm d(f_i-\hat f)\|_{C^0(M)}\longrightarrow0.
\]

By \cite[Proposition~3.3(i)]{LYZ-I}, the operator
$-\Delta_{\hat f}^{\hat g}$ on one-forms has discrete spectrum and
kernel consisting of parallel forms.
Nonsplitting excludes nonzero parallel one-forms, so its first
eigenvalue is positive. Thus, for some $c_*>0$,
\[
 c_*\|\xi\|_{L^2_{\hat f}}^2
 \leq\|\hat\nabla\xi\|_{L^2_{\hat f}}^2
 \qquad\text{for every }\xi\in H^1_{\hat f}(M,T^*M).
\]
This gap persists for the connection energy with metric $g_i$ and
measure $\mathrm dV_{f_i}$.
Let $J_i:(T^*M,g_i)\to(T^*M,\hat g)$ be the positive fiberwise isometry and put
\[
 T_i\xi=e^{-(f_i-\hat f)/2}
       \left(\frac{\mathrm dV_{g_i}}{\mathrm dV_{\hat g}}\right)^{1/2}J_i\xi.
\]
This is an isometry between the two $L^2$ spaces.
The connection comparison, $h_i\to0$, and the preceding global gradient estimate give
\[
 \|\hat\nabla(T_i\xi)\|_{L^2_{\hat f}}
 \leq(1+o(1))\|\nabla^{g_i}\xi\|_{L^2_{f_i}}
       +o(1)\|\xi\|_{L^2_{f_i}}.
\]
Indeed, differentiation of the density factor introduces only
$\mathrm d(f_i-\hat f)$ and the derivative of the volume ratio;
differentiation of $J_i$ introduces only first derivatives of $h_i$.
Absorbing the last term into the background gap yields, for large $i$,
\[
 \frac{c_*}{4}\|\xi\|_{L^2_{f_i}}^2
 \leq\|\nabla^{g_i}\xi\|_{L^2_{f_i}}^2.
\]
The calculation is first made for compactly supported forms and then
extended by density.

Let $a_i\in \mathscr Z$ be the solution supplied by
\cref{thm:uniqueness-of-minimizer-IFT}. Then $a_i\to0$ in that space and
\[
 f_i^0=\hat f+a_i,\qquad
 2\Delta^{g_i}f_i^0-|\mathrm df_i^0|_{g_i}^2+\mathrm R_{g_i}+f_i^0
 =n+\boldsymbol\mu(\hat g).
\]
Its defect $\Phi_i^0=\Phi(g_i,f_i^0)$ tends to zero in $C^0$, by
\cref{lem:graph-nonlinear-closure}.
Set $w_i=f_i-f_i^0$. Subtracting the scalar equations and differentiating,
as in \cref{eq:beta-equation}, gives
\[
 \Delta_{f_i}^{g_i}\mathrm dw_i+(\Phi_i^0)^\sharp(\mathrm dw_i)=0.
\]
The preceding bounds and $a_i\in \mathscr Z$ imply
$\mathrm dw_i\in H^1_{f_i}(M,T^*M)$.
Testing by $\mathrm dw_i$ and removing the Gaussian cutoffs gives
\[
 \|\nabla^{g_i}\mathrm dw_i\|_{L^2_{f_i}}^2
 \leq\|\Phi_i^0\|_{C^0}\|\mathrm dw_i\|_{L^2_{f_i}}^2.
\]
The connection-energy gap and $\|\Phi_i^0\|_{C^0}<c_*/4$ imply $\mathrm dw_i=0$.
The undifferentiated equation fixes this constant as
$w_i=\boldsymbol\mu(g_i)-\boldsymbol\mu(\hat g)$. Consequently,
\[
 (f_i-\boldsymbol\mu(g_i))-(\hat f-\boldsymbol\mu(\hat g))
 =a_i\longrightarrow0\quad\text{in }\mathscr Z.
\]
The left-hand side therefore belongs to $U_1$ for large $i$,
contradicting the choice of the sequence.
\end{proof}

\begin{Theorem}\label{thm:existence-and-uniqueness-of-minimizer}
On a sufficiently small neighborhood of zero in $\mathscr H$, every
$g=\hat g+h$ has a unique entropy minimizer $f_g$. The maps
$h\mapsto f_g-\hat f\in\mathscr Z$ and
$h\mapsto\boldsymbol\mu(g)\in\mathbb R$ are real analytic.
The same existence and uniqueness conclusions hold for original
metrics with $\|h\|_{E_{14}}$ sufficiently small. On this original
neighborhood the minimizer depends continuously in $C^{3,1/4}_1$
and the entropy depends $C^1$ on the metric.
\end{Theorem}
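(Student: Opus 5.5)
The plan is to combine the analytic branch of \cref{thm:uniqueness-of-minimizer-IFT} with the a priori localization of \cref{thm:asymptotic-estimates-on-minimizer}. First produce a candidate: for $h$ in the neighborhood $U_1$ of \cref{thm:uniqueness-of-minimizer-IFT}, the function $\hat f+u_h$ solves $W(h,u_h)=0$. This is the Euler--Lagrange equation \eqref{eq:Euler-Lagrange-equation} with constant $\boldsymbol\mu(\hat g)$ in place of $\boldsymbol\mu(g)$, and it is not normalized. Set
\[
 c_h=\log\int_M(4\pi)^{-n/2}e^{-\hat f-u_h}\,\mathrm dV_g,\qquad
 f_h=\hat f+u_h+c_h .
\]
Then $f_h$ is normalized and solves \eqref{eq:Euler-Lagrange-equation} with constant $m_h=\boldsymbol\mu(\hat g)+c_h$. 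Analyticity of $c_h$ follows because $u_h\mapsto e^{-u_h}$ is analytic into functions with $|u_h|\le C\epsilon\boldsymbol\rho$ (\eqref{eq:finite-low-coefficients}), $\mathrm dV_g/\mathrm dV_{\hat g}$ is analytic in $h$, and the Gaussian weight $e^{-\hat f}$ dominates $e^{C\epsilon\boldsymbol\rho}$, so the integral converges with its power series.

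Next show that $f_h$ is the minimizer and that $\boldsymbol\mu(g)=m_h$. I would reuse the compactness part of the proof of \cref{thm:asymptotic-estimates-on-minimizer}. Take a minimizing sequence of compactly supported normalized potentials, or minimizers on large Dirichlet domains. The log-Sobolev bound there gives $\boldsymbol\mu(g)>-\infty$, and the Zhang-type local estimates give a positive limit $v$ with no mass loss. Mass cannot escape to the Euclidean end, since $\boldsymbol\mu(\mathbb R^n)=0>\boldsymbol\mu(\hat g)$, using the entropy gap and $\limsup\boldsymbol\mu(g)\le\boldsymbol\mu(\hat g)$. This yields a normalized minimizer $f$ solving \eqref{eq:Euler-Lagrange-equation}.

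Apply \cref{thm:asymptotic-estimates-on-minimizer}, with the neighborhood chosen so that it lies in the uniqueness neighborhood $U_2$ of the implicit function theorem. This gives $(f-\boldsymbol\mu(g))-(\hat f-\boldsymbol\mu(\hat g))\in U_2$, and that function solves $W(h,\cdot)=0$. By uniqueness in the implicit function theorem it equals $u_h$. Hence $f=\hat f+u_h+\boldsymbol\mu(g)-\boldsymbol\mu(\hat g)$, and normalization forces $\boldsymbol\mu(g)-\boldsymbol\mu(\hat g)=c_h$. The same argument applies to any minimizer, which gives uniqueness: $f_g=f_h$ and $\boldsymbol\mu(g)=\boldsymbol\mu(\hat g)+c_h$.

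Since constants lie in $\mathscr Z$ ($L_{1/2}1=\tfrac12\in\chi C^{6,3/4}+$ compact error), $f_g-\hat f=u_h+c_h\in\mathscr Z$ is analytic, and so is $\boldsymbol\mu(g)$. The main obstacle I expect is the existence step. One must check that the minimizing limit satisfies the growth hypotheses needed to integrate by parts and to enter \cref{thm:asymptotic-estimates-on-minimizer}; \cref{lem:quadratic-growth-minimizer,lem:higher-order-estimates-on-minimizer} and the Gaussian decay barriers supply exactly this.

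For original metrics with $\|h\|_{E_{14}}$ small, apply \cref{thm:finite-coordinate-slice} with $k=10$. This gives $\tilde h=F_{Z(h)}^*(\hat g+h)-\hat g\in E_9\cap\ker\hat D\hookrightarrow\mathscr H$ by \eqref{eq:original-to-graph}, with norm $\le C\|h\|_{E_{14}}$. Because $\boldsymbol\mu$ and minimizers are diffeomorphism-natural, $\boldsymbol\mu(g)=\boldsymbol\mu(\hat g+\tilde h)$ and $f_g=(F_{Z(h)}^{-1})^*f_{\hat g+\tilde h}$. Existence and uniqueness transfer to $g$. The entropy is a composition of the analytic map on $\mathscr H$ with the $C^1$ map $h\mapsto\tilde h$, so it is $C^1$. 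Continuity of $f_g-\hat f$ in $C^{3,1/4}_1$ follows from \cref{lem:finite-potential-pullback}, using $\mathscr Z\hookrightarrow C^{6,1/4}_1\subset C^{4,1/4}_1$ and continuity of $Z(h)$.
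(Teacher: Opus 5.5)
Your proposal follows the paper's proof essentially step for step. The steps are: existence by Dirichlet exhaustion, with the Euclidean entropy gap preventing escape; identification of every minimizer with $\hat f+u_h$ up to a constant, via \cref{thm:asymptotic-estimates-on-minimizer} and implicit-function uniqueness; analyticity of the normalization constant (the paper's $\log Z(g)$) through the Gaussian-dominated power series of $e^{-u}$; and transfer to $E_{14}$ metrics through \cref{thm:finite-coordinate-slice} at $k=10$, diffeomorphism invariance, and \cref{lem:finite-potential-pullback}.

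The one step you assert rather than prove is that $h\mapsto\tilde h=F_{Z(h)}^*(\hat g+h)-\hat g$ is $C^1$ into $\mathscr H$. You also need it at least continuous there for the potential statement.

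\Cref{thm:finite-coordinate-slice} only gives $C^1$ dependence of the parameter $Z(h)$. The map $Z\mapsto F_Z^*(\hat g+h)$ is a composition operator, and differentiating it in $Z$ consumes one derivative of the field being composed. So the chain rule does not apply at a fixed regularity level, and you should not expect $C^1$ dependence into $E_9$.

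The paper closes this by differentiating the finite coefficient and remainder formulas of the pullback with Taylor's integral formula. It uses the surplus regularity $h\in E_{14}$ and the order-$10$ map estimates to obtain $C^1$ dependence into $E_8\cap\ker\hat D$. The loss of one level is harmless, because \eqref{eq:original-to-graph} only requires $E_8$.

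With that supplied, your composition argument goes through, both for the $C^1$ dependence of the entropy and for the continuity of the pulled-back minimizer in $C^{3,1/4}_1$.
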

\begin{proof}
Testing with a compactly supported approximation to the background
minimizer gives $\boldsymbol\mu(g)<0$. Apply the Dirichlet exhaustion
construction of \cite[Theorem~1.1]{zhang2012extremal}.
Choose smooth exhaustion domains with uniformly controlled boundary
charts and extend the normalized Dirichlet minimizers by zero.
As in the cited proof, these extensions are weak subsolutions with
uniform $W^{1,2}$ and local H\"older bounds, and their maxima are bounded
below by a positive constant. If maximizing points escaped to infinity,
the asymptotically conical geometry and these bounds would give a nonzero
Euclidean limit $v_\infty$, with squared mass $a\in(0,1]$, satisfying
\[
 4\Delta v_\infty+2v_\infty\log v_\infty+\lambda v_\infty\geq0,
 \qquad \lambda=\boldsymbol\mu(g)+n+\frac n2\log(4\pi).
\]
Testing this weak inequality by cutoffs times $v_\infty$ and normalizing gives
\[
 0=\boldsymbol\mu(\mathbb R^n)\le\boldsymbol\mu(g)+\log a<0.
\]
Thus the maxima stay in a compact set, where interior compactness gives
a positive Euler--Lagrange solution. The same mass argument on $M$ gives
$\boldsymbol\mu(g)\le\boldsymbol\mu(g)+\log a$, hence $a=1$ and a
positive minimizer. The preceding asymptotic and spectral-gap argument
identifies every minimizer with $\hat f+u_h$ up to a constant.

Define
\begin{equation}\label{eq:normalized-analytic-branch}
 \begin{gathered}
 Z(g)=\int_M e^{-u_h}\sqrt{\det(I+\hat g^{-1}h)}\,\mathrm dV_{\hat f},\\
 f_g=\hat f+u_h+\log Z(g),\qquad
 \boldsymbol\mu(g)=\boldsymbol\mu(\hat g)+\log Z(g).
 \end{gathered}
\end{equation}
These relations follow from normalized mass and the scalar equation.
Constants belong to $\mathscr Z$, since their image under
$L_{1/2}$ is a constant in $\mathcal T$.
Treat $h$ and $u$ as independent real variables. Since
$\mathscr Z\hookrightarrow C^{6,1/4}_1$, one has
$|u|\le C\|u\|_{\mathscr Z}\boldsymbol\rho$. Hence, for every fixed
$\epsilon>0$,
\[
 \sum_{m=0}^{\infty}\frac1{m!}
 \sup_{\|u\|_{\mathscr Z}\le\epsilon}
       \int_M|u|^m\,\mathrm dV_{\hat f}
 \le\int_M e^{C\epsilon\boldsymbol\rho}\,\mathrm dV_{\hat f}<\infty.
\]
Thus $e^{-u}=\sum_{m\ge0}(-u)^m/m!$ is a series of continuous
homogeneous polynomials converging absolutely and uniformly on bounded
$\mathscr Z$ balls in $L^1_{\hat f}$. For small $\|h\|_{\mathscr H}$,
the determinant square root has a convergent real power series in
$L^\infty$. Multiplication from $L^1_{\hat f}\times L^\infty$ to
$L^1_{\hat f}$ and integration are bounded, so
\[
 (h,u)\longmapsto
 \int_M e^{-u}\sqrt{\det(I+\hat g^{-1}h)}\,\mathrm dV_{\hat f}
\]
is real analytic. Compose with $h\mapsto(h,u_h)$ and then with the
real logarithm near $Z(\hat g)=1$. Together with
\eqref{eq:normalized-analytic-branch}, this proves both analytic assertions.

For an original input, \cref{thm:finite-coordinate-slice} at $k=10$
gives $\tilde h\in E_9\cap\ker\hat D$. The inclusion
\eqref{eq:original-to-graph} uses only $E_8$, so this metric belongs
to the preceding analytic neighborhood. Pull the unique minimizer
back by the inverse diffeomorphism. By diffeomorphism invariance, change of variables preserves
the variational problem and proves existence and uniqueness on the
original metric.

For the dependence assertions, the pullback map
$(h,Z)\mapsto F_Z^*(\hat g+h)-\hat g$ is $C^1$ into $E_8$:
differentiate the finite coefficient and
remainder formulas in \cite[Proposition~6.4(ii)]{LYZ-I}, using
the parameter estimates of \cite[Lemma~6.3 and the proof of
Proposition~6.5]{LYZ-I}. Taylor's integral formula applies to each
composition with $F_Z$. The differentiated
composition uses one extra derivative of the field being composed;
$h\in E_{14}$ and the order-$10$ map estimates supply it, with the
output measured one level below $E_9$. The bounded product estimates
also control the derivative remainder in that norm. Thus the sliced
metric is $C^1$ into $E_8\cap\ker\hat D$, and hence into $\mathscr H$
by \eqref{eq:original-to-graph}. Composing with the analytic entropy proves
its $C^1$ dependence. The analytic scalar branch takes values in
$\mathscr Z\hookrightarrow C^{6,1/4}_1$, so
\cref{lem:finite-potential-pullback} gives continuous dependence of
the pulled-back minimizer in $C^{3,1/4}_1$.
\end{proof}

\begin{Lemma}\label{lem:variation-formulas}
    Let $h$ lie in the analytic neighborhood in $\mathscr H$ and let
    $k\in\mathscr H$. The gradient of $\boldsymbol{\mu}$ at $g$ is given by
    \begin{equation}
        \nabla\boldsymbol{\mu}(g)=\frac{1}{2}g-\Ric_g-\nabla^2_gf_g,
    \end{equation}
    where $f_g$ is the unique minimizer of $\mathcal W(g,\cdot)$.
    That is,
    \begin{equation*}
        \mathrm D\boldsymbol{\mu}_g[k]=\int_M\langle  \Phi(g),k  \rangle  \,\mathrm dV_{f_g},
    \end{equation*}
    where $k$ denotes a symmetric $(0,2)$-tensor.
    For simplicity, we write $f:=f_g$.
    Operators without specifying the metric are understood to be defined by $g$.
    Let $v$ be the variation of $f_g$ in $g$ along the direction $k$, i.e., $v:=\mathrm D f_g[k]$.
    Then $v$ is characterized by
    \begin{equation*}
        (2\Delta_f+1)\big(v-\frac12\tr_gk\big)
        +\delta_f^2k+\langle \Phi(g),k\rangle_g=\mathrm D\boldsymbol{\mu}_g[k],
    \end{equation*}
    together with the normalization condition
    \begin{equation*}
        \int_M  \big(\frac{1}{2}\tr_gk-v\big)  \mathrm dV_{f_g}=0.
    \end{equation*}
    The linearization of the shrinker defect is
    \begin{equation*}
        \mathrm D\Phi_g[k]=\frac{1}{2}\Delta_fk  +\Rm_g(k)  +\delta^*\delta_fk  +\nabla_g^2\big(\frac{1}{2}\tr_gk-v\big)  +\frac{1}{2}(\Phi(g)\circ k+k\circ \Phi(g)),
    \end{equation*}
    where $(\Phi(g)\circ k)_{ij}:=(\Phi(g))_{ip}k_{qj}g^{pq}$.
\end{Lemma}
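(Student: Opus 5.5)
Since $h\mapsto f_g-\hat f\in\mathscr Z$ and $h\mapsto\boldsymbol\mu(g)$ are real analytic by \cref{thm:existence-and-uniqueness-of-minimizer}, the plan is to differentiate the identities that characterize $f_g$ and $\boldsymbol\mu(g)$ along $g_\tau=g+\tau k$, $k\in\mathscr H$. These identities are the normalization $\int\mathrm dV_{f_g}=1$, the Euler--Lagrange equation \eqref{eq:Euler-Lagrange-equation}, and $\boldsymbol\mu(g)=\mathcal W(g,f_g)$. I would take the pointwise first variation formulas for $\mathrm R_g$, $\Delta_g$, $|\nabla f|^2_g$, $\nabla^2_g$ and $\mathrm dV_g$ from \cite[(2.9)--(2.11)]{cao2012second}. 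The growth bounds are already available: $k\in C^{6,1/4}_0$ with derivatives in $C^{6,1/4}_{-1/2}$ by \cref{prop:finite-graph-realization}, $v\in\mathscr Z\hookrightarrow C^{6,1/4}_1$, and $f_g$ is comparable to $\boldsymbol\rho^2/4$ with $|\nabla f_g|\le C\boldsymbol\rho$. Hence every integrand below grows at most polynomially against the Gaussian weight $e^{-f_g}$, and weighted integrations by parts are justified by cutoffs exactly as in the proof of \cref{thm:asymptotic-estimates-on-minimizer}.

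\textbf{Gradient.} Differentiating the normalization gives $\int(\tfrac12\tr_gk-v)\,\mathrm dV_f=0$, which is the stated normalization condition. The standard computation gives
\[
\frac{\mathrm d}{\mathrm d\tau}\mathcal W(g_\tau,f_\tau)=\int\langle\Phi(g),k\rangle\,\mathrm dV_f
+\int\bigl(\tfrac12\tr_gk-v\bigr)\bigl(2\Delta f-|\nabla f|^2+\mathrm R+f-n-\boldsymbol\mu\bigr)\mathrm dV_f .
\]
This is Perelman's formula, with the divergence terms removed by weighted integration by parts. In the second term, the constant $-\boldsymbol\mu$ is harmless by the normalization condition. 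The bracket then vanishes by \eqref{eq:Euler-Lagrange-equation}, which yields $\mathrm D\boldsymbol\mu_g[k]=\int\langle\Phi(g),k\rangle\,\mathrm dV_{f_g}$.

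\textbf{Equation for $v$.} Next I would differentiate \eqref{eq:Euler-Lagrange-equation} itself. Write $\delta_f$ for the weighted divergence. The variations are
\[
\delta\mathrm R=\delta^2k-\Delta\tr k-\langle\Ric,k\rangle,\qquad
\delta(\Delta f)=\Delta v-\langle k,\nabla^2f\rangle-\langle\delta k+\tfrac12\mathrm d\tr k,\mathrm df\rangle,\qquad
\delta|\nabla f|^2=2\langle\mathrm df,\mathrm dv\rangle-k(\nabla f,\nabla f).
\]
I would then regroup using $\delta_f^2k=\delta^2k-2\langle\delta k,\mathrm df\rangle+k(\nabla f,\nabla f)-\langle k,\nabla^2f\rangle$, together with $\Delta_f=\Delta-\nabla_{\nabla f}$. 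The collection is
\[
(2\Delta_f+1)v-\Delta_f\tr k+\delta_f^2k-\langle\Ric+\nabla^2f,k\rangle=\mathrm D\boldsymbol\mu_g[k].
\]
Writing $-\Delta_f\tr k=(2\Delta_f+1)(-\tfrac12\tr k)+\tfrac12\tr k$ and noting $\tfrac12\tr k-\langle\Ric+\nabla^2f,k\rangle=\langle\Phi(g),k\rangle$ gives the stated equation. Uniqueness of $v$ under the normalization follows because $2\Delta_f+1$ has trivial kernel among normalized-orthogonal fields of this class near the shrinker. This is inherited from the Gaussian invertibility of $L_{1/2}$ on $\mathscr Z$ by the perturbation argument of \cref{thm:uniqueness-of-minimizer-IFT}.

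\textbf{Defect.} For $\mathrm D\Phi_g[k]$ I would use $\delta\Ric=-\tfrac12\Delta k-\Rm(k)+\tfrac12(\Ric\circ k+k\circ\Ric)-\delta^*\delta k-\tfrac12\nabla^2\tr k$. For the Hessian, $\delta(\nabla^2f)=\nabla^2v-\delta\Gamma(\nabla f)$, and the term $\delta\Gamma(\nabla f)$ produces $\tfrac12\nabla_{\nabla f}k$, a drift correction to $\delta^*\delta k$ making it $\delta^*\delta_fk$, and $\tfrac12(\nabla^2f\circ k+k\circ\nabla^2f)$. Adding $\tfrac12k$ and using $\tfrac12g-\Ric-\nabla^2f=\Phi(g)$ recombines the compositions into $\tfrac12(\Phi\circ k+k\circ\Phi)$, after the sign for $\tfrac12g$ is balanced. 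The main obstacle is exactly this bookkeeping, in particular matching the convention for $\Rm_g(k)$ from \cite{LYZ-I} and the sign of $\delta^*$. I would verify it by checking the formula on $k=\mathcal L_Yg$, where $\mathrm D\Phi_g[k]=\mathcal L_Y\Phi$ and $v=Y(f)$. The analytic justification is routine given the established weighted bounds.
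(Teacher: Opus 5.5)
Your proposal is correct and follows essentially the same route as the paper. The paper cites Perelman's first variation, differentiates the Euler--Lagrange equation and the normalization, and takes the variations of $\Ric_g$ and $\nabla_g^2f$ from Cao--Zhu before substituting $\Ric_g+\nabla_g^2f=\frac12g-\Phi(g)$, with integrations by parts justified by Gaussian cutoffs.

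There is one sign slip in an intermediate formula. The variation of $\Delta f$ is $\Delta v-\langle k,\nabla^2f\rangle-\langle\delta k-\tfrac12\mathrm d\tr k,\mathrm df\rangle$, with a minus sign on the trace term; this is what actually produces the $-\Delta_f\tr k$ in your (correct) collected equation. Also, the constant in Perelman's bracket is $-1$ rather than $-\boldsymbol\mu$, which is harmless by the normalization exactly as you note.
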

\begin{proof}
The first variation is Perelman's formula
\cite[\S3]{perelman2002entropy}; see also
\cite[Lemma~2.1]{cao2012second}, with fixed scale $\tau=1$.
Differentiating
$2\Delta_gf-|\nabla_gf|^2+\mathrm R_g+f-n=\boldsymbol\mu(g)$
and $\int_M\mathrm dV_f=1$ gives the scalar equation and normalization.
The variations of $\Ric_g$ and $\nabla_g^2f$ are those used in
\cite[proof of Lemma~2.3]{cao2012second}. Substituting
$\Ric_g+\nabla_g^2f=\frac12g-\Phi(g)$ gives the displayed
linearization, including the two terms involving $\Phi(g)$.
All integrations by parts follow by exhaustion cutoffs, using
Gaussian decay and the polynomial bounds for the variations.
\end{proof}

\section{Analysis with weakened Gaussian weight}\label{sec:appendix-weakened-gaussian}
For $g=\hat g+h$ with $h$ small in $\mathscr H$,
\cref{thm:existence-and-uniqueness-of-minimizer} gives
$f_g-\hat f\in\mathscr Z$, with at most linear growth. For every fixed
$0<\kappa<1$, the measure $\mathrm dV_{f_g}$ is therefore bounded above
by a constant multiple of $\mathrm dV_{\hat f,\kappa}$, uniformly on
a small graph neighborhood. We use this fixed measure to establish the
graph estimates, inverse gauge operator, and nonlinear reconstruction
needed in Section~\ref{sec:finite-Banach-metric-LS}.

\subsection{Coercivity and weakened Gaussian estimates}
\phantomsection\label{idx:oscillator-weight}
Throughout this section, all unmarked derivatives and pointwise norms in
analytic estimates are taken with respect to $\hat g$.
We retain the weight $\boldsymbol{\rho}$ from
\eqref{eq:prelim-background-drift}. Thus $\boldsymbol{\rho}\asymp r$ on the end, $|\nabla\boldsymbol{\rho}|\leq C$, and
$|\nabla^j\boldsymbol{\rho}|\leq C_j\boldsymbol{\rho}^{1-j}$.  Bundle components on the end are
always taken in normalized AC frames.  The operators
$\partial_{\mathrm c,0}=r\partial_r$ and $\partial_{\mathrm c,i}=\partial_{\theta^i}$ act
on these components, where $(\theta^1,\ldots,\theta^{n-1})$ are local
coordinates on the link $\Sigma$. A finite atlas and partition of unity
on $\Sigma$ are understood.  On the compact core we use ordinary coordinate
 derivatives.  Different such choices give equivalent norms.

\phantomsection\label{idx:oscillator-domains}
We first establish a graph estimate using balls of radius comparable to
$\boldsymbol{\rho}^{-1}$.  Let $E$ be one
of the fixed tensor bundles, with its induced metric and connection, and put
\begin{equation*}
 \begin{gathered}
 H^1_{\boldsymbol{\rho}}=\{v\in H^1_{\mathrm{loc}}(M,E;\hat g):
                    \nabla v,\boldsymbol{\rho} v\in L^2(\mathrm dV_{\hat g})\},
 \qquad
 \|v\|_{H^1_{\boldsymbol{\rho}}}^2=\|\nabla v\|_2^2+\|\boldsymbol{\rho} v\|_2^2,\\
 H^2_{\boldsymbol{\rho}}=\{v\in H^2_{\mathrm{loc}}(M,E;\hat g):
          \nabla^2v,\boldsymbol{\rho}\nabla v,\boldsymbol{\rho}^2v\in L^2(\mathrm dV_{\hat g})\},
 \qquad
 \|v\|_{H^2_{\boldsymbol{\rho}}}=\|\nabla^2v\|_2+
                         \|\boldsymbol{\rho}\nabla v\|_2+\|\boldsymbol{\rho}^2v\|_2.
 \end{gathered}
\end{equation*}
Here and until a Gaussian measure is explicitly indicated, $\|\cdot\|_2$
and $\langle\cdot,\cdot\rangle$ denote the unweighted $L^2$ norm and
inner product.  Since $\boldsymbol{\rho}\geq1$, both displayed norms control the
$L^2$ norm.  Derivatives in these spaces are weak derivatives.
For a smooth differential operator $A$, recall its maximal domain
(cf.\ \cite[Section~2.1]{braverman2002essential}):
\begin{equation*}
 \Dom_{\max}(A)=\{v\in L^2:Av\in L^2\text{ distributionally}\}.
\end{equation*}
Its minimal domain consists of those $v\in L^2$ for which there are
$v_j\in C_c^\infty(M,E)$ such that $v_j\to v$ and $Av_j$ converges
in $L^2$.  The latter limit necessarily equals $Av$ distributionally.
Thus the minimal realization is the closure of $A|_{C_c^\infty}$ in
the graph norm $\|v\|_2+\|Av\|_2$.
For the related Gaussian realizations, see
\cite[Propositions~3.3(iii) and~3.4]{LYZ-I}.

\begin{Lemma}
\label{lem:appendix-elementary-oscillator}
Let $A$ be a smooth second-order operator on the tensor bundle $E$,
written using the fixed connection as
\[
 Av=-a^{ij}\nabla_i\nabla_jv+b^j\nabla_jv+qv.
\]
The coefficients are bundle endomorphisms. Assume $a^{ij}=a^{ji}$
and, for some $\lambda_0>0$,
\[
 \langle a^{ij}(x)\xi_i\xi_j e,e\rangle
 \geq\lambda_0|\xi|_{\hat g}^2|e|^2
 \qquad(x\in M,\ \xi\in T_x^*M,\ e\in E_x).
\]
Suppose that, for every integer $k\geq0$,
\[
 |\nabla^ka|\leq C_k\boldsymbol{\rho}^{-k},\qquad
 |\nabla^kb|\leq C_k\boldsymbol{\rho}^{1-k},\qquad
 |\nabla^kq|\leq C_k\boldsymbol{\rho}^{2-k}.
\]
All coefficient derivatives and norms use $\hat g$ and the induced
bundle connections. Assume that there are constants $C,C_0>0$ such that
\begin{equation}\label{ineq:appendix-basic-coercivity}
 \|\nabla v\|_2^2+\|\boldsymbol{\rho}v\|_2^2
 \leq C\bigl(\langle Av,v\rangle+C_0\|v\|_2^2\bigr)
\end{equation}
for every $v\in C_c^\infty(M,E)$. Then
\begin{equation}\label{ineq:appendix-elementary-graph}
 \|v\|_{H^2_{\boldsymbol{\rho}}}\leq C(\|Av\|_2+\|v\|_2)
\end{equation}
on the common domain
\[
 \Dom_{\min}(A)=\Dom_{\max}(A)=H^2_{\boldsymbol{\rho}}.
\]
The graph norm is equivalent to $\|\cdot\|_{H^2_{\boldsymbol{\rho}}}$ on this
domain. Smooth compactly supported sections form a core, and
$H^2_{\boldsymbol{\rho}}\hookrightarrow L^2$ is compact. The same conclusions
hold for the formal unweighted adjoint $A^*$.
The constants are uniform for families with common ellipticity,
coefficient, and basic coercivity bounds on the fixed background.
\end{Lemma}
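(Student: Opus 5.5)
The proof has three parts: an interior estimate on balls of radius comparable to $\boldsymbol{\rho}^{-1}$, an absorption argument using \eqref{ineq:appendix-basic-coercivity}, and the domain identities via cutoffs and mollification.

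\textbf{Step 1 (a priori estimate).} For $v\in C_c^\infty(M,E)$, \eqref{ineq:appendix-basic-coercivity} and Cauchy--Schwarz give
\[
 \|\nabla v\|_2^2+\|\boldsymbol{\rho}v\|_2^2\le C(\|Av\|_2\|v\|_2+\|v\|_2^2),
\]
so $\|v\|_{H^1_{\boldsymbol{\rho}}}\le C(\|Av\|_2+\|v\|_2)$.

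\textbf{Step 2 (weighted second-order terms).} Cover $M$ by the compact core and balls $B_p=B(p,c\boldsymbol{\rho}(p)^{-1})$ with bounded overlap; on each ball $\boldsymbol{\rho}\asymp\boldsymbol{\rho}(p)$. Rescale $B_p$ to unit size by the factor $\boldsymbol{\rho}(p)$. The rescaled operator $\boldsymbol{\rho}(p)^{-2}A$ then has principal coefficients $a$ with uniformly bounded derivatives, a drift of size $O(1)$, and a zeroth-order term of size $O(1)$; this is exactly what the coefficient hypotheses give. Interior $L^2$ elliptic estimates on nested balls, scaled back, yield
\[
 \int_{B_p}|\nabla^2v|^2\le C\int_{2B_p}\bigl(|Av|^2+\boldsymbol{\rho}^2|\nabla v|^2+\boldsymbol{\rho}^4|v|^2\bigr).
\]
This alone only converts the $\nabla^2v$ term into the weighted lower-order terms. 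To control $\|\boldsymbol{\rho}\nabla v\|_2$ and $\|\boldsymbol{\rho}^2v\|_2$ themselves, apply Step 1 to the modified sections $\boldsymbol{\rho}v$ and $\boldsymbol{\rho}^2v$. The commutators satisfy
\[
 [A,\boldsymbol{\rho}]v=O(|\nabla v|)+O(\boldsymbol{\rho}|v|),\qquad
 [A,\boldsymbol{\rho}^2]v=O(\boldsymbol{\rho}|\nabla v|)+O(\boldsymbol{\rho}^2|v|).
\]
Here one uses $|\nabla^j\boldsymbol{\rho}|\le C\boldsymbol{\rho}^{1-j}$ and $|b|\le C\boldsymbol{\rho}$. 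Hence
\[
 \|\boldsymbol{\rho}^2v\|_2\lesssim\|\boldsymbol{\rho}v\|_{H^1_{\boldsymbol{\rho}}}
 \lesssim\|\boldsymbol{\rho}Av\|_2+\|\nabla v\|_2+\|\boldsymbol{\rho}v\|_2 .
\]
The term $\|\boldsymbol{\rho}Av\|_2$ is not controlled by $\|Av\|_2$, so this step needs a different route. I would instead test $Av$ against $\boldsymbol{\rho}^2v$ and integrate by parts. Writing $\langle Av,\boldsymbol{\rho}^2v\rangle=\langle A(\boldsymbol{\rho}v),\boldsymbol{\rho}v\rangle+$ commutator terms, one obtains
\[
 \|\boldsymbol{\rho}\nabla v\|_2^2+\|\boldsymbol{\rho}^2v\|_2^2
 \le C\bigl(\|Av\|_2\|\boldsymbol{\rho}^2v\|_2+\|\boldsymbol{\rho}\nabla v\|_2\|\boldsymbol{\rho} v\|_2+\|\boldsymbol{\rho}v\|_2^2\bigr),
\]
after which Young's inequality and Step 1 absorb the right-hand side. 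The dangerous drift term $\langle b\nabla(\boldsymbol{\rho}v),\boldsymbol{\rho}v\rangle$ is already covered by applying \eqref{ineq:appendix-basic-coercivity} to $\boldsymbol{\rho}v$. The only new terms come from derivatives of $\boldsymbol{\rho}$ and are of lower weight. Combined with the localized Hessian bound, this gives \eqref{ineq:appendix-elementary-graph} on $C_c^\infty$.

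\textbf{Step 3 (domains, compactness, adjoint).} The estimate extends to $\Dom_{\min}(A)$ by closure, giving $\Dom_{\min}(A)\subset H^2_{\boldsymbol{\rho}}$; the reverse inclusion and equality of the graph norm with the $H^2_{\boldsymbol{\rho}}$ norm follow since $\|Av\|_2\lesssim\|v\|_{H^2_{\boldsymbol{\rho}}}$ by the coefficient bounds. For $\Dom_{\max}(A)\subset\Dom_{\min}(A)$, take $v\in\Dom_{\max}(A)$. Local elliptic regularity gives $v\in H^2_{\mathrm{loc}}$. Let $\eta_R$ be cutoffs in $\boldsymbol{\rho}$ on $\{R<\boldsymbol{\rho}<2R\}$, with $|\nabla\eta_R|\lesssim R^{-1}$ and $|\nabla^2\eta_R|\lesssim R^{-2}$. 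Then
\[
 A(\eta_Rv)=\eta_RAv+O(R^{-1}|\nabla v|)+O(|v|),
\]
where the $O(|v|)$ term comes from $b\cdot\nabla\eta_R$, bounded by $\boldsymbol{\rho}R^{-1}$. The $O(|v|)$ term is harmless. The gradient term needs $\nabla v\in L^2$, which I get from the localized estimates applied on annuli; this yields a Caccioppoli bound $\|\eta_R\nabla v\|_2\lesssim\|Av\|_2+\|v\|_2$ uniformly in $R$. Alternatively, one uses the standard fact that $\Dom_{\max}(A)$ is the adjoint of $\Dom_{\min}(A^*)$, with Step 2 applied to $A^*$. Mollification then shows the compactly supported smooth sections are a core.

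Compactness of $H^2_{\boldsymbol{\rho}}\hookrightarrow L^2$ follows from Rellich on compact sets and the tail bound $\|v\|_{L^2(\boldsymbol{\rho}>R)}\le R^{-2}\|v\|_{H^2_{\boldsymbol{\rho}}}$. The formal adjoint $A^*$ has the same form with coefficients obeying the same bounds. Its basic coercivity holds since $\langle A^*v,v\rangle=\langle Av,v\rangle$ for real inner products, after symmetrizing. Uniformity of constants follows because every constant above depends only on the ellipticity, coefficient, and coercivity bounds. The main obstacle is the weighted absorption in Step 2, specifically controlling the drift commutators without losing a power of $\boldsymbol{\rho}$.
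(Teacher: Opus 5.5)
Your proof is correct. Steps 1--2 are essentially the paper's a priori argument: coercivity applied to $\boldsymbol\rho v$, the commutator $[A,\boldsymbol\rho]$, absorption by Young's inequality, then interior $H^2$ estimates on balls of radius $c/\boldsymbol\rho$ summed over a bounded-overlap cover. Compactness and the adjoint are also handled the same way.

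The genuine difference is how you identify $\Dom_{\max}(A)$. The paper builds solutions. It applies Lax--Milgram to the shifted form on $H^1_{\boldsymbol\rho}$ and upgrades the solution to $H^2_{\boldsymbol\rho}$ by testing with the truncated weights $b_T$. It repeats this construction for $A^*$. Finally, for $v\in\Dom_{\max}(A)$, it pairs $h=v-w$, where $w$ is the form solution of $(A+\lambda)w=(A+\lambda)v$, against $(A^*+\lambda)z_j$ to conclude $h=0$.

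You instead approximate $v$ directly by $\eta_Rv$. This works because the drift grows at most linearly, so $|b|\,|\nabla\eta_R|\lesssim1$ on the transition annulus and $v\in L^2$ makes that error vanish. The gradient error carries a factor $R^{-1}$. The chain $\Dom_{\max}\subset\Dom_{\min}\subset H^2_{\boldsymbol\rho}\subset\Dom_{\max}$ then closes without a separate density argument in $H^2_{\boldsymbol\rho}$. Your route is shorter and needs no solvability theory. The paper's route additionally yields a bounded inverse $(A+\lambda)^{-1}:L^2\to H^2_{\boldsymbol\rho}$ for non-self-adjoint $A$, which the lemma does not assert.

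One justification needs correcting. The shrinking-ball estimates alone give only
$R^{-1}\|\nabla v\|_{L^2(\{R<\boldsymbol\rho<2R\})}\lesssim R^{-2}\|Av\|_2+\|v\|_{L^2(\{R/2<\boldsymbol\rho<4R\})}$,
not a uniform bound on $\|\eta_R\nabla v\|_2$. That weaker bound already suffices for your cutoff argument. The uniform bound you state is true, but it comes from applying \eqref{ineq:appendix-basic-coercivity} to $\eta_Rv$ and writing $\eta_R\nabla v=\nabla(\eta_Rv)-\mathrm d\eta_R\otimes v$ in the commutator term. That argument even gives $v\in H^1_{\boldsymbol\rho}$. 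Your ``alternatively'' remark via adjoints is not a proof on its own.
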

\begin{proof}
We first derive the weighted estimates from
\cref{ineq:appendix-basic-coercivity}. Set
\[
 s=\boldsymbol{\rho}\quad\text{or}\quad
 s=s_T:=\frac{\boldsymbol{\rho}}{\sqrt{1+\boldsymbol{\rho}^2/T^2}},
 \qquad b_T=s_T^2,\qquad T\geq1.
\]
The bounds for $\boldsymbol{\rho}$ give, uniformly in $T$,
\[
 0<s\leq\boldsymbol{\rho},\qquad
 |\nabla s|\leq C s\boldsymbol{\rho}^{-1},\qquad
 |\nabla^2s|\leq C s\boldsymbol{\rho}^{-2}.
\]
Since $a^{ij}=a^{ji}$, the product rule gives
\[
 [A,s]v=-2a^{ij}(\nabla_i s)\nabla_jv
        -a^{ij}(\nabla_i\nabla_j s)v+b^j(\nabla_j s)v.
\]
Consequently, for $v\in C_c^\infty(M,E)$ and $\eta>0$,
\[
 |\langle[A,s]v,sv\rangle|
 \leq\eta\|s\nabla v\|_2^2
      +C_\eta\bigl(\|sv\|_2^2+\|v\|_2^2\bigr).
\]
Apply \cref{ineq:appendix-basic-coercivity} to $sv$, use
$\langle A(sv),sv\rangle=\langle Av,s^2v\rangle+
\langle[A,s]v,sv\rangle$ and
$\|s\nabla v\|_2^2\leq2\|\nabla(sv)\|_2^2+C\|v\|_2^2$,
and choose $\eta$ sufficiently small. After enlarging $C,C_0$, this gives
\begin{equation}\label{ineq:appendix-weighted-coercivity}
 \|s\nabla v\|_2^2+\|\boldsymbol{\rho}sv\|_2^2
 \leq C\bigl(\langle Av,s^2v\rangle
                  +C_0\|sv\|_2^2+C_0\|v\|_2^2\bigr).
\end{equation}
The choice $s=\boldsymbol{\rho}$ gives the weighted estimate, while
$s=s_T$ gives its truncated form with constants independent of $T$.

For $v\in C_c^\infty$, Cauchy's inequality gives
\begin{equation*}
 |\langle Av,\boldsymbol{\rho}^2v\rangle|
 \leq\eta\|\boldsymbol{\rho}^2v\|_2^2+C_\eta\|Av\|_2^2,
 \qquad
 \|\boldsymbol{\rho} v\|_2^2\leq\eta\|\boldsymbol{\rho}^2v\|_2^2+C_\eta\|v\|_2^2.
\end{equation*}
Insert these bounds into \cref{ineq:appendix-weighted-coercivity}
with $s=\boldsymbol{\rho}$ and choose $\eta$ sufficiently small to absorb the terms involving
$\|\boldsymbol{\rho}^2v\|_2^2$.  This proves
\begin{equation}\label{ineq:appendix-weighted-low-graph}
 \|\boldsymbol{\rho}\nabla v\|_2+\|\boldsymbol{\rho}^2v\|_2
 \leq C(\|Av\|_2+\|v\|_2).
\end{equation}

Fix $p\in M$ and put $R=\boldsymbol{\rho}(p)$.  Choose a fixed sufficiently small
$c_*>0$.  On $B_{2c_*/R}(p)$ the function $\boldsymbol{\rho}$ is comparable to $R$,
since $|\nabla\boldsymbol{\rho}|\leq C$.  Rescale the metric by $R^2$ and the
operator by $R^{-2}$.  In uniformly controlled coordinate and bundle
frames, the principal coefficients have size $O(1)$, and the
first-order and zeroth-order coefficients have sizes
$R^{-1}O(R)$ and $R^{-2}O(R^2)$, respectively.
A rescaled derivative contributes a factor $R^{-1}$.
The differentiated coefficient bounds therefore give uniform local
bounds, in particular uniform Lipschitz bounds for the principal
coefficients.  The AC geometry supplies uniformly controlled charts
on these balls; the compact core is covered by finitely many such
charts.  Uniform strong ellipticity is unchanged by this rescaling.
The ordinary interior $H^2$ estimate, rescaled back to $\hat g$, gives
\begin{equation}\label{ineq:appendix-shrinking-ball-H2}
 \|\nabla^2v\|_{L^2(B_{c_*/R}(p))}
 +R\|\nabla v\|_{L^2(B_{c_*/R}(p))}
 \leq C\bigl(\|Av\|_{L^2(B_{2c_*/R}(p))} 
             +R^2\|v\|_{L^2(B_{2c_*/R}(p))}\bigr).
\end{equation}
This local estimate also holds for every $H^2_{\mathrm{loc}}$ section.
A Vitali selection gives a cover by the smaller balls with uniformly
bounded overlap of the doubled balls.  Indeed, intersecting balls
have comparable radii, and the uniformly controlled rescaled geometry
bounds their packing number.  Squaring and summing gives
\begin{equation*}
 \|\nabla^2v\|_2^2+\|\boldsymbol{\rho}\nabla v\|_2^2
 \leq C\bigl(\|Av\|_2^2+\|\boldsymbol{\rho}^2v\|_2^2\bigr).
\end{equation*}
Together with \cref{ineq:appendix-weighted-low-graph}, this proves
\cref{ineq:appendix-elementary-graph} on $C_c^\infty$.

It is easy to see that $H^1_{\boldsymbol{\rho}}$ and $H^2_{\boldsymbol{\rho}}$ are complete.
Moreover, $C_c^\infty$ is dense in $H^1_{\boldsymbol{\rho}}$.
To see this, take $\chi_L=\chi(\boldsymbol{\rho}/L)$, where $\chi=1$ on $[0,1]$
and $\chi=0$ on $[2,\infty)$.  The tails of $\nabla v$ and $\boldsymbol{\rho} v$
tend to zero, while
$\|v\nabla\chi_L\|_2\leq CL^{-1}\|v\|_2$.
Thus $\chi_Lv\to v$ in $H^1_{\boldsymbol{\rho}}$, and local mollification finishes
the approximation.

Integrating the second-order term by parts once gives the form
\begin{equation*}
 \mathfrak a(u,v)
 ={}\int_M\langle a^{ij}\nabla_j u,\nabla_i v\rangle\,\mathrm dV_{\hat g} 
 +\int_M\langle(b^j+\nabla_i a^{ij})\nabla_j u,v\rangle\,\mathrm dV_{\hat g}
   +\int_M\langle qu,v\rangle\,\mathrm dV_{\hat g}.
\end{equation*}
For compactly supported smooth sections it equals $\langle Au,v\rangle$.
The assumptions give $|\nabla a|\leq C\boldsymbol{\rho}^{-1}$ and hence
$|b+\nabla a|\leq C\boldsymbol{\rho}$.  Consequently,
\begin{equation*}
 |\mathfrak a(u,v)|
 \leq C\bigl(\|\nabla u\|_2\|\nabla v\|_2
       +\|\nabla u\|_2\|\boldsymbol{\rho} v\|_2
       +\|\boldsymbol{\rho} u\|_2\|\boldsymbol{\rho} v\|_2\bigr) 
 \leq C\|u\|_{H^1_{\boldsymbol{\rho}}}\|v\|_{H^1_{\boldsymbol{\rho}}}.
\end{equation*}
Thus $\mathfrak a$ extends continuously to
$H^1_{\boldsymbol{\rho}}\times H^1_{\boldsymbol{\rho}}$.
By density, \cref{ineq:appendix-basic-coercivity} implies
\begin{equation*}
 \mathfrak a(v,v)\geq C^{-1}\|v\|_{H^1_{\boldsymbol{\rho}}}^2-C_0\|v\|_2^2
 \qquad(v\in H^1_{\boldsymbol{\rho}}).
\end{equation*}
Fix $\lambda>C_0$ large enough for both $A$ and $A^*$.
The shifted form
$\mathfrak a_\lambda(u,v)=\mathfrak a(u,v)+\lambda\langle u,v\rangle$
is bounded and coercive on $H^1_{\boldsymbol{\rho}}$.
Since $\boldsymbol{\rho}\geq1$, the functional $\phi\mapsto\langle F,\phi\rangle$
is continuous on $H^1_{\boldsymbol{\rho}}$ for every $F\in L^2$.
Lax--Milgram therefore gives a unique $w\in H^1_{\boldsymbol{\rho}}$ with
\begin{equation*}
 \mathfrak a(w,\phi)+\lambda\langle w,\phi\rangle
 =\langle F,\phi\rangle\qquad(\phi\in H^1_{\boldsymbol{\rho}}),
 \qquad \|w\|_{H^1_{\boldsymbol{\rho}}}\leq C\|F\|_2.
\end{equation*}
In particular, $(A+\lambda)w=F$ distributionally.
Local elliptic regularity gives $w\in H^2_{\mathrm{loc}}$.

The chosen truncations satisfy
\begin{equation*}
 0<b_T\leq\min\{\boldsymbol{\rho}^2,T^2\},\qquad
 b_T\uparrow\boldsymbol{\rho}^2,\qquad
 |\nabla b_T|\leq Cb_T\boldsymbol{\rho}^{-1},\qquad
 |\nabla^2b_T|\leq Cb_T\boldsymbol{\rho}^{-2}.
\end{equation*}
These bounds follow by differentiating the displayed formula for $b_T$;
their constants are independent of $T$.
For fixed $T$, multiplication by $b_T$ maps $H^1_{\boldsymbol{\rho}}$ continuously
to itself: $b_T\leq T^2$, and
$|\nabla b_T|\leq C T$ bounds the additional term
$w\nabla b_T$.  Hence $b_Tw$ is an admissible test in the weak equation.

The truncated inequality extends from $C_c^\infty$ to $H^1_{\boldsymbol{\rho}}$
for each fixed $T$.  In fact, if $w_j\to w$ in $H^1_{\boldsymbol{\rho}}$, then
$b_Tw_j\to b_Tw$ in $H^1_{\boldsymbol{\rho}}$, the form pairings converge, and
both weighted terms on the left converge since $b_T^{1/2}\leq T$.
Using the weak equation with test $b_Tw$ therefore gives
\begin{equation*}
 \|b_T^{1/2}\nabla w\|_2^2+\|\boldsymbol{\rho} b_T^{1/2}w\|_2^2
 \leq C\bigl(|\langle F,b_Tw\rangle| 
       +C_0\|b_T^{1/2}w\|_2^2+C_0\|w\|_2^2\bigr).
\end{equation*}
Here the term $-\lambda\|b_T^{1/2}w\|_2^2$ has a favorable sign
and was discarded.  Since $b_T\leq\boldsymbol{\rho}^2$,
\begin{equation*}
 |\langle F,b_Tw\rangle|
 \leq\|F\|_2\|\boldsymbol{\rho} b_T^{1/2}w\|_2
 \leq\eta\|\boldsymbol{\rho} b_T^{1/2}w\|_2^2+C_\eta\|F\|_2^2.
\end{equation*}
Also, uniformly in $T$,
\begin{equation*}
 \|b_T^{1/2}w\|_2^2
 \leq\eta\|\boldsymbol{\rho} b_T^{1/2}w\|_2^2+C_\eta\|w\|_2^2.
\end{equation*}
Indeed, on $\{\eta\boldsymbol{\rho}^2\geq1\}$ the first term suffices, and on
its complement $b_T\leq\boldsymbol{\rho}^2<\eta^{-1}$.
Choosing $\eta$ small and absorbing gives
\begin{equation*}
 \|b_T^{1/2}\nabla w\|_2^2+\|\boldsymbol{\rho} b_T^{1/2}w\|_2^2
 \leq C(\|F\|_2^2+\|w\|_2^2),
\end{equation*}
with $C$ independent of $T$.
Monotone convergence now gives $\boldsymbol{\rho}\nabla w,\boldsymbol{\rho}^2w\in L^2$.
Apply \cref{ineq:appendix-shrinking-ball-H2} locally to $w$ and sum
using the shrinking-ball cover.  Since $Aw=F-\lambda w\in L^2$, this yields
$\nabla^2w\in L^2$, so $w\in H^2_{\boldsymbol{\rho}}$ and
$\|w\|_{H^2_{\boldsymbol{\rho}}}\leq C_\lambda\|F\|_2$.

The coefficient bounds and strong ellipticity also hold for the formal
unweighted adjoint $A^*$.
Indeed, its first-order coefficients contain $b^*$ and $\nabla a^*$,
and its zeroth-order coefficients contain $q^*$, $\nabla b^*$,
and $\nabla^2a^*$; any curvature terms introduced by reordering
covariant derivatives satisfy the same bounds on the AC background.
For $v\in C_c^\infty(M,E)$,
$\langle A^*v,v\rangle=\langle Av,v\rangle$.
Hence \cref{ineq:appendix-basic-coercivity} holds for $A^*$, and the
multiplier argument gives \cref{ineq:appendix-weighted-coercivity}
for $A^*$ as well. The preceding form construction and weighted
regularity therefore apply to the adjoint.
We have proved that both $A+\lambda$ and $A^*+\lambda$ map
$H^2_{\boldsymbol{\rho}}$ onto $L^2$ by their form solutions.

The coefficient bounds directly imply
\begin{equation*}
 \|Av\|_2+\|v\|_2\leq C\|v\|_{H^2_{\boldsymbol{\rho}}}
 \qquad(v\in H^2_{\boldsymbol{\rho}}).
\end{equation*}
For the cutoffs $\chi_L=\chi(\boldsymbol{\rho}/L)$ used above, put
$\Omega_L=\{L\leq\boldsymbol{\rho}\leq2L\}$.
On $\Omega_L$ we have $|\nabla\chi_L|\leq CL^{-1}$ and
$|\nabla^2\chi_L|\leq CL^{-2}$.
In differentiating $\chi_Lv-v$, the terms where no derivative meets
the cutoff are tails of $\nabla^2v$, $\boldsymbol{\rho}\nabla v$, or $\boldsymbol{\rho}^2v$.
The remaining terms satisfy
\begin{equation*}
 \begin{split}
 \|\boldsymbol{\rho} v\nabla\chi_L\|_2
 &\leq C\|v\|_{L^2(\Omega_L)},\\
 \|\nabla\chi_L\otimes\nabla v\|_2
       +\|v\nabla^2\chi_L\|_2
 &\leq CL^{-1}\|\nabla v\|_{L^2(\Omega_L)}
          +CL^{-2}\|v\|_{L^2(\Omega_L)}.
 \end{split}
\end{equation*}
They tend to zero, so $\chi_Lv\to v$ in $H^2_{\boldsymbol{\rho}}$.
On a fixed compact set the weights are bounded, and ordinary local
mollification in bundle charts gives approximation in $H^2$, hence
in $H^2_{\boldsymbol{\rho}}$.  A diagonal choice gives $v_j\in C_c^\infty$
with $v_j\to v$ in $H^2_{\boldsymbol{\rho}}$, and the preceding coefficient
estimate gives convergence in the graph norm.  Thus
$H^2_{\boldsymbol{\rho}}\subset\Dom_{\min}(A)$.

Conversely, a sequence defining an element of $\Dom_{\min}(A)$
is Cauchy in $H^2_{\boldsymbol{\rho}}$ by
\cref{ineq:appendix-elementary-graph} applied to differences.
Completeness and the $L^2$ convergence identify its limit as an
element of $H^2_{\boldsymbol{\rho}}$.  Therefore
$\Dom_{\min}(A)=H^2_{\boldsymbol{\rho}}$, and
\cref{ineq:appendix-elementary-graph} extends to this domain.
The reverse coefficient estimate proves the asserted norm equivalence.
Exactly the same argument applies to $A^*$.

Let $v\in\Dom_{\max}(A)$ and set $F=(A+\lambda)v\in L^2$.
The form construction gives a solution $w\in H^2_{\boldsymbol{\rho}}$ of
$(A+\lambda)w=F$.  Then $h=v-w\in L^2$ and
$(A+\lambda)h=0$ distributionally.  At this point $h$ is not known
to lie in $H^1_{\boldsymbol{\rho}}$, so the basic coercivity inequality cannot
be applied directly to $h$.
Instead solve the adjoint equation
\begin{equation*}
 (A^*+\lambda)z=h,\qquad z\in H^2_{\boldsymbol{\rho}},
\end{equation*}
by the adjoint form construction and weighted regularity.
By the core property for $A^*$ there are $z_j\in C_c^\infty$ such that
$z_j\to z$ and $(A^*+\lambda)z_j\to h$ in $L^2$.
The distributional equation for $h$ gives
\begin{equation*}
 0=\langle h,(A^*+\lambda)z_j\rangle
       \longrightarrow\langle h,h\rangle.
\end{equation*}
Hence $h=0$ and $v=w\in H^2_{\boldsymbol{\rho}}$.
Together with the minimal-domain identity and the inclusion
$\Dom_{\min}(A)\subset\Dom_{\max}(A)$, this proves the domain
identity.  Interchanging $A$ and $A^*$ proves the adjoint identity.

Let $(v_j)$ be bounded in $H^2_{\boldsymbol{\rho}}$.
On every fixed compact set it is bounded in $H^2$, so a diagonal
application of Rellich compactness gives a subsequence convergent
locally in $L^2$.  Uniformly in $j$,
\begin{equation*}
 \|v_j\|_{L^2(\{\boldsymbol{\rho}>L\})}
 \leq L^{-2}\|\boldsymbol{\rho}^2v_j\|_2.
\end{equation*}
These tails tend to zero uniformly as $L\to\infty$.
The locally convergent subsequence is therefore Cauchy in global
$L^2$, proving compactness.
\end{proof}

We use the probability measures $\mathrm dV_{\hat f,\kappa}$ and spaces
$L^2_\kappa$ fixed before \cref{sec:metric-LS-auxiliary}, with
$0\leq\kappa<1$. The AC growth of $\hat f$ gives $Z_\kappa<\infty$,
and $Z_0=1$. We write $L^2_\kappa(E)$ when specifying the tensor
bundle; all fiber norms use $\hat g$. For $\kappa>0$, the slower decay of
the density gives a continuous inclusion
$L^2_\kappa(E)\subset L^2_0(E)$.

The Gaussian conjugation is the surjective isometry
\begin{equation*}
 \begin{split}
 U_\kappa:L^2_\kappa(E)&\longrightarrow L^2(M,E;\mathrm dV_{\hat g}),\\
 U_\kappa u
 &=\left(\frac{\mathrm dV_{\hat f,\kappa}}{\mathrm dV_{\hat g}}\right)^{1/2}u
   =(4\pi)^{-n/4}Z_\kappa^{-1/2}
           e^{-(1-\kappa)\hat f/2}u.
 \end{split}
\end{equation*}
Indeed, multiplication by the square root of the density preserves the
inner product and has inverse multiplication by its reciprocal. The normalizing constant cancels
in conjugated differential operators.

Recall $\hat D$ from \cref{eq:prelim-metric-abbreviations}, and $\hat L$
and the infinitesimal solitonic deformation space $\ISD$ from
\cref{eq:prelim-ISD}.
If a nearby normalized density obeys
\begin{equation*}
 \left|\log\frac{\mathrm dV_{f_g}}{\mathrm dV_{\hat f}}\right|
 \leq\epsilon(1+\hat f)+C,
 \qquad \epsilon<\frac\kappa4,
\end{equation*}
then
\begin{equation}\label{ineq:moving-density-dominated-by-weaker-gaussian}
 \mathrm dV_{f_g}\leq C_\kappa \mathrm dV_{\hat f,\kappa}.
\end{equation}

\begin{Lemma}
\label{lem:weakened-gaussian-oscillator-package}
\label{prop:weakened-gaussian-closed-oscillator-realization}
\label{lem:weakened-gaussian-normal-gap}
\label{lem:weakened-gaussian-oscillator-graph}
\label{lem:weakened-gaussian-scalar-response}
Let $(M,\hat g,\hat f)$ be the fixed complete nontrivial AC shrinker.
For $E=S^2T^*M$ or the scalar bundle, set
\begin{equation}\label{eq:weakened-Gaussian-closed-domain}
 \begin{gathered}
 \|u\|_{H^2_{\boldsymbol{\rho},\kappa}}
 :=\|\hat\nabla^2u\|_{L^2_\kappa}
   +\|\boldsymbol{\rho}\hat\nabla u\|_{L^2_\kappa}
   +\|\boldsymbol{\rho}^2 u\|_{L^2_\kappa},\\
 H^2_{\boldsymbol{\rho},\kappa}(E)
 :=\{u\in H^2_{\mathrm{loc}}(M,E;\hat g):
                  \|u\|_{H^2_{\boldsymbol{\rho},\kappa}}<\infty\}.
 \end{gathered}
\end{equation}
Let $\mathcal L$ denote $\hat L$ on symmetric tensors or
$2\hat\Delta_{\hat f}+1$ on functions.
For every fixed $0\leq\kappa<1$, its minimal and maximal
$L^2_\kappa$ realizations have domain $H^2_{\boldsymbol{\rho},\kappa}$, with
$C_c^\infty$ as a core, compact inclusion into $L^2_\kappa$, and
\begin{equation}\label{ineq:closed-weakened-Gaussian-oscillator-graph}
 \|u\|_{H^2_{\boldsymbol{\rho},\kappa}}
 \leq C_\kappa\bigl(\|\mathcal Lu\|_{L^2_\kappa}
                         +\|u\|_{L^2_\kappa}\bigr).
\end{equation}

For tensors satisfying $\hat Dk=0$ and
$k\perp_{L^2_{\hat f}}\ISD$, and for arbitrary scalars $u$, one has
\[
 \|k\|_{H^2_{\boldsymbol\rho,\kappa}}
       \le C_\kappa\|\hat Lk\|_{L^2_\kappa},\qquad
 \|u\|_{H^2_{\boldsymbol\rho,\kappa}}
       \le C_\kappa\|(2\hat\Delta_{\hat f}+1)u\|_{L^2_\kappa}.
\]
For the scalar operator, suppose
\[
 \mathcal Eu=a_2*\hat\nabla^2u+a_1*\hat\nabla u+a_0u,\qquad
 \sup_M\bigl(|a_2|+\boldsymbol\rho^{-1}|a_1|
                       +\boldsymbol\rho^{-2}|a_0|\bigr)\le\epsilon.
\]
If $\epsilon$ is sufficiently small in terms of $\kappa$, then
\begin{equation}\label{ineq:background-scalar-response}
 \|u\|_{H^2_{\boldsymbol\rho,\kappa}}
 \le 2C_\kappa\|(2\hat\Delta_{\hat f}+1+\mathcal E)u\|_{L^2_\kappa}.
\end{equation}
All fields lie in the displayed graph domains. The constants depend
only on $\kappa$ and the fixed background, uniformly over these scalar
perturbations.
\end{Lemma}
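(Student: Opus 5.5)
The plan is to conjugate by $U_\kappa$ and reduce everything to \cref{lem:appendix-elementary-oscillator} on unweighted $L^2(\mathrm dV_{\hat g})$. For $\mathcal L$ as in the statement, set $A=-U_\kappa\mathcal LU_\kappa^{-1}$ (up to a factor). Writing $\phi=(1-\kappa)\hat f/2$, so that $U_\kappa\propto e^{-\phi}$, a direct computation gives
\[
 U_\kappa\hat\Delta_{\hat f}U_\kappa^{-1}v
 =\hat\Delta v+(2\nabla\phi-\mathbf X)\cdot\nabla v
   +\bigl(\hat\Delta\phi+|\nabla\phi|^2-\langle\mathbf X,\nabla\phi\rangle\bigr)v .
\]
The first-order coefficient is $-\kappa\mathbf X$, which is $O(\boldsymbol\rho)$. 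The zeroth-order coefficient is $-\tfrac{1-\kappa^2}{4}|\mathbf X|^2+O(1)$, which is $\asymp-\boldsymbol\rho^2$ because $\kappa<1$. All derivative bounds follow from \eqref{eq:prelim-end-errors}, and $\Rm_{\hat g}$ and the constant shifts are $O(1)$.

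The basic coercivity \eqref{ineq:appendix-basic-coercivity} then comes from integrating by parts. The drift term contributes $\langle-\kappa\nabla_{\mathbf X}v,v\rangle=\tfrac\kappa2\int(\operatorname{div}\mathbf X)|v|^2$, which is $O(\|v\|_2^2)$ modulo a small multiple of $\|\boldsymbol\rho v\|_2^2$ since $\operatorname{div}\mathbf X=\hat\Delta\hat f=O(1)$. Hence $\langle Av,v\rangle\ge\|\nabla v\|_2^2+c\|\boldsymbol\rho v\|_2^2-C_0\|v\|_2^2$. \Cref{lem:appendix-elementary-oscillator} then gives the domain identity, the core property, compactness, and the graph estimate. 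Since $U_\kappa$ maps $H^2_{\boldsymbol\rho,\kappa}$ isomorphically onto $H^2_{\boldsymbol\rho}$, this transfers to \eqref{ineq:closed-weakened-Gaussian-oscillator-graph}. The isomorphism holds because commuting $e^{-\phi}$ past $\nabla$ produces factors $\nabla\phi=O(\boldsymbol\rho)$ and $\nabla^2\phi=O(1)$, which are absorbed by the $\boldsymbol\rho$-weights in both norms.

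The invertibility estimates, which I expect to be the main obstacle, I would obtain by a compactness–contradiction argument built on injectivity. Suppose $\|u_j\|_{H^2_{\boldsymbol\rho,\kappa}}=1$ while $\|\mathcal Lu_j\|_{L^2_\kappa}\to0$. By the compact inclusion, a subsequence converges in $L^2_\kappa$, and then in $H^2_{\boldsymbol\rho,\kappa}$ by the graph estimate, to some $u$ with $\mathcal Lu=0$ and $u\ne0$. This contradicts injectivity, which we verify as follows.

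For scalars, $L^2_\kappa\subset L^2_{\hat f}$ and local regularity put $u$ in the Gaussian domain. By \cite[Section~5.2]{LYZ-I}, $2\hat\Delta_{\hat f}+1$ is invertible on a nontrivial AC shrinker, and this is the scalar inverse $G_{1/2}$ used before, so $u=0$.

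For tensors, the constraints $\hat Dk=0$ and $k\perp\ISD$ are closed in $L^2_\kappa$ and pass to the limit, since pairing with smooth kernel tensors is $L^2_\kappa$-continuous. A Gaussian kernel element satisfying these constraints lies in $\ISD$ and is also orthogonal to it, hence vanishes.

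The delicate point is that $H^2_{\boldsymbol\rho,\kappa}$ is genuinely contained in $H^2_{\hat f}$. This holds because the density ratio $e^{-\kappa\hat f}$ is bounded, which is enough to invoke the Gaussian Fredholm theory of \cite[Propositions~3.3--3.4]{LYZ-I}. Because the compactness argument already uses the graph estimate, the inequality follows with the $\|u\|$ term removed.

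Finally, the perturbed scalar bound \eqref{ineq:background-scalar-response} follows by absorption. The hypotheses give
\[
 \|\mathcal Eu\|_{L^2_\kappa}
 \le\epsilon\bigl(\|\hat\nabla^2u\|_{L^2_\kappa}+\|\boldsymbol\rho\hat\nabla u\|_{L^2_\kappa}+\|\boldsymbol\rho^2u\|_{L^2_\kappa}\bigr)
 =\epsilon\|u\|_{H^2_{\boldsymbol\rho,\kappa}} .
\]
Taking $\epsilon C_\kappa\le\tfrac12$ and applying the unperturbed estimate to $(2\hat\Delta_{\hat f}+1+\mathcal E)u-\mathcal Eu$ yields the factor $2C_\kappa$, uniformly over all such perturbations.
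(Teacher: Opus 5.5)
Your proposal is correct and follows essentially the same route as the paper: you conjugate by $U_\kappa$ to the oscillator $-c\hat\Delta+c\kappa\hat\nabla_{\mathbf X}+\frac{c(1-\kappa^2)}4|\mathbf X|^2+O(1)$, check the hypotheses of \cref{lem:appendix-elementary-oscillator} and transfer its conclusions through $U_\kappa$, use compactness--contradiction together with $L^2_\kappa\hookrightarrow L^2_{\hat f}$ to identify the constrained kernel with $\ISD$ (or with zero for scalars), and absorb $\mathcal E$. The differences are only cosmetic. You normalize in $H^2_{\boldsymbol\rho,\kappa}$ rather than in $L^2_\kappa$ in the contradiction step. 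Your drift sign is flipped relative to $A=-U_\kappa\mathcal LU_\kappa^{-1}$, which is harmless because $\hat\Delta\hat f$ is bounded.
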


\begin{proof}
Write $\mathcal L=c\hat\Delta_{\hat f}+V$, with
$(c,V)=(1/2,\Rm_{\hat g})$
or $(2,1)$.
Conjugation by $U_\kappa$ gives
\begin{equation}\label{eq:weakened-Gaussian-common-oscillator}
 U_\kappa(-\mathcal L)U_\kappa^{-1}
 =-c\hat\Delta+c\kappa\hat\nabla_{\mathbf{X}}
   +\frac{c(1-\kappa^2)}4|\mathbf{X}|^2
   -\frac{c(1-\kappa)}2\hat\Delta\hat f-V.
\end{equation}
The coefficients in this expression satisfy the explicit assumptions
of \cref{lem:appendix-elementary-oscillator}.
Indeed, for every $k\geq0$ the AC identities give
\begin{equation*}
 |\nabla^k\mathbf{X}|\leq C_k\boldsymbol{\rho}^{1-k},\qquad
 |\nabla^k\hat\Delta\hat f|+|\nabla^kV|\leq C_k\boldsymbol{\rho}^{-k},
\end{equation*}
and $|\mathbf{X}|\asymp\boldsymbol{\rho}$ outside a compact set.
The principal coefficient is parallel, the drift coefficient is
$c\kappa \mathbf{X}$, and the zeroth-order coefficient is
\begin{equation*}
 Q_\kappa=\frac{c(1-\kappa^2)}4|\mathbf{X}|^2
           -\frac{c(1-\kappa)}2\hat\Delta\hat f-V.
\end{equation*}
Thus their $k$th derivatives have orders
$O(\boldsymbol{\rho}^{-k})$, $O(\boldsymbol{\rho}^{1-k})$, and $O(\boldsymbol{\rho}^{2-k})$,
respectively.  Write the conjugated operator as
$\widetilde L_\kappa=-c\hat\Delta+c\kappa\hat\nabla_{\mathbf{X}}+Q_\kappa$.
For compactly supported $v$, integration of the drift gives
\begin{equation*}
 \langle \widetilde L_\kappa v,v\rangle
 =c\|\nabla v\|_2^2+
 \int_M\left\langle\left(
    \frac{c(1-\kappa^2)}4|\mathbf{X}|^2
       -\frac c2\hat\Delta\hat f-V\right)v,v\right\rangle \mathrm dV_{\hat g}.
\end{equation*}
For each fixed $0\leq\kappa<1$, the potential on the right is bounded
below by $c_\kappa\boldsymbol{\rho}^2-C_\kappa$.
This proves \cref{ineq:appendix-basic-coercivity} with suitable
constants $C,C_0$, completing the hypotheses of
\cref{lem:appendix-elementary-oscillator}.
That lemma proves the graph estimate and identifies the
minimal and maximal domains with $H^2_{\boldsymbol{\rho}}$ after conjugation.

The product rule, $|\mathbf{X}|\leq C\boldsymbol{\rho}$, and
$|\nabla \mathbf{X}|\leq C$ show, in both directions, that
\begin{equation*}
 \|U_\kappa u\|_{H^2_{\boldsymbol{\rho}}}\asymp\|u\|_{H^2_{\boldsymbol{\rho},\kappa}}.
\end{equation*}
Thus the core and compactness assertions follow as well.  At $\kappa=0$
this is the self-adjoint realization, with conjugated form domain
$H^1_{\boldsymbol{\rho}}$.

By \cref{prop:finite-graph-realization} and its preceding weighted
estimates, every element of $\ISD$ has polynomial bounds at all
derivative orders. Thus $\ISD\subset H^2_{\boldsymbol\rho,\kappa}$ for
every $0\le\kappa<1$. Conversely, the continuous inclusion
$L^2_\kappa\hookrightarrow L^2_{\hat f}$ and maximal-domain regularity
identify the tensor kernel on $\ker\hat D$ with $\ISD$.
The scalar operator $2\hat\Delta_{\hat f}+1=2L_{1/2}$ has zero
$L^2_{\hat f}$ kernel by \cite[Section~5.2]{LYZ-I}, as used in
\eqref{eq:finite-scalar-graphs}. The same inclusion and domain
regularity exclude a weakened-Gaussian scalar kernel.

Now a failure of either low-norm gap gives elements of unit
$L^2_\kappa$ norm whose images tend to zero.  The graph estimate
and compact embedding give a strongly convergent subsequence in
$L^2_\kappa$, with a nonzero limit in the corresponding kernel.
The condition $\hat Dk=0$ passes distributionally, and background
$L^2_{\hat f}$ orthogonality to $\ISD$ is closed in $L^2_\kappa$.
Thus the tensor limit is orthogonal to $\ISD$, while the scalar
kernel is zero, a contradiction in both cases.  Substitution in
\cref{ineq:closed-weakened-Gaussian-oscillator-graph} gives the following
estimate for tensors satisfying the stated conditions and for arbitrary scalars:
\begin{equation}\label{ineq:weakened-gaussian-normal-gap}
 \|u\|_{H^2_{\boldsymbol{\rho},\kappa}}
 \leq C_\kappa\|\mathcal Lu\|_{L^2_\kappa}.
\end{equation}
Nontriviality is used here for the scalar conclusion.

For scalars, the coefficient bound and \cref{ineq:weakened-gaussian-normal-gap} give
\begin{equation}\label{ineq:oscillator-relative-smallness}
 \|\mathcal Eu\|_{L^2_\kappa}
 \leq C_*\epsilon\|u\|_{H^2_{\boldsymbol{\rho},\kappa}}
 \leq C_*C_\kappa\epsilon\|\mathcal Lu\|_{L^2_\kappa}.
\end{equation}
Consequently,
\begin{equation*}
 \|u\|_{H^2_{\boldsymbol{\rho},\kappa}}
 \leq C_\kappa\|(\mathcal L+\mathcal E)u\|_{L^2_\kappa}
       +C_\kappa C_*\epsilon\|u\|_{H^2_{\boldsymbol{\rho},\kappa}}.
\end{equation*}
Choosing $C_\kappa C_*\epsilon\leq1/2$ proves
\cref{ineq:background-scalar-response}. The scalar graph norms are
equivalent, so the perturbed operator is closed on the same domain.
\end{proof}

\subsection{Nonlinear coefficient estimates}

Recall that $\mathscr H$ is the fixed Banach graph contained in the
background weighted divergence free slice and that $\Phi(g)=\frac12g-\Ric_g-\nabla_g^2f_g$.
The derivative $\mathrm D\Phi_g[k]$ includes the variation of the minimizer $f_g$.

\begin{Proposition}
\label{prop:nonlinear-normal-coefficient-reduction}
Fix $0<\kappa<1$.
For any $\epsilon>0$, there exists an open neighborhood
$U_1\subset\mathscr H$ of the origin such that
\begin{equation}\label{ineq:unprojected-nonlinear-normal-graph-smallness}
 \|\mathrm D\Phi_g[k]-\hat Lk\|_{L^2_\kappa}
 \leq C_\kappa\epsilon\|\hat Lk\|_{L^2_\kappa}
\end{equation}
for all $g$ satisfying $g-\hat g\in U_1$ and all
$k\in\mathscr H'$.
Here orthogonality is taken in $L^2_0=L^2(\mathrm dV_{\hat f})$.
All Hilbert norms use the fixed background metric and measure.
\end{Proposition}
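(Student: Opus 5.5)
The plan is to start from the linearization in \cref{lem:variation-formulas} and compare it with $\hat L k$ term by term. For $k\in\mathscr H'$ we have $\hat Dk=0$, so
\[
 \mathrm D\Phi_g[k]-\hat Lk
 =\bigl(\tfrac12\Delta_{f_g}-\tfrac12\hat\Delta_{\hat f}\bigr)k
  +(\Rm_g-\Rm_{\hat g})(k)
  +\delta^*\delta_{f_g}k
  +\nabla_g^2 w
  +\tfrac12(\Phi(g)\circ k+k\circ\Phi(g)),
\]
where $w=\tfrac12\tr_gk-v$ and $v=\mathrm Df_g[k]$. Each term will be bounded in $L^2_\kappa$ by $\epsilon\|k\|_{H^2_{\boldsymbol\rho,\kappa}}$, after which the normal gap of \cref{lem:weakened-gaussian-oscillator-package} gives $\|k\|_{H^2_{\boldsymbol\rho,\kappa}}\le C_\kappa\|\hat Lk\|_{L^2_\kappa}$ and closes the estimate. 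That $k$ lies in $H^2_{\boldsymbol\rho,\kappa}$ follows from the polynomial bounds \eqref{eq:finite-tensor-bounds}.

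\textbf{Coefficient terms.} By \cref{lem:graph-nonlinear-closure} and \cref{thm:existence-and-uniqueness-of-minimizer}, the quantities $g^{-1}-\hat g^{-1}$, $A$, $\hat\nabla A$, $\nabla^gf_g-\mathbf X$, $\Rm_g-\Rm_{\hat g}$ and $\Phi(g)$ are all $C\eta$ in $C^0$, where $\eta=\|h\|_{\mathscr H}$. (The entropy normalization constant $\log Z(g)$ does not affect derivatives of $f_g$.) Consequently the operator difference in the first two terms has the form $a_2*\hat\nabla^2k+a_1*\hat\nabla k+a_0k$, with $|a_2|+\boldsymbol\rho^{-1}|a_1|+\boldsymbol\rho^{-2}|a_0|\le C\eta$. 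The last term is bounded by $C\eta|k|$. The divergence term uses $\hat Dk=0$: we have $\delta_{f_g}k=\delta_{f_g}k-\hat Dk$, which is again a small-coefficient operator of order one applied to $k$. Composing with $\delta^*$ produces at most second derivatives of $k$ with coefficients of size $C\eta$, and one extra derivative on the coefficients, which is controlled by the order-one coefficient bounds. All of these terms are therefore bounded by $C\eta\|k\|_{H^2_{\boldsymbol\rho,\kappa}}$.

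\textbf{The potential variation $w$ (expected main obstacle).} From \cref{lem:variation-formulas}, $w$ solves
\[
 (2\Delta_{f_g}+1)w=\delta_{f_g}^2k+\langle\Phi(g),k\rangle_g-\mathrm D\boldsymbol\mu_g[k].
\]
I will write $2\Delta_{f_g}+1=2\hat\Delta_{\hat f}+1+\mathcal E$, where $\mathcal E$ has small coefficients in the sense of \cref{lem:weakened-gaussian-oscillator-package}. The scalar response estimate \eqref{ineq:background-scalar-response} then gives
\[
 \|w\|_{H^2_{\boldsymbol\rho,\kappa}}\le 2C_\kappa\|\mathrm{RHS}\|_{L^2_\kappa},
\]
provided $w\in H^2_{\boldsymbol\rho,\kappa}$. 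This membership follows from the linear-growth bounds for $v\in\mathscr Z$ and for $\tr k$. The right-hand side is small for three reasons:
\begin{itemize}
\item $\delta_{f_g}^2k=\delta_{f_g}^2k-\hat D^2k$ is a small-coefficient operator applied to $k$.
\item $|\langle\Phi(g),k\rangle|\le C\eta|k|$.
\item $\mathrm D\boldsymbol\mu_g[k]=\int\langle\Phi(g),k\rangle\,\mathrm dV_{f_g}$ is bounded by $C\eta\|k\|_{L^2_\kappa}$, using \eqref{ineq:moving-density-dominated-by-weaker-gaussian}. Here the density bound holds because $f_g-\hat f$ grows at most linearly with a small constant.
\end{itemize}
Hence $\|w\|_{H^2_{\boldsymbol\rho,\kappa}}\le C\eta\|k\|_{H^2_{\boldsymbol\rho,\kappa}}$. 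Finally,
\[
 \nabla_g^2w=\hat\nabla^2w-A*\mathrm dw,
\]
and both pieces are controlled by this $H^2_{\boldsymbol\rho,\kappa}$ norm.

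The delicate point is the normalization. The constant in $w$ is fixed by $\int w\,\mathrm dV_{f_g}=0$, not by the equation, but the equation already determines $w$ uniquely because $2L_{1/2}$ has no kernel. The normalization is therefore automatically consistent, and it does not enter the estimate.

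Combining the three steps and choosing $U_1$ so that $C\eta\le\epsilon$ gives
\[
 \|\mathrm D\Phi_g[k]-\hat Lk\|_{L^2_\kappa}\le C\epsilon\|k\|_{H^2_{\boldsymbol\rho,\kappa}}\le C_\kappa\epsilon\|\hat Lk\|_{L^2_\kappa}
\]
for all $k\in\mathscr H'$, which is \eqref{ineq:unprojected-nonlinear-normal-graph-smallness}.
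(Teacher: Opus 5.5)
Your proposal is correct and follows the paper's proof essentially step by step. It uses the same term-by-term comparison with $\hat Lk$, the same small-coefficient operator bounds (including writing $\delta_{f_g}k=(\delta_{f_g}-\hat D)k$), the perturbed scalar response estimate \eqref{ineq:background-scalar-response} for the potential variation, and the normal gap to close. The only difference is minor: the paper works with $y=\tfrac12\tr_gk-v+\mathrm D\boldsymbol\mu_g[k]$, so the constant drops out of the scalar equation while $\nabla_g^2y=\nabla_g^2w$. You instead keep $w$ and bound the constant $\mathrm D\boldsymbol\mu_g[k]$ directly via the first variation and the density domination \eqref{ineq:moving-density-dominated-by-weaker-gaussian}, which is equally valid.
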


\begin{proof}
Put $f=f_g$ and recall that $D_g=\delta_f^g$.
We use $D_g$ on both symmetric tensors and one-forms; thus
$D_g^2k=D_g(D_gk)$ is a scalar.
The constants below are uniform after choosing one sufficiently small
neighborhood of the origin.

Since $k\in\mathscr H'$, its polynomial growth and that
of its derivatives give $k\in H^2_{\boldsymbol{\rho},\kappa}$.
Applying the tensor gap \cref{ineq:weakened-gaussian-normal-gap}, we obtain
\begin{equation*}
 \hat Dk=0,\qquad
 \|k\|_{H^2_{\boldsymbol{\rho},\kappa}}
 \leq C_\kappa\|\hat Lk\|_{L^2_\kappa}.
\end{equation*}
For any operator
$\mathcal E=E_2*\hat\nabla^2+E_1*\hat\nabla+E_0$ satisfying
\begin{equation*}
 \sup_M\bigl(|E_2|+\boldsymbol{\rho}^{-1}|E_1|+\boldsymbol{\rho}^{-2}|E_0|\bigr)
 \leq C\epsilon,
\end{equation*}
we consequently have
\begin{equation*}
 \|\mathcal Ek\|_{L^2_\kappa}
 \leq C\epsilon\|k\|_{H^2_{\boldsymbol{\rho},\kappa}}
 \leq C_\kappa\epsilon\|\hat Lk\|_{L^2_\kappa}.
\end{equation*}

We apply this observation to the tensor-valued operators
\begin{equation*}
 \mathcal T_g=\frac12\Delta_f^g+\Rm_g-\hat L,
 \qquad \delta_g^*(D_g-\hat D),
\end{equation*}
and the scalar-valued operator $D_g(D_g-\hat D)$.
Their coefficients vanish at $g=\hat g$.
The inverse-metric and connection estimates in \cref{sec:computation-of-deformation},
\cref{eq:finite-low-coefficients}, and the analytic
dependence of $f_g$ in
\cref{thm:existence-and-uniqueness-of-minimizer} imply the preceding
weighted coefficient bounds after shrinking $U_1$.
In particular, when $D_g$ or $\delta_g^*$ differentiates a coefficient
of $D_g-\hat D$, its derivative is controlled by the differentiated
coefficient estimates.
For $\mathcal T_g$, write the two connections in the fixed background
frame.  The principal coefficient is the difference of inverse
metrics; the remaining coefficients are formed from the connection
difference, its first derivative, the drift difference, and the
curvature difference.  Their orders are bounded by
$\epsilon$, $\epsilon\boldsymbol{\rho}$, and $\epsilon\boldsymbol{\rho}^2$ in differential
orders two, one, and zero.

We also shrink $U_1$ so that
\begin{equation*}
 \|\Phi(g)\|_{C^0}\leq C\epsilon,
\end{equation*}
which follows from $\Phi(\hat g)=0$ and the same coefficient estimates.
The scalar operator
\begin{equation*}
 2\Delta_f^g+1
   =(2\hat\Delta_{\hat f}+1)+2(\Delta_f^g-\hat\Delta_{\hat f}),
\end{equation*}
is a small perturbation in the coefficient norm of
\cref{lem:weakened-gaussian-oscillator-package}.
Here the neighborhood is chosen small enough in terms of $\kappa$
to meet the scalar absorption threshold, as well as the requested
$\epsilon$ bounds.

We have
\begin{equation*}
 D_gk=(D_g-\hat D)k,\qquad
 D_g^2k=D_g(D_g-\hat D)k,\qquad
 \delta_g^*D_gk=\delta_g^*(D_g-\hat D)k.
\end{equation*}
The preceding coefficient bounds imply
\begin{equation*}
 \|D_g^2k\|_{L^2_\kappa}
       +\|\delta_g^*D_gk\|_{L^2_\kappa}
 \leq C_\kappa\epsilon\|\hat Lk\|_{L^2_\kappa}.
\end{equation*}

Set
\begin{equation*}
 v=\mathrm D f_g[k],\qquad t=\tr_gk,\qquad
 \boldsymbol{\mu}'=\mathrm D\boldsymbol{\mu}_g[k],\qquad y=\frac12t-v+\boldsymbol{\mu}'.
\end{equation*}
The number $\boldsymbol{\mu}'$ is constant as a function on $M$.
Differentiate the Euler--Lagrange equation
\begin{equation*}
 2\Delta_gf-|\nabla_gf|^2+\mathrm{R}_g+f-n=\boldsymbol{\mu}(g)
\end{equation*}
along $g+sk$ and the corresponding minimizers.
The metric and potential variation formulas give
\begin{equation*}
 \boldsymbol{\mu}'
 =(2\Delta_f^g+1)v-\Delta_f^g t
       +D_g^2k-\langle\Ric_g+\nabla_g^2f,k\rangle_g 
 =(2\Delta_f^g+1)\left(v-\frac12t\right)
       +D_g^2k+\langle \Phi(g),k\rangle_g.
\end{equation*}
For the second equality, use
$\Ric_g+\nabla_g^2f=\frac12g-\Phi(g)$.
Since $(2\Delta_f^g+1)\boldsymbol{\mu}'=\boldsymbol{\mu}'$, the definition of $y$ gives
\begin{equation*}
 (2\Delta_f^g+1)y=D_g^2k+\langle \Phi(g),k\rangle_g,
 \qquad
 \nabla_g^2y=\nabla_g^2\left(\frac12\tr_gk-v\right).
\end{equation*}
The analytic dependence of $f_g$ gives
$v=\mathrm D f_g[k]\in\mathscr Z$.
Thus $v$, $t$, and their derivatives have polynomial growth.
Since $\boldsymbol{\mu}'$ is constant and $\kappa<1$, we have
$y\in H^2_{\boldsymbol{\rho},\kappa}$.

By the coefficient bounds and uniform equivalence of $g$ and $\hat g$,
\begin{equation*}
 \|\langle \Phi(g),k\rangle_g\|_{L^2_\kappa}
 \leq C\epsilon\|k\|_{L^2_\kappa}
 \leq C_\kappa\epsilon\|\hat Lk\|_{L^2_\kappa}.
\end{equation*}
Together with the estimate for $D_g^2k$, this bounds the right-hand
side of the scalar equation.  The scalar estimate
\cref{ineq:background-scalar-response}, applied to $(2\Delta_f^g+1)$,
now gives
\begin{equation*}
 \|y\|_{H^2_{\boldsymbol{\rho},\kappa}}
 \leq C_\kappa\|(2\Delta_f^g+1)y\|_{L^2_\kappa}
 \leq C_\kappa\epsilon\|\hat Lk\|_{L^2_\kappa}.
\end{equation*}
In particular, the connection-difference bound and the formula
$\nabla_g^2y=\hat\nabla^2y-(\nabla^g-\hat\nabla)*\hat\nabla y$ give
\begin{equation*}
 \|\nabla_g^2y\|_{L^2_\kappa}
 \leq C\|y\|_{H^2_{\boldsymbol{\rho},\kappa}}
 \leq C_\kappa\epsilon\|\hat Lk\|_{L^2_\kappa}.
\end{equation*}

Differentiating $\Phi(g)=\frac12g-\Ric_g-\nabla_g^2f_g$ gives
\begin{equation*}
 \mathrm D\Phi_g[k]={}\frac12\Delta_f^gk+\Rm_g(k)
       +\delta_g^*D_gk
       +\nabla_g^2\left(\frac12\tr_gk-v\right) 
 +\frac12\bigl(\Phi(g)\circ_g k+k\circ_g \Phi(g)\bigr).
\end{equation*}
Here $\circ_g$ denotes contraction of one index using $g$.
Using the identity for $\nabla_g^2y$ and subtracting $\hat Lk$ yields
\begin{equation*}
 \mathrm D\Phi_g[k]-\hat Lk
 ={}\mathcal T_gk+\delta_g^*D_gk+\nabla_g^2y 
    +\frac12\bigl(\Phi(g)\circ_g k+k\circ_g \Phi(g)\bigr).
\end{equation*}
The coefficient, divergence, and Hessian estimates above control
the first three terms.
For the last term, the $C^0$ bound for $\Phi(g)$ gives
\begin{equation*}
 \|\Phi(g)\circ_g k+k\circ_g \Phi(g)\|_{L^2_\kappa}
 \leq C\epsilon\|k\|_{L^2_\kappa}
 \leq C_\kappa\epsilon\|\hat Lk\|_{L^2_\kappa}.
\end{equation*}
Adding these four estimates proves
\cref{ineq:unprojected-nonlinear-normal-graph-smallness}.
\end{proof}

\subsection{The inverse gauge operator and orthogonal projection}\label{subsec:direct-AC-fixed-Bianchi-inverse}
The aim of this subsection is to study the $L^2(\mathrm dV_{\hat f,\kappa})$-orthogonal projection of a symmetric $(0,2)$-tensor to its weighted divergence free component, which will be used in \cref{sec:finite-Banach-metric-LS} for the Lyapunov--Schmidt reduction.

\phantomsection\label{idx:bianchi-operators}
\begin{Definition}
    Fix $0<\kappa<1$. Define $B_{\kappa}$ to be the $L^2(\mathrm dV_{\hat f,\kappa})$-adjoint of $\hat D$, i.e.,
    \begin{equation*}
        \langle  \hat Du,v  \rangle_{L^2(\mathrm dV_{\hat f,\kappa})}
        =\langle  u,B_{\kappa}v  \rangle_{L^2(\mathrm dV_{\hat f,\kappa})}
    \end{equation*}
    for all $u\in C^{\infty}_{c}(M,S^2T^*M)$ and $v\in C^{\infty}_{c}(M,T^*M)$.
    Set $P_{\kappa}:=\hat DB_{\kappa}$.
    Denote $\mathcal{K}_\kappa:=\Ker(B_{\kappa}\restriction L^2(\mathrm dV_{\hat f,\kappa})(M,T^*M))$ and $\mathcal{K}_\kappa^\perp$ its $L^2(\mathrm dV_{\hat f,\kappa})$-orthogonal complement.
\end{Definition}
Integration by parts with respect to
$\mathrm dV_{\hat f,\kappa}$ gives
\begin{equation}\label{eq:weakened-gaussian-bianchi-adjoint}
 B_\kappa\zeta=\hat\delta^*\zeta
       -\kappa\operatorname{Sym}(\mathrm d\hat f\otimes\zeta).
\end{equation}
Indeed, the adjoint of $\hat\delta$ contributes
$(1-\kappa)\operatorname{Sym}(\mathrm d\hat f\otimes\zeta)$,
whereas the contraction term in $\hat D$ contributes its negative
with coefficient one.

Recall the Gaussian isometry $U_\kappa$ from Subsection~4.1.  For a
tensor bundle $E$ and an integer $j\geq0$, put
\begin{equation*}
 \mathcal S^j_{\mathrm{osc}}(E)
 :=\{v:\boldsymbol\rho^a\hat\nabla^b v\in L^2(\mathrm dV_{\hat g})
                         \text{ for all }a+b\leq j\},
 \qquad
 \|v\|_{\mathcal S^j_{\mathrm{osc}}}
 :=\sum_{a+b\leq j}\|\boldsymbol\rho^a\hat\nabla^b v\|_2.
\end{equation*}
Here $a,b$ are nonnegative integers and derivatives are weak derivatives.
The norms and contractions use $\hat g$.

\begin{Lemma}
\label{lem:weakened-Gaussian-weighted-Sobolev}
\label{lem:weakened-Gaussian-Sobolev-conjugation}
Fix $0\leq\kappa<1$ and a tensor bundle $E$ with the connection induced
by $\hat g$. For every $u\in H^1_\kappa(M,E)$,
\begin{equation}\label{ineq:weakened-Gaussian-weighted-Sobolev}
 \|\boldsymbol\rho u\|_{L^2_\kappa}
 \leq C_\kappa\bigl(\|\hat\nabla u\|_{L^2_\kappa}
                         +\|u\|_{L^2_\kappa}\bigr).
\end{equation}
More generally, for every integer $j\geq0$ and $u\in H^j_\kappa(M,E)$,
\begin{equation}\label{ineq:weakened-Gaussian-iterated-Sobolev}
 \sum_{a+b\leq j}\|\boldsymbol\rho^a\hat\nabla^b u\|_{L^2_\kappa}
 \leq C_{j,\kappa}\|u\|_{H^j_\kappa},
\end{equation}
where $a,b$ are nonnegative integers. In particular,
$H^2_\kappa=H^2_{\boldsymbol\rho,\kappa}$, with equivalent norms.

For every integer $j\geq0$, multiplication by $U_\kappa$ restricts to
an isomorphism
\begin{equation*}
 U_\kappa:H^j_\kappa(M,E)\longrightarrow\mathcal S^j_{\mathrm{osc}}(E),
 \qquad
 \|U_\kappa u\|_{\mathcal S^j_{\mathrm{osc}}}
       \asymp_{j,\kappa}\|u\|_{H^j_\kappa}.
\end{equation*}
Moreover, $\mathcal S^1_{\mathrm{osc}}=H^1_{\boldsymbol\rho}$,
$\mathcal S^2_{\mathrm{osc}}=H^2_{\boldsymbol\rho}$, with equivalent norms.
\end{Lemma}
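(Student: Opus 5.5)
The plan is to derive the weighted Sobolev inequality \eqref{ineq:weakened-Gaussian-weighted-Sobolev} from one integration by parts against the drift $\mathbf X$, and then obtain the rest by induction and conjugation. By density it suffices to take $u\in C_c^\infty(M,E)$; cutoffs $\chi(\boldsymbol\rho/L)$ and mollification are continuous in $H^j_\kappa$ by the same tail argument as in \cref{lem:appendix-elementary-oscillator}.

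For the first inequality, set $w=e^{\kappa\hat f}e^{-\hat f}=e^{-(1-\kappa)\hat f}$, so that $\mathrm dV_{\hat f,\kappa}$ is a constant multiple of $w\,\mathrm dV_{\hat g}$. The divergence theorem gives
\[
 \int_M\operatorname{div}\bigl(|u|^2\mathbf X\bigr)\,w\,\mathrm dV_{\hat g}
 =(1-\kappa)\int_M|u|^2|\mathbf X|^2\,w\,\mathrm dV_{\hat g}.
\]
Expanding the left side yields
\[
 (1-\kappa)\int|\mathbf X|^2|u|^2
 =\int\hat\Delta\hat f\,|u|^2+2\int\langle\hat\nabla_{\mathbf X}u,u\rangle ,
\]
with all integrals taken in $L^2_\kappa$. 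Here $\hat\Delta\hat f=\tfrac n2-\mathrm R_{\hat g}$ is bounded. The last term is controlled by Cauchy--Schwarz:
\[
 2\int\langle\hat\nabla_{\mathbf X}u,u\rangle
 \le \tfrac{1-\kappa}{2}\int|\mathbf X|^2|u|^2+C_\kappa\int|\hat\nabla u|^2 .
\]
Absorbing this, and using $|\mathbf X|\asymp\boldsymbol\rho$ outside a compact set together with $\boldsymbol\rho\le C$ on that set, gives $\|\boldsymbol\rho u\|_{L^2_\kappa}^2\le C_\kappa(\|\hat\nabla u\|^2_{L^2_\kappa}+\|u\|^2_{L^2_\kappa})$.

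The iterated estimate \eqref{ineq:weakened-Gaussian-iterated-Sobolev} follows by induction on $a$. Apply the basic inequality to $v=\boldsymbol\rho^{a-1}\hat\nabla^bu$. Since $|\hat\nabla^i\boldsymbol\rho|\le C\boldsymbol\rho^{1-i}$, we have
\[
 \hat\nabla v=\boldsymbol\rho^{a-1}\hat\nabla^{b+1}u+O\bigl(\boldsymbol\rho^{a-2}|\hat\nabla^bu|\bigr).
\]
Both terms have total order $a+b$ with fewer factors of $\boldsymbol\rho$, so they are already controlled by the induction hypothesis in the order (total index $a+b$, then $a$). In particular, the case $j=2$ gives $H^2_\kappa=H^2_{\boldsymbol\rho,\kappa}$; the reverse inclusion is trivial because $\boldsymbol\rho\ge1$.

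For the conjugation, write $U_\kappa u=c\,e^{-(1-\kappa)\hat f/2}u$. Its derivatives satisfy
\[
 \hat\nabla^b(U_\kappa u)=e^{-(1-\kappa)\hat f/2}\sum_{i\le b}P_{b,i}*\hat\nabla^iu,
\]
where $P_{b,i}$ are polynomials in $\hat\nabla^l\hat f$ with $|P_{b,i}|\le C\boldsymbol\rho^{b-i}$, since $|\hat\nabla^l\hat f|\le C\boldsymbol\rho^{2-l}$ and in particular $|\hat\nabla\hat f|\le C\boldsymbol\rho$. Hence $\|U_\kappa u\|_{\mathcal S^j_{\mathrm{osc}}}\le C\sum_{a+b\le j}\|\boldsymbol\rho^a\hat\nabla^bu\|_{L^2_\kappa}\le C\|u\|_{H^j_\kappa}$ by the iterated estimate. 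The inverse map is multiplication by $e^{+(1-\kappa)\hat f/2}$, and the same expansion bounds $\|u\|_{H^j_\kappa}$ by $\|U_\kappa u\|_{\mathcal S^j_{\mathrm{osc}}}$; this direction needs no Sobolev inequality, because $\mathcal S^j_{\mathrm{osc}}$ already contains the $\boldsymbol\rho$ weights.

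For the identifications with $H^1_{\boldsymbol\rho}$ and $H^2_{\boldsymbol\rho}$, the inclusions $H^1_{\boldsymbol\rho}\subset\mathcal S^1_{\mathrm{osc}}$ and $H^2_{\boldsymbol\rho}\subset\mathcal S^2_{\mathrm{osc}}$ follow from $\boldsymbol\rho\ge1$. The only terms requiring work are the mixed term $\boldsymbol\rho\hat\nabla v$ at $j=1$, which is absent from the $H^1_{\boldsymbol\rho}$ norm only as a lower-order entry and is fine, and the terms $\boldsymbol\rho v$ and $\hat\nabla v$ at $j=2$. These are handled by interpolation: integrating $\langle\hat\nabla v,\hat\nabla v\rangle$ by parts gives
\[
 \|\hat\nabla v\|_2^2\le\|\nabla^2v\|_2\|v\|_2+\|\boldsymbol\rho^2 v\|_2\|v\|_2 ,
\]
and $\boldsymbol\rho\le\boldsymbol\rho^2$.

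The main obstacle is the justification of the boundary terms and the density of compactly supported sections in $H^j_\kappa$, together with bookkeeping the mixed derivatives of $\boldsymbol\rho$. Both are routine given the AC bounds for $\hat f$ from \eqref{eq:prelim-end-errors}.
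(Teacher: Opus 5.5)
Your proposal is correct and follows essentially the same route as the paper: integrate the divergence of $e^{-(1-\kappa)\hat f}|u|^2\mathbf X$ and apply Young's inequality, induct on $\boldsymbol\rho^a\hat\nabla^b u$, obtain density from exhaustion cutoffs, and expand $U_\kappa^{\pm1}$ by the product rule. Your last paragraph does more work than needed: $\mathcal S^1_{\mathrm{osc}}$ has no mixed term (it only involves $a+b\le1$), and at $j=2$ each term of the $\mathcal S^2_{\mathrm{osc}}$ norm that is missing from the $H^2_{\boldsymbol\rho}$ norm is dominated directly via $\boldsymbol\rho\geq1$ (for example $\|\hat\nabla v\|_2\le\|\boldsymbol\rho\hat\nabla v\|_2$), so the interpolation step is unnecessary.
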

\begin{proof}
For $u\in C_c^\infty(M,E)$, integration of the divergence of
$e^{-(1-\kappa)\hat f}|u|^2\mathbf X$ gives
\begin{align*}
 (1-\kappa)\||\mathbf X|u\|_{L^2_\kappa}^2
 &=\int_M(\hat\Delta\hat f)|u|^2\,\mathrm dV_{\hat f,\kappa}
   +2\int_M\langle\hat\nabla_{\mathbf X}u,u\rangle\,
                                      \mathrm dV_{\hat f,\kappa}\\
 &\leq C\|u\|_{L^2_\kappa}^2
   +2\||\mathbf X|u\|_{L^2_\kappa}\|\hat\nabla u\|_{L^2_\kappa}.
\end{align*}
Here $\hat\Delta\hat f$ is bounded. Young's inequality and
$|\mathbf X|^2\geq c\boldsymbol\rho^2-C$ prove
\cref{ineq:weakened-Gaussian-weighted-Sobolev} on compact support.
Applying this estimate successively to
$\boldsymbol\rho^a\hat\nabla^b u$, and using
$|\hat\nabla\boldsymbol\rho^a|\leq C_a\boldsymbol\rho^{a-1}$,
proves \cref{ineq:weakened-Gaussian-iterated-Sobolev} by induction.
Only derivatives of total order at most $j$ occur.
Compactly supported smooth tensors are dense in each $H^j_\kappa$:
use exhaustion cutoffs with bounded covariant derivatives and then
smooth on compact subsets. Applying the estimates to differences of
such approximations extends both inequalities to the stated spaces.
At $j=2$, these estimates control the additional terms
$\boldsymbol\rho\hat\nabla u$ and $\boldsymbol\rho^2u$ in the
oscillator norm. The reverse norm bound follows from
$\boldsymbol\rho\geq1$, proving the asserted equality of spaces.

Set $a_\kappa=(1-\kappa)/2$.  The product rule gives
\begin{equation*}
 \hat\nabla(U_\kappa u)
 =U_\kappa(\hat\nabla u-a_\kappa\mathrm d\hat f\otimes u).
\end{equation*}
Its iterates and the background derivative bounds on $\hat f$ imply,
for $u\in C_c^\infty(M,E)$,
\begin{equation*}
 \|U_\kappa u\|_{\mathcal S^j_{\mathrm{osc}}}
 \leq C_{j,\kappa}\sum_{a+b\leq j}
                  \|\boldsymbol\rho^a\hat\nabla^b u\|_{L^2_\kappa}
 \leq C_{j,\kappa}\|u\|_{H^j_\kappa},
\end{equation*}
by the moment estimate just proved.
Applying the corresponding product identities to $U_\kappa^{-1}v$
gives the reverse bound
$\|U_\kappa^{-1}v\|_{H^j_\kappa}
\leq C_{j,\kappa}\|v\|_{\mathcal S^j_{\mathrm{osc}}}$.
Cutoffs and local smoothing give density of compactly supported smooth
sections in both spaces, so these mutually inverse maps extend to the
stated spaces. The two norm identifications follow from
$\boldsymbol\rho\geq1$ and the definitions.
\end{proof}

\begin{Proposition}\label{lem:weakened-Gaussian-Bianchi-Hilbert-inverse}
    Fix $0<\kappa<1$ and an integer $m\geq0$.  The operator $P_{\kappa}$ restricts to an isomorphism
    \begin{equation*}
        P_{\kappa}:H^{m+2}_\kappa(M,T^*M)\cap\mathcal{K}_{\kappa}^{\perp}
        \to H^{m}_\kappa(M,T^*M)\cap\mathcal{K}_{\kappa}^{\perp}.
    \end{equation*}
    Moreover, for $\zeta\in H^1_\kappa(M,T^*M)\cap\mathcal K_\kappa^\perp$,
    \begin{equation}\label{ineq:appendix-bianchi-form-gap}
     \|\zeta\|_{H^1_\kappa}
       +\|\boldsymbol{\rho}\zeta\|_{L^2_\kappa}
     \leq C_\kappa\|B_\kappa\zeta\|_{L^2_\kappa}.
    \end{equation}
\end{Proposition}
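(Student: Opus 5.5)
The plan is to treat $P_\kappa=\hat DB_\kappa$ as a nonnegative self-adjoint operator on $L^2_\kappa(M,T^*M)$. First I obtain the $H^2$ isomorphism and the gap \eqref{ineq:appendix-bianchi-form-gap} from \cref{lem:appendix-elementary-oscillator} after conjugation by $U_\kappa$. Then I bootstrap to $H^{m+2}_\kappa$ using \cref{lem:weakened-Gaussian-weighted-Sobolev}.

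\emph{Step 1: form identity and coercivity.} By the definition of $B_\kappa$ as the $L^2_\kappa$-adjoint of $\hat D$, for compactly supported $\zeta$ one has
\[
 \langle P_\kappa\zeta,\zeta\rangle_{L^2_\kappa}=\|B_\kappa\zeta\|_{L^2_\kappa}^2 .
\]
In particular $P_\kappa$ is formally symmetric and nonnegative. I expand $\|B_\kappa\zeta\|^2$ using \eqref{eq:weakened-gaussian-bianchi-adjoint}:
\[
 B_\kappa\zeta=\hat\delta^*\zeta-\kappa\operatorname{Sym}(\mathrm d\hat f\otimes\zeta).
\]
The squared $\kappa$-term contributes $\tfrac{\kappa^2}{2}\bigl(|\mathbf X|^2|\zeta|^2+\langle\mathbf X,\zeta\rangle^2\bigr)/2$. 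The cross term with $\hat\delta^*\zeta$ is integrated by parts against $\mathrm dV_{\hat f,\kappa}$, as in the proof of \cref{lem:weakened-Gaussian-weighted-Sobolev}. It produces only terms of the form $\hat\Delta\hat f\,|\zeta|^2$, $\hat\nabla^2\hat f(\zeta,\zeta)$, and $(1-\kappa)\kappa$ times a nonnegative multiple of $|\mathbf X|^2|\zeta|^2$ (up to sign, which must be checked).

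For the gradient part, the Bochner-type identity for $\|\hat\delta^*\zeta\|^2$ gives $\tfrac12\|\hat\nabla\zeta\|^2+\tfrac12\|\hat D\zeta\|^2$. The errors are curvature and drift terms, which I bound by $\eta\|\boldsymbol\rho\zeta\|^2+C_\eta\|\zeta\|^2$. The outcome I aim for is
\[
 \|B_\kappa\zeta\|_{L^2_\kappa}^2\ge c_\kappa\bigl(\|\hat\nabla\zeta\|_{L^2_\kappa}^2+\|\boldsymbol\rho\zeta\|_{L^2_\kappa}^2\bigr)-C_\kappa\|\zeta\|_{L^2_\kappa}^2 .
\]
Getting a positive coefficient in front of $|\mathbf X|^2$ for every $0<\kappa<1$ is the delicate sign computation. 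If the direct expansion is not manifestly positive, I would instead pass to $U_\kappa$-conjugates and compute the zeroth-order potential of $U_\kappa P_\kappa U_\kappa^{-1}$ explicitly, as in \eqref{eq:weakened-Gaussian-common-oscillator}.

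\emph{Step 2: oscillator realization and the gap.} The conjugated operator $A=U_\kappa P_\kappa U_\kappa^{-1}$ is second order. Its principal symbol acts by $e\mapsto\tfrac12(|\xi|^2e+\langle\xi,e\rangle\xi)$, which satisfies the Legendre condition with $\lambda_0=\tfrac12$. The AC bounds $|\hat\nabla^k\mathbf X|\le C\boldsymbol\rho^{1-k}$ give the coefficient hypotheses of \cref{lem:appendix-elementary-oscillator}, and Step 1 gives \eqref{ineq:appendix-basic-coercivity}. That lemma yields the following:
\begin{itemize}
\item self-adjointness, with domain $H^2_{\boldsymbol\rho}\cong H^2_\kappa$;
\item compact resolvent, hence a discrete spectrum;
\item finite-dimensional kernel, which equals $\ker B_\kappa=\mathcal K_\kappa$ by the form identity.
\end{itemize}
A compactness-contradiction argument, as in \cref{lem:weakened-gaussian-oscillator-package}, gives $\|\zeta\|_{L^2_\kappa}\le C\|B_\kappa\zeta\|_{L^2_\kappa}$ on $\mathcal K_\kappa^\perp$. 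Combined with the coercivity of Step 1, this proves \eqref{ineq:appendix-bianchi-form-gap}, first on $H^1_\kappa$ by density. Self-adjointness makes the range of $P_\kappa$ equal to $\mathcal K_\kappa^\perp$ and closed. Hence $P_\kappa:H^2_\kappa\cap\mathcal K_\kappa^\perp\to L^2_\kappa\cap\mathcal K_\kappa^\perp$ is an isomorphism, which is the case $m=0$.

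\emph{Step 3: higher regularity, the main obstacle.} For $m\ge1$ I must show that $P_\kappa\zeta\in H^m_\kappa$ implies $\zeta\in H^{m+2}_\kappa$, with the corresponding bound. I work in $\mathcal S^{j}_{\mathrm{osc}}$ through \cref{lem:weakened-Gaussian-Sobolev-conjugation} and induct on $j$. I apply the $m=0$ estimate to $\boldsymbol\rho^a\hat\nabla^b\zeta$ with $a+b=1$, cut off by $\chi_L$. The commutators $[A,\hat\nabla]$ and $[A,\boldsymbol\rho]$ have coefficients of orders $\boldsymbol\rho^{0}$, $\boldsymbol\rho^{1}$, $\boldsymbol\rho^{2}$ in differential orders $1$, $0$ and lower. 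They are therefore bounded by the $\mathcal S^{j+1}_{\mathrm{osc}}$ norm already controlled at the previous step. Passing $L\to\infty$ requires the same truncated-weight device ($s_T$) used in \cref{lem:appendix-elementary-oscillator}, so that no a priori membership is assumed. This commutator bookkeeping, which must gain exactly one weight and one derivative per step, is where I expect most of the work. Finally, orthogonality to $\mathcal K_\kappa$ is preserved because $\mathcal K_\kappa\subset H^{m}_\kappa$ for all $m$, by elliptic regularity of the kernel.
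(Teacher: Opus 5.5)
Your plan follows the paper's route. You conjugate by $U_\kappa$, verify the hypotheses of \cref{lem:appendix-elementary-oscillator} through a form identity, use self-adjointness and compact resolvent for the gap and the $m=0$ isomorphism, and then bootstrap with commutators.

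\textbf{Step~1.} The computation does close for every $0<\kappa<1$, but not by the absorption you describe. Mixed drift terms such as $\kappa\int_M(\hat\delta\zeta)\,\zeta(\mathbf X)\,\mathrm dV_{\hat f,\kappa}$ have size $\|\hat\nabla\zeta\|\,\|\boldsymbol\rho\zeta\|$. They cannot be bounded by $\eta\|\boldsymbol\rho\zeta\|^2+C_\eta\|\zeta\|^2$. What saves you is that they combine exactly with the gradient and squared terms:
\[
 \|B_\kappa\zeta\|_{L^2_\kappa}^2=\tfrac12\|\hat\nabla\zeta\|_{L^2_\kappa}^2+\tfrac12\|\hat D\zeta\|_{L^2_\kappa}^2+\tfrac\kappa2\||\mathbf X|\zeta\|_{L^2_\kappa}^2-\int_M\Bigl(\tfrac12\Ric_{\hat g}(\zeta,\zeta)+\tfrac{1+\kappa}2\hat\nabla^2\hat f(\zeta,\zeta)+\tfrac\kappa2(\hat\Delta\hat f)|\zeta|^2\Bigr)\mathrm dV_{\hat f,\kappa}.
\]
This is the paper's identity \eqref{eq:appendix-bianchi-coercive-identity} rewritten in the weighted measure. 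Your fallback of conjugating first is exactly what the paper does.

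\textbf{Step~3 (the main gap).} Your $m=0$ estimate is for $P_\kappa$ acting on one-forms. For $b\ge1$, however, $\boldsymbol\rho^a\hat\nabla^b\zeta$ is a section of $(T^*M)^{\otimes(b+1)}$, where that estimate has not been established. To use it you would need one of two repairs:
\begin{itemize}
\item rebuild the oscillator realization, including basic coercivity, for the induced operator on the larger bundle (true modulo curvature commutators, but it must be checked);
\item differentiate only along a fixed family of conic frame fields $Y$ with $|\hat\nabla^kY|\le C\boldsymbol\rho^{-k}$, so that $\hat\nabla_Y\zeta$ stays a one-form.
\end{itemize}
The paper avoids this altogether. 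Rescaled interior elliptic estimates on balls of radius $c/\boldsymbol\rho$, summed over a bounded-overlap cover, give
\[
 \|v\|_{\mathcal S^{m+2}_{\mathrm{osc}}}\le C\bigl(\|\widetilde P_\kappa v\|_{\mathcal S^{m}_{\mathrm{osc}}}+\|\boldsymbol\rho^{m+2}v\|_2\bigr).
\]
The commutator argument is then applied only to $w=\boldsymbol\rho^mv$, which remains a one-form.

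No cutoffs $\chi_L$ or truncated weights $s_T$ are needed in this step. Since $\mathrm{Dom}_{\max}=H^2_{\boldsymbol\rho}$, knowing $w\in L^2$ and $\widetilde P_\kappa w\in L^2$ distributionally already places $w$ in $H^2_{\boldsymbol\rho}$.

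\textbf{Kernel identification.} The same maximal-domain statement is what identifies the operator kernel with all of $\mathcal K_\kappa$. An $L^2_\kappa$ distributional solution of $B_\kappa\zeta=0$ satisfies $P_\kappa\zeta=0$ distributionally, and so lies in $H^2_\kappa$. The form identity alone gives only the reverse inclusion.
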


\begin{proof}
Put $a_\kappa=(1-\kappa)/2$ and $b_\kappa=(1+\kappa)/2$.
Use $U_\kappa$ on both one-forms and symmetric two-tensors; on each
bundle it is multiplication by the same normalized Gaussian factor.
Since
\begin{equation*}
 \hat\delta^*(hv)=h\hat\delta^*v-\operatorname{Sym}(\mathrm dh\otimes v),
\end{equation*}
the formula for $B_\kappa$ gives
\[
U_\kappa B_\kappa U_\kappa^{-1}v
 =e^{-a_\kappa\hat f}\left[
    \hat\delta^*(e^{a_\kappa\hat f}v)
     -\kappa\operatorname{Sym}(\mathrm{d}\hat f\otimes e^{a_\kappa\hat f}v)
    \right] 
 =\hat\delta^*v-b_\kappa\operatorname{Sym}(\mathrm{d}\hat f\otimes v)
 =:\widetilde B_\kappa v.
\]
Here $a_\kappa+\kappa=b_\kappa$.  The weighted adjoint of $B_\kappa$
is $\hat D$.  An isometry onto $L^2$ preserves adjoints, so
\begin{equation*}
 U_\kappa \hat DU_\kappa^{-1}=\widetilde B_\kappa^*,
 \qquad
 U_\kappa P_\kappa U_\kappa^{-1}
 =\widetilde B_\kappa^*\widetilde B_\kappa=:\widetilde P_\kappa.
\end{equation*}
The star on $\widetilde B_\kappa$ is the adjoint for the unweighted
measure $\mathrm dV_{\hat g}$.  Directly, for a symmetric tensor $h$,
\begin{equation*}
 U_\kappa \hat DU_\kappa^{-1}h
 =\hat\delta h-(1-a_\kappa)h(\mathbf{X},\cdot)
 =\hat\delta h-b_\kappa h(\mathbf{X},\cdot).
\end{equation*}
This is the unweighted adjoint of $\widetilde B_\kappa$, since the
pointwise adjoint of $v\mapsto\operatorname{Sym}(\mathrm{d}\hat f\otimes v)$
is $h\mapsto h(\mathbf{X},\cdot)$.

For $v\in C_c^\infty(M,T^*M)$, all integrals below use
$\mathrm dV_{\hat g}$, and all contractions use $\hat g$.
In a background orthonormal frame,
\begin{equation*}
 |\hat\delta^*v|^2
 =\frac12|\hat\nabla v|^2
   +\frac12\hat\nabla_iv_j\hat\nabla_jv_i.
\end{equation*}
The commutation identity
$\hat\nabla_i\hat\nabla_jv_i
 =\hat\nabla_j(\hat\delta v)+(\Ric_{\hat g})_{jk}v_k$
and integration by parts give
\begin{align*}
 \int_M\hat\nabla_iv_j\hat\nabla_jv_i
 &=-\int_Mv_j\hat\nabla_i\hat\nabla_jv_i
   =\int_M(\hat\delta v)^2-\int_M\Ric_{\hat g}(v,v),\\
 \|\hat\delta^*v\|_2^2
 &=\frac12\|\hat\nabla v\|_2^2
   +\frac12\|\hat\delta v\|_2^2
   -\frac12\int_M\Ric_{\hat g}(v,v).
\end{align*}
Also,
\begin{equation*}
 |\operatorname{Sym}(\mathrm d\hat f\otimes v)|^2
 =\frac12|\mathbf X|^2|v|^2+\frac12v(\mathbf X)^2.
\end{equation*}
The mixed term satisfies
\begin{align*}
 &-2b_\kappa\int_M
   \langle\hat\delta^*v,\operatorname{Sym}(\mathrm d\hat f\otimes v)\rangle\\
 &\qquad=b_\kappa\int_M
   \left(\mathbf X_iv_j\hat\nabla_iv_j
                +\mathbf X_jv_i\hat\nabla_iv_j\right)\\
 &\qquad=-\frac{b_\kappa}{2}\int_M(\hat\Delta\hat f)|v|^2
          -b_\kappa\int_M\hat\nabla^2\hat f(v,v)
          -b_\kappa\int_Mv(\mathbf X)\hat\delta v.
\end{align*}
Combining these formulas and completing the square gives
\begin{align}
 \|\widetilde B_\kappa v\|_2^2
 &=\frac12\|\hat\nabla v\|_2^2
   +\frac12\|\hat\delta v-b_\kappa v(\mathbf X)\|_2^2
   +\frac{b_\kappa^2}{2}\||\mathbf X|v\|_2^2\notag\\
 &\quad-\int_M\left\{
   \frac12\Ric_{\hat g}(v,v)+b_\kappa\hat\nabla^2\hat f(v,v)
        +\frac{b_\kappa}{2}(\hat\Delta\hat f)|v|^2\right\}
                   \mathrm dV_{\hat g}.
 \label{eq:appendix-bianchi-coercive-identity}
\end{align}
On the fixed AC shrinker, $\Ric_{\hat g}$, $\hat\nabla^2\hat f$,
and $\hat\Delta\hat f$ are bounded, and
$|\mathbf X|^2\geq c\boldsymbol{\rho}^2-C$.
Since $b_\kappa\geq1/2$, dropping the nonnegative divergence square
proves \cref{ineq:appendix-basic-coercivity} for $\widetilde P_\kappa$.

For completeness, the principal symbol of $\widetilde P_\kappa$ is
\begin{equation*}
 \sigma_2(\widetilde P_\kappa)(\xi)v
 =\frac12\left(|\xi|^2v+\langle\xi,v\rangle\xi\right),
 \qquad
 \langle\sigma_2(\widetilde P_\kappa)(\xi)v,v\rangle
 \geq\frac12|\xi|^2|v|^2.
\end{equation*}
In the fixed covariant expression
$\widetilde P_\kappa=-a^{ij}\hat\nabla_i\hat\nabla_j+b^j\hat\nabla_j+q$, the coefficient $a$
is parallel, $b$ is linear in $\mathbf{X}$, and $q$ consists of
contractions of $\mathbf{X}\otimes \mathbf{X}$, $\hat\nabla \mathbf{X}$ and
$\Rm_{\hat g}$.  Consequently, for every $\ell\geq0$,
\begin{equation*}
 |\hat\nabla^\ell a|\leq C_\ell\boldsymbol{\rho}^{-\ell},\qquad
 |\hat\nabla^\ell b|\leq C_\ell\boldsymbol{\rho}^{1-\ell},\qquad
 |\hat\nabla^\ell q|\leq C_\ell\boldsymbol{\rho}^{2-\ell}.
\end{equation*}
The operator is formally self-adjoint, so the adjoint hypotheses are
the same.  All hypotheses of \cref{lem:appendix-elementary-oscillator}
have now been checked.  That lemma gives
\begin{equation*}
 D_{\min}(\widetilde P_\kappa)=D_{\max}(\widetilde P_\kappa)=H^2_{\boldsymbol{\rho}},
 \qquad
 \|v\|_{H^2_{\boldsymbol{\rho}}}\leq C_\kappa(\|\widetilde P_\kappa v\|_2+\|v\|_2),
\end{equation*}
with $C_c^\infty$ a core and $H^2_{\boldsymbol{\rho}}\hookrightarrow L^2$ compact.
Since the minimal and maximal realizations agree, the realization on
$H^2_{\boldsymbol{\rho}}$ is self-adjoint.  It is nonnegative and is the Friedrichs
realization: on compact support its quadratic form is
$\|\widetilde B_\kappa v\|_2^2$, and its closure has form domain
$H^1_{\boldsymbol{\rho}}$ by the basic coercivity estimate and
$|\mathbf{X}|\leq C\boldsymbol{\rho}$.  In particular,
$(\widetilde P_\kappa+1)^{-1}:L^2\to H^2_{\boldsymbol{\rho}}$ is bounded, so its composition
with the compact inclusion into $L^2$ is compact.

By \cref{lem:weakened-Gaussian-Sobolev-conjugation},
$U_\kappa H^j_\kappa=\mathcal S^j_{\mathrm{osc}}$ with equivalent norms.
Thus the operator domain of $P_\kappa$ is $H^2_\kappa$ and its form
domain is $H^1_\kappa$.

The closed quadratic form gives
\begin{equation*}
 \ker \widetilde P_\kappa=\ker\widetilde B_\kappa=U_\kappa\mathcal K_\kappa.
\end{equation*}
For the converse inclusion implicit here, an $L^2$ distributional
solution of $\widetilde B_\kappa v=0$ satisfies $\widetilde P_\kappa v=0$ and hence
belongs to $H^2_{\boldsymbol{\rho}}$ by the maximal-domain statement.
Nonnegativity and compact resolvent imply that the kernel is
finite-dimensional and that the positive spectrum is bounded below by
some $\lambda_\kappa>0$.  On its orthogonal complement,
\begin{equation*}
 \|v\|_2\leq\lambda_\kappa^{-1}\|\widetilde P_\kappa v\|_2,
 \qquad
 \|v\|_{H^2_{\boldsymbol{\rho}}}\leq C_\kappa\|\widetilde P_\kappa v\|_2.
\end{equation*}
The second estimate makes the range closed.  Self-adjointness gives
$(\operatorname{Ran}\widetilde P_\kappa)^\perp=\ker \widetilde P_\kappa$, so the range is
exactly the kernel complement.  Conjugating proves the asserted
isomorphism for $m=0$, with
\begin{equation*}
 \|\zeta \|_{H^2_\kappa}
 \leq C_\kappa\|P_\kappa \zeta \|_{L^2_\kappa}
 \qquad(\zeta \perp\mathcal K_\kappa).
\end{equation*}

We spell out the higher-order estimate separately from the spectral
argument.  On a ball of radius $c/R$ about a point with $R=\boldsymbol{\rho}(p)$,
rescale as in \cref{lem:appendix-elementary-oscillator}.
Uniform interior elliptic estimates at order $m+2$, followed by summation
over the same bounded-overlap covering, give
\begin{equation*}
 \|v\|_{\mathcal S^{m+2}_{\mathrm{osc}}}
 \leq C_{m,\kappa}\left(
       \|\widetilde P_\kappa v\|_{\mathcal S^{m}_{\mathrm{osc}}}+\|\boldsymbol{\rho}^{m+2}v\|_2\right).
\end{equation*}
More explicitly, the local estimate controls
$R^{m+2-j}\|\hat\nabla^jv\|_{L^2(B_{c/R})}$ for $0\leq j\leq m+2$
by the sum of
$R^{m-j}\|\hat\nabla^j(\widetilde P_\kappa v)\|_{L^2(B_{2c/R})}$ for $0\leq j\leq m$
and $R^{m+2}\|v\|_{L^2(B_{2c/R})}$.  The coefficient bounds established above
make all these local constants uniform.  This argument also proves the
regularity assertion whenever the right-hand side is finite, by local
elliptic regularity and summation.

To estimate $\|\boldsymbol{\rho}^{m+2}v\|_2$, suppose inductively that $v\in\mathcal S^{m}_{\mathrm{osc}}$
and that $\widetilde P_\kappa v\in\mathcal S^{m}_{\mathrm{osc}}$.  For $m\geq1$, the product rule
and the coefficient bounds give
\begin{equation*}
 \|[\widetilde P_\kappa,\boldsymbol{\rho}^m]v\|_2
 \leq C_{m,\kappa}\left(
        \|\boldsymbol{\rho}^{m-1}\hat\nabla v\|_2+\|\boldsymbol{\rho}^mv\|_2\right)
 \leq C_{m,\kappa}\|v\|_{\mathcal S^{m}_{\mathrm{osc}}}.
\end{equation*}
Indeed, the derivative term contains $\hat\nabla(\boldsymbol{\rho}^m)=O(\boldsymbol{\rho}^{m-1})$;
the zeroth-order terms contain $\hat\nabla^2(\boldsymbol{\rho}^m)=O(\boldsymbol{\rho}^{m-2})$ or
$b\hat\nabla(\boldsymbol{\rho}^m)=O(\boldsymbol{\rho}^m)$.  Thus $w=\boldsymbol{\rho}^mv$ and
\begin{equation*}
 \widetilde P_\kappa w=\boldsymbol{\rho}^m\widetilde P_\kappa v+[\widetilde P_\kappa,\boldsymbol{\rho}^m]v
\end{equation*}
belong to $L^2$.  The maximal-domain result places $w$ in $H^2_{\boldsymbol{\rho}}$
and gives
\begin{equation*}
 \|\boldsymbol{\rho}^{m+2}v\|_2
 \leq C_{m,\kappa}\left(
        \|\widetilde P_\kappa v\|_{\mathcal S^{m}_{\mathrm{osc}}}+\|v\|_{\mathcal S^{m}_{\mathrm{osc}}}\right).
\end{equation*}
Starting with the order-two graph estimate, induction now proves both
the regularity implication and the estimate
\begin{gather*}
 v\in L^2,\quad \widetilde P_\kappa v\in\mathcal S^{m}_{\mathrm{osc}}
 \quad\Longrightarrow\quad v\in\mathcal S^{m+2}_{\mathrm{osc}},\\
 \|v\|_{\mathcal S^{m+2}_{\mathrm{osc}}}
 \leq C_{m,\kappa}\left(\|\widetilde P_\kappa v\|_{\mathcal S^{m}_{\mathrm{osc}}}+\|v\|_2\right).
\end{gather*}
At $m=1$ the induction uses the already established $\mathcal S^{2}_{\mathrm{osc}}$
bound, and at subsequent orders it uses the preceding estimate to
control $\|v\|_{\mathcal S^{m}_{\mathrm{osc}}}$.
Conversely the coefficient bounds give a bounded map
$\widetilde P_\kappa:\mathcal S^{m+2}_{\mathrm{osc}}\to\mathcal S^{m}_{\mathrm{osc}}$.
For data in $H^m_\kappa\cap\mathcal K_\kappa^\perp$, first solve in
$L^2_\kappa$ and then apply this regularity implication and the
identification of the Sobolev spaces under $U_\kappa$.
The spectral gap removes the $L^2$ term, yielding
\begin{equation*}
 \|\zeta \|_{H^{m+2}_\kappa}
 \leq C_{m,\kappa}\|P_\kappa \zeta \|_{H^m_\kappa}
 \qquad(\zeta \perp\mathcal K_\kappa).
\end{equation*}
This proves the isomorphism at every Sobolev level.
The same regularity implication gives
$\mathcal K_\kappa\subset H^j_\kappa$ for every $j\geq0$.

Finally, on the form domain orthogonal to the kernel, the spectral gap
gives $\lambda_\kappa\|v\|_2^2\leq\|\widetilde B_\kappa v\|_2^2$.
Combining this with the basic coercivity estimate gives
$\|v\|_{\mathcal S^{1}_{\mathrm{osc}}}\leq C_\kappa\|\widetilde B_\kappa v\|_2$.
\Cref{lem:weakened-Gaussian-weighted-Sobolev} and the isometry give
\cref{ineq:appendix-bianchi-form-gap}.
\end{proof}

Below, $P_\kappa^{-1}$ denotes the inverse on
$\mathcal K_\kappa^\perp$, extended by zero on $\mathcal K_\kappa$.

\begin{Proposition}\label{prop:weakened-projection-mapping-property}
    The operator
    \begin{equation}
        \Pi_{\kappa}:=I-B_{\kappa}P_{\kappa}^{-1}\hat D,
    \end{equation}
    initially defined on $H^1_\kappa(M,S^2T^*M)$, extends to a continuous linear operator
    \begin{equation*}
        \Pi_{\kappa}:L^2(\mathrm dV_{\hat f,\kappa})\to L^2(\mathrm dV_{\hat f,\kappa}).
    \end{equation*}
    It is the $L^2(\mathrm dV_{\hat f,\kappa})$-orthogonal projection to the closed subspace $\Ker\big(\hat D\restriction L^2(\mathrm dV_{\hat f,\kappa})\big)$.
\end{Proposition}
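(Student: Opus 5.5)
We prove the statement in the Hilbert space $\mathcal H:=L^2_\kappa(S^2T^*M)$ by showing that $\Pi_\kappa$ is idempotent, symmetric, and has kernel and range of the right form on a dense subspace; boundedness then follows. We use the Hodge-type decomposition
\[
 \mathcal H=\ker\hat D\oplus\overline{\operatorname{Ran}B_\kappa}.
\]
Here $\ker\hat D$ is taken distributionally in $L^2_\kappa$. It is closed because $\hat D$ is a differential operator and $L^2_\kappa$ embeds continuously in distributions. Since $B_\kappa$ is the formal $L^2_\kappa$-adjoint of $\hat D$, a field $h\in\mathcal H$ satisfies $\hat Dh=0$ distributionally exactly when $\langle h,B_\kappa\zeta\rangle_{L^2_\kappa}=0$ for all $\zeta\in C_c^\infty$. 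Hence $\ker\hat D=(B_\kappa C_c^\infty)^\perp$, and the decomposition follows. We also use the core property from the proof of \cref{lem:weakened-Gaussian-Bianchi-Hilbert-inverse}: $C_c^\infty$ is dense in $H^1_\kappa$. Together with the boundedness of $B_\kappa:H^1_\kappa\to L^2_\kappa$, which follows from \cref{ineq:weakened-Gaussian-weighted-Sobolev}, this identifies $\overline{B_\kappa C_c^\infty}$ with $\overline{B_\kappa H^1_\kappa}$.

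\emph{Step 1: the formula on the dense domain.} Let $h\in H^1_\kappa$. Then $\hat Dh\in L^2_\kappa(T^*M)$, again using \cref{ineq:weakened-Gaussian-weighted-Sobolev} to control the term $h(\mathbf X,\cdot)$. Moreover $\hat Dh\perp\mathcal K_\kappa$, since $\langle\hat Dh,\zeta\rangle=\langle h,B_\kappa\zeta\rangle=0$ for every $\zeta\in\mathcal K_\kappa$. This uses integration by parts with $h\in H^1_\kappa$ and $\zeta\in\mathcal K_\kappa\subset\bigcap_jH^j_\kappa$, justified by cutoffs and the moment bounds. By \cref{lem:weakened-Gaussian-Bianchi-Hilbert-inverse} with $m=0$, the one-form $\zeta:=P_\kappa^{-1}\hat Dh\in H^2_\kappa\cap\mathcal K_\kappa^\perp$ is defined, and $B_\kappa\zeta\in H^1_\kappa$. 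We then check two facts.
\begin{enumerate}
 \item $\hat D\Pi_\kappa h=\hat Dh-P_\kappa\zeta=0$, so $\Pi_\kappa h\in\ker\hat D$.
 \item $h-\Pi_\kappa h=B_\kappa\zeta\in\operatorname{Ran}B_\kappa$.
\end{enumerate}
Consequently $\Pi_\kappa h$ is the orthogonal decomposition component of $h$ in $\ker\hat D$. By the decomposition this gives $\|\Pi_\kappa h\|_{L^2_\kappa}\le\|h\|_{L^2_\kappa}$, and more precisely $\Pi_\kappa h=\Pi h$, where $\Pi$ denotes the abstract orthogonal projection onto $\ker\hat D$.

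\emph{Step 2: extension.} $H^1_\kappa$, and in particular $C_c^\infty$, is dense in $L^2_\kappa$. Since $\Pi_\kappa=\Pi$ there, $\Pi_\kappa$ extends uniquely by continuity, and the extension equals the orthogonal projection $\Pi$. The closedness of $\ker\hat D$ is what makes the range of the extension the full kernel and not merely its intersection with $H^1_\kappa$.

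\emph{Main obstacle.} The delicate point is justifying the adjoint identity $\langle\hat Dh,\zeta\rangle=\langle h,B_\kappa\zeta\rangle$ beyond compact support. One needs it both for $h\in H^1_\kappa$ against $\zeta\in\mathcal K_\kappa$ and for the orthogonality $\langle\Pi_\kappa h,B_\kappa\eta\rangle=0$ with $\eta\in H^1_\kappa$. I would handle this with the cutoffs $\chi_L=\chi(\boldsymbol\rho/L)$. The error term $h\otimes\hat\nabla\chi_L\otimes\zeta$ is bounded by $L^{-1}$ times $\|h\|_{L^2_\kappa}\|\zeta\|_{L^2_\kappa}$ restricted to $\Omega_L$, which tends to zero. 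The weight growth in $B_\kappa$ is controlled by \cref{ineq:weakened-Gaussian-weighted-Sobolev}.
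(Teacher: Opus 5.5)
Your proof is correct, but it produces the projection differently from the paper.

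The paper does not start from an abstract projection. For every $W\in L^2_\kappa$ it solves $\langle B_\kappa\zeta,B_\kappa Y\rangle=\langle W,B_\kappa Y\rangle$ on $\mathcal E_\kappa=H^1_\kappa\cap\mathcal K_\kappa^\perp$ by Lax--Milgram, with coercivity from \cref{ineq:appendix-bianchi-form-gap}. It then checks $\hat D(W-B_\kappa\zeta)=0$ by projecting test forms off the finite-dimensional $\mathcal K_\kappa$. It checks $B_\kappa\zeta\perp\ker\hat D$ by $H^1_\kappa$-density, exactly as you do. Only at the end does it use the strong inverse of \cref{lem:weakened-Gaussian-Bianchi-Hilbert-inverse}, together with uniqueness of the weak solution, to identify $W-B_\kappa\zeta$ with $\Pi_\kappa W$ for $W\in H^1_\kappa$.

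Your route takes the Hilbert projection onto $(B_\kappa C_c^\infty)^\perp$, identifies it with $\Pi_\kappa$ on $H^1_\kappa$ via the $m=0$ isomorphism, and extends by density. This is shorter and needs neither Lax--Milgram nor the form gap.

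What the paper's route buys is a variational description for all $W\in L^2_\kappa$: $\Pi_\kappa W=W-B_\kappa\zeta_W$ with $\zeta_W\in\mathcal E_\kappa$ and $\|B_\kappa\zeta_W\|_{L^2_\kappa}\le\|W\|_{L^2_\kappa}$. This weak formulation is what later defines $P_\kappa^{-1}\hat DW$ for $W\in C^{m,\alpha}_\beta$ in \cref{lem:fixed-level-Bianchi-correction}. It also defines $\mathcal C_\kappa(\hat D-D_g)q$, with its form-dual right-hand side, in \cref{prop:corrected-orthogonal-fixed-target-normal-estimate}. You could recover it from your version afterwards: the form gap makes $B_\kappa(\mathcal E_\kappa)$ closed, so it equals $(\ker\hat D)^\perp$.

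Finally, the adjoint identity you single out as the main obstacle is routine. $C_c^\infty$ is dense in $H^1_\kappa$, as noted in the proof of \cref{lem:weakened-Gaussian-weighted-Sobolev}, and $\hat D$ and $B_\kappa$ are bounded from $H^1_\kappa$ to $L^2_\kappa$. So the identity extends by density, without a separate cutoff estimate.
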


\begin{proof}
All inner products in this proof use $\mathrm dV_{\hat f,\kappa}$.
The unknown and test fields are one-forms in the Hilbert space
\begin{equation*}
 \mathcal E_\kappa
 :=H^1_\kappa(M,T^*M)\cap\mathcal K_\kappa^\perp.
\end{equation*}
\Cref{lem:weakened-Gaussian-weighted-Sobolev} and the formula for
$B_\kappa$ give a bounded map
\begin{equation*}
 B_\kappa:H^1_\kappa(M,T^*M)
       \longrightarrow L^2_\kappa(S^2T^*M).
\end{equation*}
Given $W\in L^2_\kappa(S^2T^*M)$, solve
\begin{equation*}
 \langle B_\kappa\zeta,B_\kappa Y\rangle
 =\langle W,B_\kappa Y\rangle
 \qquad\text{for all }Y\in\mathcal E_\kappa.
\end{equation*}
The bilinear form is continuous and, by
\cref{ineq:appendix-bianchi-form-gap}, coercive in the $H^1_\kappa$ norm on
$\mathcal E_\kappa$. Its right-hand side is continuous there.
Lax--Milgram gives a unique $\zeta\in\mathcal E_\kappa$.

Let $\pi_{\mathcal K_\kappa}$ denote the orthogonal projection onto
$\mathcal K_\kappa$. By
\cref{lem:weakened-Gaussian-Bianchi-Hilbert-inverse}, this kernel is
finite-dimensional, smooth, and contained in every $H^j_\kappa$.
Thus, for any $\phi\in C_c^\infty(T^*M)$, the field
\begin{equation*}
 Y=\phi-\pi_{\mathcal K_\kappa}\phi
\end{equation*}
belongs to $\mathcal E_\kappa$ and satisfies $B_\kappa Y=B_\kappa\phi$.
The form equation gives
\begin{equation*}
 \langle W-B_\kappa\zeta,B_\kappa\phi\rangle=0
 \quad\text{for all }\phi\in C_c^\infty(T^*M),
 \qquad \hat D(W-B_\kappa\zeta)=0
\end{equation*}
distributionally.

If $Z\in L^2_\kappa(S^2T^*M)$ satisfies $\hat DZ=0$ distributionally,
choose $\zeta_j\in C_c^\infty(T^*M)$ converging to $\zeta$ in $H^1_\kappa$.
The boundedness of $B_\kappa$ gives $B_\kappa\zeta_j\to B_\kappa\zeta$
in $L^2_\kappa$, and the distributional adjoint relation gives
\begin{equation*}
 \langle Z,B_\kappa\zeta\rangle
 =\lim_{j\to\infty}\langle Z,B_\kappa\zeta_j\rangle=0.
\end{equation*}
Consequently $W=(W-B_\kappa\zeta)+B_\kappa\zeta$ is the orthogonal
decomposition into $\ker \hat D$ and $(\ker \hat D)^\perp$.
The distributional kernel is closed in $L^2_\kappa$, because its
defining pairings against $B_\kappa\phi$ are continuous.
Thus $QW:=W-B_\kappa\zeta$ is its orthogonal projection and
$\|QW\|_{L^2_\kappa}\leq\|W\|_{L^2_\kappa}$.

If $W\in H^1_\kappa(M,S^2T^*M)$, then
$\hat DW=\hat\delta W-W(\mathbf X,\cdot)\in L^2_\kappa$ by
\cref{lem:weakened-Gaussian-weighted-Sobolev}.
The extended adjoint relation shows
\begin{equation*}
 \langle \hat DW,K\rangle=\langle W,B_\kappa K\rangle=0
 \qquad(K\in\mathcal K_\kappa).
\end{equation*}
By \cref{lem:weakened-Gaussian-Bianchi-Hilbert-inverse}, the field
$\zeta_0=P_\kappa^{-1}\hat DW$ belongs to
$H^2_\kappa\cap\mathcal K_\kappa^\perp$ and satisfies the same form
equation. Uniqueness gives $\zeta=\zeta_0$, so
\begin{equation*}
 QW=W-B_\kappa P_\kappa^{-1}\hat DW=\Pi_\kappa W.
\end{equation*}
This proves the claimed extension and its projection property.
\end{proof}

\subsection{Weighted H\"older estimates for the projection}

\begin{Proposition}
\label{lem:fixed-level-Bianchi-correction}
Fix $0<\kappa<1$, $m\in\mathbb Z_{\geq0}$, $\alpha\in(0,1)$, and
$\beta\in\mathbb R$.

For
$W\in C^{m,\alpha}_\beta(M,S^2T^*M)$, define $\zeta =P_\kappa^{-1} \hat DW$
as the unique element of
$H^1_\kappa(T^*M)\cap\mathcal K_\kappa^\perp$ satisfying
\begin{equation*}
 \langle B_\kappa \zeta ,B_\kappa Y\rangle_{L^2_\kappa}
 =\langle W,B_\kappa Y\rangle_{L^2_\kappa}
 \qquad\bigl(Y\in\operatorname{Dom}(B_\kappa)\bigr).
\end{equation*}
Here $\operatorname{Dom}(B_\kappa)=H^1_\kappa$ is the Gaussian form domain of
$P_\kappa=B_\kappa^*B_\kappa$.
    Then
    \begin{equation}\label{ineq:fixed-level-Bianchi-correction}
        \|B_\kappa \zeta \|_{C^{m,\alpha}_\beta}
        \leq C_{m,\beta,\kappa}
        \|W\|_{C^{m,\alpha}_\beta}.
    \end{equation}
The constants also depend on $\alpha$ and the fixed background shrinker.
\end{Proposition}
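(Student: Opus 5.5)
Since $W\in C^{m,\alpha}_\beta$ is only polynomially growing, $W\in L^2_\kappa$ for every $\kappa<1$, so $\zeta$ is well defined by the form problem of \cref{prop:weakened-projection-mapping-property}. Moreover $B_\kappa\zeta=W-\Pi_\kappa W$ is the $L^2_\kappa$-orthogonal component of $W$ transverse to $\ker\hat D$, with $\|B_\kappa\zeta\|_{L^2_\kappa}\le\|W\|_{L^2_\kappa}\le C\|W\|_{C^{0}_\beta}$. The plan is to upgrade this $L^2_\kappa$ control to weighted H\"older control in three steps. First, derive the elliptic equation satisfied by $\zeta$. Second, show that $\zeta$ decays much faster than any power of $\boldsymbol\rho$ because of the confining potential. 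Third, use rescaled interior Schauder estimates on balls of radius $c/\boldsymbol\rho$.

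For the first step, testing the form equation against $Y\in C_c^\infty$ gives $P_\kappa\zeta=\hat DW$ distributionally, modulo the finite-dimensional kernel. After conjugation by $U_\kappa$, this reads $\widetilde P_\kappa\tilde\zeta=\widetilde B_\kappa^*\widetilde W$, where $\tilde\zeta=U_\kappa\zeta$ and $\widetilde W=U_\kappa W$. The conjugated datum is $e^{-(1-\kappa)\hat f/2}W$ up to a constant. It therefore carries the Gaussian factor $e^{-(1-\kappa)r^2/8}$ and lies in every $\mathcal S^j_{\mathrm{osc}}$ up to the order $m$ allowed by its regularity.

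For the second step, the key claim is a pointwise bound of the form $|\hat\nabla^j\zeta|\le C\boldsymbol\rho^{\beta-1-j}\|W\|_{C^{m,\alpha}_\beta}$ on the end, or at least $\boldsymbol\rho^{\beta-1}$ times polynomially controlled factors. Heuristically, $P_\kappa=\hat D B_\kappa$ acts on one-forms like $\kappa$-weighted drift terms plus the potential, and its leading part is $-\tfrac12\hat\Delta+\tfrac12(1-\kappa)\hat\nabla_{\mathbf X}$ combined with zeroth-order terms $\tfrac{\kappa}{2}\hat\nabla^2\hat f$-type contributions. I will try to realise this claim by a barrier/maximum-principle argument for $P_\kappa$ on the exterior region. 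In the unconjugated picture, $\hat D W$ has size $\boldsymbol\rho^{\beta+1}$ because the drift term $W(\mathbf X,\cdot)$ dominates, and the inverse of a drift operator of the form $\hat\nabla_{\mathbf X}+\lambda$ gains two powers of $\boldsymbol\rho$. Concretely, I would compute $P_\kappa(\boldsymbol\rho^{\gamma}\omega)$ for radial model one-forms and check that the zeroth-order part $\tfrac{\kappa(1-\kappa)}{4}$-type coefficient times $|\mathbf X|^2$ is positive. That positivity would yield supersolutions of size $\boldsymbol\rho^{\beta-1}$ and give $|\zeta|\lesssim\boldsymbol\rho^{\beta-1}$ via the exterior comparison principle, in the spirit of \cref{lem:weighted-exterior-maximum-principle}. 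The weak-to-pointwise input is the weighted $L^2$ bound \cref{ineq:appendix-bianchi-form-gap} together with local $L^\infty$ estimates on balls of radius $c/\boldsymbol\rho$.

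For the third step, on each ball $B_{c/R}(p)$ with $R=\boldsymbol\rho(p)$, rescale as in \cref{lem:appendix-elementary-oscillator}. The rescaled operator has uniformly bounded coefficients, and the source $\hat DW$ contributes $R^{-2}\cdot R^{\beta+1}$ in rescaled units. Interior Schauder estimates at order $m+1$ then bound $B_\kappa\zeta=\hat\delta^*\zeta-\kappa\operatorname{Sym}(\mathrm d\hat f\otimes\zeta)$ by $R\cdot R^{\beta-1}+R^{\beta}\sim R^\beta$ in $C^{m,\alpha}$ on the small ball. These balls are smaller than the normalized annuli in \eqref{eq:prelim-annular-norm}. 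Covering each annulus by boundedly overlapping balls gives the $C^{m,\alpha}$ part of the norm, with the H\"older seminorm at scale $R$ controlled because the $C^{m,\alpha}$ bound at scale $1/R$ is stronger. The compact core is handled by ordinary elliptic regularity together with the $H^{m+2}_\kappa$ bound of \cref{lem:weakened-Gaussian-Bianchi-Hilbert-inverse}.

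The main obstacle is the second step. Obtaining the sharp decay $\boldsymbol\rho^{\beta-1}$ for $\zeta$, rather than a weaker Gaussian-moment bound, requires a genuine barrier for a one-form system. The maximum principle is not directly available for systems, so I would first try $q=|\zeta|^2$ with a Kato/Bochner inequality for the rough part of $P_\kappa$. The difficulty is that $P_\kappa$ also contains the term $\hat\nabla\hat\delta$, so it is not purely of rough-Laplacian type. If this fails, the fallback is an iteration in the weight: apply the weighted-moment estimates of \cref{lem:weakened-Gaussian-weighted-Sobolev} to $\boldsymbol\rho^{-\gamma}\zeta$ with commutator control, and combine them with the rescaled estimates to reach the sharp power.
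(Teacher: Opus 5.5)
There is a genuine gap, and it is in your third step, not only in the second. The norm $C^{m,\alpha}_\beta$ is measured on annuli of size comparable to $\boldsymbol\rho$ (see \eqref{eq:prelim-annular-norm}). It requires $|\hat\nabla^jB_\kappa\zeta|\lesssim\boldsymbol\rho^{\beta-j}$ and H\"older quotients $\lesssim R^\beta(d/R)^\alpha$ for $\hat g$-distances $d\lesssim R=\boldsymbol\rho(p)$.

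On $B_{c/R}(p)$, write $x=p+y/R$, so that $\hat\nabla\approx R\,\partial_y$. A unit-ball Schauder estimate of the kind you describe gives $\|\partial_y^jB_\kappa\zeta\|\lesssim R^\beta$, that is, $|\hat\nabla^jB_\kappa\zeta|\lesssim\boldsymbol\rho^{\beta+j}$. It also gives quotients $\lesssim R^\beta(Rd)^\alpha$ for $d\le c/R$. These are weaker than what is required by factors $\boldsymbol\rho^{2j}$ and $\boldsymbol\rho^{2\alpha}$, so your claim that the scale-$1/R$ bound is ``stronger'' has the inequality backwards.

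Only the $C^0_\beta$ part of the estimate survives, and even that rests on your second step, which you leave open. That step is genuinely hard: $P_\kappa$ is a system with non-scalar principal symbol $\tfrac12(|\xi|^2v+\langle\xi,v\rangle\xi)$, so no maximum principle is available. Separately, for $m=0$ the source $\hat DW$ is only a distribution, so any interior estimate must be in divergence form, which you do not address.

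The missing idea is a Schauder estimate at the annular scale that is uniform in a singular-perturbation parameter. The paper rescales an annulus $\{R/2<r<4R\}$ to unit size and sets $\epsilon=R^{-2}$, $U=R^{1-\beta}\zeta$, $W_R=R^{-\beta}W$. After freezing coefficients, $R^{-1}B_\kappa$ and $R^{-1}\hat D$ take the model form \eqref{eq:euclidean-Holder-model}: $\mathscr B_\epsilon U=-\operatorname{Sym}((\epsilon\partial+\kappa\omega)\otimes U)$ and $(\mathscr D_\epsilon V)_j=\epsilon\partial^iV_{ij}-\omega^iV_{ij}$, with $|\omega|\asymp1$.

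\cref{lem:euclidean-divergence-Holder} then gives $\|U\|_{C^{m,\alpha}}+\epsilon\|\partial U\|_{C^{m,\alpha}}\lesssim\|F_0\|_{C^{m,\alpha}}+\|F_1\|_{C^{m,\alpha}}$ for sources $F_0+\epsilon\partial_iF_1^i$, uniformly in $\epsilon\le1$. It is proved from the explicit inverse $\mathscr P_\epsilon^{-1}=2H_\epsilon^{-1}-H_\epsilon^{-2}bd$, where $H_\epsilon=-\epsilon^2\Delta+(1-\kappa)\epsilon\omega\cdot\partial+\kappa|\omega|^2$, together with heat-kernel bounds. The positive mass $\kappa|\omega|^2$ supplies exactly the decay you were trying to get from barriers, so no pointwise-decay step or maximum principle is needed.

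The variable-coefficient case then goes as follows.
\begin{itemize}
\item Freeze coefficients on patches of radius $\eta R$ and absorb errors of size $\eta^{1-\alpha}+\epsilon$.
\item Patch across dyadic annuli.
\item To make absorption legitimate, work with the capped conjugate $\zeta_L=w_L\zeta$, whose weighted norms are finite a priori.
\end{itemize}
The only global input is $\|\zeta\|_{L^2(K)}$ on a compact core, which the $L^2_\kappa$ form estimate controls. Finally, $B_\kappa\zeta=\hat\nabla\zeta+O(\boldsymbol\rho)\zeta$ yields the stated bound.
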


\begin{proof}
The weak solution is the one constructed in the proof of
\cref{prop:weakened-projection-mapping-property}.  Testing with $Y=\zeta $ gives
\begin{equation}\label{ineq:appendix-projection-contraction}
 \|B_\kappa \zeta \|_{L^2_\kappa}
       \leq\|W\|_{L^2_\kappa},\qquad
 \|\zeta \|_{L^2_\kappa}
       \leq C_\kappa\|W\|_{L^2_\kappa}.
\end{equation}
Also $\|W\|_{L^2_\kappa}\leq C_{\beta,\kappa}\|W\|_{C^0_\beta}$.
We first take $W\in C_c^\infty$. Adjointness gives
$\hat DW\perp\mathcal K_\kappa$, and
\cref{lem:weakened-Gaussian-Bianchi-Hilbert-inverse} gives
$\zeta\in H^j_\kappa$ for every finite $j$.

\emph{Step 1.}
For large $L$, choose a smooth cap $\Theta_L$ with
\[
 \begin{gathered}
 \Theta_L(s)=s\quad(s\le L),\qquad
 \Theta_L(s)=\Theta_L(2L)\quad(s\ge2L),\\
 0\le\Theta_L'\le1,\qquad
 |\Theta_L^{(j)}|\le C_jL^{1-j}\quad(j\ge1).
 \end{gathered}
\]
Such a cap is obtained by rescaling a fixed smooth function. Put
\begin{equation}\label{eq:projection-capped-Gaussian}
 \begin{aligned}
 w_L&=\exp\!\left[-\frac{1-\kappa}{2}
                  \bigl(\hat f-\Theta_L(\hat f)\bigr)\right],
 &\zeta_L&=w_L\zeta,\\
 a_L&=\frac{1-\kappa}{2}\bigl(1-\Theta_L'(\hat f)\bigr),
 &-\mathrm d\log w_L&=a_L\,\mathrm d\hat f.
 \end{aligned}
\end{equation}
Then $0\le a_L\le(1-\kappa)/2$. For $j\ge1$, derivatives of $a_L$
are supported where $L\le\hat f\le2L$. There
$\hat f\asymp L\asymp\boldsymbol\rho^2$; the product rule and the AC
bounds for $\hat f$ give
\[
 |\hat\nabla^j a_L|\le C_{j,\kappa}\boldsymbol\rho^{-j}.
\]
Thus all conic coefficient bounds are uniform in $L$. We take $L$
large enough that $w_L=1$ on $\operatorname{supp}W$.

By the Gaussian Sobolev identification,
$v:=U_\kappa\zeta\in\mathcal S^j_{\mathrm{osc}}$ for every $j$.
To see the resulting weighted H\"older bounds, fix $m,\alpha,\gamma$
and an integer $q>m+\alpha+n/2$. Sobolev embedding on normalized
annuli bounds the $C^{m,\alpha}$ norm of the component representative
of $v$ on $A_R=\{R<r<2R\}$, with $A_R^*=\{R/2<r<4R\}$, by
\[
 C\sum_{j=0}^q R^{j-n/2}
             \|\hat\nabla^jv\|_{L^2(A_R^*)}.
\]
Multiplication by $R^{-\gamma}$ is controlled by sufficiently high
polynomial moments of these derivatives. They are finite because
$v$ belongs to every oscillator Sobolev space. Hence
$v\in C^{m,\alpha}_\gamma$ for every finite $m$ and every $\gamma$.
Outside $\{\hat f\le2L\}$, $\zeta_L$ is a constant multiple of $v$.
Smoothness on the remaining compact set therefore makes every weighted
H\"older norm of $\zeta_L$ finite for each fixed $L$.

The product rules for divergence and its adjoint give
\begin{align*}
 D_L:=w_L\hat D w_L^{-1}
     &=\hat\delta-(1-a_L)\iota_{\mathbf X},\\
 B_{\kappa,L}:=w_LB_\kappa w_L^{-1}
     &=\hat\delta^*-(\kappa+a_L)
                 \operatorname{Sym}(\mathrm d\hat f\otimes\cdot).
\end{align*}
Thus the exact equation is
\begin{equation}\label{eq:projection-capped-equation}
 D_LB_{\kappa,L}\zeta_L=D_L(w_LW)=D_LW.
\end{equation}
The source remains in divergence form.

\emph{Step 2.}
Pull $A_R^*=\{R/2<r<4R\}$ back to a fixed annulus, use normalized
frames, and set
\begin{equation*}
 \epsilon=R^{-2},\qquad U=R^{1-\beta}\zeta_L,\qquad W_R=R^{-\beta}W.
\end{equation*}
With pullback and component identifications understood, define
\begin{equation*}
 \mathscr B_{\epsilon,R}=R^{-1}B_{\kappa,L},\qquad
 \mathscr D_{\epsilon,R}=R^{-1}D_L,\qquad
 \mathscr P_{\epsilon,R}=\mathscr D_{\epsilon,R}\mathscr B_{\epsilon,R}.
\end{equation*}
Then
\begin{equation*}
 \mathscr P_{\epsilon,R}U=\mathscr D_{\epsilon,R}W_R,
 \qquad R^{-\beta}B_{\kappa,L}\zeta_L=\mathscr B_{\epsilon,R}U.
\end{equation*}
Each factor is of the form
$\epsilon A^i(y)\partial_i+A^0(y)+\epsilon A^1(y)$, with uniformly
bounded smooth coefficients, uniformly in $L$. At a frozen point put
$\omega=R^{-1}\mathrm d\hat f$ in the normalized identification and
$a=a_L$. Up to uniformly controlled linear coordinate and frame changes,
the leading factors are
\[
 U\longmapsto-\operatorname{Sym}
       ((\epsilon\partial+(\kappa+a)\omega)\otimes U),\qquad
 V\longmapsto\epsilon\operatorname{div}V-(1-a)\iota_{\omega^\sharp}V.
\]
These are the model operators \cref{eq:euclidean-Holder-model} with
\[
 \widetilde\omega=(1-a)\omega,\qquad
 \widetilde\kappa=\frac{\kappa+a}{1-a}\in[\kappa,1].
\]
The norm of $\omega$ lies between fixed positive constants, and
$1-a\in[(1+\kappa)/2,1]$. Hence the appendix estimate applies uniformly
in $R,L$, including the endpoint $\widetilde\kappa=1$. A ball of radius
$\epsilon$ in the rescaled metric corresponds to a $\hat g$-ball of
radius $R\epsilon=R^{-1}$.

Next, we control the ordinary H\"older norm on the whole fixed annulus.
We use $\hat g$-geodesic balls of radius $\eta R$, together with their
doubled balls, where $\eta>0$ is fixed independently of $R$.  After
annular rescaling these have radius $\eta$ in the rescaled metric;
we use uniformly controlled coordinate charts on these balls and apply
\cref{lem:euclidean-divergence-Holder}.

On $B_{2\eta}(y_0)$ suppress the subscript $R$ and write
\begin{align*}
 \mathscr B_\epsilon U
 &=\epsilon b^j(y)\partial_jU+b^0(y)U+\epsilon b^1(y)U,\\
 \mathscr D_\epsilon V
 &=\epsilon\partial_i(d^i(y)V)+d^0(y)V+\epsilon c(y)V.
\end{align*}
The second formula is obtained by moving the derivative coefficients
inside the divergence; their derivatives are absorbed into $c$.
The leading coefficients have uniform $C^1$ bounds, and all displayed
coefficients have uniform $C^{0,\alpha}$ bounds.  For any leading
coefficient $A$,
\begin{equation*}
 \|A-A(y_0)\|_\infty\leq C\eta,\qquad
 [A-A(y_0)]_\alpha\leq C(4\eta)^{1-\alpha}.
\end{equation*}
Thus $\|A-A(y_0)\|_{C^{0,\alpha}}\leq C\eta^{1-\alpha}$
for $0<\eta\leq1$.

Let a star denote evaluation of a leading coefficient at $y_0$, and put
\begin{equation*}
 \mathscr B_*U=\epsilon b_*^j\partial_jU+b_*^0U,\qquad
 \mathscr D_*V=\epsilon\partial_i(d_*^iV)+d_*^0V,
 \qquad \mathscr P_*=\mathscr D_*\mathscr B_*.
\end{equation*}
For $N_\epsilon(U)=\|U\|_{C^{0,\alpha}}
+\epsilon\|\partial U\|_{C^{0,\alpha}}$, set
\begin{equation*}
 V=\mathscr B_\epsilon U,\qquad
 E_B=(\mathscr B_\epsilon-\mathscr B_*)U
 =\epsilon(b^j-b_*^j)\partial_jU+(b^0-b_*^0)U+\epsilon b^1U.
\end{equation*}
The H\"older product inequality gives
\begin{equation*}
 \|V\|_{C^{0,\alpha}}\leq C N_\epsilon(U),\qquad
 \|E_B\|_{C^{0,\alpha}}
 \leq C(\eta^{1-\alpha}+\epsilon)N_\epsilon(U).
\end{equation*}
The operator difference has the exact decomposition
\begin{align*}
 (\mathscr P_\epsilon-\mathscr P_*)U
 &=(\mathscr D_\epsilon-\mathscr D_*)V+\mathscr D_*E_B
   =F_0^{\rm err}+\epsilon\partial_i(F_1^{\rm err})^i,\\
 (F_1^{\rm err})^i&=(d^i-d_*^i)V+d_*^iE_B,\\
 F_0^{\rm err}&=(d^0-d_*^0)V+\epsilon cV+d_*^0E_B.
\end{align*}
Consequently
\begin{equation*}
 \|F_0^{\rm err}\|_{C^{0,\alpha}}
       +\|F_1^{\rm err}\|_{C^{0,\alpha}}
 \leq C(\eta^{1-\alpha}+\epsilon)
       \bigl(\|U\|_{C^{0,\alpha}}
                    +\epsilon\|\partial U\|_{C^{0,\alpha}}\bigr).
\end{equation*}
The estimate is on $F_1^{\rm err}$, not its divergence; no second
derivative of $U$ is estimated separately.  Likewise,
$\mathscr D_\epsilon W_R$ has divergence-form source coefficients
$d^iW_R$ and $(d^0+\epsilon c)W_R$, bounded by
$C\|W_R\|_{C^{0,\alpha}}$ without differentiating $W_R$.

\emph{Step 3.}
Choose $\chi=1$ on $B_\eta(y_0)$, supported in $B_{2\eta}(y_0)$,
with derivatives bounded in terms of $\eta$.  Apply
\cref{ineq:appendix-parameter-divergence-Schauder} to $\chi U$, extended by zero.  In the coefficient error, extend
$A-A(y_0)$ using a cutoff on a slightly larger patch; its global
$C^{0,\alpha}$ norm is still $O(\eta^{1-\alpha})$.
Choose $\eta$ small enough, and then $R$ large enough, to absorb
$C(\eta^{1-\alpha}+\epsilon)N_\epsilon(\chi U)$.

For clarity, the product rules give
\[
\mathscr P_\epsilon(\chi U)
 ={}\mathscr D_\epsilon(\chi W_R)
    +\epsilon d^i(\partial_i\chi)(\mathscr B_\epsilon U-W_R) 
 +\mathscr D_\epsilon\bigl(
                    \epsilon b^i(\partial_i\chi)U\bigr).
\]
The last two terms have divergence-form source norms bounded by
$C_\eta\epsilon(N_\epsilon(U)+\|W_R\|_{C^{0,\alpha}})$
on the enlarged patch.  Hence the local estimate is
\begin{equation*}
 N_\epsilon(U;B_\eta)
 \leq C_\eta\|W_R\|_{C^{0,\alpha}(B_{2\eta})}
       +C_\eta\epsilon N_\epsilon(U;B_{2\eta}).
\end{equation*}
Take a finite covering of the fixed annulus with a Lebesgue number
comparable to $\eta$.  Pairs of sufficiently close points lie in a
common enlarged patch.  For the remaining pairs,
\begin{equation*}
 \frac{|V(y)-V(z)|}{|y-z|^\alpha}
       \leq C_\eta\|V\|_\infty.
\end{equation*}
Thus patching controls the full annular H\"older seminorm, with no
factor $\epsilon^{-\alpha}$.  Changes of smooth bundle frame have
uniformly bounded coefficients and obey the same product estimates.

For $m>0$, differentiate the equation on each fixed annular chart and
put
\[
 N_{\epsilon,m}(U)=\|U\|_{C^{m,\alpha}}
                        +\epsilon\|\partial U\|_{C^{m,\alpha}}.
\]
Every commutator differentiates at least one coefficient. Keeping its
second-order terms in divergence form bounds the source coefficients by
\[
 C_m\bigl(N_{\epsilon,m-1}(U)+\|W_R\|_{C^{m,\alpha}}\bigr).
\]
For example, the highest remaining derivative in the divergence source
is $\epsilon(\partial a)\partial^mU$, controlled by
$\epsilon\|\partial U\|_{C^{m-1,\alpha}}$. All coefficient derivatives
have bounds independent of $L$. The cutoff terms at order $m$ are
bounded by $C_{m,\eta}\epsilon N_{\epsilon,m}(U)$ on the enlarged patch.

Take suprema over overlapping dyadic annuli. Neighboring weights are
uniformly comparable. At order zero, choosing the inner radius large
absorbs the neighboring-annulus errors, which carry $\epsilon=R^{-2}$.
At order $m$, the same choice, enlarged in terms of $m$, absorbs the
highest-order errors; the already established order-$(m-1)$ estimate
controls the commutators. These absorptions are legitimate because all
weighted norms of $\zeta_L$ are finite for each fixed $L$.

The annular constants and the inner radius are independent of $L$ and
of $\operatorname{supp}W$. Choose a fixed enlarged compact core $K$
containing the remaining annuli. On this core take $L$ large enough
that $w_L=1$ on a neighborhood of $K$, as well as on
$\operatorname{supp}W$. The interior divergence-form Schauder estimate
then controls $\zeta_L=\zeta$ in $C^{m+1,\alpha}$ by the
$C^{m,\alpha}$ norm of $W$ and $\|\zeta\|_{L^2(K)}$.
Induction in $m$ consequently gives
\[
 \|\zeta_L\|_{C^{m,\alpha}_{\beta-1}}
       +\|\hat\nabla\zeta_L\|_{C^{m,\alpha}_\beta}
 \le C_{m,\alpha,\beta,\kappa}
       \bigl(\|W\|_{C^{m,\alpha}_\beta}+\|\zeta\|_{L^2(K)}\bigr),
\]
uniformly in $L$ and the support of $W$. Indeed,
$U=R^{1-\beta}\zeta_L$, and $\epsilon\partial U$ corresponds to
$R^{-\beta}\hat\nabla\zeta_L$ up to bounded frame terms controlled by
$U$. The annular norms are equivalent to the weighted norms by
\cref{eq:prelim-weighted-norm,eq:prelim-annular-norm}.

\emph{Step 4.}
On every fixed compact set $\zeta_L=\zeta$ for all sufficiently large
$L$. Taking the limit on each annulus and then the supremum yields
\begin{equation}\label{ineq:appendix-global-parameter-Schauder}
 \|\zeta\|_{C^{m,\alpha}_{\beta-1}}
       +\|\hat\nabla\zeta\|_{C^{m,\alpha}_\beta}
 \le C_{m,\alpha,\beta,\kappa}
       \bigl(\|W\|_{C^{m,\alpha}_\beta}+\|\zeta\|_{L^2(K)}\bigr).
\end{equation}
On the fixed core the unweighted and Gaussian norms are equivalent, so
\cref{ineq:appendix-projection-contraction} controls the last term.
By \cref{eq:weakened-gaussian-bianchi-adjoint}, $B_\kappa\zeta$ is a sum
of $\hat\nabla\zeta$ and $O(\boldsymbol\rho)\zeta$, with the corresponding
conic derivative bounds. This proves the asserted estimate for
compactly supported smooth $W$.

For general $W\in C^{m,\alpha}_\beta$, take compactly supported smooth
$W_j$ obtained by cutoffs and mollification in normalized charts, with
uniformly bounded $C^{m,\alpha}_\beta$ norms and $W_j\to W$ in
$L^2_\kappa$. The form estimate and the weak equation for differences
give $\zeta_j\to\zeta$ in $H^1_\kappa$ and
$B_\kappa\zeta_j\to B_\kappa\zeta$ in $L^2_\kappa$.
The uniform graph bounds give local compactness in $C^{m+1,\alpha'}$
for every $\alpha'<\alpha$. These local limits agree with the weak
solution. Lower semicontinuity of the weighted $C^{m,\alpha}$ norms
proves the graph and correction estimates at the stated exponent.

\end{proof}

\subsection{Gaussian reconstruction}
\phantomsection\label{idx:bianchi-correction}
Set $\mathcal C_\kappa=B_\kappa P_\kappa^{-1}$, with its weak extension
on the form-dual space and the fixed kernel complement. For
$h\in\mathscr H$ near zero let $\epsilon(h)=C\|h\|_{\mathscr H}$;
this controls the finite coefficient norms in
\eqref{eq:finite-low-coefficients} and tends to zero at the background.
\begin{Theorem}[Gaussian reconstruction and normal estimate]
\label{prop:corrected-orthogonal-fixed-target-normal-estimate}
Fix $0<\kappa<1$. For $h$ in a sufficiently small fixed neighborhood in $\mathscr H$,
\begin{equation}\label{eq:corrected-orthogonal-fixed-target-reconstruction}
 \Phi(g)=\mathscr R_{\kappa,h}\mathscr Q_\kappa(h),\qquad
 \mathscr Q_\kappa(h)=\Pi_\kappa\Phi(g),\qquad
 \mathscr R_{\kappa,h}=(I-\mathcal C_\kappa(\hat D-D_g))^{-1},
\end{equation}
with $\|\mathscr R_{\kappa,h}\|_{L^2_\kappa\to L^2_\kappa}\le2$.
The inverse is continuous in $h$ in this operator norm. Moreover,
\begin{equation}\label{ineq:corrected-orthogonal-normal-graph-smallness}
 \|\mathrm D\mathscr Q_\kappa(h)[k]-\hat Lk\|_{L^2_\kappa}
 \le C_\kappa\epsilon(h)\|\hat Lk\|_{L^2_\kappa},
 \qquad k\in\mathscr H'.
\end{equation}
\end{Theorem}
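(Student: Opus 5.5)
The plan is to combine an exact Bianchi-type identity for the minimizer defect with the $L^2_\kappa$ Helmholtz decomposition of \cref{prop:weakened-projection-mapping-property}.

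\emph{Step 1: the Bianchi identity.} For $h$ in the analytic neighborhood of \cref{thm:existence-and-uniqueness-of-minimizer}, write $f=f_g$. The contracted Bianchi identity with drift gives
\[
 \delta^g_f(\Ric_g+\nabla_g^2f)=\tfrac12\mathrm d\bigl(\mathrm R_g+2\Delta_gf-|\nabla^gf|_g^2\bigr)+(\text{drift terms}),
\]
together with $\delta^g_f(\tfrac12g)=-\tfrac12\mathrm df$. Combining these with the Euler--Lagrange equation \eqref{eq:Euler-Lagrange-equation} should yield the exact identity $D_g\Phi(g)=0$ (Perelman's gradient $\nabla\boldsymbol\mu$ is $D_g$-free by diffeomorphism invariance). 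The same identity also follows from \cref{lem:variation-formulas} by testing against $\mathcal L_{Y}g$ and using invariance of $\boldsymbol\mu$. Since $\Phi(g)\in\mathcal V\hookrightarrow L^2_\kappa$ by \cref{lem:graph-nonlinear-closure}, we obtain $\hat D\Phi(g)=(\hat D-D_g)\Phi(g)$ distributionally.

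\emph{Step 2: reconstruction.} By \cref{prop:weakened-projection-mapping-property}, every $W\in L^2_\kappa$ decomposes as $W=\Pi_\kappa W+\mathcal C_\kappa\hat DW$. Here $\mathcal C_\kappa\hat DW=B_\kappa\zeta$, and $\zeta$ is the Lax--Milgram solution of the form equation.

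The key point is that $\mathcal C_\kappa(\hat D-D_g)$ extends to a small operator on $L^2_\kappa$. Write $(\hat D-D_g)W=\operatorname{div}(E_1*W)+E_0*W$, moving derivatives onto coefficients. The coefficients $E_1$ (inverse-metric difference) and $E_0$ (connection and drift differences) obey the bounds
\[
 |E_1|\le\epsilon(h),\qquad |E_0|\le\epsilon(h)\boldsymbol\rho,
\]
by \eqref{eq:finite-low-coefficients} and the differentiated coefficient bounds. Hence, for $Y\in\mathcal E_\kappa$,
\[
 |\langle(\hat D-D_g)W,Y\rangle_{L^2_\kappa}|\le C\epsilon(h)\|W\|_{L^2_\kappa}\bigl(\|Y\|_{H^1_\kappa}+\|\boldsymbol\rho Y\|_{L^2_\kappa}\bigr).
\]
Here the derivative falling on the weight $e^{\kappa\hat f}$ produces a factor $\boldsymbol\rho$, which is exactly what the $\boldsymbol\rho$ term absorbs. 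By \eqref{ineq:appendix-bianchi-form-gap}, the right-hand side is at most $C\epsilon(h)\|W\|_{L^2_\kappa}\|B_\kappa Y\|_{L^2_\kappa}$. The form equation then defines $\zeta$ with
\[
 \|\mathcal C_\kappa(\hat D-D_g)W\|_{L^2_\kappa}=\|B_\kappa\zeta\|_{L^2_\kappa}\le C\epsilon(h)\|W\|_{L^2_\kappa}.
\]
The pairing against elements of $\mathcal K_\kappa$ is handled by the kernel-complement convention. Shrinking the neighborhood so that $C\epsilon(h)\le\tfrac12$, the Neumann series gives $\mathscr R_{\kappa,h}$ with norm at most $2$. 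Since the coefficients depend continuously on $h$ in these weighted sup norms, $\mathscr R_{\kappa,h}$ is continuous in operator norm. Applying the decomposition to $W=\Phi(g)$ and using Step 1 gives
\[
 \Phi(g)=\Pi_\kappa\Phi(g)+\mathcal C_\kappa(\hat D-D_g)\Phi(g),
\]
which is \eqref{eq:corrected-orthogonal-fixed-target-reconstruction}. I expect this step to be the main obstacle: the smallness must be placed in the weak, form-dual sense, and the weight derivative must be absorbed by the $\boldsymbol\rho$ term in the gap.

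\emph{Step 3: normal estimate.} The map $h\mapsto\Phi(g)$ is analytic into $\mathcal V\hookrightarrow L^2_\kappa$, by \cref{lem:graph-nonlinear-closure} composed with the analytic branch of \cref{thm:existence-and-uniqueness-of-minimizer}. Since $\Pi_\kappa$ is bounded and linear, we get $\mathrm D\mathscr Q_\kappa(h)[k]=\Pi_\kappa\mathrm D\Phi_g[k]$. For $k\in\mathscr H'$, the commutation identity $\hat D\hat L=(\tfrac12\hat\Delta_{\hat f}+\tfrac14)\hat D$ gives $\hat D\hat Lk=0$, so $\Pi_\kappa\hat Lk=\hat Lk$. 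Therefore
\[
 \mathrm D\mathscr Q_\kappa(h)[k]-\hat Lk=\Pi_\kappa\bigl(\mathrm D\Phi_g[k]-\hat Lk\bigr).
\]
Since $\Pi_\kappa$ is an orthogonal projection, hence a contraction on $L^2_\kappa$, \eqref{ineq:corrected-orthogonal-normal-graph-smallness} follows from \cref{prop:nonlinear-normal-coefficient-reduction}, with $\epsilon$ there taken comparable to $\epsilon(h)$.
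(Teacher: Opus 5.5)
Your proposal is correct and follows the paper's proof essentially step for step: the form-dual smallness of $\mathcal C_\kappa(\hat D-D_g)$ via integration by parts against the weakened Gaussian weight and the gap \eqref{ineq:appendix-bianchi-form-gap}, the Neumann series, reconstruction from $D_g\Phi(g)=0$, and the reduction of \eqref{ineq:corrected-orthogonal-normal-graph-smallness} to \cref{prop:nonlinear-normal-coefficient-reduction} through $\Pi_\kappa\hat Lk=\hat Lk$ and the contraction property of $\Pi_\kappa$. Two small remarks. Your direct derivation of the Bianchi identity is in fact exact, with no leftover drift terms, since $\delta^g_f(\Ric_g+\nabla_g^2f)=\tfrac12\mathrm d(\mathrm R_g+2\Delta_gf-|\nabla^gf|_g^2)$ and $\delta^g_f(\tfrac12g)=-\tfrac12\mathrm df$. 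Also, your bound $|E_0|\le\epsilon(h)\boldsymbol\rho$ is weaker than the paper's $\|b\|_{C^0}\le C\epsilon(h)$, but it still suffices because the gap controls $\|\boldsymbol\rho Y\|_{L^2_\kappa}$.
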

\begin{proof}
We first prove the operator estimate
\begin{equation}\label{ineq:orthogonal-projection-low-reconstruction-smallness}
 \|\mathcal C_\kappa(\hat D-D_g)q\|_{L^2_\kappa}
 \le C_\kappa\epsilon(h)\|q\|_{L^2_\kappa},\qquad q\in L^2_\kappa.
\end{equation}

Write
$E_h=\hat D-D_g=a^i\hat\nabla_i+b$ in the fixed background frame.
The inverse-metric and connection estimates, \cref{eq:finite-low-coefficients}, and the dependence of
the minimizer on $h$ give
\begin{equation*}
 \|a\|_{C^1}+\|b\|_{C^0}
       \leq C\epsilon(h),\qquad \epsilon(h)\longrightarrow0.
\end{equation*}
The maps $a^i,b$ send symmetric tensors to one-forms; their stars
below are pointwise adjoints for the background metric.  All pairings
use $\mathrm dV_{\hat f,\kappa}=C_\kappa e^{-(1-\kappa)\hat f}\mathrm dV_{\hat g}$.
For smooth compactly supported fields, integration by parts gives
\[
\langle a^i\hat\nabla_iq,\zeta \rangle_{L^2_\kappa}
 ={}-\int_M\left\langle q,(a^i)^*\hat\nabla_i\zeta
                         +(\hat\nabla_i a^i)^*\zeta \right\rangle
                                                    \mathrm dV_{\hat f,\kappa} 
 +(1-\kappa)\int_M\left\langle q,
                         \hat f_i(a^i)^*\zeta \right\rangle \mathrm dV_{\hat f,\kappa}.
\]
Thus the weighted formal adjoint is
\begin{equation*}
 E_h^{*_{\kappa}}\zeta 
 =-(a^i)^*\hat\nabla_i\zeta-(\hat\nabla_i a^i)^*\zeta 
       +(1-\kappa)\hat f_i(a^i)^*\zeta +b^*\zeta .
\end{equation*}
Since $|\mathrm{d}\hat f|\leq C\boldsymbol{\rho}$, Cauchy--Schwarz implies
\[
|\langle E_hq,\zeta \rangle_{L^2_\kappa}|
 \leq \|q\|_{L^2_\kappa}
                       \|E_h^{*_{\kappa}}\zeta \|_{L^2_\kappa} 
 \leq C_\kappa\epsilon(h)\|q\|_{L^2_\kappa}
       \bigl(\|\hat\nabla \zeta \|_{L^2_\kappa}
              +\|\zeta \|_{L^2_\kappa}
              +\|\boldsymbol{\rho} \zeta \|_{L^2_\kappa}\bigr).
\]
For $q\in L^2_\kappa$ and a form-domain vector $\zeta $, define
the left side by $\langle q,E_h^{*_{\kappa}}\zeta \rangle$.
\Cref{lem:weakened-Gaussian-weighted-Sobolev} and density extend the preceding calculation
to these fields.  If moreover
$\zeta \perp\mathcal K_\kappa$, the full form estimate
\cref{ineq:appendix-bianchi-form-gap} gives
\begin{equation*}
 \|\hat\nabla \zeta \|_{L^2_\kappa}
       +\|\zeta \|_{L^2_\kappa}+\|\boldsymbol{\rho} \zeta \|_{L^2_\kappa}
 \leq C_\kappa\|B_\kappa \zeta \|_{L^2_\kappa}.
\end{equation*}
Consequently
\begin{equation}\label{ineq:weakened-Gaussian-moving-divergence-dual-bound}
 |\langle(\hat D-D_g)q,\zeta \rangle_{L^2_\kappa}|
 \leq C_\kappa\epsilon(h)
 \|q\|_{L^2_\kappa}\|B_\kappa \zeta \|_{L^2_\kappa}.
\end{equation}

Let $\zeta _q=P_\kappa^{-1} E_hq$ be the weak solution on
$\mathcal K_\kappa^\perp$.  Its equation is
\begin{equation*}
 \langle B_\kappa \zeta _q,B_\kappa Y\rangle_{L^2_\kappa}
       =\langle E_hq,Y\rangle_{L^2_\kappa}
 \qquad(Y\in\operatorname{Dom}(B_\kappa)\cap\mathcal K_\kappa^\perp).
\end{equation*}
Testing with $Y=\zeta _q$ gives
\begin{equation*}
 \|B_\kappa \zeta _q\|_{L^2_\kappa}^2
 \leq C_\kappa\epsilon(h)\|q\|_{L^2_\kappa}
                         \|B_\kappa \zeta _q\|_{L^2_\kappa}.
\end{equation*}
Since $B_\kappa \zeta _q=\mathcal C_\kappa E_hq$, this proves
\cref{ineq:orthogonal-projection-low-reconstruction-smallness}.

Choose $C_\kappa\epsilon(h)<1/2$ in
\cref{ineq:orthogonal-projection-low-reconstruction-smallness}. The Neumann
series gives the inverse and its bound. The weighted Bianchi identity
$D_g\Phi(g)=0$, obtained from diffeomorphism invariance and the first
variation \cite[Lemma~2.1]{cao2012second}, gives
$\Phi(g)=\Pi_\kappa\Phi(g)+\mathcal C_\kappa(\hat D-D_g)\Phi(g)$,
which proves reconstruction. The coefficient bounds used in the
form-dual estimate are continuous in $h$; the resolvent identity
therefore proves operator-norm continuity. Finally
$\hat D\hat Lk=0$ and the orthogonal projection is a contraction, so
\[
 \mathrm D\mathscr Q_\kappa(h)[k]-\hat Lk
 =\Pi_\kappa(\mathrm D\Phi_g[k]-\hat Lk).
\]
Applying \cref{prop:nonlinear-normal-coefficient-reduction} concludes the proof.
\end{proof}

\section{\texorpdfstring{\L ojasiewicz--Simon}{Lojasiewicz--Simon} inequality}
\label{sec:finite-Banach-metric-LS}
We first perform the reduction on the fixed Banach graph and then
transfer the resulting inequalities to the original metric and pair
classes.

\subsection{Lyapunov--Schmidt reduction}
The Gaussian normal inverse of \cref{prop:finite-graph-realization} is
\begin{equation}\label{eq:finite-LS-strong-background-Hessian}
 \hat L:\mathscr H'\longrightarrow\mathscr F'.
\end{equation}
Choose an $L^2_0$-orthonormal basis $\eta_1,\ldots,\eta_d$ of $\ISD$.
Fix $0<\kappa<1$, and take $R$ sufficiently large that
$\zeta_i=\Pi_\kappa(\chi_R\eta_i)$ have invertible pairing matrix
$M_{ij}=\langle\zeta_i,\eta_j\rangle_{L^2_0}$.
Indeed $\chi_R\eta_i\to\eta_i$ in $L^2_\kappa$ and the projection is
bounded there, while $L^2_\kappa\hookrightarrow L^2_0$.
Since $\chi_R\eta_i$ is smooth and compactly supported,
\cref{lem:fixed-level-Bianchi-correction}, applied at every finite
order and weight, shows that $\zeta_i$ is smooth and decays to
arbitrary polynomial order. In particular $\zeta_i\in\mathscr F$. Set
$\mathcal C=\operatorname{span}\{\zeta_i\}_{i=1}^d$ and
\begin{equation}\label{eq:finite-LS-fixed-splittings}
 \mathscr F=\mathscr F'\oplus\mathcal C,
 \qquad
 \Pi_{\mathcal C}q=\sum_{i,j}\langle q,\eta_i\rangle_{L^2_0}
                             (M^{-1})_{ij}\zeta_j,
 \qquad \Pi_{\mathscr F'}=I-\Pi_{\mathcal C}.
\end{equation}
These projections are bounded on $\mathscr F$ and on each fixed
$L^2_{\kappa'}$: Cauchy--Schwarz applies to the Gaussian pairing
functionals, and the range is finite-dimensional. The same convention
applies when $d=0$.

\begin{Proposition}\label{thm:finite-LS-normal-inverse-family}
On a fixed neighborhood in $\mathscr H$, the map
\begin{equation}\label{eq:finite-LS-quasigradient}
 \mathscr Q_\kappa(h)=\Pi_\kappa\Phi(\hat g+h,f_{\hat g+h})
                 :\mathscr H\longrightarrow\mathscr F
\end{equation}
is real analytic and
\begin{equation}\label{eq:finite-LS-quasigradient-linearization}
 \mathrm D\mathscr Q_\kappa(0)=\hat L.
\end{equation}
The normal differential
$\Pi_{\mathscr F'}\mathrm D\mathscr Q_\kappa(h)|_{\mathscr H'}$
is a bounded isomorphism $\mathscr H'\to\mathscr F'$ for small $h$.
\end{Proposition}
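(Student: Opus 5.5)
The map $\mathscr Q_\kappa$ will be written as a composition of maps already known to be analytic, with $\Pi_\kappa$ acting as a bounded linear operator on $\mathcal V$. First, \cref{thm:existence-and-uniqueness-of-minimizer} gives that $h\mapsto u_h':=f_{\hat g+h}-\hat f\in\mathscr Z$ is real analytic on a small neighborhood in $\mathscr H$. Second, \cref{lem:graph-nonlinear-closure} gives that $(h,u)\mapsto\Phi(\hat g+h,\hat f+u)$ is analytic into $\mathcal V$. A constant added to $u$ changes neither $\nabla^2_g u$ nor $\Phi$, so composing the two maps shows that $h\mapsto\Phi(g)$ is analytic into $\mathcal V=C^{6,1/4}_{-1/2}$. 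Third, \cref{lem:fixed-level-Bianchi-correction} with $m=6$, $\alpha=1/4$, $\beta=-1/2$ shows that $W\mapsto B_\kappa P_\kappa^{-1}\hat DW$ is bounded on $\mathcal V$. Hence $\Pi_\kappa=I-B_\kappa P_\kappa^{-1}\hat D$ is a bounded linear map $\mathcal V\to\mathcal V$. By \cref{prop:weakened-projection-mapping-property}, its range lies in $\ker\hat D$, since $\mathcal V\subset L^2_\kappa$ continuously. Therefore $\Pi_\kappa$ maps $\mathcal V$ boundedly into $\mathscr F$, and $\mathscr Q_\kappa=\Pi_\kappa\circ\Phi(\cdot)$ is real analytic into $\mathscr F$.

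For the linearization, differentiate at $h=0$ using \cref{lem:variation-formulas}. At $\hat g$ we have $\Phi(\hat g)=0$ and $\hat Dk=0$ for $k\in\mathscr H$. Moreover $v=\mathrm Df_{\hat g}[k]$ satisfies $(2\hat\Delta_{\hat f}+1)(v-\tfrac12\tr k)=\mathrm D\boldsymbol\mu_{\hat g}[k]=0$. The scalar normalization, together with the injectivity of $2L_{1/2}$ on $L^2_{\hat f}$, forces $v=\tfrac12\tr k$. This is consistent with $\mathrm Du_0=\tfrac12\tr$ from \cref{thm:uniqueness-of-minimizer-IFT}, since $\log Z$ has zero derivative. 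Hence $\mathrm D\Phi_{\hat g}[k]=\hat Lk$. The identity $\hat D\hat L=(\tfrac12\hat\Delta_{\hat f}+\tfrac14)\hat D$ then gives $\hat D\hat Lk=0$, so $\Pi_\kappa\hat Lk=\hat Lk$ and $\mathrm D\mathscr Q_\kappa(0)=\hat L$. On $\ISD$ both sides vanish.

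For the normal isomorphism, I will use an operator-norm perturbation argument rather than the $L^2_\kappa$ estimate \eqref{ineq:corrected-orthogonal-normal-graph-smallness}. The concern is that this estimate lives in $L^2_\kappa$, while invertibility is needed in the Banach norm of $\mathscr F'$. Analyticity gives continuity of $h\mapsto\mathrm D\mathscr Q_\kappa(h)$ in $\mathcal L(\mathscr H,\mathscr F)$. Hence $\Pi_{\mathscr F'}\mathrm D\mathscr Q_\kappa(h)|_{\mathscr H'}$ is close in operator norm to $\Pi_{\mathscr F'}\hat L|_{\mathscr H'}=\hat L|_{\mathscr H'}$, using $\hat L\mathscr H'\subset\mathscr F'$ and the boundedness of $\Pi_{\mathscr F'}$ on $\mathscr F$. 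By \cref{prop:finite-graph-realization}, $\hat L|_{\mathscr H'}$ is a bounded isomorphism $\mathscr H'\to\mathscr F'$. A Neumann series then gives invertibility for small $h$.

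The main obstacle is the third step: checking that $\Pi_\kappa$ is genuinely bounded $\mathcal V\to\mathcal V$ and lands in $\ker\hat D$. This requires identifying the weak solution in \cref{lem:fixed-level-Bianchi-correction} with $P_\kappa^{-1}\hat DW$ for $W\in\mathcal V$. It also requires confirming that the projected field keeps its $C^{6,1/4}_{-1/2}$ bound, which \eqref{ineq:fixed-level-Bianchi-correction} supplies.
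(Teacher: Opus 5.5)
Your proposal is correct and follows essentially the same route as the paper. Analyticity of the unprojected defect comes from \cref{lem:graph-nonlinear-closure} and \cref{thm:existence-and-uniqueness-of-minimizer}; boundedness of $\Pi_\kappa$ on $C^{6,1/4}_{-1/2}$ comes from the fixed-level estimate at $(m,\alpha,\beta)=(6,1/4,-1/2)$, with range in $\ker\hat D$; the linearization follows from $\mathrm Df_{\hat g}[k]=\tfrac12\tr k$ and $\Pi_\kappa\hat Lk=\hat Lk$; and the normal isomorphism follows from operator-norm continuity and a Neumann series. (Invoking the scalar normalization when deriving $v=\tfrac12\tr k$ is unnecessary, since injectivity of $2L_{1/2}$ on $L^2_{\hat f}$ already gives it.)
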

\begin{proof}
\Cref{lem:graph-nonlinear-closure,thm:existence-and-uniqueness-of-minimizer}
give analyticity of the unprojected defect into
$C^{6,1/4}_{-1/2}$. The same-order estimate
\cref{ineq:fixed-level-Bianchi-correction} at
$(m,\alpha,\beta)=(6,1/4,-1/2)$ gives boundedness of $\Pi_\kappa$ into that
space. Its distributional divergence vanishes, proving the full target
assertion. Differentiating the scalar equation gives
$\mathrm Df_{\hat g}[k]=\tfrac12\tr k$ for sliced $k$; the
normalization contributes zero at the background because the first
variation of the entropy vanishes. The defect variation then gives
$\mathrm D\Phi_{\hat g}[k]=\hat Lk$ on $\ker\hat D$.
The projection fixes $\hat Lk$, proving
\eqref{eq:finite-LS-quasigradient-linearization}.
At zero the normal operator is \eqref{eq:finite-LS-strong-background-Hessian}.
Analyticity implies operator-norm continuity from $\mathscr H'$ to
$\mathscr F'$; its inverse persists by a Neumann series.
\end{proof}

\begin{Corollary}\label{cor:finite-LS-analytic-center-reduction}
There is a unique real-analytic map on a neighborhood of zero in
$\ISD$,
\begin{equation}\label{eq:finite-LS-center-graph}
 \eta_\kappa:U_1\subset\ISD\longrightarrow\mathscr H',\qquad
 \Pi_{\mathscr F'}\mathscr Q_\kappa(\xi+\eta_\kappa(\xi))=0,
\end{equation}
with $\eta_\kappa(0)=0$ and $\mathrm D\eta_\kappa(0)=0$.
The center and its entropy are
\begin{equation}\label{eq:finite-LS-center-and-reduced-energy}
 c_\kappa(\xi)=\xi+\eta_\kappa(\xi),\qquad
 e_\kappa(\xi)=\boldsymbol\mu(\hat g+c_\kappa(\xi)),
\end{equation}
and $e_\kappa$ is real analytic.
\end{Corollary}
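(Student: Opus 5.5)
The plan is to apply the analytic implicit function theorem in Banach spaces to the map
\[
 \mathcal G:(\xi,\eta)\longmapsto\Pi_{\mathscr F'}\mathscr Q_\kappa(\xi+\eta),
 \qquad \ISD\times\mathscr H'\longrightarrow\mathscr F',
\]
defined on a small neighborhood of the origin. We use the splitting $\mathscr H=\ISD\oplus\mathscr H'$ from \eqref{eq:finite-tensor-graphs}, where $\ISD$ is finite-dimensional with a fixed norm. By \cref{thm:finite-LS-normal-inverse-family}, $\mathscr Q_\kappa:\mathscr H\to\mathscr F$ is real analytic. Since $\Pi_{\mathscr F'}$ is a bounded linear projection on $\mathscr F$, the composition $\mathcal G$ is real analytic. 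It also vanishes at the origin, because $\Phi(\hat g,\hat f)=0$.

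\textbf{Step 1: the normal differential.} At the origin, $\mathrm D_\eta\mathcal G(0,0)=\Pi_{\mathscr F'}\hat L|_{\mathscr H'}$. Since $\hat L$ maps $\mathscr H'$ into $\mathscr F'$, the projection acts as the identity on its image, so this differential equals $\hat L:\mathscr H'\to\mathscr F'$. That operator is a bounded isomorphism by \cref{prop:finite-graph-realization}. The analytic implicit function theorem therefore gives neighborhoods $U_1\subset\ISD$ and $V\subset\mathscr H'$ and a unique real-analytic $\eta_\kappa:U_1\to V$ with $\mathcal G(\xi,\eta_\kappa(\xi))=0$ and $\eta_\kappa(0)=0$. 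Uniqueness is understood within $V$, and this is the sense of ``unique'' in the statement.

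\textbf{Step 2: vanishing first derivative.} Differentiate the identity $\mathcal G(\xi,\eta_\kappa(\xi))=0$ at $\xi=0$ to get
\[
 \hat L\,\mathrm D\eta_\kappa(0)[\xi']=-\Pi_{\mathscr F'}\hat L\xi'=0,
\]
since $\hat L\xi'=0$ for $\xi'\in\ISD$. Injectivity of $\hat L$ on $\mathscr H'$ then gives $\mathrm D\eta_\kappa(0)=0$.

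\textbf{Step 3: analyticity of the reduced entropy.} The map $c_\kappa=\Id+\eta_\kappa$ is real analytic from $U_1$ into $\mathscr H$. After shrinking $U_1$ so that $c_\kappa(U_1)$ lies in the neighborhood of \cref{thm:existence-and-uniqueness-of-minimizer}, composing with the analytic map $h\mapsto\boldsymbol\mu(\hat g+h)$ from that theorem shows that $e_\kappa$ is real analytic.

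The only point needing care is Step 1, and it is already settled. One must check that $\Pi_{\mathscr F'}$ is bounded on $\mathscr F$, which holds by the discussion after \eqref{eq:finite-LS-fixed-splittings}. One must also check that $\hat L(\mathscr H')\subset\mathscr F'$ exactly, with no $\mathcal C$-component, which holds because $\mathscr F'$ is defined as the $\ISD$-orthogonal part. Beyond these checks the argument is a direct application of the implicit function theorem.
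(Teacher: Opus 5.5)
Your proposal is correct and follows the paper's proof essentially verbatim. The paper applies the analytic implicit function theorem to $\mathcal G_\kappa(\xi,w)=\Pi_{\mathscr F'}\mathscr Q_\kappa(\xi+w)$ on $\ISD\times\mathscr H'$, with normal differential at zero the isomorphism $\hat L:\mathscr H'\to\mathscr F'$. It then reads off $\mathrm D\eta_\kappa(0)=0$ from $\hat L\xi=0$ and composes with the analytic entropy branch. Your two extra checks are the details the paper leaves implicit: that $\Pi_{\mathscr F'}$ fixes $\mathscr F'$, and that $\hat L(\mathscr H')\subset\mathscr F'$, which holds by the definition of $\mathscr H'$.
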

\begin{proof}
Apply the analytic Banach implicit function theorem to
$\mathcal G_\kappa(\xi,w)=\Pi_{\mathscr F'}\mathscr Q_\kappa(\xi+w)$
on $\ISD\times\mathscr H'$. Its normal differential at zero is the
bounded isomorphism above. Differentiation in $\xi$ gives
$\mathrm D\eta_\kappa(0)=0$ since $\hat L\xi=0$.
Composition with the scalar entropy branch on this same domain gives
analyticity of $e_\kappa$.
\end{proof}

The first variation and the Gaussian reconstruction give
\begin{equation}\label{eq:finite-LS-variable-pairing}
 \mathrm D\boldsymbol\mu_{\hat g+h}[k]
 =\mathscr B_{\kappa,h}(\mathscr Q_\kappa(h),k),\qquad
 \mathscr B_{\kappa,h}(q,k)
 =\int_M\langle\mathscr R_{\kappa,h}q,k\rangle_g\,\mathrm dV_{f_g}.
\end{equation}
For two fixed weights define, on $\mathcal C$,
\begin{equation}\label{eq:finite-LS-two-delta-target-isomorphism}
 J_{\kappa',\kappa,h}q=\Pi_{\kappa'}\mathscr R_{\kappa,h}q,
 \qquad
 \mathscr Q_{\kappa'}(h)=\Pi_{\kappa'}\mathscr R_{\kappa,h}
                                      \mathscr Q_\kappa(h).
\end{equation}
When $0<\kappa'<\kappa$, reconstruction in $L^2_\kappa$ and the
embedding $L^2_\kappa\hookrightarrow L^2_{\kappa'}$ make the first
map continuous in $h$ into $L^2_{\kappa'}$, and it equals the identity
at zero on $\mathcal C$.

\subsection{\texorpdfstring{\L ojasiewicz--Simon}{Lojasiewicz--Simon}
inequalities for Perelman's functionals}

\begin{Theorem}[Global \L ojasiewicz--Simon inequality for $\boldsymbol{\mu}$]\label{thm:finite-global-metric-LS}
    There exist an open neighborhood $U_1\subset E_{14}(M,S^2T^*M)$ of the origin and constants $C>0$, $\vartheta\in(0,1/2)$ such that
    \begin{equation}\label{ineq:finite-global-metric-LS}
        |\boldsymbol{\mu}(g)-\boldsymbol{\mu}(\hat g)|^{1-\vartheta}
        \leq C\left(  \int_M\Big|  \Ric_g+\nabla_g^2f_g-\frac{1}{2}g  \Big|_g^2\mathrm dV_{f_g}  \right)^{1/2}
    \end{equation}
    for all $h\in U_1$.
    Here $g:=\hat{g}+h$ and $f_g$ denotes the unique minimizer of $\mathcal W(g,\cdot)$ given by \cref{thm:existence-and-uniqueness-of-minimizer}.
\end{Theorem}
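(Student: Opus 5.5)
We first prove the inequality on the analytic graph $\mathscr H$ and then transfer it to $E_{14}$ through the finite coordinate slice. For an original $h\in E_{14}$ small, \cref{thm:finite-coordinate-slice} at $k=10$ gives $\tilde h=F_{Z(h)}^*(\hat g+h)-\hat g\in E_9\cap\ker\hat D$ with $\|\tilde h\|_{E_9}\le C\|h\|_{E_{14}}$. By \eqref{eq:original-to-graph}, $\tilde h$ lies in a small ball of $\mathscr H$. Diffeomorphism invariance, with the minimizer pulled back by \cref{thm:existence-and-uniqueness-of-minimizer}, preserves both $\boldsymbol\mu$ and the defect integral by change of variables. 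It therefore suffices to prove \eqref{ineq:finite-global-metric-LS} for $h$ in a small ball of $\mathscr H$.

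On $\mathscr H$ we use the Lyapunov--Schmidt decomposition of \cref{cor:finite-LS-analytic-center-reduction}. Fix $0<\kappa'<\kappa<1$ and write $h=\xi+w$ with $\xi\in\ISD$ and $w\in\mathscr H'$. Set $c=c_\kappa(\xi)=\xi+\eta_\kappa(\xi)$ and $k=w-\eta_\kappa(\xi)\in\mathscr H'$, so that $h=c+k$. The argument then has three steps.

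\emph{Step 1 (normal estimate).} Since $\Pi_{\mathscr F'}\mathscr Q_\kappa(c)=0$, integrating \eqref{ineq:corrected-orthogonal-normal-graph-smallness} along the segment from $c$ to $h$, and using the normal gap of \cref{lem:weakened-gaussian-oscillator-package}, gives
\[
 \|k\|_{H^2_{\boldsymbol\rho,\kappa}}\le C\|\hat Lk\|_{L^2_\kappa}\le C\|\Pi_{\mathscr F'}\mathscr Q_\kappa(h)\|_{L^2_\kappa}\le C\|\mathscr Q_\kappa(h)\|_{L^2_\kappa}.
\]
The last step uses boundedness of $\Pi_{\mathscr F'}$ on $L^2_\kappa$.

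\emph{Step 2 (entropy error).} Expand $\boldsymbol\mu(\hat g+h)-\boldsymbol\mu(\hat g+c)$ by the pairing formula \eqref{eq:finite-LS-variable-pairing}:
\[
 \boldsymbol\mu(\hat g+h)-e_\kappa(\xi)=\int_0^1\mathscr B_{\kappa,c+sk}\bigl(\mathscr Q_\kappa(c+sk),k\bigr)\,\mathrm ds .
\]
Here $\|\mathscr R\|\le2$, and \eqref{ineq:moving-density-dominated-by-weaker-gaussian} lets us dominate $\mathrm dV_{f_g}$ by $\mathrm dV_{\hat f,\kappa'}$ once the potential difference grows sublinearly in $\hat f$. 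This holds because $\mathscr Z\hookrightarrow C^{6,1/4}_1$ gives growth $O(\epsilon\boldsymbol\rho)$. We also have $\|\mathscr Q_\kappa(c+sk)\|_{L^2_\kappa}\le\|\mathscr Q_\kappa(h)\|+C\|\hat Lk\|\le C\|\mathscr Q_\kappa(h)\|_{L^2_\kappa}$. Together these give $|\boldsymbol\mu(g)-e_\kappa(\xi)|\le C\|\mathscr Q_\kappa(h)\|_{L^2_\kappa}^2$.

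\emph{Step 3 (finite-dimensional inequality).} By the classical \L ojasiewicz inequality for the analytic function $e_\kappa$ on $\ISD$, $|e_\kappa(\xi)-e_\kappa(0)|^{1-\vartheta_0}\le C|\nabla e_\kappa(\xi)|$. We differentiate, $\mathrm De_\kappa(\xi)[\zeta]=\mathscr B_{\kappa,c}(\mathscr Q_\kappa(c),\zeta+\mathrm D\eta_\kappa\zeta)$, and note that $\mathscr Q_\kappa(c)\in\mathcal C$. This yields $|\nabla e_\kappa(\xi)|\le C\|\mathscr Q_\kappa(c)\|_{L^2_\kappa}\le C\|\mathscr Q_\kappa(h)\|_{L^2_\kappa}$, using Step 1 and the Lipschitz bound for $\mathscr Q_\kappa$ into $L^2_\kappa$. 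Combining Steps 2 and 3, and shrinking $\vartheta\le\vartheta_0$ with $\vartheta<1/2$ so that the quadratic error is absorbed, we obtain
\[
 |\boldsymbol\mu(g)-\boldsymbol\mu(\hat g)|^{1-\vartheta}\le C\|\mathscr Q_\kappa(h)\|_{L^2_\kappa}.
\]

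The main obstacle is converting the fixed-weight norm $\|\Pi_\kappa\Phi(g)\|_{L^2_\kappa}$ into the physical quantity $\|\Phi(g)\|_{L^2(\mathrm dV_{f_g})}$ on the right-hand side of \eqref{ineq:finite-global-metric-LS}. The measure $\mathrm dV_{\hat f,\kappa}$ has much heavier tails than $\mathrm dV_{f_g}$, so the comparison goes the wrong way. We would first apply the contraction $\Pi_\kappa$ to get $\|\mathscr Q_\kappa\|_{L^2_\kappa}\le\|\Phi(g)\|_{L^2_\kappa}$. We then interpolate, using the pointwise weighted bound $|\Phi(g)|\le C\epsilon\boldsymbol\rho^{-1/2}$ from \eqref{eq:finite-low-coefficients}, which is in fact available at high regularity in $C^{6,1/4}_{-1/2}$. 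Split $M$ at the level $\hat f=\Lambda$. On $\{\hat f\le\Lambda\}$ the two densities are comparable up to $e^{C\Lambda}$. On the tail the contribution is at most $C\epsilon^2e^{-(1-\kappa)\Lambda}$.

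Choosing $\Lambda\sim\theta\log(1/\|\Phi\|_{L^2_{f_g}})$ yields
\[
 \|\Phi\|_{L^2_\kappa}\le C\|\Phi\|_{L^2_{f_g}}^{1-\sigma}
\]
for any small $\sigma>0$, at the cost of a further decrease of $\vartheta$. Heavier tails for $\Phi$ may require interpolating with higher derivative bounds, or working with $\kappa$ close to $0$. The interpolation exponent must be tracked so that the final $\vartheta$ stays in $(0,1/2)$.
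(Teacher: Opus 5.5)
Your Steps 1--3 are sound, and with a single weight they follow the paper's reduction. They give $|\boldsymbol\mu(g)-\boldsymbol\mu(\hat g)|^{1-\theta}\le C\|\mathscr Q_\kappa(h)\|_{L^2_\kappa}$, where $\theta=\theta_\kappa$ is the \L ojasiewicz exponent of $e_\kappa$.

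\textbf{The gap is in the final conversion to the physical norm.} Write $D=\|\Phi(g)\|_{L^2(\mathrm dV_{f_g})}$. At fixed $\kappa$, the claim $\|\Phi\|_{L^2_\kappa}\le CD^{1-\sigma}$ for arbitrarily small $\sigma$ is false.
\begin{itemize}
\item The density ratio $\mathrm dV_{\hat f,\kappa}/\mathrm dV_{f_g}$ is $e^{\kappa\hat f+O(\boldsymbol\rho)}$.
\item In your split, the inner piece is at most $e^{\kappa\Lambda+o(\Lambda)}D^2$ and the tail is about $e^{-(1-\kappa)\Lambda}$. Balancing forces $\Lambda\approx2\log(1/D)$ and gives only $\|\Phi\|_{L^2_\kappa}\lesssim D^{1-\kappa-o(1)}$.
\item This is sharp for size information alone. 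Take a defect of size $\boldsymbol\rho^{-1/2}$ concentrated on the shell $\{\Lambda\le\hat f\le\Lambda+1\}$. Neither the pointwise decay nor higher-derivative bounds can improve the exponent.
\item Equivalently, H\"older gives $\|\Phi\|_{L^2_\kappa}\le C_aD^a$ only for $a<1-\kappa$.
\end{itemize}
To close the argument you would need $1-\theta<a<1-\kappa$, that is, $\kappa<\theta_\kappa$. But $\theta_\kappa$ comes from the $\kappa$-dependent reduced function $e_\kappa$, so you cannot just choose $\kappa$ small first; the requirement is circular. Nor can you take $\kappa=0$: dominating $\mathrm dV_{f_g}$ by a fixed Gaussian needs $\kappa>0$, because $f_g-\hat f$ grows linearly. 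Your $\kappa'$ does not help, since you fix it before $\theta$ is known and use it only for density domination.

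\textbf{The missing idea is to decouple the two roles of the weight.}
\begin{itemize}
\item Keep the center graph $c_\kappa$, and hence $\theta$, fixed. Then choose $0<\kappa'<\min\{\kappa,\theta\}$ and redo the normal and entropy-error estimates in $L^2_{\kappa'}$.
\item Your Step 1 does not transfer directly. The center equation holds only for $\kappa$, so $\Pi_{\mathscr F'}\mathscr Q_{\kappa'}(c_\kappa)$ need not vanish.
\item The paper instead writes $\mathscr Q_{\kappa'}(c_\kappa)=J_{\kappa',\kappa,c_\kappa}\mathscr Q_\kappa(c_\kappa)$, where $J=\Pi_{\kappa'}\mathscr R_{\kappa,c_\kappa}$ is close to the identity on $\mathcal C$.
\item It then perturbs the background block estimate $|q|+\|v\|_{L^2_{\kappa'}}+\|\hat Lv\|_{L^2_{\kappa'}}\le C\|q+\hat Lv\|_{L^2_{\kappa'}}$, for $q\in\mathcal C$ and $v\in\mathscr H'$. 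This gives $|\mathscr Q_\kappa(c_\kappa)|+\|w\|_{L^2_{\kappa'}}+\|\hat Lw\|_{L^2_{\kappa'}}\le C\|\mathscr Q_{\kappa'}(h)\|_{L^2_{\kappa'}}$.
\item The quadratic entropy error and the center inequality then yield $|\boldsymbol\mu(g)-\boldsymbol\mu(\hat g)|^{1-\theta}\le C\|\mathscr Q_{\kappa'}(h)\|_{L^2_{\kappa'}}\le C\|\Phi(g)\|_{L^2_{\kappa'}}$.
\item H\"older with $a\in(1-\theta,1-\kappa')$ finishes the proof, with $\vartheta=1-(1-\theta)/a\in(0,\theta)$.
\end{itemize}
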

\begin{proof}
\emph{Step 1.}
By \cref{thm:finite-coordinate-slice}, a small $E_{14}$ input has
sliced output in $E_9$. By \eqref{eq:original-to-graph}, the sliced
output is small in $\mathscr H$. Diffeomorphism invariance therefore
reduces the proof to $h=g-\hat g\in\mathscr H$; the same proof gives
the metric inequality on a small graph neighborhood itself.  Fix $0<\kappa<1$ and use
\cref{cor:finite-LS-analytic-center-reduction} to construct $\eta_\kappa$ and
$e_\kappa$.  Write $h=\xi+\eta$ with $\xi\in\ISD$ and
$\eta\in\mathscr H'$, and set
\begin{equation}\label{eq:finite-LS-center-normal-decomposition}
 c_\kappa=c_\kappa(\xi):=\xi+\eta_\kappa(\xi),\qquad
 w:=\eta-\eta_\kappa(\xi)\in\mathscr H',\qquad h=c_\kappa+w.
\end{equation}
The finite-dimensional \L ojasiewicz inequality for the real-analytic
function $e_\kappa$ gives
\begin{equation}\label{ineq:finite-LS-center-Lojasiewicz}
 |\boldsymbol{\mu}(\hat g+c_\kappa)-\boldsymbol{\mu}(\hat g)|^{1-\theta}\leq C|\mathrm D e_\kappa(\xi)|
\end{equation}
for some $\theta=\theta_\kappa\in(0,\frac12]$.

Recall from \cref{eq:finite-LS-fixed-splittings} that
$\mathcal C=\operatorname{span}\{\zeta_1,\ldots,\zeta_d\}$ is the fixed
finite-dimensional complement in $\mathscr F=\mathscr F'\oplus\mathcal C$.
The center equation gives $\mathscr Q_\kappa(c_\kappa)\in\mathcal C$.
Recall the pairing from \cref{eq:finite-LS-variable-pairing}: with
$g_{c_\kappa}=\hat g+c_\kappa(\xi)$,
\[
 \mathscr B_{\kappa,c_\kappa(\xi)}(q,\ell)
 =\int_M\langle\mathscr R_{\kappa,c_\kappa(\xi)}q,\ell\rangle_{g_{c_\kappa}}
                   \,\mathrm dV_{f_{g_{c_\kappa}}}.
\]
Define
\begin{equation*}
 F_\xi:\mathcal C\longrightarrow\ISD^*,\qquad
 (F_\xi q)[a]:=\mathscr B_{\kappa,c_\kappa(\xi)}(q,\mathrm D c_\kappa(\xi)[a]).
\end{equation*}
The first-variation pairing \cref{eq:finite-LS-variable-pairing} implies
$\mathrm D e_\kappa(\xi)=F_\xi\mathscr Q_\kappa(c_\kappa)$.
Since $\mathrm D c_\kappa(0)[a]=a$ and the pairing at zero is the
$L^2_0$ pairing, the matrix of $F_0$ is
$M_{ij}=\langle\zeta_i,\eta_j\rangle_{L^2_0}$.
It is invertible by the choice of $\mathcal C$.
Continuity in these finite-dimensional spaces gives uniformly bounded
$F_\xi$ and $F_\xi^{-1}$ near zero, and hence
\begin{equation}\label{ineq:finite-LS-center-pairing}
 C^{-1}|\mathscr Q_\kappa(c_\kappa)|\leq |\mathrm D e_\kappa(\xi)|
                          \leq C|\mathscr Q_\kappa(c_\kappa)|.
\end{equation}
Here and below $|\cdot|$ denotes fixed norms on the indicated
finite-dimensional spaces.

\emph{Step 2.}
The exponent $\theta=\theta_\kappa$ is obtained after fixing $\kappa$
and may depend on it. Choose
\begin{equation}\label{eq:finite-LS-physical-delta-choice}
 0<\kappa'<\min\{\kappa,\theta\},
\end{equation}
keeping the center graph $c_\kappa$ fixed. This allows
$1-\theta<a<1-\kappa'$ in Step 5. The transition identity
\cref{eq:finite-LS-two-delta-target-isomorphism} gives
$\mathscr Q_{\kappa'}(c_\kappa)
 =J_{\kappa',\kappa,c_\kappa}\mathscr Q_\kappa(c_\kappa)$.
All constants below may depend on the two fixed parameters.

\emph{Step 3.}
Shrink the neighborhood so that $h_t:=c_\kappa+tw$ stays in the neighborhoods
of the preceding estimates for $0\leq t\leq1$, and set $g_t:=\hat g+h_t$.  The fundamental
theorem of calculus gives
\begin{equation}\label{eq:difference-of-quasi-gradient}
 \mathscr Q_{\kappa'}(h)
 =\mathscr Q_{\kappa'}(c_\kappa)
  +\int_0^1 \mathrm D\mathscr Q_{\kappa'}(c_\kappa+tw)[w]\,\mathrm{d}t.
\end{equation}
Set
\begin{equation*}
 B_{c_\kappa,w}:=\int_0^1\mathrm D\mathscr Q_{\kappa'}(c_\kappa+tw)|_{\mathscr H'}\,\mathrm{d}t,
 \qquad
 \mathbb L_{c_\kappa,w}(q,v):=J_{\kappa',\kappa,c_\kappa}q+B_{c_\kappa,w}v.
\end{equation*}
Thus $\mathbb L_{c_\kappa,w}:\mathcal C\times\mathscr H'\to L^2_{\kappa'}\cap\ker\hat D$ and
$\mathscr Q_{\kappa'}(h)=\mathbb L_{c_\kappa,w}(\mathscr Q_\kappa(c_\kappa),w)$.

Boundedness of the
fixed projections on $L^2_{\kappa'}$, as established in
\eqref{eq:finite-LS-fixed-splittings}, and the background normal gap
\cref{ineq:weakened-gaussian-normal-gap} imply
\begin{equation*}
 |q|+\|v\|_{L^2_{\kappa'}}+\|\hat Lv\|_{L^2_{\kappa'}}
 \leq C_0\|q+\hat Lv\|_{L^2_{\kappa'}}
 \quad(q\in\mathcal C,\ v\in\mathscr H').
\end{equation*}
Indeed, $\Pi_{\mathcal C}(q+\hat Lv)=q$ and
$\Pi_{\mathscr F'}(q+\hat Lv)=\hat Lv$, and the normal gap controls $v$.
At $c_\kappa=0$, $J_{\kappa',\kappa,0}q=\Pi_{\kappa'}q=q$ for
$q\in\mathcal C\subset\mathscr F$.
Operator-norm continuity of the Gaussian reconstruction in $c_\kappa$ and finite-dimensionality of $\mathcal C$
therefore give, after shrinking the neighborhood,
\begin{equation*}
 \|(J_{\kappa',\kappa,c_\kappa}-I)q\|_{L^2_{\kappa'}}\leq\epsilon|q|.
\end{equation*}
Also, \cref{ineq:corrected-orthogonal-normal-graph-smallness} gives
\begin{equation*}
 \|(B_{c_\kappa,w}-\hat L)v\|_{L^2_{\kappa'}}
 \leq\epsilon\|\hat Lv\|_{L^2_{\kappa'}}.
\end{equation*}
Choose $C_0\epsilon\leq\frac12$ and absorb these errors in the
background estimate.  Applying the result to
$(q,v)=(\mathscr Q_\kappa(c_\kappa),w)$ yields
\begin{equation}\label{ineq:finite-LS-block-inverse}
 |\mathscr Q_\kappa(c_\kappa)|+\|w\|_{L^2_{\kappa'}}
       +\|\hat Lw\|_{L^2_{\kappa'}}
 \leq C\|\mathscr Q_{\kappa'}(h)\|_{L^2_{\kappa'}}.
\end{equation}

\emph{Step 4.}
Put $E:=\|\mathscr Q_{\kappa'}(h)\|_{L^2_{\kappa'}}$.
The transition map restricted to $\mathcal C$ and
\cref{ineq:finite-LS-block-inverse} give
\begin{equation*}
 \|\mathscr Q_{\kappa'}(c_\kappa)\|_{L^2_{\kappa'}}
 \leq C|\mathscr Q_\kappa(c_\kappa)|\leq CE.
\end{equation*}
The forward derivative estimate
\cref{ineq:corrected-orthogonal-normal-graph-smallness} also gives
\begin{equation*}
 \|\mathrm D\mathscr Q_{\kappa'}(h_t)[w]\|_{L^2_{\kappa'}}
 \leq C\|\hat Lw\|_{L^2_{\kappa'}}\leq CE.
\end{equation*}
Integrating from $0$ to $t$, and then using the uniformly bounded
reconstruction in \cref{prop:corrected-orthogonal-fixed-target-normal-estimate},
we obtain
\begin{equation*}
 \sup_{0\leq t\leq1}
 \left(\|\mathscr Q_{\kappa'}(h_t)\|_{L^2_{\kappa'}}
                     +\|\Phi(g_t)\|_{L^2_{\kappa'}}\right)\leq CE.
\end{equation*}

The metrics $g_t$ are uniformly equivalent to
$\hat g$, and the normalized-potential estimates give
$|f_{g_t}-\hat f|\leq C\boldsymbol{\rho}$ uniformly in $t$.
Since $\hat f=\frac14\boldsymbol{\rho}^2+\boldsymbol{\mu}(\hat g)-1$, the exact density ratio satisfies
\begin{equation*}
 \frac{\mathrm dV_{f_{g_t}}}{\mathrm dV_{\hat f,\kappa'}}
 =Z_{\kappa'}e^{-\kappa'\hat f-(f_{g_t}-\hat f)}
                  \frac{\mathrm dV_{g_t}}{\mathrm dV_{\hat g}}
 \leq C.
\end{equation*}
Thus the first variation and Cauchy's inequality imply
\[
\left|\frac{\mathrm{d}}{\mathrm{d}t}\boldsymbol{\mu}(g_t)\right|
 =\left|\int_M\langle \Phi(g_t),w\rangle_{g_t}\,\mathrm dV_{f_{g_t}}\right| 
 \leq C\|\Phi(g_t)\|_{L^2_{\kappa'}}
           \|w\|_{L^2_{\kappa'}}\leq CE^2.
\]
Integrating over $0\leq t\leq1$ proves
\begin{equation}\label{ineq:finite-LS-normal-energy-quadratic}
 |\boldsymbol{\mu}(\hat g+h)-\boldsymbol{\mu}(\hat g+c_\kappa)|
 \leq C\|\mathscr Q_{\kappa'}(h)\|_{L^2_{\kappa'}}^2.
\end{equation}
Combining \cref{ineq:finite-LS-center-Lojasiewicz},
\cref{ineq:finite-LS-center-pairing}, and
\cref{ineq:finite-LS-block-inverse} gives
$|\boldsymbol{\mu}(\hat g+c_\kappa)-\boldsymbol{\mu}(\hat g)|^{1-\theta}\leq CE$.
Since $0<1-\theta<1$, subadditivity of this power yields
\begin{equation*}
 |\boldsymbol{\mu}(\hat g+h)-\boldsymbol{\mu}(\hat g)|^{1-\theta}
 \leq C\bigl(E+E^{2(1-\theta)}\bigr).
\end{equation*}
Here $2(1-\theta)\geq1$, so the second term is at most $CE$ after
shrinking the neighborhood so that $E\leq1$.  Therefore
\begin{equation}\label{ineq:finite-LS-weakened-Gaussian}
 |\boldsymbol{\mu}(\hat g+h)-\boldsymbol{\mu}(\hat g)|^{1-\theta}
 \leq C\|\mathscr Q_{\kappa'}(h)\|_{L^2_{\kappa'}}.
\end{equation}

\emph{Step 5.}
Choose $a\in(1-\theta,1-\kappa')$, which is possible by
\cref{eq:finite-LS-physical-delta-choice}. Put $U=|\Phi(g)|_{\hat g}$ and
$\omega_h=\mathrm dV_{\hat f,\kappa'}/\mathrm dV_{f_g}$.  Then
\begin{equation*}
 \int_M U^2\,\mathrm dV_{\hat f,\kappa'}
 \leq\left(\int_M U^2\,\mathrm dV_{f_g}\right)^a
      \left(\int_M U^2\omega_h^{1/(1-a)}\,\mathrm dV_{f_g}\right)^{1-a}.
\end{equation*}
Writing $j_h=\mathrm dV_g/\mathrm dV_{\hat g}$, the measure in the second factor is
\begin{equation*}
 \omega_h^{1/(1-a)}\mathrm dV_{f_g}
 =C_{a,\kappa'}j_h^{-a/(1-a)}
   \exp\left(-\frac{1-\kappa'-a}{1-a}\hat f
             +\frac{a}{1-a}(f_g-\hat f)\right)\mathrm dV_{\hat g}.
\end{equation*}
Here $1-\kappa'-a>0$, $f_g-\hat f=O(\boldsymbol{\rho})$, and $j_h$ is uniformly
bounded above and below.  The uniform polynomial bound on $\Phi(g)$
therefore makes the second factor uniformly finite.
The metric equivalence and the orthogonal contraction of $\Pi_{\kappa'}$
now give
\begin{equation}\label{ineq:finite-LS-physical-power-comparison}
 \|\mathscr Q_{\kappa'}(h)\|_{L^2_{\kappa'}}
 \leq\|\Phi(g)\|_{L^2_{\kappa'}}
 \leq C_a\|\Phi(g)\|_{L^2(\mathrm dV_{f_g})}^a.
\end{equation}
Combining this with \cref{ineq:finite-LS-weakened-Gaussian} and taking
the power $1/a$ proves \cref{ineq:finite-global-metric-LS} with
\begin{equation*}
 \vartheta:=1-\frac{1-\theta}{a}.
\end{equation*}
The inequalities $1-\theta<a<1$ give
$0<\vartheta<\theta\leq\frac12$, as asserted.
\end{proof}

The same inequality and the following pair estimates hold for finite
regularity at the stated orders; all derivatives and integrations below
are classical at these orders and are justified by Gaussian cutoffs.
Here $f$ denotes an arbitrary normalized potential for $g$, whereas
$f_g$ is the entropy minimizer for $g$.

\begin{Theorem}[Global \L ojasiewicz--Simon inequality for $\mathcal{W}$]\label{thm:global-pair-LS}
    Let $(M^n,\hat{g},\hat{f})$ be a complete nontrivial AC shrinker.
    There exist $\epsilon,C>0$ and $\vartheta\in(0,\frac12]$ such
    that every Riemannian metric $g$ and potential $f$ satisfying
    \begin{equation}\label{eq:finite-pair-hypotheses}
     \begin{gathered}
      g-\hat g\in E_{14},\qquad f-\hat f\in C^{3,1/4}_1,\qquad
      \int_M\mathrm dV_f=1,\\
      \|g-\hat g\|_{E_{14}}+\|f-\hat f\|_{C^{3,1/4}_1}<\epsilon
     \end{gathered}
    \end{equation}
    obey
    \begin{equation}\label{ineq:global-pair-LS}
        |\mathcal{W}(g,f)-\mathcal{W}(\hat{g},\hat{f})|^{1-\vartheta}
        \leq C\left(\int_M  \Big|\Ric_g+\nabla_g^2f-\frac{1}{2}g\Big|_g^2  \,\mathrm dV_f  \right)^{1/2}.
    \end{equation}
\end{Theorem}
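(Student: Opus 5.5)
We deduce the pair inequality from the metric inequality \cref{thm:finite-global-metric-LS} by splitting
\[
 \mathcal W(g,f)-\mathcal W(\hat g,\hat f)
 =\bigl(\mathcal W(g,f)-\boldsymbol\mu(g)\bigr)
  +\bigl(\boldsymbol\mu(g)-\boldsymbol\mu(\hat g)\bigr),
\]
using $\mathcal W(\hat g,\hat f)=\boldsymbol\mu(\hat g)$. Write $S(g,f)^2=\int_M|\Phi(g,f)|_g^2\,\mathrm dV_f$ for the right-hand side. The first term is nonnegative, since $f_g$ minimizes. We need two estimates. The first is $\mathcal W(g,f)-\boldsymbol\mu(g)\le C S(g,f)^2$. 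The second is $S(g,f_g)\le C S(g,f)$, possibly up to a power that can be absorbed. Together with \cref{thm:finite-global-metric-LS} and subadditivity of $x\mapsto x^{1-\vartheta}$, these give the claim. The exponent is $\vartheta$ from the metric theorem, and the quadratic term is harmless because $2(1-\vartheta)\ge1$ and $S$ is small.

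\emph{Reduction to the slice.} By \cref{thm:finite-coordinate-slice} at $k=10$ and \cref{lem:finite-potential-pullback}, pulling back by $F_{Z(h)}$ changes neither $\mathcal W$, $S$, nor normalization. It keeps $f-\hat f$ small in $C^{3,1/4}_1$ and puts $h$ small in $\mathscr H$. By \cref{thm:existence-and-uniqueness-of-minimizer}, $f_g-\hat f$ is then small in $C^{3,1/4}_1$. Write $f=f_g+w$, so that $w$ is small in $C^{3,1/4}_1$, $|w|\le\epsilon\boldsymbol\rho$, and $\int e^{-w}\mathrm dV_{f_g}=1$.

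\emph{Entropy gap along the potential direction.} Perelman's computation with $g$ fixed gives
\[
 \mathcal W(g,f)=\int_M\bigl(2\Delta f-|\nabla f|^2+\mathrm R+f-n\bigr)\,\mathrm dV_f
\]
after Gaussian-cutoff integration by parts. Also $\tr_g\Phi(g,f)=\tfrac n2-\mathrm R-\Delta f$. Subtracting the Euler--Lagrange equation for $f_g$ expresses $\mathcal W(g,f)-\boldsymbol\mu(g)$ through $w$ and $\tr\Phi(g,f)$. The plan is to linearize at $f_g$: the second variation of $\mathcal W(g,\cdot)$ on normalized directions is $\langle -(2\Delta_{f_g}+1)w,w\rangle_{f_g}$ modulo constants. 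The scalar response \cref{ineq:background-scalar-response} in $L^2_{\kappa}$, together with the density comparison \eqref{ineq:moving-density-dominated-by-weaker-gaussian}, makes this operator invertible with a uniform gap. This gives
\[
 \mathcal W(g,f)-\boldsymbol\mu(g)\le C\|(2\Delta_{f_g}+1)w\|^2
\]
in a weakened Gaussian norm. Next, $\nabla^2_g w=\Phi(g,f_g)-\Phi(g,f)$, so $(2\Delta_{f}+1)w$, after reorganizing the quadratic term $|\nabla w|^2$, is controlled by $\tr\Phi(g,f)-\tr\Phi(g,f_g)$ plus a constant. Hence $\|(2\Delta_{f_g}+1)w\|\lesssim S(g,f)+S(g,f_g)$, up to interpolation between $\mathrm dV_f$ and the weakened weight, performed as in Step~5 of the metric theorem at the cost of a power $a<1$ close to $1$.

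\emph{Comparing defects.} For $S(g,f_g)\lesssim S(g,f)$ we use $D_g\Phi(g,f_g)=0$, the weighted Bianchi identity, and the analogous identity for an arbitrary pair: $\delta_f\Phi(g,f)=\tfrac12\mathrm d(\mathrm R+2\Delta f-|\nabla f|^2+f)$, modulo sign conventions. Thus $\mathrm dW_f$, where $W_f$ is the Euler--Lagrange expression for $f$, is a divergence of the defect, and the same applies to $\mathrm d$ of the difference $W_f-W_{f_g}$, which is $(2\Delta+1)w$ plus quadratic terms. The one-form gap used in the proof of \cref{thm:asymptotic-estimates-on-minimizer} then bounds $w$ modulo constants by $S(g,f)$ in the weighted norms. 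Finally $\Phi(g,f_g)=\Phi(g,f)+\nabla^2 w$ gives the second estimate, again after interpolation of measures, so the final exponent is slightly worse, $\vartheta'=1-(1-\vartheta)/a\in(0,\tfrac12]$.

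\emph{Main obstacle.} The hard step is this last one, the uniform elliptic control of $w$ by the defect when $f$ is an arbitrary normalized potential. The measures $\mathrm dV_f$ and $\mathrm dV_{f_g}$ differ by $e^{-w}$ with $w$ of linear growth. The gradient of $\Phi$ is not available in $L^2$, since only $\Phi$ appears on the right-hand side. To handle this, we would pair the divergence identity weakly against test one-forms, as in the form-dual estimate \eqref{ineq:weakened-Gaussian-moving-divergence-dual-bound}, so that only $\|\Phi\|_{L^2}$ enters. We would then invert the scalar operator in the weakened space $L^2_\kappa$ and absorb the quadratic terms using the smallness of $\|w\|_{C^{3,1/4}_1}$.
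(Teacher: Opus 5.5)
Your outer architecture matches the paper's. You split at $\boldsymbol\mu(g)$, pull back to the slice, apply \cref{thm:finite-global-metric-LS}, and pay a power to change measure. The gaps are in the two auxiliary estimates.

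\emph{The entropy-gap step rests on a false claim.} Tracing $\nabla_g^2w=\Phi(g,f_g)-\Phi(g,f)$ controls only $\Delta_gw$. It says nothing about the drift and zeroth-order parts of $(2\Delta_{f_g}+1)w$. The exact relation is $(2\Delta_{f_g}+1)w-|\nabla w|^2=W_f-\boldsymbol\mu(g)$, with $W_f=2\Delta f-|\nabla f|^2+\mathrm R_g+f-n$. The defect controls $W_f$ only through $\mathrm dW_f=-2\delta_f\Phi(g,f)$, that is, weakly and modulo a constant. That constant is the $\mathrm dV_f$-mean of $W_f-\boldsymbol\mu(g)$, which equals $\mathcal W(g,f)-\boldsymbol\mu(g)$, the very quantity being estimated. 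So $\|(2\Delta_{f_g}+1)w\|\lesssim S(g,f)+S(g,f_g)$ does not follow as written.

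The second-variation detour is also unnecessary. With $u=f-f_g$, the Euler--Lagrange equation and $\int\Delta_ff_g\,\mathrm dV_f=0$ give exactly $\mathcal W(g,f)-\boldsymbol\mu(g)=\int(|\nabla u|^2+u)\,\mathrm dV_f$. Jensen gives $\int u\,\mathrm dV_f=-\int e^{-u}\log e^{-u}\,\mathrm dV_{f_g}\le0$. Hence the gap is at most $\int|\nabla u|^2\,\mathrm dV_f$, with no Taylor remainder. H\"older and the one-form spectral gap (a Hessian Poincar\'e inequality in $L^2(\mathrm dV_{f_g})$) then bound this by $C_a\|\nabla_g^2u\|^{2a}_{L^2(\mathrm dV_{f_g})}$.

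\emph{The defect comparison, which you call the main obstacle, is left as a plan.} The idea that dissolves it is one you already cite. Integrating $D_g\Phi(g,f_g)=0$ against $\nabla_g^2u$ gives $\int\langle\Phi(g),\nabla_g^2u\rangle\,\mathrm dV_{f_g}=0$. So $\Phi(g,f)=\Phi(g)-\nabla_g^2u$ is an orthogonal decomposition:
\[
 \|\Phi(g,f)\|^2_{L^2(\mathrm dV_{f_g})}
 =\|\Phi(g)\|^2_{L^2(\mathrm dV_{f_g})}
 +\|\nabla_g^2u\|^2_{L^2(\mathrm dV_{f_g})}.
\]
This bounds $\Phi(g)$ and $\nabla_g^2u$ at once, with constant one, without elliptic inversion or weakened weights. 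One H\"older step, using the linear growth of $u$, gives $\|\Phi(g,f)\|_{L^2(\mathrm dV_{f_g})}\le C_bE^b$ with $E=S(g,f)$. Let $\vartheta_0$ be the metric exponent, and choose $1-\vartheta_0<b<1$ and $1/(2b)<a<1$. Then $|\boldsymbol\mu(g)-\boldsymbol\mu(\hat g)|\le CE^{b/(1-\vartheta_0)}$ and $0\le\mathcal W(g,f)-\boldsymbol\mu(g)\le CE^{2ab}$. Both exponents exceed one, which closes the argument.

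Your divergence idea can also be completed, more simply than by scalar inversion in $L^2_\kappa$. For $\beta=\mathrm du$, differentiating $W_f-\boldsymbol\mu(g)=(2\Delta_{f_g}+1)u-|\nabla u|^2$ gives $\Delta_f\beta+\Phi(g)(\nabla u,\cdot)=-\delta_f\Phi(g,f)$. Test this against $\beta$ itself in $\mathrm dV_f$ and use the one-form gap for that measure. This yields $\|\nabla_g^2u\|_{L^2(\mathrm dV_f)}\le C\,S(g,f)$. Together with the Jensen bound and one change of measure, that supplies both of your estimates.
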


The following two lemmas also hold for a small $\mathscr H$ metric
and a normalized potential with $\|f-\hat f\|_{C^{3,1/4}_1}$ small.
Together with the metric inequality they prove the theorem.

\begin{Lemma}\label{lem:nonlinear-Hessian-Poincare}
    There exist $\epsilon,C>0$ such that every pair satisfying
    \eqref{eq:finite-pair-hypotheses} obeys
    \begin{equation}\label{ineq:nonlinear-Hessian-Poincare}
        \|\nabla_g(f-f_g)\|_{L^2(\mathrm dV_{f_g})}
        \leq C\|\nabla_g^2(f-f_g)\|_{L^2(\mathrm dV_{f_g})}.
    \end{equation}
\end{Lemma}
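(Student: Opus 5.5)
The estimate is a weighted Poincaré inequality for the gradient one-form $\xi=\mathrm d(f-f_g)$, measured against $\mathrm dV_{f_g}$. The plan is to reduce it to the connection-energy gap for one-forms already established in the proof of \cref{thm:asymptotic-estimates-on-minimizer}. There, nonsplitting of the nontrivial AC shrinker gives
\[
 c_*\|\xi\|_{L^2_{\hat f}}^2\le\|\hat\nabla\xi\|_{L^2_{\hat f}}^2
\]
on the background. This gap was transferred to $(g_i,\mathrm dV_{f_i})$ through the isometry $T_i$, using only two inputs: $h_i\to0$ in $\mathscr H$, and $\|\mathrm d(f_i-\hat f)\|_{C^0}\to0$. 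Here we need the same transfer with $f_i$ replaced by the minimizer $f_g$. Since $\nabla_g\xi=\nabla_g^2(f-f_g)$, the gap
\[
 \tfrac{c_*}{4}\|\xi\|_{L^2(\mathrm dV_{f_g})}^2\le\|\nabla_g\xi\|_{L^2(\mathrm dV_{f_g})}^2
\]
is exactly \eqref{ineq:nonlinear-Hessian-Poincare} with $C=2c_*^{-1/2}$.

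The first step is to check the hypotheses of the transfer uniformly on a small neighborhood. By \cref{thm:finite-coordinate-slice} and diffeomorphism invariance, both sides of \eqref{ineq:nonlinear-Hessian-Poincare} are unchanged if we pull back $g$, $f$ and $f_g$ simultaneously by $F_{Z(h)}$. The pulled-back potential is still small in $C^{3,1/4}_1$ by \cref{lem:finite-potential-pullback}, so we may assume $h\in\mathscr H$ is small. Then \cref{thm:existence-and-uniqueness-of-minimizer} gives $f_g-\hat f\in\mathscr Z$ with norm $\le C\|h\|_{\mathscr H}$. The gradient bound in \eqref{eq:finite-low-coefficients} (the drift estimate $\|\nabla^g(\hat f+u)-\mathbf X\|_{C^0}\le C\epsilon$) then makes $\|\mathrm d(f_g-\hat f)\|_{C^0}$ small, and the density ratio in $T$ has logarithmic derivative $-\tfrac12\mathrm d(f_g-\hat f)$ plus first derivatives of $h$, which are small in $C^0$. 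Hence
\[
 \|\hat\nabla(T\xi)\|_{L^2_{\hat f}}\le(1+o(1))\|\nabla^g\xi\|_{L^2_{f_g}}+o(1)\|\xi\|_{L^2_{f_g}}
\]
for compactly supported $\xi$, with $o(1)\to0$ as $\epsilon\to0$. Absorbing the last term into the background gap yields the transferred gap for compactly supported forms.

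The remaining step, which I expect to be the main obstacle, is extending the gap by density to $\xi=\mathrm d(f-f_g)$ itself. The potential $f$ is only $C^{3,1/4}_1$, so $|\xi|\le C$ and $|\nabla_g\xi|\le C\boldsymbol\rho^{-1}$, while $\mathrm dV_{f_g}\le Ce^{-c\boldsymbol\rho^2}\mathrm dV_{\hat g}$. Hence $\xi\in H^1_{f_g}$. Applying the gap to $\chi_L\xi$, with $\chi_L=\chi(\boldsymbol\rho/L)$, produces a cutoff error $\|\xi\,\mathrm d\chi_L\|_{L^2(\mathrm dV_{f_g})}$; this is at most $CL^{-1}$ times a Gaussian tail and tends to zero. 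The terms $\|\chi_L\xi\|$ and $\|\chi_L\nabla_g\xi\|$ converge by monotone convergence, so the inequality passes to the limit.

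One subtlety must be checked: the background gap is stated for $\xi\in H^1_{\hat f}$, and $T\xi$ lies in $H^1_{\hat f}$ only after the cutoff. This is why the argument is carried out on $\chi_L\xi$ and only then passed to the limit. No orthogonality condition is needed, because the kernel of $-\Delta_{\hat f}$ on one-forms consists of parallel forms and is trivial by nonsplitting.
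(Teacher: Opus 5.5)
Your proposal is correct and follows essentially the same route as the paper: you take the background one-form gap coming from nonsplitting, transfer it to $(g,\mathrm dV_{f_g})$ through the isometry $T_g$ using smallness of $h$ in $C^1$ and of $\mathrm d(f_g-\hat f)$ in $C^0$, and then extend it by density to $\mathrm d(f-f_g)$. The only cosmetic difference is that you first pass to the slice, whereas the paper's proof obtains the needed smallness directly from \cref{thm:existence-and-uniqueness-of-minimizer}, which already covers the original $E_{14}$ neighborhood.
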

\begin{proof}
By \cite[Proposition~3.3(i)]{LYZ-I}, the weighted rough Laplacian
$-\hat\Delta_{\hat f}$ on one-forms has discrete spectrum and its
kernel consists of parallel forms. The nontrivial AC shrinker has no
nonzero parallel one-forms, as used in the proof of
\cref{thm:asymptotic-estimates-on-minimizer}. Hence, for some $c_*>0$,
\[
 c_*\|\omega\|_{L^2_{\hat f}}^2
 \leq\|\hat\nabla\omega\|_{L^2_{\hat f}}^2,
 \qquad \omega\in H^1_{\hat f}(M,T^*M).
\]
The same proof gives persistence of this gap for nearby metrics and
weighted measures. Indeed, let $J_g:(T^*M,g)\to(T^*M,\hat g)$ be the
positive fiberwise isometry and set
\[
 T_g\omega=e^{-(f_g-\hat f)/2}
       \left(\frac{\mathrm dV_g}{\mathrm dV_{\hat g}}\right)^{1/2}J_g\omega.
\]
This is an isometry of the two $L^2$ spaces. By
\cref{thm:existence-and-uniqueness-of-minimizer}, both
$\|h\|_{C^1}$ and $\|\mathrm d(f_g-\hat f)\|_{C^0}$ are small.
Differentiating $T_g$ and comparing the connections therefore gives
\[
 \|\hat\nabla(T_g\omega)\|_{L^2_{\hat f}}
 \leq(1+\delta)\|\nabla^g\omega\|_{L^2(\mathrm dV_{f_g})}
       +\delta\|\omega\|_{L^2(\mathrm dV_{f_g})},
 \qquad \delta\longrightarrow0\quad\text{as }h\longrightarrow0.
\]
Apply the background gap and absorb the last term. After shrinking
the neighborhood, this yields
\begin{equation}\label{ineq:nearby-one-form-gap}
 \|\omega\|_{L^2(\mathrm dV_{f_g})}
 \leq C\|\nabla^g\omega\|_{L^2(\mathrm dV_{f_g})}.
\end{equation}
The calculation is first made for compactly supported forms and
extends to the Gaussian Sobolev space by density. Finally take
$\omega=\mathrm d(f-f_g)$; its polynomial growth and that of its
covariant derivative ensure membership in this space. This proves
\cref{ineq:nonlinear-Hessian-Poincare}.
\end{proof}

\begin{Lemma}\label{lem:quadratic-pair-energy}
    For every $a\in(0,1)$, there exist $\epsilon,C_a>0$ such that
    every pair satisfying \eqref{eq:finite-pair-hypotheses} obeys
    \begin{equation}\label{ineq:pair-relative-entropy-defect}
        0\leq\mathcal{W}(g,f)-\boldsymbol{\mu}(g)
        \leq\int_M|\nabla(f-f_g)|^2\,\mathrm dV_f
        \leq C_a
        \|\nabla_g^2(f-f_g)\|_{L^2(\mathrm dV_{f_g})}^{2a}.
    \end{equation}
\end{Lemma}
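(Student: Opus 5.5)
We prove the three inequalities in \eqref{ineq:pair-relative-entropy-defect} in turn, writing $w=f-f_g$; by \cref{thm:existence-and-uniqueness-of-minimizer} and the hypotheses, $\|w\|_{C^{3,1/4}_1}$ is small. The first inequality is the definition of $\boldsymbol\mu(g)$ as the infimum over normalized potentials, since $f$ is admissible.

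For the second inequality we use the standard substitution $\phi=e^{-f/2}$, $\phi_g=e^{-f_g/2}$. Write $\phi=\phi_g\psi$ with $\psi=e^{-w/2}$, so $\int\psi^2\,\mathrm dV_{f_g}=1$. Expanding Perelman's functional in $\phi$ gives
\[
\mathcal W(g,f)=\int_M\bigl(4|\nabla\phi|^2+\mathrm R\phi^2-\phi^2\log\phi^2-n\phi^2\bigr)(4\pi)^{-n/2}\mathrm dV_g .
\]
Inserting $\phi=\phi_g\psi$ and integrating the cross term $8\psi\phi_g\langle\nabla\phi_g,\nabla\psi\rangle$ by parts against the Euler--Lagrange equation \eqref{eq:Euler-Lagrange-equation} gives the exact identity
\[
\mathcal W(g,f)-\boldsymbol\mu(g)=4\int_M|\nabla\psi|^2\,\mathrm dV_{f_g}-\int_M\psi^2\log\psi^2\,\mathrm dV_{f_g}.
\]
Since $4|\nabla\psi|^2\,\mathrm dV_{f_g}=|\nabla w|^2\psi^2\,\mathrm dV_{f_g}=|\nabla w|^2\,\mathrm dV_f$, and since the entropy term $\int\psi^2\log\psi^2\,\mathrm dV_{f_g}$ is nonnegative by Jensen's inequality (the measure is a probability measure and $\int\psi^2=1$), we obtain
\[
\mathcal W(g,f)-\boldsymbol\mu(g)\le\int_M|\nabla w|^2\,\mathrm dV_f .
\]
The integrations by parts are justified by Gaussian cutoffs, using the linear growth of $w$ and the polynomial growth of its derivatives.

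For the third inequality we pass from $\mathrm dV_f$ to $\mathrm dV_{f_g}$ by the same Hölder interpolation as in Step~5 of \cref{thm:finite-global-metric-LS}. With $U=|\nabla w|$,
\[
\int U^2\,\mathrm dV_f=\int U^2e^{-w}\,\mathrm dV_{f_g}\le\Bigl(\int U^2\,\mathrm dV_{f_g}\Bigr)^a\Bigl(\int U^2e^{-w/(1-a)}\,\mathrm dV_{f_g}\Bigr)^{1-a}.
\]
The second factor is bounded uniformly in terms of $a$. The reason is that $|w|\le C\epsilon\boldsymbol\rho$ while $\mathrm dV_{f_g}$ decays like $e^{-\boldsymbol\rho^2/4+C\boldsymbol\rho}$, so $e^{C\epsilon\boldsymbol\rho/(1-a)}$ times a polynomial is integrable, uniformly once $\epsilon$ is small. \Cref{lem:nonlinear-Hessian-Poincare} then gives $\int U^2\,\mathrm dV_{f_g}\le C\|\nabla_g^2w\|_{L^2(\mathrm dV_{f_g})}^2$, and raising to the power $a$ finishes the proof.

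The main obstacle is justifying that the inequality holds for pairs in the original class $E_{14}$ rather than only on the graph $\mathscr H$. Here we pull back simultaneously by the slice diffeomorphism $F_Z$ of \cref{thm:finite-coordinate-slice}. \Cref{lem:finite-potential-pullback} keeps the pulled-back potential small in $C^{3,1/4}_1$, and change of variables preserves every quantity in \eqref{ineq:pair-relative-entropy-defect}.
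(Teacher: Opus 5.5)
Your proposal is correct and follows essentially the paper's proof: the same exact identity $\mathcal W(g,f)-\boldsymbol\mu(g)=\int(|\nabla w|^2+w)\,\mathrm dV_f$, the same Jensen step, and the same H\"older interpolation followed by \cref{lem:nonlinear-Hessian-Poincare}. The only difference is cosmetic: you derive the identity in the variables $\phi=\phi_g\psi$, whereas the paper expands $f=f_g+u$ directly and uses $\int_M\Delta_ff_g\,\mathrm dV_f=0$.
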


\begin{proof}
    The first inequality $\mathcal{W}(g,f)\geq\boldsymbol{\mu}(g)$ follows from the very definition of the $\boldsymbol{\mu}$-entropy.
    Put $u=f-f_g$ and $p=e^{-u}$, so $\mathrm dV_f=p\,\mathrm dV_{f_g}$ and $\int_Mp\,\mathrm dV_{f_g}=1$.
    Expanding $f=f_g+u$ in the definition of $\mathcal{W}(g,f)$ and using the Euler--Lagrange equation
    \begin{equation*}
        2\Delta f_g-|\nabla f_g|^2+\mathrm{R}_g+f_g-n=\boldsymbol{\mu}(g),
    \end{equation*}
    we obtain
    \begin{equation}\label{eq:difference-of-W-and-mu}
        \mathcal{W}(g,f)-\boldsymbol{\mu}(g)
        =\int_M\bigl(  |\nabla u|^2+u-2\Delta_f f_g  \bigr)  \,\mathrm dV_f.
    \end{equation}
    Weighted integration by parts gives $\int_M\Delta_f f_g\,\mathrm dV_f=0$.
    Moreover, by Jensen's inequality,
    \begin{equation*}
        \int_M  u  \,\mathrm dV_f
        =-\int_M  p\log p  \,\mathrm dV_{f_g}  \leq 0.
    \end{equation*}
    Combining these with \cref{eq:difference-of-W-and-mu}, we obtain
    \begin{equation*}
        \mathcal{W}(g,f)-\boldsymbol{\mu}(g)
        \leq\int_M  |\nabla u|^2  \,\mathrm dV_f.
    \end{equation*}
    Finally, H\"older's inequality gives
    \begin{equation*}
        \int_M|\nabla u|^2e^{-u}\,\mathrm dV_{f_g}
        \leq\left(\int_M|\nabla u|^2\,\mathrm dV_{f_g}\right)^a
        \left(\int_M|\nabla u|^2e^{-u/(1-a)}\,\mathrm dV_{f_g}\right)^{1-a}
        \leq C_a\|\nabla_g u\|_{L^2(\mathrm dV_{f_g})}^{2a}.
    \end{equation*}
    Applying \cref{lem:nonlinear-Hessian-Poincare} proves the last inequality in \cref{ineq:pair-relative-entropy-defect}.
\end{proof}

Now we present the proof of \cref{thm:global-pair-LS}.
\begin{proof}[Proof of \cref{thm:global-pair-LS}]
    Apply \cref{thm:finite-coordinate-slice} to $g$ and pull back the pair
    $(g,f)$ simultaneously. The sliced metric is small in $\mathscr H$,
    and \cref{lem:finite-potential-pullback} makes the transformed
    arbitrary potential small in $C^{3,1/4}_1$. Work with this transformed
    pair and its unique
    minimizer, and at the end use diffeomorphism invariance
    to return to the original pair.
    Let $\vartheta_0\in(0,1/2)$ be the exponent in \cref{thm:finite-global-metric-LS}.
    Then
    \begin{equation*}
        |\boldsymbol{\mu}(g)-\boldsymbol{\mu}(\hat{g})|^{1-\vartheta_0}
        \leq C\|\Phi(g)\|_{L^2(\mathrm dV_{f_g})}.
    \end{equation*}
    Choose numbers
    \begin{equation*}
        1-\vartheta_0<b<1,
        \qquad \frac{1}{2b}<a<1.
    \end{equation*}
    These choices are possible because $1-\vartheta_0\geq1/2$, and they give $2ab>1$.

    Set $u=f-f_g$.
    The weighted Bianchi identity $\delta_{f_g}^g\Phi(g)=0$, following
    from diffeomorphism invariance and the first variation
    \cite[Lemma~2.1]{cao2012second}, and integration by parts give
    \begin{equation*}
        \int_M\langle \Phi(g),\nabla_g^2u\rangle\,\mathrm dV_{f_g}=0.
    \end{equation*}
    Since $\Phi(g,f)=\Phi(g)-\nabla_g^2u$, the above $L^2(\mathrm dV_{f_g})$-orthogonality implies
    \begin{equation}\label{eq:pair-Pythagorean-identity}
        \|\Phi(g,f)\|_{L^2(\mathrm dV_{f_g})}^2
        =\|\Phi(g)\|_{L^2(\mathrm dV_{f_g})}^2
        +\|\nabla_g^2u\|_{L^2(\mathrm dV_{f_g})}^2.
    \end{equation}
    Write $E:=\|\Phi(g,f)\|_{L^2(\mathrm dV_f)}$.
    By decreasing $\epsilon$ if necessary, we may assume $E\leq1$.
    Apply H\"older's inequality to $U=\Phi(g,f)$:
    \begin{equation*}
        \int_M|U|_g^2\,\mathrm dV_{f_g}
        \leq\left(\int_M|U|_g^2\,\mathrm dV_f\right)^b
        \left(\int_M|U|_g^2e^{bu/(1-b)}\,\mathrm dV_{f_g}\right)^{1-b}.
    \end{equation*}
    The second integral is uniformly bounded on a sufficiently small
    neighborhood: $u=O(\boldsymbol\rho)$, $U$ has uniformly controlled
    polynomial growth, and $\mathrm dV_{f_g}$ has Gaussian decay.
    Hence $\|\Phi(g,f)\|_{L^2(\mathrm dV_{f_g})}\leq C_bE^b$.
    Together with \cref{eq:pair-Pythagorean-identity}, this gives
    \begin{equation*}
        \|\Phi(g)\|_{L^2(\mathrm dV_{f_g})}\leq C_bE^b,
        \qquad
        \|\nabla_g^2u\|_{L^2(\mathrm dV_{f_g})}\leq C_bE^b.
    \end{equation*}
    The \L ojasiewicz--Simon inequality for $\boldsymbol{\mu}$ therefore gives
    \begin{equation}\label{ineq:pair-metric-energy-physical-power}
        |\boldsymbol{\mu}(g)-\boldsymbol{\mu}(\hat{g})|
        \leq C E^{b/(1-\vartheta_0)}.
    \end{equation}
    Moreover, \cref{lem:quadratic-pair-energy} gives
    \begin{equation}\label{ineq:pair-potential-energy-physical-power}
        0\leq\mathcal W(g,f)-\boldsymbol{\mu}(g)
        \leq C_a\|\nabla_g^2u\|_{L^2(\mathrm dV_{f_g})}^{2a}
        \leq CE^{2ab}.
    \end{equation}
    Taking
    \begin{equation*}
        \gamma:=\min\left\{  \frac{b}{1-\vartheta_0},2ab  \right\}>1,  \qquad
        1-\vartheta:=\frac1\gamma,
    \end{equation*}
    the inequalities \cref{ineq:pair-metric-energy-physical-power} and \cref{ineq:pair-potential-energy-physical-power} then imply
    \begin{equation*}
        |\mathcal{W}(g,f)-\boldsymbol{\mu}(\hat{g})|
        \leq CE^{\gamma}.
    \end{equation*}
    Clearly, $\gamma<2$ and hence $\vartheta\in(0,\frac{1}{2}]$.
\end{proof}

\section{Strong uniqueness of AC tangent flows}
\label{s10:section}\label{sec:tangent-flow-uniqueness}

In this section, we prove strong uniqueness when one tangent flow is generated by a complete nontrivial AC shrinker. We first establish the geometric and potential estimates on a closed Ricci flow $M\times[-2,0)$ with a uniform entropy lower bound. Both radius functions in this part of the argument refer to the single slice $t=-1$. The geometric radius is defined by metric closeness and one bound on the kernel potential at a base point. Spacetime estimates are consequences of this definition.

The organization follows the radius argument of Fang--Li \cite[Section 6]{fangli2026strong}, with two changes appropriate to the AC setting. The entropy radius uses the instantaneous soliton defect, and the complete extension retains a small change of the asymptotic cone.

In the applications below, we choose a fixed integer $q\ge100$
sufficiently large for the finite-order comparison and extension estimates.
This choice is made after fixing the exponents and relative losses, and
before choosing the geometric tolerance $\sigma$ and the subsequent radius
and time thresholds. Higher-order convergence follows from the parabolic
estimates established below.

\subsection{Basic setups}
\label{s10:setup}

Throughout Subsections~\ref{s10:setup}--\ref{s10:cutoff}, let
\[
 (M^n,g(t))_{-2\le t<0}
\]
be a smooth closed connected Ricci flow with entropy bounded below by $-Y$, in the sense of \cite[Definition~2.20]{fangli2026structure}. We use the Ricci flow spacetime, heat kernels, and spacetime completion of \cite{fangli2026structure}. Write $x^*=(x,t)$ for a spacetime point and $\mathfrak t(x^*)=t$ for its time component. For $x^*,y^*\in M\times[-1,0)$, put
\[
 t_+=\max\{\mathfrak t(x^*),\mathfrak t(y^*)\},\qquad
 t_-=\min\{\mathfrak t(x^*),\mathfrak t(y^*)\}.
\]
The spacetime distance is
\[
 d^*(x^*,y^*)=\inf_{-1.98\le u\le t_-}
 \max\left\{\sqrt{t_+-u},\,
 W_1^{g(u)}(\nu_{x^*;u},\nu_{y^*;u})\right\},
\]
where $\nu_{x^*;u}$ is the conjugate heat-kernel measure based at $x^*$, and $W_1^{g(u)}$ is the Wasserstein distance with cost $d_{g(u)}$; see \cite[Definition~3.2 and Section~9]{fangli2026structure}. Denote the completion of $M\times[-1,0)$ by $(Z,d_Z)$, so $d_Z=d^*$ on the original negative-time spacetime.\phantomsection\label{idx:completion-metric} The time function $\mathfrak t$ extends continuously to $Z$, and the added points lie in $Z_0=\mathfrak t^{-1}(0)$.

Let $z\in Z_0$. The heat-kernel construction in \cite[Section~5]{fangli2026structure}, extended to earlier slices by the reproduction formula as in its Section~9, gives, for $\tau=-t$,
\begin{equation}\label{s10:physicalkernel}
 \mathrm{d}\nu_t=K(z;\cdot,t)\,\mathrm dV_{g(t)}
       =(4\pi\tau)^{-n/2}e^{-f(t)}\,\mathrm dV_{g(t)}.
\end{equation}
The kernel is smooth and positive on every negative-time slice and is the limit there of kernels based at regular terminal points.

Fix a connected complete nontrivial AC shrinker $(\hM,\hg,\hf)$, normalized by
\[
 \Ric_{\hg}+\hnd^2\hf=\frac12\hg,\qquad
 (4\pi)^{-n/2}\int_{\hM}e^{-\hf}\,\mathrm dV_{\hg}=1.
\]
Fix a minimum point $\hat p$ of $\hf$. Denote its canonical Ricci flow by
\begin{equation}\label{s10:canonical}
 \hg(t)=(-t)\Psi_t^*\hg,\qquad \hf(t)=\Psi_t^*\hf,\qquad
 \partial_t\Psi_t=\frac1{-t}\hnd\hf\circ\Psi_t,\quad \Psi_{-1}=\Id.
\end{equation}
On each canonical end in \eqref{eq:prelim-canonical-end}, write $g_C=\mathrm{d}r^2+r^2g_\Sigma$ and $\mathbf{X}=\hnd\hf=\frac12r\partial_r$. In these coordinates $\Psi_{-\tau}(r,\theta)=(r/\sqrt{\tau},\theta)$ whenever the points in question remain on the end. Consequently,
\[
 \sum_{j=0}^{m}r^j|\nabla^{C,j}(\hg(t)-g_C)|_{g_C}
 \le C_m\tau r^{-2},\qquad r\ge A_0,\quad -1\le t<0.
\]

The heat-kernel upper bound in \cite[Theorem~5.30]{fangli2026structure} gives $f(t)\ge-C_0(n,Y)$ on $[-1,0)$. Since
\[
 \hf-\boldsymbol{\mu}(\hg)=\mathrm R_{\hg}+|\hnd\hf|_{\hg}^2\ge0,
\]
we may set
\[
 C_*=1+\max\{C_0(n,Y),-\boldsymbol{\mu}(\hg)\}.
\]
Then $f(t)+C_*\ge1$ on $[-1,0)$ and $\hf+C_*\ge1$. Put
\begin{equation}\label{s10:radial}
 b=2\sqrt{f(-1)+C_*},\qquad
 \hb=2\sqrt{\hf+C_*},\qquad \Omega_L=\{\hb<L\}\subset\hM.
\end{equation}
In particular, $b,\hb\ge2$; we use these positive radii directly in all radial weights. On the end $\hb=r+O(r^{-1})$, with differentiated expansions. Unless a time is displayed, the symbols
\begin{equation}\label{s10:slicenotation}
 g=g(-1),\quad f=f(-1),\quad \mathrm dV_f=\mathrm{d}\nu_{-1},\quad
 \Phi=\Phi(g,f),\quad \mathcal D=\|\Phi\|_{L^2(\mathrm dV_f)}
\end{equation}
always refer to this Ricci-flow time slice, with $\Phi(g,f)$ as in \eqref{eq:prelim-soliton-defect}.
We also write the entropy and, within this section, the pointwise derivative sum as
\[
 \mathcal W(g,f)=\int_M(\mathrm{R}_g+|\nabla f|^2+f-n)\,\mathrm dV_f,\qquad
 [u]_m=\sum_{j=0}^{m}|\hnd^ju|_{\hg}.
\]

We use the weighted integer norms $C^\ell_\beta$ from Section~\ref{sec:prelim-weighted-spaces}, also on restricted domains. Throughout this section, the weights are zero for metric perturbations, two for potentials, and one for vector fields of linear growth. Differentiation orders are fixed before the small tolerances and radius thresholds. The H\"older-based strong topology is used when applying \cref{thm:global-pair-LS}; the comparison is given in Lemma~\ref{s10:completion}.

\begin{Lemma}\label{s10:kernel}
For $-1\le t<0$, every $H_n$-center $z_H(t)$ of $z$ at time $t$, and every $\eta>0$,
\begin{equation}\label{s10:kernelpoint}
 f(x,t)\ge\frac{d_{g(t)}(z_H(t),x)^2}{(4+\eta)\tau}-C_\eta,
\end{equation}
\begin{equation}\label{s10:kerneltail}
 \int_{M\setminus B_{g(t)}(z_H(t),L\sqrt{\tau})}
 \bigl(1+\tau|\mathrm{R}|+\tau|\nabla f|^2+\tau|\Delta f|+|f|\bigr)\,\mathrm{d}\nu_t
 \le C_\eta e^{-L^2/(4+\eta)}.
\end{equation}
The constants depend on $n,Y,\eta$, and not on the flow. Moreover $\mathrm{R}_{g(t)}\ge-n/2$ on this time interval and $-Y-n\le\mathcal W(g,f)\le0$.
\end{Lemma}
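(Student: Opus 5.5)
The plan is to derive every assertion from the heat-kernel theory of Fang--Li \cite{fangli2026structure}, applied to the terminal point $z$ by approximation. The kernel based at $z$ is, on each negative-time slice, a smooth limit of kernels based at regular terminal points. It therefore suffices to prove each estimate for conjugate heat kernels based at points $x_0^*=(x_0,t_0)$ with $t_0\in(t,0)$, with constants depending only on $n,Y,\eta$, and then let $t_0\to0$. For the pointwise bound \eqref{s10:kernelpoint} I would invoke the Gaussian upper bound for heat kernels under an entropy lower bound, namely \cite[Theorem~5.30]{fangli2026structure}, in its centered form. It gives
\[
 K(x_0^*;x,t)\le \frac{C_\eta}{\tau^{n/2}}\exp\!\left(-\frac{d_{g(t)}(z_H(t),x)^2}{(4+\eta)\tau}\right),
\]
where $z_H(t)$ is an $H_n$-center. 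Taking $-\log$ of $K=(4\pi\tau)^{-n/2}e^{-f}$ yields \eqref{s10:kernelpoint}. Passing to the limit requires that $H_n$-centers of the approximating kernels converge to $H_n$-centers of $z$; the bound is stable under shifting the center by a bounded multiple of $\sqrt\tau$, at the cost of enlarging $\eta$. This is where \cite[Section~9]{fangli2026structure} is used.

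The scalar facts come next. The lower bound $\mathrm R_{g(t)}\ge-n/2$ on $[-1,0)$ follows from the maximum principle for $\partial_t\mathrm R\ge\Delta\mathrm R+\frac2n\mathrm R^2$ started at $t=-2$: comparison with the ODE gives $\mathrm R\ge -\frac{n}{2(t+2)}\ge-\frac n2$ for $t\ge-1$. The bound $\mathcal W\le0$ is Perelman's monotonicity: $\mathcal W$ of the pointed kernel is nonincreasing backward in time and tends to $0$ as $\tau\to0$ at regular points, so the limit preserves $\le0$. The bound $\mathcal W\ge -Y-n$ comes from $\mathcal W(g,f)\ge\boldsymbol\mu(g(-1),1)$ together with the entropy lower bound in the sense of \cite[Definition~2.20]{fangli2026structure}; the extra $-n$ absorbs the normalization convention, or equivalently comes from Perelman's $\mathcal N$-entropy bound $\mathcal N\ge -Y$ and $\mathcal W\ge\mathcal N-n$-type comparisons.

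For the tail \eqref{s10:kerneltail}, the term $1$ is handled by combining the Gaussian concentration of $\nu_t$ about an $H_n$-center with the Gaussian upper bound above, integrated against the volume growth bound $|B(z_H,s)|\le Cs^n$ (also from the entropy bound). For the terms $|f|$, $\tau|\mathrm R|$, $\tau|\nabla f|^2$ and $\tau|\Delta f|$, I would use pointwise bounds of polynomial type in $d/\sqrt\tau$ and multiply by the Gaussian density. The lower bound $f\ge -C$ comes from the Gaussian upper bound. The upper bound $f\le C(1+d^2/\tau)$ holds on the relevant region by the Hein--Naber/Bamler lower bound near the center and the gradient estimate $\tau|\nabla f|^2\le C(1+f)$ \cite{fangli2026structure}. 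Together these give $\tau|\nabla f|^2\le C(1+d^2/\tau)$. For $\tau|\mathrm R|$ no pointwise bound holds, so I would instead use the integrated identity $\tau\mathrm R\le$ the quantity controlled via $\tau(2\Delta f-|\nabla f|^2+\mathrm R)+f-n\le0$ (Perelman's Harnack). Combined with $\mathrm R\ge-n/2$, this controls $\tau|\mathrm R|+\tau|\Delta f|$ pointwise by $C(1+f+\tau|\nabla f|^2)+\tau(-\Delta f)_+$-type terms. The remaining $\tau\Delta f$ contributions are treated by integrating by parts against a cutoff $\phi$ supported outside $B(z_H,L\sqrt\tau)$, using $\Delta f\,\mathrm d\nu=(|\nabla f|^2)\mathrm d\nu+\operatorname{div}(\cdot)$.

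The main obstacle is exactly this: $\mathrm R$ and $\Delta f$ have no pointwise polynomial control far out. I would first try to bound $\int\phi\,\tau|\mathrm R|\,\mathrm d\nu$ through the Harnack-type pointwise inequality and the lower bound on $\mathrm R$. Each integral reduces to $\int_{d\ge L\sqrt\tau}(1+d^2/\tau)^k\,\mathrm d\nu\le C_\eta e^{-L^2/(4+\eta)}$, after slightly enlarging $\eta$ to absorb polynomial factors.
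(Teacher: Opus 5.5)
Your treatment of \eqref{s10:kernelpoint} matches the paper: the Gaussian upper bound of \cite[Theorem~5.30]{fangli2026structure}, approximation of the terminal point, and stability under moving the $H_n$-center. So do your arguments for $\mathrm R\ge -n/2$ and $\mathcal W\le 0$. For the lower entropy bound, your second route is the one that works: $\mathcal W=\mathcal N+\tau\partial_\tau\mathcal N$ with $\mathcal N\ge -Y$ and $\tau\partial_\tau\mathcal N\ge -\frac n2+\tau\mathrm R_{\min}\ge -n$. The alternative via $\boldsymbol\mu(g(-1),1)$ is not equivalent to it and depends on how the entropy bound is defined.

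The gap is in \eqref{s10:kerneltail}, which the paper does not derive but quotes from \cite[Proposition~3.1]{fangli2026strong}. Your derivation rests on three inputs:
\begin{itemize}
\item a global pointwise gradient estimate $\tau|\nabla f|^2\le C(1+f)$;
\item the resulting global bound $f\le C(1+d^2/\tau)$, i.e.\ a Gaussian lower bound for $K$;
\item polynomial volume growth $|B(z_H,s)|\le Cs^n$ for all $s$.
\end{itemize}
None of these follows from the hypotheses (closed flow, entropy bounded below, hence $\mathrm R\ge-n/2$).

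Curvature-free Hamilton-type gradient bounds hold for the forward heat equation coupled to Ricci flow, where the Ricci term in the Bochner formula cancels. For the conjugate heat equation satisfied by $K(z;\cdot,\cdot)$, that term appears with the bad sign, so Ricci bounds are needed; Gaussian lower bounds likewise need curvature control. Volume upper bounds from the entropy bound at radius $s$ use the flow on $[t-s^2,t]$, so here they reach only $s\lesssim1$, while $L\sqrt\tau$ is unbounded and $\tau$ can be $1$. The paper itself, in Proposition~\ref{s10:rough}, obtains derivative bounds on $f$ only with Gaussian losses and only where the geometry is already controlled. Your phrase ``on the relevant region'' is exactly what fails, since the relevant region is the far region.

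Your Harnack-plus-integration-by-parts idea is the right tool, and pushed further it removes the need for any pointwise bound. Take a Lipschitz $\psi$ with $\psi=0$ on $B(z_H,(L-1)\sqrt\tau)$, $\psi=1$ off $B(z_H,L\sqrt\tau)$, and $|\nabla\psi|\le\tau^{-1/2}$. Combine the identity $\int\psi^2\Delta f\,\mathrm d\nu=\int\psi^2|\nabla f|^2\,\mathrm d\nu-\int\langle\nabla\psi^2,\nabla f\rangle\,\mathrm d\nu$ with Perelman's inequality $\tau(2\Delta f-|\nabla f|^2+\mathrm R)+f-n\le0$, with $\mathrm R\ge-n/2$, $f\ge-C$, and Young's inequality. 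This gives
\[
 \int\psi^2\bigl(\tau|\nabla f|^2+\tau|\Delta f|+\tau|\mathrm R|+|f|\bigr)\,\mathrm d\nu
 \le C\,\nu_t\bigl(M\setminus B(z_H,(L-1)\sqrt\tau)\bigr).
\]
Everything then reduces to the mass tail at the sharp rate $e^{-L^2/(4+\eta)}$. That input still cannot come from integrating the pointwise bound against a volume growth you do not have. The Hein--Naber concentration $\nu(A)\nu(B)\le e^{-d(A,B)^2/(8\tau)}$ gives only the weaker rate with $8$ in place of $4+\eta$. The sharp rate must be imported from the heat-kernel theory, which is what the paper's citation of \cite[Proposition~3.1]{fangli2026strong} does. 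The completion-point case then follows by approximation, adjusting $L$ and $\eta$ for the moving centers.
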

\begin{proof}
The scalar maximum principle gives $\mathrm R_{g(t)}\ge-n/[2(t+2)]$. Equation~\eqref{s10:kernelpoint} follows directly from \cite[Theorem~5.30]{fangli2026structure}. For \eqref{s10:kerneltail}, apply \cite[Proposition~3.1]{fangli2026strong}. The entropy lower bound, $\tau\le1$, and $\tau\mathrm R_{\min}\ge-n/2$ make the constants uniform. For a completion point, approximate the terminal point by regular ones. Their kernels converge smoothly and positively on each fixed earlier slice; centers have a convergent subsequence, and their second-moment bounds pass to the limit. The same tail estimate follows by adjusting $L$ and $\eta$.

Finally, \cite[Proposition~2.8]{fangli2026strong} gives
\[
 \mathcal W_z(1)=\mathcal N_z(1)+\mathcal N'_z(1)\le0,
 \qquad \mathcal N_z(1)\ge-Y,\qquad \mathcal N'_z(1)\ge-n.
\]
These inequalities also pass to a completion point by the preceding kernel convergence and prove the asserted entropy bound.
\end{proof}

\subsection{Single-time radius functions}
\label{s10:radii}

\begin{Definition}[Geometric radius]\label{s10:defC}
Fix an integer $q\ge4$ and $0<\sigma<1$. The number $r_C^{(\sigma,q)}$ is the supremum of the radii $L$ for which there is a smooth embedding $\iota$ of a neighborhood of $\overline{\Omega_L}$ into $M$ satisfying
\begin{equation}\label{s10:Cdefinition}
 \|\iota^*g(-1)-\hg\|_{C^{q}_0(\Omega_L)}<\sigma,
 \qquad f(\iota(\hat p),-1)\le n.
\end{equation}
Only radii for which $\hat p\in \Omega_L$ are considered; the supremum is zero if there are none. Assertions involving $r_C^{(\sigma,q)}$ are first proved for $L<r_C^{(\sigma,q)}$ and an embedding.
\end{Definition}

When the parameters $\sigma$ and $q$ are understood, we write $r_C$ for $r_C^{(\sigma,q)}$.
The constants in the following estimates are fixed independently of $L$. A reserved collar always means that, for fixed numbers $0<\lambda<\lambda'<1$, the estimate on $\iota(\Omega_{\lambda L})$ is proved using the larger domain $\iota(\Omega_{\lambda'L})$. Both numbers can be chosen arbitrarily close to one, before the thresholds for $\sigma$ and $L$ are chosen.

\begin{Definition}[Entropy radius]\label{s10:defEP}
Define the single-time entropy radius by
\begin{equation}\label{s10:rE}
 r_E=\sqrt{8\log(1/\mathcal D)}\quad(0<\mathcal D<1),\qquad
 r_E=+\infty\quad(\mathcal D=0),\qquad r_E=0\quad(\mathcal D\ge1).
\end{equation}
Thus $\mathcal D=e^{-r_E^2/8}$ when $0<\mathcal D<1$.
\end{Definition}

We will construct matched product coordinates on regions whose radius is an arbitrarily large fixed fraction of $\min\{r_C,r_E\}$, once this minimum is sufficiently large. Figure~\ref{fig:s10:dependencies} records the order of the principal parameter choices.

\begin{figure}[H]
\centering
\begin{tikzpicture}[
  x=1cm,y=1cm,
  font=\fontsize{10}{11.5}\selectfont,
  box/.style={draw=black!65,line width=.45pt,rounded corners=1.5pt,
    fill=black!3,align=center,inner xsep=6pt,inner ysep=5pt,
    text width=12.2cm,minimum height=.85cm},
  arrow/.style={-{Latex[length=1.7mm,width=1.2mm]},line width=.5pt,draw=black!75}
]
\node[box] (fixed) at (0,0)
  {\textbf{Fixed reference and analytic data}\\
   \mbox{$(\hM,\hg,\hf)$}, \mbox{$n,Y,\vartheta$}};
\node[box] (free) at (0,-1.45)
  {\textbf{Exponents and losses}\\
   \mbox{$\theta,a$}; relative losses and collars};
\node[box] (extension) at (0,-2.90)
  {\textbf{Extension core}\\
   \mbox{$A,\delta_A,\delta_{\mathrm{core}},\delta_P,\epsilon_0$}};
\node[box] (thresholds) at (0,-4.35)
  {\textbf{Geometric comparison}\\
   \mbox{$q\ge100$, then $\sigma$}};
\node[box] (core) at (0,-5.80)
  {\textbf{Continued core}\\
   \mbox{$A,\delta_A$; regularity $5q$}};
\node[box] (start) at (0,-7.25)
  {\textbf{Margin and starting time}\\
   Fix \mbox{$W$}, then choose \mbox{$s_0$} along the tangent sequence};
\draw[arrow] (fixed)--(free);
\draw[arrow] (free)--(extension);
\draw[arrow] (extension)--(thresholds);
\draw[arrow] (thresholds)--(core);
\draw[arrow] (core)--(start);
\end{tikzpicture}
\caption{Principal parameter choices in Section~\ref{s10:section}. The extension core and the continued core involve independent choices of $A,\delta_A$, made before and after $q,\sigma$, respectively. The margin $W$ is fixed before choosing $s_0$ along the tangent sequence.}
\label{fig:s10:dependencies}
\end{figure}

\subsection{Forward geometry and potential estimates}
\label{s10:pseudo}

\begin{Lemma}\label{s10:shell}
Fix $0<\lambda<\lambda'<1$ and an integer $m\ge0$. There are $A_0,L_0,\sigma_0$ such that, if $q\ge m+4$, $L\ge L_0$, and $\iota$ satisfies \eqref{s10:Cdefinition} with $\sigma\le\sigma_0$, the following holds. For $A\ge A_0$, define
\[
 V_{A,\lambda L}=\{x\text{ on the model end}:2A<r(x)<\lambda L\},
 \qquad U_{A,\lambda L}=\iota(V_{A,\lambda L}).
\]
We use balls whose larger initial balls lie in
$\iota(\{A<r<\lambda'L\})\subset\iota(\Omega_L)$; enlarge $L_0$ to allow for $\hb-r=O(r^{-1})$. With $\boldsymbol r=r\circ\iota^{-1}$ on this fixed material domain,
\begin{equation}\label{s10:physicalcurv}
 |\nabla_{g(t)}^j\Rm_{g(t)}|(x)\le C_j\boldsymbol r(x)^{-2-j},
 \quad x\in U_{A,\lambda L},\quad -1\le t<0,\quad 0\le j\le m .
\end{equation}
The time-independent pullback $h(t)=\iota^*g(t)$ satisfies
\begin{equation}\label{s10:material}
 \sum_{j=0}^{m}r^j|\hnd^j(h(t)-h(-1))|_{\hg}
 \le C_m r^{-2},
 \quad x\in V_{A,\lambda L},\quad -1\le t<0.
\end{equation}
The smaller balls are noncollapsed, uniformly in $L$ and $t$. This lemma also holds for a single collared annulus on which the metric hypothesis holds.
\end{Lemma}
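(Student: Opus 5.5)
The plan is pseudolocality on conical annuli followed by integration of the Ricci flow equation in time. Take $x\in U_{A,\lambda L}$ with $\boldsymbol r(x)=r_0$, and work on the ball $B_{g(-1)}(x,c r_0)$ with $c$ small, depending on $\lambda,\lambda'$. By \eqref{s10:Cdefinition}, with $\sigma\le\sigma_0$, the pullback $\iota^*g(-1)$ is $C^q_0$-close to $\hg$ on $\{A<r<\lambda'L\}$. By \eqref{eq:prelim-canonical-end}, $\hg$ is $C^q$-close to $g_C$ there, up to $O(r^{-2})$. Rescale the ball by $r_0^{-2}$. The rescaled metric at time $-1$ is then close to a unit-size piece of the cone, which is flat away from the vertex only up to the link curvature; it has bounded curvature, $|\Rm|\le Cr_0^{-2}$ before rescaling, and it is almost Euclidean in volume on scale $c$. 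The remaining time interval $[-1,0)$ has length $1$, which is small compared with $r_0^2$ once $A_0$ is large.

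\emph{Curvature bound.} Apply Perelman's pseudolocality theorem, or better the version with bounded initial curvature: a Ricci flow whose initial ball of radius $cr_0$ has $|\Rm|\le (cr_0)^{-2}$ and volume noncollapsing has $|\Rm|\le C(cr_0)^{-2}$ on the half ball for times up to $\epsilon(cr_0)^2$. Choosing $A_0$ large makes $\epsilon A_0^2\ge 1$, so the whole interval $[-1,0)$ is covered. Closedness of $M$ removes any completeness issue. Shi's local derivative estimates then give \eqref{s10:physicalcurv}. Since the initial data are controlled in $C^q_0$ with $q\ge m+4$, I would use the version of Shi's estimates with controlled initial derivatives, which has no $t^{-j/2}$ blowup. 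Distance distortion is controlled by $\int|\Ric|\,dt\le Cr_0^{-2}$, so $\iota(\{A<r<\lambda' L\})$ remains uniformly comparable in $g(t)$. This keeps the balls inside the reserved collar and gives the noncollapsing statement, via volume comparison from the bounded-curvature rescaled picture.

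\emph{Material estimate.} The pullback satisfies $\partial_t h=-2\Ric_{h(t)}$ on the fixed domain, and $|\Ric|_{h(t)}\le C r^{-2}$. Integrating over an interval of length at most $1$ gives $|h(t)-h(-1)|\le Cr^{-2}$. For derivatives, write $\hnd^j h(t)$ in terms of $\nabla_{h(t)}^j$ plus connection differences. The evolution of the connection difference $\partial_t\Gamma=\nabla\Ric$ is bounded by $r^{-3}$, and integrating in time keeps it $O(r^{-3})$. Inductively, $|\hnd^j(h(t)-h(-1))|\le C r^{-2-j}$, which is \eqref{s10:material}. The single-annulus statement follows from the same proof, since only a collared annulus is ever used.

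The main obstacle is the pseudolocality step. The flow may be singular at $t=0$, and the pointwise control must hold uniformly up to $t\to0$. The needed time scale, $1$, must be dominated by $\epsilon r_0^2$, which forces $A_0$ large. The hypotheses of pseudolocality also have to be verified on the conical annulus uniformly in $L$: the almost Euclidean isoperimetric constant on scale $c r_0$ comes from the smooth link, and the thresholds are chosen in the order $\lambda,\lambda'$, then $A_0$, then $\sigma_0$, then $L_0$.
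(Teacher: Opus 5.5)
Your proposal is correct and follows essentially the same route as the paper. The paper applies Perelman's bounded-curvature pseudolocality on balls of radius $cr_0$, with $A$ large enough that the pseudolocality time covers $[-1,0)$, and uses local derivative estimates with initial derivative bounds, so there is no $t^{-j/2}$ loss since $q\ge m+4$. A distance-distortion/first-exit argument keeps the material sets inside the time-$t$ balls, and integration of $\partial_th=-2\Ric_h$ and $\partial_t\Gamma(h)=h^{-1}*\nabla_h\Ric_h$ gives \eqref{s10:material}.

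Two points of precision are missing. First, $c$ must also be small in terms of the link, so that the initial ball is almost Euclidean in volume rather than merely noncollapsed. Second, the pseudolocality conclusion holds on time-$t$ balls of radius $\epsilon c r_0$, which is why one covers by smaller balls and needs the first-exit argument.
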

\begin{proof}
At radius $r_0\ge2A$, choose a ball of radius $c r_0$, where $c>0$ depends on the link and the two collars. In cone coordinates this ball is as close to a Euclidean ball as required after taking $c$ small, $A$ large, and $\sigma$ small. Its initial curvature is at most $(c r_0)^{-2}$ and its volume is at least $(1-\delta)\omega_n(c r_0)^n$, with $\delta$ as in the bounded-initial-curvature version of forward pseudolocality, \cite[Theorem 10.3]{perelman2002entropy}. That theorem gives curvature at most $C r_0^{-2}$ on a smaller ball for a time $c_1r_0^2$. Take $A$ so that this time is at least one. The same conclusion is used up to the available terminal time if the smooth flow ends sooner.

Choose the smaller covering balls strictly inside the balls supplied by the theorem. Integration of the Ricci bound and a first-exit argument keep their fixed material subsets inside those balls for $-1\le t<0$. To obtain the derivative estimates, write $\rho_0=cr_0$ and rescale the flow by
\[
 \widetilde g(s)=\rho_0^{-2}g(-1+\rho_0^2s),
 \qquad 0\le s<\rho_0^{-2}.
\]
The retained initial ball now has fixed radius, and pseudolocality bounds its curvature uniformly throughout this interval. The initial $C^q_0$ comparison gives
\[
 |\widetilde\nabla^{\,j}\mathrm{Rm}_{\widetilde g(0)}|
 \le C_j,\qquad 0\le j\le q-2,
\]
on a slightly larger fixed ball. Apply the local derivative estimate with initial derivative bounds, \cite[Theorem~14.16]{chow2008ricci}, on nested balls. If the initial curvature derivatives are bounded through order $\ell$, that theorem bounds the derivative of order $j$ by $C_js^{-(j-\ell)_+/2}$, where $(x)_+=\max\{x,0\}$. Choose $\ell\ge m$; the assumption $q\ge m+4$ supplies these initial bounds and the derivatives needed below. For $j\le m$ the time exponent is zero, so the estimates are uniform down to $s=0$ and independent of the length $\rho_0^{-2}$ of the rescaled interval. Scaling back gives
\[
 |\nabla_{g(t)}^j\mathrm{Rm}_{g(t)}|
 \le C_j\rho_0^{-2-j}\le C_j'r_0^{-2-j},
 \qquad 0\le j\le m,
\]
which proves \eqref{s10:physicalcurv}.

Integrate $\partial_th=-2\Ric_h$ and
$\partial_t\Gamma(h)=h^{-1}*\nabla_h\Ric_h$ on the fixed chart. Inductively converting to the initial connection proves \eqref{s10:material}; the initial metric comparison converts that connection to $\hnd$. The volume-form equation and metric comparison preserve the volume lower bounds. These arguments use only times at or after $-1$.
\end{proof}

The next proposition is analogous to \cite[Proposition~6.2]{fangli2026strong}; in the AC setting we treat the conical exterior and the compact core separately.

\begin{Proposition}\label{s10:forwardcomparison}
For every $0<\lambda<1$, $\epsilon>0$, $0<a<1$, and finite order $m$, there are $q_0,\sigma_0,L_0$ such that an embedding satisfying \eqref{s10:Cdefinition} with $q\ge q_0$, $\sigma\le\sigma_0$, $L\ge L_0$ satisfies
\begin{equation}\label{s10:forwardclose}
 \sup_{-1\le t\le-a}
 \|\iota^*g(t)-\hg(t)\|_{C^{m}_0(\Omega_{\lambda L};\,\hg)}
 <\epsilon .
\end{equation}
On a fixed exterior subdomain as in Lemma~\ref{s10:shell}, the metric comparison with $g_C$ and the curvature estimates hold for the full interval $[-1,0)$. The core comparison \eqref{s10:forwardclose} is asserted only through $-a$.
\end{Proposition}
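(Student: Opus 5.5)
The plan is to split $\Omega_{\lambda L}$ into a conical exterior $\{2A<r<\lambda L\}$ and a compact core $\Omega_{3A}$, with $A$ fixed depending only on $\epsilon,\lambda,m$ and the reference shrinker, and to treat the two pieces by different mechanisms. On the exterior, Lemma~\ref{s10:shell}, applied with $m$ replaced by $m$ and $q\ge m+4$, gives \eqref{s10:material}. Hence $\sum_j r^j|\hnd^j(h(t)-h(-1))|\le C_mr^{-2}$ on $V_{A,\lambda L}$ for all $-1\le t<0$. The initial comparison $\|h(-1)-\hg\|_{C^q_0}<\sigma$ and the canonical-end estimate $\sum_jr^j|\nabla^{C,j}(\hg(t)-g_C)|\le C_m\tau r^{-2}$ then give, by the triangle inequality,
\[
 \|h(t)-\hg(t)\|_{C^m_0(V_{A,\lambda L})}\le C_m\sigma+C_mA^{-2}.
\]
The connection differences $\hnd-\nabla^C$ are $O(r^{-3})$, so changing which connection measures derivatives costs only lower-order terms. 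Choosing $A$ large and then $\sigma_0$ small makes this less than $\epsilon/2$. The same bounds give the curvature estimates and the comparison with $g_C$ on the full interval $[-1,0)$, which is the exterior assertion.

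On the core, I would argue by contradiction and compactness, following \cite[Proposition~6.2]{fangli2026strong}. Suppose there are flows and embeddings $\iota_i$ with $\sigma_i\to0$, $q_i\to\infty$, $L_i\to\infty$ for which \eqref{s10:forwardclose} fails on $\Omega_{3A}$ at some time in $[-1,-a]$. The first task is uniform curvature control on $\iota_i(\Omega_{4A})\times[-1,-a]$. The exterior bounds control a collar around the core. In the interior, the initial data are $C^{q}$-close to $\hg$, and I would apply pseudolocality at a small scale $r_1$ around each core point, as in Lemma~\ref{s10:shell}. Iterating in time steps of size $c r_1^2$ needs closeness maintained at each stage, which is exactly what the compactness argument has to supply. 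Given these bounds, Shi-type estimates with initial derivative bounds \cite[Theorem~14.16]{chow2008ricci} yield smooth subsequential convergence of $\iota_i^*g_i(t)$ on $\Omega_{4A}\times[-1,-a]$ to a Ricci flow $\bar g(t)$ with $\bar g(-1)=\hg$.

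The limit $\bar g(t)$ is then glued along the collar to the exterior, where it is known to agree with $\hg(t)$ up to $O(A^{-2})$ errors at infinity. The result is a complete bounded-curvature Ricci flow on $\hM\times[-1,-a]$ with initial data $\hg$. By Chen--Zhu uniqueness of complete bounded-curvature Ricci flows, $\bar g(t)=\hg(t)$. This contradicts the assumed failure, and so proves \eqref{s10:forwardclose} with $q_0,\sigma_0,L_0$ obtained from the contradiction.

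The main obstacle is the core curvature bound up to time $-a$. Pseudolocality alone gives control only for a time $c r_1^2$ at scale $r_1$ in the core, not up to $-a$. I would try a continuity argument: let $T_i$ be the first time at which closeness to $\hg(t)$ on $\Omega_{4A}$ fails at a level $\epsilon'$. Up to $T_i$, curvature is bounded, so the limit exists and equals $\hg(t)$ by uniqueness. This forces $T_i\to-a$, because $\hg(t)$ has bounded geometry on the core for $t\le -a$, with constants depending on $a$. Pseudolocality restarted at $T_i$ then extends the bound a uniform amount past $T_i$. The parameter $a$ enters exactly because $\hg(t)$ degenerates toward the cone as $t\to0$.
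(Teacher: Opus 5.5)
Your proposal is correct and follows the paper's proof essentially step for step. The exterior bound $C_m\sigma+C_mA^{-2}$ comes from Lemma~\ref{s10:shell} and the three-term splitting. The core is handled by contradiction: small-scale pseudolocality and derivative estimates with initial data give a complete bounded-curvature limit on all of $\hM$, Chen--Zhu uniqueness identifies it with $\hg(t)$, and the argument continues in uniform time steps using the bounded geometry of $\hg(t)$ for $t\le-a$.

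Two small points of phrasing. The core and exterior limits are a single subsequential limit of $\iota_i^*g_i(t)$ on the exhausting domains, not a gluing. Your first-failure-time argument should conclude that the failure times eventually exceed $-a$, not that $T_i\to-a$; this is equivalent to the paper's device of restarting pseudolocality at times already reached in the limit.
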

\begin{proof}
Choose $\lambda<\lambda'<1$, and put $h(t)=\iota^*g(t)$ on the fixed model domain. On its exterior,
\[
 h(t)-\hg(t)=[h(t)-h(-1)]+[h(-1)-\hg]+[\hg-\hg(t)].
\]
Lemma~\ref{s10:shell} bounds the first term in scaled derivatives through order $m$ by $C_mr^{-2}$. The initial comparison bounds the second by $C_m\sigma$. For the last term, the canonical end expansion gives
\[
 \sum_{j=0}^m r^j|\hnd^j(\hg(t)-g_C)|_{\hg}
 \le C_m(-t)r^{-2},\qquad -1\le t<0;
\]
the same estimate at $t=-1$ bounds $\hg-g_C$. Consequently,
\[
 \|h(t)-\hg(t)\|_{C^m_0(\{2A<r<\lambda L\})}
 \le C_m\sigma+C_mA^{-2},\qquad -1\le t<0.
\]
Take $A$ large and then $\sigma$ small so that this is less than $\epsilon/2$. The larger collar determined by $\lambda'$ supplies all the balls used here; the difference $\hb-r=O(r^{-1})$ is absorbed by increasing the radius threshold.

Suppose now that the asserted comparison on the remaining fixed core fails. The negation of the proposition gives flows $g_i(t)$ and embeddings $\iota_i$ with
\[
 q_i\ge i+m+4,\qquad \sigma_i\longrightarrow0,\qquad L_i\longrightarrow\infty,
\]
for which \eqref{s10:forwardclose} fails. Write $h_i(t)=\iota_i^*g_i(t)$. The exterior estimate confines this failure to a fixed compact core $K$. After reducing $\epsilon$ by a fixed norm-equivalence constant if necessary, there are $t_i\in[-1,-a]$, $x_i\in K$, and $j_i\le m$ such that
\[
 |\hnd^{j_i}(h_i(t_i)-\hg(t_i))|_{\hg}(x_i)\ge c_m\epsilon.
\]
Pass to a subsequence with $t_i\to t_\infty$, $x_i\to x_\infty$, and $j_i=j$.

We show that $h_i\to\hg(t)$ smoothly on compact subsets throughout $[-1,-a]$. At time $-1$ the convergence is smooth on every compact subset of $\hM$. Cover an enlarged core by finitely many small balls on which the reference metric has controlled curvature and almost Euclidean volume ratio. Initial convergence and forward pseudolocality give a uniform curvature bound on smaller balls for a common time interval $[-1,-1+\delta]$. The derivative estimate with initial data used in Lemma~\ref{s10:shell} gives every fixed curvature derivative bound there, including at $-1$. The exterior estimates hold independently on the rest of the exhausting model domains.

These bounds give convergence in the original maps $\iota_i$. Indeed, integrating $\partial_th_i=-2\Ric_{h_i}$ first gives uniform metric equivalence. The equation
\[
 \partial_t\Gamma(h_i)=h_i^{-1}*\nabla_{h_i}\Ric_{h_i}
\]
and its differentiated equations then bound the connection difference from the fixed initial connection and its derivatives. Induction converts the intrinsic curvature bounds to bounds for all fixed-coordinate derivatives of $h_i$ on compact sets. The flow equation also controls the time derivatives. A diagonal compactness argument therefore produces a smooth limit $h_\infty(t)$ on $\hM\times[-1,-1+\delta]$, with $h_\infty(-1)=\hg$.

The core and exterior curvature bounds are uniform globally. Integrating the Ricci bound makes $h_\infty(t)$ uniformly equivalent to the complete metric $\hg$, hence complete. Uniqueness for complete bounded-curvature Ricci flows, \cite[Theorem~1.1]{chen2006uniqueness}, identifies $h_\infty(t)$ with the canonical flow $\hg(t)$.

The same construction can be restarted at any time already reached. On $[-1,-a]$ the canonical flow has uniform bounded geometry on the enlarged core, so its finite covering, curvature bounds, and pseudolocality constants can be chosen uniformly in the restart time. The independently controlled exterior completes each limit flow. Thus there is a common positive continuation step, and finitely many steps give convergence in the same fixed maps on every compact subset of $\hM\times[-1,-a]$. This contradicts the displayed violation at $(x_i,t_i)$. Combining the core convergence with the exterior estimate proves \eqref{s10:forwardclose}.
\end{proof}

\begin{Lemma}\label{s10:annulardistance}
Fix $0<\lambda<\lambda'<1$, $0<a<1$, and $\eta>0$. After the choices in Proposition~\ref{s10:forwardcomparison}, put $p=\iota(\hat p)$. For $x$ on the end with $2A\le r(x)\le\lambda L$ and $-1\le t\le-a$,
\begin{equation}\label{s10:distance}
 \bigl|d_{g(t)}(p,\iota(x))-r(x)\bigr|\le\eta r(x)+C_{A,a,\eta},
 \qquad d_{g(t)}(p,z_H(t))\le C_{a,\eta}.
\end{equation}
For every $R\le\lambda L$ above a fixed threshold,
\begin{equation}\label{s10:Couter}
 M\setminus\iota(\Omega_R)\subset
 M\setminus B_{g(t)}(z_H(t),(1-\eta)R-C_{A,a,\eta}).
\end{equation}
The constants and thresholds may depend on the reference shrinker, $n,Y,\lambda,\lambda',a,\eta$, and on $A$ where indicated, but are independent of $L$ and the particular flow.

At time $-1$, assuming only \eqref{s10:Cdefinition} with $q\ge4$ and sufficiently small $\sigma$, one also has, for $B\ge1$,
\[
 \{f\le B\}\subset\iota(\Omega_{\lambda L})
 \quad\text{if}\quad L>C_\lambda(1+\sqrt{B+C_*}),
\]
with $\iota(\Omega_{\lambda'L})$ providing the larger collar. Given $0<\zeta<1$, the collars can be chosen sufficiently close to one and $\sigma$ sufficiently small so that
\[
 \{b\le(1-\zeta)L-C_\zeta\}\subset\iota(\Omega_{\lambda L}).
\]
These are entire sublevels in $M$; the constants and geometric thresholds in these time-$-1$ assertions are independent of higher derivative orders.
\end{Lemma}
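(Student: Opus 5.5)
The plan is to compare $d_{g(t)}$ with the cone distance through the fixed embedding, then locate the $H_n$-centers near $p=\iota(\hat p)$ via Wasserstein monotonicity, and finally read off the two sublevel inclusions from the Gaussian lower bound \eqref{s10:kernelpoint}.

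\emph{Step 1: distance comparison on the annulus.} By Proposition~\ref{s10:forwardcomparison} at order $m=0$, with $\epsilon$ small in terms of $\eta$, the metric $\iota^*g(t)$ is $(1+\eta/4)$-bi-Lipschitz to $\hg(t)$ on $\Omega_{\lambda' L}$ for $-1\le t\le -a$. On the end, $\hg(t)$ is $C\tau r^{-2}$-close to $g_C$ for $r\ge 2A$. On the fixed compact core $\{r\le 2A\}$, $\hg(t)$ is uniformly bounded for $t\in[-1,-a]$, with constants depending on $a$ and $A$.
\begin{itemize}
\item \emph{Upper bound.} Take a path from $\hat p$ to the inner sphere $\{r=2A\}$ inside the core, followed by the radial segment out to $x$. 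Its $g(t)$-length is at most $(1+\eta)r(x)+C_{A,a,\eta}$.
\item \emph{Lower bound.} Let $\gamma$ be any $g(t)$-path in $M$ from $p$ to $\iota(x)$. Consider the last portion of $\gamma$ that runs from the set $\iota(\{r\le 2A\})$ to $\iota(x)$ while staying in $\iota(\Omega_{\lambda'L})$. If $\gamma$ leaves $\iota(\Omega_{\lambda' L})$ instead, it must cross $\iota(\{r=\lambda' L\})$, with $\lambda'L>r(x)$, which only lengthens it. Along such a portion, $|\mathrm dr|_{g_C}=1$ and the bi-Lipschitz bound give $g(t)$-length at least $(1-\eta)(r(x)-2A)$.
\end{itemize}
Together these prove the first inequality of \eqref{s10:distance}.

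\emph{Step 2: the $H_n$-center is near $p$.} At $t=-1$, the normalization $f(p,-1)\le n$ and the gradient bound for conjugate heat kernels, $\tau|\nabla f|^2\le C(f+C)$ (uniform in $n,Y$), give $f\le C$ on $B_{g(-1)}(p,1)$. The volume there is noncollapsed by Step 1, so $\nu_{-1}(B_{g(-1)}(p,1))\ge c$. On the other hand, the variance bound for an $H_n$-center gives $\nu_{-1}(B_{g(-1)}(z_H(-1),C))\ge 1-c/2$. The two balls must therefore intersect, so $d_{g(-1)}(p,z_H(-1))\le C$.

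For $t\in(-1,-a]$, I would use monotonicity of $W_1$ under the conjugate heat flow. The kernel $\nu_{z_H(t),t;-1}$ lies within $W_1^{g(-1)}$-distance $\sqrt{H_n\tau}$ of $\nu_{-1}$, hence has an $H_n$-center within $C$ of $p$. On the controlled region, the curvature over $[-1,-a]$ is bounded: on the core by Proposition~\ref{s10:forwardcomparison}, and on the exterior by Lemma~\ref{s10:shell}. Standard distance-distortion and heat-kernel concentration then place $z_H(t)$ within $C_{a,\eta}$ of the center of $\nu_{z_H(t),t;-1}$, and hence of $p$.

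I expect this step to be the main obstacle. The difficulty is ensuring that $z_H(t)$ lies in the controlled region at all, rather than in the uncontrolled part of $M$. I would handle it by contradiction with the tail estimate \eqref{s10:kerneltail} together with Step 1.

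\emph{Step 3: the outer inclusion \eqref{s10:Couter}.} Let $y\notin\iota(\Omega_R)$. Any path from $p$ to $y$ must cross $\iota(\{r=R'\})$ with $R'=R-O(R^{-1})$, so by Step 1 $d_{g(t)}(p,y)\ge(1-\eta)R-C$. The triangle inequality with Step 2 then gives \eqref{s10:Couter}.

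\emph{Step 4: sublevel sets at $t=-1$.} If $f(x)\le B$, then \eqref{s10:kernelpoint} gives
\[
 d_{g(-1)}(z_H(-1),x)\le\sqrt{(4+\eta)(B+C_\eta)}.
\]
Hence $d_{g(-1)}(p,x)\le C(1+\sqrt{B+C_*})$, and Step 3 with $R=\lambda L$ gives $\{f\le B\}\subset\iota(\Omega_{\lambda L})$.

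For the sharp form, suppose $x\notin\iota(\Omega_{\lambda L})$. Then $d_{g(-1)}(z_H(-1),x)\ge(1-\eta)\lambda L-C$, so by \eqref{s10:kernelpoint}
\[
 b(x)=2\sqrt{f(x)+C_*}\ \ge\ \frac{2}{\sqrt{4+\eta}}\bigl((1-\eta)\lambda L-C\bigr).
\]
Choosing $\lambda$ close to $1$ and $\eta$ small makes the right side at least $(1-\zeta)L-C_\zeta$, which is the contrapositive of the second inclusion.

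At $t=-1$ only the $C^0$-part of \eqref{s10:Cdefinition} and the kernel bounds enter, not Proposition~\ref{s10:forwardcomparison}. This is why these constants are independent of higher derivative orders.
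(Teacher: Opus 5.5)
Your Steps 1, 3 and 4 agree with the paper: the radial upper bound, the first-exit lower bound through the outer level, the triangle inequality, and \eqref{s10:kernelpoint} at $t=-1$ using only $C^0$ closeness.

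The gap is in Step~2 for $-1<t\le-a$, exactly where you expect trouble. The $W_1$-monotonicity step is correct: it places an $H_n$-center of $\nu_{z_H(t),t;-1}$ within bounded $g(-1)$-distance of $p$. But you then use ``distance distortion'' to pass back to the position of $z_H(t)$ at time $t$. That needs curvature control along the worldline of $z_H(t)$ on $[-1,t]$, which you only have if you already know $z_H(t)\in\iota(\Omega_{\lambda'L})$. Your proposed remedy does not supply this. \eqref{s10:kerneltail} only says that $\nu_t$ concentrates around $z_H(t)$, and Step~1 only describes the time-$t$ metric on the chart. Neither links the time-$t$ location of $z_H(t)$ to data at time $-1$, so no contradiction follows. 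As written, the second bound in \eqref{s10:distance}, and hence \eqref{s10:Couter} for $t>-1$, is not proved.

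The missing idea is forward localization of heat flow issued near $(p,-1)$. The paper uses the reproduction identity $K(z;p,-1)=\int_MK(w,t;p,-1)\,\mathrm d\nu_t(w)\ge(4\pi)^{-n/2}e^{-n}$, which is forced by $f(p,-1)\le n$. It combines this with localization of the forward kernel $K(\cdot,t;p,-1)$ about $p$ and Gaussian concentration of $\nu_t$, following the forward-time argument of \cite[Claim~6.4]{fangli2026strong}.

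One concrete way to obtain the localization is as follows.
\begin{itemize}
\item The controlled geometry on $\iota(\Omega_{\lambda'L})\times[-1,-a]$ makes $K(\cdot,s;p,-1)$ small on a fixed annulus $\iota(\{r\approx D\})$ for all $s\in[-1,t]$.
\item The maximum principle for the forward heat equation on the compact set $M\setminus\iota(\{r<D\})$, where the initial data vanish, carries this smallness into the uncontrolled part of $M$.
\item Since $\int_MK(w,t;p,-1)\,\mathrm dV_{g(t)}(w)\le e^{n/2}$ by $\mathrm R\ge-n/2$, the reproduction identity then yields some $w_0\in\iota(\{r<D\})$ with $f(w_0,t)\le C_a$.
\item Finally, \eqref{s10:kernelpoint} bounds $d_{g(t)}(w_0,z_H(t))$, and Step~1 bounds $d_{g(t)}(p,w_0)$.
\end{itemize}
The same localization would also complete your own route: apply it to $v(\cdot,s)=\nu_{\cdot,s;-1}(B_{g(-1)}(p,C))$, for which your $W_1$ step gives $v(z_H(t),t)\ge\frac12$.

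Separately, at $t=-1$ you rely on a pointwise bound $\tau|\nabla f|^2\le C(f+C)$ that would itself need local justification. It is unnecessary: \eqref{s10:kernelpoint} at $x=p$ with $f(p,-1)\le n$ gives $d_{g(-1)}(p,z_H(-1))^2\le(4+\eta)(n+C_\eta)$ directly, as in the paper.
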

\begin{proof}
On the exterior the metric differs from $g_C$ by as little as desired in $C^0$. A radial path from a fixed inner level to $x$, preceded by a path in the controlled compact core, has length at most $(1+\eta)r(x)+C$. For the lower bound, any curve from $p$ to $\iota(x)$ which remains in $\iota(\Omega_{\lambda'L})$ has length at least $(1-\eta)r(x)-C$, by integrating the radial function once the curve leaves the core. If it leaves that chart first, its initial segment crosses the outer radial level and has length at least $(1-\eta)\lambda'L-C$. This is at least the required lower bound for $r(x)\le\lambda L$, after adjusting the thresholds. The same first-exit argument at $\partial\iota(\Omega_R)$ proves the lower distance to the complement.

At $t=-1$, $f(p,-1)\le n$ and the Gaussian kernel upper bound imply that $p$ is a bounded distance from an $H_n$-center. For $-1<t\le-a$, reproduction gives
\[
 K(z;p,-1)=\int_M K(w,t;p,-1)\,\mathrm d\nu_t(w)
 \ge(4\pi)^{-n/2}e^{-n}.
\]
Forward heat-kernel localization about $p$ on the controlled core, together with Gaussian concentration of $\nu_t$ about $z_H(t)$, would make this integral arbitrarily small if $d_{g(t)}(p,z_H(t))$ were large. The forward-time argument of \cite[Claim~6.4]{fangli2026strong} therefore gives $d_{g(t)}(p,z_H(t))\le C_a$, which also transfers the distance-to-complement bound to $z_H(t)$.

At time $-1$, the initial metric comparison alone gives the same first-exit estimate. Combining it with \eqref{s10:kernelpoint} yields, for any prescribed $\delta>0$ and sufficiently small geometric tolerance,
$b\ge(1-\delta)R-C_\delta$ on $M\setminus\iota(\Omega_R)$. Taking $R=\lambda L$ proves the bounded-sublevel assertion. Choosing $\lambda<\lambda'<1$ close to one and then $\delta$ small proves the assertion with loss $\zeta$.
\end{proof}

The next proposition is analogous to \cite[Proposition~6.3 and Corollary~6.6]{fangli2026strong}.

\begin{Proposition}\label{s10:rough}
Fix $0<\lambda<\lambda'<1$.
\begin{enumerate}[label=(\roman*)]
\item For every integer $m\ge0$ and $\eta>0$, there exist a finite order $q_0\ge4$ and constants $\sigma_0>0$ and $L_0<\infty$ such that, whenever $q\ge q_0$, $0<\sigma\le\sigma_0$, and $L\ge L_0$, an embedding $\iota$ of \eqref{s10:Cdefinition} has the following properties. These thresholds are chosen after $m,\eta$, and the collars are fixed, and may depend on them. Set
$\boldsymbol r=r\circ\iota^{-1}$ on the end. For some fixed $A$ and every $x\in\iota(\{2A\le r\le\lambda'L\})$,
\begin{equation}\label{s10:roughpotential}
 \left|f(x,t)-\frac{\boldsymbol r(x)^2}{4\tau}\right|
 \le\eta\frac{\boldsymbol r(x)^2}{\tau}+C_\eta,
 \qquad -1\le t\le-\frac12 .
\end{equation}
On the fixed core $\iota(\Omega_{4A})$ the potential is uniformly bounded on this same time interval. At time $-1$, for $x\in\iota(\Omega_{\lambda L})$, put $\rho_x=b(x)^{-1}$. Then
\begin{align}
 &\begin{aligned}
 &\sum_{j=0}^{m}b^{2+j}|\nabla^j\Rm_g|\le C_m,\qquad
 \inj_g(x)\ge c\rho_x,\\
 &B_g(x,4\rho_x)\subset\iota(\Omega_{\lambda'L}),\qquad
 \Vol_g B_g(x,\rho_x)\ge c\rho_x^n,
 \end{aligned}\label{s10:roughcurv}\\
 &\sum_{j=0}^{m+2}|\nabla^jf|+\sum_{j=0}^{m}|\nabla^j\Phi|
 \le C_{m,\eta}b^{N_m}e^{\eta b^2},
 \label{s10:roughderivatives}\\
 &\osc_{B_g(x,2\rho_x)}f\le C_\eta+\eta b(x)^2 .
 \label{s10:roughosc}
\end{align}
The nonnegative integer exponent $N_m=N_m(n,m)$ in \eqref{s10:roughderivatives} can be chosen independently of $\eta$; the constant and the geometric thresholds may depend on $\eta$. In particular, \eqref{s10:roughderivatives} holds for each prescribed finite $m$, with the preceding choices allowed to depend on $m$. All derivatives and norms in \eqref{s10:roughcurv}--\eqref{s10:roughosc} are intrinsic for $g=g(-1)$. The conically weighted curvature estimate in \eqref{s10:roughcurv} uses the prescribed finite initial order.

\item Fix any one order $q\ge4$ in Definition~\ref{s10:defC}. There is a small geometric tolerance $\sigma_*>0$ such that, for every $B\ge1$, there is $L_B<\infty$ with the following property. If $0<\sigma\le\sigma_*$ and an embedding has $L\ge L_B$, then
\[
 E_B:=\{x\in M:f(x,-1)\le B\}\subset\iota(\Omega_{\lambda L}),
\]
and, for every integer $m\ge0$,
\[
 \sup_{E_B}\left(
 \sum_{j=0}^{m}|\nabla^j\Rm_g|
 +\sum_{j=0}^{m+2}|\nabla^jf|
 +\sum_{j=0}^{m}|\nabla^j\Phi|
 \right)\le C_{B,m}.
\]
The fixed order $q$, the tolerance $\sigma_*$, and the threshold $L_B$ are independent of $m$; only the bound $C_{B,m}$ changes with the derivative order. The set $E_B$ is the entire sublevel in $M$. These are intrinsic, unweighted bounds at time $-1$.
\end{enumerate}
Constants in both parts are independent of $L$ and of the particular flow, with dependence on the fixed model, $n,Y$, and the stated parameters understood.
\end{Proposition}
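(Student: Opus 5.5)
The plan is to prove part (i) in three stages: the potential on the exterior annuli, the curvature and injectivity package at time $-1$, and finally derivative bounds for $f$ and $\Phi$ obtained by rescaled local elliptic estimates. Part (ii) is then a bounded-sublevel specialization.

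\emph{Stage 1: the rough potential bound \eqref{s10:roughpotential}.} For the lower bound, I will combine the kernel lower bound \eqref{s10:kernelpoint} with the distance comparison \eqref{s10:distance} of Lemma~\ref{s10:annulardistance}, applied with $a=1/2$. These give $d_{g(t)}(z_H(t),x)\ge(1-\eta)\boldsymbol r(x)-C$, and hence $f\ge(1-C\eta)\boldsymbol r^2/(4\tau)-C_\eta$.

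The upper bound needs a lower bound on the heat kernel $K(z;x,t)$, and I expect this to be the main obstacle. The approach I would try first mirrors \cite[Claim~6.4 and Proposition~6.3]{fangli2026strong}. Reproduction gives
\[
 K(z;x,-1)=\int K(w,t';x,-1)\,\mathrm d\nu_{t'}(w).
\]
The measure $\nu_{t'}$ has definite mass near $z_H(t')$, which is at bounded distance from $p$ by Lemma~\ref{s10:annulardistance}. On the region controlled by Proposition~\ref{s10:forwardcomparison} and Lemma~\ref{s10:shell}, the forward kernel $K(w,t';x,-1)$ should be bounded below by a Gaussian $\exp(-(1+\eta)d^2/4(t'+1))$. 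I would obtain this Gaussian lower bound from a Li--Yau--Hamilton or Harnack argument along a near-radial path, which is available because the curvature is $O(\boldsymbol r^{-2})$ and the metric is close to the cone in $C^m$.

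I would choose the intermediate time $t'$ close to $0$ so that the spatial comparison with the shrinker heat kernel, $e^{-\hf}$, is sharp up to $\eta$. The comparison at intermediate times $t\in[-1,-1/2]$ is handled the same way, and the core bound follows from the same two-sided argument with bounded distances.

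\emph{Stage 2: the curvature and injectivity package \eqref{s10:roughcurv}.} This follows directly from the $C^q_0$ comparison in \eqref{s10:Cdefinition} with $q\ge m+2$ and from $b\asymp\hb$ on $\iota(\Omega_{\lambda L})$. The latter comes from Stage 1 at $t=-1$. Indeed, $\hb=r+O(r^{-1})$, so $\hb\asymp b$ on this domain.

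The injectivity and volume bounds then come from the closeness of $g$ to $\hg$ at scale $\rho_x=b(x)^{-1}$, since $\hg$ has curvature $\lesssim b^{-2}$ and conical injectivity radius $\gtrsim b$, which is much larger than $\rho_x$. The ball inclusion $B_g(x,4\rho_x)\subset\iota(\Omega_{\lambda'L})$ is immediate from the collar.

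\emph{Stage 3: derivative bounds \eqref{s10:roughosc} and \eqref{s10:roughderivatives}.} The oscillation bound \eqref{s10:roughosc} follows from the gradient estimate of \cite[Theorem~1.1]{wang2023gradient} for conjugate heat kernels, $\tau|\nabla f|^2\le Cf+C$, or from Stage 1 directly. On $B(x,2\rho_x)$ this bound contributes at most $C\rho_x b\le C$ plus the variation of $\boldsymbol r^2/4$, which is $O(1)$, plus $\eta b^2$.

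For \eqref{s10:roughderivatives}, I will work with $u=e^{-f}$ as a conjugate heat equation solution on the parabolic cylinder of size $\rho_x$ ending at $-1$. I first rescale it by $e^{f(x)}$; the oscillation bound then makes the rescaled solution pinched between $e^{-C_\eta-\eta b^2}$ and $e^{C_\eta+\eta b^2}$ on this cylinder.

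The ball must be extended backward in time. This uses times before $-1$, which the flow provides on $[-2,-1]$. Curvature control there at scale $\rho_x$ follows from the $C^q$ control at $-1$ together with short-time backward estimates. Alternatively, one can use the elliptic identity $\tau(2\Delta f-|\nabla f|^2+R)+f-n=\mathcal W$ only at the level of the parabolic equation $\partial_t f=-\Delta f+|\nabla f|^2-R+n/2\tau$.

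Parabolic Schauder estimates in the rescaled cylinder then bound $\rho_x^j|\nabla^j u|/u$ by $C_{m,\eta}e^{C\eta b^2}$, up to polynomial factors in $b$. The Fa\`a di Bruno formula converts this into bounds on $\nabla^j f$ with a polynomial loss $b^{N_m}$ from the factor $\rho_x^{-j}$, and $\Phi$ is then controlled by the curvature and $\nabla^2 f$. Replacing $\eta$ by a small multiple gives the stated form.

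\emph{Part (ii).} This follows by fixing $q$ and taking $\eta$ small. Lemma~\ref{s10:annulardistance} gives $E_B\subset\iota(\Omega_{\lambda L})$ for $L\ge L_B$, and on $E_B$ we have $b\le C_B$. The intrinsic higher derivative bounds at arbitrary order $m$ cannot come from the $C^q$ hypothesis alone. I would instead obtain them from Shi-type interior estimates: the flow is smooth, and the curvature is bounded at scale $1$ on a backward parabolic neighborhood by pseudolocality and Lemma~\ref{s10:shell}. Applied backward to $[-2,-1]$, these estimates bound all orders with constants $C_{B,m}$ independent of $q$.
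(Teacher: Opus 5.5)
Your Stages 1 and 2 follow the paper's route: reproduction at a time $-\epsilon$ close to $0$, the mass of $\nu_{-\epsilon}$ near an $H_n$-center at bounded distance from $p$, and a sharp Gaussian lower bound for the forward kernel. The paper gets that bound from Perelman's diagonal estimate \cite[Corollary~9.5]{perelman2002entropy} plus the spatial comparison of \cite{fangli2026strong}. The lower bound on $f$ comes from \eqref{s10:kernelpoint} and \eqref{s10:distance}, as you say.

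The genuine gap is in Stage~3, which runs in the wrong time direction. The kernel satisfies $-\partial_tK=\Delta K-\mathrm RK$, equivalently $\partial_tf=-\Delta f+|\nabla f|^2-\mathrm R+\frac n{2\tau}$. This is a forward heat equation in $\theta=-t$. Interior parabolic estimates at $t=-1$ therefore need the solution on $B\times[-1,-1+r^2]$, that is, at times \emph{after} $-1$. On your cylinder $B\times[-1-\rho_x^2,-1]$, the slice $t=-1$ is the initial slice of the $\theta$-evolution, so no interior estimate applies there.

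The pinching claim also fails. Nothing in Stage~1 controls $f$ for $t<-1$, and \eqref{s10:roughosc} concerns the single slice $t=-1$. So nothing supports your pinching of $u$ on that cylinder.

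The repair is to use the later cylinder. This is why \eqref{s10:roughpotential} is proved on all of $[-1,-\frac12]$, and Lemma~\ref{s10:shell} and Proposition~\ref{s10:forwardcomparison} give the geometry there. One bounds $|\nabla^\ell K|(x,-1)/K(x,-1)$ by $C\sup_{\mathrm{cyl}}K/K(x,-1)$. Then \eqref{s10:roughpotential} bounds this ratio by $C_\xi e^{2\xi\boldsymbol r(x)^2+C\boldsymbol r(x)}$, so no two-sided pinching is needed.

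A related problem affects part~(ii). With $q$ fixed and $m$ arbitrary, Shi's local estimates at $t=-1$ need curvature bounds at times \emph{before} $-1$. Lemma~\ref{s10:shell} and Perelman's pseudolocality only control $t\ge-1$. You need the backward half of two-sided pseudolocality, \cite[Theorem~2.14]{fangli2026strong}. It is available because $-1$ is interior to $[-2,0)$ and the entropy is bounded below.

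The derivatives of $f$ and $\Phi$ in part~(ii), which your sketch does not address, must likewise come from interior estimates for $K$ in $\theta$ on the later cylinder. These use the resulting curvature bounds, the uniform upper bound for $K$, and $K(z;\cdot,-1)\ge(4\pi)^{-n/2}e^{-B}$ on $E_B$.

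Finally, \cite[Theorem~1.1]{wang2023gradient} is an elliptic estimate. It does not apply with uniform constants to the time-dependent kernel potential. Derive \eqref{s10:roughosc} from Stage~1 directly, as you also suggest and as the paper does.
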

\begin{proof}
We retain two further relative collars between $\lambda'$ and $1$ whenever applying the preceding estimates. They are fixed once, so this costs an arbitrarily small relative loss.

\smallskip\noindent\emph{(i):}
Choose $0<\epsilon<1/4$, to be made small relative to the desired loss, and apply the forward comparison through $-\epsilon$. Let $z_1$ be an $H_n$-center of $z$ at time $-\epsilon$. The set
\[
 E_1=B_{g(-\epsilon)}(z_1,\sqrt{2H_n\epsilon})
\]
has $\nu_{-\epsilon}$-mass at least $1/2$ and is a bounded distance from $p$. Fix $t=-\tau\in[-1,-1/2]$ and $x$ in the retained chart. Proposition~\ref{s10:forwardcomparison} bounds the curvature along $\{x\}\times[t,-\epsilon]$, so \cite[Corollary~9.5]{perelman2002entropy} gives
\[
 K(x,-\epsilon;x,t)\ge c_\epsilon(\tau-\epsilon)^{-n/2}.
\]
The forward heat-kernel spatial comparison used in \cite[proof of Proposition 6.3]{fangli2026strong} then gives, for any $a_1>1$,
\[
 K(w,-\epsilon;x,t)\ge c_{a_1,\epsilon}(\tau-\epsilon)^{-n/2}
 \exp\!\left(-\frac{1+a_1}{4a_1(\tau-\epsilon)}
                 d_{g(-\epsilon)}(w,x)^2\right).
\]
Its hypotheses are the entropy and scalar lower bounds and the displayed diagonal bound. The distance lemma bounds $d_{g(-\epsilon)}(w,x)$ by $(1+\delta)\boldsymbol r(x)+C$ for $w\in E_1$. Reproduction, restricted to $E_1$, therefore gives
\[
 K(z;x,t)\ge c\tau^{-n/2}
 \exp\!\left(-\frac{(1+a_1)(1+\delta)^2(1+\xi)}
                    {4a_1(1-\epsilon/\tau)}
                   \frac{\boldsymbol r(x)^2}{\tau}\right).
\]
The constant absorbs the additive distance term by Young's inequality with parameter $\xi$. Take $a_1$ large and then $\delta,\xi,\epsilon$ small. Since $\tau\ge1/2$, the coefficient is as close to $1/4$ as prescribed. This proves the upper potential bound. The lower bound follows from \eqref{s10:kernelpoint}, the center estimate, and the lower distance bound in \eqref{s10:distance}, again absorbing the additive constant with a smaller quadratic loss. The same calculation with bounded distances gives the core potential bound.

A coarse instance of \eqref{s10:roughpotential} gives
$\boldsymbol r\asymp b$ sufficiently far out on the end; on a fixed core, $b$ is uniformly bounded. The initial metric comparison and the model bound for $|\mathrm{d}\hb|_{\hg}$ give
\[
 |\mathrm{d}(\hb\circ\iota^{-1})|_g\le C
 \quad\text{on }\iota(\Omega_{\lambda'L}).
\]
Every curve from $x\in\iota(\Omega_{\lambda L})$ to the complement of this larger chart must cross its boundary. Integrating the displayed bound up to the first exit gives
\[
 d_g\bigl(x,M\setminus\iota(\Omega_{\lambda'L})\bigr)
 \ge c_0(\lambda'-\lambda)L.
\]
Since $b\ge2$, we have $4\rho_x=4/b(x)\le2$. Enlarge $L_0$ so that the last distance is greater than $2$. This proves the ambient ball inclusion in \eqref{s10:roughcurv} for the specified radius $\rho_x=b(x)^{-1}$. The initial geometry of the fixed AC model, the metric comparison, and the compact core bounds give the curvature estimate, $\inj_g(x)\ge c\rho_x$, and $\Vol_g B_g(x,\rho_x)\ge c\rho_x^n$, after adjusting the uniform constants.

The conjugate kernel satisfies $-\partial_tK=\Delta_{g(t)}K-\mathrm R_{g(t)}K$. Choose a small uniform $r_0>0$ so that the balls of radius $2r_0$ lie in the larger chart and $4r_0^2<1/2$. By standard interior parabolic regularity, applied in the forward variable $\theta=-t$, the finite-order geometry bounds give
\[
 \frac{|\nabla^\ell K|(z;x,-1)}{K(z;x,-1)}
 \le C_\ell\sup_{y,t}\frac{K(z;y,t)}{K(z;x,-1)},
\]
where the supremum is over $y\in B_g(x,2r_0)$ and $-1\le t\le-1+4r_0^2$. On the end, \eqref{s10:roughpotential} and $|\boldsymbol r(y)-\boldsymbol r(x)|\le C$ bound this ratio, for every small auxiliary loss $\xi>0$, by
\[
 C_\xi\exp\!\left((1/4+\xi)\boldsymbol r(x)^2
 -(1/4-\xi)(-t)^{-1}\boldsymbol r(y)^2\right)
 \le C_\xi e^{2\xi\boldsymbol r(x)^2+C\boldsymbol r(x)}.
\]
Differentiating $f=-\log K-\frac n2\log(4\pi(-t))$ and choosing $\xi$ small in terms of $m,\eta$ proves the potential derivative bounds in \eqref{s10:roughderivatives}. The definition of $\Phi$ gives the defect bounds. Polynomial factors can be absorbed into a smaller auxiliary Gaussian loss, so $N_m$ can be chosen independently of $\eta$. On the fixed core the kernel ratio is uniformly bounded.

For $x$ sufficiently far out on the end, the ball $B_g(x,2\rho_x)$ remains on the end, and the radial Lipschitz bound gives
\[
 |\boldsymbol r(y)-\boldsymbol r(x)|\le Cb(x)^{-1},\qquad
 \osc \boldsymbol r^2\le C,\qquad
 \sup \boldsymbol r^2\le Cb(x)^2+C.
\]
Write $f(y)=\boldsymbol r(y)^2/4+e(y)$ using \eqref{s10:roughpotential} with the smaller loss $\zeta_0$. For any two points in that ball,
\[
 |f(y)-f(y')|\le\frac14|\boldsymbol r(y)^2-\boldsymbol r(y')^2|
       +\zeta_0\bigl(\boldsymbol r(y)^2+\boldsymbol r(y')^2\bigr)
       +2C_{\zeta_0}
 \le C_{\zeta_0}+C\zeta_0 b(x)^2.
\]
Choose $\zeta_0\le\eta/C$. This proves \eqref{s10:roughosc}. For the remaining points, the ball lies in a fixed enlarged core, where the uniform potential bound applies.

\smallskip\noindent\emph{(ii):}
For the bounded-sublevel assertion, Lemma~\ref{s10:annulardistance} places the entire set $E_B$ in a retained chart with a fixed collar once $L\ge L_B$, where $L_B$ is independent of $m$. The $C^2$ geometric comparison supplies uniformly noncollapsed balls of a fixed radius with bounded curvature at every point of $E_B$. Since $t=-1$ is an interior time of $[-2,0)$, two-sided pseudolocality \cite[Theorem~2.14]{fangli2026strong} and local Shi estimates give all intrinsic curvature derivatives on smaller fixed spacetime cylinders. The geometric tolerance, radii, and threshold $L_B$ are independent of the derivative order.

Standard interior parabolic estimates for the conjugate kernel in the variable $\theta=-t$, together with its uniform upper bound, give all spatial kernel derivatives at $t=-1$. On $E_B$ one has $K(z;x,-1)\ge(4\pi)^{-n/2}e^{-B}$; the first derivative bound preserves a positive lower bound on a neighborhood of radius depending only on $B$ and the fixed data. Logarithmic differentiation then gives every intrinsic derivative of $f$, and $\Phi=\frac12g-\Ric_g-\nabla_g^2f$ gives those of the defect. Thus the asserted bounds hold on $E_B$ and on such neighborhoods, with constants $C_{B,m}$, while $q,\sigma_*,L_B$ remain independent of $m$.
\end{proof}

\subsection{Single-time estimates and product coordinates}
\label{s10:product}

All statements in this subsection concern $(g,f)=(g(-1),f(-1))$ from Subsection~\ref{s10:setup}. Constants are uniform over the Ricci flows under consideration, after the finite orders, collars, and losses are fixed.

The following lemma is analogous to \cite[Proposition~6.13]{fangli2026strong}, using the instantaneous defect norm.

\begin{Lemma}\label{s10:interpolation}
Fix an integer $k\ge0$, $0<a<1$, and $\eta>0$. Fix a relative collar. Choose $q$ large, $\sigma$ small, and the geometric-radius threshold large enough for Proposition~\ref{s10:rough}(i) at the required orders and losses. There is a nonnegative integer $N=N(n,k,a)$, independent of $\eta$ and of the radius parameters, such that on that retained part of an embedding for $r_C$ one has
\begin{equation}\label{s10:instant}
 \sum_{j=0}^kb^{2+j}|\nabla^j\Phi|(x)
 \le Cb^N
 \exp\!\left(-\frac a8\bigl(r_E^2-b(x)^2\bigr)+\eta b(x)^2\right)
\end{equation}
when $0<\mathcal D<1$. The constant $C$ may also depend on $\eta$, the fixed collars, the entropy bound, and the reference shrinker. The exponent $N$ is independent of $L$, $r_E$, the point $x$, and the particular flow. In particular, for every $\sigma_1>0$, after fixing sufficiently small rough losses,
\begin{equation}\label{s10:instantuniform}
 \sum_{j=0}^kb^{2+j}|\nabla^j\Phi|
 \le CR_*^N e^{-c_{\sigma_1}R_*^2}
 \quad\text{on }\{b\le(1-\sigma_1)R_*\},
 \qquad R_*=\min\{r_C,r_E\},
\end{equation}
with the fixed additive collar losses understood. The set in this assertion is the entire sublevel, by Lemma~\ref{s10:annulardistance}. More generally, without restricting $\mathcal D$ to be less than one, for every $\kappa_1>a/2$,
\begin{equation}\label{s10:weightedvelocityintrinsic}
 \sum_{j=0}^k|\nabla^j\Phi|(x)
 \le C_{k,a,\kappa_1}e^{\kappa_1(f(x)+C_*)}\mathcal D^a
\end{equation}
on $\iota(\Omega_{\lambda L})$, where $L<r_C$ and $\iota$ is an embedding satisfying \eqref{s10:Cdefinition}; the larger chart $\iota(\Omega_{\lambda'L})$ supplies the collar. The orders, tolerance, and rough losses are chosen for $k,a,\kappa_1$ and these fixed collars.
\end{Lemma}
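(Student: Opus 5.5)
The approach is pointwise interpolation between the global $L^2(\mathrm dV_f)$ smallness $\mathcal D$ and the rough high-order bounds of Proposition~\ref{s10:rough}(i), carried out on the balls $B_g(x,\rho_x)$, $\rho_x=b(x)^{-1}$, which are controlled by \eqref{s10:roughcurv}. This is the single-time analogue of \cite[Proposition~6.13]{fangli2026strong}.

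Fix $x\in\iota(\Omega_{\lambda L})$. First convert the weighted $L^2$ bound into an unweighted local one. By \eqref{s10:roughosc}, $f\le f(x)+C_\eta+\eta b^2$ on $B_g(x,2\rho_x)$. Also $f(x)=b(x)^2/4-C_*$ by the definition \eqref{s10:radial}. Hence
\[
 \int_{B_g(x,\rho_x)}|\Phi|^2\,\mathrm dV_g\le C_\eta e^{b^2/4+\eta b^2}\mathcal D^2 .
\]
Next pass to the rescaled ball $\rho_x^{-2}g$, which has unit size. There \eqref{s10:roughcurv} gives bounded geometry: curvature bounds, an injectivity radius lower bound, and noncollapsing. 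Estimate \eqref{s10:roughderivatives} at a high order $m\gg k$ bounds the $C^{m}$ norm of $\Phi$ by $C b^{N_m}e^{\eta b^2}$, after including the scaling factors $\rho_x^{-j}=b^j$. Standard Gagliardo--Nirenberg interpolation on a unit ball of bounded geometry between $L^2$ and $C^m$ then gives, for derivatives of order $j\le k$,
\[
 |\nabla^j\Phi|(x)\le C\,\|\Phi\|_{L^2}^{a}\,\|\Phi\|_{C^m}^{1-a},
\]
with $a$ close to $1$ once $m$ is large in terms of $k$ and $a$. The $L^2$ norm here is the rescaled one, so it acquires a polynomial factor $b^{n/2}$.

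Inserting the two bounds yields
\[
 |\nabla^j\Phi|(x)\le Cb^{N}\,e^{\frac a8 b^2+C\eta b^2}\,\mathcal D^a .
\]
After renaming $\eta$, this is \eqref{s10:weightedvelocityintrinsic}, since $e^{\frac a8b^2}\asymp e^{\frac a2(f+C_*)}$; any $\kappa_1>a/2$ absorbs both the polynomial factor and the $\eta$-loss. When $0<\mathcal D<1$, substituting $\mathcal D^a=e^{-ar_E^2/8}$ gives \eqref{s10:instant}, and the weights $b^{2+j}$ only enlarge $N$. The exponent $N$ comes from $N_m$, $n/2$ and $k+2$, so it depends only on $(n,k,a)$ and not on $\eta$.

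For \eqref{s10:instantuniform}, take $b\le(1-\sigma_1)R_*$ and suppose $L<r_C$ is close to $R_*$. Lemma~\ref{s10:annulardistance} places this whole sublevel in the retained chart. The exponent is at most
\[
 -\tfrac a8\bigl(1-(1-\sigma_1)^2\bigr)R_*^2+\eta R_*^2 .
\]
First choose $a$ near $1$, then $\eta$ small in terms of $\sigma_1$, and the exponent becomes $\le -c_{\sigma_1}R_*^2$.

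The main obstacle is the interpolation step. The $C^m$ bound from Proposition~\ref{s10:rough}(i) must hold at an order $m$ large enough that the interpolation exponent reaches $a$. Yet the polynomial exponent $N_m$ must stay independent of $\eta$, and the order $q$ and tolerance $\sigma$ are chosen only afterwards. A second point to check is that the oscillation bound \eqref{s10:roughosc} costs only $\eta b^2$ in the exponent rather than a multiple of $b^2$. If it cost more, it would destroy the sharp coefficient $a/8$.
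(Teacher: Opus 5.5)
Your proposal is correct and follows essentially the paper's route. The oscillation bound \eqref{s10:roughosc} turns $\mathcal D$ into an unweighted $L^2$ bound on $B_g(x,2\rho_x)$ with factor $e^{b^2/8+\eta b^2}$, interpolation on the rescaled ball against \eqref{s10:roughderivatives} produces $\mathcal D^a$ with $N$ independent of $\eta$, and the identities $b^2=4(f+C_*)$ and $\mathcal D=e^{-r_E^2/8}$ give the stated forms. Two small precisions: the interpolation exponent only needs to exceed $a$, after which you reduce to $a$ because the rescaled $L^2$ norm is at most a constant times the rescaled $C^m$ norm; and in \eqref{s10:instantuniform} you should not move $a$ toward $1$, since $a$ is fixed and it suffices to take the rough loss $\eta<\frac a8\bigl(1-(1-\sigma_1)^2\bigr)$.
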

\begin{proof}
Put $\rho_x=b(x)^{-1}$ and $\mathcal B_x=B_g(x,2\rho_x)$. Proposition~\ref{s10:rough}(i) supplies the larger ball $B_g(x,4\rho_x)$ and uniform local geometry after rescaling by $\rho_x^{-2}$. Potential oscillation gives
\begin{equation}\label{s10:unweightedL2}
 \|\Phi\|_{L^2(\mathcal B_x,\mathrm dV_g)}
 \le C_\eta e^{b(x)^2/8+\eta b(x)^2}\mathcal D.
\end{equation}
Choose $m=m(n,k,a)$ sufficiently large and put
\[
 M_x=\sum_{i=0}^m\rho_x^i\|\nabla^i\Phi\|_{L^\infty(\mathcal B_x)},
 \qquad L_x=\rho_x^{-n/2}\|\Phi\|_{L^2(\mathcal B_x)}.
\]
The local interpolation argument of \cite[Appendix~B, Lemma~B.1]{colding2015uniqueness}, applied to tensor components on a uniformly controlled coordinate ball and using H\"older's inequality to pass from $L^1$ to $L^2$, gives
\[
 \rho_x^j|\nabla^j\Phi|(x)\le C M_x^{1-a}L_x^a,
 \qquad 0\le j\le k.
\]
Indeed, for sufficiently large $m$ the interpolation exponent is greater than $a$; when $L_x\le M_x$ this gives the displayed bound, and when $L_x>M_x$ the independent bound by $M_x$ suffices.

For every small auxiliary loss $\xi>0$, \eqref{s10:roughderivatives} and \eqref{s10:unweightedL2} give
\[
 M_x\le C b(x)^{N_m}e^{\xi b(x)^2},\qquad
 L_x\le C b(x)^{n/2}e^{(1/8+\xi)b(x)^2}\mathcal D.
\]
Here the rough loss is first reduced so that replacing $b(y)$ by its bound in terms of $b(x)$ on $\mathcal B_x$ costs at most $\xi b(x)^2$. Since $N_m$ is independent of this loss, interpolation yields a fixed integer $N=N(n,k,a)$ such that
\begin{equation}\label{s10:intermediatevelocity}
 \sum_{j=0}^k b^{2+j}|\nabla^j\Phi|(x)
 \le Cb^N e^{(a/8+\eta)b^2}\mathcal D^a,
\end{equation}
after choosing $\xi<\eta$. Substituting $\mathcal D=e^{-r_E^2/8}$ proves \eqref{s10:instant}. On $b\le(1-\sigma_1)R_*$, the negative quadratic term dominates the chosen rough losses, proving \eqref{s10:instantuniform}.

For \eqref{s10:weightedvelocityintrinsic}, use $b^2=4(f+C_*)$ in \eqref{s10:intermediatevelocity} and absorb the polynomial and the arbitrarily small Gaussian loss into $\kappa_1-a/2$. For $\mathcal D\ge1$ the rough estimate gives the same conclusion. When $\mathcal D=0$, positivity of the density implies $\Phi=0$ on the whole slice.
\end{proof}

\begin{Lemma}\label{s10:corematching}
Fix a model compact set $\Omega_{2A}$, a finite order $\ell$, and $\delta_0>0$. There are a finite order $q_0$, $\sigma_0>0$, and $L_0<\infty$ such that, if an embedding $\iota$ in Definition~\ref{s10:defC} has $q\ge q_0$, $\sigma\le\sigma_0$, and
\[
 L\ge L_0,\qquad r_E\ge L_0,
\]
then
\begin{equation}\label{s10:corepair}
 \|\iota^*g-\hg\|_{C^{\ell}(\Omega_{2A})}
 +\|\iota^*f-\hf\|_{C^{\ell}(\Omega_{2A})}<\delta_0.
\end{equation}
Consequently a large regular inner model level can be matched to the actual potential level, with induced metric, lapse, and coordinate change as small as prescribed.
\end{Lemma}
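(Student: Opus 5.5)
We argue by contradiction and compactness, in the style of the core step in Proposition~\ref{s10:forwardcomparison}. Suppose the statement fails for fixed $\Omega_{2A}$, $\ell$, $\delta_0$. Then there are flows $g_i(t)$ with entropy bounded below by $-Y$, completion points $z_i$, and embeddings $\iota_i$ with $q_i\to\infty$, $\sigma_i\to0$, $L_i\to\infty$ and $r_{E,i}\to\infty$, for which \eqref{s10:corepair} fails. Write $g_i=\iota_i^*g_i(-1)$ and $f_i=\iota_i^*f_i(-1)$ on $\Omega_{L_i}$.

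\emph{Compactness.} The metric part of \eqref{s10:corepair} is immediate once $q_i\ge\ell$ and $\sigma_i<\delta_0/2$, so only the potential needs argument. Proposition~\ref{s10:rough}(i) gives uniform local bounds at time $-1$: on every fixed $\Omega_R$ the potential is bounded (by the core bound together with \eqref{s10:roughpotential}), and \eqref{s10:roughderivatives} bounds all its derivatives up to any fixed order. Arzel\`a--Ascoli and a diagonal argument then give $f_i\to f_\infty$ in $C^\infty_{\mathrm{loc}}(\hM)$ along a subsequence. The limit satisfies $f_\infty(\hat p)\le n$ and the quadratic two-sided bound $|f_\infty-r^2/4|\le\eta r^2+C_\eta$ for every $\eta>0$.

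\emph{Identification of the limit.} First we pass the soliton equation to the limit. By Lemma~\ref{s10:interpolation}, \eqref{s10:instantuniform} holds with $R_*\to\infty$, so $\Phi(g_i,f_i)\to0$ in $C^k$ on every fixed compact set. Hence $\Ric_{\hg}+\hnd^2f_\infty=\frac12\hg$, and $u=f_\infty-\hf$ satisfies $\hnd^2u=0$. So $\mathrm du$ is parallel, and nonsplitting of the nontrivial AC shrinker forces $u$ to be a constant $c$.

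Next we fix the constant by normalization. The kernel tail estimate \eqref{s10:kerneltail} together with Lemma~\ref{s10:annulardistance} shows that the measures $\mathrm dV_{f_i}$ are tight on $\iota_i(\Omega_R)$ as $R\to\infty$, uniformly in $i$. Unit mass therefore passes to the limit, giving $\int_{\hM}(4\pi)^{-n/2}e^{-\hf-c}\,\mathrm dV_{\hg}=1$, hence $c=0$. Since every subsequence has the same limit $\hf$, the whole sequence converges, contradicting the assumed failure of \eqref{s10:corepair}.

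\emph{Matching of levels.} The consequence follows from regularity of the potential on the end. For large $A$, the level $\{\hf=A^2/4+\boldsymbol\mu(\hg)\}$ is regular, with $|\mathrm d\hf|\asymp A$. By the implicit function theorem, $C^\ell$-closeness of $f$ to $\hf$ on $\Omega_{2A}$ makes the corresponding actual level a small normal graph over the model level. Its induced metric, the lapse $|\mathrm df|^{-1}$, and the coordinate change are then small in $C^{\ell-1}$.

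\emph{Main obstacle.} The delicate step is the tightness used to conclude $c=0$. It requires the uniform Gaussian tail and the statement that the $H_n$-center stays a bounded distance from $\iota_i(\hat p)$; this is exactly where the condition $f(\iota(\hat p),-1)\le n$ from Definition~\ref{s10:defC} enters. If that control were unavailable, the alternative is to use the entropy lower bound to exclude loss of mass, as in the escape argument of Theorem~\ref{thm:asymptotic-estimates-on-minimizer}.
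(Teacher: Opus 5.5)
Your proposal is correct and follows essentially the same route as the paper. Both arguments argue by contradiction with $\sigma_i\to0$ and $L_i,r_{E,i}\to\infty$, take a compactness limit of the potentials from Proposition~\ref{s10:rough}(i), pass the soliton equation to the limit to get $\hnd^2(f_\infty-\hf)=0$ and hence $f_\infty=\hf+c$ by nonsplitting, fix $c=0$ by unit mass using the first-exit estimate \eqref{s10:Couter} with the Gaussian tail bound, and finish with the implicit function theorem on a large regular level. The only difference is cosmetic: you get $\Phi(g_i,f_i)\to0$ locally from the pointwise estimate \eqref{s10:instantuniform}, whereas the paper obtains the limiting equation directly from $\mathcal D_i\to0$ using two-sided local density bounds; either works.
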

\begin{proof}
If the assertion failed, take $\sigma_i\to0$, $L_i\to\infty$, and $r_{E,i}\to\infty$ while keeping a failing compact set and tolerance fixed. Thus $\mathcal D_i\to0$. The potential and derivative bounds of Proposition~\ref{s10:rough}(i) give, after a diagonal subsequence, a limit $f_\infty$ on all of $\hM$. The metric limits are $\hg$. Since the kernel densities have positive upper and lower bounds on each compact set, $\mathcal D_i\to0$ implies in distributions, and hence smoothly after regularity,
\[
 \Ric_{\hg}+\hnd^2 f_\infty=\frac12\hg.
\]
It follows that $\hnd^2(f_\infty-\hf)=0$. As in \cite[Section~2.3]{LYZ-I}, AC curvature decay excludes a line factor unless the shrinker is Gaussian. Hence $f_\infty=\hf+c$.

The mass normalization fixes the constant. The first-exit estimate \eqref{s10:Couter} at $t=-1$ and the Gaussian mass bound give
\[
 \int_{M_i\setminus\iota_i(\Omega_R)}\mathrm dV_{f_i}\le C e^{-cR^2}
\]
for each large fixed $R$ and all sufficiently large $i$, with constants independent of $i,R$. Pass to the limit on $\Omega_R$ and then let $R\to\infty$. The limiting density has total mass one, so
$1=(4\pi)^{-n/2}\int e^{-\hf-c}\,\mathrm dV_{\hg}=e^{-c}$.
Thus $c=0$. The finite-order compact convergence contradicts the failure of \eqref{s10:corepair}. This proves the quantitative assertion by compactness. On the regular level $\hb=A$, the implicit function theorem and the two radial flows now give the last statement.
\end{proof}

\begin{Proposition}\label{s10:potentialcoords}
Fix an integer $k\ge0$. Suppose an attached component $\Omega$ of $\{A\le b\le B\}$ has compact closure, boundary contained in its two potential levels, and a compact regular inner level $\Sigma_A$. Assume
\begin{equation}\label{s10:prodassump}
 cA\le|\nabla f|\le CA\quad\text{on }\Sigma_A,
 \qquad
 b^{2+j}|\nabla^j\Rm|\le C_j,\quad
 b^{2+j}|\nabla^j\Phi|\le\epsilon_0
\end{equation}
for $0\le j\le k+6$. After an identification with the reference inner level, assume also that the induced metric and lapse on $\Sigma_A$ are $\delta_A$-close to the reference data in scaled $C^{k+6}$ norms. The induced metric is divided by $A^2$ in this comparison, and the lapse is $|\nabla b|^{-1}$. For $A$ large and $\delta_A,\epsilon_0$ small, the following hold.
\begin{enumerate}[label=(\roman*)]
\item The vector field
\begin{equation}\label{s10:radialfield}
 V_b=\frac{\nabla b}{|\nabla b|^2}
     =\frac b{2|\nabla f|^2}\nabla f
\end{equation}
gives a diffeomorphism $\Sigma_A\times[A,B]\to\Omega$. Matching it with the reference potential coordinates gives
\begin{equation}\label{s10:productbound}
 \begin{gathered}
 f\circ\Psi=\hf\quad\text{on }\{A\le\hb\le B\},\\
 \|\Psi^*g-\hg\|_{C^{k+3}_0(\{A\le\hb\le B\})}
 \le C_k(\delta_A+A^{-2}+\epsilon_0).
 \end{gathered}
\end{equation}
\item\label{s10:relative} In these matched potential coordinates, suppress the pullback and write
\[
 g=a^2\mathrm db^2+b^2\gamma_b,\qquad
 \hg=\hat a^2\mathrm db^2+b^2\hat\gamma_b,\qquad
 f=\hf=b^2/4-C_*.
\]
Define
\begin{equation}\label{s10:Q}
 Q=\mathrm R_g+|\nabla f|^2-f,\qquad
 \hat Q=\mathrm R_{\hg}+|\hnd\hf|^2-\hf.
\end{equation}
Let $\delta$ be a bound for $\|g-\hg\|_{C^{k+3}_0(A\le b\le B)}$, and put
\[
 \epsilon_B=\sum_{j=0}^{k+2}\sup_{A\le b\le B}b^{2+j}|\nabla^j\Phi|.
\]
Set
\begin{equation}\label{s10:etaB}
 \kappa_A=\sup_{\Sigma_A}|Q-\hat Q|,\qquad
 h_B=\gamma_B-\hat\gamma_B,\qquad
 \eta_B=\delta+\kappa_A+\epsilon_B(1+\log(B/A)).
\end{equation}
Extend $h_B$ independently of $b$ on the fixed product and write
\[
 g-\hg=b^2h_B+w_B.
\]
Then
\begin{align}
 \|w_B\|_{C^{k}_{-2}(A\le b\le B)}
 &\le C_k\eta_B,\label{s10:relativebound}\\
 \|h_B-(\gamma_A-\hat\gamma_A)\|_{C^{k}(\Sigma_A)}
 &\le C_k\eta_B A^{-2}.\label{s10:conetrace}
\end{align}
\end{enumerate}
The constants depend on the stated curvature bounds and the reference geometry, and are independent of $B$.
\end{Proposition}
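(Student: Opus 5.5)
The plan is to build the product structure by flowing the inner level along $V_b$, and then to compare the two sets of potential coordinates using the structure equations. The soliton defect controls how far $g$ is from a gradient shrinker. The reference identities then control each ODE in $b$.

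For (i), first note that $V_b b=1$ by construction. So the flow of $V_b$ from $\Sigma_A$ maps level sets to level sets, provided $|\nabla f|$ stays bounded below on $\Omega$. To check this, use the defect equation $\nabla^2 f=\frac12 g-\Ric-\Phi$ along integral curves:
\[
 \partial_b |\nabla f|^2=2\nabla^2f(V_b,\nabla f)=\frac{b}{|\nabla f|^2}\bigl(\tfrac12|\nabla f|^2-\Ric(\nabla f,\nabla f)-\Phi(\nabla f,\nabla f)\bigr).
\]
With $|\Ric|+|\Phi|\le Cb^{-2}$, this gives $|\nabla f|^2=\frac{b^2}{4}(1+O(\delta_A+A^{-2}+\epsilon_0))$ by a continuity argument starting from $\Sigma_A$. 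Compactness of $\overline\Omega$ and the boundary assumption then make the flow a diffeomorphism $\Sigma_A\times[A,B]\to\Omega$.

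Next, define $\Psi$ by composing with the analogous reference flow of $\hat V_{\hb}$ and the given identification of inner levels; then $f\circ\Psi=\hf$ automatically. For the metric bound, write $\Psi^*g=a^2\mathrm db^2+b^2\gamma_b$. The lapse satisfies $a=|\nabla b|^{-1}=b/(2|\nabla f|)$, which is controlled by the ODE above. The second fundamental form of the level sets is $\nabla^2 f/|\nabla f|$ restricted to the level, that is $(\frac12 g-\Ric-\Phi)/|\nabla f|$, so
\[
 \partial_b(b^2\gamma_b)=2a\,\mathrm{II}=\frac{b}{|\nabla f|^2}\bigl(g-2\Ric-2\Phi\bigr)|_{T\Sigma}.
\]
This yields $\partial_b\gamma_b=O(b^{-3})$ in scaled norms, both for $g$ and for $\hg$. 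Hence $\gamma_b-\hat\gamma_b$ stays $\le C(\delta_A+A^{-2}+\epsilon_0)$. Here the $A^{-2}$ comes from $\int_A^\infty b^{-3}$, the $\epsilon_0$ from $b^{2}|\Phi|$, and the $\delta_A$ from the initial data. Tangential derivatives follow by differentiating these ODEs along $\Sigma$ and using the curvature bounds through order $k+6$; this costs three derivatives, which gives the stated order $k+3$.

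For (ii), the aim is the sharper statement that $g-\hg$ is a $b$-independent cone variation plus an $O(b^{-2})$ error. Take the difference of the two $\gamma$ ODEs. The error $\Ric_g-\Ric_{\hg}$ is where the cone variation enters. The key tool is the quantity $Q$: on a shrinker $\hat Q$ is constant, and for $g$ one has $\nabla Q=2\,\delta_f\Phi$-type terms plus $\Phi*\nabla f$, i.e. $|\nabla Q|\lesssim b\epsilon_B b^{-2}\cdot b$. Integrating in $b$ from $\Sigma_A$ gives
\[
 |Q-\hat Q|\le \kappa_A+C\epsilon_B\log(B/A).
\]
The $\log$ is exactly the integral $\int_A^B \mathrm db/b$. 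Since $|\nabla f|^2=f+Q-\mathrm R$, this controls the lapse difference $a-\hat a$ at order $b^{-2}\eta_B$.

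Feeding this into the $\gamma$ equation, the difference $\partial_b(\gamma_b-\hat\gamma_b)$ is $b^{-3}$ times a linear expression in $(g-\hg)$ plus $O(b^{-3}\eta_B)$. Integrating from $b$ to $B$ gives $\gamma_b-\gamma_B=O(\eta_B b^{-2})$, which is \eqref{s10:relativebound} for the tangential part; integrating from $A$ to $B$ gives \eqref{s10:conetrace}. The radial and mixed components of $w_B$ come from the lapse, and vanish by construction of the coordinates.

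The main obstacle is the bound for $\Ric_g-\Ric_{\hg}$. Naively it is only $O(\delta b^{-2})$, which after integration produces $\delta$ itself rather than a weight $-2$ remainder. I would handle this by showing that the part of $\Ric_g$ linear in the cone variation $b^2h_B$ is $O(b^{-2}\delta)$ in scaled units, and that once the decaying factor is divided out it enters $\partial_b\gamma$ at order $b^{-3}$. Extending $h_B$ constantly is consistent with this, so the Ricci error contributes only $\delta\cdot b^{-2}$ to $w_B$. This is the step whose bookkeeping I expect to require the full $C^{k+3}$ control and the loss of derivatives down to $k$.
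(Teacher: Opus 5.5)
Your route is the paper's: flow the inner level along $V_b$, control the lapse through $Q$ and the contracted Bianchi identity, and integrate the level-metric equation. For (ii) you subtract the two level equations, use $H-\hat H=Q-\hat Q-(\mathrm R_g-\mathrm R_{\hg})$, and integrate from $b$ to $B$ and from $A$ to $B$.

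\textbf{The obstacle in your last paragraph does not exist.} The naive bound $|\Ric_g-\Ric_{\hg}|_{\hg}\le C\delta b^{-2}$ is exactly what the paper uses.

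\emph{Scaling.} Restrict the Ricci difference to the level and measure it in $\gamma_b$, where $|U|_{\gamma_b}=b^2|U|_{g_b}$. It then has size $C\delta$.

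\emph{Contribution to the ODE.} The level equation is
\[
 \partial_b\gamma_b=\frac{2(a^2-1)}{b}\gamma_b+\frac{4a^2}{b^3}(-\Ric-\Phi)^\top .
\]
So the Ricci difference contributes $O(\delta b^{-3})$ to $\partial_b(\gamma_b-\hat\gamma_b)$.

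\emph{Integration.} Over $[b,B]$ this gives $O(\delta b^{-2})$, which is precisely the weight $-2$ remainder in $w_B$.

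Your preceding paragraph already had this right. No separate analysis of the part of $\Ric_g$ linear in $b^2h_B$ is needed.

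\textbf{Two slips break the cancellations you rely on.}

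First, $2a\,\mathrm{II}=\frac{b}{|\nabla f|^2}\nabla^2 f=\frac{b}{|\nabla f|^2}\bigl(\tfrac12 g-\Ric-\Phi\bigr)^\top$, not $\frac{b}{|\nabla f|^2}(g-2\Ric-2\Phi)^\top$. With your factor, the leading term is $4b\gamma_b$. It would not cancel $\partial_b(b^2)\gamma_b=2b\gamma_b$, and $\partial_b\gamma_b$ would not be $O(b^{-3})$.

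Second, the correct bound is $|\nabla Q|\lesssim\epsilon_B b^{-1}$, dominated by $\Phi(\nabla f,\cdot)$. Your expression $b\,\epsilon_B b^{-2}\cdot b$ equals $\epsilon_B$, which would integrate to a linear term in $b$. Only the $b^{-1}$ decay produces the $\log(B/A)$ in $\eta_B$.
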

\begin{proof}
\noindent\emph{(i):}
\smallskip\noindent\emph{Step 1.}
The identity
\begin{equation}\label{s10:hessian}
 \nabla^2f=\frac12g-\Ric-\Phi
\end{equation}
gives $\frac13g\le\nabla^2f\le\frac23g$ on the exterior component. Along a trajectory of $V_b$, parametrized by $b$,
\[
 \frac b3\le\frac{\mathrm d}{\mathrm db}|\nabla f|^2
 =\frac b{|\nabla f|^2}\nabla^2f(\nabla f,\nabla f)\le\frac{2b}3.
\]
The inner gradient bounds imply $cb^2\le|\nabla f|^2\le Cb^2$, so the normal flow continues from $\Sigma_A$ to the outer level. Its image is open by the inverse function theorem and closed by compactness of $\Sigma_A\times[A,B]$. Connectedness gives coverage of the attached component, and ODE uniqueness gives injectivity. Thus these trajectories define the asserted product coordinates.

\smallskip\noindent\emph{Step 2.}
The contracted Bianchi identity gives
\begin{equation}\label{s10:dQ}
 \mathrm dQ=-2\mathrm d(\tr_g\Phi)+2\delta^g\Phi
       -2\Phi(\nabla f,\cdot).
\end{equation}
Hence $|V_bQ|\le C\epsilon_0/b$. Initial lapse matching gives $|Q|\le C(1+A^2\delta_A)$ on $\Sigma_A$. With
\[
 L_A(b)=1+A^2\delta_A+\epsilon_0\log(b/A),
\]
we obtain
\begin{equation}\label{s10:Hbounds}
 |Q|\le CL_A(b),\qquad |\nabla f|^2=\frac{b^2}4+O(L_A(b)),\qquad
 |\nabla b|^2=1+O(L_A(b)b^{-2}).
\end{equation}
Write $g=a^2\mathrm db^2+g_b$, $\gamma_b=b^{-2}g_b$, and $a^2=b^2/(4|\nabla f|^2)$. The exact level-metric equation is
\begin{equation}\label{s10:leveleq}
 \partial_b\gamma_b=\frac{2(a^2-1)}b\gamma_b
              +\frac{4a^2}{b^3}(-\Ric-\Phi)^\top,
\end{equation}
where ${}^\top$ denotes restriction to vectors tangent to the level $\Sigma_b$. We view these tensors on the fixed manifold $\Sigma_A$ through the potential flow and put
\[
 u=a^2-1,\qquad \mathcal S=(-\Ric-\Phi)^\top,\qquad
 a^2=\left(1+\frac{4(Q-\mathrm R_g-C_*)}{b^2}\right)^{-1}.
\]
Then $|u|\le CL_A(b)b^{-2}$ and $|\mathcal S|_{\gamma_b}\le C$, since a covariant two-tensor has norm $|U|_{\gamma_b}=b^2|U|_{g_b}$. Consequently,
\[
 |\partial_b\gamma_b|_{\gamma_b}\le CL_A(b)b^{-3},\qquad
 \int_A^\infty L_A(b)b^{-3}\,\mathrm db
 =\frac{1+A^2\delta_A}{2A^2}+\frac{\epsilon_0}{4A^2}.
\]
This proves uniform equivalence of $\gamma_b$ and $\gamma_A$ and a zeroth-order difference bounded by $C(\delta_A+A^{-2}+\epsilon_0A^{-2})$, independently of $B$.

\smallskip\noindent\emph{Step 3.}
\phantomsection\label{idx:level-connection}
Let $\nabla^b=\nabla^{g_b}=\nabla^{\gamma_b}$ and $\nabla^{b,j}=(\nabla^b)^j$. The normal $\mathbf n=\nabla f/|\nabla f|$ and tangential projection $\Pi$ satisfy
$|\nabla^\ell\mathbf n|+|\nabla^\ell\Pi|\le C_\ell b^{-\ell}$ through the required orders, by \eqref{s10:hessian} and the gradient bounds. In particular, the second fundamental form satisfies $|\mathrm{II}_b|\le Cb^{-1}$. Repeated tangential projection of the ambient connection gives, for an ambient covariant tensor $U$ and a scalar function $\phi$,
\[
 \begin{aligned}
 |\nabla^{b,j}(U^\top)|_{g_b}
 &\le C_j\sum_{\ell=0}^j b^{\ell-j}|\nabla^\ell U|_g,\\
 |\nabla^{b,j}\phi|_{g_b}
 &\le C_j\sum_{\ell=1}^j b^{\ell-j}|\nabla^\ell\phi|_g
 \qquad(j\ge1).
 \end{aligned}
\]
Differentiating \eqref{s10:dQ} gives $|\nabla^jQ|_g\le C_j\epsilon_0b^{-j}$ for positive $j$. Using the restriction estimates and rescaling every covariant index, we obtain
\[
 |\nabla^{b,j}\mathrm R_g|_{\gamma_b}\le C_jb^{-2},\qquad
 |\nabla^{b,j}\mathcal S|_{\gamma_b}\le C_j,\qquad
 |\nabla^{b,j}Q|_{\gamma_b}\le C_j\epsilon_0\quad(j\ge1).
\]
The first two bounds also hold for $j=0$. The reciprocal formula for $a^2$, whose denominator is uniformly positive, therefore gives
\[
 |\nabla^{b,j}u|_{\gamma_b}\le C_j(\epsilon_0b^{-2}+b^{-4})
 \qquad(1\le j\le k+5).
\]
The factor $A^2\delta_A$ affects the value of $Q$ but does not enter these positive tangential derivatives. Since $\nabla^b\gamma_b=0$, differentiating \eqref{s10:leveleq} now yields
$|\nabla^{b,j}(\partial_b\gamma_b)|_{\gamma_b}\le C_jb^{-3}$ for $1\le j\le k+5$.

Set $\nabla^A=\nabla^{\gamma_A}$ and $E_b=\nabla^b-\nabla^A$. Connection variation in the fixed product coordinates gives
\[
 \partial_b\Gamma(\gamma_b)^\ell_{ij}
 =\frac12\gamma_b^{\ell p}
 \bigl(\nabla^b_i\partial_b\gamma_{jp}
      +\nabla^b_j\partial_b\gamma_{ip}
      -\nabla^b_p\partial_b\gamma_{ij}\bigr).
\]
Thus $|\nabla^{b,j}(\partial_bE_b)|_{\gamma_b}\le C_jb^{-3}$ for $0\le j\le k+4$. Converting to $\nabla^A$ introduces only lower derivatives of $E_b$. Induction, integration from $E_A=0$, and the integrability of $b^{-3}$ give
\[
 \sup_{A\le b\le B}|(\nabla^A)^jE_b|_{\gamma_A}\le C_jA^{-2}
 \qquad(0\le j\le k+4).
\]
Metric compatibility, $\nabla^A\gamma_b=E_b*\gamma_b$, then controls the positive tangential derivatives. Together with Step 2 this proves
\[
 \sup_{A\le b\le B}\|\gamma_b-\gamma_A\|_{C^{k+4}(\Sigma_A,\gamma_A)}
 \le C_k(\delta_A+A^{-2}+\epsilon_0).
\]

\smallskip\noindent\emph{Step 4.}
\phantomsection\label{idx:scaled-radial}
Use $D_b=b\partial_b$ on tensors pulled back to $\Sigma_A$. The formula for $V_b$ and \eqref{s10:hessian} give $|\nabla^jV_b|_g\le C_jb^{-j}$. For tangent coordinate vectors $Z_i,Z_j$ transported by the flow, $[V_b,Z_i]=0$; with $\mathcal S_{\mathrm{amb}}=-\Ric-\Phi$, a representative calculation is
\[
 \begin{aligned}
 \partial_b\bigl(\mathcal S_{\mathrm{amb}}(Z_i,Z_j)\bigr)
 ={}&(\nabla_{V_b}\mathcal S_{\mathrm{amb}})(Z_i,Z_j)\\
 &+\mathcal S_{\mathrm{amb}}(\nabla_{Z_i}V_b,Z_j)
   +\mathcal S_{\mathrm{amb}}(Z_i,\nabla_{Z_j}V_b).
 \end{aligned}
\]
Each term has size $O(b^{-1})$ in the rescaled level metric. Radial differentiation of a level covariant derivative adds a contraction with $\partial_bE_b$, controlled by the connection-variation formula. Induction in the radial order, together with the tangential estimates, gives
\[
 |D_b^p(\nabla^A)^j\mathcal S|_{\gamma_A}\le C_{p,j},\qquad
 |D_b^p(\nabla^A)^j\mathrm R_g|_{\gamma_A}\le C_{p,j}b^{-2}.
\]
The differentiated identity \eqref{s10:dQ} similarly gives
$|D_b^p(\nabla^A)^jQ|_{\gamma_A}\le C_{p,j}\epsilon_0$ for $p+j\ge1$. All these estimates are used through total order $k+4$. Differentiating the reciprocal formula and then the equation
$D_b\gamma_b=2u\gamma_b+4a^2b^{-2}\mathcal S$, using $[D_b,\nabla^A]=0$, yields
\[
 \begin{aligned}
 |D_b^p(\nabla^A)^ju|_{\gamma_A}&\le C_{p,j}L_A(b)b^{-2}
 &&(p+j\le k+4),\\
 |D_b^p(\nabla^A)^j\gamma_b|_{\gamma_A}&\le C_{p,j}L_A(b)b^{-2}
 &&(p\ge1,\ p+j\le k+4).
 \end{aligned}
\]
These bounds are uniform in $B$, since $\sup_{b\ge A}L_A(b)b^{-2}\le C(\delta_A+A^{-2}+\epsilon_0A^{-2})$.

Apply the same identities to the reference pair, for which $\Phi=0$ and $Q$ is constant. Initial matching controls $\gamma_A-\hat\gamma_A$ and the change to $\hat\nabla^A=\nabla^{\hat\gamma_A}$, so
\[
 \sup_{A\le b\le B}\sum_{p+j\le k+4}
 \left(|D_b^p(\hat\nabla^A)^j(a^2-\hat a^2)|
       +|D_b^p(\hat\nabla^A)^j(\gamma_b-\hat\gamma_b)|\right)
 \le C_k(\delta_A+A^{-2}+\epsilon_0).
\]
The norms here use $\hat\gamma_A$. On $b\asymp R$, rescaling by $R^{-2}$ converts these to the weighted integer-order metric bounds through order $k+4$. Restriction to order $k+3$ proves \eqref{s10:productbound}. Both potentials equal $b^2/4-C_*$ in the matched coordinates.

\smallskip\noindent\emph{(ii):}
The reference $\hat Q$ is constant. Integrating \eqref{s10:dQ} in the product chart gives
\[
 |Q-\hat Q|\le\kappa_A+C\epsilon_B\log(b/A).
\]
Its positive conic derivatives are bounded by $C_j\epsilon_B$, by differentiating \eqref{s10:dQ} itself. Put $H=|\nabla f|^2$ and $\hat H=|\hnd\hf|^2$. Since the potentials agree,
\[
 H-\hat H=Q-\hat Q-(\mathrm{R}_g-\mathrm{R}_{\hg}),
 \qquad
 a^2-\hat a^2=-\frac{b^2(H-\hat H)}{4H\hat H}.
\]
The given conic metric estimate bounds $\mathrm{R}_g-\mathrm{R}_{\hg}$ in $C^{k+1}_{-2}$ by $C\delta$. Thus $a^2-\hat a^2$ has weight minus two, with norm at most $C\eta_B$. Subtracting \eqref{s10:leveleq} for the two metrics gives
\begin{equation}\label{s10:gammadiff}
 \|\partial_b(\gamma_b-\hat\gamma_b)\|_{C^{k}(\Sigma_A)}
 \le C\eta_Bb^{-3},
\end{equation}
with the corresponding conic derivative estimates. The Ricci difference is controlled by the given conic norm. Integration from $b$ to $B$, together with the lapse estimate, gives \eqref{s10:relativebound}; integration from $A$ to $B$ gives \eqref{s10:conetrace}. Link derivatives commute with the radial integral, and positive radial derivatives follow from \eqref{s10:gammadiff}.

\end{proof}

\begin{Proposition}[Matched product coordinates]\label{s10:radiuscomparison}
Fix an integer $k\ge0$, tolerances $\delta_{\mathrm{core}},\delta_P>0$, and $0<\zeta<1$. For every sufficiently large regular model level $A$, there are a finite order $q$, a geometric tolerance $\sigma>0$, and constants $R_0,C_\zeta$ with the following property. For a Ricci flow in Subsection~\ref{s10:setup}, set
$r_C=r_C^{(\sigma,q)}$ and $R_*=\min\{r_C,r_E\}$. If $R_*\ge R_0$, every
\[
 2A\le B\le(1-\zeta)R_*-C_\zeta
\]
admits a matched product embedding $P$ of a neighborhood of $\overline{\Omega_B}$ into $M$, with a larger reserved relative collar, such that
\[
 \|P^*g-\hg\|_{C^{k+3}(\Omega_{2A})}
 +\|f\circ P-\hf\|_{C^{k+3}(\Omega_{2A})}<\delta_{\mathrm{core}},
\]
and, on $\{A\le\hb\le B\}$,
\begin{equation}\label{s10:Pchart}
 f\circ P=\hf,\qquad
 P^*g=a^2\mathrm d\hb^2+\hb^2\gamma_{\hb},\qquad
 \|P^*g-\hg\|_{C^{k+3}_0}<\delta_P.
\end{equation}
The lapse is positive and uniformly bounded above and below, and the level metrics are uniformly equivalent to the reference level metrics. On a fixed inner collar, $P$ is matched to a core embedding by a controlled change of coordinates. Moreover,
\begin{equation}\label{s10:epsilonB}
 \epsilon_B
 \le C R_*^N e^{-c_\zeta R_*^2}.
\end{equation}
The same bounds hold on the reserved collar, chosen in terms of $\zeta$. The order $q$ is chosen to retain the defect estimate through $k+6$, as required by Proposition~\ref{s10:potentialcoords}(i), and through any further finite order prescribed before the parameters are fixed. All constants are independent of $B,R_*$ and the particular flow.
\end{Proposition}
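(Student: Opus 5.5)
The plan is to assemble the matched product embedding from three earlier results: the core matching of Lemma~\ref{s10:corematching}, the exterior defect decay of Lemma~\ref{s10:interpolation}, and the product construction of Proposition~\ref{s10:potentialcoords}(i). The parameters are fixed in a definite order. First fix $k,\delta_{\mathrm{core}},\delta_P,\zeta$. Next choose the inner level $A$ large, so that the term $C_kA^{-2}$ in \eqref{s10:productbound} is at most $\delta_P/3$ and the level $\hb=A$ is regular. Then choose $\delta_A$ and $\epsilon_0$ small enough that $C_k(\delta_A+\epsilon_0)<\delta_P/3$. Only after this do we choose $q$, $\sigma$, and $R_0$.

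\emph{Core step.} Given $R_*\ge R_0$, pick $L<r_C$ with $L$ close to $r_C$, and an embedding $\iota$ satisfying \eqref{s10:Cdefinition}. Apply Lemma~\ref{s10:corematching} on $\Omega_{4A}$ at order $\ell\ge k+10$, with a tolerance at most $\delta_{\mathrm{core}}$ and small relative to $\delta_A$. This makes $\iota^*f$ and $\hf$ $C^\ell$-close near the level $\hb=A$. By the implicit function theorem, the actual level $\{b=A\}$ is then a small normal graph over $\iota(\{\hb=A\})$. We take $\Sigma_A$ to be this level, identified with the reference level. Its induced metric divided by $A^2$ and its lapse are $\delta_A$-close in scaled $C^{k+6}$. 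The gradient bound $cA\le|\nabla f|\le CA$ follows from $|\hnd\hf|\asymp\hb$.

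\emph{Exterior hypotheses.} On $\{A\le b\le B\}$ with $B\le(1-\zeta)R_*-C_\zeta$, I would use \eqref{s10:instantuniform} with $\sigma_1=\zeta/2$ at order $k+6$. This gives $b^{2+j}|\nabla^j\Phi|\le CR_*^Ne^{-c_\zeta R_*^2}$, which is below $\epsilon_0$ once $R_0$ is large, and it already yields \eqref{s10:epsilonB}. The weighted curvature bounds come from \eqref{s10:roughcurv}. By Lemma~\ref{s10:annulardistance}, the sublevel $\{b\le B'\}$ with $B'$ slightly larger than $B$ (the reserved collar) lies inside $\iota(\Omega_{\lambda L})$ for collars close to one. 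The component attached to $\Sigma_A$ then has compact closure, with boundary contained in the two levels. Proposition~\ref{s10:potentialcoords}(i) gives the product diffeomorphism $\Psi$ together with \eqref{s10:Pchart}, namely $f\circ\Psi=\hf$ and $\|\Psi^*g-\hg\|_{C^{k+3}_0}<\delta_P$. The lapse and level-metric equivalence come from Steps~2--3 of that proof.

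\emph{Gluing.} Define $P$ to equal $\Psi$ on $\{A\le\hb\le B\}$ and to be $\iota$ composed with a small correction on $\Omega_A$. The correction is obtained from the flow of the normalized gradient fields of $\hf$ and $\iota^*f$ on a fixed collar $\{A/2\le\hb\le A\}$. These two fields are $C^{k+6}$-close by the core step, so the correction is a diffeomorphism close to the identity, and it agrees with $\Psi$ along $\Sigma_A$. Using a smooth cutoff in $\hb$, we interpolate between $\iota$ and the matched radial chart so that $P$ is smooth across $\hb=A$. The core estimate on $\Omega_{2A}$ holds because both pieces are $\delta_{\mathrm{core}}$-close to the reference there. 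Constants are independent of $B$ and $R_*$ because Proposition~\ref{s10:potentialcoords} is $B$-uniform.

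The main obstacle is the ordering of the choices. The order $q$ must be large enough for Lemma~\ref{s10:interpolation} at order $k+6$ and for Lemma~\ref{s10:corematching} at order $\ell$. The tolerance $\sigma$ depends on $A$ through the core lemma. The exterior decay requires $B$ to stay strictly below $(1-\zeta)R_*$, uniformly in the flow. I would check carefully that $R_0$ can be chosen last, and that the smooth gluing at the inner collar does not disturb the exact identity $f\circ P=\hf$ on $\{\hb\ge A\}$.
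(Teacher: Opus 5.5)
Your proposal is correct and follows essentially the same route as the paper: take an $r_C$-chart, use Lemma~\ref{s10:corematching} to match the inner level, use Lemma~\ref{s10:interpolation} for \eqref{s10:epsilonB} and the $\epsilon_0$ hypothesis, confine the attached component to the chart, apply Proposition~\ref{s10:potentialcoords}(i), and join to the core embedding on a fixed inner collar by a small isotopy built from the two radial flows, choosing $A$ first and the $R_*$ threshold last. The differences are minor. The paper confines the component using the lower bound in \eqref{s10:roughpotential} together with connectedness, where you use the entire-sublevel containment of Lemma~\ref{s10:annulardistance}. Also, on $\{A\le\hb\le2A\}$ the $\delta_{\mathrm{core}}$ bound must come from closeness of the two radial flows on the compact core, as in your gluing step, rather than from \eqref{s10:productbound}, which only gives closeness of order $\delta_P$.
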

\begin{proof}
Choose several nested radii strictly between $B$ and $R_*$, distributing the loss $\zeta$ among them and reserving a larger relative collar. Take an embedding from Definition~\ref{s10:defC} with all these retained radii below $L<r_C$. Proposition~\ref{s10:rough}(i) supplies ambient balls and their larger collars. Lemma~\ref{s10:interpolation} gives \eqref{s10:epsilonB}, with all the additional finite orders used below.

To apply Proposition~\ref{s10:potentialcoords}(i), we first use Lemma~\ref{s10:corematching} at a sufficiently high fixed order. It makes the metric and potential close to the reference pair on an enlarged core and supplies the required scaled $C^{k+6}$ inner matching. Hence $b=A$ has a compact regular component, or the prescribed union of components, close to the reference inner level; its lapse and induced metric have the required bounds. The lower bound in \eqref{s10:roughpotential} puts the outer boundary of a retained metric chart above a potential level strictly larger than $B$. Thus the portion of $\{A\le b\le B\}$ attached to the matched inner level cannot reach that boundary. Its closure is compact and its boundary is contained in its two potential levels.

Proposition~\ref{s10:potentialcoords}(i), applied to each attached component, now gives the product chart. Extend the inner level matching through a fixed collar by the two radial flows and then join to the original core embedding by a small isotopy supported in the enlarged core. The implicit-function estimate makes this change small in every prescribed finite norm. The pulled-back potential is exactly $\hf$ on the product end and remains close to $\hf$ on the core. First choose $A$ large for the term $A^{-2}$ in \eqref{s10:productbound}, then the core error small, then $\sigma$ small, and finally the threshold for $R_*$ large. These choices give the prescribed core and product tolerances and the asserted chart with its collar.
\end{proof}

\subsection{Complete extensions and the pointed \texorpdfstring{\L ojasiewicz--Simon}{Lojasiewicz--Simon} inequality}
\label{s10:cutoff}

We continue to work only with the time slice $t=-1$. The product chart is used to place a complete extension in the neighborhood of \cref{thm:global-pair-LS}. The analytic input is smallness in $E_{14}$ and the stated finite potential norm. Proposition~\ref{s10:potentialcoords}(ii) supplies the required decaying remainder.

\begin{Lemma}\label{s10:completion}
Assume that the matched product chart is constructed with sufficiently high fixed regularity. Suppose the matched product chart is available through $B=\Lambda R$, where $\Lambda>1$ is fixed and $R\ge2A$, and the matching extends through the core with error $\delta_{\mathrm{core}}$. There is a complete unnormalized pair $(g_R,\widetilde f_R)$ on $\hM$, equal to the actual pulled-back pair on $\Omega_R$, such that
\begin{equation}\label{s10:strongextension}
 \|g_R-\hg\|_{E_{14}}+\|\widetilde f_R-\hf\|_{C^{3,1/4}_1}
 \le C_A\delta_{\mathrm{core}}+C\eta_B.
\end{equation}
Outside $\Omega_B$ it equals $(\hg+H_0(h_B),\hf)$, where $H_0(h_B)=\hb^2h_B$ on the end. The perturbation belongs to $E_\infty\times C^\infty_{1;0}$. 
\end{Lemma}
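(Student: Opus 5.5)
The extension will be built by interpolating, in the matched potential coordinates of Proposition~\ref{s10:radiuscomparison}, between the actual pulled-back pair and an explicit conical model. On $\{A\le\hb\le B\}$ we have $f\circ P=\hf$ exactly. Proposition~\ref{s10:potentialcoords}(ii) gives the decomposition
\[
 P^*g-\hg=\hb^2h_B+w_B,
\]
with $h_B$ a $b$-independent tangential link tensor of size $C\eta_B$ in $C^k$ and $\|w_B\|_{C^k_{-2}}\le C_k\eta_B$. Fix a cutoff $\phi_R$ equal to one on $\Omega_R$, supported in $\Omega_{\Lambda R}$, with $|\hnd^j\phi_R|\le C_jR^{-j}$; the support lies in $\Omega_B$ since $\Lambda>1$. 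Define
\[
 g_R=\hg+\phi_R(P^*g-\hg)+(1-\phi_R)H_0(h_B),\qquad \widetilde f_R=\phi_R\,(f\circ P)+(1-\phi_R)\hf.
\]
On the end $H_0(h_B)=\hb^2h_B$; we cut it off smoothly into the compact core by the fixed cutoff $\chi$. Then $g_R$ agrees with the actual pair on $\Omega_R$ and equals $(\hg+H_0(h_B),\hf)$ outside $\Omega_B$. Positivity and completeness follow from $C^0$ smallness, and $\widetilde f_R-\hf$ is supported in the core, since $f\circ P=\hf$ on the product region. Hence $\widetilde f_R-\hf$ is compactly supported with $C^{3,1/4}$ norm at most $C_A\delta_{\mathrm{core}}$, which gives both the potential part of \eqref{s10:strongextension} and membership in $C^\infty_{1;0}$.

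For the metric, rewrite on the end
\[
 g_R-\hg=\hb^2h_B+\phi_Rw_B+\chi_{\mathrm{core}}(\cdots).
\]
The first term must be read as the degree-zero $E_{14}$ coefficient. Since $\hb=r+O(r^{-1})$ and the product coordinates differ from the canonical cone coordinates of \eqref{eq:prelim-canonical-end} by a controlled change, $\hb^2h_B$ equals $\chi\boldsymbol\tau(c_0)$ with $c_0=h_B$ tangential, up to a remainder in $C^{m}_{-2}$ of size $C\eta_B$. There are no degree $-1$ coefficients, so $c_{-1}=b_{-1}=0$. The term $\phi_Rw_B$ lies in $C^{14,1/4}_{-2}$: the cutoff derivatives cost $R^{-j}\le C\hb^{-j}$ on the support, so the weighted norm is preserved. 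The core contribution is bounded by $C_A\delta_{\mathrm{core}}$. The Hölder norms are taken at an integer order one above $14$ and then interpolated down, which is why the chart is taken with sufficiently high regularity $k$. Outside $\Omega_B$ the perturbation is exactly $H_0(h_B)$ with $h_B$ smooth, so the field lies in $E_\infty$ once the interior is smooth.

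The main obstacle is the identification of $\hb^2h_B$ with a genuine $E_{14}$ degree-zero term. Two issues arise. First, $h_B$ is defined on the level $\Sigma_A$ transported by the potential flow rather than on the canonical link. Second, $\hb^2$ times a tensor in product coordinates must be converted into $\boldsymbol\tau$-radial extensions in the canonical chart, with an error of weight $-2$ and size $O(\eta_B)$ rather than $O(1)$. I would handle both by writing the reference product chart of $\hg$ as the canonical end chart reparametrized by $r\mapsto\hb$. The $A$-dependent identification of $\Sigma_A$ with $\Sigma$ is then fixed by the reference geometry alone. The difference $\hb^2-r^2=O(1)$ multiplies $h_B$ and gives a weight $-2$ error bounded by $C\|h_B\|_{C^{k}}\le C\eta_B$, which is the second term of \eqref{s10:strongextension}.
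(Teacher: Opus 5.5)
Your proposal is correct and follows the paper's proof essentially step for step. Your $g_R=\hg+\phi_R(P^*g-\hg)+(1-\phi_R)H_0(h_B)$ is exactly the paper's splice $\hg+H_0(h_B)+\chi_R(h-H_0(h_B))$, and your potential is likewise the paper's: the actual potential on the core and $\hf$ on the product end. The cutoff remainder $\phi_Rw_B$ is controlled in $C^{14,1/4}_{-2}$ through the order-$15$ estimates of Proposition~\ref{s10:potentialcoords}(ii), and $\hb^2h_B$ is read as a degree-zero tangential coefficient plus a weight $-2$ remainder via $\hb=r+O(r^{-1})$. Your remark that the reference potential flow lines are the canonical radial lines, since $\mathbf X=\frac r2\partial_r$, supplies the justification the paper leaves implicit. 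The one bound you assert without justification is $\|h_B\|\le C_A\delta_{\mathrm{core}}+C\eta_B$; it follows from \eqref{s10:conetrace} together with the inner-level matching of $\gamma_A$ to $\hat\gamma_A$.
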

\begin{proof}
Apply Proposition~\ref{s10:radiuscomparison} with its parameter equal to
$17$, and use Proposition~\ref{s10:potentialcoords}(ii) through order $15$.
The weighted $C^{15}$ estimates control the $C^{14,1/4}$ quantities
defining $E_{14}$.

Extend $\hb^2h_B$ by an inner cutoff independent of $R$ and denote the resulting tensor by $H_0(h_B)$. On the potential end it is tangential and annihilates $\hnd\hf$. The reference expansion $\hb=r+O(r^{-1})$ writes it as a degree-zero tangential term plus a weight-minus-two remainder, so it satisfies the radial contraction conditions in \cref{def:prelim-radial-filtration}.

Choose $\chi_R(\hb)=1$ on $\Omega_R$ and $\chi_R=0$ outside $\Omega_B$, with $|\hnd^j\chi_R|\le C_{j,\Lambda}R^{-j}$. For the actual metric difference $h$, set
\begin{equation}\label{s10:splice}
 g_R=\hg+H_0(h_B)+\chi_R\bigl(h-H_0(h_B)\bigr).
\end{equation}
Keep the actual potential on the core and set $\widetilde f_R=\hf$ on the product end; these definitions agree on the fixed matching collar. Proposition~\ref{s10:potentialcoords}(ii), at order $15$, controls the cutoff remainder with weight minus two. On the fixed core and uniformly rescaled annuli with their reserved collars,
\[
 \|\cdot\|_{C^{14,1/4}_\beta}
 \le C\|\cdot\|_{C^{15}_\beta}.
\]
Apply this comparison to the link coefficients and the weight-minus-two
remainder in the expansion defining $E_{14}$. The potential perturbation
vanishes on the product end and is controlled on the fixed core, so its
$C^{3,1/4}_1$ norm is at most $C_A\delta_{\mathrm{core}}$.
Since \eqref{s10:conetrace} also bounds $h_B$ on the fixed inner region by
$C_A\delta_{\mathrm{core}}+C\eta_BA^{-2}$, we obtain
\eqref{s10:strongextension}, with constants depending on the fixed $\Lambda$.

The perturbation is smooth and, outside a compact set, has the stated homogeneous metric term and reference potential. Hence it belongs to $E_\infty\times C^\infty_{1;0}$. Small conic norm makes $g_R$ uniformly equivalent to the complete metric $\hg$, so $g_R$ is complete.
\end{proof}

Before comparing the entropies, we normalize the unnormalized extension from Lemma~\ref{s10:completion}. Its total weighted mass need not be one, so put
\begin{equation}\label{s10:normalization}
 c_R=\log\left((4\pi)^{-n/2}\int_{\hM}e^{-\widetilde f_R}\mathrm dV_{g_R}\right),
 \qquad f_R=\widetilde f_R+c_R.
\end{equation}
Then $(4\pi)^{-n/2}\int_{\hM}e^{-f_R}\mathrm dV_{g_R}=1$. Gaussian integrability, uniform metric equivalence, and \eqref{s10:strongextension} give
\[
 |c_R|+\|g_R-\hg\|_{E_{14}}+\|f_R-\hf\|_{C^{3,1/4}_1}
 \le C_A\delta_{\mathrm{core}}+C\eta_B.
\]
Thus the normalized pair $(g_R,f_R)$ satisfies
\eqref{eq:finite-pair-hypotheses} when the displayed bound is sufficiently
small. Exact agreement on $\Omega_R$ holds for the unnormalized pair: if $P$ is the matched chart, then $\widetilde f_R=f\circ P$ and $f_R=f\circ P+c_R$ there. Outside $\Omega_B$, $f_R=\hf+c_R$. Since $c_R$ is constant,
\[
 \Phi(g_R,f_R)=\Phi(g_R,\widetilde f_R),\qquad
 \mathrm dV_{f_R}=e^{-c_R}\mathrm dV_{\widetilde f_R}.
\]
The next lemma gives the sharper Gaussian bound for $c_R$ needed in the entropy comparison.

\begin{Lemma}\label{s10:cutoffcomparison}
Let $(g_R,f_R)$ be the complete extension of Lemma~\ref{s10:completion}, normalized by \eqref{s10:normalization}. Assume its matched product chart $P$ is constructed as in Proposition~\ref{s10:radiuscomparison} inside a metric chart from Definition~\ref{s10:defC}: for some $L<r_C$ and fixed $0<\lambda<\lambda'<1$,
\[
 B=\Lambda R<\lambda L,\qquad
 P(\Omega_B)\subset\iota(\Omega_{\lambda L}),
\]
where $\iota$ satisfies \eqref{s10:Cdefinition}, and $\iota(\Omega_{\lambda'L})$ is the larger domain used for the estimates. Then, for every $\eta\in(0,1)$ and $\eps>0$, the following estimates hold for sufficiently large $R$:
\begin{align}
 |\mathcal W(g,f)-\mathcal W(g_R,f_R)|
 &\le C_{\eta,\eps}e^{-(1-\eta)^2R^2/(4+\eps)}
       +CR^Ne^{-R^2/4},\label{s10:Wcutoff}\\
 \|\Phi(g_R,f_R)\|_{L^2(\mathrm dV_{f_R})}
 &\le C\mathcal D+CR^Ne^{-R^2/8}.\label{s10:Tcutoff}
\end{align}
The constants and the lower threshold for $R$ depend only on the reference pair, $n,Y$, the fixed finite orders and bounds, the core and product matching parameters, the collars, $\Lambda$, and the displayed losses $\eta,\eps$. They are independent of the particular Ricci flow and of $R,B,L$. The tolerance $\sigma$ need only satisfy the chart-construction thresholds in Proposition~\ref{s10:radiuscomparison}; this lemma requires no further smallness depending on $\eta,\eps$.
\end{Lemma}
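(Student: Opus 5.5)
The plan is to split both integrals into the region $P(\Omega_R)$, where the extended pair coincides (up to the constant $c_R$) with the actual pulled-back pair, and the complementary regions. On the complements we use Gaussian tail bounds for the actual flow and explicit Gaussian decay of the model for the extension. The first step is a sharp bound on $c_R$. Write
\[
 e^{c_R}=\int_{\hM}\mathrm dV_{\widetilde f_R}
 =\int_{P(\Omega_R)}\mathrm dV_f+\int_{\hM\setminus\Omega_R}\mathrm dV_{\widetilde f_R}.
\]
Since $\int_M\mathrm dV_f=1$, this gives
\[
 |e^{c_R}-1|\le\nu_{-1}\bigl(M\setminus P(\Omega_R)\bigr)
 +\int_{\hM\setminus\Omega_R}\mathrm dV_{\widetilde f_R}.
\]
Because $P(\Omega_R)\supset\iota(\Omega_{R'})$ with $R'\ge(1-\eta/2)R-C$ (the potential matching $f\circ P=\hf$ and the sublevel inclusions of Lemma~\ref{s10:annulardistance}), the first term is bounded by \eqref{s10:Couter} and \eqref{s10:kerneltail} at time $-1$ ($\tau=1$). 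This yields $C_{\eta,\eps}e^{-(1-\eta)^2R^2/(4+\eps)}$. On $\{\hb\ge R\}$ we have $\widetilde f_R=\hf=\hb^2/4-C_*$ and $g_R$ uniformly equivalent to $\hg$, so the second term is at most $CR^Ne^{-R^2/4}$. Hence $|c_R|$ is bounded by the right side of \eqref{s10:Wcutoff}.

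For \eqref{s10:Wcutoff}, write $w(g,f)=\mathrm R_g+|\nabla f|^2+f-n$. On $\Omega_R$ we have $w(g_R,f_R)=w(g,f)\circ P+c_R$ and $\mathrm dV_{f_R}=e^{-c_R}P^*\mathrm dV_f$. The core contributions therefore differ by terms of size $|c_R|\int|w|\,\mathrm dV_f+|1-e^{-c_R}|\int|w|\,\mathrm dV_f$, and $\int|w|\,\mathrm dV_f$ is uniformly bounded by \eqref{s10:kerneltail} with $L=0$. The actual tail $\int_{M\setminus P(\Omega_R)}|w|\,\mathrm dV_f$ is handled by \eqref{s10:kerneltail} exactly as for the mass: $\tau|\mathrm R|+\tau|\nabla f|^2+|f|+1$ is precisely the integrand controlled there. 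The extension tail over $\{\hb\ge R\}$ uses the pointwise bounds on $\mathrm R_{g_R}$, $|\nabla f_R|^2\le C\hb^2$ (from \eqref{s10:strongextension}) and $f_R\le C\hb^2$ against the Gaussian density, giving $CR^Ne^{-R^2/4}$.

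For \eqref{s10:Tcutoff}, note $\Phi(g_R,f_R)=\Phi(g_R,\widetilde f_R)$. On $\Omega_R$ this is $P^*\Phi$, whose $L^2(\mathrm dV_{f_R})$ norm is at most $e^{-c_R/2}\mathcal D\le C\mathcal D$. On $\{R\le\hb\le B\}$ the pair is the spliced one. There $\widetilde f_R=\hf$ and $g_R-\hg=H_0(h_B)+\chi_R w_B$, so $\Phi(g_R,\hf)$ is a differential expression in $g_R-\hg$ and its derivatives through order two, with coefficients involving $\mathbf X$. The derivative estimates of Proposition~\ref{s10:potentialcoords} and Lemma~\ref{s10:completion} bound it pointwise by $C\hb^N$. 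Outside $\Omega_B$ the pair $(\hg+H_0(h_B),\hf)$ again has polynomially bounded defect. Integrating against $e^{-\hb^2/4}$ over $\{\hb\ge R\}$ gives a squared norm of at most $CR^Ne^{-R^2/4}$, that is, a norm of at most $CR^Ne^{-R^2/8}$.

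The main obstacle is the first tail estimate. We must convert the condition $P(\Omega_R)^c$, given in model potential coordinates, into a complement of a ball about the $H_n$-center with radius $(1-\eta)R-C$, uniformly in the flow. This needs $P(\Omega_R)\supset\{b<R\}$ up to the loss $\eta$, which should follow from $f\circ P=\hf$ on the product region, the core matching, and connectedness of the attached sublevel component; the loss $\eta$ then enters only through \eqref{s10:Couter}. This also explains why no further smallness of $\sigma$ is required.
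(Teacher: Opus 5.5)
Your decomposition, the identity $e^{c_R}=\int_{P(\Omega_R)}\mathrm dV_f+\int_{\hM\setminus\Omega_R}\mathrm dV_{\widetilde f_R}$, the comparison on $\Omega_R$, and the artificial-exterior estimates are all fine. There, polynomial pointwise bounds on $\Phi(g_R,\hf)$ and on the $\mathcal W$-integrand, integrated against $e^{-\hb^2/4}$, suffice; the paper's sharper bound $|\hnd^j\Phi(g_R,\widetilde f_R)|\le C_j\hb^{-2-j}$ is not needed for \eqref{s10:Wcutoff}--\eqref{s10:Tcutoff}.

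The gap is exactly the step you flag: controlling $\nu_{-1}(M\setminus P(\Omega_R))$ and the actual $\mathcal W$-integrand there. Two things go wrong.
\begin{itemize}
\item The inclusion $\iota(\Omega_{R'})\subset P(\Omega_R)$ does not follow from Lemma~\ref{s10:annulardistance}. Its sublevel inclusions run the other way ($\{b\le\cdot\}\subset\iota(\Omega_{\cdot})$), so they only give lower bounds on $f$ off the chart. You need an upper bound on $f$ on $\iota(\Omega_{R'})$, which only the heat-kernel lower bound in \eqref{s10:roughpotential} provides. Likewise, turning $\{b\ge R\}$ into the complement of a ball about $z_H$, as in your last paragraph, needs an upper bound on $f$ in terms of distance. \eqref{s10:Couter} does not supply this.
\item More seriously, every comparison routed through the $\sigma$-chart $\iota$ carries a \emph{multiplicative} radial loss comparable to $\sigma$, $\delta_P$ and the rough-potential loss. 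This applies to \eqref{s10:roughpotential}, to \eqref{s10:Couter}, and to the near-isometry $\iota^{-1}\circ P$. Reaching the exponent $(1-\eta)^2R^2/(4+\eps)$ then forces $\sigma$ to be small in terms of $\eta$, which is exactly what the lemma excludes. Your claim that this route explains why no further smallness of $\sigma$ is needed is incorrect. For fixed $\sigma$, your argument only gives the estimate for $\eta$ above a $\sigma$-dependent threshold.
\end{itemize}

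The missing idea is to measure distance inside the product chart $P$ itself. There the radial error is integrable, so the loss is only additive.
\begin{itemize}
\item By Proposition~\ref{s10:potentialcoords}(i), the lapse satisfies $a=1+O(L_A(b)b^{-2})$ with $L_A(b)=1+A^2\delta_A+\epsilon_0\log(b/A)$. Hence $\int_A^\infty\sup_{\Sigma_b}|a-1|\,\mathrm db\le C_A$, uniformly in $B$.
\item Core matching bounds $f(p)$ at $p=P(\hat p)$, so \eqref{s10:kernelpoint} gives $d_g(p,z_H)\le C_A$.
\item Any curve from $p$ to $M\setminus P(\Omega_R)$ must exit through $b=R$, because $P$ embeds a neighborhood of the closed retained domain. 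Since $P^*g=a^2\mathrm d\hb^2+\hb^2\gamma_{\hb}$, its segment after the last crossing of $b=A$ has length at least $\int_A^R\inf_{\Sigma_b}a\,\mathrm db\ge R-C_A$.
\item Hence $d_g(z_H,M\setminus P(\Omega_R))\ge R-C_A$. Applying \eqref{s10:kerneltail} with tail loss $\eps$ at radius $R-C_A$ bounds both the missing mass in $e^{c_R}$ and the actual exterior entropy contribution by $C_\eps e^{-(R-C_A)^2/(4+\eps)}\le C_{\eta,\eps}e^{-(1-\eta)^2R^2/(4+\eps)}$ once $C_A\le\eta R$.
\end{itemize}
Here $\eta$ only absorbs an additive constant. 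That is why no smallness of $\sigma$ beyond the chart construction is required.
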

\begin{proof}
Let $\Omega_R^M$ be the image of the retained core and the attached product end $b<R$. Proposition~\ref{s10:potentialcoords}(i) gives
\[
 a=1+O\bigl(L_A(b)b^{-2}\bigr),\qquad
 \int_A^B\sup_{\Sigma_b}|a-1|\,\mathrm db
 \le \frac{C(1+A^2\delta_A+\epsilon_0)}A,
\]
uniformly in $B$, since $L_A(b)=1+A^2\delta_A+\epsilon_0\log(b/A)$. Choose $p=P(\hat p)$ in the retained core. Core matching bounds $f(p)$, and Lemma~\ref{s10:kernel} gives $d_g(p,z_H)\le C_A$. Any curve from $p$ to $M\setminus\Omega_R^M$ first exits through $b=R$, because $P$ embeds a neighborhood of the closed retained domain. Its segment after the last crossing of $b=A$ has length at least
\[
 \int_A^R\inf_{\Sigma_b}a\,\mathrm db\ge R-C_A.
\]
The triangle inequality therefore yields
\begin{equation}\label{s10:Omegadistance}
 d_g(z_H,M\setminus\Omega_R^M)\ge R-C_A.
\end{equation}
Applying Lemma~\ref{s10:kernel} at distance $R-C_A$ bounds the actual exterior entropy contribution by the first Gaussian in \eqref{s10:Wcutoff}, for $R$ sufficiently large in terms of the fixed losses $\eta,\eps$.

For the artificial exterior, all derivatives and norms in the following estimates are taken with respect to $\hg$, and $b=\hb\ge R$. Write
\[
 g_c=\hg+h_0,\qquad h_0=b^2h_B,\qquad
 g_R=g_c+e_R,\qquad e_R=\chi_Rw_B,
 \qquad \widetilde f_R=\hf=b^2/4-C_*.
\]
Here $h_B$ is extended independently of $b$ in the fixed product coordinates and is purely tangential. Fix $J\ge0$ and use the preceding metric and remainder estimates through order $J+2$. Homogeneity and the controlled link derivatives give
\[
 |\hnd^\ell h_0|\le C_\ell b^{-\ell},\qquad 0\le\ell\le J+2.
\]
In particular, $h_0$ has bounded size but need not decay. Since it annihilates $\partial_b$, the gradient of $\hf$ is unchanged:
\[
 \mathbf{X}=\nabla^{g_c}\hf=\hnd\hf
   =\frac{b}{2\hat a^2}\partial_b.
\]
For a covariant two-tensor $h$ one has
\[
 \mathcal L_{\phi Z_1}h=\phi\mathcal L_{Z_1}h
       +\mathrm{d}\phi\otimes h({Z_1},\cdot)+h({Z_1},\cdot)\otimes \mathrm{d}\phi.
\]
The last two terms vanish for $h_0$ and ${Z_1}=\partial_b$. Since $\mathcal L_{\partial_b}h_0=2bh_B$,
\[
 \mathcal L_{\mathbf{X}}h_0=\hat a^{-2}h_0.
\]
Using $\nabla_g^2\hf=\frac12\mathcal L_{\nabla^g\hf}g$ and the reference soliton equation therefore yields the exact cancellation
\[
 \Phi(g_c,\hf)=\Ric_{\hg}-\Ric_{g_c}
                  +\frac12(1-\hat a^{-2})h_0.
\]
The connection-difference formula gives
\[
 |\hnd^\ell(\nabla^{g_c}-\hnd)|\le C_\ell b^{-1-\ell},
 \qquad
 |\hnd^j(\Ric_{g_c}-\Ric_{\hg})|\le C_jb^{-2-j}.
\]
Also, the constant reference quantity $\hat Q=\mathrm{R}_{\hg}+|\hnd\hf|^2-\hf$ gives
\[
 \hat a^{-2}-1=4b^{-2}(\hat Q-\mathrm{R}_{\hg}-C_*),\qquad
 |\hnd^\ell(\hat a^{-2}-1)|\le C_\ell b^{-2-\ell}.
\]
The product rule now proves $|\hnd^j\Phi(g_c,\hf)|\le C_jb^{-2-j}$ for $0\le j\le J$. This proves the desired estimate outside $\Omega_B$, where $g_R=g_c$.

On $R\le b\le B=\Lambda R$, Proposition~\ref{s10:potentialcoords}(ii) at the required order and the cutoff bounds give
\[
 |\hnd^m e_R|
 \le C_m\sum_{\ell=0}^m b^{-\ell}\eta_Bb^{-2-(m-\ell)}
 \le C_m\eta_Bb^{-2-m},\qquad 0\le m\le J+2.
\]
Here $\Lambda$ is fixed, so $R^{-\ell}\le C_{\ell,\Lambda}b^{-\ell}$ on the transition annulus. Put $\nabla^c=\nabla^{g_c}$ and $\mathcal A=\nabla^{g_R}-\nabla^c$. The exact formula
\[
 \mathcal A^k{}_{ij}=\frac12(g_R)^{k\ell}
 \bigl(\nabla_i^c(e_R)_{j\ell}+\nabla_j^c(e_R)_{i\ell}
                         -\nabla_\ell^c(e_R)_{ij}\bigr)
\]
and uniform metric equivalence imply
\[
 |\hnd^\ell\mathcal A|\le C_\ell\eta_Bb^{-3-\ell},\qquad
 |\hnd^j(\Ric_{g_R}-\Ric_{g_c})|\le C_j\eta_Bb^{-4-j}.
\]
Conversion between $\nabla^c$ and $\hnd$ preserves these powers, since their difference has order $b^{-1}$ with the differentiated bounds above. The quadratic connection terms are absorbed using the uniform bound for $\eta_B$. For the Hessian, the exact identity is
\[
 (\nabla_{g_R}^2\hf-\nabla_{g_c}^2\hf)({Z_1},Y)
       =-\mathrm{d}\hf\bigl(\mathcal A({Z_1},Y)\bigr).
\]
Since $|\hnd^{m+1}\hf|\le C_m b^{1-m}$, this yields
\[
 |\hnd^j(\nabla_{g_R}^2\hf-\nabla_{g_c}^2\hf)|
 \le C_j\sum_{\ell=0}^j\eta_Bb^{-3-\ell}b^{1-(j-\ell)}
 \le C_j\eta_Bb^{-2-j}.
\]
Combining the negatives of the Ricci and Hessian differences with the term $e_R/2$ gives
\[
 |\hnd^j(\Phi(g_R,\hf)-\Phi(g_c,\hf))|
       \le C_j\eta_Bb^{-2-j}.
\]
Consequently,
\[
 |\hnd^j\Phi(g_R,\widetilde f_R)|\le C_jb^{-2-j}
 \quad\text{on }\{b\ge R\},\qquad 0\le j\le J.
\]
The constants are independent of $R$ for fixed $\Lambda$ and the prescribed finite-order bounds. The order-$15$ remainder estimate used in Lemma~\ref{s10:completion} supplies these bounds for $0\le J\le13$. The present $L^2$ comparison uses only $J=0$. The same estimate holds with $f_R$ in place of $\widetilde f_R$, since their difference is constant. Polynomial volume growth and $\widetilde f_R=b^2/4-C_*$ imply
\[
 \int_{b\ge R}(1+|\mathrm{R}_{g_R}|+|\nabla\widetilde f_R|^2+|\widetilde f_R|)
             \,\mathrm dV_{\widetilde f_R}\le CR^Ne^{-R^2/4},
\]
and its exterior $L^2$ defect norm is at most $CR^Ne^{-R^2/8}$.

The unnormalized pair agrees exactly with the actual pair on the retained inner region. Compare these unnormalized integrals first. The same mass tails give
\[
 |c_R|
 \le C_{\eta,\eps}e^{-(1-\eta)^2R^2/(4+\eps)}+CR^Ne^{-R^2/4}.
\]
Adding $c_R$ changes $\mathcal W$ by an error of this order. It does not change the defect tensor and only multiplies its squared norm integral by $e^{-c_R}$. This proves both estimates for the normalized pair.
\end{proof}

\begin{Proposition}[Pointed \L ojasiewicz--Simon inequality]\label{s10:pointedLS}
Let $\vartheta\in(0,\frac12)$ be as in \cref{thm:global-pair-LS}. For every $0<\theta<\vartheta$, choose a sufficiently large finite order $q$ and sufficiently small $\sigma$, and put $r_C=r_C^{(\sigma,q)}$. There is a uniform constant $C>0$ such that every Ricci flow in Subsection~\ref{s10:setup} satisfies
\begin{equation}\label{s10:LSerror}
 |\mathcal W(g(-1),f(-1))-\boldsymbol{\mu}(\hg)|^{1-\theta}
 \le C\mathcal D+Ce^{-r_C^2/8}.
\end{equation}
If $r_E\le r_C$ and $r_E$ is sufficiently large, then
\begin{equation}\label{s10:pureLS}
 |\mathcal W(g(-1),f(-1))-\boldsymbol{\mu}(\hg)|^{1-\theta}\le C\mathcal D.
\end{equation}
The constants depend only on the fixed reference pair, $n,Y$, and the displayed parameters.
\end{Proposition}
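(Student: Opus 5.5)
The strategy is to build the complete normalized extension $(g_R,f_R)$ of Lemma~\ref{s10:completion} at a radius $R$ comparable to $\min\{r_C,r_E\}$. Then apply \cref{thm:global-pair-LS} to it, and transfer the resulting inequality back to the actual pair $(g,f)=(g(-1),f(-1))$ using Lemma~\ref{s10:cutoffcomparison}. Note that $\mathcal W(\hg,\hf)=\boldsymbol\mu(\hg)$, since $\hf$ is the minimizer.

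First, the degenerate regimes. By Lemma~\ref{s10:kernel}, $|\mathcal W(g,f)-\boldsymbol\mu(\hg)|\le C(n,Y)$ uniformly. Hence \eqref{s10:LSerror} is trivial whenever $\mathcal D\ge c$ or $r_C\le R_0$, after enlarging $C$. The same remark handles $r_E$ below any fixed threshold in \eqref{s10:pureLS}.

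Now assume $R_*=\min\{r_C,r_E\}\ge R_0$. Fix $\zeta$ small and $\Lambda>1$, and set $B=(1-\zeta)R_*-C_\zeta$ and $R=B/\Lambda$. Proposition~\ref{s10:radiuscomparison} gives the matched chart on $\Omega_B$, with $\epsilon_B\le CR_*^Ne^{-c_\zeta R_*^2}$. Proposition~\ref{s10:potentialcoords}(ii) gives $\eta_B\le C(\delta_P+\kappa_A)+CR_*^{N+1}e^{-c_\zeta R_*^2}$, where $\kappa_A$ is small by the core matching. Lemma~\ref{s10:completion} and the normalization then place $(g_R,f_R)$ in the $\epsilon$-neighborhood of \cref{thm:global-pair-LS}, after choosing $\delta_{\mathrm{core}},\delta_P$, then $q,\sigma$, then $R_0$. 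Applying that theorem with exponent $\vartheta$ and combining it with \eqref{s10:Wcutoff} and \eqref{s10:Tcutoff} gives
\[
 |\mathcal W(g,f)-\boldsymbol\mu(\hg)|
 \le C\bigl(\mathcal D+R^Ne^{-R^2/8}\bigr)^{1/(1-\vartheta)}
   +C_{\eta,\eps}e^{-(1-\eta)^2R^2/(4+\eps)}+CR^Ne^{-R^2/4}.
\]
We raise this to the power $1-\theta$, using subadditivity; the first term gives at most $C\mathcal D+CR^Ne^{-R^2/8}$.

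The main obstacle is the bookkeeping of Gaussian exponents, because the extension radius $R=((1-\zeta)R_*-C_\zeta)/\Lambda$ loses a factor relative to $R_*$. After the power $1-\theta$, the exterior terms become $e^{-(1-\theta)(1-\eta)^2R^2/(4+\eps)}$. The interpolation error also carries a factor $(R^Ne^{-R^2/8})^{(1-\theta)/(1-\vartheta)}$ when it is not absorbed linearly. We need all of these to be at most $Ce^{-R_*^2/8}$ when $r_C\le r_E$, and at most $C\mathcal D=Ce^{-r_E^2/8}$ when $r_E\le r_C$.

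The cutoff entropy terms decay like $e^{-R^2/4}$ up to losses, which is twice the required rate. So we choose $\Lambda$ close to $1$ and $\zeta,\eta,\eps$ small, so that $(1-\theta)(1-\eta)^2(1-\zeta)^2\Lambda^{-2}/(4+\eps)>1/8$. This is where $\theta<\vartheta<\tfrac12$ provides the slack.

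For the defect term we do not raise it to a power at all. We keep $CR^Ne^{-R^2/8}$ linearly and only need it to be $\le Ce^{-R_*^2/8}$. This fails at $R<R_*$, so this is the step I would scrutinize. First I would try to sharpen \eqref{s10:Tcutoff}. On the artificial exterior, the defect is $O(b^{-2})$ while the density is $e^{-b^2/4}$, so the $L^2$ tail is really $R^Ne^{-R^2/8}$, and this is too weak. Instead, use the true term $(R^Ne^{-R^2/8})^{1/(1-\vartheta)}$ before the power $1-\theta$; its exponent $\frac{1-\theta}{1-\vartheta}\cdot\frac{R^2}{8}$ exceeds $R_*^2/8$ once $\Lambda,\zeta$ are close enough to $1$, since $\theta<\vartheta$. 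So I would avoid subadditivity on that term and bound the sum $(\mathcal D+\text{tail})^{(1-\theta)/(1-\vartheta)}$ by $\mathcal D^{(1-\theta)/(1-\vartheta)}+\text{tail}^{(1-\theta)/(1-\vartheta)}$, with $\mathcal D^{(1-\theta)/(1-\vartheta)}\le\mathcal D$ for $\mathcal D\le1$. Using $R_*\le r_C$ gives \eqref{s10:LSerror}, and in the case $r_E\le r_C$ we have $e^{-R_*^2/8}=\mathcal D$, which gives \eqref{s10:pureLS}.
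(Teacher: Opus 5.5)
Your proposal is correct and is essentially the paper's proof. The paper also applies \cref{thm:global-pair-LS} to the normalized extension at a cutoff radius $R\ge\lambda_0R_*$ and transfers the result via Lemma~\ref{s10:cutoffcomparison}. It raises the defect tail to the power $p_0=(1-\theta)/(1-\vartheta)>1$; your intermediate linear treatment of that tail indeed fails, and your final version is the one the paper uses. It then requires $p_0\lambda_0^2>1$ and $(1-\theta)(1-\eta)^2\lambda_0^2/(4+\eps)>1/8$, and treats the bounded-radius regimes through the uniform bound of Lemma~\ref{s10:kernel}, exactly as you do.
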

\begin{proof}
Write $\mathcal E=|\mathcal W(g,f)-\boldsymbol{\mu}(\hg)|$. Assume first $0<\mathcal D<1$ and put
\[
 p_0=\frac{1-\theta}{1-\vartheta}>1,\qquad
 R_*=\min\{r_C,r_E\}.
\]
For $R_*$ large, Proposition~\ref{s10:radiuscomparison} gives a product chart through $B$ slightly smaller than $R_*$. Choose a cutoff radius $R<B$, with $B/R>1$ fixed and close to one. Because all relative collar losses can be arbitrarily small, we can arrange
\begin{equation}\label{s10:lambdachoice}
 R\ge\lambda_0R_*,\qquad
 p_0\lambda_0^2>1,\qquad
 \frac{(1-\theta)(1-\eta)^2\lambda_0^2}{4+\eps}>\frac18
\end{equation}
with $\lambda_0<1$ sufficiently close to one.

The core matching gives $\kappa_A\le C_A\delta_{\mathrm{core}}$. Lemma~\ref{s10:corematching}, Proposition~\ref{s10:potentialcoords}(ii), and Lemma~\ref{s10:completion} therefore put the normalized extension in the fixed neighborhood of \cref{thm:global-pair-LS}. In particular, the exponentially small bound in \eqref{s10:epsilonB} absorbs the factor $\log(B/A)$ in $\eta_B$. Apply that theorem and Lemma~\ref{s10:cutoffcomparison}, and raise its gradient term to the power $p_0$. This yields
\[
\mathcal E^{1-\theta}\le C\bigl(\mathcal D^{p_0}
 +R^{Np_0}e^{-p_0R^2/8}
 +e^{-(1-\theta)(1-\eta)^2R^2/(4+\eps)} 
 +R^{N(1-\theta)}e^{-(1-\theta)R^2/4}\bigr).
\]
The strict inequalities in \eqref{s10:lambdachoice} absorb the polynomial factors and bound every Gaussian error by $Ce^{-R_*^2/8}$. If $R_*=r_E$, this equals $C\mathcal D=Ce^{-r_E^2/8}$. If $R_*=r_C$, they are bounded by $Ce^{-r_C^2/8}$. Since $\mathcal D^{p_0}\le \mathcal D$, these are the two conclusions.

If $R_*$ is bounded because $r_E$ is bounded, $\mathcal D$ has a positive lower bound. If the geometric radius is bounded, its Gaussian error has a positive lower bound. In either case Lemma~\ref{s10:kernel} bounds $\mathcal E$ uniformly, so \eqref{s10:LSerror} follows after enlarging $C$. The same uniform bound handles $\mathcal D\ge1$. For $\mathcal D=0$, use the extension with $R_*=r_C$ directly; the actual defect vanishes. Thus the assertion is uniform over the Ricci flows under consideration.
\end{proof}

\subsection{The modified flow and finite length}
\label{s10:length}

Throughout this subsection and the rest of the section, fix a closed connected Ricci flow
$(M,g(t))_{-2\le t<0}$ with entropy bounded below by $-Y$, and a completion point $z$ at time zero. Assume that one tangent flow at $z$ is generated by the reference shrinker $(\hM,\hg,\hf)$. Let $f(t)$ be the kernel potential based at $z$, as in \eqref{s10:physicalkernel}. Fix $\vartheta\in(0,\frac12)$ for which \cref{thm:global-pair-LS} holds, decreasing its exponent if necessary.

Let $\varphi_t$ solve
$\partial_t\varphi_t=-\nabla^{g(t)}f(t)\circ\varphi_t$ with
$\varphi_{-1}=\mathrm{Id}$. Define
\begin{equation}\label{s10:modified}
 \gz(s)=e^s\varphi_{-e^{-s}}^*g(-e^{-s}),\qquad
 \fz(s)=\varphi_{-e^{-s}}^*f(-e^{-s}).
\end{equation}
Set
\begin{equation}\label{s10:defect}
 \Phi(s):=\Phi(\gz(s),\fz(s)).
\end{equation}
The equations and the stationary probability density are
\begin{equation}\label{s10:evolution}
 \partial_s\gz=2\Phi,\qquad \partial_s\fz=\tr_{\gz}\Phi,\qquad
 \mathrm dV_{\fz(s)}=(4\pi)^{-n/2}e^{-\fz(s)}\,\mathrm dV_{\gz(s)},\qquad \partial_s \mathrm dV_{\fz(s)}=0.
\end{equation}
Indeed the logarithmic derivative of this density is
$-\partial_s\fz+\frac12\tr_{\gz}\partial_s\gz=0$.

For $s>0$, the rescaled Ricci flow and kernel potential
\begin{equation}\label{s10:renormalization}
 \bigl(e^s g(e^{-s}t),f(e^{-s}t)\bigr),\qquad -2\le t<0,
\end{equation}
satisfy the hypotheses of Subsections~\ref{s10:setup}--\ref{s10:cutoff}. Their time-minus-one pair pulls back by $\varphi_{-e^{-s}}$ to $(\gz(s),\fz(s))$, so the preceding intrinsic estimates apply to this pair. For fixed $(\sigma,q)$, let $r_C^{(\sigma,q)}(s)$ and $r_E(s)$ be its geometric and entropy radii from Definitions~\ref{s10:defC} and~\ref{s10:defEP}, and abbreviate the former to $r_C(s)$. Set
\begin{equation}\label{s10:energy}
 \mathcal D(s)=\|\Phi(s)\|_{L^2(\mathrm dV_{\fz(s)})},\qquad
 E(s)=\boldsymbol{\mu}(\hg)-\mathcal W(\gz(s),\fz(s)).
\end{equation}
The given tangent sequence, the Gaussian entropy remainder, and monotonicity give
\begin{equation}\label{s10:energyid}
 E(s)\ge0,\qquad E(s)\longrightarrow0,\qquad E'(s)=-2\mathcal D(s)^2.
\end{equation}
We write $b(s)=2\sqrt{\fz(s)+C_*}$. The original scalar lower bound also gives
$\mathrm{R}_{\gz(s)}\ge-Ce^{-s}$ at late times.

Choose a smooth exhaustion coordinate $r$ extending the canonical end coordinate, with its compact inner part below a fixed large level. For a fixed core embedding $\psi_0$, define
\begin{equation}\label{s10:Phi}
 (G(s),F(s))=\psi_0^*(\gz(s),\fz(s)).
\end{equation}

For a fixed large $A$, let $K=\{r\le8A\}$ and suppose the embedding is defined on a neighborhood of $K$. Fix the radius order $q\ge100$ chosen above. We make the following assumption on the core at order $5q$.

\noindent\textbf{Assumption:}
\begin{equation}\label{s10:coreassumption}
 \sup_{s_0\le u<S}
 \bigl(\|G(u)-\hg\|_{C^{5q}_0(K)}
       +\|F(u)-\hf\|_{C^{5q}_2(K)}\bigr)<\delta_A.
\end{equation}
The order $q$ is used in Definition~\ref{s10:defC}, while $5q$ supplies the mixed derivatives in the construction below.

Next, we obtain an exponential growth bound for $r_C$. The idea is to construct an expanding geometric chart using the controlled geometry of a fixed annulus at earlier modified times. At time $s$, each large radius $r$ is assigned an earlier time $u$ so that parabolic scaling carries that annulus to radius $r$. For the reference shrinker, the construction recovers the canonical chart; its conical leading term has exact scaling cancellation. The core hypothesis and forward curvature estimates control the error uniformly, while monotonicity of the time and radial parameters makes the transported level sets nested. The chart reaches radius comparable to $Ae^{(s-s_0)/2}$.

\begin{Proposition}[Growth of the geometric radius]\label{s10:Cpersistence}
For each fixed radius order $q\ge100$ and sufficiently small tolerance $\sigma$, write $r_C(s)=r_C^{(\sigma,q)}(s)$. One can choose $A$, $\delta_A$, $W_0$, and $c_0>0$ so that \eqref{s10:coreassumption} implies
\begin{equation}\label{s10:Cgrowth}
 r_C(s)\ge c_0 A e^{(s-s_0)/2},
 \qquad s_0+W_0\le s<S.
\end{equation}
The constants are independent of $s_0$, $S$, and $S-s_0$, provided $s_0>0$.
\end{Proposition}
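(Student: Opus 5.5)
The plan is to construct, for each $s\in[s_0+W_0,S)$, an explicit embedding $\iota_s$ of a neighborhood of $\overline{\Omega_L}$ with $L=c_0Ae^{(s-s_0)/2}$, and to verify \eqref{s10:Cdefinition} for the pair $(\gz(s),\fz(s))$. The chart is obtained by transporting earlier copies of the core. For a target radius $r\in[4A,L]$, take the earlier modified time $u=u(r)=s-2\log(r/(4A))$, which lies in $[s_0,s]$. At time $u$ the core hypothesis \eqref{s10:coreassumption} gives the annulus $\{2A\le r\le 8A\}$ of $\psi_0$, $\delta_A$-close to $\hg$ in $C^{5q}_0$. In unmodified terms, $u$ corresponds to the physical time $-e^{-u}$ and $s$ to $-e^{-s}$. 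Parabolic rescaling by $e^{s}$ then carries this annulus, with its material points held fixed, to one of radius $\approx 4A\,e^{(s-u)/2}=r$ at time $s$. I define $\iota_s$ on the shell $\{r\}\times\Sigma$ as $\varphi$-transport of $\psi_0|_{\{4A\}\times\Sigma}$ from modified time $u(r)$ to time $s$, and as $\psi_0$ itself on the core $\{r\le4A\}$.

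The comparison estimate is then a forward-in-time statement on each annulus. Over the physical interval from $-e^{-u}$ to $-e^{-s}$, the rescaled flow restricted to the annulus is a Ricci flow starting close to a piece of the canonical shrinker flow. For the reference shrinker, the model map $(r,\theta)\mapsto\Psi$-transport reproduces the canonical end chart exactly. This is the scaling cancellation of the leading term $g_C$, and it holds because $\Psi_t(r,\theta)=(r/\sqrt{-t},\theta)$. The errors are of two kinds, which I would control separately.
\begin{enumerate}
\item The initial $\delta_A$ error on the annulus. This is propagated by Lemma~\ref{s10:shell} and Proposition~\ref{s10:forwardcomparison}, applied with a single collared annulus as allowed there. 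The scale-invariant curvature bounds $C_j r^{-2-j}$ keep the $C^q_0$ distance to $\hg(t)$ bounded by $C\delta_A+CA^{-2}$ uniformly in the elapsed time.
\item The mismatch between the actual diffeomorphisms $\varphi$ (generated by $\nabla f$) and the model dilation (generated by $\mathbf X$). This is controlled by the potential closeness $\|F-\hf\|_{C^{5q}_2}<\delta_A$ on $K$, together with the rough potential bounds \eqref{s10:roughpotential}–\eqref{s10:roughderivatives}. The velocity difference is $O(\delta_A r)$ in scaled norms, and integrating it in $\log r$ over one dyadic step gives a small relative displacement.
\end{enumerate}
The extra derivatives (order $5q$ rather than $q$) are spent on converting time-derivative and mixed spatial–transport derivatives of $\iota_s$ into $C^q_0$ bounds on $\iota_s^*\gz(s)-\hg$.

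Next I will show that $\iota_s$ is an embedding. Because $u(r)$ is strictly decreasing in $r$ and the transported level sets $\{r\}\times\Sigma$ move outward monotonically (the radial component of $-\nabla f$ is $\approx -r/2$ in physical time), consecutive shells are nested and disjoint. The potential closeness also gives a uniform transversality bound: $\partial_r\iota_s$ has radial component in $[1-C\delta_A,1+C\delta_A]$. Injectivity then follows from a first-exit argument along the potential flow, and smoothness in $r$ follows from smooth dependence on $u$. The base-point condition $f(\iota_s(\hat p))\le n$ holds because $\iota_s=\psi_0$ on the core and $F$ is $\delta_A$-close to $\hf$ there. The margin $W_0$ is chosen so that $L\ge4A$ and the initial transition collar lies inside $K$.

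I expect the main obstacle to be the uniformity of step 2 across all radii, independently of $S-s_0$. The displacement errors on different shells must not accumulate geometrically. I would handle this by never composing transports: each shell is transported directly from its own time $u(r)$. The only accumulation is then within a bounded number of dyadic scales, and scale invariance makes the per-shell error $C(\delta_A+A^{-2})$. Choosing $A$ large and then $\delta_A$ small makes this less than $\sigma$.
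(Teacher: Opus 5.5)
Your construction is the paper's, with its constant $a_\infty\in(A,4A)$ replaced by $4A$. The shell $\{r\}\times\Sigma$ goes to the materially transported copy of a fixed source sphere taken at the earlier time $u(r)=s-2\log(r/4A)$. The forward evolution on the source annulus is controlled by the annular version of Lemma~\ref{s10:shell}, the leading term $g_C$ cancels exactly by scaling, and injectivity comes from strict nesting of the transported domains.

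There is, however, a concrete defect at the seam $r=4A$, where you glue $\psi_0$ to the transported shells. Put $\Theta_u=\varphi_{-e^{-u}}\circ\psi_0$ and $V(u)=\nabla^{G(u)}F(u)$, so that $\partial_u\Theta_u=-\mathrm d\Theta_u(V(u))$. For $r\ge4A$ your chart is $\iota_s(r,\theta)=\varphi_{-e^{-s}}^{-1}\circ\Theta_{u(r)}(4A,\theta)$. Since $ru'(r)=-2$ and $2\mathbf X=4A\,\partial_r$ on the source sphere,
\[
 \mathrm d\iota_s(r\partial_r)
 =\mathrm d\bigl(\varphi_{-e^{-s}}^{-1}\circ\Theta_{u(r)}\bigr)
 \bigl(4A\,\partial_r+2(V(u(r))-\mathbf X)\bigr).
\]
So the one-sided derivative at $r=4A^+$ is $\mathrm d\psi_0(4A\,\partial_r+2(V(s)-\mathbf X))$, while from the core side it is $\mathrm d\psi_0(4A\,\partial_r)$. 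The core hypothesis only makes $V-\mathbf X$ of size $O(\delta_A r)$, not zero. Hence $\iota_s$ is not $C^1$ across the seam, and $\iota_s^*\gz(s)$ jumps by $O(\delta_A)$ there. Thus $\iota_s$ is not a smooth embedding, and the $C^q_0$ closeness demanded by \eqref{s10:Cdefinition} cannot hold across $r=4A$.

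The paper repairs this with a smooth nondecreasing profile $a(r)$, equal to $r$ near $\{r\le A\}$ and to $a_\infty$ for $r\ge4A$, with $0\le ra'/a\le1$. It uses the chart $\mathcal F_s(r,\theta)=\Theta_{u_s(r)}(a(r),\theta)$ with $u_s(r)=s-2\log(r/a(r))$.
\begin{itemize}
\item Near the core $u_s=s$, so the chart coincides with $\Theta_s$ and is smooth.
\item The condition $ra'/a\le1$ makes $u_s$ nonincreasing while $a$ is nondecreasing, which keeps your nesting argument valid in the interpolation region.
\item The identity $a'-au_s'/2=a/r$ gives $\mathrm d\mathcal F_s(r\partial_r)=\mathrm d\Theta_u\bigl(a\partial_a-ru_s'(V-\mathbf X)\bigr)$ with $ru_s'$ bounded.
\end{itemize}
The price is that the chain rule $r\partial_r=(ru_s')\partial_u+(ra'/a)\,a\partial_a$ now requires mixed bounds
\[
 \|\partial_u^p(\mathcal P-g_C)\|_{C^\ell_0}+\|\partial_u^p(V-\mathbf X)\|_{C^\ell_1}\le C(\delta_A+A^{-2})
\]
on the whole source annulus, where $\mathcal P(u,s)=e^u\Theta_u^*g(-e^{-s})$. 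These follow from $\partial_u\mathcal P=\mathcal P-\mathcal L_V\mathcal P$, $\mathcal L_{\mathbf X}g_C=g_C$, and the modified flow equations; this is where the order $5q$ is spent.

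The same formula also corrects your item~2. The velocity mismatch enters the pulled-back metric pointwise, through the bounded factor $ru_s'$; nothing is integrated over a dyadic step. The rough bounds of Proposition~\ref{s10:rough} are not needed either. Far out at time $s$ they would presuppose the large chart you are constructing, and only $V-\mathbf X$ on the source annulus inside $K$ ever enters.
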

\begin{proof}
\smallskip\noindent\emph{1.}
For this proof put $\Theta_u=\varphi_{-e^{-u}}\circ\psi_0$ and
\[
 \mathcal P(u,s)=e^u\Theta_u^*g(-e^{-s}),\qquad
 V(u)=\nabla^{G(u)}F(u).
\]
Thus $\mathcal P(u,u)=G(u)$ and $\partial_u\Theta_u=-\mathrm d\Theta_u(V(u))$.
For each $u\in[s_0,S)$, apply the annular version of Lemma~\ref{s10:shell} to $e^ug(e^{-u}t)$ and the fixed map $\Theta_u$ on $\{A/2<r<6A\}$. On $\{A\le r\le4A\}$ it gives
\[
 \|\mathcal P(u,s)-G(u)\|_{C^{2q+4}_0}\le CA^{-2},
 \qquad u\le s,\quad u<S.
\]
The core assumption supplies the initial derivatives, since $5q\ge4q+20$.
These forward estimates use only the core geometry at $u$ and remain valid
up to time zero. Together with the core hypothesis and the AC expansion,
they imply
\[
 \|\mathcal P(u,s)-g_C\|_{C^{2q+4}_0}
 \le C(\delta_A+A^{-2}),\qquad
 \|V(u)-\mathbf X\|_{C^{5q-1}_1}\le C\delta_A.
\]
Here and below the source-annulus norms may be computed with $g_C$; they are uniformly equivalent to the reference weighted norms.
In particular, $\mathrm dr(V)>0$ on the fixed annulus.

\smallskip\noindent\emph{2.}
Choose a smooth nondecreasing radial profile $a(r)$ with
\[
 a(r)=r\ \text{near }\{r\le A\},\qquad
 a(r)=a_\infty\ \text{for }r\ge4A,\qquad
 A<a_\infty<4A,\qquad 0\le\frac{ra'}a\le1.
\]
Choose the profile by scaling a fixed function, so all its scaled derivatives are uniformly bounded. For fixed $s$, set
\begin{equation}\label{s10:birthchart}
 u_s(r)=s-2\log\frac r{a(r)},\qquad
 \mathcal F_s(r,\theta)=\Theta_{u_s(r)}(a(r),\theta).
\end{equation}
On the inner core set $\mathcal F_s=\Theta_s$. The definitions agree near the matching level. Retain the core and $A\le r\le R_s$, where
\[
 R_s=\frac12a_\infty e^{(s-s_0)/2}.
\]
For a fixed sufficiently large $W_0$, whenever $s\ge s_0+W_0$,
one has $R_s\ge4A$ and $s_0\le u_s(r)\le s$ throughout this domain.
At $R_s$ the earlier time is $s_0+2\log2$, so a neighborhood of the closed retained domain is also available.

The boundary velocity of $\Theta_u(\{r<\rho\})$, for $A\le\rho\le4A$, is $-\mathrm d\Theta_u(V)$ and points strictly inward.
These domains therefore shrink strictly as $u$ increases.
As $r$ increases, $a(r)$ does not decrease and $u_s(r)$ does not increase; for distinct radii at least one change is strict.
Consequently the transported level sets are strictly nested, and the
inner core lies inside them. This proves global injectivity.
The differential estimate below proves local nonsingularity.

\smallskip\noindent\emph{3.}
The equations for the fixed pullbacks are
\[
 \partial_uG=G-2\Ric_G-2\nabla_G^2F,\qquad
 \partial_uF=n/2-\mathrm R_G-\Delta_GF.
\]
Identify the source annulus with a fixed annulus and rescale the metric
and potential by $A^{-2}$. The resulting equations have uniformly controlled
coefficients. Subtracting the stationary reference equations and differentiating gives
\[
 \|\partial_u^p(V-\mathbf X)\|_{C^\ell_1}\le C_{p,\ell}\delta_A,
 \qquad \ell+2p+1\le 5q.
\]
At fixed $s$, differentiation of the pullback defining $\mathcal P$ gives
$\partial_u\mathcal P=\mathcal P-\mathcal L_V\mathcal P$.
Since $\mathcal L_{\mathbf X}g_C=g_C$, this becomes
\[
 \partial_u(\mathcal P-g_C)
 =(1-\mathcal L_V)(\mathcal P-g_C)
       -\mathcal L_{V-\mathbf X}g_C.
\]
The spatial bound from Step~1 and the preceding estimates for $V$
now give, by repeated differentiation and the product rule,
\[
 \|\partial_u^p(\mathcal P-g_C)\|_{C^\ell_0}
 \le C_{p,\ell}(\delta_A+A^{-2}),\qquad p+\ell\le q+1.
\]
The order $2q+4$ in Step~1 and the core order $5q\ge4q+20$ supply all derivatives used here. The resulting mixed derivative bounds are uniform in $s-s_0$.

Abbreviate $a=a(r)$ and $u=u_s(r)$. Differentiating the chart and using
$a'-au_s'/2=a/r$ gives
\[
 \mathrm d\mathcal F_s(r\partial_r)
 =\mathrm d\Theta_u\bigl(a\partial_a-ru_s'(V-\mathbf X)\bigr),
 \qquad
 \mathrm d\mathcal F_s(W)=\mathrm d\Theta_u(W)
\]
for a link-tangent vector $W$. For link-tangent vectors $W,Z$ and
scalars $\xi,\zeta$, the exact metric identity is
\begin{equation}\label{s10:chartmetric}
 \begin{aligned}
 &r^{-2}\mathcal F_s^*(e^sg(-e^{-s}))
       (\xi r\partial_r+W,\zeta r\partial_r+Z)\\
 &\quad=a^{-2}\mathcal P(u,s)
 \bigl(\xi[a\partial_a-ru_s'(V-\mathbf X)]+W,\,
       \zeta[a\partial_a-ru_s'(V-\mathbf X)]+Z\bigr).
 \end{aligned}
\end{equation}
Here $e^{s-u}a^2=r^2$ cancels the scale factors.
Replacing $\mathcal P$ by $g_C$ and $V$ by $\mathbf X$ gives
$\xi\zeta+g_\Sigma(W,Z)$ exactly.
The error is $O(\delta_A+A^{-2})$, and the radial vector differs from
$a\partial_a$ by at most $C\delta_Aa$ in $g_C$ norm.
For small $\delta_A$ this also proves nonsingularity.

For every coefficient in fixed radial and angular frames, composition with $(u_s(r),a(r),\theta)$ is differentiated by
\[
 r\partial_r=(ru_s')\partial_u+\frac{ra'}a\,a\partial_a.
\]
All scaled derivatives of the two coefficients are uniformly bounded.
Apply this rule and the mixed estimates to \eqref{s10:chartmetric},
using radial and angular frames on a fixed finite cover of the link.
The product rule gives, on every rescaled annulus and uniformly in its
radius,
\[
 \|\mathcal F_s^*(e^sg(-e^{-s}))-g_C\|_{C^q_0(\{A\le r\le R_s\})}
 \le C_q(\delta_A+A^{-2}).
\]
Since $\hg-g_C$ has weight minus two and $\mathcal F_s=\Theta_s$
on the inner core, the same bound relative to $\hg$ holds on the
whole retained domain. Choose $A$ large and then $\delta_A$ small so
that this bound is less than $\sigma$.
Also $\hf(\hat p)\le n/2$, so core closeness gives
$f(-e^{-s})(\mathcal F_s(\hat p))=F(s)(\hat p)\le n$.
Finally $\hb=r+O(r^{-1})$ supplies a fixed $c_0>0$ such that the domain contains a neighborhood of $\overline{\Omega_{c_0Ae^{(s-s_0)/2}}}$.
Thus $\varphi_{-e^{-s}}^{-1}\circ\mathcal F_s$ is an embedding satisfying
\eqref{s10:Cdefinition} for $(\gz(s),\fz(s))$, proving \eqref{s10:Cgrowth}.
\end{proof}

The embeddings in this proposition may depend on $s$. They are used to obtain intrinsic estimates on the time slice. Time integration below is always in the original fixed modified gauge \eqref{s10:modified}, not in these varying embeddings.

\begin{Lemma}\label{s10:corevelocity}
Suppose \eqref{s10:coreassumption} holds on $[s_0,S)$.
There is a fixed $W\ge W_0$, independent of $s_0,S$, such that, for every $0<a<1$,
\begin{equation}\label{s10:corevel}
 \|\partial_sG\|_{C^{5q}(K)}
 +\|\partial_sF\|_{C^{5q}(K)}
 \le C_{A,q,a}\mathcal D(s)^a,
 \qquad s_0+W\le s<S.
\end{equation}
For every $B\ge1$ there is also $W_B\ge W_0$, independent of derivative order,
such that, for every integer $m\ge0$ and $0<a<1$,
\[
 \sum_{j=0}^m\left(
 |\nabla^j\partial_s\gz(s)|_{\gz(s)}
 +|\nabla^j\partial_s\fz(s)|_{\gz(s)}\right)
 \le C_{B,m,a}\mathcal D(s)^a
 \quad\text{on }\{\fz(s)\le B\},\qquad s_0+W_B\le s<S.
\]
Here $\nabla=\nabla^{\gz(s)}$; the constants and margins may depend on the fixed geometric data.
\end{Lemma}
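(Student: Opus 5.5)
The plan is to reduce both estimates to the intrinsic interpolation bound \eqref{s10:weightedvelocityintrinsic} of Lemma~\ref{s10:interpolation}. That bound is applied to the renormalized flow \eqref{s10:renormalization} at modified time $s$, whose time-minus-one pair is $(\gz(s),\fz(s))$. By \eqref{s10:evolution},
\[
 \partial_s\gz=2\Phi(s),\qquad \partial_s\fz=\tr_{\gz}\Phi(s).
\]
So every intrinsic derivative of the velocity is a linear combination of $\nabla^j\Phi(s)$, with coefficients given by the metric. It therefore suffices to bound $\sum_{j\le m}|\nabla^j\Phi(s)|$ by $C\mathcal D(s)^a$ on the relevant region. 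Lemma~\ref{s10:interpolation} needs an embedding satisfying \eqref{s10:Cdefinition} whose radius $L$ exceeds a fixed threshold, with order $q$ and tolerance $\sigma$ adapted to $k$, $a$ and $\kappa_1$. Proposition~\ref{s10:Cpersistence} supplies such embeddings: under \eqref{s10:coreassumption} it gives $r_C(s)\ge c_0Ae^{(s-s_0)/2}$ for $s\ge s_0+W_0$. I choose $W\ge W_0$ so that $c_0Ae^{W/2}$ exceeds the thresholds $L_0$ of Propositions~\ref{s10:rough} and~\ref{s10:forwardcomparison} and Lemma~\ref{s10:interpolation}, with margin for the relative collars.

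For the core estimate \eqref{s10:corevel}, I take $k=5q+2$ in Lemma~\ref{s10:interpolation} and fix $\kappa_1\in(a/2,1)$. On $\psi_0(K)$ the potential $\fz(s)$ is bounded by a constant depending only on $A$, because \eqref{s10:coreassumption} gives $F(s)$ close to $\hf$ in $C^0$ on $K$. Hence $e^{\kappa_1(\fz+C_*)}\le C_A$ there. Two points need care.

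\emph{Containment.} The core image must lie in the retained chart $\iota(\Omega_{\lambda L})$ of an embedding from the definition of $r_C(s)$. By Lemma~\ref{s10:annulardistance}, the entire sublevel $\{f\le B\}$ lies in $\iota(\Omega_{\lambda L})$ once $L>C_\lambda(1+\sqrt{B+C_*})$. Here I take $B=\sup_K F+1$, which is again controlled by $A$.

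\emph{Frame conversion.} The estimate is intrinsic with respect to $\gz(s)$, but \eqref{s10:corevel} uses fixed coordinate norms on $K$ after pullback by $\psi_0$. The core assumption controls $G(s)-\hg$ in $C^{5q}$, so the difference of Christoffel symbols is controlled through order $5q-1$. The standard inductive conversion then bounds fixed-coordinate derivatives of $\psi_0^*\Phi$ through order $5q$ by intrinsic derivatives of $\Phi$ through order $5q$. This conversion is where I expect the most bookkeeping: it forces a loss of one order, and the regularity $5q$ of the assumption has to cover it.

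For the sublevel estimate, Proposition~\ref{s10:rough}(ii) is the relevant input, since its geometric threshold $L_B$ does not depend on the derivative order. I therefore set $W_B$ so that $c_0Ae^{W_B/2}\ge L_B$ and so that the interpolation thresholds at a fixed $q$ are met. On $\{\fz(s)\le B\}$, I apply the local interpolation of Lemma~\ref{s10:interpolation} with radius comparable to $1$. Proposition~\ref{s10:rough}(ii) gives bounds $C_{B,m'}$ on high derivatives of $\Phi$, and the unweighted local $L^2$ norm is at most $C_Be^{B}\mathcal D(s)$, since the density is bounded below on the sublevel. Interpolating with $m'$ large in terms of $m$ and $a$ then gives $C_{B,m,a}\mathcal D^a$. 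This argument uses no new geometric smallness, so $W_B$ indeed does not depend on $m$.

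The main obstacle is the uniformity: the thresholds on $q$ and $\sigma$ chosen in Lemma~\ref{s10:interpolation} for a given $a$ must be compatible with the single fixed pair $(\sigma,q)$ used in Proposition~\ref{s10:Cpersistence}. To handle this, I would first try to avoid Lemma~\ref{s10:interpolation}'s order requirements altogether. On bounded-potential regions, Proposition~\ref{s10:rough}(ii) at the fixed $q$ already provides all derivative bounds, and the local interpolation then holds for every $a<1$.
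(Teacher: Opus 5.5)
Your sublevel argument and your frame conversion are exactly what the paper does. The sublevel argument consists of Proposition~\ref{s10:rough}(ii) at the single fixed pair $(\sigma,q)$, the local $L^2$ bound by $C_B\mathcal D(s)$ from the density comparison on bounded-potential neighborhoods, and local interpolation at an order $N(m,a)$ that leaves $L_B$ and $W_B$ untouched. The frame conversion goes through $\nabla^{G(s)}-\hnd\in C^{5q-1}$ with a loss of one order.

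The gap is in your primary route to \eqref{s10:corevel}. Lemma~\ref{s10:interpolation} with $k=5q+2$ is not merely delicate; it is unavailable at any admissible order.
\begin{itemize}
\item That lemma needs defect bounds through an order $m(n,k,a)\ge k$. It takes them from Proposition~\ref{s10:rough}(i), which derives them from the $C^{q'}_0$ closeness of a chart in Definition~\ref{s10:defC}. So it requires a geometric order $q'>5q$.
\item Your only source of a large geometric radius is Proposition~\ref{s10:Cpersistence}. Its charts reduce to $\psi_0$ on the inner core, so their $C^{q'}_0$ closeness there is exactly the $C^{q'}$ closeness of $G(s)$ to $\hg$ on $K$.
\item \eqref{s10:coreassumption} supplies that closeness only up to order $5q$.
\end{itemize}
No admissible $q'$ exists, and your containment step via Lemma~\ref{s10:annulardistance} becomes moot.

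What you call a fallback in your last paragraph is therefore the proof. You should build \eqref{s10:corevel} on top of the sublevel estimate rather than beside it:
\begin{itemize}
\item The core hypothesis gives $F(s)\le B_A$ on $K$ for some $B_A\ge1$ depending on $A$, so $\psi_0(K)\subset\{\fz(s)\le B_A\}$.
\item Take $W=W_{B_A}$, that is, $c_0Ae^{W/2}>L_{B_A}$.
\item Apply the sublevel defect bound with $B=B_A$ and $m=5q$, pull back by $\psi_0$, and convert to $\hnd$.
\item Conclude with $\partial_sG=2\Phi(G,F)$ and $\partial_sF=\tr_G\Phi(G,F)$, using the uniform $C^{5q}$ bound on $G^{-1}$.
\end{itemize}
Handle the case $\mathcal D(s)\ge1$ separately, by the uniform rough derivative bounds. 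With this reorganization your argument coincides with the paper's proof.
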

\begin{proof}
Fix the geometric order and tolerance within the thresholds of Proposition~\ref{s10:rough}(ii).
The core hypothesis gives $F(s)\le B_A$ on $K$ for a fixed $B_A\ge1$,
so $\psi_0(K)\subset\{\fz(s)\le B_A\}$.
By Proposition~\ref{s10:Cpersistence}, choose $W_B\ge W_0$ such that
$c_0Ae^{W_B/2}>L_B$, and take $W=W_{B_A}$.
The threshold $L_B$ in Proposition~\ref{s10:rough}(ii) is independent of derivative order.
For $s_0+W_B\le s<S$, that proposition gives uniform intrinsic bounds of every
finite order on $\{\fz(s)\le B\}$ and uniformly controlled neighborhoods of its points.
The potential is uniformly bounded above on these neighborhoods, so
\[
 \mathrm dV_{\gz(s)}=(4\pi)^{n/2}e^{\fz(s)}\mathrm dV_{\fz(s)}
 \le C_B\mathrm dV_{\fz(s)}.
\]
Consequently, the ordinary local $L^2$ norm of $\Phi(s)$ is at most $C_B\mathcal D(s)$.
For $\mathcal D(s)\le1$, local interpolation with a sufficiently large finite
order $N=N(m,a)$ gives
\[
 \sum_{j=0}^m|\nabla^j\Phi(s)|_{\gz(s)}
 \le C_{B,m,a}\mathcal D(s)^a
 \quad\text{on }\{\fz(s)\le B\}.
\]
For $\mathcal D(s)\ge1$, the same estimate follows from the uniform rough derivative bounds.
Increasing $N$ changes only these bounds and the interpolation constants, not $q,\sigma,L_B$, or $W_B$.
The equations \eqref{s10:evolution} give the asserted intrinsic velocity estimates.

Apply the defect estimate with $B=B_A$ and $m=5q$, and pull back by $\psi_0$.
It gives the corresponding intrinsic bounds for $\Phi(G,F)=\psi_0^*\Phi(s)$ on $K$.
To pass to the fixed reference connection, put $\mathcal A_s=\nabla^{G(s)}-\hnd$.
The core hypothesis implies
\[
 \|\mathcal A_s\|_{C^{5q-1}(K,\hg)}\le C_{A,q}.
\]
Converting $\hnd^j\Phi(G,F)$ to intrinsic derivatives introduces derivatives of
$\mathcal A_s$ only through order $j-1$. Thus
\[
 \|\Phi(G,F)\|_{C^{5q}(K,\hg)}
 \le C_{A,q,a}\mathcal D(s)^a.
\]
Finally, pullback of \eqref{s10:evolution} gives
$\partial_sG=2\Phi(G,F)$ and $\partial_sF=\tr_G\Phi(G,F)$.
The uniform $C^{5q}$ bound for $G^{-1}$ proves \eqref{s10:corevel}.
\end{proof}

The length calculation is the power-of-gradient argument of Sun--Wang \cite[Lemma 3.2]{sun2015on}, retaining the Gaussian error and the original kernel potential.
\begin{Proposition}\label{s10:finitelength}
Fix $0<\theta<\vartheta$ and choose
\begin{equation}\label{s10:abeta}
 2-\frac1{1-\theta}<a<1,
 \qquad \beta=1-(2-a)(1-\theta)>0.
\end{equation}
Let $[s_0,S)$ be a sufficiently late core interval and $s_1=s_0+W<S$, with $W$ larger than the fixed initial margins. Then
\begin{equation}\label{s10:finiteinterval}
 \int_{s_1}^S\mathcal D(s)^a\,\mathrm{d}s
 \le CE(s_1)^\beta+C\exp(-c_aA^2e^W).
\end{equation}
The endpoint $S$ may be finite or infinite; the constants are independent of its value. This estimate is conditional on the core hypothesis on the stated interval.
\end{Proposition}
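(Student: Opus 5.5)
The proof follows the power-of-gradient argument. On $[s_1,S)$ I would combine the pointed \L ojasiewicz--Simon inequality of Proposition~\ref{s10:pointedLS}, applied to the rescaled flow \eqref{s10:renormalization} at each $s$, with the energy identity \eqref{s10:energyid}. The geometric radius enters only through the Gaussian error $e^{-r_C(s)^2/8}$. I would control this error by Proposition~\ref{s10:Cpersistence}, which gives $r_C(s)\ge c_0Ae^{(s-s_0)/2}$ for $s\ge s_0+W_0$, once $W$ exceeds that margin. At each $s\in[s_1,S)$, \eqref{s10:LSerror} then gives
\[
 E(s)^{1-\theta}\le C\mathcal D(s)+Ce^{-c A^2e^{s-s_0}},
\]
with $E(s)=|\mathcal W(\gz(s),\fz(s))-\boldsymbol\mu(\hg)|$ since $E\ge0$.

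Next I split $[s_1,S)$ into the good set $G=\{s:\mathcal D(s)\ge e^{-cA^2e^{s-s_0}}\}$ and its complement. On the complement, $\mathcal D(s)^a\le e^{-acA^2e^{s-s_0}}$. Integrating over $s\ge s_1=s_0+W$ gives at most $C\exp(-c_aA^2e^W)$, which is the second term of \eqref{s10:finiteinterval}. On $G$ the inequality reduces to $E^{1-\theta}\le 2C\mathcal D$. There I differentiate $E^\beta$ using $E'=-2\mathcal D^2$:
\[
 -\frac{\mathrm d}{\mathrm ds}E^\beta=2\beta E^{\beta-1}\mathcal D^2
 =2\beta E^{\beta-1}\mathcal D^{2-a}\mathcal D^a\ge c\,E^{\beta-1+(2-a)(1-\theta)}\mathcal D^a=c\,\mathcal D^a .
\]
The last equality uses the choice of $\beta$ in \eqref{s10:abeta}. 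The condition $a>2-1/(1-\theta)$ is exactly what makes $\beta>0$. Where $E(s)=0$, monotonicity forces $E\equiv0$ and $\mathcal D=0$ afterwards, so those times contribute nothing.

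Since $E^\beta$ is nonincreasing on all of $[s_1,S)$, integrating the differential inequality over $G$ bounds $\int_G\mathcal D^a$ by $C$ times the total decrease of $E^\beta$, which is at most $CE(s_1)^\beta$. No connectedness of $G$ is needed: $E^\beta$ is absolutely continuous and monotone, so its decrease over a measurable set is at most its total decrease. The constants depend only on those of Proposition~\ref{s10:pointedLS} and Proposition~\ref{s10:Cpersistence}, hence are independent of $S$.

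The main obstacle is justifying uniform applicability at every $s$. Proposition~\ref{s10:pointedLS} requires the fixed $(\sigma,q)$ of the growth proposition, and the core assumption \eqref{s10:coreassumption} must hold on $[s_0,S)$ so that $r_C(s)$ grows exponentially with constants independent of $s_0$. I would match the parameter order of Figure~\ref{fig:s10:dependencies}: first $\theta$ and $a$, then $q$ and $\sigma$ for the pointed inequality, then $A$ and $\delta_A$, and finally $W$. The constant $c$ in the Gaussian error comes from $r_C^2/8\ge c_0^2A^2e^{s-s_0}/8$.
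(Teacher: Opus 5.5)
Your proposal is correct and is essentially the paper's argument. You use the pointed \L ojasiewicz inequality \eqref{s10:LSerror}, make its Gaussian radius error doubly exponentially small via Proposition~\ref{s10:Cpersistence}, and split $[s_1,S)$ into two sets. Where $\mathcal D$ dominates this error, you use $-\frac{\mathrm d}{\mathrm ds}E^\beta\ge c\,\mathcal D^a$ together with monotonicity of $E^\beta$ on the whole interval; on the complement you integrate $\mathcal D^a$ directly. The case $E=0$ is handled by stationarity, exactly as in the paper.
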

\begin{proof}
At every $s\in[s_1,S)$, Proposition~\ref{s10:Cpersistence} and the inequality \eqref{s10:LSerror}, applied to $(\gz(s),\fz(s))$, imply
\[
 E^{1-\theta}\le C(\mathcal D+\omega),\qquad
 \omega(s)=C\exp(-cA^2e^{s-s_0}).
\]
Where $\mathcal D\ge\omega$, this gives $E^{1-\theta}\le C'\mathcal D$, and therefore
\begin{equation}\label{s10:lengthderivative}
 -\frac{\mathrm{d}}{\mathrm{d}s}E^\beta
 =2\beta E^{-(2-a)(1-\theta)}\mathcal D^2\ge c\mathcal D^a.
\end{equation}
Integrate over this set, using monotonicity of $E^\beta$ on the whole interval. On the complementary set, $\mathcal D^a\le\omega^a$, whose integral is bounded by $C\exp(-c_aA^2e^W)$. If $E$ vanishes, the identity is justified by replacing it with $E+\delta$ and passing to the limit; alternatively \eqref{s10:energyid} makes the remaining trajectory stationary.

This proves \eqref{s10:finiteinterval} on the whole interval $[s_1,S)$.

\end{proof}

\begin{Corollary}\label{s10:noexit}
Under the standing tangent-flow assumption of this subsection, an initial embedding from the given tangent sequence and a sufficiently late time $s_0$ can be chosen so that \eqref{s10:coreassumption} holds with $S=\infty$. Consequently, \eqref{s10:Cgrowth} holds for all $s\ge s_0+W_0$, and, for the parameters in \eqref{s10:abeta} and a sufficiently large fixed $W$,
\[
 \int_{s_0+W}^{\infty}\mathcal D(s)^a\,\mathrm{d}s<\infty.
\]
\end{Corollary}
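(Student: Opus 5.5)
The argument is a continuity (bootstrap) argument in $S$, combining the conditional length bound of Proposition~\ref{s10:finitelength} with the core velocity estimate of Lemma~\ref{s10:corevelocity}.

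\textbf{Initial data.} First fix $A$, $\delta_A$, $W_0$ as in Proposition~\ref{s10:Cpersistence}. Then fix $W\ge\max\{W_0,W_{B_A}\}$ as in Lemma~\ref{s10:corevelocity}. Enlarge $W$ further so that the Gaussian term $C\exp(-c_aA^2e^W)$ in \eqref{s10:finiteinterval} is below $\delta_A/(8C_{A,q,a})$.

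Next use the tangent sequence. Along $s_i\to\infty$, the pairs $(\gz(s_i),\fz(s_i))$ converge to $(\hg,\hf)$ in $C^\infty_{\mathrm{loc}}$ after suitable embeddings $\psi_i$. The potential converges as well, since the metric limit forces the kernel potential limit by the argument of Lemma~\ref{s10:corematching}. Also $E(s)\to0$ by \eqref{s10:energyid}. Choose $s_0=s_i$ large and $\psi_0=\psi_i$ so that three things hold:
\begin{itemize}
\item[(a)] the core quantity in \eqref{s10:coreassumption} is below $\delta_A/4$ at $s_0$;
\item[(b)] it stays below $\delta_A/2$ on $[s_0,s_0+W]$;
\item[(c)] $CE(s_0)^\beta\le\delta_A/(8C_{A,q,a})$.
\end{itemize}
For (b), the flow on the fixed window $[s_0,s_0+W]$ is a bounded-time modified evolution. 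Smooth convergence of tangent flows on compact time intervals gives closeness of $(G,F)$ to the stationary pair $(\hg,\hf)$ uniformly on $[s_0,s_0+W]$ in $C^{5q}(K)$, once $i$ is large. Equivalently, one can apply Proposition~\ref{s10:forwardcomparison} together with the potential estimates after rescaling.

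\textbf{Bootstrap.} Let $S^*$ be the supremum of those $S$ for which \eqref{s10:coreassumption} holds on $[s_0,S)$. By (b), $S^*>s_0+W$. Suppose $S^*<\infty$. For $s\in[s_0+W,S^*)$, Lemma~\ref{s10:corevelocity} and Proposition~\ref{s10:finitelength} (with $s_1=s_0+W$, using that $E$ is nonincreasing so $E(s_1)\le E(s_0)$) give
\[
\|G(s)-G(s_1)\|_{C^{5q}(K)}+\|F(s)-F(s_1)\|_{C^{5q}(K)}\le C_{A,q,a}\int_{s_1}^{S^*}\mathcal D^a\le \frac{\delta_A}{4}.
\]
Here the $C^{5q}_2$ and $C^{5q}$ norms are equivalent on the compact $K$. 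Combined with (b), this shows the core quantity is at most $3\delta_A/4$ on $[s_0,S^*)$. By continuity of $(G,F)$ in $s$ (the flow is smooth at every negative time), the bound persists slightly beyond $S^*$, contradicting maximality. Hence $S^*=\infty$.

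Then \eqref{s10:Cgrowth} holds for all $s\ge s_0+W_0$ by Proposition~\ref{s10:Cpersistence}, and the integral bound follows from \eqref{s10:finiteinterval} with $S=\infty$.

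\textbf{Main obstacle.} The delicate point is step (b): justifying closeness on the whole initial window $[s_0,s_0+W]$ in the fixed gauge $\psi_0$ at order $5q$, including the potential. The tangent-flow convergence is only stated pointed-wise in the completed spacetime. I would handle it as follows:
\begin{itemize}
\item obtain the time slice at $s_0$ from the tangent sequence;
\item propagate forward over the bounded window using Proposition~\ref{s10:forwardcomparison} and the uniqueness of complete bounded-curvature flows;
\item control the potential via the kernel estimates of Proposition~\ref{s10:rough}(ii) and the normalization argument of Lemma~\ref{s10:corematching};
\item control the diffeomorphisms $\varphi$ by integrating the convergent drift $\nabla f$ over the bounded window.
\end{itemize}
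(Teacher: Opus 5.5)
Your proposal is correct and follows the paper's proof. As there, $W$ is fixed before the tangent time, and smooth spacetime convergence on the fixed compact window, with continuous dependence of the $\varphi_t$ ODE, gives the initial core closeness; $E\to0$ makes $E(s_1)^\beta$ small, and a first-exit argument with Lemma~\ref{s10:corevelocity} and Proposition~\ref{s10:finitelength} forces $S=\infty$. In step (b), the conjugate heat-kernel part of that convergence already gives the potential, so your appeal to Lemma~\ref{s10:corematching} is unnecessary; it would also require $\mathcal D$ to be small on the window, which is not known in advance.
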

\begin{proof}
Fix the core and all tolerances needed above. Let
\[
 e_K(s)=\|G(s)-\hg\|_{C^{5q}_0(K)}
       +\|F(s)-\hf\|_{C^{5q}_2(K)}.
\]
On the fixed compact core, the weighted and unweighted norms are equivalent. Absorbing this equivalence into the constant in Lemma~\ref{s10:corevelocity}, let $C_v$ satisfy
\[
 e_K(s)\le e_K(s_1)+C_v\int_{s_1}^s\mathcal D(u)^a\,\mathrm{d}u
\]
whenever the core hypothesis holds between $s_0$ and $s$, with $s_1$ beyond the fixed initial margins. Choose $W$ larger than these margins and so large that
\[
 C_v C\exp(-c_aA^2e^W)<\delta_A/4,
\]
where $C,c_a$ are the constants in Proposition~\ref{s10:finitelength}.

Smooth spacetime convergence along the given tangent sequence, together with continuous dependence of the defining ODE for $\varphi_t$, gives a sufficiently late $s_0$ and an initial embedding for which $e_K(s)<\delta_A/4$ on $[s_0,s_0+W+1]$. Here $W$ is fixed before choosing the tangent time, so only convergence on a fixed compact spacetime domain is used. Since $E(s)\to0$, we can also arrange, with $s_1=s_0+W$, that
\[
 C_v C E(s_1)^\beta<\delta_A/4.
\]
Let $S$ be the first possible time at which $e_K$ reaches $\delta_A$; then $S>s_1$. On every compact subinterval $[s_0,S']$ with $S'<S$, the supremum of $e_K$ is strictly below $\delta_A$, so the preceding propositions apply. Their constants are independent of $S'$. Applying Proposition~\ref{s10:finitelength} on these intervals gives, for every $s_1\le s<S$,
\[
 e_K(s)
 \le e_K(s_1)+C_v\int_{s_1}^s\mathcal D(u)^a\,\mathrm{d}u 
 \le e_K(s_1)+C_v C E(s_1)^\beta
                +C_v C\exp(-c_aA^2e^W)
 <3\delta_A/4.
\]
If $S$ were finite, smoothness of the original Ricci flow and its globally defined diffeomorphisms at every finite modified time would continue the marked pair across $S$. Continuity of its fixed-core norm contradicts a first exit at level $\delta_A$. Thus $S=\infty$, and in fact the core error stays uniformly below $3\delta_A/4$. Proposition~\ref{s10:Cpersistence} gives the radius growth, and letting the upper endpoint tend to infinity in Proposition~\ref{s10:finitelength} gives the finite-length conclusion.
\end{proof}

\begin{Proposition}\label{s10:decay}
For the embedding supplied by Corollary~\ref{s10:noexit} and the parameters in \eqref{s10:abeta},
\begin{equation}\label{s10:energydecay}
 E(s)\le C(1+s)^{-1/(1-2\theta)},
\end{equation}
and
\begin{equation}\label{s10:taildecay}
 I_a(s):=\int_s^\infty \mathcal D(u)^a\,\mathrm{d}u
 \le C(1+s)^{-\beta/(1-2\theta)}
\end{equation}
at all sufficiently late times. Consequently, for every $\gamma<\vartheta/(1-2\vartheta)$ one can choose $0<\theta<\vartheta$ and $a<1$ as in \eqref{s10:abeta} so that
\begin{equation}\label{s10:Irate}
 I_a(s)\le C(1+s)^{-\gamma}
\end{equation}
with a strictly larger exponent than $\gamma$ available if needed.
\end{Proposition}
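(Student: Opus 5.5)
The plan is to derive a differential inequality for $E$ from the energy identity \eqref{s10:energyid} and the pointed inequality \eqref{s10:LSerror}, integrate it to get \eqref{s10:energydecay}, and then feed this decay into the length estimate of Proposition~\ref{s10:finitelength} applied on tails $[s,\infty)$. By Corollary~\ref{s10:noexit}, the core hypothesis holds with $S=\infty$, so Proposition~\ref{s10:Cpersistence} gives $r_C(s)\ge c_0Ae^{(s-s_0)/2}$ for $s\ge s_0+W_0$. Hence \eqref{s10:LSerror} reads
\[
 E(s)^{1-\theta}\le C\mathcal D(s)+\omega(s),\qquad \omega(s)=C\exp\bigl(-cA^2e^{s-s_0}\bigr),
\]
and $\omega$ is doubly exponentially small, in particular smaller than any negative power of $1+s$.

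\emph{Energy decay.} Fix a late $s$ and split into two cases. If $C\mathcal D(s)\ge\omega(s)$, then $E^{1-\theta}\le 2C\mathcal D$, so $E'=-2\mathcal D^2\le -cE^{2(1-\theta)}$. Otherwise $E(s)^{1-\theta}\le 2\omega(s)$, so $E(s)\le C\omega(s)^{1/(1-\theta)}$, which is far below the claimed rate. To combine the cases I will compare with $\Psi(s)=C_1(1+s)^{-1/(1-2\theta)}$, which solves $\Psi'=-c'\Psi^{2(1-\theta)}$ for a suitable $C_1$. Since $E$ is nonincreasing and tends to zero, a barrier argument applies: on the set where $E>\Psi$, the second case is impossible for $s$ late (as $\omega\ll\Psi$), so the first case gives the ODE inequality and $E$ cannot cross $\Psi$ from below. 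For the initial condition, take $C_1$ large, using only $E(s_0)\le C$. If $E$ vanishes at some time, it stays zero and the bound is trivial. This yields \eqref{s10:energydecay}.

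\emph{Tail decay.} The plan is to apply Proposition~\ref{s10:finitelength} with starting point $s$ in place of $s_1$. I will rerun its proof on $[s,\infty)$ rather than shifting $s_0$: the inequality \eqref{s10:lengthderivative} holds wherever $\mathcal D\ge\omega$, and on the complement $\int_s^\infty\omega^a\le C\exp(-c_aA^2e^{s-s_0})$. Together these give
\[
 I_a(s)\le CE(s)^\beta+C\exp\bigl(-c_aA^2e^{s-s_0}\bigr)\le C(1+s)^{-\beta/(1-2\theta)},
\]
which is \eqref{s10:taildecay}.

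\emph{Exponent.} By \eqref{s10:abeta}, $\beta=1-(2-a)(1-\theta)$, which tends to $\theta$ as $a\to1$. So the exponent $\beta/(1-2\theta)$ tends to $\theta/(1-2\theta)$, and letting $\theta\uparrow\vartheta$ it tends to $\vartheta/(1-2\vartheta)$. Given $\gamma<\vartheta/(1-2\vartheta)$, first choose $\theta$ close to $\vartheta$, then $a$ close to $1$, so that $\beta/(1-2\theta)>\gamma$; this also leaves a strictly larger exponent available. The one point needing care is that all constants (including $W$ and the core) are fixed in Corollary~\ref{s10:noexit} only after $\theta$ and $a$ are chosen. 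I will therefore make the choice of $(\theta,a)$ first and then invoke the corollary.

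The main obstacle I expect is the barrier step, namely handling the times where the Gaussian error $\omega$ dominates $\mathcal D$. The resolution sketched above relies on $\omega$ being doubly exponentially small.
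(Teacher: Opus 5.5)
Your proposal is correct and follows the paper's argument: you combine the pointed inequality \eqref{s10:LSerror} with the radius growth \eqref{s10:Cgrowth}, run a comparison argument for $E$, apply the two-set integration of \eqref{s10:lengthderivative} on $[s,\infty)$, and choose $\theta\uparrow\vartheta$ and then $a\uparrow1$. The only difference is cosmetic: the paper squares to the forced inequality $E'\le-c_1E^{2(1-\theta)}+C_1\omega^2$ and compares with the shifted supersolution $K(1+s-s_2)^{-1/(1-2\theta)}$, which also keeps $K$ independent of $s_2$; your case split instead lets you use an unforced barrier once you note that $\omega^{1/(1-\theta)}\ll\Psi$.
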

\begin{proof}
At sufficiently late times \eqref{s10:LSerror} and \eqref{s10:Cgrowth} give
\[
 E^{1-\theta}\le C(\mathcal D+\omega),\qquad
 \omega(s)\le C\exp(-c e^s).
\]
Squaring and using $-E'=2\mathcal D^2$ yields
\begin{equation}\label{s10:forcedode}
 E'\le-c_1E^{2(1-\theta)}+C_1\omega^2.
\end{equation}
The forcing decreases faster than every exponential. A direct supersolution proves \eqref{s10:energydecay}: with $\alpha=(1-2\theta)^{-1}$, take $H(s)=K(1+s-s_2)^{-\alpha}$, where $s_2$ is sufficiently late and $K$ is large. Since $2\alpha(1-\theta)=\alpha+1$, the inequality
\[
 H'\ge-c_1H^{2(1-\theta)}+C_1\omega^2
\]
follows by taking $c_1K^{2(1-\theta)}-\alpha K$ larger than the finite supremum of $C_1(1+s-s_2)^{\alpha+1}\omega(s)^2$. Scalar comparison gives $E\le H$. Restricting to later $s\ge2s_2+1$ gives \eqref{s10:energydecay}.

Apply the two-set integration in \eqref{s10:lengthderivative} on $[s,\infty)$. It gives
\[
 I_a(s)\le CE(s)^\beta+C\exp(-c_ae^s).
\]
Equation \eqref{s10:energydecay} proves \eqref{s10:taildecay}. Finally, as $a\uparrow1$, $\beta\uparrow\theta$; as $\theta\uparrow\vartheta$, $\theta/(1-2\theta)$ increases to $\vartheta/(1-2\vartheta)$. Choose both parameters with strict margins to obtain the last assertion.
\end{proof}

\subsection{Convergence in a fixed gauge}
\label{s10:fixedgauge}

In this subsection, we prove strong uniqueness. The general strategy is similar to \cite[Section~7]{fangli2026strong}: finite length gives convergence in fixed coordinates, and weighted estimates quantify this convergence on expanding potential sublevels. We then identify the completed tangent flow and deduce isolation of the singular point.

\begin{Theorem}[Strong uniqueness]\label{s10:main}
Suppose one tangent flow at $z$ is generated by $(\hM,\hg,\hf)$ as above. There are an open subset $U\subset M$ and a smooth diffeomorphism $\psi:\hM\to U$, independent of $s$, such that
\begin{equation}\label{s10:fixedlimit}
 \psi^*(\gz(s),\fz(s))\longrightarrow(\hg,\hf)
 \quad\text{in }C^\infty_{\mathrm{loc}}(\hM),\qquad
 \psi^*\mathrm dV_{\fz(s)}=\mathrm dV_{\hat f}.
\end{equation}
In particular, $\int_U\mathrm dV_{\fz(s)}=1$.

Let $\vartheta\in(0,\frac12)$ be an admissible exponent in \cref{thm:global-pair-LS}. For every integer $m\ge0$ and every
\begin{equation}\label{s10:ratechoices}
 0<\gamma<\frac{\vartheta}{1-2\vartheta},\qquad 0<c<2\gamma,
\end{equation}
there are $C<\infty$ and $\bar s<\infty$ such that, for every integer $j\ge\bar s$, the map $\psi$ satisfies, on
\begin{equation}\label{s10:omegaj}
 \Omega_j^{\log}=\{\hf\le c\log j\}
\end{equation}
for all $s\ge j$,
\begin{equation}\label{s10:mainrate}
 [\psi^*\gz(s)-\hg]_m+[\psi^*\fz(s)-\hf]_m
 \le C e^{\hf/2}(1+s)^{-\gamma}
 \quad\text{on }\Omega_j^{\log}.
\end{equation}
The constants may depend on the reference shrinker, $Y$, and the displayed parameters; the constant $\bar s$ may also depend on the flow and $z$.
\end{Theorem}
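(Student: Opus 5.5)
The plan is to work in the fixed gauge of the core embedding $\psi_0$ supplied by Corollary~\ref{s10:noexit}, integrate the velocity bounds to get a limit in that gauge, identify the limit with the reference pair, and then correct $\psi_0$ by a single diffeomorphism. First, since $\partial_s\gz=2\Phi$ and $\partial_s\fz=\tr_{\gz}\Phi$, Lemma~\ref{s10:corevelocity} gives, on each sublevel $\{\fz(s)\le B\}$ and for $s$ late, pointwise derivative bounds of the velocity by $C_{B,m,a}\mathcal D(s)^a$. By Corollary~\ref{s10:noexit} and Proposition~\ref{s10:decay}, $I_a(s)=\int_s^\infty\mathcal D^a\to0$ with $I_a(s)\le C(1+s)^{-\gamma'}$ for some $\gamma'>\gamma$. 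The potential is itself controlled by its velocity, so a sublevel at one time stays inside a slightly larger sublevel at all later times. Hence the pairs $(\gz(s),\fz(s))$ are Cauchy in $C^m$ on every compact subset of the domain $U=\bigcup_B\{\fz(s_0)\le B\}$, with
\[
 [\gz(s)-g_\infty]_m+[\fz(s)-f_\infty]_m\le C_{B,m}I_a(s)\quad\text{on }\{f_\infty\le B\}.
\]
The limit satisfies $\Phi(g_\infty,f_\infty)=0$, because $\mathcal D(s)\to0$ along a sequence and the local $L^2$ control of Lemma~\ref{s10:corevelocity} applies.

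Second, identify $(U,g_\infty,f_\infty)$ with $(\hM,\hg,\hf)$. The core assumption keeps $\psi_0^*(g_\infty,f_\infty)$ $\delta_A$-close to the reference pair on $K$. Proposition~\ref{s10:Cpersistence} gives $r_C(s)\to\infty$, so the time slices carry larger and larger AC charts. Taking a subsequential limit of the charts $\mathcal F_s$, or equivalently comparing with the tangent-flow sequence, gives a complete AC shrinker limit containing a region diffeomorphic to $\Omega_L$ for every $L$. The limit is complete because $\fz$ is proper with quadratic growth by Proposition~\ref{s10:rough}. Since the tangent flow along the given sequence is $(\hM,\hg,\hf)$, one obtains an isometry $\psi:\hM\to U$ with $\psi^*(g_\infty,f_\infty)=(\hg,\hf)$. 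Because $\mathrm dV_{\fz(s)}$ is stationary by \eqref{s10:evolution}, it equals $\mathrm dV_{f_\infty}$ for every $s$. This gives $\psi^*\mathrm dV_{\fz(s)}=\mathrm dV_{\hf}$ exactly, and total mass one follows from the normalization of $\hf$ together with Gaussian concentration. Thus $\psi$ is independent of $s$ and \eqref{s10:fixedlimit} holds.

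Third, obtain the rate \eqref{s10:mainrate} by redoing the integration with weights instead of fixed $B$. I would use \eqref{s10:weightedvelocityintrinsic}: the velocity and its derivatives are bounded by $C e^{\kappa_1(\fz+C_*)}\mathcal D^a$ with $\kappa_1>a/2$ chosen close to $1/2$. This holds on all of $\iota(\Omega_{\lambda L})$ with $L<r_C(s)$, which by \eqref{s10:Cgrowth} contains $\{\hf\le c\log j\}$ for $s\ge j$ large. Integrating in $s$ from $s$ to $\infty$ then gives
\[
 [\psi^*\gz(s)-\hg]_m+[\psi^*\fz(s)-\hf]_m\le C e^{\kappa_1\hf}I_a(s).
\]
A bootstrap keeps $\fz(u)\le\hf+1$ along the way, since $e^{\kappa_1\hf}I_a(j)\le j^{c\kappa_1}j^{-\gamma'}\to0$ when $c<2\gamma$ and $\kappa_1\approx1/2$. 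The factor $e^{(\kappa_1-1/2)\hf}\le j^{c(\kappa_1-1/2)}$ is then absorbed using the margin $\gamma'>\gamma$, which yields $Ce^{\hf/2}(1+s)^{-\gamma}$.

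The main obstacle is the bootstrap: one must show that the weighted bound itself keeps the expanding domain $\{\hf\le c\log j\}$ inside both the sublevels where Lemma~\ref{s10:interpolation} applies and the growing geometric charts, with conversion between intrinsic and $\hg$-derivatives. I would handle it by a continuity argument in $s$ on each $\Omega_j^{\log}$, exactly as in the core no-exit argument of Corollary~\ref{s10:noexit}.
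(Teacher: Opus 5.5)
Your route is the paper's: integrate the sublevel velocity bounds in the fixed modified gauge (Proposition~\ref{s10:limit}), identify the limit through the tangent sequence and the stationary density (Proposition~\ref{s10:identify}), then integrate \eqref{s10:weightedvelocityintrinsic} from $s$ to $\infty$ under a bootstrap on $\Omega_j^{\log}$ (Lemma~\ref{s10:weightedconvergence}). However, Step~1 fails as written, and Step~2 needs exactly what is lost.

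Since $M$ is closed, $\fz(s_0)$ is bounded, so $\bigcup_B\{\fz(s_0)\le B\}=M$. Convergence on all of $M$ is impossible: for the tangent-sequence embeddings $\iota_i$ (composed with $\varphi_{-e^{-s_i}}^{-1}$), $\iota_i^*\fz(s_i)\to\hf$ on exhausting domains forces $\max_M\fz(s_i)\to\infty$. The error is treating sublevel persistence as uniform in $B$. The bound of Lemma~\ref{s10:corevelocity} on $\{\fz(s)\le B+2\}$ holds only for $s\ge s_0+W_{B+2}$, and $W_{B+2}$ grows with $B$ because the radius must first exceed $L_{B+2}$. The barrier also needs $C_BI_a(s_1)<\frac14$, with $C_B$ growing.

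You need a diagonal exhaustion $U=\bigcup_j\{\fz(s_j)<B_j\}$ with $B_{j+1}\ge B_j+2$, $s_j\uparrow\infty$ late enough for $B_j$, and $C_{B_j}I_a(s_j)<\frac14$. The upper barrier gives $\overline{U_j}\subset U_{j+1}$ with uniform control for $s\ge s_j$. The lower barrier (not Proposition~\ref{s10:rough}, which is a single-slice chart statement) gives properness of $f_\infty$. More importantly, it shows that for each fixed $B$, every sufficiently late sublevel $\{\fz(s)\le B\}$ lies in the fixed compact set $\overline{U_j}\subset U$ once $B_j>B+1$.

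That property is what keeps $\iota_i(K)$, for a fixed compact $K\subset\hM$, inside one compact subset of $U$. Only then do the $\iota_i$ subconverge to a local isometry $J:(\hM,\hg)\to(U,g_\infty)$ with $J^*f_\infty=\hf$. You then still need three things:
\begin{itemize}
\item injectivity of $J$, from injectivity of $\iota_i$ and convergence of local inverses;
\item the covering argument, from completeness of $\hM$;
\item the mass identity $\int_{J(\hM)}\mathrm dV_{\fz(s)}=1=\int_M\mathrm dV_{\fz(s)}$, to exclude further components of $U$.
\end{itemize}
Limits of the charts $\mathcal F_s$ do not supply this, since those charts are only $\sigma$-close to $\hg$.

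Two smaller repairs are needed in Step~3. First, \eqref{s10:weightedvelocityintrinsic} through order $m$ needs a radius $r_C^{(\sigma',q')}$ with $q'$ large and $\sigma'$ small depending on $m,a,\kappa_1$. But \eqref{s10:Cgrowth} was proved only for the one fixed $q$. You must rerun Proposition~\ref{s10:Cpersistence} with these parameters, taking its order-$5q'$ core hypothesis at late times from the $C^\infty_{\mathrm{loc}}$ convergence of Step~2.

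Second, the bootstrap cannot be run forward from $s=j$ as in Corollary~\ref{s10:noexit}, since nothing is known on $\Omega_j^{\log}$ at $s=j$. Run it backward instead. For fixed $j$, compact convergence gives $[\psi^*\gz(s)-\hg]_m+[\psi^*\fz(s)-\hf]_m<\frac12$ for all large $s$. On any terminal interval $[s_1,\infty)$ with $s_1\ge j$ where this holds, the integrated estimate gives a bound $Cj^{c\kappa_1-\gamma'}<\frac14$, so no first failure can occur. The bootstrap must include these derivative bounds, not only $\psi^*\fz\le\hf+1$, in order to convert intrinsic derivatives of $\Phi$ into $\hg$-derivatives.
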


We now work on the fixed manifold carrying $(\gz(s),\fz(s))$. The fixed density in \eqref{s10:evolution} will identify the normalization of the limiting pair. The first step is to obtain a limit on an open subset of this manifold.

\begin{Lemma}\label{s10:persistlevels}
Fix $a$ for which $I_a(s)\to0$. For each sufficiently large $B$ there is a time $s_B$ and a constant $C_B$ such that, for $s\ge s_B$, all points with $\fz(s)\le B+2$ satisfy
\begin{equation}\label{s10:sublevelvelocity}
 |\partial_s\fz|+|\partial_s\gz|_{\gz}\le C_B\mathcal D(s)^a.
\end{equation}
If $s_1\ge s_B$ and $C_BI_a(s_1)<1/4$, then
\begin{align}
 \fz(x,s_1)\le B&\quad\Longrightarrow\quad
 |\fz(x,s)-\fz(x,s_1)|<1/4\quad(s\ge s_1),\label{s10:upperbarrier}\\
 \fz(x,s_1)\ge B&\quad\Longrightarrow\quad
 \fz(x,s)>B-1/4\quad(s\ge s_1).\label{s10:lowerbarrier}
\end{align}
For each fixed derivative order there are analogous intrinsic bounds by $C_{B,m,a}\mathcal D(s)^a$ on the same sublevels for the same $s_B$.
\end{Lemma}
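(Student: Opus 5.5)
The plan is to obtain the velocity estimate \eqref{s10:sublevelvelocity} directly from the second assertion of Lemma~\ref{s10:corevelocity}, and then to derive the two barrier statements by integrating this bound in $s$ along a fixed point $x$ of the modified gauge. By Corollary~\ref{s10:noexit}, the core hypothesis \eqref{s10:coreassumption} holds with $S=\infty$ for the chosen embedding and late $s_0$. Apply Lemma~\ref{s10:corevelocity} with sublevel parameter $B+2$ and $m=0$, and set $s_B=s_0+W_{B+2}$. This yields, on $\{\fz(s)\le B+2\}$ and for $s\ge s_B$,
\[
 |\partial_s\fz|+|\partial_s\gz|_{\gz}\le C_B\mathcal D(s)^a .
\]
The bounds for each fixed derivative order follow in the same way from that lemma with general $m$. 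Since the margin $W_{B+2}$ is independent of the derivative order, the time $s_B$ is the same for every $m$, and only the constant changes to $C_{B,m,a}$.

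For \eqref{s10:upperbarrier}, fix $x$ with $\fz(x,s_1)\le B$ and use a first-exit argument. Let $s'$ be the supremum of those times $s\ge s_1$ for which $\fz(x,u)\le B+2$ holds for all $u\in[s_1,s]$. On $[s_1,s')$ the velocity bound applies at $x$, so
\[
 |\fz(x,s)-\fz(x,s_1)|\le C_B\int_{s_1}^{s}\mathcal D^a\le C_BI_a(s_1)<\tfrac14 .
\]
Hence $\fz(x,\cdot)<B+1/4$ on this interval. The function $s\mapsto\fz(x,s)$ is continuous, because the flow is smooth at every finite modified time, so exit at level $B+2$ is impossible. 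Therefore $s'=\infty$, and the estimate holds for all $s\ge s_1$.

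For \eqref{s10:lowerbarrier}, fix $x$ with $\fz(x,s_1)\ge B$ and argue by contradiction. Suppose some later time $s_3$ has $\fz(x,s_3)\le B-1/4$. By continuity, there is a last time $s_2\in[s_1,s_3)$ with $\fz(x,s_2)=B$. On $[s_2,s_3]$ we then have $\fz(x,\cdot)\le B<B+2$, so the velocity bound applies throughout. Integrating gives $|\fz(x,s_3)-B|\le C_BI_a(s_2)\le C_BI_a(s_1)<1/4$, which contradicts $\fz(x,s_3)\le B-1/4$.

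The only delicate point is confirming that Lemma~\ref{s10:corevelocity} applies on the whole half-line for the fixed sublevel $B+2$. This uses $S=\infty$ from Corollary~\ref{s10:noexit}. It also uses the fact that $W_{B+2}$ is determined by $L_{B+2}$ and the radius growth \eqref{s10:Cgrowth}, and so does not depend on $s_1$. Once this is in place, the remaining steps are elementary ODE comparisons at a fixed point.
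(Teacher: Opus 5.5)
Your proposal is correct and follows the paper's proof essentially step for step. The velocity bound comes from Lemma~\ref{s10:corevelocity} at level $B+2$ with $s_B\ge s_0+W_{B+2}$, valid on the whole half-line because Corollary~\ref{s10:noexit} gives $S=\infty$. The two barriers then follow from a first-exit argument and a last-crossing-of-level-$B$ argument, each integrated against $I_a(s_1)$; the only cosmetic difference is that you run the first-exit argument at level $B+2$ rather than the paper's $B+1$, which changes nothing.
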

\begin{proof}
Choose $s_B\ge s_0+W_{B+2}$. Lemma~\ref{s10:corevelocity} gives \eqref{s10:sublevelvelocity} and its differentiated versions on the entire displayed sublevel set.

For \eqref{s10:upperbarrier}, suppose a point initially at potential at most $B$ first reaches $B+1$. Up to that time it lies in the controlled sublevel set, and integration of \eqref{s10:sublevelvelocity} bounds its potential change by $C_BI_a(s_1)<1/4$, a contradiction. It therefore stays below $B+1$, and the same integral gives the asserted absolute change for all future times.

For \eqref{s10:lowerbarrier}, a trajectory starting above $B$ could reach $B-1/4$ only after crossing the level $B$. Between its last such crossing and its first arrival at $B-1/4$, it lies in the controlled sublevel set. Its potential change would be at least $1/4$, again contradicting the same integral.
\end{proof}

\begin{Proposition}\label{s10:limit}
There is an open set $U\subset M$ on which $(\gz(s),\fz(s))$ converges smoothly on compact subsets to a pair $(g_\infty,f_\infty)$. The function $f_\infty:U\to\R$ is proper and bounded below. Each connected component of $U$ is a complete Ricci shrinker. On $U$,
\begin{equation}\label{s10:limitdensity}
 \mathrm dV_{\fz(s)}=(4\pi)^{-n/2}e^{-f_\infty}\mathrm dV_{g_\infty}.
\end{equation}
Moreover, for every finite $B$, all sufficiently late sublevels $\{\fz(s)\le B\}$ lie in a fixed compact subset of $U$.
\end{Proposition}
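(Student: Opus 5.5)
We define the open set by persistent sublevels, namely
\[
 U=\bigcup_{B}\,\bigl\{x:\fz(x,s_1(B))<B\bigr\},
\]
where for each large $B$ we fix $s_1(B)\ge s_B$ with $C_BI_a(s_1(B))<1/4$, as permitted by Lemma~\ref{s10:persistlevels} and Corollary~\ref{s10:noexit}.

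\emph{Step 1: uniform Cauchy property on each piece.} By \eqref{s10:upperbarrier}, every point of $U_B=\{\fz(s_1)<B\}$ stays in $\{\fz(s)\le B+1/4\}$ for all $s\ge s_1$. On that set Lemma~\ref{s10:persistlevels} gives intrinsic velocity bounds $C_{B,m,a}\mathcal D(s)^a$ at every derivative order. The velocity at order zero makes the metrics $\gz(s)$ uniformly equivalent to $\gz(s_1)$ on $U_B$. Inductively, integrating $\partial_s\Gamma=\gz^{-1}*\nabla\partial_s\gz$, we convert these intrinsic bounds into bounds in the fixed metric $\gz(s_1)$. Since $\int_{s_1}^\infty\mathcal D^a<\infty$, the pair $(\gz(s),\fz(s))$ is Cauchy in every $C^m$ on compact subsets of $U_B$, hence converges smoothly to $(g_\infty,f_\infty)$. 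The pieces are nested (by \eqref{s10:upperbarrier} again, with $B$ increasing), so the limits agree on overlaps. The density identity \eqref{s10:limitdensity} follows directly from the stationarity $\partial_s\mathrm dV_{\fz}=0$ in \eqref{s10:evolution}: it is constant in $s$ and passes to the limit. The bound $f_\infty\ge-C_*$ follows from $\fz\ge-C_0$.

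\emph{Step 2: compactness of sublevels and properness.} We claim that $\{\fz(s)\le B\}$, for $s\ge s_1(B+1)$, lies in the fixed set $\overline{\{\fz(s_1(B+1))\le B+1/2\}}\subset U_{B+1}$. The inclusion comes from the lower barrier \eqref{s10:lowerbarrier}: points with $\fz(s_1)\ge B+1/2$ keep $\fz>B+1/4$. This set is compact, because $M$ is closed and sublevels of a continuous function are closed. It is also strictly inside $U_{B+1}$, because its points have $\fz(s_1)\le B+1/2<B+1$. Passing to the limit, $\{f_\infty\le B\}\subset\{\fz(s_1)\le B+1/2\}$, which is compact in $U$; hence $f_\infty$ is proper on $U$.

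\emph{Step 3: shrinker equation and completeness.} We have $\Phi(s)\to0$ locally in $C^m$ on these sublevels, since the bounds of Lemma~\ref{s10:corevelocity} give $|\nabla^j\Phi|\le C\mathcal D^a$ and $\mathcal D(s)\to0$ by $E'=-2\mathcal D^2$ and the energy limit. Thus $\Ric_{g_\infty}+\nabla^2f_\infty=\frac12g_\infty$. For completeness we use the standard gradient bound: the uniform bound $|\nabla\fz|^2\le C(\fz+C_*)$ from Proposition~\ref{s10:rough}(ii) passes to the limit, so $\sqrt{f_\infty+C_*}$ is Lipschitz. A $g_\infty$-geodesic of finite length therefore stays in a sublevel of $f_\infty$, which is compact by properness; so each component is complete.

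The main obstacle is Step 2. There we need the lower barrier to be effective uniformly in $s$, so that no mass of the sublevel leaks to $\partial U$ along the fixed gauge. The choice of $s_1(B+1)$ relative to the level $B+1/2$, together with the velocity bounds on the enlarged sublevel $B+2$, is what must be arranged carefully.
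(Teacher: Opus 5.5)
Your construction of $U$ from persistent sublevels, the use of the two barriers of Lemma~\ref{s10:persistlevels}, the integration of the connection variation, the density identity from stationarity of $\mathrm dV_{\fz(s)}$, and the lower-barrier argument for properness and for the last assertion all follow the paper's proof. Step~2, which you flag as the main obstacle, is in fact routine. The genuine gap is in the completeness argument of Step~3.

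Proposition~\ref{s10:rough}(ii) does not give a uniform inequality $|\nabla\fz|^2\le C(\fz+C_*)$. It gives $|\nabla f|\le C_{B,1}$ on $\{f\le B\}$ with a constant depending on $B$. That constant comes from logarithmic differentiation of a kernel whose lower bound is $e^{-B}$, so it may grow exponentially in $B$. With such bounds, $\sqrt{f_\infty+C_*}$ is only locally Lipschitz. A curve needs length at least $C_{k+1}^{-1}$ to cross from level $k$ to level $k+1$, so if $\sum_k C_k^{-1}<\infty$ a finite-length curve could leave every compact subset of $U$. Nothing in your argument excludes this.

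The missing idea is to use the limit equation itself. On each component of $U$, the soliton equation gives a constant $Q_\infty$ with $\mathrm R_{g_\infty}+|\nabla f_\infty|^2-f_\infty=Q_\infty$. Moreover $\mathrm R_{g_\infty}\ge0$, because the maximum principle on the original flow gives $\mathrm R_{\gz(s)}\ge-Ce^{-s}$; you never used this ingredient. Choosing $C\ge Q_\infty$ with $f_\infty+C\ge1$ then gives $|\nabla\sqrt{f_\infty+C}|\le\frac12$, and properness yields completeness exactly as you intended.

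There are two smaller inaccuracies.
\begin{enumerate}
\item From $E'=-2\mathcal D^2$ and $E\to0$ you only get $\int^\infty\mathcal D^2<\infty$. This yields a sequence $s_i\to\infty$ with $\mathcal D(s_i)\to0$, not $\mathcal D(s)\to0$. The sequence suffices, since $\Phi(s)\to\Phi(g_\infty,f_\infty)$ locally smoothly once the pair converges.
\item Nestedness of your pieces via the upper barrier needs $s_1(B)$ nondecreasing along a discrete sequence of levels with gaps larger than $1/4$; the paper takes $B_{j+1}\ge B_j+2$ and increasing $s_j$. Agreement of the limits on overlaps does not need nestedness, since all pieces are limits of the same family of tensors on the fixed manifold $M$.
\end{enumerate}
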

\begin{proof}
Choose numbers $B_j\to\infty$ with $B_{j+1}\ge B_j+2$, and increasing times $s_j$ such that Lemma~\ref{s10:persistlevels} applies with threshold $B_j$ and $C_{B_j}I_a(s_j)<1/4$. Put
\begin{equation}\label{s10:Uj}
 U_j=\{x\in M:\fz(x,s_j)<B_j\},\qquad U=\bigcup_jU_j.
\end{equation}
The upper barrier gives $\overline{U_j}\subset U_{j+1}$ and, for $s\ge s_j$, $\fz(s)\le B_j+1/4$ on $\overline{U_j}$. The sets are nonempty for large $j$, since the no-exit core contains a fixed point of the modified manifold with bounded potential.

On a neighborhood of $\overline{U_j}$, \eqref{s10:sublevelvelocity} and $\int_{s_j}^\infty \mathcal D^a<\infty$ give uniform equivalence of $\gz(s)$ with $\gz(s_j)$. They also give a positive-definite $C^0$ limit and uniform convergence of the potential. We justify convergence of derivatives in these fixed coordinates. The connection variation is
\begin{equation}\label{s10:connectiontime}
 \partial_s\Gamma(\gz)^k_{ij}
 =(\gz)^{k\ell}
 \bigl(\nabla_i\Phi_{j\ell}+\nabla_j\Phi_{i\ell}-\nabla_\ell \Phi_{ij}\bigr).
\end{equation}
The intrinsic differentiated bounds in Lemma~\ref{s10:persistlevels} make its right-hand side integrable. Thus the connection difference relative to $\gz(s_j)$ is bounded and converges. Differentiate \eqref{s10:connectiontime} in that fixed connection. At each order, the conversion from current to fixed covariant derivatives involves already controlled lower connection derivatives, and the highest connection derivative occurs with an integrable coefficient bounded by $C\mathcal D^a$. Gronwall's inequality gives its uniform bound, and the equation then gives integrability of its time derivative. Induction proves convergence of all connection derivatives. Equations \eqref{s10:evolution} give convergence of all derivatives of the metric and potential. For each derivative order one may start at a later time at which its intrinsic estimates hold; the intervening finite time interval is smooth and does not affect convergence. A finite cover of $\overline{U_j}$ completes this argument. The limits agree on overlaps, since all tensors are in the same fixed coordinates.

There is a sequence $s_i\to\infty$ with $\mathcal D(s_i)\to0$. The local estimates imply $\Phi(s_i)\to0$ on every compact subset of $U$. Passing to the smooth limit gives
\begin{equation}\label{s10:limitsoliton}
 \Ric_{g_\infty}+\nabla^2_{g_\infty}f_\infty=\frac12g_\infty.
\end{equation}
The scalar lower bound $\mathrm{R}_{\gz(s)}\ge-Ce^{-s}$ from Subsection~\ref{s10:length} gives $\mathrm{R}_{g_\infty}\ge0$. Taking the local limit in the fixed-density identity proves \eqref{s10:limitdensity}.

If $x\in U\setminus U_j$, then $\fz(x,s_j)\ge B_j$, and the lower barrier gives $f_\infty(x)\ge B_j-1/4$. Hence every finite sublevel of $f_\infty$ is contained in some $U_j$, and its closure is a closed subset of the compact set $\overline{U_j}\subset U_{j+1}$. Thus $f_\infty$ is proper. It is bounded below by $1-C_*$.

On each connected component, the soliton equation gives a constant $Q_\infty$ such that
\[
 \mathrm{R}_{g_\infty}+|\nabla f_\infty|^2-f_\infty=Q_\infty.
\]
Choose a constant $C$ on that component so that $f_\infty+C\ge1$ and $C\ge Q_\infty$. Since $\mathrm{R}_{g_\infty}\ge0$,
\[
 |\nabla\sqrt{f_\infty+C}|\le\frac12.
\]
A finite-length curve cannot escape all compact sets: the displayed function remains bounded along it, whereas properness makes its sublevels compact. This proves completeness.

Finally, fix $B$ and choose $j$ with $B_j>B+1$. If $s\ge s_j$ and $\fz(x,s)\le B$, the lower barrier rules out $\fz(x,s_j)\ge B_j$. Thus $x\in U_j\subset\overline{U_j}\Subset U$. This proves the last assertion, which will prevent the images of tangent convergence maps from escaping in the fixed coordinates.
\end{proof}

\begin{Proposition}\label{s10:identify}
The open set $U$ in Proposition~\ref{s10:limit} is connected, and there is a diffeomorphism $\psi:\hM\to U$ satisfying \eqref{s10:fixedlimit}. In particular, $\int_U\mathrm dV_{\fz(s)}=1$.
\end{Proposition}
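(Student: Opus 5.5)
The plan is to compare the limit pair $(g_\infty,f_\infty)$ on $U$ with the reference shrinker through the no-exit core of Corollary~\ref{s10:noexit} and the expanding geometric charts of Proposition~\ref{s10:Cpersistence}. By the core assumption \eqref{s10:coreassumption} with $S=\infty$, the fixed core embedding $\psi_0$ gives $\psi_0^*(g_\infty,f_\infty)$ within $\delta_A$ of $(\hg,\hf)$ on $K$. We will show that the component $U_0$ of $U$ containing $\psi_0(K)$ is isometric to $(\hM,\hg)$, with $f_\infty$ pulled back to $\hf$. Then we will show that $U=U_0$.

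\emph{Identification of $U_0$.} Take $s_i\to\infty$. The charts $\varphi_{-e^{-s_i}}^{-1}\circ\mathcal F_{s_i}$ of Proposition~\ref{s10:Cpersistence} have radii $\ge c_0Ae^{(s_i-s_0)/2}\to\infty$. By Proposition~\ref{s10:rough}(i), they place every sublevel $\{\fz(s_i)\le B\}$ inside their image for large $i$. By Proposition~\ref{s10:limit}, these sublevels lie in a fixed compact subset of $U$, so the images of compact model sets stay in a fixed compact subset of $U$ in the fixed gauge. The maps are $C^q$-close to isometries up to error $\sigma$, and Lemma~\ref{s10:corevelocity} supplies higher bounds on sublevels. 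Arzel\`a--Ascoli therefore gives a subsequential limit $\psi:\hM\to U$. It is a local isometry from $(\hM,\hg_\sigma)$, where $\hg_\sigma$ is within $\sigma$ of $\hg$ in $C^q_0$.

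To remove the $\sigma$, I would instead use the tangent-flow sequence itself. The given tangent flow provides maps $\psi_i$ with $\psi_i^*(\gz(s_i),\fz(s_i))\to(\hg,\hf)$ in $C^\infty_{\mathrm{loc}}$. Since $f\circ\psi_i\to\hf$ is proper, the images of $\Omega_B$ lie in $\{\fz(s_i)\le B+1\}$, which is contained in a fixed compact subset of $U$. The convergence of $(\gz(s_i),\fz(s_i))$ to $(g_\infty,f_\infty)$ on that compact set is smooth. A diagonal subsequence therefore converges to $\psi:\hM\to U$ with $\psi^*(g_\infty,f_\infty)=(\hg,\hf)$, and $\psi$ is a local isometry. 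It is injective as a limit of embeddings that are uniformly bi-Lipschitz on compact sets. It is open, and its image is closed in $U$: a boundary point of the image would have bounded $f_\infty$, hence be a limit of $\psi(x_k)$ with $\hf(x_k)$ bounded, and $\hM$ is complete with $\hf$ proper. Thus $\psi(\hM)$ is a whole component $U_0$. The density identity $\psi^*\mathrm dV_{\fz(s)}=\mathrm dV_{\hf}$ then follows from \eqref{s10:limitdensity} and $\psi^*(g_\infty,f_\infty)=(\hg,\hf)$.

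\emph{Connectedness.} By \eqref{s10:limitdensity} and stationarity of $\mathrm dV_{\fz(s)}$, the $\mathrm dV_{\fz(s)}$-mass of $U_0$ equals $\int_{\hM}\mathrm dV_{\hf}=1$. The total mass on $M$ is $1$ and the density is positive on $M$. So any other component of $U$, being a nonempty open set, would carry positive mass, which is impossible. Hence $U=U_0$ and $\int_U\mathrm dV_{\fz(s)}=1$. The convergence \eqref{s10:fixedlimit} follows by composing the smooth convergence on compact subsets of $U$ with the fixed map $\psi$.

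The main obstacle is the first step: showing that the maps $\psi_i$ from the tangent sequence have images in a fixed compact subset of the fixed-gauge manifold, so that they converge rather than drifting. This is where the last assertion of Proposition~\ref{s10:limit} and properness of $\hf$ are essential, together with uniform injectivity and bi-Lipschitz control passing to the limit.
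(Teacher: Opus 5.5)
Your final argument, once you drop the $\sigma$-close charts of Proposition~\ref{s10:Cpersistence}, is essentially the paper's proof. You pull back by the tangent-sequence embeddings, keep their images in a fixed compact subset of $U$ via the last assertion of Proposition~\ref{s10:limit}, extract a limit local isometry with $\psi^*f_\infty=\hf$, identify its image with a component of $U$, and exclude other components by the unit-mass identity \eqref{s10:limitdensity}. For the component step you use openness plus closedness via properness of $\hf$, where the paper uses completeness and a covering argument.

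The one step to tighten is injectivity. A uniform lower bi-Lipschitz bound does not follow from convergence of the pulled-back metrics alone. The clean justification is the paper's overlap argument: if $\psi(x)=\psi(y)$ with $x\neq y$, then disjoint neighborhoods $N_x,N_y$ both map onto a neighborhood of the common image. Uniform $C^1$ convergence then forces $\psi_i(N_x)\cap\psi_i(N_y)\neq\emptyset$ for large $i$, contradicting injectivity of $\psi_i$.
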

\begin{proof}
Use the originally given tangent sequence. Compose its time-minus-one embeddings with the inverse of $\varphi_{-e^{-s_i}}$ to obtain embeddings
\[
 \iota_i:\Omega_i\to M,
 \qquad \iota_i^*\gz(s_i)\to\hg,
 \qquad \iota_i^*\fz(s_i)\to\hf
\]
on exhausting compact model domains. For a fixed compact $K\subset\hM$, the potentials on $\iota_i(K)$ are uniformly bounded. The last assertion of Proposition~\ref{s10:limit} puts these images in a fixed compact subset of $U$ for all large $i$.

Smooth convergence of the target and pulled-back metrics gives compactness of the embeddings and their local inverses. A diagonal subsequence converges smoothly on compact subsets to a local isometry
\begin{equation}\label{s10:J}
 J:(\hM,\hg)\longrightarrow(U,g_\infty),
 \qquad J^*f_\infty=\hf.
\end{equation}
Local inverse convergence and injectivity of $\iota_i$ imply injectivity of $J$: two distinct preimages would give overlapping images of disjoint neighborhoods for all large $i$. Completeness of $\hM$ makes $J$ a covering of its target component $U_0$, hence an isometry onto $U_0$.

By \eqref{s10:limitdensity} and \eqref{s10:J},
\[
 \int_{U_0}\mathrm dV_{\fz(s)}=(4\pi)^{-n/2}\int_{\hM}e^{-\hf}\,\mathrm dV_{\hg}=1.
\]
Every nonempty additional component of $U$ would be open in $M$ and would have positive mass for the fixed smooth positive measure $\mathrm dV_{\fz(s)}$. This is impossible because $\int_M\mathrm dV_{\fz(s)}=1$. Therefore $U=U_0$. Set $\psi=J$. The density equality holds for every $s$ because $\mathrm dV_{\fz(s)}$ is stationary, and the smooth convergence follows from Proposition~\ref{s10:limit}.
\end{proof}

The fixed marking gives the identity
\begin{equation}\label{s10:fixedvolume}
 \psi^*\fz(s)-\hf
 =\log\frac{\mathrm dV_{\psi^*\gz(s)}}{\mathrm dV_{\hg}}
\end{equation}
at every time for which the modified pair is defined.

\begin{Lemma}\label{s10:weightedconvergence}
Let $\psi$ be given by Proposition~\ref{s10:identify}, and fix $m$ and $0<a<1$ with $I_a(s)<\infty$. Write
\[
 G(s)=\psi^*\gz(s),\qquad F(s)=\psi^*\fz(s),
 \qquad h(s)=G(s)-\hg,\quad v(s)=F(s)-\hf.
\]
Choose the geometric order $q$, tolerance $\sigma$, and fixed collars $0<\lambda<\lambda'<1$ so that \eqref{s10:weightedvelocityintrinsic} holds through order $m$ with $\kappa_1=1/2$. These parameters are chosen for the prescribed $m,a$; in this lemma write $r_C(s)=r_C^{(\sigma,q)}(s)$. There are constants $C_m$ and $L_{m,a}\ge1$ with the following property. Let $H$ be large and suppose that, on $\{\hf+C_*\le H\}$ and for $s\ge s_1$,
\begin{equation}\label{s10:ratebootstrap}
 [h(s)]_m+[v(s)]_m<\frac12.
\end{equation}
Assume also the explicit geometric-radius bound
\[
 r_C(s)>L_{m,a}\bigl(1+\sqrt{H+2}\bigr)
 \qquad\text{for every }s\ge s_1.
\]
Then
\begin{equation}\label{s10:weightedint}
 [h(s)]_m+[v(s)]_m
 \le C_m e^{(\hf+C_*)/2} I_a(s)
 \quad(s\ge s_1).
\end{equation}
These parameters and constants are fixed for $m,a$ and the reference data, independently of $H$ and $s_1$.
\end{Lemma}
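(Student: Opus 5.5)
The plan is to integrate the velocity bound \eqref{s10:weightedvelocityintrinsic} from time $s$ to $\infty$ in the fixed gauge $\psi$. Since $h(s)\to0$ and $v(s)\to0$ in $C^\infty_{\mathrm{loc}}$ by Proposition~\ref{s10:identify}, we have, pointwise on $\{\hf+C_*\le H\}$,
\[
 \hnd^j h(s)=-\int_s^\infty\hnd^j\partial_uG(u)\,\mathrm du,
 \qquad
 \hnd^j v(s)=-\int_s^\infty\hnd^j\partial_uF(u)\,\mathrm du .
\]
By \eqref{s10:evolution}, $\partial_uG=2\psi^*\Phi(u)$ and $\partial_uF=\tr_G\psi^*\Phi(u)$. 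It therefore suffices to bound $\sum_{j\le m}|\hnd^j\psi^*\Phi(u)|_{\hg}$ by $C_me^{(\hf+C_*)/2}\mathcal D(u)^a$ at every point of the region and every $u\ge s$, and then integrate to obtain $I_a(s)$.

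The pointwise defect bound comes in three steps.
\begin{enumerate}
\item Apply Lemma~\ref{s10:interpolation} to the rescaled flow \eqref{s10:renormalization} at time $u$, with $\kappa_1=1/2>a/2$ and order $m$. This gives $\sum_{j\le m}|\nabla^j\Phi(u)|_{\gz(u)}\le C e^{(\fz(u)+C_*)/2}\mathcal D(u)^a$, valid on $\iota(\Omega_{\lambda L})$ for any $L<r_C(u)$.
\item Check that the region lies in this chart. By Lemma~\ref{s10:annulardistance}, the sublevel $\{\fz(u)\le B\}$ lies in $\iota(\Omega_{\lambda L})$ once $L>C_\lambda(1+\sqrt{B+C_*})$. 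Under \eqref{s10:ratebootstrap} we have $|v(u)|<1/2$, so $\fz(u)+C_*\le H+1/2$ on $\psi(\{\hf+C_*\le H\})$, and we take $B=H+2-C_*$. The hypothesis $r_C(u)>L_{m,a}(1+\sqrt{H+2})$, with $L_{m,a}$ chosen larger than $C_\lambda$ times the collar ratio, supplies the required $L$.
\item Replace the weight and the metric. The same bound $|v|<1/2$ gives $e^{(\fz+C_*)/2}\le e^{1/4}e^{(\hf+C_*)/2}$. The bound $[h]_m<1/2$ makes $G$ uniformly equivalent to $\hg$ and $G^{-1}$ bounded in $C^m$, which is needed for the trace term.
\end{enumerate}

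The main obstacle is converting intrinsic derivatives $\nabla^{G,j}$ into reference derivatives $\hnd^j$. Their difference involves $\mathcal A=\nabla^G-\hnd$ and its derivatives through order $m-1$, which depend on $\hnd^{\le m}h$. The bootstrap \eqref{s10:ratebootstrap} controls exactly these terms by an absolute constant. By induction on $j$ we get $|\hnd^j\Phi|\le C_m\sum_{i\le j}|\nabla^{G,i}\Phi|_G$, with $C_m$ depending only on $m$ and the reference data, and not on $H$ or $s_1$.

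One point needs attention. The intrinsic estimate is stated at points of $M$ in the $s$-dependent chart $\iota$, whereas the integration happens along the fixed points $\psi(x)$. This causes no difficulty, because the intrinsic estimate holds at every point of the entire sublevel in $M$, independently of the chart.

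Integrating over $u\in[s,\infty)$ then gives \eqref{s10:weightedint}. The only constants introduced are $C_m$ and $L_{m,a}$, both independent of $H$ and $s_1$.
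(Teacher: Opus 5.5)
Your proposal is correct and follows the paper's proof essentially step for step. You place $\psi(\{\hf+C_*\le H\})$ inside the sublevel $\{\fz(u)+C_*\le H+2\}\subset\iota_u(\Omega_{\lambda L_u})$ via Lemma~\ref{s10:annulardistance}, apply \eqref{s10:weightedvelocityintrinsic} with $\kappa_1=1/2$, convert intrinsic to reference derivatives using the bootstrap bound on $\hnd^{\le m}h$, and integrate in the fixed gauge from $s$ to $\infty$. The one small addition is that $L_{m,a}$ must also exceed the fixed geometric radius thresholds of Proposition~\ref{s10:rough}(i) and Lemma~\ref{s10:interpolation}, not only $C_\lambda$; the paper absorbs these into $L_{m,a}$ in the same way.
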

\begin{proof}
The choice $\kappa_1=1/2$ is possible because $a<1$. If a larger core or smaller geometric tolerance is needed, the already proved compact smooth convergence supplies the corresponding core bound at all sufficiently late times. Proposition~\ref{s10:Cpersistence} then applies with these new fixed parameters. This changes only the waiting time and the fixed constants, not $\psi$.

We first specify the domains on which the intrinsic estimate is used. At each $s\ge s_1$, choose
\[
 L_{m,a}\bigl(1+\sqrt{H+2}\bigr)<L_s<r_C(s)
\]
and an embedding $\iota_s$ of a neighborhood of $\overline{\Omega_{L_s}}$ into the modified slice. By Definition~\ref{s10:defC},
\[
 \|\iota_s^*\gz(s)-\hg\|_{C^{q}_0(\Omega_{L_s})}<\sigma,
 \qquad \fz(\iota_s(\hat p),s)\le n.
\]
Choose $L_{m,a}$ large enough to include the fixed geometric thresholds and the constant in Lemma~\ref{s10:annulardistance}. That lemma, applied with $B=H+2-C_*$, gives
\[
 \{\fz(s)+C_*\le H+2\}
 \subset\iota_s(\Omega_{\lambda L_s})
 \subset\iota_s(\Omega_{\lambda'L_s}).
\]
The first chart image is the domain of \eqref{s10:weightedvelocityintrinsic}; the second supplies its spatial collar.

For $x$ with $\hf(x)+C_*\le H$, the bootstrap bound implies
\[
 \fz(\psi(x),s)+C_*=F(x,s)+C_*
 <\hf(x)+C_*+\frac12<H+1.
\]
Consequently, \eqref{s10:weightedvelocityintrinsic} applies at every $\psi(x)$ under consideration. The embeddings $\iota_s$ are chosen independently at each time to obtain this intrinsic estimate. All pullbacks and time integrations below use the single fixed map $\psi$.

Under \eqref{s10:ratebootstrap}, $\frac12\hg<G<\frac32\hg$. The connection difference is $A=G^{-1}*\hnd h$. In converting $\hnd^j\Phi(G,F)$ to current covariant derivatives for $j\le m$, derivatives of $A$ occur through order at most $m-1$, hence derivatives of $h$ through order at most $m$. The product formulas and \eqref{s10:ratebootstrap} give
\[
 [\partial_sh]_m+[\partial_sv]_m
 \le C_m\sum_{j=0}^m|\nabla_G^j\Phi(G,F)|_G
 \le C_m e^{(F+C_*)/2}\mathcal D(s)^a
 \le C_m'e^{(\hf+C_*)/2}\mathcal D(s)^a.
\]
For $m=0$ only metric equivalence is needed. Integrate from $s$ to infinity in the fixed reference connection, using $h,v\to0$ on this fixed compact domain. This gives \eqref{s10:weightedint}.
\end{proof}

\begin{proof}[Proof of Theorem~\ref{s10:main}]
Corollary~\ref{s10:noexit} and Propositions~\ref{s10:limit} and \ref{s10:identify} prove the qualitative fixed-gauge conclusion and its probability normalization. It remains to establish the quantitative assertion.

Fix $m$, $\gamma$, and $c$ as in \eqref{s10:ratechoices}. Choose $\gamma<\gamma_1<\vartheta/(1-2\vartheta)$ and parameters $\theta,a$ in Proposition~\ref{s10:decay} such that
\begin{equation}\label{s10:gamma1}
 I_a(s)\le C(1+s)^{-\gamma_1},\qquad c/2<\gamma<\gamma_1.
\end{equation}
Fix the geometric parameters of Lemma~\ref{s10:weightedconvergence} once for these $m,a$. As explained in its proof, compact smooth convergence and Proposition~\ref{s10:Cpersistence} give \eqref{s10:Cgrowth} for these parameters after a fixed waiting time. For an integer $j$, put $H=c\log j+C_*$. For every sufficiently large $j$ and every $s\ge j$,
\[
 r_C(s)\ge c_0Ae^{(s-s_0)/2}
 >L_{m,a}\bigl(1+\sqrt{c\log j+C_*+2}\bigr).
\]
Indeed the exponential lower bound at $s=j$ dominates the right-hand side, and increases for $s\ge j$. This verifies the radius hypothesis of Lemma~\ref{s10:weightedconvergence}. In particular, Lemma~\ref{s10:annulardistance} places the entire sublevel $\{\fz(s)+C_*\le H+2\}$ in $\iota_s(\Omega_{\lambda L_s})$, with the larger chart $\iota_s(\Omega_{\lambda'L_s})$ available, exactly as in that lemma.

For this fixed $j$, compact smooth convergence makes \eqref{s10:ratebootstrap} hold on $\Omega_j^{\log}$ for all sufficiently large $s$. Continue this bound backward in $s$ toward $j$, on terminal intervals $[s_1,\infty)$. On each such interval, Lemma~\ref{s10:weightedconvergence} and \eqref{s10:gamma1} give
\[
 \sup_{\Omega_j^{\log}}([h(s)]_m+[v(s)]_m)
 \le C_m e^{H/2}(1+s)^{-\gamma_1}
 \le C_m j^{c/2-\gamma_1}\qquad(s\ge j).
\]
Here the fixed factor $e^{C_*/2}$ is absorbed into $C_m$. For all large $j$, the last quantity is less than $1/4$. Thus a first failure of \eqref{s10:ratebootstrap} at a time $s\ge j$ is impossible. It holds for every $s\ge j$, and the pointwise version of \eqref{s10:weightedint}, with $e^{C_*/2}$ again absorbed into the constant, gives \eqref{s10:mainrate}, even with exponent $\gamma_1$. Weakening that exponent gives the stated estimate.
\end{proof}

\begin{Corollary}[Uniqueness of the AC tangent flow]\label{s10:uniqueness}
Under the assumptions of Theorem~\ref{s10:main}, every tangent flow at $z$ is isometric to the completed canonical flow of $(\hM,\hg,\hf)$ as a pointed noncollapsed Ricci flow limit space. The isometry preserves the time function, the regular Ricci flow spacetime, and the conjugate heat kernels, including the distinguished terminal kernel.
\end{Corollary}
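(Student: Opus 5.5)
The plan is to turn the fixed-gauge convergence of Theorem~\ref{s10:main} into convergence of every rescaled sequence of the completed flow. First, for an arbitrary sequence $r_i\searrow0$, I write the rescaled original flow at scale $r_i$. With $s_i=-2\log r_i$, its time $-\tau$ slice is, up to the diffeomorphisms $\varphi$, the modified pair $(\gz(s_i+\log(1/\tau)),\fz(s_i+\log(1/\tau)))$ pulled back and scaled by $\tau$. By \eqref{s10:fixedlimit} and the stationarity of $\mathrm dV_{\fz(s)}$, the pullbacks by the single map $\psi$ converge to $(\hg,\hf)$ uniformly in $s\ge s_i$, on compact sets and in $C^\infty_{\mathrm{loc}}$. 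Undoing the modification by the flow of $-\nabla f$, which converges to the flow of $-(-t)^{-1}\mathbf X$ on the reference side, I obtain smooth convergence on compact subsets of $\hM\times(-\infty,0)$ to the canonical flow \eqref{s10:canonical}. The convergence is realized by the embeddings $\varphi_{-r_i^2\tau}\circ\psi\circ\Psi_{-\tau}^{-1}$ with the scaling built in. The kernel potentials converge to $\hf(t)$ by construction, because $\fz$ is the kernel potential.

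Second, I identify this smooth convergence with pointed convergence of the completed spaces $(Z,r_i^{-1}d_Z,z,r_i^{-2}\mathfrak t)$. Fang--Li compactness \cite{fangli2026structure} gives a subsequential pointed limit, which is a noncollapsed Ricci flow limit space with a regular part, a time function, and conjugate kernels based at the limit of $z$. Its regular part contains the smooth limit from the first step. The limit space is determined by its regular part together with the limiting kernel measures: $d_Z$ is computed from Wasserstein distances between conjugate kernel measures, and these measures are the limits of $\nu_{x^*;u}$. The limiting terminal kernel is $(4\pi\tau)^{-n/2}e^{-\hf(t)}\mathrm dV_{\hg(t)}$, since the mass identity $\int_U\mathrm dV_{\fz(s)}=1$ shows that no mass escapes outside $\psi(\hM)$. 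Hence every tangent flow has regular part isometric to the canonical flow of $\hM$ with matching kernels. Such a limit is then the metric completion of its regular part, and this completion is the completed canonical flow. The isometry is time-preserving by construction and carries the distinguished terminal kernel to the kernel based at the vertex.

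The main obstacle is the second step. I must exclude additional regular or singular parts of the tangent flow that are invisible to $\psi(\hM)$, and I must show that the completion of the canonical regular part matches the limit metric $d_Z$, not only its restriction to the regular part. For the first issue, I will use the full-mass identity together with the kernel tail bound \eqref{s10:kerneltail}: any point of the limit at negative time is a limit of points whose kernel measures lie a bounded $W_1$-distance from those of points in $\psi(\hM)$. The sublevel containment in Proposition~\ref{s10:limit} then places such points in the image. For the second issue, I will invoke the structure results of \cite{fangli2026structure}, namely that a noncollapsed limit is the completion of its regular part and that $d^*$ is intrinsic to it. Since the limit is independent of the subsequence, the full family converges and uniqueness follows.
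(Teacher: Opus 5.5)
Your overall architecture matches the paper's. The embeddings in your first step are the maps $\Xi_s$ used in the proof of \cref{s10:uniqueness}. They pull the rescaled metrics, kernel potentials, and time vector field back to quantities converging to those of the canonical flow. One correction: the composition should use $\Psi_{-\tau}$, not $\Psi_{-\tau}^{-1}$, since $\hg(t)=(-t)\Psi_t^*\hg$.

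The gap is in the step you yourself flag as the main obstacle. Let $J_t:(\hM,\hg(t))\to(\mathcal R_\infty)_t$ be the limiting local isometry; you need $J_t$ to exhaust the regular slice.

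\begin{itemize}
\item The sublevel containment of \cref{s10:limit} needs an \emph{upper} bound on $f$ at the approximating points, and nothing you cite provides one.
\item \eqref{s10:kerneltail} and \eqref{s10:kernelpoint} bound the kernel from above, in mass or pointwise, so they bound $f$ only from below.
\item A bound on $W_1$-distances of kernel measures, i.e.\ on $d^*$, does not even bound the slice distance at time $t$, since one only has $d^*\le d_{g(t)}$. It certainly does not bound $f$.
\item The full-mass identity shows only that the complement of $J_t(\hM)$ carries no $\nu_{z_\infty;t}$-mass. It cannot exclude a region of the regular slice, for instance another component, on which the limiting terminal kernel vanishes. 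At such points the approximants have $f\to\infty$, and your argument says nothing there.
\end{itemize}

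Your later appeal to the limit being the completion of its regular part only helps once the regular slice is known to equal $J_t(\hM)$.

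The missing ingredient is structural: the regular slices of a noncollapsed limit are connected, \cite[Proposition~7.9]{fangli2026structure}. Since $(\hM,\hg(t))$ is complete, $J_t$ is then a covering of the whole regular slice. The identity $J_t^*\mathrm d\nu_{\infty,t}=\mathrm d\hat\nu_t$ from \cite[Theorem~5.20]{fangli2026structure}, together with both measures having mass one, forces a single sheet. This is how the paper argues. Singular negative-time points are then excluded because each slice is the completion of its regular slice (\cite[Theorem~7.19]{fangli2026structure}, \cite[Theorem~2.4(c)]{bamler2020structure}), and the regular slice is already complete.

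Three further points are asserted in your proposal without argument:
\begin{itemize}
\item Before using the Wasserstein description of $d_Z$, you must identify the kernels based at \emph{every} regular point, not only the terminal one. The paper does this by uniqueness of bounded heat evolution on the complete bounded-curvature slabs, then applies \cite[Theorem~1.7(e)]{fangli2026structure} and density of the regular part.
\item Sending $\hat z\mapsto z_\infty$ needs the center estimates.
\item Preservation of terminal regularity uses the Euclidean-tangent characterization \cite[Theorem~7.15]{fangli2026structure}.
\end{itemize}
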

\begin{proof}
Let $(Z_\infty,d_{Z_\infty},z_\infty)$ be a tangent flow along parabolic normalizations $s_i\to\infty$ in the convergence of \cite[Theorem~5.2 and Notation~5.38]{fangli2026structure}, and let $(\hat Z,d^{\hat Z},\hat z)$ be the completed reference flow. Using $\psi$ from Theorem~\ref{s10:main} and $\Psi_t$ from \eqref{s10:canonical}, put $v=s-\log(-t)$ and
\begin{equation}\label{s10:spacetimemap}
 \Xi_s(x,t)=\bigl(\varphi_{e^{-s}t}(\psi(\Psi_t(x))),e^{-s}t\bigr).
\end{equation}
These are time-preserving embeddings into the parabolically rescaled spacetime. Write $G(v)=\psi^*\gz(v)$ and $F(v)=\psi^*\fz(v)$. Pullback of the spatial metrics, potentials, and normalized kernels gives
\begin{align*}
 \Xi_s^*(e^sg)&=(-t)\Psi_t^*G(v),\qquad
 \Xi_s^*f=\Psi_t^*F(v),\\
 \Xi_s^*(e^{-ns/2}K(z;\cdot))
 &=(4\pi(-t))^{-n/2}\exp(-\Psi_t^*F(v)).
\end{align*}
For the original time vector $\partial_{\mathfrak t}$, direct differentiation gives
\begin{equation}\label{s10:timevector}
 \Xi_s^*(e^{-s}\partial_{\mathfrak t})-\partial_t
 =\frac1{-t}\Psi_t^*\bigl(\nabla^{G(v)}F(v)-\hnd\hf\bigr).
\end{equation}
Theorem~\ref{s10:main} gives spatial convergence on every collared compact negative-time domain. Differentiating \eqref{s10:evolution} gives convergence of all time derivatives, since the reference pair is stationary. Thus all the displayed quantities converge smoothly to those of the canonical flow.

The kernel bound keeps each fixed compact chart at bounded pointed spacetime distance. On collared domains, the metric and time-vector bounds provide ambient parabolic neighborhoods with uniformly positive curvature radius. In the regular convergence charts of \cite[Theorem~5.2]{fangli2026structure}, the comparison maps and their local inverses therefore have uniform $C^k$ bounds on smaller domains for every $k$. A diagonal subsequence on a nested exhaustion gives compatible limits on overlaps, hence a local spacetime isometry
\[
 J:\hM\times(-\infty,0)\longrightarrow(\mathcal R_\infty)_{t<0},
 \qquad J_t^*g_\infty(t)=\hg(t),\qquad
 J^*\partial_{\mathfrak t_\infty}=\partial_t.
\]
By \cite[Theorem~5.20]{fangli2026structure}, $J_t^*\mathrm d\nu_{\infty,t}=\mathrm d\hnu_t$, where $\hnu_t$ is the canonical probability measure. The regular target slice is connected by \cite[Proposition~7.9]{fangli2026structure}, so completeness makes $J_t$ a covering. Both distinguished measures have mass one, and its sheet number satisfies
\[
 1=\int_{\hM}J_t^*\mathrm d\nu_{\infty,t}
   =N_t\int_{(\mathcal R_\infty)_t}\mathrm d\nu_{\infty,t}=N_t.
\]
Hence $J_t$ is an isometry onto the entire regular slice.

The identified metrics are complete and have bounded curvature on compact negative-time intervals. By \cite[Theorem~7.19]{fangli2026structure}, all negative-time points belong to the associated metric flow. Its slices are the intrinsic completions of their regular slices by \cite[Theorem~2.4(c)]{bamler2020structure}; completeness therefore leaves no additional points. Since $J$ preserves the time vector, uniqueness of bounded heat evolution on these complete bounded-curvature time slabs identifies every regular-based conjugate kernel; see also \cite[Lemma~5.22 and Remark~5.23]{fangli2026structure}.

The spatial distances and kernels now agree, so \cite[Theorem~1.7(e)]{fangli2026structure} identifies the spacetime distances. Density of the negative-time regular part, as in \cite[Corollary~5.7]{fangli2026structure}, extends $J$ to an isometry of the completions. The center estimates give
\[
 d_{Z_\infty}(J(\hat p,t),z_\infty)
 +d^{\hat Z}((\hat p,t),\hat z)\le C\sqrt{-t},
\]
so the extension sends $\hat z$ to $z_\infty$. Continuity of conjugate measures \cite[Corollary~6.21]{fangli2026structure} identifies all terminal kernels. The Euclidean tangent characterization \cite[Definition~7.14 and Theorem~7.15]{fangli2026structure} preserves terminal regularity, and the negative-time spacetime identification extends smoothly there.
\end{proof}

Retain the setting of \cref{s10:main} and the completed spacetime distance $d_Z$ on $Z$. Write $Z_0=\mathfrak t^{-1}(0)$ and let $\mathcal S=Z\setminus\mathcal R$ be the singular set of $Z$. Smoothness at negative times gives $\mathcal S\subset Z_0$.
\phantomsection\label{idx:timezero-distance}
For $x,y\in Z_0$, the intrinsic time-slice distance is
\[
 d_0^Z(x,y):=\lim_{t\uparrow0}
 W_1^{g(t)}(\nu_{x;t},\nu_{y;t})\in[0,\infty].
\]
By \cite[Definition~6.1 and Lemmas~6.3--6.4]{fangli2026structure}, this is an extended metric and $d_Z(x,y)\le d_0^Z(x,y)$.

\begin{Corollary}[Isolation of an AC singularity]\label{s10:iso-main}
Suppose $z\in\mathcal S$ satisfies the hypotheses of \cref{s10:main}. Then there exists $r_0>0$ such that
\begin{equation}\label{s10:iso-conclusion}
 B_{d_Z}(z,r_0)\cap\mathcal S=\{z\}.
\end{equation}
In particular, $z$ is isolated in $\mathcal S$ also with respect to $d_0^Z$.
\end{Corollary}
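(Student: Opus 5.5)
The plan is to argue by contradiction, combining the uniqueness of \cref{s10:uniqueness} with the smoothness of the completed canonical flow away from its vertex. Suppose there are singular points $z_i\in\mathcal S\setminus\{z\}$ with $d_Z(z_i,z)\to0$, and set $r_i=d_Z(z_i,z)$. Parabolically rescale $Z$ about $z$ by $r_i^{-1}$. By the compactness theory of \cite{fangli2026structure}, a subsequence converges to a tangent flow at $z$. By \cref{s10:uniqueness} this limit is the completed canonical flow $\hat Z$ of $(\hM,\hg,\hf)$, pointed at its vertex $\hat z$. The rescaled points $z_i$ lie at distance one from the base point and in the time-zero slice, since $\mathcal S\subset Z_0$. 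After passing to a subsequence they converge to some $w\in\hat Z_0$ with $d^{\hat Z}(w,\hat z)=1$.

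The first step is to show that $w$ is a regular point of $\hat Z$. The terminal slice of the canonical AC flow is the cone $\mathscr C(\Sigma)$ with its vertex. Points other than the vertex are reached smoothly by the flow $\hg(t)\to g_C$ on the end, as recorded after \eqref{eq:prelim-canonical-end}, and by \cref{s10:forwardcomparison} the curvature stays bounded up to $t=0$ on the exterior. So every terminal point at positive distance from $\hat z$ has a parabolic neighborhood of bounded curvature. Its tangent flow is therefore Gaussian, and by \cite[Definition~7.14 and Theorem~7.15]{fangli2026structure} such a point is regular. One also has to check that a point of $\hat Z_0$ at $d^{\hat Z}$-distance one from $\hat z$ really corresponds to a cone point with $r>0$. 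This comparison of $d^{\hat Z}$ on $\hat Z_0$ with cone distance should follow from the kernel and center estimates in the proof of \cref{s10:uniqueness}.

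The second step, which I expect to be the main obstacle, is to transfer regularity of $w$ back to the $z_i$. What is needed is an $\epsilon$-regularity or openness statement that is stable under pointed convergence of noncollapsed limit spaces. If $w$ is regular in the limit, then points of the approximating spaces converging to $w$ have curvature radius bounded below by a uniform constant, and are therefore regular. I would take this from the regular-part convergence in \cite[Theorem~5.2]{fangli2026structure}: convergence is smooth on the regular part, and a neighborhood of $w$ is covered by it. The points $z_i$ are terminal, so convergence has to be handled up to time zero. For this I would use the two-sided pseudolocality in \cite{fangli2026strong}, or equivalently a Gaussian-density gap. By the Euclidean characterization, points where the pointed Nash entropy at small scale is close to zero are regular, and this entropy is continuous under the convergence. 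In either form the conclusion is that $z_i$ is regular for large $i$, contradicting $z_i\in\mathcal S$. This yields $r_0$ with $B_{d_Z}(z,r_0)\cap\mathcal S=\{z\}$.

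Finally, the statement about $d_0^Z$ is immediate from $d_Z\le d_0^Z$. Hence $B_{d_0^Z}(z,r_0)\subset B_{d_Z}(z,r_0)$, and $z$ is isolated in $\mathcal S$ for $d_0^Z$ as well.
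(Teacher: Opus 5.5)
Your proposal is correct and follows the paper's proof. The shared steps are: blow up at scale $d_Z(z,w_i)$, identify the limit via \cref{s10:uniqueness}, and show that terminal points of $\hat Z$ other than the vertex are regular. Regularity is then transferred back to $w_i$ by continuity of the pointed Nash entropy (after approximating the terminal points by negative-time points) and the $\epsilon$-regularity of \cite[Proposition~7.7]{fangli2026structure}, with $d_Z\le d_0^Z$ giving the last claim. The identification of terminal points with cone points, which you flag, is settled in the paper by surjectivity of the endpoint map onto $\hat Z_0$ and boundedness of the core's image \cite[Lemmas~5.1 and~5.4]{li2026kahlerlimits}, together with the Kotschwar--Wang smooth extension on distant ends and the soliton dilations. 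The dilation step is also what reaches terminal points whose cone radius lies below the fixed end; your appeal to $\hg(t)\to g_C$ on the end does not literally cover these, and neither does \cref{s10:forwardcomparison}, which concerns the given flow rather than the canonical one.
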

\begin{proof}
Let $(\hat Z,d^{\hat Z},\hat z,\hat{\mathfrak t})$ be the completed canonical shrinker flow, with regular part $\hat{\mathcal R}$. By \cite[Proposition~2.1]{kotschwar2015rigidity}, its restriction to each sufficiently distant end extends smoothly to the asymptotic cone at time zero. The endpoint map from $\hM$ onto $\hat Z_0$ is surjective, and the image of a compact core has bounded intrinsic distance from $\hat z$; see \cite[Lemmas~5.1 and~5.4]{li2026kahlerlimits}. The soliton dilations described in \cite[Section~5.2]{li2026kahlerlimits} fix $\hat z$, multiply the terminal radius, and preserve regularity. Every terminal point other than $\hat z$ can therefore be dilated beyond the image of the core, where the smooth extension applies. Dilating back gives
\begin{equation}\label{s10:iso-modelregular}
 \hat Z_0\setminus\{\hat z\}\subset\hat{\mathcal R}.
\end{equation}

Suppose \eqref{s10:iso-conclusion} fails. Choose $w_i\in\mathcal S\setminus\{z\}\subset Z_0$ with $r_i:=d_Z(z,w_i)\to0$. Passing to a subsequence, the parabolic rescalings $(Z,r_i^{-1}d_Z,z,r_i^{-2}\mathfrak t)$ converge to a tangent flow $(Z_\infty,d_{Z_\infty},z_\infty,\mathfrak t_\infty)$, and $w_i$ converges to a terminal point $w_\infty$ with $d_{Z_\infty}(z_\infty,w_\infty)=1$. By \cref{s10:uniqueness}, this tangent flow is the completed canonical flow, with $z_\infty$ corresponding to $\hat z$. Hence \eqref{s10:iso-modelregular} makes $w_\infty$ regular. Choose $\rho>0$ so that $\mathcal N_{w_\infty}(\rho^2)>-\eps/2$, where $\eps$ is the constant in \cite[Proposition~7.7]{fangli2026structure}. Approximate each $w_i$ by smooth negative-time points, with pointed convergence and entropy errors tending to zero. Entropy continuity, \cite[Corollary~6.21 and Lemma~7.2]{fangli2026structure}, applied diagonally gives $\mathcal N_{w_i}(\rho^2)>-\eps$ for all large $i$ in the rescaled flows. The cited $\eps$-regularity then gives a positive curvature radius at $w_i$, contradicting its singularity.

Finally, $d_Z\le d_0^Z$ implies $B_{d_0^Z}(z,r_0)\subset B_{d_Z}(z,r_0)\cap Z_0$, giving the last assertion.
\end{proof}

\clearpage
\appendix
\titleformat{\section}{\normalfont\Large\bfseries}{Appendix~\thesection.}{0.75em}{}
\section{Uniform Euclidean H\"older estimate for the gauge operator}
\label{app:euclidean-model}

We prove the constant-coefficient estimate used in
\cref{lem:fixed-level-Bianchi-correction}. All derivatives and norms
are Euclidean on $\mathbb R^n$. Fix $0<c<C<\infty$ and
$0<\kappa_0\le1$.

For $\kappa\in[\kappa_0,1]$, $0<\epsilon\leq1$, and a constant real covector
$\omega$ with $c\leq|\omega|\leq C$, define
\begin{align}
 \mathscr B_\epsilon U
 &=-\operatorname{Sym}((\epsilon\partial+\kappa\omega)\otimes U),\notag\\
 (\mathscr D_\epsilon V)_j
 &=\epsilon\partial^iV_{ij}-\omega^iV_{ij},\qquad
 \mathscr P_\epsilon=\mathscr D_\epsilon\mathscr B_\epsilon.
 \label{eq:euclidean-Holder-model}
\end{align}

\begin{Lemma}
\label{lem:euclidean-divergence-Holder}
Fix $m\in\mathbb Z_{\geq0}$ and $\alpha\in(0,1)$.  Suppose a smooth
compactly supported one-form $U$ satisfies
\begin{equation*}
 \mathscr P_\epsilon U=F_0+\epsilon\partial_iF_1^i
\end{equation*}
with bounded $C^{m,\alpha}$ source coefficients $F_0,F_1$.  Then
\begin{equation}\label{ineq:appendix-parameter-divergence-Schauder}
 \|U\|_{C^{m,\alpha}}+\epsilon\|\partial U\|_{C^{m,\alpha}}
 \leq C_{m,\alpha,\kappa_0}
   \bigl(\|F_0\|_{C^{m,\alpha}}+\|F_1\|_{C^{m,\alpha}}\bigr).
\end{equation}
The constant depends only on $m,\alpha,n,c,C,\kappa_0$ and is
uniform in $0<\epsilon\le1$, $c\le|\omega|\le C$, and
$\kappa\in[\kappa_0,1]$.
\end{Lemma}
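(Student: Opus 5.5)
The estimate is scale-invariant in a precise sense, so I would rescale first and then split into a Fourier-multiplier estimate and an interpolation argument. Put $y=x/\epsilon$ and $\tilde U(y)=U(\epsilon y)$. Then $\epsilon\partial_x=\partial_y$, and the equation becomes the $\epsilon$-free constant-coefficient system
\[
 \mathscr P_1\tilde U=\tilde F_0+\partial_i\tilde F_1^i .
\]
H\"older seminorms transform with powers of $\epsilon$. The left side of \eqref{ineq:appendix-parameter-divergence-Schauder} is controlled by the sup norm plus top seminorms, and the $\epsilon$-factors on both sides match only at the level of the full (inhomogeneous) norms. So the plan is to prove an inhomogeneous estimate for $\mathscr P_1$ on all of $\mathbb R^n$ with uniform constants, and then scale back carefully.

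The key structural fact is that $\mathscr P_1$ has symbol
\[
 p(\xi)=\sigma_{\mathscr D}(\xi)\sigma_{\mathscr B}(\xi),
\]
with $\sigma_{\mathscr B}(\xi)U=-\operatorname{Sym}((i\xi+\kappa\omega)\otimes U)$ and $\sigma_{\mathscr D}(\xi)V=(i\xi-\omega)\lrcorner V$. I would show that $p(\xi)$ is invertible for every real $\xi$, with
\[
 |p(\xi)^{-1}|\le C(1+|\xi|)^{-2},
\]
uniformly in $\kappa\in[\kappa_0,1]$ and $c\le|\omega|\le C$. The route is to compute $\langle p(\xi)U,\bar U\rangle$ as a pairing of $\sigma_{\mathscr B}$ with $\sigma_{\mathscr D}^*$. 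Since $\sigma_{\mathscr D}^*$ equals $\sigma_{\mathscr B}$ with $\kappa$ replaced by $1$ (up to sign and conjugation), the real part gives $|\xi|^2$-type and $\kappa|\omega|^2$-type positive terms. This is the discrete analogue of the coercive identity \eqref{eq:appendix-bianchi-coercive-identity}. If the mixed-term real part does not have a sign, I would instead compute $\det p(\xi)$ explicitly by decomposing $U$ along $\xi$, $\omega$ and their orthogonal complement. The ellipticity bound at large $|\xi|$ comes from the principal part $\frac12(|\xi|^2+\xi\otimes\xi)$. At bounded $\xi$ the zeroth-order part $\kappa\operatorname{Sym}(\omega\otimes\cdot)$ contracted with $\omega$ is positive definite, with lower bound about $\kappa|\omega|^2/2\ge\kappa_0c^2/2$. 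This is where $\kappa\ge\kappa_0>0$ is essential.

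Given the symbol bound, I would write $\tilde U=T_0\tilde F_0+T_1\tilde F_1$ with
\[
 T_0=p(D)^{-1},\qquad T_1=p(D)^{-1}i\xi .
\]
$T_0$ has an order $-2$ symbol and $T_1$ has an order $-1$ symbol, and both are smooth with symbol-class derivative bounds uniform in the parameters. I would then prove $C^{m,\alpha}\to C^{m+1,\alpha}$ boundedness of such inhomogeneous multipliers in two pieces. For the low-frequency part I would use the kernel, which is integrable with uniform bounds. For the high-frequency part I would use the Littlewood--Paley characterization of H\"older--Zygmund spaces for non-integer exponents. This yields
\[
 \|\tilde U\|_{C^{m+1,\alpha}}\le C(\|\tilde F_0\|_{C^{m,\alpha}}+\|\tilde F_1\|_{C^{m,\alpha}}).
\]
Compact support of $U$ justifies the Fourier representation, since the symbol is invertible and smooth.

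The main obstacle is the scaling back. Under $y=x/\epsilon$ the rescaled norm $\|\tilde F\|_{C^{m,\alpha}}$ is at most $\|F\|_{C^{m,\alpha}}$, because $\epsilon\le1$ only shrinks derivatives and seminorms. The rescaled estimate, however, controls $\epsilon^{j}\|\partial^jU\|_{\infty}$ and $\epsilon^{m+\alpha}[\partial^mU]_\alpha$, which is weaker than the unweighted left side. To recover it I would differentiate the original equation $m$ times; the coefficients are constant, so $\partial^\beta U$ solves the same equation with sources $\partial^\beta F$. This reduces matters to the cases $m=0$ and $\partial U$. For $m=0$ I would interpolate: the sup norm $\|U\|_\infty$ is scale invariant. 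For the seminorm $[U]_\alpha$ I would split increments at $|x-x'|\ge\epsilon$, handled by the sup norm, and $|x-x'|<\epsilon$, handled by the rescaled $C^{1,\alpha}$ bound via $\epsilon\|\partial U\|_\infty$. I would also use $[F]_\alpha^{\rm resc}=\epsilon^\alpha[F]_\alpha$, and bound the source sup terms by the full norm. The term $\epsilon\|\partial U\|_{C^{0,\alpha}}$ is handled the same way from the rescaled $C^{1,\alpha}$ bound. I expect this bookkeeping to close but to need the most care, specifically to avoid any stray factor $\epsilon^{-\alpha}$.
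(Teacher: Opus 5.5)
Your symbol analysis and the reduction to $m=0$ by differentiating the constant-coefficient equation are fine. The scaling-back step, however, has a genuine gap, and it is the one you flagged. In the original variables your rescaled inhomogeneous estimate says
\[
 \|U\|_\infty+\epsilon\|\partial U\|_\infty+\epsilon^{\alpha}[U]_\alpha+\epsilon^{1+\alpha}[\partial U]_\alpha
 \le C\bigl(\|F\|_\infty+\epsilon^{\alpha}[F]_\alpha\bigr),\qquad F=(F_0,F_1).
\]
So it controls $[U]_\alpha$ and $\epsilon[\partial U]_\alpha$ only up to a factor $\epsilon^{-\alpha}$. Your split reproduces exactly this loss:
\begin{itemize}
\item increments with $|x-x'|\ge\epsilon$ give $2\epsilon^{-\alpha}\|U\|_\infty$;
\item increments with $|x-x'|<\epsilon$ give $\epsilon^{-\alpha}\cdot\epsilon\|\partial U\|_\infty$.
\end{itemize}
Both $\|U\|_\infty$ and $\epsilon\|\partial U\|_\infty$ are bounded only by $C(\|F\|_\infty+\epsilon^\alpha[F]_\alpha)$. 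You therefore end with $[U]_\alpha\le C(\epsilon^{-\alpha}\|F\|_\infty+[F]_\alpha)$, and likewise for $\epsilon[\partial U]_\alpha$. This is not bookkeeping: an estimate with $\|\tilde F\|_\infty$ on the right at unit rescaled scale carries no better information about $[U]_\alpha$.

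What is missing is a seminorm-to-seminorm estimate for the rescaled solution operators, uniform in $\kappa\in[\kappa_0,1]$ and $c\le|\omega|\le C$:
\[
 [\tilde U]_\alpha+[\partial_y\tilde U]_\alpha\le C\bigl([\tilde F_0]_\alpha+[\tilde F_1]_\alpha\bigr).
\]
There are no sup norms on the right. This estimate is dilation invariant, so it rescales to $[U]_\alpha+\epsilon[\partial U]_\alpha\le C([F_0]_\alpha+[F_1]_\alpha)$. Combined with your sup bound $\|U\|_\infty+\epsilon\|\partial U\|_\infty\le C(\|F\|_\infty+\epsilon^\alpha[F]_\alpha)$, it proves the lemma.

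In your framework this estimate holds for the following reasons.
\begin{itemize}
\item $p(\xi)^{-1}$ is smooth at $\xi=0$, which is where $\kappa\ge\kappa_0$ enters, and it has order $-2$.
\item Hence $T_0$, $T_1$ and $\partial T_0$ are convolutions with kernels in $L^1$, uniformly in the parameters. Translation invariance then gives $[K*f]_\alpha\le\|K\|_1[f]_\alpha$.
\item The order-zero operator $\partial T_1$ needs a cancellation argument. You can use homogeneous rather than inhomogeneous Littlewood--Paley theory, or a kernel argument based on $\int\partial^2K_0=0$.
\end{itemize}

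This is exactly how the paper's proof is organized. It writes $\mathscr P_\epsilon=\frac12(H_\epsilon I+bd)$ with the scalar operator $H_\epsilon=db$, and inverts explicitly as $\mathscr P_\epsilon^{-1}=2H_\epsilon^{-1}-H_\epsilon^{-2}bd$. It represents $H_\epsilon^{-j}$ through the drifted heat semigroup and proves $[(\epsilon\partial)^IH_\epsilon^{-j}f]_\alpha\le C[f]_\alpha$ for $|I|\le 2j$. In the critical case $|I|=2j$ it uses $\int\partial^Ip_t=0$ and splits the time integral at $t=|x-y|^2$. The paper notes explicitly that the absence of a sup norm in this estimate is what permits restoring $\epsilon$.

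A smaller point concerns your first route to invertibility of $p(\xi)$, which does not work. Put $S=\operatorname{Sym}(\xi\otimes U)$ and $T=\operatorname{Sym}(\omega\otimes U)$ for complex $U$. The real part of $\langle p(\xi)U,U\rangle$ is $|S|^2+\kappa|T|^2-(1+\kappa)\operatorname{Im}\langle S,T\rangle$, which becomes negative for suitable $\xi,U$ when $\kappa$ is small. Your fallback is therefore needed. The factorization $p(\xi)=\frac12\bigl(H(\xi)I+b(\xi)\otimes d(\xi)\bigr)$ with $\operatorname{Re}H(\xi)=|\xi|^2+\kappa|\omega|^2$ gives invertibility and $|p(\xi)^{-1}|\le C(1+|\xi|)^{-2}$ directly.
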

\begin{proof}
\emph{Step 1.}
Define the scalar operator and the two auxiliary maps
\begin{align*}
 H_\epsilon&=-\epsilon^2\Delta
     +(1-\kappa)\epsilon\omega\cdot\partial+\kappa|\omega|^2,\\
 b&=\epsilon\partial+\kappa\omega,\qquad
 d=-\epsilon\operatorname{div}+\omega\cdot.
\end{align*}
The map $b$ sends scalars to vectors, and $d$ sends vectors to
scalars.  Direct expansion gives $db=H_\epsilon$.  Moreover,
\begin{equation*}
 (\mathscr P_\epsilon U)_j
 =\frac12\sum_i(-\epsilon\partial_i+\omega_i)
                    (b_iU_j+b_jU_i)
 =\frac12\bigl(H_\epsilon U_j+b_j(dU)\bigr).
\end{equation*}
Thus $\mathscr P_\epsilon=\frac12(H_\epsilon I+bd)$.
All coefficients are constant, so $H_\epsilon$ commutes with $b,d$,
and $(bd)^2=b(db)d=H_\epsilon bd$.  Consequently
\begin{equation*}
 \frac12(H_\epsilon I+bd)
       (2H_\epsilon^{-1}I-H_\epsilon^{-2}bd)
 =I+\frac12H_\epsilon^{-1}bd
       -\frac12H_\epsilon^{-2}(bd)^2=I.
\end{equation*}
The reverse product is the identity as well.  We obtain
\begin{equation}\label{eq:appendix-scalar-resolvent-formula}
 \mathscr P_\epsilon^{-1}
       =2H_\epsilon^{-1}I-H_\epsilon^{-2}bd.
\end{equation}
Here $H_\epsilon^{-2}=(H_\epsilon^{-1})^2$, and scalar inverses act
componentwise.

Put $b_0=(1-\kappa)\omega$ and $c_0=\kappa|\omega|^2$.
Throughout the parameter range,
$|b_0|\le C$ and $\kappa_0c^2\le c_0\le C^2$.
These bounds make the kernel estimates below uniform, including
$\kappa=1$, when $b_0=0$.
The solution of $\partial_tv=-H_\epsilon v$, $v(0)=f$, is
\begin{equation*}
 (e^{-tH_\epsilon}f)(x)
 =e^{-c_0t}(e^{\epsilon^2t\Delta}f)(x-\epsilon b_0t).
\end{equation*}
Indeed, diffusion, constant translation, and multiplication by
$e^{-c_0t}$ commute.  In particular the convolution kernel is the
Gaussian centered at $\epsilon b_0t$.  Integration in $t$ gives
$H_\epsilon^{-1}=\int_0^\infty e^{-tH_\epsilon}\,\mathrm{d}t$, since
$H_\epsilon e^{-tH_\epsilon}=-\partial_te^{-tH_\epsilon}$ and the
endpoint at infinity vanishes.  The semigroup identity also gives
\begin{equation*}
 H_\epsilon^{-2}
 =\int_0^\infty\!\int_0^\infty e^{-(s+t)H_\epsilon}\,\mathrm{d}s\,\mathrm{d}t
 =\int_0^\infty t e^{-tH_\epsilon}\,\mathrm{d}t.
\end{equation*}
Thus, denoting the convolution kernel by $G_{\epsilon,j}$, we have
\begin{equation}\label{eq:appendix-heat-kernel-resolvent}
 \begin{split}
 G_{\epsilon,j}(x)
 &=\frac1{(j-1)!}\int_0^\infty
    t^{j-1}e^{-c_0t}(4\pi\epsilon^2t)^{-n/2}
       \exp\left(-\frac{|x-\epsilon b_0t|^2}{4\epsilon^2t}\right)\mathrm{d}t,\\
 H_\epsilon^{-j}f&=G_{\epsilon,j}*f,\qquad j=1,2.
 \end{split}
\end{equation}
The kernel formula is pointwise away from the origin and has its
usual locally integrable interpretation at the origin.

\emph{Step 2.}
Set $T_\epsilon=\mathscr P_\epsilon^{-1}$.  Since derivatives commute
with this inverse,
\begin{align*}
 U&=T_\epsilon F_0+\epsilon\partial_iT_\epsilon F_1^i,\\
 \epsilon\partial_jU
 &=\epsilon\partial_jT_\epsilon F_0
       +\epsilon^2\partial_j\partial_iT_\epsilon F_1^i.
\end{align*}
Expanding $b,d$ in \cref{eq:appendix-scalar-resolvent-formula}, all
operators on the right are linear combinations of
\begin{equation*}
 T_{\epsilon,I,j}:=(\epsilon\partial)^I H_\epsilon^{-j},
 \qquad j=1,2,\qquad |I|\leq2j.
\end{equation*}
We prove that these operators are uniformly bounded on $C^{0,\alpha}$.
It is enough first to treat $\epsilon=1$.  Write
$p_t(z)=(4\pi t)^{-n/2}e^{-|z|^2/(4t)}$ and $\ell=|I|$.
Gaussian scaling gives
\begin{equation*}
 \|\partial^I p_t\|_1\leq C_I t^{-\ell/2},\qquad
 \int_{\mathbb R^n}|z|^\alpha|\partial^I p_t(z)|\,\mathrm{d}z
       \leq C_{I,\alpha}t^{(\alpha-\ell)/2}.
\end{equation*}
If $\ell<2j$, the kernel is integrable, because
\begin{equation*}
 \|\partial^I G_{1,j}\|_1
 \leq C\int_0^\infty t^{j-1-\ell/2}e^{-c_0t}\,\mathrm{d}t<\infty.
\end{equation*}
Convolution by an $L^1$ kernel bounds both the sup norm and the
homogeneous H\"older seminorm by the corresponding norm of $f$.

When $\ell=2j$, this absolute-integrability argument fails at $t=0$.
We instead use the cancellation $\int\partial^I p_t=0$.  With
$x_t=x-b_0t$,
\begin{equation*}
 \partial^I(e^{t\Delta}f)(x_t)
 =\int_{\mathbb R^n}\partial^I p_t(z)
                      \bigl(f(x_t-z)-f(x_t)\bigr)\,\mathrm{d}z.
\end{equation*}
Consequently, if $Q_tf(x)=\partial^I(e^{t\Delta}f)(x-b_0t)$,
\begin{equation*}
 \|Q_tf\|_\infty\leq C[f]_\alpha t^{(\alpha-\ell)/2},\qquad
 \|\partial Q_tf\|_\infty
       \leq C[f]_\alpha t^{(\alpha-\ell-1)/2}.
\end{equation*}
The second bound uses the same cancellation for one more derivative.
The formula
\begin{equation*}
 T_{1,I,j}f=\frac1{(j-1)!}\int_0^\infty
                         t^{j-1}e^{-c_0t}Q_tf\,\mathrm{d}t
\end{equation*}
is therefore absolutely convergent in the sup norm for H\"older data,
even though the differentiated spatial kernel is not absolutely
integrable.  It agrees with the distributional derivative of the
resolvent: truncate the time integral at $t=s>0$ and then let $s\downarrow0$.
In particular,
\begin{equation*}
 \|T_{1,I,j}f\|_\infty
 \leq C[f]_\alpha\int_0^\infty t^{\alpha/2-1}e^{-c_0t}\,\mathrm{d}t
 \leq C[f]_\alpha.
\end{equation*}
For $h=|x-y|>0$, split the time integral at $t=h^2$.  Use the sup
bound on $(0,h^2)$ and the gradient bound on $(h^2,\infty)$ to obtain
\[
|T_{1,I,j}f(x)-T_{1,I,j}f(y)|
 \leq C[f]_\alpha\left(
       \int_0^{h^2}t^{\alpha/2-1}\,\mathrm{d}t
       +h\int_{h^2}^\infty t^{(\alpha-3)/2}\,\mathrm{d}t\right) 
 \leq C[f]_\alpha h^\alpha.
\]
Both integrals converge because $0<\alpha<1$.  Thus, in all cases
$|I|\leq2j$,
\begin{equation*}
 \|T_{1,I,j}f\|_\infty
 \leq C(\|f\|_\infty+[f]_\alpha),\qquad
 [T_{1,I,j}f]_\alpha\leq C[f]_\alpha.
\end{equation*}
The second estimate contains no sup norm on its right-hand side.
This distinction is needed when restoring the parameter.

Indeed, \cref{eq:appendix-heat-kernel-resolvent} gives
$G_{\epsilon,j}(x)=\epsilon^{-n}G_{1,j}(x/\epsilon)$.
For $f_\epsilon(y)=f(\epsilon y)$ it follows that
\begin{equation*}
 (T_{\epsilon,I,j}f)(\epsilon y)=T_{1,I,j}f_\epsilon(y),
 \qquad [f_\epsilon]_\alpha=\epsilon^\alpha[f]_\alpha.
\end{equation*}
Hence
\begin{align*}
 \|T_{\epsilon,I,j}f\|_\infty
 &\leq C(\|f\|_\infty+\epsilon^\alpha[f]_\alpha),\\
 [T_{\epsilon,I,j}f]_\alpha
 &=\epsilon^{-\alpha}[T_{1,I,j}f_\epsilon]_\alpha
 \leq C[f]_\alpha.
\end{align*}
These estimates are uniform for $0<\epsilon\leq1$.  Substitution in
the formulas for $U$ and $\epsilon\partial U$ proves
\cref{ineq:appendix-parameter-divergence-Schauder} for $m=0$.
Applying the same estimate to $\partial^I U$ and summing for $|I|\leq m$
proves it for every $m$, since ordinary derivatives commute with the
constant-coefficient equation.  The source is kept in divergence
form throughout, so no additional derivative of $F_1$ is required.

\end{proof}

\clearpage
\section*{Index of notation and terminology}
\phantomsection
\addcontentsline{toc}{section}{Index of notation and terminology}
\begingroup
\fontsize{10}{11.3}\selectfont
\setlength{\columnsep}{2em}
\setlength{\parindent}{0pt}
\setlength{\parskip}{0pt}
\newcommand{\indexpage}[1]{\hyperref[#1]{\textcolor{linkblue}{\pageref*{#1}}}}
\newcommand{\notationentry}[2]{\par\hangindent=1em\hangafter=1 #1, #2\par}
\begin{multicols}{2}
\raggedcolumns
\notationentry{$a$, $\gamma_b$}{\indexpage{s10:radialfield}, \indexpage{s10:leveleq}}
\notationentry{Analytic center graph}{\indexpage{cor:finite-LS-analytic-center-reduction}}
\notationentry{$[u]_m$ (Section~6)}{\indexpage{s10:slicenotation}}
\notationentry{$b$, $\hat b$}{\indexpage{s10:radial}}
\notationentry{$B_\kappa$}{\indexpage{eq:weakened-gaussian-bianchi-adjoint}}
\notationentry{$c_\kappa(\xi)$}{\indexpage{cor:finite-LS-analytic-center-reduction}}
\notationentry{$\mathcal C$}{\indexpage{eq:finite-LS-fixed-splittings}}
\notationentry{$\mathcal C_\kappa$}{\indexpage{idx:bianchi-correction}}
\notationentry{Canonical shrinker flow}{\indexpage{s10:canonical}}
\notationentry{Cone $\mathscr C$, $g_C$}{\indexpage{idx:AC-geometry}}
\notationentry{$C^{k,\alpha}_\beta$}{\indexpage{def:prelim-weighted-Holder}}
\notationentry{$C^{k,\alpha}_{\beta,\mathbf X}$ (critical graph space)}{\indexpage{def:prelim-graph-spaces}}
\notationentry{$C^{k,\alpha}_{\Lambda;\beta}$ (finite expansions)}{\indexpage{def:prelim-finite-expansions}}
\notationentry{$C^\infty_0$, $C^\infty_{1;0}$}{\indexpage{def:prelim-ordinary-spaces}}
\notationentry{Cylindrical representative $\mathscr T_\beta$}{\indexpage{eq:prelim-cylindrical-representative}}
\notationentry{Complete extension}{\indexpage{s10:completion}}
\notationentry{Cutoff comparison}{\indexpage{s10:cutoffcomparison}}
\notationentry{Spacetime distances $d^*$, $d_Z$}{\indexpage{idx:completion-metric}}
\notationentry{Time-zero distance $d_0^Z$}{\indexpage{idx:timezero-distance}}
\notationentry{$\mathcal D$, $\mathcal D(s)$}{\indexpage{s10:slicenotation}, \indexpage{s10:energy}}
\notationentry{$\mathrm D$}{\indexpage{idx:operator-conventions}}
\notationentry{$D_b=b\partial_b$}{\indexpage{idx:scaled-radial}}
\notationentry{$D_g$, $\hat D$}{\indexpage{eq:prelim-metric-abbreviations}}
\notationentry{$E(s)$}{\indexpage{s10:energy}}
\notationentry{$e_\kappa(\xi)$}{\indexpage{eq:finite-LS-center-and-reduced-energy}}
\notationentry{Entropy radius}{\indexpage{s10:defEP}}
\notationentry{$\eta_\kappa$}{\indexpage{eq:finite-LS-center-graph}}
\notationentry{$\hat f$}{\indexpage{idx:entropy-conventions}}
\notationentry{$f_g$}{\indexpage{idx:pair-defect}, \indexpage{thm:existence-and-uniqueness-of-minimizer}}
\notationentry{Background drift $\mathbf{X}$}{\indexpage{eq:prelim-background-drift}}
\notationentry{Finite length}{\indexpage{s10:finitelength}}
\notationentry{Fixed probability density}{\indexpage{s10:evolution}}
\notationentry{Fixed gauge convergence}{\indexpage{s10:fixedgauge}}
\notationentry{Fixed-target reconstruction}{\indexpage{prop:corrected-orthogonal-fixed-target-normal-estimate}}
\notationentry{$\hat g$}{\indexpage{idx:entropy-conventions}}
\notationentry{$g^z$, $f^z$}{\indexpage{s10:modified}}
\notationentry{Gaussian weight}{\indexpage{idx:gaussian-measures}}
\notationentry{Geometric radius}{\indexpage{s10:defC}}
\notationentry{$H^1_{\boldsymbol{\rho}}$, $H^2_{\boldsymbol{\rho}}$}{\indexpage{idx:oscillator-domains}}
\notationentry{$H^m_\kappa$}{\indexpage{idx:gaussian-measures}}
\notationentry{$\mathcal S^j_{\mathrm{osc}}$}{\indexpage{lem:weakened-Gaussian-Sobolev-conjugation}}
\notationentry{$H^2_{\boldsymbol{\rho},\kappa}$}{\indexpage{eq:weakened-Gaussian-closed-domain}}
\notationentry{Isolation of an AC singularity}{\indexpage{s10:iso-main}}
\notationentry{$\mathcal K=\ker\hat P$}{\indexpage{idx:coordinate-graphs}}
\notationentry{$\mathcal K_\kappa$}{\indexpage{idx:bianchi-operators}}
\notationentry{Infinitesimal solitonic deformations $\ISD$}{\indexpage{idx:ISD}}
\notationentry{$\Omega_L$}{\indexpage{s10:radial}}
\notationentry{$\kappa$, $\kappa'$}{\indexpage{idx:gaussian-measures}, \indexpage{eq:finite-LS-physical-delta-choice}}
\notationentry{Kernel based at a terminal point}{\indexpage{s10:physicalkernel}}
\notationentry{$\hat L$}{\indexpage{eq:prelim-ISD}, \indexpage{eq:finite-LS-strong-background-Hessian}}
\notationentry{$L^2_\kappa$}{\indexpage{idx:gaussian-measures}}
\notationentry{\L ojasiewicz--Simon inequality}{\indexpage{thm:finite-global-metric-LS}, \indexpage{thm:global-pair-LS}}
\notationentry{Lyapunov--Schmidt reduction}{\indexpage{cor:finite-LS-analytic-center-reduction}}
\notationentry{Maximal operator domain}{\indexpage{idx:oscillator-domains}}
\notationentry{Modified Ricci flow}{\indexpage{s10:modified}}
\notationentry{$\boldsymbol{\mu}$}{\indexpage{idx:entropy-conventions}}
\notationentry{$\Phi(g,f)$, $\Phi(g)$}{\indexpage{idx:pair-defect}, \indexpage{s10:slicenotation}, \indexpage{s10:defect}}
\notationentry{$\nabla^b$, $\nabla^{b,j}$}{\indexpage{idx:level-connection}}
\notationentry{$O_{k+\alpha}(r^\beta)$}{\indexpage{idx:weighted-O}}
\notationentry{Normal inverse}{\indexpage{thm:finite-LS-normal-inverse-family}}
\notationentry{$\mathrm dV_{\hat f}$, $\mathrm dV_{\hat f,\kappa}$}{\indexpage{idx:gaussian-measures}}
\notationentry{$\mathrm dV_f$, $\mathrm dV_{f_g}$}{\indexpage{idx:pair-defect}}
\notationentry{$P_\kappa$}{\indexpage{idx:bianchi-operators}}
\notationentry{$\Pi_\kappa$}{\indexpage{prop:weakened-projection-mapping-property}}
\notationentry{$\Pi_{\mathcal C}$, $\Pi_{\mathscr F'}$}{\indexpage{eq:finite-LS-fixed-splittings}}
\notationentry{Pointed \L ojasiewicz--Simon inequality}{\indexpage{s10:pointedLS}}
\notationentry{Parameter dependence in Section~6}{\indexpage{fig:s10:dependencies}}
\notationentry{Potential coordinates}{\indexpage{s10:potentialcoords}}
\notationentry{Potential pullback}{\indexpage{lem:finite-potential-pullback}}
\notationentry{Pair hypotheses}{\indexpage{eq:finite-pair-hypotheses}}
\notationentry{$\mathscr Q_\kappa$}{\indexpage{eq:finite-LS-quasigradient}}
\notationentry{$Q$}{\indexpage{s10:Q}}
\notationentry{$\boldsymbol r$}{\indexpage{s10:shell}}
\notationentry{$R_*$}{\indexpage{s10:instantuniform}}
\notationentry{$r_E$}{\indexpage{s10:rE}}
\notationentry{$r_C^{(\sigma,q)}$, $r_C$}{\indexpage{s10:defC}}
\notationentry{$\mathscr R_{\kappa,h}$}{\indexpage{eq:corrected-orthogonal-fixed-target-reconstruction}}
\notationentry{Radial transport $\boldsymbol{\tau}$, $\boldsymbol{\tau}_r$}{\indexpage{def:prelim-AC-degree}}
\notationentry{Matched product coordinates}{\indexpage{s10:radiuscomparison}}
\notationentry{Relative radial estimate}{\indexpage{s10:relative}}
\notationentry{$h_B$, $w_B$, $\eta_B$}{\indexpage{s10:etaB}}
\notationentry{$\mathcal S$}{\indexpage{s10:iso-main}}
\notationentry{$\boldsymbol{\rho}$}{\indexpage{eq:prelim-background-drift}, \indexpage{idx:oscillator-weight}}
\notationentry{Soliton defect}{\indexpage{idx:pair-defect}, \indexpage{s10:slicenotation}}
\notationentry{Strong uniqueness of tangent flows}{\indexpage{s10:main}, \indexpage{s10:uniqueness}}
\notationentry{Time function $\mathfrak t$, spacetime point $x^*$}{\indexpage{s10:setup}}
\notationentry{Prescribed homogeneous component $[u]_\gamma$}{\indexpage{def:prelim-finite-expansions}}
\notationentry{$W(h,u)$}{\indexpage{eq:function-for-minimizer-IFT}}
\notationentry{$\mathcal W$}{\indexpage{idx:entropy-conventions}, \indexpage{thm:global-pair-LS}}
\notationentry{Weakened Gaussian weight}{\indexpage{idx:gaussian-measures}, \indexpage{sec:appendix-weakened-gaussian}}
\notationentry{$Z_\kappa$}{\indexpage{idx:gaussian-measures}}
\notationentry{$\delta_f$, $\delta^*$}{\indexpage{idx:operator-conventions}}
\notationentry{Weighted Laplacian $\Delta_f$}{\indexpage{idx:operator-conventions}}
\notationentry{$E_\Sigma$, $\Sigma_r$}{\indexpage{def:prelim-AC-degree}, \indexpage{eq:prelim-cylindrical-representative}}
\notationentry{$E_m$, $E_\infty$}{\indexpage{def:prelim-radial-filtration}}
\notationentry{$\mathscr H$, $\mathscr H'$, $\mathscr F$, $\mathscr F'$}{\indexpage{eq:finite-tensor-graphs}}
\notationentry{$\mathscr Z$, $\mathcal T$, $L_{1/2}$, $G_{1/2}$}{\indexpage{eq:finite-scalar-graphs}}
\notationentry{$\mathscr X_k$, $\mathscr Y_k$, $\mathscr S_k$, $\hat P$}{\indexpage{idx:coordinate-graphs}}
\notationentry{$a_h$, $a_u$}{\indexpage{prop:finite-graph-realization}}
\notationentry{Original-to-graph inclusion}{\indexpage{eq:original-to-graph}}
\notationentry{Fixed tensor target $\mathcal V$}{\indexpage{eq:finite-orders}}
\end{multicols}
\endgroup
\clearpage

\phantomsection
\addcontentsline{toc}{section}{References}
\bibliographystyle{amsalpha}
\bibliography{references}

\newcommand{\etalchar}[1]{$^{#1}$}
\providecommand{\LiWangHeatDOI}{https://doi.org/10.1007/s00526-020-01861-y}
\providecommand{\bysame}{\leavevmode\hbox to3em{\hrulefill}\thinspace}
\providecommand{\MR}{\relax\ifhmode\unskip\space\fi MR }
\providecommand{\MRhref}[2]{%
  \href{http://www.ams.org/mathscinet-getitem?mr=#1}{#2}
}
\providecommand{\href}[2]{#2}
\begin{thebibliography}{CCG{\etalchar{+}}08}

\bibitem[Aub98]{aubin1998some}
Thierry Aubin, \emph{Some nonlinear problems in {R}iemannian geometry},
  Springer Monographs in Mathematics, Springer-Verlag, Berlin, 1998.

\bibitem[Bam20]{bamler2020structure}
Richard~H. Bamler, \emph{Structure theory of non-collapsed limits of {Ricci}
  flows}, arXiv preprint, 2020, \url{https://arxiv.org/abs/2009.03243v2}.

\bibitem[BMS02]{braverman2002essential}
Maxim Braverman, Ognjen Milatovic, and Mikhail~A. Shubin, \emph{Essential
  self-adjointness of {Schr\"odinger}-type operators on manifolds},
  \href{https://doi.org/10.1070/RM2002v057n04ABEH000532}{Russian Math. Surveys}
  \textbf{57} (2002), no.~4, 641--692. \MR{1942115}

\bibitem[CCG{\etalchar{+}}08]{chow2008ricci}
Bennett Chow, Sun-Chin Chu, David Glickenstein, Christine Guenther, James
  Isenberg, Tom Ivey, Dan Knopf, Peng Lu, Feng Luo, and Lei Ni, \emph{The
  {R}icci flow: techniques and applications. {P}art {II}}, Mathematical Surveys
  and Monographs, vol. 144, American Mathematical Society, Providence, RI,
  2008, Analytic aspects.

\bibitem[CCR15]{carlotto2015slowly}
Alessandro Carlotto, Otis Chodosh, and Yanir~A. Rubinstein, \emph{Slowly
  converging {Y}amabe flows},
  \href{https://doi.org/10.2140/gt.2015.19.1523}{Geom. Topol.} \textbf{19}
  (2015), no.~3, 1523--1568.

\bibitem[CM14]{colding2014tangent}
Tobias~H. Colding and William~P. Minicozzi, II, \emph{On uniqueness of tangent
  cones for {E}instein manifolds},
  \href{https://doi.org/10.1007/s00222-013-0474-z}{Invent. Math.} \textbf{196}
  (2014), no.~3, 515--588.

\bibitem[CM15]{colding2015uniqueness}
\bysame, \emph{Uniqueness of blowups and {{\L}ojasiewicz} inequalities},
  \href{https://doi.org/10.4007/annals.2015.182.1.5}{Ann. of Math. (2)}
  \textbf{182} (2015), no.~1, 221--285.

\bibitem[CMZ24]{chan2024closed}
Pak-Yeung Chan, Zilu Ma, and Yongjia Zhang, \emph{On {R}icci flows with closed
  and smooth tangent flows},
  \href{https://doi.org/10.1007/s00526-024-02778-6}{Calc. Var. Partial
  Differential Equations} \textbf{63} (2024), no.~7, Paper No. 179, 31.

\bibitem[CS21]{chodosh2021conical}
Otis Chodosh and Felix Schulze, \emph{Uniqueness of asymptotically conical
  tangent flows}, \href{https://doi.org/10.1215/00127094-2020-0098}{Duke Math.
  J.} \textbf{170} (2021), no.~16, 3601--3657. \MR{4332673}

\bibitem[CZ06]{chen2006uniqueness}
Bing-Long Chen and Xi-Ping Zhu, \emph{Uniqueness of the {Ricci} flow on
  complete noncompact manifolds},
  \href{https://doi.org/10.4310/jdg/1175266184}{J. Differential Geom.}
  \textbf{74} (2006), no.~1, 119--154.

\bibitem[CZ12]{cao2012second}
Huai-Dong Cao and Meng Zhu, \emph{On second variation of {Perelman}'s {Ricci}
  shrinker entropy}, \href{https://doi.org/10.1007/s00208-011-0701-0}{Math.
  Ann.} \textbf{353} (2012), no.~3, 747--763.

\bibitem[FL26a]{fangli2026structure}
Hanbing Fang and Yu~Li, \emph{On the structure of noncollapsed {R}icci flow
  limit spaces}, arXiv preprint, 2026,
  \url{https://arxiv.org/abs/2510.12398v3}.

\bibitem[FL26b]{fangli2026generalized}
\bysame, \emph{Strong uniqueness and rectifiability of generalized cylindrical
  singularities in {R}icci flow}, arXiv preprint, 2026,
  \url{https://arxiv.org/abs/2605.17001v1}.

\bibitem[FL26c]{fangli2026strong}
\bysame, \emph{Strong uniqueness of tangent flows at cylindrical singularities
  in {R}icci flow}, arXiv preprint, 2026,
  \url{https://arxiv.org/abs/2510.20320v2}.

\bibitem[KW15]{kotschwar2015rigidity}
Brett Kotschwar and Lu~Wang, \emph{Rigidity of asymptotically conical shrinking
  gradient {R}icci solitons}, \href{https://doi.org/10.4310/jdg/1427202764}{J.
  Differential Geom.} \textbf{100} (2015), no.~1, 55--108. \MR{3326574}

\bibitem[LW20]{li2020heat}
Yu~Li and Bing Wang, \emph{Heat kernel on {R}icci shrinkers},
  \href{https://doi.org/10.1007/s00526-020-01861-y}{Calc. Var. Partial
  Differential Equations} \textbf{59} (2020), no.~6,
  \href{\LiWangHeatDOI}{Paper No. 194}, 84. \MR{4169048}

\bibitem[LYZ26]{LYZ-I}
Yu~Li, Longhuan Yan, and Wenjia Zhang, \emph{{On the geometry of asymptotically
  conical Ricci shrinkers (I): weighted analysis}}, arXiv preprint, 2026,
  \url{https://arxiv.org/abs/2610.39814}.

\bibitem[LZ26]{li2026kahlerlimits}
Yu~Li and Junsheng Zhang, \emph{{G}romov--{H}ausdorff limits of noncollapsed
  {K}\"ahler--{R}icci flows and the geometry of {R}icci shrinkers}, arXiv
  preprint, 2026, \url{https://arxiv.org/abs/2607.25644v1}.

\bibitem[Per02]{perelman2002entropy}
Grisha Perelman, \emph{The entropy formula for the {Ricci} flow and its
  geometric applications}, arXiv preprint, 2002,
  \url{https://arxiv.org/abs/math/0211159v1}.

\bibitem[R{\aa}d92]{rade1992yangmills}
Johan R{\aa}de, \emph{On the {Y}ang--{M}ills heat equation in two and three
  dimensions}, \href{https://doi.org/10.1515/crll.1992.431.123}{J. Reine Angew.
  Math.} \textbf{431} (1992), 123--163. \MR{1179335}

\bibitem[Sch14]{schulze2014compact}
Felix Schulze, \emph{Uniqueness of compact tangent flows in mean curvature
  flow}, \href{https://doi.org/10.1515/crelle-2012-0070}{J. Reine Angew. Math.}
  \textbf{690} (2014), 163--172.

\bibitem[Sim83]{simon1983asymptotics}
Leon Simon, \emph{Asymptotics for a class of non-linear evolution equations,
  with applications to geometric problems},
  \href{https://doi.org/10.2307/2006981}{Ann. of Math. (2)} \textbf{118}
  (1983), no.~3, 525--571. \MR{727703}

\bibitem[{\v S}TW04]{sesum2004notes}
Nata{\v s}a {\v S}e{\v s}um, Gang Tian, and Xiaodong Wang, \emph{Notes on
  {P}erelman's paper on the entropy formula for the {R}icci flow and its
  geometric applications}, Unpublished notes,
  \url{https://users.math.msu.edu/users/xwang/perel.pdf}, 2004.

\bibitem[SW15]{sun2015on}
Song Sun and Yuanqi Wang, \emph{On the {K}\"ahler-{R}icci flow near a
  {K}\"ahler-{E}instein metric},
  \href{https://doi.org/10.1515/crelle-2013-0004}{J. Reine Angew. Math.}
  \textbf{699} (2015), 143--158. \MR{3305923}

\bibitem[Wan23]{wang2023gradient}
Jie Wang, \emph{Gradient estimates for a class of elliptic and parabolic
  equations on {R}iemannian manifolds},
  \href{https://doi.org/10.1007/s11464-021-0420-0}{Front. Math.} \textbf{18}
  (2023), no.~5, 999--1024. \MR{4647880}

\bibitem[Zha12]{zhang2012extremal}
Qi~S. Zhang, \emph{Extremal of log {S}obolev inequality and {$W$} entropy on
  noncompact manifolds}, \href{https://doi.org/10.1016/j.jfa.2012.07.005}{J.
  Funct. Anal.} \textbf{263} (2012), no.~7, 2051--2101. \MR{2956934}

\end{thebibliography}
\par\vspace{10pt}
\noindent\begin{minipage}{\textwidth}
Yu Li, Institute of Geometry and Physics, University of Science and Technology
of China, No.~96 Jinzhai Road, Hefei, Anhui Province, 230026, China;
Hefei National Laboratory, No.~5099 West Wangjiang Road, Hefei, Anhui Province,
230088, China; E-mail: \href{mailto:yuli21@ustc.edu.cn}{yuli21@ustc.edu.cn}.

\medskip
Longhuan Yan, School of Mathematical Sciences, University of Science and
Technology of China, No.~96 Jinzhai Road, Hefei, Anhui Province, 230026, China;
E-mail: \href{mailto:yanlonghuan@mail.ustc.edu.cn}{yanlonghuan@mail.ustc.edu.cn}.

\medskip
Wenjia Zhang, School of Mathematical Sciences, University of Science and
Technology of China, No.~96 Jinzhai Road, Hefei, Anhui Province, 230026, China;
E-mail: \href{mailto:wj12345678@mail.ustc.edu.cn}{wj12345678@mail.ustc.edu.cn}.
\end{minipage}\par

\end{document}